\newcommand{\Lie}{{\mathcal{L}}}
\newcommand{\der}{\nabla}
\newcommand{\les}{\lesssim}
\newcommand{\bea}{\begin{eqnarray}}
\newcommand{\eea}{\end{eqnarray}}
\newcommand{\derm}{ { \der^{(\bf{m})}} }
\newcommand{\rderm}{   {\mbox{$\nabla \mkern-13mu /$\,}^{(\bf{m})} }   }
\newcommand{\eps}{{\varepsilon}}\newcommand{\R}{{\mathbb R}}
\newcommand{\E}{{\cal E}}
\newcommand{\la}{\langle}\newcommand{\si}{\sigma}\renewcommand{\b}{\beta}
\newcommand{\cal}{\mathcal}
\def\a{\alpha}\def\ga{\gamma}\def\de{\delta}\def\Si{\Sigma}
\def\bm{\left( \begin{array}{cc}}
\def\endm{\end{array}\right)}\newcommand{\eq}{\end{equation}}
\def\a{\alpha}\def\b{\beta}
\def\ga{\gamma}\def\de{\delta}\def\Box{\square}\def\pa{\partial}
\def \rectangle#1#2{\hbox{\vrule\vbox to #2 {\hrule\hbox to #1{\hfil}\vfil\hrule}\vrule}}
\def\a{\alpha}\def\b{\beta}\def\ga{\gamma}
\def\de{\delta}\def\Box{\square}\def\pa{\partial}
\def\pa{\partial}
\def\beaa{\begin{eqnarray*}}
\def\eeaa{\end{eqnarray*}}
\def\pa{\partial}
\def\a{{\alpha}}
\def\b{{\beta}}
\def\ga{\gamma}
\def\Ga{\Gamma}
\def\de{\delta}
\def\eps{\epsilon}
\def\la{\lambda}
\def\La{\Lambda}
\def\si{\sigma}
\def\Si{\Sigma}
\def\Om{\Omega}
\def\g{{\bf g}}
\def\SSS{{\Bbb S}}
\def\nn{{\mathbb N}}
\def\R{{\mathbb R}}
\def \p{ \partial}
\def\12{\frac{1}{2}}
\def\N{{\mathcal N}}
\def\bep{\begin{proposition}}
\def\eep{\end{proposition}}
\def\4{\frac{1}{4}}
\def\i{{\rm i}}
\def \p{ \partial}
\def\12{\frac{1}{2}}
\def\N{\nn}
\def\bep{\begin{proposition}}
\def\eep{\end{proposition}}
\def\d{\text{d}}
\def\bm#1{\boldsymbol{#1}} 
\def\build#1_#2^#3{\mathrel{\mathop{\kern 0pt#1}\limits_{#2}^{#3}}}
\def\4{\frac{1}{4}}
\def\i{{\rm i}}
\def\<{\langle}
\def\>{\rangle}
\theoremstyle{plain}
\newtheorem{theorem}{Theorem}
\newtheorem{proposition}{Proposition}
\newtheorem{lemma}{Lemma}
\newtheorem{corollary}{Corollary}
\theoremstyle{remark}
\newtheorem{remark}{Remark}
\theoremstyle{definition}
\newtheorem{definition}{Definition}
\numberwithin{equation}{section}
\numberwithin{proposition}{section}
\numberwithin{definition}{section}
\numberwithin{lemma}{section}
\numberwithin{corollary}{section}
\numberwithin{remark}{section}
\begin{document}
\include{psfig}
\title[Einstein-Yang-Mills in Higher Dimensions]{The global stability of the Minkowski space-time solution to the Einstein-Yang-Mills equations in higher dimensions}
\author{Sari Ghanem}
\address{University of Lübeck}
\email{sari.ghanem@uni-luebeck.de}
\maketitle

\begin{abstract}
This is a first in a series of papers in which we study the stability of the $(1+n)$-Minkowski space-time, for $n \geq 3$\,, solution to the Einstein-Yang-Mills equations, in both the Lorenz and harmonic gauges, associated to any arbitrary compact Lie group $G$\,, and for arbitrary small perturbations. In this first, we prove global stability of the Minkowski space-time, $\R^{1+n}$\,, in higher dimensions $n \geq 5$ (both in the interior and in the exterior); in the paper that follows, we prove exterior stability for $n=4$\,; and its sequel, we prove exterior stability for $n=3$\,, and in all these cases, stability is studied as a solution to the fully coupled Einstein-Yang-Mills system in the Lorenz and harmonic gauges. We show here that for $n \geq 5$\,, the $\R^{1+n}$ Minkowski space-time in wave coordinates is stable as solution to the Einstein-Yang-Mills system in the Lorenz gauge on the Yang-Mills potential, for sufficiently small perturbations of the Einstein-Yang-Mills potential and metric, and leads to a global Cauchy development. We also obtain dispersive estimates in wave coordinates on the gauge invariant norm of the Yang-Mills curvature, on the Yang-Mills potential in the Lorenz gauge, and on the perturbations of the metric. In this manuscript, we detail all the material of our proof so as to provide lecture notes for Ph.D. students wanting to learn the Cauchy problem for the Einstein-Yang-Mills system.
\end{abstract}

\setcounter{page}{1}
\pagenumbering{arabic}


\section{Introduction}

This is a first paper, in a series of three papers where we study the non-linear stability of the Minkowski space-time solution to the Einstein-Yang-Mills equations in $(1+n)$-dimensions, where $n \geq 3$ is the number of space dimensions. In this first paper, we prove the global non-linear stability of Minkowski space-time for $n\geq 5$\,, however we carry out the computations with parameters that will be of use for the third paper concerning $n=3$\,, although these parameters will be chosen trivial both in this case of $n\geq 5$ and in the case of $n = 4$ in the paper that follows. We also define, in this paper, the Cauchy problem for the fully coupled Einstein-Yang-Mills system in generality for $n\geq 3$\,, so as to refer to it in the following papers. 

The problem that we look at, is that of the perturbation of the Minkowski space-time under the evolution problem in General Relativity with matter of which the governing equations are the Einstein-Yang-Mills equations
\bea\label{TheEinsteinYangMillsequationsasEinsteinwrotethem}
R_{ \mu \nu} - \frac{1}{2} g_{\mu\nu} \cdot R &=& 8\pi \cdot T_{\mu\nu} \; ,
\eea
where $T_{\mu\nu}$ is the Yang-Mills stress-energy-momentum tensor (see \eqref{definitionofstressenergymomenturmYangMillsmatter}), prescribed by the unknown Yang-Mills curvature $F$ given by (see \eqref{expF}),
\bea
F_{\a\b} = \der_{\a}A_{\b} - \der_{\b}A_{\a} + [A_{\a},A_{\b}]   \; ,
\eea
where $A$ is the unknown Yang-Mills potential valued in the Lie algebra $\cal G$ associated to the Lie group $G$\;, and where $\der_{\alpha}$ is the unknown space-time covariant derivative of Levi-Civita, prescribed by the unknown metric $\g$\;.

However, the Einstein-Yang-Mills equations \eqref{TheEinsteinYangMillsequationsasEinsteinwrotethem} imply the Yang-Mills equations (see \eqref{ThegaugecovariantdivergenceoftheYangMillscurvatureisequaltozero}), namely
\bea
 \der_{\alpha} F^{\a\b}  + [A_{\alpha}, F^{\a\b} ]  = 0 \; .
\eea
Thus, the Einstein-Yang-Mills system on $(\cal M, F, \g)$, is the following (see \eqref{simpEYM})
\bea
\begin{cases} \label{EYMsystemforintro}
R_{ \mu \nu}  =& 2 < F_{\mu\b}, F_{\nu}^{\;\;\b} > +   \frac{1}{(1-n)} \cdot g_{\mu\nu } \cdot < F_{\a\b },F^{\a\b } >\; ,   \\
0 \;\;\;\, \,=&  \der_{\alpha} F^{\a\b}  + [A_{\alpha}, F^{\a\b} ]    \;, \\
F_{\a\b} =& \der_{\a}A_{\b} - \der_{\b}A_{\a} + [A_{\a},A_{\b}] \; .  \end{cases} 
      \eea

The Einstein-Yang-Mills equations form an overdetermined system, not any initial data set leads to a Cauchy development. The initial data set (see Subsection \ref{Cauchyproblem}), namely $(\Sigma, \overline{A}, \overline{E}, \overline{g}, \overline{k})$\;, must satisfy the Einstein-Yang-Mills constraint equations which arise from the Gauss-Codazzi equations (see Lemma \ref{Gauss-Codazzi-equations}), as well as the Yang-Mills constraint equations (see Lemma \ref{TheconstraintequationsfortheEinstein-Yang-Mills system}). The Einstein-Yang-Mills constraints for the initial data are 
\bea
\begin{cases} 
 \mathcal{R}+ \overline{k}^i_{\,\, \, i} \overline{k}_{j}^{\,\,\,j}  -  \overline{k}^{ij} \overline{k}_{ij}    &=    \frac{4}{(n-1)}  < \overline{E}_{i}, \overline{E}^{ i}>    +  < \overline{F}_{ij },\overline{F}^{ij } > \, ,\\
\overline{D}_{i} \overline{k}^i_{\,\,\, j}    - \overline{D}_{j} \overline{k}^i_{\, \,\,i}   &=  2 < \overline{E}_{i}, {\overline{F}_{j}}^{\, i} > \, ,\\
\overline{D}^i \overline{E}_{ i} + [\overline{A}^i, \overline{E}_{ i} ]  &= 0 \, , \end{cases} 
\eea

where $\overline{F}$ is prescribed by $\overline{A}$ through 
\bea
\overline{F}_{\a\b} =  \overline{D}_{\a}\overline{A}_{\b} -  \overline{D}_{\b}\overline{A}_{\a} + [\overline{A}_{\a},\overline{A}_{\b}]   \; ,
\eea
and where $\mathcal{R}$ is given by contracting in \eqref{restrictedRiemanntensor}. Here $\overline{D}$ is defined as the Levi-Civita connection prescribed by Riemannian metric $\overline{g}$\;, and we raised indices with respect to the $\overline{g}$. Then, we are in fact looking for a Lorentzian metric $\g$\;, and therefore for $\der$\;, and for a Manifold $\cal M$\;, and therefore for $\hat{t}$ (see Definition \ref{definitionoftheunitorthogonaltimelikevectorefieldhatt}), such as on $\Sigma$\;, we have $\overline{D} = D$ (defined in \eqref{defrestrictedcovariantderivative}), and we have $\overline{k}  = k$ (see Definition \ref{definitionsosecondfundamentalform}), and we have $\overline{A} = A$ and $\overline{E}_{i} = F_{\hat{t} i}$\;.

Since the Einstein-Yang-Mills equations are invariant under gauge transformations (see Subsection \ref{The invariance under gauge transformation}) and under change of system of coordinates (see Subsection \ref{The diffeomorphism invariance}), we need to fix the system of coordinates and the gauge in order to make a precise statement on decay of the fields, which are the metric and the Yang-Mills potential. We choose to work in the Lorenz gauge (see \eqref{TheLorenzgaugeconditionthatwechoose}) and in wave coordinates (see \eqref{Thewavecoordinateconditionthatwechooseforoursystemofcoordinates}). The use of wave coordinates dates back to the celebrated work of Choquet-Bruhat, \cite{CB1}, where she proved existence of a maximal Cauchy development for the Einstein vacuum equations for sufficiently smooth initial data. Whereas to the Lorenz gauge, it is here being used to be able to make a statement on decay for the Yang-Mills potential.

However, if one chooses to work in the Lorenz gauge, namely $\der^{\a}   A_{\a}  =  0$\;, then the Yang-Mills equations implied by the Einstein-Yang-Mills equations (see \eqref{ThegaugecovariantdivergenceoftheYangMillscurvatureisequaltozero}), namely $ \textbf{D}^{(A)}_{\a}F^{\a\b}  = 0$\;, imply a system of non-linear wave equations in wave coordinates, on the Yang-Mills potential (see Lemma \ref{waveequationontheYangMillspotentialwithspurcesdependingonthemetricg}) with sources depending on both the Yang-Mills potential $A$ and the metric $g$\;. Furthermore, in wave coordinates, the Einstein-Yang-Mills equations (the original fields equations), imply a system of non-linear wave equations on the metric $g$ (see \eqref{waveequationonthemetricgwithsourcesonthemetricgdependingonRiccitensor}), with sources depending on the Ricci tensor $R$\;, that is here non-vanishing since we are treating the Einstein equations with matter, namely the Yang-Mills fields, which in its turn lead sources depending again on the Yang-Mils potential $A$ and the metric $g$\;. In fact, since we are interested in perturbations of the Minkowski space-time, the evolution problem that we are interested in, is on one hand that for the difference $h:= g - m$\;, where $m$ is defined to be the Minkowski metric $(-1, +1, \ldots, +1)$ in wave coordinates, and on the other hand, that for the Yang-Mills potential in the Lorenz gauge $A$\;.

The advantage of the use of both the wave coordinates (also referred to as the harmonic gauge) and of the Lorenz gauge, is that the field equations simplify to a system of coupled non-linear hyperbolic wave equations on both the unknown Yang-Mills potential $A$ and the unknown metric $h$ (see Lemma \ref{EYMsystemashyperbolicPDE}, or see Lemmas \ref{waveequationontheEinsteinYangMillspotentialderivedfromthegaugecovariantdivergenceoftheYangMillscurvatureisequaltozero} and \ref{waveequationontheEinsteinYangMillsmetricsmallhwithsourcesusingtheRiccitensorthatwascomoutedearlier}). 

Yet, we need to transform the initial data set $(\Sigma, \overline{A}, \overline{E}, \overline{g}, \overline{k})$ into an initial data of the type $(\Sigma, A_\Sigma, \pa_t A_\Sigma, g_\Sigma, \pa_t g_\Sigma)$, suitable for the considered coupled system of non-linear wave equations (given in Lemma \ref{EYMsystemashyperbolicPDE}), so as to give a hyperbolic formulation for the Cauchy problem.  

We are going to construct the initial data set $(\Sigma, A_\Sigma, \pa_t A_\Sigma, g_\Sigma, \pa_t g_\Sigma)$, considering that on one hand, the solution of the Einstein-Yang-Mills system that we are looking for is gauge invariant for both gauge transformations on $A$ and for diffeomorphisms on the system of coordinates (see Section \ref{The gauge conditions}), and on the other hand, in consistency with the fact that we are writing our equations in the Lorenz gauge and in wave coordinates conditions. Let us explain:

Given the gauge invariance of the Yang-Mills system (see Lemma \ref{gaugeinvarianceoftheYangMillssystem}), we can choose to look at our initial data for the Yang-Mills potential to be a section of the unknown solution $A$\;, such that $A_t = 0$ (only for the initial data $A_\Sigma$), which is a condition that will not necessarily be preserved for the evolution of $A_\Sigma$\;, namely $A$\;. Also, given the diffeomorphism invariance of the solution (see Subsection \ref{The diffeomorphism invariance}), we can choose which Cauchy hypersurface in the manifold $\cal M$\;, we would like our given initial data slice $\Sigma$ to ultimately be. We choose that we would like $\Sigma$ to be in $\cal M$ in a way such that $\pa_t$ is orthogonal to $\Sigma \subset \cal M$ (that is a condition that will not be preserved for the evolution of $\Sigma$, namely $\Sigma_t$). Differently speaking, one can always make a gauge transformation on $A$\;, and a diffeomorphism on $\cal M$\;, such that the initial data satisfies the conditions $A_t = 0$ (on $\Sigma$) and $g_{ti} = 0$ for spatial indices (on $\Sigma$). 

However, we wanted to look for a solution in both the Lorenz gauge and in wave coordinates. Thus, once we decided to look at our initial data set in a way that leads to $A_\Sigma$ and $g _\Sigma$ to be of the kind that we have just described (\eqref{initialdatadforzerothderivativeAsigma} and \eqref{constructionoggsigma}), namely with the properties that $(A_\Sigma)_t =0 $ and $({g_\Sigma})_{ti} = 0$\;, we can then proceed forward to construct $\pa_t A_\Sigma$ and $\pa_t g_\Sigma$ in consistency with the Lorenz gauge and the wave coordinates condition, which is possible for us to do, because in fact $\pa_t A_\Sigma$ and $\pa_t g_\Sigma$ are not part of the initial data set -- we just need to see what these gauges impose on $\pa A_\Sigma$ and $\pa g_\Sigma$ and deduce the expressions of their time partial derivatives in terms of $\overline{A}$\;, $\overline{E}$\;, $\overline{g}$\;, and $\overline{k}$, which are given from the initial data set (see \eqref{constructionopatialtimeAonsigma} and \eqref{constructionopatialtgonsigma}). 

It is not sufficient that the “new” initial data set that we constructed, namely $(\Sigma, A_\Sigma, \pa_t A_\Sigma, g_\Sigma, \pa_t g_\Sigma)$\;, is in the Lorenz gauge and in wave coordinates -- these gauges conditions will not necessarily be preserved for all time $t$\,, by the hyperbolic system of evolution that we deduced in \eqref{ThewaveequationontheYangMillspotentialwithhyperbolicwaveoperatorusingingpartialderivativesinwavecoordinates} and \eqref{Thewaveequationonthemetrichwithhyperbolicwaveoperatorusingingpartialderivativesinwavecoordinates} (in Lemma \ref{EYMsystemashyperbolicPDE}) by making implications on the original Einstein-Yang-Mills system \eqref{EYMsystemforintro}, assuming “sometimes” and in the first place that the solution will be in the Lorenz gauge and wave coordinates during the evolution. Let us explain:

The fact that we used “sometimes” the Lorenz gauge and the wave coordinates conditions, in order to simplify our original Einstein-Yang-Mills system \eqref{EYMsystemforintro}, only gives us an implication on the solution (implication given in \eqref{ThewaveequationontheYangMillspotentialwithhyperbolicwaveoperatorusingingpartialderivativesinwavecoordinates} and \eqref{Thewaveequationonthemetrichwithhyperbolicwaveoperatorusingingpartialderivativesinwavecoordinates} in Lemma \ref{EYMsystemashyperbolicPDE}), and this is if such a solution of \eqref{EYMsystemforintro} exists for all time $t$ while being in the Lorenz gauge and in wave coordinates simultaneously. Now, the question is: how do we know that solving the simplified system (that we derived by using “sometimes” the Lorenz gauge and wave coordinates condition to simplify \eqref{EYMsystemforintro}) gives rise to an actual solution of the original Einstein-Yang-Mills system \eqref{EYMsystemforintro} and that is indeed in the Lorenz gauge and in wave coordinates for all time $t$?

In fact, we are going to show that there is indeed a way to solve the Einstein-Yang-Mills system in the hyperbolic formulation (given in Lemma \ref{EYMsystemashyperbolicPDE}), where the evolution in time gives rise to a solution of the original Einstein-Yang-Mills system \eqref{EYMsystemforintro}, that will always be in the Lorenz gauge and in wave coordinates for all time $t$\;. Let us explain how we do that:

We shall in fact show that for a solution of the simplified coupled non-linear wave equations (namely, \eqref{ThewaveequationontheYangMillspotentialwithhyperbolicwaveoperatorusingingpartialderivativesinwavecoordinates} and \eqref{Thewaveequationonthemetrichwithhyperbolicwaveoperatorusingingpartialderivativesinwavecoordinates} in Lemma \ref{EYMsystemashyperbolicPDE}), the Einstein-Yang-Mills system \textit{implies} a system of non-linear wave equations for both the Lorenz gauge and the wave coordinate gauge conditions. We will also show that for our initial data $(\Sigma, A_\Sigma, \pa_t A_\Sigma, g_\Sigma, \pa_t g_\Sigma)$\;, constructed in consistency with the Lorenz and wave coordinates gauges, it is \textit{precisely} the Einstein-Yang-Mills constraint equations (given in Lemma \ref{TheconstraintequationsfortheEinstein-Yang-Mills system}) that will give us that the initial conditions for the propagation (through the Einstein-Yang-Mills system \eqref{EYMsystemforintro}) of the Lorenz and wave coordinates gauges are null. Thus, by starting with an initial data set $(\Sigma, \overline{A}, \overline{E}, \overline{g}, \overline{k})$ that satisfies the Einstein-Yang-Mills constraint equations (given in Lemma \ref{TheconstraintequationsfortheEinstein-Yang-Mills system}), we have constructed a “new” hyperbolic initial data set (in Subsections \ref{IntialdataforYangMills} and \ref{constructioninitialdataformetric}) in consistency with the Lorenz and wave coordinates conditions, for our non-linear coupled wave equations  \eqref{ThewaveequationontheYangMillspotentialwithhyperbolicwaveoperatorusingingpartialderivativesinwavecoordinates} and \eqref{Thewaveequationonthemetrichwithhyperbolicwaveoperatorusingingpartialderivativesinwavecoordinates}, and we show that this will give rise to a solution for which the original Einstein-Yang-Mills system will guarantee to us that the Lorenz and wave conditions are null: the Einstein-Yang-Mills system will imply that the gauges propagate by non-linear wave equations, with null initial data precisely because on one hand, we constructed our hyperbolic initial data in consistency with the Lorenz and wave coordinates gauges, and because on the other hand, we started with an initial data the solves the Einstein-Yang-Mills constraints.

Thus, we have showed that if a solution to the Einstein-Yang-Mills system \eqref{EYMsystemforintro} is in the Lorenz and wave coordinates gauges, then it must solve the non-linear hyperbolic system in \eqref{ThewaveequationontheYangMillspotentialwithhyperbolicwaveoperatorusingingpartialderivativesinwavecoordinates} and \eqref{Thewaveequationonthemetrichwithhyperbolicwaveoperatorusingingpartialderivativesinwavecoordinates} (given in Lemma \ref{EYMsystemashyperbolicPDE}), and we showed that for such a solution, the Einstein-Yang-Mills system implies that it is indeed in the Lorenz and wave coordinates gauges, and therefore, that it is also a solution for the original Einstein-Yang-Mills system (in the Lorenz and wave coordinates gauges). Consequently, solving the Einstein Yang-Mills system \eqref{EYMsystemforintro} in the Lorenz and wave coordinates gauges, with the initial data satisfying the Einstein-Yang-Mills constraints, is equivalent to solving the non-linear hyperbolic system (given in Lemma Lemma \ref{EYMsystemashyperbolicPDE}) with an initial data constructed in \eqref{initialdatadforzerothderivativeAsigma}, \eqref{constructionopatialtimeAonsigma}, \eqref{constructionoggsigma}, \eqref{constructionopatialtgonsigma}, in consistency with the Lorenz and wave coordinates gauges and with the Einstein-Yang-Mills constraints.

In fact, in order for us to construct a solution to the Yang-Mills equations \eqref{ThegaugecovariantdivergenceoftheYangMillscurvatureisequaltozero}, that is in the Lorenz gauge for the Yang-Mills potential, given the initial data set, all what we need to do is to solve the wave equation \eqref{ThewaveequationontheYangMillspotentialwithhyperbolicwaveoperatorusingingpartialderivativesinwavecoordinates}, that reads the equation that we show in Lemma \ref{equationtodefinepropgationofAtorespectLorenzgauge}. Then, Lemma \ref{LemmathatgaranteeshowtheLorenzgaugewillbepreserved} will tell to us that the original Yang-Mills equations \eqref{ThegaugecovariantdivergenceoftheYangMillscurvatureisequaltozero} implies that the Lorenz gauge will propagate in time $t$ through a non-linear wave equation (see \eqref{nonlinearwaveequationontheLorenzGaugeCondition}), and that the initial conditions for the propagation of the Lorenz gauge are null thanks to the fact that we started with a hyperbolic initial data that is both consistent with the Lorenz gauge and that satisfies the Yang-Mills constraints \eqref{theYangMillsconstraintforthepotentialandelectricchargeonsigma}. 

Also, in order for us to construct a solution to the Einstein-Yang-Mills equations \ref{TheEinsteinYangMillsequationsasEinsteinwrotethem}, that is in the wave coordinates gauge, given the initial data set, all what we need to do is to solve the wave equation on the metric \eqref{Thewaveequationonthemetrichwithhyperbolicwaveoperatorusingingpartialderivativesinwavecoordinates}. Now, thanks to Lemma \eqref{nonlinearwaveequationonGaforthepropagationofthewavecoordicondition}, we know that the Einstein-Yang-Mills equations \eqref{EYMsystemforintro} will imply, for such a solution, a system of non-linear wave equations on the wave coordinate condition (see \eqref{systemofnonlinearwaveeqonthewavecoordicondition}), a condition that we can write as a tensor (see Remark \ref{remarkondefinitionofGa} and \eqref{definitionofGainwavecoordinates}). Then, Lemma \ref{initialconditionsonthederivativesofGa} will tell us that the derivatives on the initial slice $\Sigma$ of the tensor that gives the wave coordinate gauge are null because the initial data satisfies the Einstein-Yang-Mills constraints \eqref{theEinsteinYangMillsconstriantsonSigmafirst} and \eqref{theEinsteinYangMillsconstriantsonSigmasecond}. Also, we constructed the hyperbolic initial data in consistency with the wave coordinates gauge and therefore the zeroth derivative of the tensor that gives the wave coordinate is null. Thus, the Einstein-Yang-Mills equations \eqref{EYMsystemforintro} will read exactly a non-linear wave equation on the propagation of the wave coordinates gauge, with null initial data.

Consequently, we have proved Corollary \ref{thefinalconstructiongivenconstriantsontheinitialdatatogaranteeLorenzandharmonicgauges}, that gives us a way to solve the Einstein-Yang-Mills system in both the Lorenz and wave coordinate gauges, by solving non-linear coupled wave equations on both the Yang-Mills potential and on perturbations of the metric, provided an initial data set that satisfies the Einstein-Yang-Mills constraints (given in Lemma \ref{TheconstraintequationsfortheEinstein-Yang-Mills system}).

However, instead of working in fixed wave coordinates, we can write the equations more geometrically, by viewing them as a system of tensorial wave equations, by defining covariant derivatives with respect to the metric $m$\;, and not $g$\;, which we write as $\derm$\; (definied in Definition \ref{definitionoftheMinkowskicovaruiantderivative}), and then look at the corresponding tensorial covariant wave operator we are interested in, which is $ g^{\a\b} \derm_\a \derm_\b $\;.

This leads in the Lorenz gauge, to a coupled system of tensorial covariant hyperbolic operators, with coupled non-linear sources, where this time, the fact that we privilege wave coordinates condition is hidden in the definition of the tensorial covariant derivative  $\derm$\;. It is precisely the study of the structure of these non-linear source terms, of both $ g^{\a\b} \derm_\a \derm_\b A_\si $ and $ g^{\a\b} \derm_\a \derm_\b h_{\mu\nu} $\;, that would allow us to make a statement about the dispersive estimates of the fields.

In the Lorenz gauge, we get the following system of coupled covariant tensorial wave equations on both $A$ and $h$ (see Lemma \ref{structureofthesourcetermsofthewaveequationonpoentialandmetricinLorenzandwavecoordinforEinsteinYangMillssystem}), where we lower and higher indices with respect to the metric $m$\;, where wave coordinates are hidden in the definition of $\derm$ (being the covariant derivative of the Minkowski space-time, that is defined to be Minkowski in wave coordinates),
  \bea
  \notag
 && g^{\la\mu} \derm_{\la}   \derm_{\mu}   A_{\si}     \\
   \notag
 &=&    ( \derm_{\si}  h^{\a\mu} )  \cdot (  \derm_{\a}A_{\mu} )  \\
    \notag
 &&   +   \frac{1}{2}    \big(   \derm^{\mu} h^{\nu}_{\, \, \, \si} +  \derm_\si h^{\nu\mu} -   \derm^{\nu} h^{\mu}_{\,\,\, \si}  \big)   \cdot  \big( \derm_{\mu}A_{\nu} -  \derm_{\nu}A_{\mu}  \big) \\
 \notag
&&     +   \frac{1}{2}    \big(   \derm^{\mu} h^{\nu}_{\, \, \, \si} +  \derm_\si h^{\nu\mu} -   \derm^{\nu} h^{\mu}_{\,\,\, \si}  \big)   \cdot   [A_{\mu},A_{\nu}] \\
 \notag
 && -  \big(  [ A_{\mu}, \derm^{\mu} A_{\si} ]  +    [A^{\mu},  \derm_{\mu}  A_{\si} - \derm_{\si} A_{\mu} ]    +    [A^{\mu}, [A_{\mu},A_{\si}] ]  \big)  \\
 \notag
  && + O( h \cdot  \derm  h \cdot  \derm A) + O( h \cdot  \derm h \cdot  A^2) + O( h \cdot  A \cdot \derm A) + O( h \cdot  A^3) \, ,\\
  \eea
 and
  \bea
\notag
 && g^{\alpha\beta}\derm_\alpha \derm_\beta h_{\mu\nu} \\
 \notag
  &=& P(\derm_\mu h,\derm_\nu h)  +  Q_{\mu\nu}(\derm h,\derm h)   + G_{\mu\nu}(h)(\derm h,\derm h)  \\
\notag
 &&   -4     <   \derm_{\mu}A_{\b} - \derm_{\b}A_{\mu}  ,  \derm_{\nu}A^{\b} - \derm^{\b}A_{\nu}  >    \\
 \notag
 &&   + m_{\mu\nu }       \cdot  <  \derm_{\a}A_{\b} - \derm_{\b}A_{\a} , \derm_{\a} A^{\b} - \derm^{\b}A^{\a} >   \\
 \notag
&&           -4  \cdot  \big( <   \derm_{\mu}A_{\b} - \derm_{\b}A_{\mu}  ,  [A_{\nu},A^{\b}] >   + <   [A_{\mu},A_{\b}] ,  \derm_{\nu}A^{\b} - \derm^{\b}A_{\nu}  > \big)  \\
\notag
&& + m_{\mu\nu }    \cdot \big(  <  \derm_{\a}A_{\b} - \derm_{\b}A_{\a} , [A^{\a},A^{\b}] >    +  <  [A_{\a},A_{\b}] , \derm^{\a}A^{\b} - \derm^{\b}A^{\a}  > \big) \\
\notag
 &&  -4     <   [A_{\mu},A_{\b}] ,  [A_{\nu},A^{\b}] >      + m_{\mu\nu }   \cdot   <  [A_{\a},A_{\b}] , [A^{\a},A^{\b}] >  \\
 \notag
     && + O \big(h \cdot  (\derm A)^2 \big)   + O \big(  h  \cdot  A^2 \cdot \derm A \big)     + O \big(  h   \cdot  A^4 \big)  \,  , \\
\eea

where $P$\;, $Q$ and $G$ are defined in \eqref{definitionofthetermbigPinsourcetermsforeinstein}, \eqref{definitionofthetermbigQinsourcetermsforeinstein} and \eqref{definitionofthetermbigGinsourcetermsforeinstein}, and where here, the notation $O$\;, for the zeroth Lie derivative of the given tensors, is defined in Definition \ref{definitionofbigOonlyforAandhadgradientofAandgardientofh}, which is a somewhat different notation than the one we use for the Lie derivatives of these tensors in Definition \ref{definitionofbigOforLiederivatives} (see Remark \ref{remarkaboutthedifferenceofdefinitionofbigOwhenwetalkaboutLiederivatives}). As far as this notation is concerned, let us point out in this whole paper, and in the ones that follow on these mathematical problems, when we write partial derivatives, namely $\pa$\;, this means that we fixed already the system of coordinates to be the wave coordinates, and when we write $\derm$, it is a a different way to see this geometrically, as tensors, which is sometimes useful for computation. Hence, the definition of the norms is also given along those lines, where there is either explicit, or implicit, choice of wave coordinates (see Definition \ref{definitionoftheMinkowskicovaruiantderivative}, and see \eqref{equationofthedefinitionofthenormofgradienteitheraspartialorcovariant}).

For certain systems of non-linear hyperbolic equations, such as this one at hand, one can prove that a local solution exists, under certain regularity assumptions on the initial data. One can also prove, as it is well-known, that such a local solution either exists for all time, or blows up in finite time if a higher order energy norm (such as the one defined in \eqref{definitionoftheenergynorm}) blows up. In other words, one has a global solution for all time if a higher order energy norm stays finite. Thus, proving the finiteness for all time of such a higher order energy norm, \eqref{definitionoftheenergynorm}, allows one to conclude that the local solution is in fact a global one.

An important feature of this higher order energy norm is that for non-linear hyperbolic differential equations which are locally well-posed, the time dependance of this higher order energy is continuous: it depends continuously on time. Thus, one looks at the maximal time such that the local solution’s higher energy norm is bounded by a certain constant, say $C$, and if one then proves that one has actually a better bound, say $\frac{C}{2}$, then this proves that the maximal time for which the higher order energy is bounded was in fact not maximal, or differently speaking, it is in fact infinity for the time, i.e. the higher order energy is bounded indeed for all time, and therefore does not blow up. Hence, this proves that the local solution of the locally well-posed non-linear hyperbolic equation, is in fact a global solution for all time (see Subsection \ref{Thebootstrapargumentandnotationonboundingtheenergy}). Such an argument is called a continuity argument or a bootstrap argument: one starts with an a priori estimate on the higher order energy (see Subsection \ref{Theassumptionforthebootstrapandthenotation}) and then one improves this a priori estimate and therefore one concludes, given the fact that the time dependance is continuous, that the a priori estimate is in fact a true estimate on this higher order energy norm. 
 
On the top of that, if one bounds this higher order energy using a bootstrap argument, as described above, or whatever argument that works, one can then use the Klainerman-Sobolev inequality (see \eqref{wksi}), that tells us that if we bound a certain weighted higher order norm (if this is the norm that was being used in the bootstrap argument), then one gets also pointwise decay in time of the solution, with a decay rate that depends on the space dimension $n$ of the space-time. Hence, this way, one can also get global dispersive estimates.

To effectively run such a bootstrap argument (see Subsection \ref{Thebootstrapargumentandnotationonboundingtheenergy}), it all depends on improving the a priori estimate on this weighted higher order energy norm. For this, one has to study the non-linear structure of the source terms in the non-linear hyperbolic equation (see Lemmas \ref{usingbootstraoptoshowstructorforsourcesonYangMillspotentialA} and \ref{boostraptostudystructureofsourcetermsonthemetricperturbationh}), as well as the structure of the wave operator itself: it is not the flat wave operator, but it is a wave operator that depends on the solution itself (see Lemma \ref{structureofthesourcetermsofthewaveequationonpoentialandmetricinLorenzandwavecoordinforEinsteinYangMillssystem}).

More precisely, the higher order energy norm in question is a certain norm of the gradient of the Lie derivatives in the direction of the Minkowski vector fields of the local solution (see \eqref{definitionoftheenergynorm}). Since we are talking about a wave operator that depends on the solution itself, commuting the wave operator with the Lie derivatives in the direction of the Minkowski vector fields, gives a structure that depends on the wave operator itself and on the solution (see Lemma \ref{thecommutatortermusingthebootstrap}). Also, such a commutation gives a quantity that depends on the Lie derivatives in the direction of the Minkowski vector fields of the source terms of the wave operator. Thus, one has to study these Lie derivatives of the source terms of the non-linear hyperbolic wave equation in order to bound the higher order energy norm. 

Speaking of bounding the higher order energy norm, one applies a conservation law (see Lemma \ref{Conservationlawwithweightforwaveequations}), that is nothing else but the divergence theorem applied to suitably chosen tensors so as the boundary terms would look like the energy norm that we would like to bound (see Lemma \ref{howtogetthedesirednormintheexpressionofenergyestimate}), however the divergence theorem generates a space-time integral that one would then need to control. This space-time integral involves the source terms of the non-linear wave equation (see Lemma \ref{Theenerhyestimatefornequalfive}). In other words, in order to control the higher order energy norm in question (as in Lemma \ref{the main energy estimate forAandhforn4}), one needs to control the source terms of non-linear hyperbolic equation satisfied by the Lie derivatives of the solutions, which are nothing else but the Lie derivatives of the source terms of the original equation and the Lie derivatives of the structure of the wave equation itself which we would call the commutator term (see Lemma \ref{estimatingtheequareofthesourcetermsusingbootsrapassumption}).

However, in order to close the argument, one needs to improve the bound on the higher order energy without using even more higher order energy for which a bound would also be assumed -- such an argument obviously does not close, as the bound on this even higher order energy could then not be improved. For this, one controls the Lie derivatives of the source terms using the fact that it is a product of Lie derivatives: one does not need to control them all, but one needs to control one factor in the product, as long as the control on that factor is good enough (see Lemmas \ref{squareofthesourcetermsforAforngeq5withdeltaequalzeroforenergyestimate} and \ref{squareofthesourcetermsforthemetrichforngeq5withdeltaequalzeroforenergyestimate}). With that control, one can then look forward to establishing a Grönwall inequality on the higher order energy norm (see Lemmas \ref{estimateonthesoacetimeintegralforLiederivativesZofthesourcetermsAusefulforagronwallinequalityinenergyestimate} and \ref{estimateonthesoacetimeintegralforLiederivativesZofthesourcetermsformetrichusefulforagronwallinequalityinenergyestimate}).

The celebrated Grönwall lemma tells us that if the factor in the integrand is decaying fast enough, in such a way that it is integrable, then the quantity in question, which is here the higher order energy norm, will be bounded. If the initial conditions are small enough, then the bound on the energy will then be improved from what was initially assumed and used in the argument (see Lemma \ref{TheGronwalltypeinequalityonenergyforngeq5}). Thus, using the continuity of the growth of the energy, this would close the bootstrap argument (see Proposition \ref{Thetheoremofglobalstabilityanddecayforngeq 5}). 

In the case of higher dimensions $n\geq 4$, the Klainerman-Sobolev inequality gives a pointwise decay that is fast enough to be integrable in time, and hence one could close a bootstrap argument that concludes that the higher order energy will remain bounded for all time. In the case of $n\geq 5$, using an energy estimate associated to wave equations, combined with the Klainerman-Sobolev inequality, one could get a suitable Grönwall inequality everywhere: for an energy norm defined as an integral on the whole space slice. This allows one to conclude global stability of the Minkowski space-time. In the case of $n=4$, there is a lack of integrability for a term in the interior region: inside an outgoing light cone, where one could get concentration of energy. Thus, in the case of $n=4$, one defines the energy to be only an integral on the exterior: exterior to the outgoing light cone. With this exterior notion, that we will reat in the paper that follows, one could then get an integrable factor in the Grönwall inequality and thereby conclude exterior stability of the Minkowski space-time under perturbations governed by the coupled Einstein-Yang-Mills equations.

In this paper, we will prove the following theorem.

\subsection{The statement of the theorem}\

\begin{theorem}
Let $n \geq 5$\;. Assume that we are given an initial data set $(\Sigma, \overline{A}, \overline{E}, \overline{g}, \overline{k})$ for \eqref{EYMsystemforintro}
\;. We assume that $\Sigma$ is diffeomorphic to $\R^n$\;. Then, there exists a global system of coordinates $(x^1, ..., x^n) \in \R^n$ for $\Sigma$\;. We define
\bea
r := \sqrt{ (x^1)^2 + ...+(x^n)^2  }\;.
\eea
Furthermore, we assume that the data $(\overline{A}, \overline{E}, \overline{g}, \overline{k}) $ is smooth and asymptotically flat. 

Let $\de_{ij}$ be the Kronecker symbol and let $\overline{h}_{ij} $ be defined in this system of coordinates $x^i$\;, by
 \bea
\overline{h}_{ij} := \overline{g}_{ij} - \de_{ij} \; .
\eea
We then define the weighted $L^2$ norm on $\Sigma$\;, namely $\overline{\E}_N$\;, for $\ga > 0$\;, by
 \bea\label{definitionoftheenergynormforinitialdata}
 \notag
&&\overline{\E}_N \\
\notag
&:=&  \sum_{|I|\leq N} \big(   \| (1+r)^{1/2 + \ga + |I|}   \overline{D} (  \overline{D}^I  \overline{A}    )  \|_{L^2 (\Sigma)} +  \|(1+r)^{1/2 + \ga + |I|}    \overline{D}  ( \overline{D}^I \overline{h}   )  \|_{L^2 (\Sigma)} \big) \\
\notag
&:=&  \sum_{|I|\leq N}      \big(   \sum_{i=1}^{n}  \|(1+r)^{1/2 + \ga + |I|}     \overline{D} (  \overline{D}^I  \overline{A_i}    )  \|_{L^2 (\Sigma)} +  \sum_{i, j =1}^{n}  \|(1+r)^{1/2 + \ga + |I|}     \overline{D}  ( \overline{D}^I \overline{h}_{ij}   )  \|_{L^2 (\Sigma)} \big) \; , \\
\eea
where the integration is taken on $\Sigma$ with respect to the Lebesgue measure $dx_1 \ldots dx_n$\;, and where $\overline{D} $ is the Levi-Civita covariant derivative associated to the given Riemannian metric $\overline{g}$\;.

We also assume that the initial data set $(\Sigma, \overline{A}, \overline{E}, \overline{g}, \overline{k})$ satisfies the Einstein-Yang-Mills constraint equations, namely
\bea
\notag
  \mathcal{R}+ \overline{k}^i_{\,\, \, i} \overline{k}_{j}^{\,\,\,j}  -  \overline{k}^{ij} \overline{k}_{ij}   &=&    \frac{4}{(n-1)}   < \overline{E}_{i}, \overline{E}^{ i}>   \\
 \notag
 && +  < \overline{D}_{i}  \overline{A}_{j} - \overline{D}_{j} \overline{A}_{i} + [ \overline{A}_{i},  \overline{A}_{j}] ,\overline{D}^{i}  \overline{A}^{j} - \overline{D}^{j} \overline{A}^{i} + [ \overline{A}^{i},  \overline{A}^{j}] >  \;  ,\\
 \notag
\overline{D}_{i} \overline{k}^i_{\,\,\, j}    - \overline{D}_{j} \overline{k}^i_{\, \,\,i}  &=&  2 < \overline{E}_{i}, \overline{D}_{j}  \overline{A}^{i} - \overline{D}^{i} \overline{A}_{j} + [ \overline{A}_{j},  \overline{A}^{i}]  >  \;  ,\\
\notag
\overline{D}^i \overline{E}_{ i} + [\overline{A}^i, \overline{E}_{ i} ]  &=& 0  \;  . \\
\eea
For any $n \geq 5$\;, and for any $N \geq 2 \lfloor  \frac{n}{2} \rfloor  + 2$\;, there exists a constant $ \overline{c} (N, \ga)$ depending on $N$ and on $\ga$\;, such that if 
\bea\label{Assumptiononinitialdataforglobalexistenceanddecay}
\overline{\E}_N \leq   \overline{c} ( N, \ga) \; ,
\eea
then there exists a solution $(\cal{M}, A, g)$ to the Cauchy problem for the fully coupled Einstein-Yang-Mills system \eqref{EYMsystemforintro} in the future of $\Sigma$ converging to the null Yang-Mills potential and to the Minkowski space-time in the following sense: if we define the metric $ m_{\mu\nu}$ to be the Minkowski metric in wave coordinates $(x^0, x^1, \ldots, x^n)$\, and define $t = x^0$\;, and if we define in this system of wave coordinates 
\bea
h_{\mu\nu} := g_{\mu\nu} - m_{\mu\nu}  \;  ,
\eea
then, for $\overline{h}^1_{ij} $ and $\overline{A}_i$ decaying sufficiently fast as exhibited in Proposition \ref{Thetheoremofglobalstabilityanddecayforngeq 5}, we have the following estimates on $h$\;, and on $A$ in the Lorenz gauge, for the norm constructed using wave coordinates (see Subsection \ref{subsectiondefinitionofthenorms}), by taking the sum over all indices in wave coordinates. That there exists a constant $E(N)$\;, that depends on $\overline{c} (N, \ga)$\;, such that for all $|I| \leq N -  \lfloor  \frac{n}{2} \rfloor  - 1$\;, we have

\bea
 \notag
&&  \sum_{\mu= 0}^{n} |\derm  ( \Lie_{Z^I}  A_{\mu} ) (t,x)  |     +   \sum_{\mu, \nu = 0}^{n}  |\derm  ( \Lie_{Z^I}  h_{\mu\nu} ) (t,x)  |   \\
 \notag
    &\les& \begin{cases}   \frac{E ( N ) }{(1+t+|r-t|)^{\frac{(n-1)}{2}} (1+|r-t|)^{1+\gamma}}  \; ,\quad\text{when }\quad r-t  >0 \;  ,\\
           \notag
        \frac{E ( N )  }{(1+t+|r-t|)^{\frac{(n-1)}{2}}(1+|r-t|)^{\frac{1}{2} }}   \; ,\quad\text{when }\quad r-t  <0 \;  , \end{cases} \\
      \eea
and
 \bea
 \notag
  \sum_{\mu= 0}^{n}  |\Lie_{Z^I} A_{\mu} (t,x)  | +   \sum_{\mu, \nu = 0}^{n}  |\Lie_{Z^I}  h_{\mu\nu} (t,x)  |  &\les& \begin{cases}   \frac{ c (\gamma) \cdot E ( N )  }{(1+t+|r-t|)^{\frac{(n-1)}{2}} (1+|r-t|)^{\gamma}}  \; ,\quad\text{when }\quad r-t >0  \;  ,\\
   \notag
        \frac{E ( N )  \cdot (1+|r-t|)^{\frac{1}{2} }  }{(1+t+|r-t|)^{\frac{(n-1)}{2}}}   \; ,\quad\text{when }\quad r-t <0  \; , \end{cases} \\
      \eea
            where $Z^I$ are the Minkowski vector fields (see Subsection \ref{TheMinkwoskivectorfieldsdefinition}).
            
     In particular, the gauge invariant norm on the Yang-Mills curvature decays as follows, for all $|I| \leq N -  \lfloor  \frac{n}{2} \rfloor  - 1$\;,
 \bea
 \notag
&& \sum_{\mu, \nu = 0}^{n}  |\Lie_{Z^I} F_{\mu\nu}  (t,x) |  \\
 \notag
&\les& \begin{cases}   \frac{E ( N ) }{(1+t+|r-t|)^{\frac{(n-1)}{2}} (1+|r-t|)^{1+\gamma}} + \frac{ c (\gamma) \cdot E ( N )  }{(1+t+|r-t|)^{(n-1)} (1+|r-t|)^{2\gamma}} \;,\quad\text{when }\quad r-t >0  \;  ,\\
           \notag
        \frac{E ( N )  }{(1+t+|r-t|)^{\frac{(n-1)}{2}}(1+|r-t|)^{\frac{1}{2} }} +   \frac{E ( N )  \cdot (1+|r-t|)  }{(1+t+|r-t|)^{(n-1)}}  \; ,\quad\text{when }\quad r-t <0   \;  . \end{cases}  \\
      \eea
      
      Furthermore, if one defines $w$ as follows (see Definition \ref{defw}), 
\bea
w(q):=\begin{cases} (1+|r-t|)^{1+2\gamma} \quad\text{when }\quad r-t>0 \;, \\
         1 \,\quad\text{when }\quad r-t<0 \; , \end{cases}
\eea
and if we define $\Sigma_t$ as being the time evolution in wave coordinates of $\Sigma$\;, then for all time $t$\;, we have
\bea\label{theboundinthetheoremonEnbyconstantEN}
\notag
\E_{N} (t) &:=&  \sum_{|J|\leq N} \big( \|w^{1/2}   \derm ( \Lie_{Z^J} h   (t,\cdot) )  \|_{L^2(\Sigma_t)} +  \|w^{1/2}   \derm ( \Lie_{Z^J}  A   (t,\cdot) )  \|_{L^2 \Sigma_t) } \big) \\
&\leq& E(N) \; .
\eea
     
More precisely, for any constant $E(N)$\;, there exist two constants, a constant $c_1$ that depends on $\ga > 0$ and on $n \geq 5$\;, and a constant $c_2$ (to bound $ \overline{\E}_N (0)$ defined in \eqref{definitionoftheenergynormforinitialdata}), that depends on $E(N)$\;, on $N \geq 2 \lfloor  \frac{n}{2} \rfloor  + 2$ and on $w$ (i.e. depends on $\gamma$), such that if
\bea\label{AssumptiononinitialdataforcertainfirstminimalLiederivativesglobalexistenceanddecay}
 \overline{\E}_{ ( \lfloor  \frac{n}{2} \rfloor  +1)} (0)  \leq c_1(\ga, n ) \; ,
\eea
and if
\bea\label{Assumptiononinitialdataforglobalexistenceanddecay}
 \overline{\E}_N (0) \leq c_2 (E(N), N, \ga) \; ,
\eea
then, we have for all time $t$\,, 
\bea\label{Theboundontheglobaleenergybyaconstantthatwechoose}
 \E_{N} (t) \leq E(N) \; .
\eea

\end{theorem}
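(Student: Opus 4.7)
The plan is to implement the bootstrap (continuity) scheme that the introduction already advertises. First I would invoke the corollary on the equivalence between the Einstein-Yang-Mills system in Lorenz and wave coordinate gauges and the coupled nonlinear wave system of Lemma \ref{EYMsystemashyperbolicPDE}: given $(\Sigma,\overline{A},\overline{E},\overline{g},\overline{k})$ satisfying the constraints, I produce a hyperbolic initial data set $(A_\Sigma,\pa_t A_\Sigma,g_\Sigma,\pa_t g_\Sigma)$ consistent with both gauges so that the propagation of the Lorenz tensor and the wave-coordinate tensor is governed by a homogeneous linear wave equation with vanishing Cauchy data and therefore is identically zero. The smallness hypothesis on $\overline{\E}_N$ translates into smallness of the weighted hyperbolic energy $\E_N(0)$. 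Standard local well-posedness then delivers a local smooth solution, and the blow-up criterion reduces the entire theorem to showing that $\E_N(t)$ stays bounded.

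Next I would bootstrap on the assumption $\E_N(t)\leq E(N)$ on a maximal time interval $[0,T^*)$. By continuity, $T^*>0$; the goal is to prove an improved estimate $\E_N(t)\leq E(N)/2$ on that interval, which by the continuity of $t\mapsto\E_N(t)$ forces $T^*=+\infty$. To improve the bound I would apply the weighted conservation law of Lemma \ref{Conservationlawwithweightforwaveequations} to $\Lie_{Z^I}A$ and $\Lie_{Z^I}h$ with weight $w$; after using Lemma \ref{howtogetthedesirednormintheexpressionofenergyestimate} on the boundary terms, this produces $\E_N(t)^2$ on the left and a space-time integral of Lie derivatives of the source terms together with a commutator contribution from $[g^{\alpha\beta}\derm_\alpha\derm_\beta,\Lie_{Z^I}]$ on the right.

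The heart of the argument is estimating this space-time integral. Using the explicit structure of the source terms of $g^{\la\mu}\derm_\la\derm_\mu A_\si$ and $g^{\a\b}\derm_\a\derm_\b h_{\mu\nu}$ displayed above (each nonlinearity is a product of at least two factors taken from $\derm A,\derm h, A,h$, with higher-order terms carrying extra copies of $h$ or $A$), I would distribute $\Lie_{Z^I}$ across the product and, on each term, place the factor of high order (order $|J|\geq \lfloor n/2\rfloor+1$) in $L^2$ to absorb it into $\E_N$, while placing the companion factor (order $|K|\leq N-\lfloor n/2\rfloor-1$) in $L^\infty$ via the Klainerman-Sobolev inequality \eqref{wksi} applied with the bootstrap bound. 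This yields a pointwise decay of order $(1+t+|r-t|)^{-(n-1)/2}$ (times $(1+|r-t|)^{-\gamma}$ in the exterior and $(1+|r-t|)^{1/2}$ in the interior) for the low-derivative factor, with an extra $(1+|r-t|)^{-1}$ whenever a $\derm$ is present. Multiplying this pointwise bound by the $L^2$ factor and integrating over $\Sigma_t$ via Cauchy-Schwarz gives an integrand of the form $f(t)\cdot\E_N(t)^2$ with $f(t)\sim (1+t)^{-(n-1)/2}$ up to harmless factors.

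The main obstacle, and the reason the proof works exactly for $n\geq 5$, is precisely the integrability of $f$. For $n\geq 5$ one has $(n-1)/2\geq 2>1$, so $f\in L^1([0,\infty))$ even when the weight $w\equiv 1$ in the interior region (so no exterior restriction is needed, in contrast to the $n=4$ companion paper). A Gr\"onwall-type inequality in the form of Lemma \ref{TheGronwalltypeinequalityonenergyforngeq5} then gives
\begin{equation*}
\E_N(t)^2 \;\leq\; \E_N(0)^2 \exp\!\Bigl(C\!\int_0^\infty f(s)\,ds\Bigr),
\end{equation*}
and choosing $c_2(E(N),N,\gamma)$ in \eqref{Assumptiononinitialdataforglobalexistenceanddecay} small enough that the right-hand side is at most $E(N)^2/4$ improves the bootstrap; the separate smallness $c_1(\gamma,n)$ for $\overline{\E}_{\lfloor n/2\rfloor+1}$ is what is needed to guarantee the pointwise decay used inside the bootstrap itself. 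Once \eqref{theboundinthetheoremonEnbyconstantEN} is established for all $t$, the pointwise decay statements for $\Lie_{Z^I}A$, $\Lie_{Z^I}h$ and, via $F=\derm A + [A,A]$, for $\Lie_{Z^I}F$ follow directly from Klainerman-Sobolev, giving the two stated regimes $r-t>0$ and $r-t<0$ with the rates written in the theorem.
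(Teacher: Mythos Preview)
Your outline is the same bootstrap scheme the paper runs, and the key observation---that for $n\geq 5$ the Klainerman--Sobolev decay $(1+t+|q|)^{-(n-1)/2}$ is integrable in $t$ even in the interior, so no null-structure refinement is needed---is exactly the mechanism of Proposition~\ref{Thetheoremofglobalstabilityanddecayforngeq 5}.

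There is, however, one ingredient you pass over that the paper does use explicitly: when you ``place the factor of high order in $L^2$ to absorb it into $\E_N$,'' this only works directly if that factor carries a $\derm$. Several of the source terms (e.g.\ $A\cdot\derm A$, $A^3$, $h\cdot(\derm h)^2$) and the commutator contribution produce, after distributing $\Lie_{Z^I}$, a high-order factor of the form $\Lie_{Z^K}A$ or $\Lie_{Z^K}h$ \emph{without} a derivative, and $\|\Lie_{Z^K}A\|_{L^2_w}$ is not part of $\E_N$. The paper closes this by the weighted Hardy-type inequality (Corollary~\ref{HardytypeinequalityforintegralstartingatROm} and Lemma~\ref{Hardytypeinequalitywithintegralonthewholespaceslice}), which trades $\int_{\Sigma_t} \tfrac{w}{(1+|q|)^2}\,|\Lie_{Z^K}\Phi|^2$ for $\int_{\Sigma_t} w\,|\derm(\Lie_{Z^K}\Phi)|^2$ at the cost of a constant $c(\gamma)$; this is what makes Lemmas~\ref{estimateonthesoacetimeintegralforLiederivativesZofthesourcetermsAusefulforagronwallinequalityinenergyestimate} and \ref{estimateonthesoacetimeintegralforLiederivativesZofthesourcetermsformetrichusefulforagronwallinequalityinenergyestimate} go through and is also how the commutator estimate (Lemma~\ref{estimatethespace-timeintegralofthecommutatortermneededfortheenergyestimate}) converts the undifferentiated $|\Lie_{Z^J}H|$ factor into something controlled by $\E_{|I|}$. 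With that inequality inserted, your Gr\"onwall step is exactly the paper's Lemma~\ref{TheGronwalltypeinequalityonenergyforngeq5}.
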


\section{The Einstein-Yang-Mills equations}

\subsection{The set-up} \

We consider that we are given an arbitrary compact Lie group $G$\;, and a positive definite Ad-invariant scalar product, $<$ $,$ $>$\;, on the Lie algebra $\cal G$\;, associated to the Lie group $G$\;.

The unknowns that we are looking for are $(\cal{M}, A, \g)$\;, where $\cal M$ is an unknown manifold, where $A$ is an unknown Yang-Mills potential, which in a given system of coordinates $x^{\alpha}$, is a one-form $A$ on the manifold $\cal M$\:, valued in the Lie algebra $\cal G$, and can be written as
$$A  = A_{\alpha}dx^{\alpha} \, ,$$
and where $\g$ is an unknown Lorentzian metric.

Let $\der_{\alpha}$ be the Levi-Civita covariant derivative, that is a torsion free connection and compatible with the metric $\g$ that is part of the unknowns $(\cal{M}, A, \g)$ that we are looking for. We define the gauge covariant derivative of any arbitrary tensor $\Psi$ valued in the Lie algebra $\cal G$\;, as
\bea
\textbf{D}^{(A)}_{\alpha}\Psi := \der_{\alpha}\Psi + [A_{\alpha},\Psi] \, .
\eea
The Yang-Mills curvature, $F$, is a two-form defined by
\bea\label{expF}
F_{\a\b} = \der_{\a}A_{\b} - \der_{\b}A_{\a} + [A_{\a},A_{\b}]   \, .
\eea

\subsection{The field equations} \

In a given system of coordinates, we define
\beaa
e_\mu = \frac{\pa}{\pa x_\mu} \, .
\eeaa

Let $R_{\a \beta \gamma \delta}$ be the Riemann tensor that is 
\bea
R_{\a \beta \gamma \delta}&:=& g( e_{\alpha}, \der_{e_{\gamma}} \der_{e_{\delta}} e_{\beta} - \der_{e_{\delta}} \der_{e_{\gamma}} e_{\beta} - \der_{[e_{\gamma}, e_{\delta}]} e_{\beta} ) \, .
\eea

We are in fact looking for a (1+$n$)-dimensional globally hyperbolic Lorentzian manifold $(\cal M, \g)$\;, and a one-form $A$ defined on this manifold, which satisfy the Einstein-Yang-Mills equations, which are 
\bea
R_{ \mu \nu} - \frac{1}{2} g_{\mu\nu} R &=& 8\pi \cdot T_{\mu\nu} \, , \label{EYM}
\eea
where
\bea\label{definitionofstressenergymomenturmYangMillsmatter}
 T_{\mu\nu} =  \frac{1}{4 \pi} \cdot  ( < F_{\mu\b}, F_{\nu}^{\; \; \b}> - \frac{1}{4} g_{\mu\nu } < F_{\a\b },F^{\a\b } > ) \, ,
 \eea
where $R_{ \mu \nu}$ is the Ricci tensor, that is
\bea
R_{ \mu \nu} := R^{\alpha}_{\,\,\, \mu\alpha \nu} :=  g^{\a\si} R_{\si \mu\alpha \nu}  \, ,
\eea
and where the scalar of Ricci $R$ is given by
\bea
R := R_{ \mu}^{\,\;\; \mu} := g^{\a\si} R_{ \mu\si} \, .
\eea
Here, we have used the Einstein summation convention of lowering and highering indices with respect to the unknown background metric $\g$\;.

However, the expression of $F$ in terms of $A$, \eqref{expF}, leads to the Bianchi identities for the Yang-Mills curvature (see \cite{G1}), 
\bea\label{BianchiforYangMillscurvature}
\textbf{D}^{(A)}_{\a}F_{\mu\nu} + \textbf{D}^{(A)}_{\mu}F_{\nu\a} + \textbf{D}^{(A)}_{\nu}F_{\a\mu} = 0 \, . \label{eq:Bianchi}
\eea
Since $\der$ is the Levi-Civita covariant derivative, we have the Bianchi identities for the Riemann tensor
\bea
\der_{\a} R^{\ga}_{\,\,\, \b\mu\nu} + \der_{\mu} R^{\ga}_{\,\,\, \b\nu\a} + \der_{\nu} R^{\ga}_{\,\,\, \b\a\mu} = 0 \, . \label{eq:BianchiEinstein}
\eea
Contracting, we get
\beaa
\der_{\a} R^{\a}_{\,\,\, \b\mu\nu} + \der_{\mu} R^{\a}_{\,\,\, \b\nu\a} + \der_{\nu} R^{\a}_{\,\,\, \b\a\mu}  = \der_{\a} R^{\a}_{\,\,\, \b\mu\nu} - \der_{\mu} R_{\b\nu}  +  \der_{\nu} R_{\b\mu} = 0 \, .
\eeaa
Contracting again, we obtain
\beaa
 \der^{\a} R_{\a \b\mu}^{\,\,\, \,\,\;  \;\;\; \b}   - \der_{\mu} R_{\b}^{\,\;\;\b}  +  \der^{\b} R_{\b\mu} = 0 \, ,
\eeaa
which leads to
\beaa
\der^{\a} R_{\a\mu} - \der_{\mu} R  +  \der^{\b} R_{\b\mu} = 0 \, ,
\eeaa
and hence
\bea\label{fromthebianchiidentityfortheRiemanntensor}
2 ( \der^{\a} R_{\a\mu} - \frac{1}{2} \der_{\mu} R )  = 0 \, .
\eea
Therefore, 
\beaa
\der^{\mu} (R_{\mu\nu} - \frac{1}{2} \g_{\mu\nu}R ) = 0 \,  ,
\eeaa
which in its turn implies that
\bea\label{energymomentumtensordivergencefree}
\der^{\mu}  T_{\mu\nu} = 0 \, .
\eea
Using the Bianchi identities for the Yang-Mills curvature \eqref{BianchiforYangMillscurvature}, the fact that  $<$ $,$ $>$ is Ad-invariant, that the connection $\der$ is compatible with the metric $\g$\,, then the fact that the energy-momentum tensor is divergence free, \eqref{energymomentumtensordivergencefree}, leads to the following Yang-Mills equation (see \cite{G1}),
\bea\label{ThegaugecovariantdivergenceoftheYangMillscurvatureisequaltozero}
\textbf{D}^{(A)}_{\a}F^{\a\b} := \der_{\alpha} F^{\a\b}  + [A_{\alpha}, F^{\a\b} ]  = 0 \; . 
\eea

Now, contracting the left hand side of the Einstein-Yang-Mills equations, \eqref{EYM}, gives
\beaa
{R_{ \mu}}^{\mu} - \frac{1}{2} {g_{\mu}}^{\mu} R &=& {R_{ \mu}}^{\mu} - \frac{1}{2} g^{\mu\a} g_{\mu\a} R  \\
&=& R - \frac{(n+1)}{2} R  = \frac{(1-n)}{2}  R \, .
\eeaa
Thus, the full contraction of the Einstein-Yang-Mills equations leads to
\beaa
 \frac{(1-n)}{2} \cdot R &=& 8\pi \cdot {T_{\mu}}^{\mu}  \\
&=& 8\pi  \cdot \frac{1}{4 \pi} ( < F_{\mu\b}, F^{\mu\b} > - \frac{1}{4} {g_{\mu}}^{\mu }   < F_{\a\b },F^{\a\b } > )\\
&=& 2( < F_{\a\b}, F^{\a\b}> - \frac{(n+1)}{4} \cdot < F_{\a\b },F^{\a\b } > ) \\
&=&  2 \cdot  \frac{(3-n)}{4} \cdot < F_{\a\b },F^{\a\b } > \\
&=&  \frac{(3-n)}{2} \cdot <F_{\a\b },F^{\a\b } >  .
\eeaa
Therefore,
\bea\label{expressionofRicciscalarR}
R &=&  \frac{(3-n)}{(1-n)} \cdot  < F_{\a\b },F^{\a\b }> .
\eea
Consequently, the Einstein-Yang-Mills equations in (1+$n$)-dimensions, \eqref{EYM}, can be written as
\beaa
R_{ \mu \nu} - \frac{1}{2} g_{\mu\nu}   \frac{(3-n)}{(1-n)} \cdot  < F_{\a\b },F^{\a\b } > &=&  2 < F_{\mu\b}, F_{\nu}^{\; \; \b}> - \frac{1}{2} g_{\mu\nu } < F_{\a\b },F^{\a\b } >  \, ,
 \eeaa
which yields to
\bea\label{simpEYM}  
R_{ \mu \nu}  &=& 2 < F_{\mu\b}, F_{\nu}^{\;\;\b} > + g_{\mu\nu }  \frac{1}{(1-n)} \cdot < F_{\a\b },F^{\a\b } >\; .  
\eea

Finally, the Einstein Yang-Mills equations are given by the following system
\bea \label{EYMsystemofequationsongandA}  
\begin{cases} 
R_{ \mu \nu}  =& 2 < F_{\mu\b}, F_{\nu}^{\;\;\b} > +   \frac{1}{(1-n)} \cdot g_{\mu\nu } \cdot < F_{\a\b },F^{\a\b } >\; ,   \\
0 \;\;\;\, \,=&  \der_{\alpha} F^{\a\b}  + [A_{\alpha}, F^{\a\b} ]    \;, \\
F_{\a\b} =& \der_{\a}A_{\b} - \der_{\b}A_{\a} + [A_{\a},A_{\b}] \; .  \end{cases} 
      \eea

\section{The Cauchy problem and the constraints for the Einstein-Yang-Mills system }

\subsection{The Cauchy problem}\label{Cauchyproblem}

\begin{definition}\label{definitionoftheunitorthogonaltimelikevectorefieldhatt}
Since the unknown space-time $(\cal{M},\g)$ is  globally hyperbolic, we know by then that there exists a smooth vector field $\frac{\pa}{\pa t}$ such that $\cal{M}$ is foliated by Cauchy hypersurfaces $\Sigma_{t}$\;. The one-form $(dt)_{\mu}$ defines a vector field $ g^{\mu\nu} (dt)_{\nu}$ orthogonal to the hypersurfaces $\Sigma_{t}$\;. This vector field could then in turn be normalised to define a unit timelike vector $\hat{t}$ orthogonal to $\Sigma_{t}$\;.

In fact, let 
\bea\label{definitionoflapseN}
N = \big( - (dt)^{\mu} (dt)_{\mu} \big)^{\frac{1}{2}}= \big(- g_{\mu\nu} (dt)^{\mu} (dt)^{\nu} \big)^{\frac{1}{2}} \;.
\eea
 Then, at each point $p$ on $\Sigma_{t}$\;, we define
\bea
\hat{t}^{\nu} = \frac{1}{N} (dt)^{\nu} \; .
\eea
\end{definition}

\begin{definition}\label{definitionsosecondfundamentalform}
For $U, V$ vector fields tangent to $\Sigma_{t}$\,, let second fundamental form $k$ be defined by 
\bea
\label{secondfundamentalfrmdef}
k (U, V) &:=& g(\der_U \hat{t}, V) \; .
\eea
\end{definition}

We are looking for an unknown (1+$n$)-dimensional globally hyperbolic manifold $(\cal M,\g)$, therefore foliated by space-like hypersurfaces $\Sigma_t$\;, where $t$ is a smooth time function, and $\hat{t}$ is a timelike vector orthogonal to $\Sigma_{t}$ (defined in Definition \ref{definitionoftheunitorthogonaltimelikevectorefieldhatt}), and we are looking for an unknown Yang-Mills curvature $F$ on $\cal M$\;, which solve the Einstein-Yang-Mills equations \eqref{EYMsystemofequationsongandA} on $(\cal M, F, \g)$ . The Cauchy problem for the Einstein-Yang-Mills equations can be formulated as follows:

We consider that we are given an initial data set for the space-time, that is $(\Sigma, \overline{A}, \overline{E}, \overline{g}, \overline{k})$\;, which consists of an $n$-dimensional manifold $\Sigma$ with a Riemannian metric ${\overline{g}}$\;, and a symmetric two-tensor $k_{0}$\;, and consists of an initial data for the Yang-Mills fields which are two one-tensors ${\overline{A}} = {\overline{A}}_{i} dx^i$ and $E = E_{i}dx^i$ on $\Sigma$ valued in the Lie algebra $\cal G$\;. We are then looking for a (1+$n$)-dimensional Lorentzian manifold $\cal M$\;, with Yang-Mills curvature $F$\;, which solve the Einstein-Yang-Mills equations \eqref{EYMsystemofequationsongandA}, such that $\Sigma = \Sigma_{t_0}\subset \cal M$\;, and such that ${\overline{g}}$ is the restriction of $\g$ on $\Sigma_{t_0} \subset \cal M$\;, and $\overline{k}$ is the restriction of the second fundamental form $k$ on $\Sigma_{t_0} \subset \cal M$ (defined in Definition \ref{definitionsosecondfundamentalform} in \eqref{secondfundamentalfrmdef}), and such that $E_{i} = F_{\hat{t} i}$\;.

\begin{remark}
In this series of papers, we are going to impose that $ \{t, x_1, ..., x_n \}$ satisfy the wave coordinate condition (see Section \ref{The gauge conditions}), we are going to construct the Minkowski metric using this wave coordinates system (see Section \ref{LookingmetricperturbationMinkowskispacetime}), and we are going to impose that $A$ satisfies the Lorenz condition (see Section \ref{The gauge conditions}). We are going to construct $t$ such that $t = 0$ on $\Sigma \subset \cal M$\,. We also use the notation $x^0 = t$\;.

\end{remark}

\subsection{The constraint equations}\label{derivationofEinsteinYangMillsconstraintequationsforthecoupledsystem}\

The Einstein-Yang-Mills equations are overdetermined -- not any initial data set, $(\Sigma, \overline{A}, \overline{E}, \overline{g}, \overline{k})$\;, for the Einstein-Yang-Mills equations \eqref{EYMsystemofequationsongandA}, leads to a Cauchy development. In fact, the initial data must satisfy itself the Einstein-Yang-Mills equations. Let us explain in what follows.

\begin{definition}\label{definitionsforconstraints}
For $U, V$ vector fields tangent to $\Sigma_{t}$\,, let
\bea
\label{defrestrictedcovariantderivative}
D_{ U} V  &:=&   \der_U V - k (U,V) \, \hat{t} \; ,\\
\label{restrictedRiemanntensor}
\mathcal{R}^a_{\ bcd}&:=& g( e^a, D_{e_c} D_{e_d} e_b - D_{e_d} D_{e_c} e_b- D_{[e_c, e_d]} e_b ) \; .
\eea
\end{definition}

We will show the following well-known lemmas.

\begin{lemma}
The tensor $k$, the second fundamental form of the hypersurface $\Sigma_t$\,, is symmetric, that is for all $U, V \in T\Sigma_{t}$\,, we have $k (U, V) = k (V, U)$\,.
\end{lemma}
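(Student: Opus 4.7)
The plan is to use the fact that $\hat t$ is proportional to the gradient of a scalar function $t$, combined with the torsion-freeness of the Levi-Civita connection and the tangency condition on $U,V$.

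First I would rewrite $\hat t$ as a one-form. Since $\hat t^\nu = \frac{1}{N}(dt)^\nu = \frac{1}{N} g^{\nu\mu}\partial_\mu t$, lowering the index gives $\hat t_\nu = \frac{1}{N}\partial_\nu t$. Thus, up to the conformal factor $1/N$, the one-form $\hat t$ is exact. This is the key structural observation.

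Next I would compute the antisymmetric part of $\nabla\hat t$. A direct calculation gives
\begin{eqnarray*}
\nabla_\mu \hat t_\nu &=& \partial_\mu\!\left(\tfrac{1}{N}\right)\,\partial_\nu t \;+\; \tfrac{1}{N}\,\nabla_\mu \partial_\nu t.
\end{eqnarray*}
Because $\nabla$ is torsion-free and $t$ is a scalar function, the Hessian $\nabla_\mu\partial_\nu t$ is symmetric in $(\mu,\nu)$. Hence
\begin{eqnarray*}
\nabla_\mu \hat t_\nu - \nabla_\nu \hat t_\mu
&=& \partial_\mu\!\left(\tfrac{1}{N}\right)\partial_\nu t \;-\; \partial_\nu\!\left(\tfrac{1}{N}\right)\partial_\mu t.
\end{eqnarray*}

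Then I would contract with $U^\mu V^\nu$ and use the tangency condition. Since $U,V\in T\Sigma_t$ are tangent to a level set of $t$, we have $U(t)=V(t)=0$, i.e.\ $U^\mu \partial_\mu t=V^\nu\partial_\nu t=0$. Therefore
\begin{eqnarray*}
k(U,V)-k(V,U) &=& U^\mu V^\nu\bigl(\nabla_\mu \hat t_\nu - \nabla_\nu\hat t_\mu\bigr)\\
&=& U\!\left(\tfrac{1}{N}\right)V(t) \;-\; V\!\left(\tfrac{1}{N}\right)U(t) \;=\; 0,
\end{eqnarray*}
which gives the claimed symmetry $k(U,V)=k(V,U)$.

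There is no real obstacle here, the proof is essentially a two-line calculation once one recognizes that $\hat t$ is a conformal rescaling of an exact one-form. The only point to double-check is the sign convention for $N$ in \eqref{definitionoflapseN} and that $\hat t_\nu$ is indeed proportional to $\partial_\nu t$ (so that the Hessian argument applies), which follows immediately from $\hat t^\nu = N^{-1}g^{\nu\mu}(dt)_\mu$.
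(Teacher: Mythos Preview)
Your proof is correct but takes a genuinely different route from the paper's. The paper argues abstractly: from $g(\hat t, U)=0$ it gets $g(\nabla_V\hat t, U)=-g(\hat t,\nabla_V U)$, then uses torsion-freeness together with the integrability fact $[U,V]\in T\Sigma_t$ (so $g(\hat t,[U,V])=0$) to conclude $g(\hat t,\nabla_V U)=g(\hat t,\nabla_U V)$, hence $k(V,U)=k(U,V)$. Your approach instead exploits the explicit structure $\hat t_\nu=\tfrac{1}{N}\partial_\nu t$: the antisymmetric part of $\nabla\hat t$ reduces to $\partial_{[\mu}(1/N)\,\partial_{\nu]}t$ by symmetry of the Hessian, and this vanishes on tangent vectors because $U(t)=V(t)=0$. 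The paper's argument is the classical one valid for any embedded hypersurface with a unit normal; yours is more computational but makes transparent exactly how the level-set structure of $\Sigma_t$ enters. Both ultimately rest on torsion-freeness (yours via Hessian symmetry, the paper's via $\nabla_U V-\nabla_V U=[U,V]$).
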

\begin{proof}
We know that for all $U \in T\Sigma_{t}$, we have
\beaa
g(\hat{t}, U) = 0 \, .
\eeaa
This along with the fact that $\der_{V} g(\hat{t}, U) = 0 $, we obtain 
\beaa
0= V (g(\hat{t}, U) ) = g(\der_{V}\hat{t}, U)  +g(\hat{t}, \der_{V} U) \, .
\eeaa
Now, for all $U, V \in T\Sigma_{t}$, we have $[U, V] := UV-VU \in T\Sigma_{t}$. Thus,
\beaa
0 = g(\hat{t}, [U, V]) =  g(\hat{t}, \der_{U} V -\der_{V} U  ) 
\eeaa
(since the metric is torsion free). Consequently,
\beaa
g(\der_{V}\hat{t}, U)  = - g(\hat{t}, \der_{V} U) = - g(\hat{t}, \der_{U} V  ) \, .
\eeaa
However, since $0= U (g(\hat{t}, V) )$, we get
\beaa
g(\der_{V}\hat{t}, U)  =  g(\der_{U} \hat{t},  V  ) \, .
\eeaa
\end{proof}

\begin{lemma}
The connection $D$ is compatible with the metric $\g$, that is $D g = 0$\,.
\end{lemma}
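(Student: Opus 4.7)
The plan is to verify the defining identity of metric compatibility, namely $U\bigl(g(V,W)\bigr) = g(D_U V, W) + g(V, D_U W)$ for all vector fields $U, V, W$ tangent to $\Sigma_t$, by unwinding the definition of $D$ given in \eqref{defrestrictedcovariantderivative} and exploiting two facts that are already at our disposal: the Levi-Civita connection $\der$ of $\g$ satisfies $\der \g = 0$, and the unit timelike field $\hat t$ is $\g$-orthogonal to every vector tangent to $\Sigma_t$ (by construction in Definition \ref{definitionoftheunitorthogonaltimelikevectorefieldhatt}).

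Concretely, starting from the right-hand side and using \eqref{defrestrictedcovariantderivative}, I would write
\beaa
g(D_U V, W) + g(V, D_U W)
&=& g\bigl(\der_U V - k(U,V)\hat t,\, W\bigr) + g\bigl(V,\, \der_U W - k(U,W)\hat t\bigr) \\
&=& g(\der_U V, W) + g(V, \der_U W) - k(U,V)\, g(\hat t, W) - k(U,W)\, g(V, \hat t).
\eeaa
Because $V, W \in T\Sigma_t$, the two terms containing $\hat t$ vanish. What remains is precisely $g(\der_U V, W) + g(V, \der_U W) = U\bigl(g(V,W)\bigr)$ by $\der \g = 0$, which is exactly what is needed.

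Since this chain of equalities holds for all triples of vector fields tangent to $\Sigma_t$, it is the intrinsic statement that the induced connection $D$ on the hypersurface is compatible with the induced Riemannian metric $\overline{g}$, i.e.\ $D g = 0$ in the sense in which $D$ acts on tensors along $\Sigma_t$. I do not anticipate any real obstacle here; the only subtlety is the bookkeeping that the $\hat t$-component introduced by the second-fundamental-form correction in the definition of $D$ is precisely what is needed so that the orthogonality of $\hat t$ to $\Sigma_t$ kills the extra terms and leaves the compatibility identity intact. A companion remark would note that the symmetry of $k$ established in the preceding lemma is what will later pair with this compatibility to identify $D$ with the Levi-Civita connection of $\overline{g}$ on $\Sigma_t$, but that identification is not needed for the statement at hand.
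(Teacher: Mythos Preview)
Your proof is correct and is essentially the same as the paper's: both substitute the relation $D_U V = \der_U V - k(U,V)\hat t$ into the compatibility identity, invoke $\der g = 0$, and use the orthogonality of $\hat t$ to $\Sigma_t$ to eliminate the extra terms. The only cosmetic difference is that the paper runs the chain of equalities starting from $\pa_c\, g(e_a,e_b)$ and working toward $g(D_c e_a, e_b) + g(e_a, D_c e_b)$, whereas you run it in the reverse direction.
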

\begin{proof}

Since the metric $g$ is compatible with the metric, we have
\beaa
\pa_{c}  g( e_a,   e_b)  &=&    g( \der_{c} e_a,   e_b)  + g(  e_a,   \der_{c} e_b)  \\
 &=&    g( D_{c} e_a + k_{ca} \hat{t}   ,   e_b)  + g(  e_a,   D_{c} e_b + k_{cb} \hat{t} ) \\
  &=&    g( D_{c} e_a  ,   e_b)  + g(  e_a,   D_{c} e_b  ) \\
  && \text{(since $ \hat{t} $ is orthogonal to $\Sigma_{t})$. } \\
\eeaa
\end{proof}

\begin{lemma}\label{Gauss-Codazzi-equations}
We have the Gauss-Codazzi equations which say that for a spatial frame $ \{ e_a, e_b, e_c \} $ tangent to the hypersurface $\Sigma_{t}$\;, we have
\bea
\label{Gauss}
R^a_{\,\,\, bcd} &=&  \mathcal{R}^a_{\,\,\, bcd} - k^a_{\,\,\, d} k_{bc} + k^a_{\,\, \, c} k_{bd}  \; ,\\
R^a_{\,\,\,  \hat{t} cd} &=& D_{e_c}k^a_{\,\,\, d} - D_{e_d}k^a_{\,\,\, c}  \; .
\eea
\end{lemma}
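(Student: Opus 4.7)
The plan is to start from the defining expressions of $R^a_{\,\,\,bcd}$ and $\mathcal{R}^a_{\,\,\,bcd}$ in Definition \ref{definitionsforconstraints} and systematically substitute the splitting
\[
\der_U V = D_U V + k(U,V)\,\hat{t}
\]
for $U,V$ tangent to $\Sigma_t$, which is just a rearrangement of \eqref{defrestrictedcovariantderivative}. The idea is to rewrite the spacetime double covariant derivatives appearing in $R^a_{\,\,\,bcd}$ as intrinsic double covariant derivatives plus explicit extrinsic corrections in $k$, and then project onto either a tangent or the normal direction to extract the Gauss equation or the Codazzi equation, respectively.

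Before unwinding the double derivatives, I would record the auxiliary identity $\der_{e_c}\hat{t} = k_c{}^a e_a$: since $g(\hat{t},\hat{t}) = -1$ is constant, compatibility of $\der$ with $g$ forces $g(\hat{t},\der_{e_c}\hat{t}) = 0$, so $\der_{e_c}\hat{t}$ is tangent to $\Sigma_t$, and its components along a tangent frame are read off directly from \eqref{secondfundamentalfrmdef}. With this in hand I would expand
\[
\der_{e_c}\der_{e_d}e_b \;=\; \der_{e_c}\bigl(D_{e_d}e_b + k_{db}\hat{t}\bigr)
\;=\; D_{e_c}D_{e_d}e_b + k(e_c, D_{e_d}e_b)\,\hat{t} + \bigl(e_c(k_{db})\bigr)\hat{t} + k_{db}\, k_c{}^a e_a,
\]
and similarly with $c \leftrightarrow d$. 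For a coordinate frame $[e_c,e_d]=0$, so the last piece of \eqref{restrictedRiemanntensor} contributes symmetrically through $D_{[e_c,e_d]}e_b$ and disappears after antisymmetrization; otherwise one uses that $[e_c,e_d]$ is itself tangent to $\Sigma_t$.

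Projecting the antisymmetrized combination onto $e^a$ annihilates every $\hat{t}$-term and leaves precisely $\mathcal{R}^a_{\,\,\,bcd}$ plus the quadratic extrinsic contribution $k_{db}k_c{}^a - k_{cb}k_d{}^a$, which is the Gauss equation. Projecting instead onto $\hat{t}$ (using $g(\hat{t},\hat{t})=-1$) kills the tangential $D_{e_c}D_{e_d}e_b$ and $k_{db}k_c{}^a e_a$ pieces and leaves the combination $e_c(k_{db}) + k(e_c, D_{e_d}e_b) - (c \leftrightarrow d)$. Here the previous lemma (compatibility of $D$ with the induced metric on $\Sigma_t$) lets me reassemble this into $(D_{e_c}k)_{db} - (D_{e_d}k)_{cb}$, which after raising the $b$-index with the Riemannian metric gives exactly the stated Codazzi equation.

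The main obstacle will be the careful bookkeeping of the tangent/normal decomposition, in particular making sure that the partial derivative $e_c(k_{db})$ combines with the correction $k(e_c,D_{e_d}e_b)$ into a genuine intrinsic covariant derivative $(D_{e_c}k)(e_d,e_b)$, and that the $\der_{[e_c,e_d]}e_b$ term is handled consistently (it contributes to both the intrinsic $\mathcal{R}$ via its tangent part and to a $k([e_c,e_d],e_b)\hat{t}$ piece that must vanish in the tangential projection and cancel symmetrically in the normal projection). Everything else is standard manipulation of Levi-Civita covariant derivatives.
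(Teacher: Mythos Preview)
Your proposal is correct and follows essentially the same strategy as the paper: both expand the spacetime second covariant derivatives via the splitting $\der_U V = D_U V + k(U,V)\hat{t}$ together with the identity $\der_{e_c}\hat{t} = k_c{}^a e_a$, and then read off the tangential and normal parts. The only cosmetic difference is that for the Codazzi equation the paper computes $g(e^a,\der_{e_c}\der_{e_d}\hat{t} - \cdots)$ directly with $\hat{t}$ in the second slot, whereas you reuse the expansion of $\der_{e_c}\der_{e_d}e_b$ and project onto $\hat{t}$; this yields $R_{\hat{t}bcd}$ rather than $R^a_{\,\,\,\hat{t}cd}$, so when you carry it out you will need the antisymmetry $R^a_{\,\,\,\hat{t}cd} = -g^{ab}R_{\hat{t}bcd}$ together with the sign from $g(\hat{t},\hat{t})=-1$ (these two signs cancel) to land on the stated form.
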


\begin{proof}

\textbf{The Gauss equations:}\\

We will show the well-known proof of the following Gauss equations
\beaa
R^a_{\,\,\, bcd} &=&  \mathcal{R}^a_{\,\,\, bcd} - k^a_{\,\,\, d} k_{bc} + k^a_{\,\, \, c} k_{bd}  \; .
\eeaa

In fact, we have,
\beaa
R^a_{\,\,\, bcd} &=& g( e^a, \der_{e_c}  \der_{e_d} e_b -  \der_{e_d}  \der_{e_c} e_b -  \der_{[e_c, e_d]} e_b ) \\
&=& g( e^a,  \der_{e_c}  \der_{e_d} e_b) - g( e^a,  \der_{e_d}  \der_{e_c} e_b ) - g( e^a,  \der_{( \der_{c} e_{d} -  \der_{d} e_{c})} e_b ) \\
&& \text{(since $\der$ is a Levi-Civita connection and therefore torsion free) } \\
&=&   \pa_{e_c} g( e^a,   \der_{e_d} e_b) -  g( \der_{e_c} e^a,   \der_{e_d} e_b)  - \pa_{e_d} g( e^a,  \der_{e_c} e_b )\\
&&  +   g(  \der_{e_d} e^a,  \der_{e_c} e_b )   - g( e^a,  \der_{( \der_{c} e_{d} -  \der_{d} e_{c})} e_b ) \\
&=&   \pa_{e_c} g( e^a,   D_{e_d} e_b + k_{db} \hat{t} ) -  g( D_{e_c} e^a + {k_{c}}^{a} \hat{t} ,   D_{e_d} e_b + k_{db} \hat{t} )  -  \pa_{e_d} g( e^a,  D_{e_c} e_b + k_{cb} \hat{t} ) \\
&& +   g(  D_{e_d} e^a + {k_{d}}^{a} \hat{t} ,  D_{e_c} e_b + k_{cb} \hat{t} )    - g( e^a,  \der_{( D_{c} e_{d} + k_{cd}  \hat{t}  -  D_{d} e_{c} -  k_{dc}  \hat{t}  )} e_b ) \\
&=&   \pa_{e_c} g( e^a,   D_{e_d} e_b  ) -  g( D_{e_c} e^a + {k_{c}}^{a} \hat{t} ,   D_{e_d} e_b + k_{db} \hat{t} )  -  \pa_{e_d} g( e^a,  D_{e_c} e_b  ) \\
&& +   g(  D_{e_d} e^a +  {k_{d}}^{a}  \hat{t} ,  D_{e_c} e_b + k_{cb} \hat{t} )    - g( e^a,  \der_{( D_{c} e_{d} + k_{cd}  \hat{t}  -  D_{d} e_{c} -  k_{dc}  \hat{t}  )} e_b ) \\
&& \text{(where we used the fact that $\hat{t}$ is orthogonal to $\Sigma_{t}$) } 
\eeaa
\beaa
&=&   g(  D_{e_c} e^a,   D_{e_d} e_b  ) +  g( e^a,   D_{e_c}  D_{e_d} e_b  )  \\
&& -  g( D_{e_c} e^a ,   D_{e_d} e_b  )  -    g( D_{e_c} e^a  ,   k_{db} \hat{t} )  -  g( {k_{c}}^{a} \hat{t} ,   D_{e_d} e_b ) -    g(  {k_{c}}^{a} \hat{t} ,  k_{db} \hat{t} ) \\
&&  -  \pa_{e_d} g( e^a,  D_{e_c} e_b  ) \\
&& +   g(  D_{e_d} e^a  ,  D_{e_c} e_b )+   g(  D_{e_d} e^a  ,  k_{cb} \hat{t} )+   g(    {k_{d}}^{a}  \hat{t} ,  D_{e_c} e_b  )+   g(   {k_{d}}^{a}  \hat{t} ,   k_{cb} \hat{t} )  \\
&&  - g( e^a,  \der_{( D_{c} e_{d}   -  D_{d} e_{c}  )} e_b )  - g( e^a,  \der_{(  k_{cd}  \hat{t}  -  k_{dc}  \hat{t}  )} e_b ) \\
&& \text{(where we used the fact that also $D$ is compatible with the metric $g$) } \\
&=&   g(  D_{e_c} e^a,   D_{e_d} e_b  ) +  g( e^a,   D_{e_c}  D_{e_d} e_b  )    - g( e^a,  D_{e_d} D_{e_c} e_b  ) - g( e^a,  \der_{( D_{c} e_{d}   -  D_{d} e_{c}  )} e_b )  \\
&& -  g( D_{e_c} e^a ,   D_{e_d} e_b  )  -    g( D_{e_c} e^a  ,   k_{db} \hat{t} )  -  g( {k_{c}}^{a} \hat{t} ,   D_{e_d} e_b ) -    g(  {k_{c}}^{a} \hat{t} ,  k_{db} \hat{t} ) \\
&&  -   g(D_{e_d} e^a,  D_{e_c} e_b  ) \\
&& +   g(  D_{e_d} e^a  ,  D_{e_c} e_b )+   g(  D_{e_d} e^a  ,  k_{cb} \hat{t} )+   g(    {k_{d}}^{a} \hat{t} ,  D_{e_c} e_b  )+   g(   {k_{d}}^{a} \hat{t} ,   k_{cb} \hat{t} )  \\
&& \text{(where we used the fact that $k$ is symmetric) } \\
&=&  \mathcal{R}^a_{\ bcd}    -    g( D_{e_c} e^a  ,   k_{db} \hat{t} )  -  g( {k_{c}}^{a} \hat{t} ,   D_{e_d} e_b ) +     {k_{c}}^{a}   k_{db}  \\
&&+   g(  D_{e_d} e^a  ,  k_{cb} \hat{t} )+   g(    {k_{d}}^{a} \hat{t} ,  D_{e_c} e_b  ) -   {k_{d}}^{a}  k_{cb}  \, . \\
\eeaa

However, we have for $U, V$ tangent to $\Sigma_{t}$
\beaa
D_{ U} V  &=&   \der_U V - k (U,V) \hat{t} \, . \\
\eeaa
Thus,
\bea
\notag
g(  D_{ U} V,  \hat{t} )  &=&  g( \der_U V  , \hat{t} )- k (U,V) g( \hat{t},  \hat{t} ) \\
\notag
&=& - k (U,V)  - k (U,V) g( \hat{t},  \hat{t} ) \\
\notag
&=& - k (U,V)  + k (U,V)  \\
&=& 0 \, .
\eea
Hence,
\beaa
R^a_{\,\,\, bcd} &=&  \mathcal{R}^a_{\,\,\, bcd}  + k^{\,\, \, a}_c k_{bd} - k^{\,\,\, a}_d k_{bc}   \\
&=&  \mathcal{R}^a_{\ bcd}   + k^a_{\,\, \, c} k_{bd} - k^a_{\,\,\, d} k_{bc}  \\
&& \text{(using the symmetry of the second fundamental form $k$). } 
\eeaa

\textbf{The Codazzi equations}\\

Now, we prove the Codazzi equations
\beaa
\label{Gauss}
R^a_{\,\,\,  \hat{t} cd}  &=& D_{e_c}k^a_{\,\,\, d} - D_{e_d}k^a_{\,\,\, c} \; .
\eeaa

We have,
\beaa
 R^a_{\,\,\,  \hat{t} cd} &=& g( e^{a}, \der_{e_{c}} \der_{e_{d}} \hat{t} - \der_{e_{d}} \der_{e_{c}} \hat{t} - \der_{[e_{c}, e_{d}]} \hat{t} ) \\
&=& g( e^{a}, \der_{e_{c}} \der_{e_{d}} \hat{t} )  - g( e^{a},   \der_{e_{d}} \der_{e_{c}} \hat{t} ) -  g( e^{a}, \der_{[e_{c}, e_{d}]} \hat{t} ) \\
&=&\pa_{e_{c}}  g( e^{a}, \der_{e_{d}} \hat{t} )  -g( \der_{e_{c}} e^{a},  \der_{e_{d}} \hat{t} )  -  \pa_{e_{d}}  g( e^{a},  \der_{e_{c}} \hat{t} ) \\
&&  + g(  \der_{e_{d}}  e^{a},    \der_{e_{c}} \hat{t} )  -  g( e^{a}, \der_{(\der_{e_{c}} e_{d} - \der_{e_{d}} e_{c} ) } \hat{t} ) \\
&=&\pa_{e_{c}}  g( e^{a}, \der_{e_{d}} \hat{t} )  -g( \der_{e_{c}} e^{a},  \der_{e_{d}} \hat{t} )  - \pa_{e_{d}}  g( e^{a},  \der_{e_{c}} \hat{t} )  \\
&&+ g(  \der_{e_{d}}  e^{a},    \der_{e_{c}} \hat{t} )  -  g( e^{a}, \der_{(D_{e_{c}} e_{d} + k_{cd}\hat{t}   - D_{e_{d}} e_{c} - k_{dc}\hat{t} ) } \hat{t} )\, . \\
\eeaa
However, we have
\bea
\notag
  \der_{e_{\a}} \hat{t} &=&    g( \der_{e_{\a}} \hat{t}, e_{\mu} ) e^{\mu} -  g( \der_{e_{\a}} \hat{t}, \hat{t}) \hat{t}  \\
  &=&  k_{\a\mu} e^{\mu}  \\
  \notag
  && \text{(where we used the fact that $\hat{t}$ is a unit vector field). } 
 \eea
 Thus,
 \beaa
R^a_{\,\,\,  \hat{t} cd} &=& \pa_{e_{c}}  g( e^{a}, \der_{e_{d}} \hat{t} )  -g( \der_{e_{c}} e^{a},  \der_{e_{d}} \hat{t} )  -  \pa_{e_{d}}  g( e^{a},  \der_{e_{c}} \hat{t} )  + g(  \der_{e_{d}}  e^{a},    \der_{e_{c}} \hat{t} ) \\
&& -  g( e^{a}, \der_{(D_{e_{c}} e_{d} + k_{cd}\hat{t}   - D_{e_{d}} e_{c} - k_{dc}\hat{t} ) } \hat{t} ) \\
&=& \pa_{e_c}{k^a}_d - \pa_{e_d}{k^a}_c   -g( \der_{e_{c}} e^{a}, k_{d\mu} e^{\mu} )   + g(  \der_{e_{d}}  e^{a},    k_{c\mu} e^{\mu}) \\
&& - g( e^{a}, \der_{(D_{e_{c}} e_{d}   - D_{e_{d}} e_{c}  ) } \hat{t} ) \\
&& \text{(using the symmetry of $k$).} 
\eeaa
Yet,
\beaa
  g( e^{a}, \der_{(D_{e_{c}} e_{d}   - D_{e_{d}} e_{c}  ) } \hat{t} ) &=&   g( e^{a}, k_{\nu\mu} (D_{e_{c}} e_{d}   - D_{e_{d}} e_{c}  )^{\nu} e^{\mu} ) =k_{\nu a} (D_{e_{c}} e_{d}   - D_{e_{d}} e_{c}  )^{\nu} \, , \\
 \der_{e_{c}} e^{a} &=&  D_{e_{c}} e^{a}  + k_{c}^{\,\,\, a} \hat{t} \, .
\eeaa
Hence,
 \beaa
R^a_{\,\,\,  \hat{t} cd}   &=& \pa_{e_c}{k^a}_d - \pa_{e_d}{k^a}_c   -g(  D_{e_{c}} e^{a}  + {k_{c}}^{a} \hat{t}, k_{d\mu} e^{\mu} )   + g(   D_{e_{d}} e^{a}  + {k_{d}}^{a} \hat{t} ,    k_{c\mu} e^{\mu}) \\
&& - k_{\nu a} (D_{e_{c}} e_{d}   - D_{e_{d}} e_{c}  )^{\nu} \\
 &=& \pa_{e_c}{k^a}_d - \pa_{e_d}{k^a}_c   -g(  D_{e_{c}} e^{a} , k_{d\mu} e^{\mu} )   + g(   D_{e_{d}} e^{a} ,    k_{c\mu} e^{\mu}) \\
&& - {k^{a}}_{\nu } (D_{e_{c}} e_{d}   - D_{e_{d}} e_{c}  )^{\nu} \\
 &=& \pa_{e_c}{k^a}_d   -k_{d\mu}  g(  D_{e_{c}} e^{a} , e^{\mu} ) -{k^{a}}_{\nu } (D_{e_{c}} e_{d}  )^{\nu}  - \pa_{e_d}{k^a}_c  + {k^{a}}_{\nu } (  D_{e_{d}} e_{c}  )^{\nu} +  k_{c\mu} g(   D_{e_{d}} e^{a} ,    e^{\mu})   \\
 &=& \pa_{e_c}{k^a}_d   -k_{\mu d}  (  D_{e_{c}} e^{a} )^{\mu}  -{k^{a}}_{\nu } (D_{e_{c}} e_{d}  )^{\nu}  - \pa_{e_d}{k^a}_c  + {k^{a}}_{\nu }(  D_{e_{d}} e_{c}  )^{\nu} +  k_{\mu c} (   D_{e_{d}} e^{a} )^{\mu}   \\
 && \text{(where we used the symmetry of $k$) } \\
  &=& D_{e_c} k^a_{\,\,\, d}    - D_{e_d} k^a_{\, \,\,c}     \, .
 \eeaa
 
 \end{proof}

\begin{lemma}\label{TheconstraintequationsfortheEinstein-Yang-Mills system}

The constraint equations for the Einstein-Yang-Mills system are 
\bea
\label{theEinsteinYangMillsconstriantsonSigmafirst}
 \mathcal{R}+ k^a_{\,\, \, a} k_{c}^{\,\,\,c}  -  k^{ac} k_{ac}    &=&    \frac{4}{(n-1)}   < E_{b}, E^{ b}>    +  < F_{ab },F^{ab } > \, ,\\
 \label{theEinsteinYangMillsconstriantsonSigmasecond}
 D_{e_a} k^a_{\,\,\, d}    - D_{e_d} k^a_{\, \,\,a}  &=&  2 < E_{b}, {F_{d}}^{\, b}> \, ,\\ 
\label{theYangMillsconstraintforthepotentialandelectricchargeonsigma} D^i E_{ i} + [A^i, E_{ i} ]  &=& 0 \, , 
\eea
where $E_{i} = F_{\hat{t} i}$\;, and where the summation is carried only over spatial indices. Here $\hat{t}$\;, $k$\;, $D$ and $\mathcal{R}$ are given in Definitions \ref{definitionoftheunitorthogonaltimelikevectorefieldhatt}, \ref{definitionsosecondfundamentalform}, and \ref{definitionsforconstraints}.

\end{lemma}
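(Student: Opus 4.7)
The plan is to prove the three constraints separately, using in each case a different combination of the Gauss--Codazzi identities (Lemma \ref{Gauss-Codazzi-equations}), the Einstein--Yang--Mills field equations \eqref{EYMsystemofequationsongandA}, and the algebraic decompositions $g^{\mu\nu} = -\hat t^\mu \hat t^\nu + g^{ij}_{\text{spatial}}$ and $\langle F_{\alpha\beta}, F^{\alpha\beta}\rangle = -2\langle E_a, E^a\rangle + \langle F_{ab}, F^{ab}\rangle$; the latter follows by splitting the $(\alpha,\beta)$ sum into time/time, time/space (and its mirror), and space/space parts, using $E_i = F_{\hat t i}$ and the antisymmetry $F_{\hat t\hat t}=0$.

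For the Hamiltonian constraint \eqref{theEinsteinYangMillsconstriantsonSigmafirst}, I would contract the Gauss equation twice with the spatial inverse metric. Since $g^{\mu\nu}R_{\mu\nu} = -R_{\hat t\hat t} + g^{ij}R_{ij}$ so that $g^{ij}R_{ij} = R + R_{\hat t\hat t}$, and since $g^{ij}R_{\hat t i\hat t j} = R_{\hat t\hat t}$ because the $\hat t\hat t\hat t\hat t$ contribution to the Ricci trace vanishes by antisymmetry, the double contraction yields the scalar identity $R + 2R_{\hat t\hat t} = \mathcal{R} + (k^a_{\,\,a})^2 - k^{ab}k_{ab}$. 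I would then substitute for the left-hand side using the Einstein--Yang--Mills equation $R_{\mu\nu} = 2\langle F_{\mu\beta}, F_\nu^{\,\,\beta}\rangle + \frac{1}{1-n}g_{\mu\nu}\langle F, F\rangle$ at $(\hat t,\hat t)$ together with $R = \frac{3-n}{1-n}\langle F, F\rangle$ derived in Section 2, noting $g_{\hat t\hat t}=-1$ and $\langle F_{\hat t\beta}, F_{\hat t}^{\,\,\beta}\rangle = \langle E_b, E^b\rangle$, and simplifying with the decomposition of $\langle F, F\rangle$.

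For the momentum constraint \eqref{theEinsteinYangMillsconstriantsonSigmasecond}, I would contract the Codazzi equation on $a$ and $c$: the spatial trace $R^a_{\,\,\hat t a d}$ coincides with the full Ricci contraction $R_{\hat t d}$ because the would-be $R^0_{\,\,\hat t 0 d}$ term vanishes by antisymmetry of the Riemann tensor. This yields $R_{\hat t d} = D_a k^a_{\,\,d} - D_d k^a_{\,\,a}$. Substituting the Einstein--Yang--Mills equation at $(\hat t, d)$ with $d$ spatial, so that $g_{\hat t d}=0$ kills the trace term, and splitting $\beta$ into temporal and spatial parts in $\langle F_{\hat t\beta}, F_d^{\,\,\beta}\rangle$ using $F_{\hat t\hat t}=0$, gives directly $2\langle E_b, F_d^{\,\,b}\rangle$. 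For the Yang--Mills Gauss constraint \eqref{theYangMillsconstraintforthepotentialandelectricchargeonsigma}, I would project $\nabla_\alpha F^{\alpha\beta} + [A_\alpha, F^{\alpha\beta}] = 0$ along $\hat t$ by contracting with $\hat t_\beta$; writing $\hat t_\beta \nabla_\alpha F^{\alpha\beta} = \nabla_\alpha(\hat t_\beta F^{\alpha\beta}) - F^{\alpha\beta}\nabla_\alpha\hat t_\beta$, one identifies $\hat t_\beta F^{\alpha\beta}$ with (minus) the spacetime extension of $E^\alpha$, and uses the fact that $\nabla_a\hat t_b = k_{ab}$ on $\Sigma$, so that $F^{\alpha\beta}\nabla_\alpha\hat t_\beta$ vanishes by antisymmetry of $F$ paired against symmetry of $k$. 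Finally converting the spacetime divergence to the intrinsic one using $D_U V = \nabla_U V - k(U,V)\hat t$ (Definition \ref{definitionsforconstraints}) yields the stated Gauss law.

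The main obstacles are bookkeeping. For the Hamiltonian constraint the $n$-dependent coefficients from $\frac{1}{1-n}g_{\mu\nu}\langle F,F\rangle$, from $R = \frac{3-n}{1-n}\langle F,F\rangle$, and from the decomposition $\langle F,F\rangle = -2\langle E,E\rangle + \langle F_{ab}, F^{ab}\rangle$ must combine so that the coefficient of $\langle F_{ab}, F^{ab}\rangle$ collapses to $1$ and the coefficient of $\langle E_b, E^b\rangle$ becomes the stated $\tfrac{4}{n-1}$; this requires careful tracking of signs. For the Yang--Mills Gauss law, the subtlety is that several extrinsic-curvature terms appear when changing from the spacetime connection $\nabla$ to the induced connection $D$, and one must verify that every such term either vanishes identically (by antisymmetry-vs-symmetry of $F$ against $k$, or by $F_{\hat t\hat t}=0$) or recombines with the divergence to yield exactly $D^i E_i$ on the slice.
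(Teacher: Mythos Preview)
Your proposal is correct and follows essentially the same route as the paper: double contraction of Gauss plus the identity $R + 2R_{\hat t\hat t} = \mathcal R + (k^a{}_a)^2 - k^{ab}k_{ab}$ for the Hamiltonian constraint, single contraction of Codazzi and $R^a{}_{\hat t a d} = R_{\hat t d}$ for the momentum constraint, and the $\hat t$-component of the Yang--Mills equation for the Gauss law. The only difference is that your treatment of the Gauss law is more explicit about the passage from the spacetime connection $\nabla$ to the induced connection $D$ via the antisymmetry-versus-symmetry cancellation of $F^{\alpha\beta}\nabla_\alpha\hat t_\beta$; the paper simply writes $\textbf{D}^{(A)\,i}F_{\hat t i} = \nabla^i E_i + [A^i,E_i] = 0$ without spelling this out.
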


\begin{proof}
Based on Lemma \ref{Gauss-Codazzi-equations}, and by raising indices, we obtain
\beaa
R_{\,\, \, ba}^{a \, \,  \;\;\; d}   &=& g^{d\mu} R^a_{\,\,\, ba\mu} = g^{dc} R^a_{\,\,\, bac} =  {\mathcal{R}^a_{\,\,\, ba}}^{d} + k^a_{\,\, \, a} k_{b}^{\,\,\,d} - k^{ad} k_{ba}   \\
&& \text{(where we used the fact that $\hat{t}$ is orthogonal to the Cauchy hypersurfaces $\Sigma_{t}$). } 
\eeaa

Thus, summing over spatial indices, we obtain
\beaa
R_{\,\, \, ba}^{a \, \,  \;\;\; b}   &=&   \mathcal{R}_{\,\,\, ba}^{a \, \,  \;\;\; b} + k^a_{\,\, \, a} k_{b}^{\,\,\,b} - k^{ab} k_{ba} \, .
\eeaa
But,
\beaa
R &=& R_{\mu}^{\,\,\; \mu} = g^{\nu\mu} R_{\mu\nu} = g^{\nu\mu} R_{\mu \a \nu}^{\,\,\, \,\,\,  \;\;\; \a} = g^{\hat{t} \hat{t} } R_{\hat{t}  \a \hat{t} }^{\,\,\, \,\,\,  \,\,\, \a} + g^{a b } R_{a  \a b}^{\,\,\, \,\,\,  \,\,\, \a} = g^{\hat{t} \hat{t} } {R_{\hat{t}  a \hat{t} }}^{a} + g^{a b }  g^{\a\b}  {R_{a  \a b \b}}^{}  \\
&& \text{(where we used the symmetries of the Riemann tensor) } \\
&=&  g^{\hat{t} \hat{t} } R_{\hat{t}  a \hat{t} }^{\,\,\, \,\,\,  \,\,\, a} + g^{a b }  g^{\hat{t} \hat{t} }  {R_{a  \hat{t}  b \hat{t} }}^{}  + g^{a b }  g^{ c d }  {R_{a  c b d }}^{}\\
&=&  g^{\hat{t} \hat{t} }  R_{\hat{t}  a \hat{t} }^{\,\,\, \,\,\,  \,\,\, a}+   g^{\hat{t} \hat{t} }  R_{a  \hat{t}  \,\,\, \hat{t} }^{\,\,\,  \,\,\, a}   + g^{a b }  g^{ c d }  {R_{a  c b d }}^{} =   g^{\hat{t} \hat{t} } R_{\hat{t}  a \hat{t} }^{\,\,\, \,\,\,  \,\,\, a} +   g^{\hat{t} \hat{t} }  R_{a  \hat{t}  \,\,\, \hat{t} }^{\,\,\,  \,\,\, a}  +   R_{a  c  }^{\,\,\, \,\,\; ac}\\
&& \text{(where we used again the fact that $\hat{t}$ is orthogonal to the Cauchy hypersurfaces $\Sigma_{t}$) } \\
&=& 2 g^{\hat{t} \hat{t} } R_{\hat{t}  a \hat{t} }^{\,\,\, \,\,\,  \,\,\, a} +   R_{a  c  }^{\,\,\, \,\,\; ac} = - 2  R_{\hat{t}  a \hat{t} }^{\,\,\, \,\,\,  \,\,\, a}  +  R_{a  c  }^{\,\,\, \,\,\; ac} =  - 2  R_{\hat{t}  \a \hat{t} }^{\,\,\, \,\,\,  \,\,\, \a}  +   R_{a  c  }^{\,\,\, \,\,\; ac}=  - 2  R_{\hat{t}  \hat{t} }  +   R_{a  c  }^{\,\,\, \,\,\; ac}\\
&& \text{(where we used the symmetries of the Riemann tensor) } \\
 &=& - 2  R_{\hat{t}  \hat{t} }  +    R_{a  c  }^{\,\,\, \,\,\; ac} \, .
 \eeaa
 However, since $R =  \frac{(3-n)}{(1-n)} < F_{\a\b },F^{\a\b } > $, we get
 \bea
   \frac{(3-n)}{(1-n)} < F_{\a\b },F^{\a\b }> + 2  R_{\hat{t}  \hat{t} }  &=& R_{a  c  }^{\,\,\, \,\,\; ac} \, ,
 \eea
 and hence,
 \beaa
 \notag
R_{a  c  }^{\,\,\, \,\,\; ac}&=& 2  R_{\hat{t}  \hat{t} } +   \frac{(3-n)}{(1-n)} < F_{\a\b },F^{\a\b } >  = 2 ( 8\pi T_{\hat{t} \hat{t} } ) +   \frac{(3-n)}{(1-n)} < F_{\a\b },F^{\a\b } >  \\
 &=& 2 [ 2 \big( < F_{\hat{t}\b}, {F_{\hat{t}}}^{\b} >  - \frac{1}{4} g_{\hat{t}\hat{t} } < F_{\a\b },F^{\a\b } > \big) ] +   \frac{(3-n)}{(1-n)} < F_{\a\b },F^{\a\b } > \\
&=& 4 < F_{\hat{t}\b}, {F_{\hat{t}}}^{\b} >  + < F_{\a\b },F^{\a\b } >  +   \frac{(3-n)}{(1-n)} < F_{\a\b },F^{\a\b } > \\
&=& 4 < F_{\hat{t}\b}, {F_{\hat{t}}}^{\b} > +  \frac{2(2-n)}{(1-n)} < F_{\a\b },F^{\a\b } > \, .
 \eeaa

However,
\beaa
&& < F_{\a\b },F^{\a\b } > \\
&=& < F_{\hat{t}\b },F^{\hat{t}\b } >  +  < F_{a\b },F^{a\b } > \\
&=&  < F_{\hat{t}b },F^{\hat{t}b } >  +  < F_{a\b },F^{a\b } >  =  < F_{\hat{t}b },F^{\hat{t}b } >  +  < F_{a\hat{t} },F^{a\hat{t} } > +  < F_{ab },F^{ab } > \\
&& \text{(using the anti-symmetry of the Yang-Mills curvature) } \\
&=& 2 < F_{\hat{t}b },F^{\hat{t}b } >  +  < F_{ab },F^{ab } > = 2 g^{\hat{t}\mu} < F_{\hat{t}b },{F_{\mu}}^{\, b } >  +  < F_{ab },F^{ab } > \\
&=&  2 g^{\hat{t}\hat{t}} < F_{\hat{t}b },{F_{\hat{t}}}^{\, b } >  +  < F_{ab },F^{ab } > =  - 2 < F_{\hat{t}b },{F_{\hat{t}}}^{\, b } >  +  < F_{ab },F^{ab } > \\
&& \text{(where we used the fact that $\hat{t}$ is a unit-orthogonal vector to the} \\
&& \text{  Cauchy hypersurfaces foliation). } \\
\eeaa
Thus,
 \beaa
 \notag
 R_{a  c  }^{\,\,\, \,\,\; ac} &=&  4 < F_{\hat{t}\b}, {F_{\hat{t}}}^{\, \b}>  +  \frac{2(2-n)}{(1-n)}  < F_{\a\b },F^{\a\b } > \\
  \notag
 &=&   4 < F_{\hat{t}b}, {F_{\hat{t}}}^{\, b}>   -  \frac{4(2-n)}{(1-n)}  < F_{\hat{t}b },{F_{\hat{t}}}^{\, b } >  +  \frac{2(2-n)}{(1-n)}  < F_{ab },F^{ab } > \\
 &=&   \frac{4}{(n-1)} < F_{\hat{t}b}, {F_{\hat{t}}}^{\, b}>    +  < F_{ab },F^{ab } > \, .
\eeaa
However, Gauss equations give
\beaa
R_{a  c  }^{\,\,\, \,\,\; ac} &=&  \mathcal{R}_{a  c  }^{\,\,\, \,\,\; ac} - k^{ac} k_{ca} +  k^a_{\,\, \, a} k_{c}^{\,\,\,c} \, .
\eeaa
Thus,
 \bea
 \notag
 \mathcal{R}_{a  c  }^{\,\,\, \,\,\; ac} + k^a_{\,\, \, a} k_{c}^{\,\,\,c}  -  k^{ac} k_{ac}    &=&    \frac{4}{(n-1)}  < F_{\hat{t}b}, {F_{\hat{t}}}^{\, b}>    +  < F_{ab },F^{ab } > \, . \\
\eea
Now, we look at
\bea
 \notag
R^a_{\,\,\,  \hat{t} ad} &=& R^\mu_{\,\,\,  \hat{t} \mu d} = R_{\hat{t} d}   \\ 
 \notag
&& \text{(using the symmetries of the Riemann curvature) } \\
 \notag
&=& 8\pi T_{\hat{t} d } + \frac{1}{2} g_{\hat{t} d }   \frac{(3-n)}{(1-n)} < F_{\a\b },F^{\a\b } > \\
 \notag
&=&   2 ( < F_{\hat{t}\b}, {F_{d}}^{\, \b}> - \frac{1}{4} g_{\hat{t} d } < F_{\a\b },F^{\a\b } > ) + \frac{1}{2} g_{\hat{t} d }   \frac{(3-n)}{(1-n)} < F_{\a\b },F^{\a\b }> \\
 \notag
&=&     2 < F_{\hat{t}b}, {F_{d}}^{\, b}> \\
&=& D_{e_a} k^a_{\,\,\, d}    - D_{e_d} k^a_{\, \,\,a}  \, .
\eea

Hence, the constraint equations for the Einstein-Yang-Mills system are 
\bea
 \mathcal{R}+ k^a_{\,\, \, a} k_{c}^{\,\,\,c}  -  k^{ac} k_{ac}    &=&   \frac{4}{(n-1)}   < F_{\hat{t}b}, {F_{\hat{t}}}^{\, b}>    +  < F_{ab },F^{ab } > \, ,\\
 D_{e_a} k^a_{\,\,\, d}    - D_{e_d} k^a_{\, \,\,a}  &=&  2 < F_{\hat{t}b}, {F_{d}}^{\, b}> \, .
\eea

Also, since we want $E_{i} = F_{\hat{t} i}$ on $\Sigma$\;, then in view of the fact that $\textbf{D}^{(A) \,  i}F_{\hat{t} i}   = 0$ (which is implied from the Einstein-Yang-Mills system), we get
\bea
\textbf{D}^{(A)\,  i}  F_{\hat{t} i} = \textbf{D}^{(A)\,  i}  E_{ i} = \der^i E_{ i} + [A^i, E_{ i} ]  = 0  \, .
\eea

\end{proof}

\section{The gauges invariance of the equations and fixing the gauges}\label{The gauge conditions}

The Einstein-Yang-Mills equations are invariant under both gauge transformations and diffeomorphisms. We explain in what follows.

\subsection{The invariance under gauge transformation}\label{The invariance under gauge transformation}\

For any Yang-Mills potential $A_\a $ solution to the Einstein-Yang-Mills system and for any element $ \cal{O} \in G$, since $G$ is a Lie group and therefore a group, there exists an inverse $ \cal{O}^{-1} \in G$, and therefore we can define
\bea
\tilde{A_\a} =  \cal{O} A_\a  \cal{O}^{-1} - ( \der_\a  \cal{O} )  \cal{O}^{-1} \, ,
\eea
and
\bea
\tilde{F_{\a\b}} = \der_{\a}\tilde{A_{\b}} - \der_{\b}\tilde{A_{\a}} + [\tilde{A_{\a}},\tilde{A_{\b}}]   \, .
\eea
We have the following well-known lemma:
\begin{lemma}
We have
\beaa
\tilde{F_{\a\b}} = \cal{O} F_{\a\b}  \cal{O}^{-1} \, ,
\eeaa
and for any tensor $\Psi$ valued in the Lie algebra $\cal G$ associated to the Lie group $G$, if 
\beaa
\tilde{\Psi} =  \cal{O} \Psi   \cal{O}^{-1} \, ,
\eeaa
then,
\beaa
 \textbf{D}^{(\tilde{A})}_{\alpha}\tilde{\Psi} =  \cal{O} ( \textbf{D}^{(A)}_{\alpha}\Psi )  \cal{O}^{-1} \, .
\eeaa

\end{lemma}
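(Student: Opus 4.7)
The plan is to prove both identities by direct Leibniz expansion from the definitions. The essential preliminary is the identity $\der_\alpha \mathcal{O}^{-1} = -\mathcal{O}^{-1}(\der_\alpha \mathcal{O})\mathcal{O}^{-1}$, obtained by differentiating $\mathcal{O}\mathcal{O}^{-1} = \mathrm{Id}$; it will be used repeatedly to move derivatives past the inverse. It is also convenient to abbreviate $B_\alpha := (\der_\alpha \mathcal{O})\mathcal{O}^{-1}$, so that $\tilde{A}_\alpha = \mathcal{O} A_\alpha \mathcal{O}^{-1} - B_\alpha$.

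For the first identity I would compute $\der_\alpha \tilde{A}_\beta - \der_\beta \tilde{A}_\alpha + [\tilde{A}_\alpha, \tilde{A}_\beta]$ term by term. The Leibniz rule applied to $\der_\alpha(\mathcal{O} A_\beta \mathcal{O}^{-1})$ yields three terms, the middle one being $\mathcal{O}(\der_\alpha A_\beta)\mathcal{O}^{-1}$ and the other two involving $B_\alpha$. The double-derivative contribution in $-\der_\alpha((\der_\beta \mathcal{O})\mathcal{O}^{-1})$ drops out under antisymmetrization in $\alpha \leftrightarrow \beta$, because $\mathcal{O}$ carries no spacetime indices and the Levi-Civita connection is torsion-free, so $\der_\alpha \der_\beta \mathcal{O}$ is symmetric in $\alpha,\beta$; what is left from these two terms is $-[B_\alpha, B_\beta]$. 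The commutator $[\tilde{A}_\alpha, \tilde{A}_\beta]$ then decomposes into four pieces: $[\mathcal{O} A_\alpha \mathcal{O}^{-1}, \mathcal{O} A_\beta \mathcal{O}^{-1}]$ collapses to $\mathcal{O}[A_\alpha, A_\beta]\mathcal{O}^{-1}$ by inserting $\mathcal{O}^{-1}\mathcal{O}$; the two mixed pieces are exactly what is needed to cancel the surviving $B_\alpha \cdot \mathcal{O} A_\beta \mathcal{O}^{-1}$ and $\mathcal{O} A_\alpha \mathcal{O}^{-1} \cdot B_\beta$ terms (and their $\alpha \leftrightarrow \beta$ partners) coming from the Leibniz expansion; and the fourth piece $[B_\alpha, B_\beta]$ cancels the $-[B_\alpha, B_\beta]$ noted above. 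What remains is precisely $\mathcal{O}(\der_\alpha A_\beta - \der_\beta A_\alpha + [A_\alpha, A_\beta])\mathcal{O}^{-1} = \mathcal{O} F_{\alpha\beta} \mathcal{O}^{-1}$.

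For the second identity, I would write $\textbf{D}^{(\tilde{A})}_\alpha \tilde{\Psi} = \der_\alpha(\mathcal{O}\Psi\mathcal{O}^{-1}) + [\tilde{A}_\alpha, \mathcal{O}\Psi\mathcal{O}^{-1}]$ and expand. Leibniz together with the preliminary identity for $\der_\alpha \mathcal{O}^{-1}$ produces exactly $B_\alpha \mathcal{O}\Psi\mathcal{O}^{-1} + \mathcal{O}(\der_\alpha \Psi)\mathcal{O}^{-1} - \mathcal{O}\Psi\mathcal{O}^{-1} B_\alpha$. The commutator with $\mathcal{O} A_\alpha \mathcal{O}^{-1}$ telescopes (by inserting $\mathcal{O}^{-1}\mathcal{O}$) to $\mathcal{O}[A_\alpha, \Psi]\mathcal{O}^{-1}$, while the commutator with $-B_\alpha$ contributes $-B_\alpha \mathcal{O}\Psi\mathcal{O}^{-1} + \mathcal{O}\Psi\mathcal{O}^{-1} B_\alpha$, which cancels the two unwanted Leibniz terms. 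What survives is $\mathcal{O}(\der_\alpha \Psi + [A_\alpha, \Psi])\mathcal{O}^{-1} = \mathcal{O}\, \textbf{D}^{(A)}_\alpha \Psi\, \mathcal{O}^{-1}$, as desired.

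The main difficulty is purely bookkeeping: every cancellation is forced, and care is needed only because $\mathcal{O}$, $A$, and $\Psi$ are noncommuting matrix-valued quantities, so the order of factors must be tracked throughout. No deeper structural input is required beyond the torsion-freeness of $\der$, invoked once in the antisymmetric vanishing of $\der_\alpha \der_\beta \mathcal{O}$, and the preliminary formula for $\der_\alpha \mathcal{O}^{-1}$, which is the only nontrivial identity used.
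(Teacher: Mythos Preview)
Your proposal is correct and follows essentially the same approach as the paper: both proofs proceed by direct Leibniz expansion, both rely on the identity $\der_\alpha \mathcal{O}^{-1} = -\mathcal{O}^{-1}(\der_\alpha \mathcal{O})\mathcal{O}^{-1}$ obtained from differentiating $\mathcal{O}\mathcal{O}^{-1} = \mathrm{Id}$, and both use the symmetry of $\der_\alpha\der_\beta \mathcal{O}$ in $\alpha,\beta$ to kill the double-derivative terms. Your abbreviation $B_\alpha = (\der_\alpha \mathcal{O})\mathcal{O}^{-1}$ makes the cancellation structure considerably more transparent than the paper's fully expanded computation, but the logical content is identical.
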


\begin{proof}
Computing
\beaa
\tilde{F_{\a\b}} &=& \der_{\a}\tilde{A_{\b}} - \der_{\b}\tilde{A_{\a}} + [\tilde{A_{\a}},\tilde{A_{\b}}]  \\
&=&  \der_{\a} (  \cal{O} A_{\b}  \cal{O}^{-1}   ) - \der_{\a}  \big(  ( \der_\b  \cal{O} )  \cal{O}^{-1}   \big) - \der_{\b} (  \cal{O} A_{\a}  \cal{O}^{-1}   ) + \der_{\b}  \big(  ( \der_\a  \cal{O} )  \cal{O}^{-1}   \big)  \\
&& + [\cal{O} A_\a \cal{O}^{-1} - ( \der_\a  \cal{O} )  \cal{O}^{-1},  \cal{O} A_\b  \cal{O}^{-1} - ( \der_\b  \cal{O} )  \cal{O}^{-1} ]   \\
&=& ( \der_{\a} \cal{O} ) A_{\b} \cal{O}^{-1}   +   \cal{O} (\der_{\a} A_{\b} ) \cal{O}^{-1}    + \cal{O} A_{\b} ( \der_{\a}  \cal{O}^{-1}  ) \\
&& - ( \der_{\a}  \der_\b \cal{O} ) \cal{O}^{-1}   -    ( \der_\b \cal{O} ) ( \der_{\a} \cal{O}^{-1} )  \\
&& -  ( \der_{\b} \cal{O} ) A_{\a} \cal{O}^{-1}  -  \cal{O}  (\der_{\b} A_{\a} ) \cal{O}^{-1}    -  \cal{O} A_{\a} ( \der_{\b} \cal{O}^{-1}  ) \\
&& + ( \der_{\b}  \der_\a \cal{O} ) \cal{O}^{-1}    -    ( \der_\a \cal{O} ) ( \der_{\b} \cal{O}^{-1} )  \\
&& + [\cal{O} A_\a \cal{O}^{-1} , \cal{O} A_\b \cal{O}^{-1}  ]    - [\cal{O} A_\a \cal{O}^{-1} ,  ( \der_\b \cal{O} ) \cal{O}^{-1} ]    - [ ( \der_\a \cal{O} ) \cal{O}^{-1}, \cal{O} A_\b \cal{O}^{-1}  ] \\
&&  + [ ( \der_\a \cal{O} ) \cal{O}^{-1},  ( \der_\b \cal{O} ) \cal{O}^{-1} ]  \, , \\
\eeaa
and developing the terms in the commutators, we get
\beaa
\tilde{F_{\a\b}} &=&  \cal{O} (\der_{\a} A_{\b} - \der_{\b} A_{\a} ) \cal{O}^{-1}  + \cal{O} A_\a \cal{O}^{-1}  \cal{O} A_\b \cal{O}^{-1}   - \cal{O} A_\b \cal{O}^{-1}  \cal{O} A_\a \cal{O}^{-1} \\
&& + ( \der_{\a} \cal{O} ) A_{\b} \cal{O}^{-1}     + \cal{O} A_{\b} ( \der_{\a}  \cal{O}^{-1}  ) \\
&& - ( \der_{\a}  \der_\b \cal{O} ) \cal{O}^{-1}   -    ( \der_\b \cal{O} ) ( \der_{\a} \cal{O}^{-1} )  \\
&& -  ( \der_{\b} \cal{O} ) A_{\a} \cal{O}^{-1}   -  \cal{O} A_{\a} ( \der_{\b} \cal{O}^{-1}  ) \\
&& + ( \der_{\b}  \der_\a \cal{O} ) \cal{O}^{-1}    -    ( \der_\a \cal{O} ) ( \der_{\b} \cal{O}^{-1} )  \\
&&  - \big( \cal{O} A_\a \cal{O}^{-1}   ( \der_\b \cal{O} ) \cal{O}^{-1}  -  ( \der_\b \cal{O} ) \cal{O}^{-1}   \cal{O} A_\a \cal{O}^{-1}  \big) \\
&&   - \big(  ( \der_\a \cal{O} ) \cal{O}^{-1} \cal{O} A_\b \cal{O}^{-1}   - \cal{O} A_\b \cal{O}^{-1}  ( \der_\a \cal{O} ) \cal{O}^{-1} \big) \\
&&  +  \big( ( \der_\a \cal{O} ) \cal{O}^{-1}  ( \der_\b \cal{O} ) \cal{O}^{-1}  -  ( \der_\a \cal{O} ) \cal{O}^{-1}  ( \der_\b \cal{O} ) \cal{O}^{-1} \big)  \, . \\
\eeaa
Since $\cal{O} \cal{O}^{-1} = I$ and in a system of coordinates $\der_{\a}  \der_\b \cal{O} - \der_{\b}  \der_\a \cal{O} = 0$, we get
\beaa
\tilde{F_{\a\b}}  &=&  \cal{O} \big( \der_{\a} A_{\b} - \der_{\b} A_{\a} + [A_{\a}, A_{\b}] \big) \cal{O}^{-1}  \\
&& + ( \der_{\a} \cal{O} ) A_{\b} \cal{O}^{-1}     + \cal{O} A_{\b} ( \der_{\a}  \cal{O}^{-1}  ) \\
&&  -    ( \der_\b \cal{O} ) ( \der_{\a} \cal{O}^{-1} )  \\
&& -  ( \der_{\b} \cal{O} ) A_{\a} \cal{O}^{-1}   -  \cal{O} A_{\a} ( \der_{\b} \cal{O}^{-1}  ) \\
&&    -    ( \der_\a \cal{O} ) ( \der_{\b} \cal{O}^{-1} )  \\
&&  - \big( \cal{O} A_\a \cal{O}^{-1}   ( \der_\b \cal{O} ) \cal{O}^{-1}  -  ( \der_\b \cal{O} )  A_\a \cal{O}^{-1}  \big) \\
&&   - \big(  ( \der_\a \cal{O} ) A_\b \cal{O}^{-1}   - \cal{O} A_\b \cal{O}^{-1}  ( \der_\a \cal{O} ) \cal{O}^{-1} \big) \\
&&  +  \big( ( \der_\a \cal{O} ) \cal{O}^{-1}  ( \der_\b \cal{O} ) \cal{O}^{-1}  -  ( \der_\a \cal{O} ) \cal{O}^{-1}  ( \der_\b \cal{O} ) \cal{O}^{-1} \big) \, . \\
\eeaa
On the other hand, since
\beaa
\cal{O} \cal{O}^{-1} = I \, ,
\eeaa 
we have
\beaa
( \der_\a \cal{O} ) \cal{O}^{-1} + \cal{O} ( \der_\a \cal{O}^{-1} ) = 0 \, ,
\eeaa
and thus,
\beaa
\der_\a \cal{O}^{-1} = - \cal{O}^{-1} ( \der_\a \cal{O} ) \cal{O}^{-1}  \, .
\eeaa
Therefore 
\beaa
\tilde{F_{\a\b}}  &=&  \cal{O}  F_{\a\b} \cal{O}^{-1}  \\
&& + ( \der_{\a} \cal{O} ) A_{\b} \cal{O}^{-1}     - \cal{O} A_{\b} \cal{O}^{-1} ( \der_\a \cal{O} ) \cal{O}^{-1}   \\
&&  +  ( \der_\b \cal{O} ) \cal{O}^{-1} ( \der_\a \cal{O} ) \cal{O}^{-1}   \\
&& -  ( \der_{\b} \cal{O} ) A_{\a} \cal{O}^{-1}   +  \cal{O} A_{\a} \cal{O}^{-1} ( \der_\b \cal{O} ) \cal{O}^{-1}  \\
&&    -   ( \der_\a \cal{O} ) ( \cal{O}^{-1} ( \der_\b \cal{O} ) \cal{O}^{-1} )  \\
&&  - \cal{O} A_\a \cal{O}^{-1}   ( \der_\b \cal{O} ) \cal{O}^{-1}  + ( \der_\b \cal{O} )  A_\a \cal{O}^{-1}  \\
&&   -   ( \der_\a \cal{O} ) A_\b \cal{O}^{-1}   + \cal{O} A_\b \cal{O}^{-1}  ( \der_\a \cal{O} ) \cal{O}^{-1}  \\
&&  +  ( \der_\a \cal{O} ) \cal{O}^{-1}  ( \der_\b \cal{O} ) \cal{O}^{-1}  -  ( \der_\a \cal{O} ) \cal{O}^{-1}  ( \der_\b \cal{O} ) \cal{O}^{-1} )  \\
&=&  \cal{O}  F_{\a\b} \cal{O}^{-1}  \, .
\eeaa

Now, let
\beaa
\tilde{\Psi} = \cal{O} \Psi  \cal{O}^{-1} \, ,
\eeaa
we compute
\beaa
 \textbf{D}^{(\tilde{A})}_{\alpha}\tilde{\Psi} &=& \der_{\alpha}\tilde{\Psi}  + [\tilde{A_\a}, \tilde{\Psi}] \\
 &=& \der_{\alpha}\tilde{\Psi}  + [\cal{O} A_\a \cal{O}^{-1} - ( \der_\a \cal{O} ) \cal{O}^{-1}, \tilde{\Psi}] \\
 &=& \der_{\alpha}\tilde{\Psi}  + \cal{O} A_\a \cal{O}^{-1}  \tilde{\Psi}  + \tilde{\Psi} \cal{O} A_\a \cal{O}^{-1}  - ( \der_\a \cal{O} ) \cal{O}^{-1} \tilde{\Psi}  +  \tilde{\Psi} ( \der_\a \cal{O} ) \cal{O}^{-1}  \\
 &=& \der_{\alpha} (\cal{O} \Psi  \cal{O}^{-1} )   + \cal{O} A_\a \cal{O}^{-1}  \cal{O} \Psi  \cal{O}^{-1}  + \cal{O} \Psi  \cal{O}^{-1} \cal{O} A_\a \cal{O}^{-1} \\
 && - ( \der_\a \cal{O} ) \cal{O}^{-1} \cal{O} \Psi  \cal{O}^{-1} +  \cal{O} \Psi  \cal{O}^{-1}( \der_\a \cal{O} ) \cal{O}^{-1}  \\
  &=& ( \der_{\alpha} \cal{O} ) \Psi  \cal{O}^{-1}  + \cal{O} ( \der_{\alpha} \Psi ) \cal{O}^{-1}  + \cal{O} \Psi (  \der_{\alpha} \cal{O}^{-1} ) \\
 &&   + \cal{O} A_\a \Psi  \cal{O}^{-1}  + \cal{O} \Psi  A_\a \cal{O}^{-1} \\
 && - ( \der_\a \cal{O} )  \Psi  \cal{O}^{-1} +  \cal{O} \Psi  \cal{O}^{-1}( \der_\a \cal{O} ) \cal{O}^{-1}  \\
  &=&  \cal{O} ( \der_{\alpha} \Psi ) \cal{O}^{-1}  + \cal{O} \Psi (  \der_{\alpha} \cal{O}^{-1} ) + \cal{O}  [A_\a, \Psi]  \cal{O}^{-1}  \\
   && - \cal{O} \Psi  \cal{O} ^{-1} ( \cal{O} \der_\a \cal{O}^{-1} ) \\
   &=&  \cal{O} ( \der_{\alpha} \Psi ) \cal{O}^{-1}  + \cal{O}  [A_\a, \Psi]  \cal{O}^{-1}  \\
    &=&  \cal{O} ( \textbf{D}^{(A)}_{\alpha}\Psi ) \cal{O}^{-1} \, .
 \eeaa

\end{proof}
Now, we state the following well-known lemma:
\begin{lemma}\label{gaugeinvarianceoftheYangMillssystem}
 If $(M, A, \g)$ is a solution to the Einstein-Yang-Mills equations, then $(M, \tilde{A}, \g)$ is also a solution which is what we call the gauge invariance of the equations.
\end{lemma}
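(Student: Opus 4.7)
The plan is to verify that each of the three equations of the Einstein-Yang-Mills system \eqref{EYMsystemofequationsongandA} is preserved under the transformation $A_\a \mapsto \tilde{A}_\a = \cal{O} A_\a \cal{O}^{-1} - (\der_\a \cal{O}) \cal{O}^{-1}$, keeping the metric $\g$ fixed. Since $\g$ is unchanged, the Levi-Civita connection $\der$, the Ricci tensor $R_{\mu\nu}$, and the operations of raising/lowering indices are all unchanged. The only thing to check is that the right-hand sides of the equations -- which depend on $A$ and $F$ -- transform consistently.

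The third equation of \eqref{EYMsystemofequationsongandA}, namely $\tilde{F}_{\a\b} = \der_\a \tilde{A}_\b - \der_\b \tilde{A}_\a + [\tilde{A}_\a, \tilde{A}_\b]$, is simply the definition of $\tilde{F}$ and holds by construction. By the previous lemma, this definition produces $\tilde{F}_{\a\b} = \cal{O} F_{\a\b} \cal{O}^{-1}$.

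For the Yang-Mills equation, I would apply the second statement of the previous lemma with the tensor $\Psi = F^{\a\b}$, whose gauge transform is $\tilde{\Psi} = \tilde{F}^{\a\b} = \cal{O} F^{\a\b} \cal{O}^{-1}$ (indices are raised with the unchanged metric $\g$). This immediately gives
\beaa
\textbf{D}^{(\tilde{A})}_\a \tilde{F}^{\a\b} = \cal{O} \big( \textbf{D}^{(A)}_\a F^{\a\b} \big) \cal{O}^{-1} = 0 \, ,
\eeaa
since $(M,A,\g)$ satisfies $\textbf{D}^{(A)}_\a F^{\a\b} = 0$.

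The main (and only non-mechanical) point is to verify the Einstein equation $R_{\mu\nu} = 2 \langle \tilde{F}_{\mu\b}, \tilde{F}_\nu^{\;\b} \rangle + \frac{1}{1-n} g_{\mu\nu} \langle \tilde{F}_{\a\b}, \tilde{F}^{\a\b} \rangle$. Here I would invoke the assumption that $\langle \cdot, \cdot \rangle$ is an $\mathrm{Ad}$-invariant scalar product on $\cal G$, which gives for any $X, Y \in \cal G$ the identity $\langle \cal{O} X \cal{O}^{-1}, \cal{O} Y \cal{O}^{-1} \rangle = \langle X, Y \rangle$. Applying this pointwise to $\tilde{F}_{\mu\b} = \cal{O} F_{\mu\b} \cal{O}^{-1}$ yields
\beaa
\langle \tilde{F}_{\mu\b}, \tilde{F}_\nu^{\;\b} \rangle = \langle F_{\mu\b}, F_\nu^{\;\b} \rangle \, , \qquad \langle \tilde{F}_{\a\b}, \tilde{F}^{\a\b} \rangle = \langle F_{\a\b}, F^{\a\b} \rangle \, ,
\eeaa
so the right-hand side of the Einstein equation is unchanged and coincides with the (unchanged) left-hand side $R_{\mu\nu}$ because $(M,A,\g)$ solves \eqref{EYMsystemofequationsongandA}. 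Combining the three checks, $(M, \tilde{A}, \g)$ is a solution, which is the claim; no genuine obstacle is expected, as all the analytical work has already been done in the preceding lemma and in setting up $\langle \cdot, \cdot \rangle$ to be $\mathrm{Ad}$-invariant.
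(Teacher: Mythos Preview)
Your proposal is correct and follows essentially the same approach as the paper: both use the Ad-invariance of $\langle\cdot,\cdot\rangle$ to show the stress-energy terms are unchanged (hence the Einstein equation is preserved), and both invoke the covariance identity $\textbf{D}^{(\tilde{A})}_\a \tilde{F}^{\a\b} = \cal{O}\,(\textbf{D}^{(A)}_\a F^{\a\b})\,\cal{O}^{-1}$ from the preceding lemma to handle the Yang-Mills equation. The only minor difference is organizational: the paper first rederives the Yang-Mills equation for $\tilde{F}$ from the Einstein equation via the contracted Bianchi identity and the Bianchi identity for $\tilde{F}$, and only afterward remarks that this is consistent with the direct calculation you gave; your route is the more economical of the two.
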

\begin{proof}
Let $F_{\a\b}$ be a solution to the Einstein-Yang-Mills system
\beaa
R_{ \mu \nu} - \frac{1}{2} \cdot g_{\mu\nu} R &=&   2 \big( < F_{\mu\b}, F_{\nu}^{\;\;\b}> - \frac{1}{4} g_{\mu\nu } < F_{\a\b },F^{\a\b } > \big) \;.
\eeaa
Since
\beaa
 &&   < \tilde{F}_{\mu\b},  \tilde{F}_{\nu}^{\;\;\b}> - \frac{1}{4} \cdot g_{\mu\nu } <  \tilde{F}_{\a\b }, \tilde{F}^{\a\b } > \\
 &=&  < \cal{O} F_{\mu\b} \cal{O}^{-1}, \cal{O} F_{\nu}^{\;\;\b} \cal{O}^{-1}> - \frac{1}{4} \cdot g_{\mu\nu } < \cal{O} F_{\a\b } \cal{O}^{-1} , \cal{O} F^{\a\b } \cal{O}^{-1}>  \\
  &=&  <  F_{\mu\b} , F_{\nu}^{\;\;\b}> - \frac{1}{4} \cdot g_{\mu\nu } <  F_{\a\b }  ,  F^{\a\b } > \;,
  \eeaa
then, $\tilde{F}_{\a\b}$ is also a solution to the Einstein-Yang-Mills system
\bea
R_{ \mu \nu} - \frac{1}{2} \cdot g_{\mu\nu} R &=&   2 \big(  < \tilde{F}_{\mu\b},  \tilde{F}_{\nu}^{\;\;\b}> - \frac{1}{4} \cdot g_{\mu\nu } <  \tilde{F}_{\a\b }, \tilde{F}^{\a\b } > \big) \;,
\eea
which will in turn enforce, by the symmetries of the Riemann tensor, that
\beaa
\der^{\mu}  T_{\mu\nu} (\tilde{F})= 0 \;,
\eeaa
and since we also have the Bianchi identities for $\tilde{F}$ (given the expression of $\tilde{F}$ in terms of the potential $\tilde{A}$), we also have
\bea
 \textbf{D}^{(\tilde{A})}_{\a} \tilde{F}_{\mu\nu} + \textbf{D}^{(\tilde{A})}_{\mu} \tilde{F}_{\nu\a} + \textbf{D}^{(\tilde{A})}_{\nu} \tilde{F}_{\a\mu}\; ,
 \eea
which all together leads to
\bea
\textbf{D}^{(\tilde{A})}_{\a} \tilde{F}^{\a\b} = \der_{\alpha} \tilde{F}^{\a\b}  + [\tilde{A}_{\alpha}, \tilde{F}^{\a\b} ]  = 0 \;.
\eea
This is consistent with the fact that
\beaa
\textbf{D}^{(\tilde{A})}_{\a} \tilde{F}^{\a\b} = \cal{O} ( \textbf{D}^{(A)}_{\a} F^{\a\b} ) \cal{O}^{-1} = 0\;,
\eeaa
and that 
\beaa
 \textbf{D}^{(\tilde{A})}_{\a} \tilde{F}_{\mu\nu} + \textbf{D}^{(\tilde{A})}_{\mu} \tilde{F}_{\nu\a} + \textbf{D}^{(\tilde{A})}_{\nu} \tilde{F}_{\a\mu}  = \cal{O} ( \textbf{D}^{(A)}_{\a}F_{\mu\nu} + \textbf{D}^{(A)}_{\mu}F_{\nu\a} + \textbf{D}^{(A)}_{\nu}F_{\a\mu}  )  \cal{O}^{-1} = 0 \;.
\eeaa

\end{proof}
Consequently, for each solution $F$ of the Einstein-Yang-Mills equation, we can make a gauge transformation
\beaa
\tilde{A_\a} = \cal{O} A_\a \cal{O}^{-1} - ( \der_\a \cal{O} ) \cal{O}^{-1} \, ,
\eeaa
and define a new solution $\tilde{A}$, which is what we call the gauge invariance of the equations. Hence, a solution $A$ to the Einstein-Yang-Mills system is only defined up to a class.

We know that a global existence result for the Yang-Mills fields for any arbitrary gauge on the Yang-Mills potential fails. In fact, given any global solution for the Einstein-Yang-Mills equations, one can always perform a gauge transformation on the Yang-Mills potential so that the gauge transformed solution remains a solution to the Einstein-Yang-Mills equations but blows-up in finite time. Thus, fixing a gauge condition on the Yang-Mills fields is essential in order to obtain a global solution. We choose here to work in the Lorenz gauge, which impose on the solution to satisfy the following condition
\bea\label{TheLorenzgaugeconditionthatwechoose}
\der^{\a}   A_{\a}  &=&  0 \; .
\eea

\subsection{The diffeomorphism invariance}\label{The diffeomorphism invariance}\

Let $(\cal{M}, A, \g)$ be a solution to the Einstein-Yang-Mills system. Now, consider a diffeomorphism $\phi : \cal{M} \to \cal{M}^{\prime}$ and define a metric $\g^\prime$ on $\cal{M}^{\prime}$ by $\phi^* \g^\prime = \g$ where $\phi^*$ is the pull-back of $\phi$. In other words, at each point $p \in \cal{M}$, we define $\g^\prime$ by
\beaa
g^\prime (\phi_* X, \phi_*Y)  (\phi (p)) = g (X, Y) (p) \, ,
\eeaa
where $\phi_* : T_p\cal{M} \to T_p\cal{M}^\prime $ is the push-forward of $\phi$ defined through
\beaa
\phi_* X(f) (\phi (p)) = X (\phi^* f) (p) \, ,
\eeaa
for all smooth functions $f$ on $\cal{M}^{\prime}$ and for all $X \in T_p\cal{M}$, and where $\phi^* f(p) := f(\phi(p))$.

Then, $(\cal{M}^\prime, A, \g^\prime)$ is also a solution to the Einstein-Yang-Mills equations, which is what we call the diffeomorphism invariance of the equations. Hence, a solution to the Einstein-Yang-Mills system is only defined up to a class, where two solutions are the same if they are isometric. However, this gives us the freedom to choose a representative of this class. 

Another way to look at the solution, is that one can eliminate the diffeomorphism invariance by fixing a system of coordinates. We choose to look at the manifold in harmonic coordinates, which 
means that we are fixing our system of coordinates $\{ x^\mu \} $ such that:
\bea\label{Thewavecoordinateconditionthatwechooseforoursystemofcoordinates}
\Box_{\g} x^\mu := \der^{\a}  \der_{\a}  x^\mu =  0 \, ,
\eea
and such that $x^0 = 0 $ on $ \Sigma $.

Since $x^{\mu}$ are scalar functions on $\cal{M}$, and not tensors, we have,
\beaa
\der_{\a}  x^{\mu}  &=&  \pa_{\a} ( x^\mu  )     .
\eeaa

We evaluate
\beaa
\der^{\a}  \der_{\a}  x^{\mu} &=& \pa^{\a}  \der_{\a}  x^{\mu}  -    \der_{\der^{\a } e_\a}  x^{\mu}   \\
&=&  \pa^{\a}  \pa_{\a} x^\mu     - { {\Ga^{\a}}_{\a}}^{\b}  \der_{\b }  x^{\mu}  \\
&=&  \pa^{\a}  \pa_{\a} x^\mu   - { {\Ga^{\a}}_{\a}}^{\b}  \pa_{\b }  x^{\mu}  \, ,
\eeaa
where $\Ga_{\, \, \, \a}^{\a  \, \, \, \mu}$ are the Christoffel symbols. We have
\beaa
\Ga_{\, \, \, \a}^{\a  \, \, \, \mu} = g^{\a\b}\Ga_{\b\a}^{\, \, \, \, \, \, \, \mu}  \, .
\eeaa

Computing the contraction, we get
 \beaa
 \pa_{\a}  x^\mu  = \begin{cases} 1,\quad\text{for}\quad \a= \mu \; ,\\
    0,\quad\text{for }\quad \a \neq \mu \; .\end{cases} 
\eeaa

Hence,
\beaa
 \pa^{\a}  \pa_{\a} x^\mu  = \pa^{\mu}  \pa_{\mu}   x^\mu  = \pa^{\mu} 1 = 0 \, .
  \eeaa
Thus,
\beaa
\der^{\a}  \der_{\a}  x^{\mu} &=&   -\Ga_{\, \, \, \a}^{\a  \, \, \, \b} \pa_{\b }  x^{\mu}  \\
 &=&   -\Ga_{\, \, \, \a}^{\a  \, \, \, \mu}  \pa_{\mu }  x^{\mu}  \\
  &=&   -\Ga_{\, \, \, \a}^{\a  \, \, \, \mu} \, .
\eeaa
Consequently,
\beaa
\der^{\a}  \der_{\a}  x^{\mu} &=& 0   
\eeaa
is equivalent to
\bea\label{wavecoordinatecondition}
 \Ga_{\, \, \, \a}^{\a  \, \, \, \mu} &=&g^{\a\nu} \Ga_{\nu \a}^{\, \,  \,  \,   \, \, \, \mu} =  0  \, .
\eea

Now, for any arbitrary tensor $\Psi$, we have
\beaa
\der^{\a}  \der_{\a}  \Psi  &=&   \der^{\a} ( \der_{\a}  \Psi ) -  \der_{ \der^{\a}  e_{\a}  }  \Psi \\
&=&   \der^{\a} ( \der_{\a}  \Psi ) -  \der_{ \Ga_{\, \, \, \a}^{\a  \, \, \, \mu} e_{\mu}  }  \Psi  \\
&=&   \der^{\a} ( \der_{\a}  \Psi ) -  \Ga_{\, \, \, \a}^{\a  \, \, \, \mu} \der_{  \mu  }  \Psi  \, .
\eeaa

Consequently, either in harmonic coordinates or in a geodesic frame (i.e. a frame where the Christoffel symbols vanish), we can write
\bea
\der^{\a}  \der_{\a}  \Psi &=&   \der^{\a} ( \der_{\a}  \Psi ) \, .
\eea

\begin{lemma}
In either wave coordinates or in a geodesic frame, the Lorenz gauge can be written as
\bea
\pa^{\a}  A_{\a}  & =& 0 .
 \eea
\end{lemma}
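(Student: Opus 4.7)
The plan is to expand the covariant divergence $\der^{\a} A_{\a}$ in terms of partial derivatives and Christoffel symbols, and then invoke the wave coordinate condition \eqref{wavecoordinatecondition} (or the vanishing of Christoffel symbols in a geodesic frame) to kill the correction term.

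Concretely, since $A$ is a one-form, I would write
\beaa
\der^{\a} A_{\a} = g^{\a\b} \der_{\b} A_{\a} = g^{\a\b} \big( \pa_{\b} A_{\a} - \Ga_{\b\a}^{\,\,\,\,\,\,\,\mu} A_{\mu} \big) = \pa^{\a} A_{\a} - g^{\a\b} \Ga_{\b\a}^{\,\,\,\,\,\,\,\mu} A_{\mu}.
\eeaa
The contracted Christoffel symbol appearing here is precisely $\Ga_{\,\,\,\a}^{\a\,\,\,\mu}$, which, by the computation carried out just above the lemma, vanishes identically in wave coordinates thanks to \eqref{wavecoordinatecondition}. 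In a geodesic frame, every $\Ga_{\b\a}^{\,\,\,\,\,\,\,\mu}$ vanishes at the point in question, so the same conclusion follows trivially.

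Therefore $\der^{\a} A_{\a} = \pa^{\a} A_{\a}$ in either setting, and the Lorenz condition $\der^{\a} A_{\a} = 0$ reduces to $\pa^{\a} A_{\a} = 0$. There is no genuine obstacle here; the only care needed is the bookkeeping of the sign and the identification $g^{\a\b}\Ga_{\b\a}^{\,\,\,\,\,\,\,\mu} = \Ga_{\,\,\,\a}^{\a\,\,\,\mu}$, both of which are immediate from the definitions used in the derivation of \eqref{wavecoordinatecondition}.
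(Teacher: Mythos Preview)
Your proof is correct and follows essentially the same approach as the paper: both expand $\der^{\a}A_{\a}$ and isolate the correction term $g^{\a\b}\Ga_{\b\a}^{\,\,\,\,\,\,\,\mu}A_{\mu} = \Ga_{\,\,\,\a}^{\a\,\,\,\mu}A_{\mu}$, which vanishes by \eqref{wavecoordinatecondition} in wave coordinates or trivially in a geodesic frame. The only cosmetic difference is that the paper writes the correction as $A(\der^{\a}e_{\a})$ before expanding $\der^{\a}e_{\a}$ in Christoffel symbols, whereas you use the component formula for the covariant derivative of a one-form directly.
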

\begin{proof}
We have
\beaa
  \der^{\a}  A_{\a} &=&  \pa^{\a}  A_{\a}  - A (\der^{\a} e_{\a} )    \\
&=& \pa^{\a}  A_{\a}  -\Ga_{\, \, \, \a}^{\a  \, \, \, \mu}  A (e_{\mu} ) \, .
 \eeaa
Thus, the result follows.
\end{proof}

\section{Looking at the metric as a perturbation of the Minkowski space-time}\label{LookingmetricperturbationMinkowskispacetime}

Now that we have fixed the coordinates to be the wave coordinates, let $m$ be Minkowski metric in these wave coordinates $\{x^0, ..., x^n\}$, i.e. $m$ is the metric prescribed by: 
\beaa
m_{00}&:=&-1\;,\qquad  m_{ii}:=1,\quad \text{if}\quad i=1, ...,n\,, \\
\quad\text{and}\quad m_{\mu\nu}&:=&0\;,\quad \text{if}\quad \mu\neq \nu \quad \text{for} \quad  \mu, \nu \in \{0, 1, ..., n\} \,.
\eeaa
\begin{definition}\label{definitionofsmallhandbigH}
We define $h$ as the 2-tensor given by:
\bea
h_{\mu\nu} := g_{\mu\nu} - m_{\mu\nu} \; .
\eea

Let $m^{\mu\nu}$ be the inverse of $m_{\mu\nu}$\,. We define
\bea
h^{\mu\nu} &:=& m^{\mu\mu^\prime}m^{\nu\nu^\prime}h_{\mu^\prime\nu^\prime} \;,\\
H^{\mu\nu} &:=& g^{\mu\nu}-m^{\mu\nu} \;.
\eea
\end{definition}

\begin{definition}\label{definitionofbigOonlyforAandhadgradientofAandgardientofh}
Let $K$ be a tensor that is either $A$ or $h$ or $H$\;, or $\derm A$\;, $\derm h$ or $\derm H$\;.

Let $ P_n (K )$ be tensors that are Polynomials of degree $n$\;, and $Q_1 (K)$ a tensor that is a Polynomial of degree $1$ such that $Q_1 (0) = 0$ and $Q_1 \neq 0$\;, of which the coefficients are components in wave coordinates of the metric $\textbf m$ and of the inverse metric $\textbf m^{-1}$, and of which the variables are components in wave coordinates of the covariant tensor $K$, leaving some indices free, so that the following product gives a tensor that we define as,
\bea
\notag
O_{\mu_{1} \ldots \mu_{k} } (K  ) &:=& Q_1 ( K) \cdot \Big( \sum_{n=0}^{\infty} P_n ( K ) \Big) \; .\\
\eea
For a family of tensors $K^{(1)}, \ldots,  K^{(m)}$, where each tensor $K^{(l)}$ is again either $A$ or $h$ or $H$\;, or $\derm A$\;, $\derm h$ or $\derm H$\;, we define
\bea
\notag
O_{\mu_{1} \ldots \mu_{k} } (K^{(1)} \cdot \ldots \cdot K^{(m)} ) &:=& \prod_{l=1}^{m} Q_{1}^{l} ( K^{(l)} ) \cdot \Big( \sum_{n=0}^{\infty} P_{n}^{l}  ( K^{(l)} ) \Big) \; .\\
\eea
where again $P_n^{l} (K^{l} )$ and $Q_1^l (K)$, are tensors that are Polynomials of degree $n$ and $1$, respectively, with $Q_1 (0) = 0$ and $Q_1 \neq 0$\;, of which the coefficients are components in wave coordinates of the metric $\textbf m$ and of the inverse metric $\textbf m^{-1}$, and of which the variables are components in wave coordinates of the covariant tensor $K^{l}$, leaving some indices free, so that at the end the whole product $\prod_{l=1}^{m} Q_{1}^{l} ( K^{(l)} ) \cdot \Big( \sum_{n=0}^{\infty} P_{n}^{l}  ( K^{(l)} ) \Big)$ gives a tensor which we define as $O_{\mu_{1} \ldots \mu_{k} } (K^{(1)} \cdot \ldots \cdot K^{(m)} )$. To lighten the notation, we shall sometimes drop the indices and just write $O (K^{(1)} \cdot \ldots \cdot K^{(m)} )$\;.

\begin{remark}\label{remarkaboutthedifferenceofdefinitionofbigOwhenwetalkaboutLiederivatives}
Note that in this Definition \ref{definitionofbigOonlyforAandhadgradientofAandgardientofh}, we did not include Lie derivatives of $A$ or $h$ or $H$ neither Lie derivatives of $\derm A$\;, $\derm h$ or $\derm H$. We will however generalize this definition, in a separate definition to include the Lie derivatives (see Definition \ref{definitionofbigOforLiederivatives}).
\end{remark}

\begin{remark}\label{thedefinitionofBigOforthesespecifictensorsalsoappliesforpartialderivativeinwavecoordinates}
The same definition for $O$ as in Definition \ref{definitionofbigOonlyforAandhadgradientofAandgardientofh}, is considered when we use the notation $\pa A$\;, $\pa h$ or $\pa H$ (instead of the Minkowski covariant derivatives), where naturally, the tensors are simply replaced by their partial derivatives in wave coordinates.
\end{remark}

\end{definition}

\begin{lemma}\label{BigHintermsofsmallh}
We have,
\bea
H^{\mu\nu}=-h^{\mu\nu}+O^{\mu\nu}(h^2) \, ,
\eea
or differently written
\beaa
g^{\mu\nu} = m^{\mu\nu}-h^{\mu\nu}  +O^{\mu\nu}(h^2) \, .
\eeaa

\end{lemma}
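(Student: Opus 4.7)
The plan is to derive an identity for $H$ from the defining relation $g_{\mu\nu} g^{\nu\lambda} = \delta_{\mu}^{\lambda}$ and then solve it by iteration. First, I would write $g_{\mu\nu} = m_{\mu\nu} + h_{\mu\nu}$ and $g^{\nu\lambda} = m^{\nu\lambda} + H^{\nu\lambda}$, so that expanding the product and using $m_{\mu\nu} m^{\nu\lambda} = \delta_{\mu}^{\lambda}$ gives
\begin{equation*}
m_{\mu\nu} H^{\nu\lambda} + h_{\mu\nu} m^{\nu\lambda} + h_{\mu\nu} H^{\nu\lambda} = 0 \, .
\end{equation*}
Contracting this with $m^{\mu\sigma}$ and using the definition $h^{\sigma\lambda} := m^{\mu\sigma} m^{\nu\nu'} h_{\mu\nu'}$ (with obvious index shuffling) yields the implicit equation
\begin{equation*}
H^{\sigma\lambda} = - h^{\sigma\lambda} - h^{\sigma}{}_{\nu} \, H^{\nu\lambda} \, ,
\end{equation*}
where $h^{\sigma}{}_{\nu} := m^{\mu\sigma} h_{\mu\nu}$ has entries linear in the components of $h$ with coefficients built from $m$ and $m^{-1}$.

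Next, I would solve this by iteration: substituting the equation into itself once gives
\begin{equation*}
H^{\sigma\lambda} = - h^{\sigma\lambda} + h^{\sigma}{}_{\nu} h^{\nu\lambda} + h^{\sigma}{}_{\nu} h^{\nu}{}_{\rho} H^{\rho\lambda} \, ,
\end{equation*}
and continuing the iteration one formally obtains the Neumann-type series
\begin{equation*}
H^{\sigma\lambda} = - h^{\sigma\lambda} + \sum_{n \geq 2} (-1)^{n} \, h^{\sigma}{}_{\nu_{1}} h^{\nu_{1}}{}_{\nu_{2}} \cdots h^{\nu_{n-1}\lambda} \, .
\end{equation*}
The tail $\sum_{n \geq 2} \ldots$ is precisely of the form covered by $O^{\sigma\lambda}(h^{2})$ from Definition \ref{definitionofbigOonlyforAandhadgradientofAandgardientofh}: each term is a product of components of $h$ with coefficients built only from entries of $m$ and $m^{-1}$, the lowest-order factor being quadratic in $h$ (so one can factor out $Q_{1}(h) \cdot h$ with $Q_{1}$ linear in $h$, vanishing at $h=0$). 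To make this rigorous without appealing to convergence of an infinite series, I would instead solve the linear system $(\delta^{\sigma}_{\nu} + h^{\sigma}{}_{\nu}) H^{\nu\lambda} = - h^{\sigma\lambda}$ by inverting the $(n+1)\times(n+1)$ matrix $\delta + h$ via Cramer's rule: its inverse has entries of the form $\delta^{\sigma}_{\nu} - h^{\sigma}{}_{\nu} + R^{\sigma}{}_{\nu}(h)$ where each entry of $R$ is a rational function in $h$ whose numerator vanishes to order $2$ at $h=0$ and whose denominator equals $\det(\delta + h) = 1 + O(h)$, so that $R$ fits the $O(h^{2})$ form directly.

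The computation is entirely algebraic and there is no real obstacle; the only point requiring care is the bookkeeping that identifies the remainder with the $O^{\mu\nu}(h^{2})$ symbol of Definition \ref{definitionofbigOonlyforAandhadgradientofAandgardientofh}, i.e. verifying that the remainder can indeed be written as $Q_{1}(h)\big(\sum_{n\geq 0} P_{n}(h)\big)$ with $Q_{1}$ linear in $h$ vanishing at zero and with coefficients depending only on $m$ and $m^{-1}$. This follows either from the explicit Neumann expansion above (factoring out one copy of $h^{\sigma}{}_{\nu} h^{\nu\lambda}$) or from the Cramer's rule form, and gives the stated identity $H^{\mu\nu} = - h^{\mu\nu} + O^{\mu\nu}(h^{2})$, equivalently $g^{\mu\nu} = m^{\mu\nu} - h^{\mu\nu} + O^{\mu\nu}(h^{2})$.
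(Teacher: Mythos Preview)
Your proof is correct. The paper takes a slightly different route: rather than deriving the implicit equation $H^{\sigma\lambda} = -h^{\sigma\lambda} - h^{\sigma}{}_{\nu} H^{\nu\lambda}$ and iterating it, the paper directly computes $g_{\mu\alpha}(m^{\alpha\nu} - h^{\alpha\nu})$, shows this equals $I_{\mu}^{\;\;\nu} + O_{\mu}^{\;\;\nu}(h^2)$, then multiplies by $g^{\lambda\mu}$ and inverts the factor $(1 + O(h^2))$. In effect the paper guesses the leading-order answer and verifies, whereas you set up and solve the linear system via the Neumann series or Cramer's rule; both are equally elementary, and your approach has the minor advantage of making the structure of the remainder term (and its fit with Definition~\ref{definitionofbigOonlyforAandhadgradientofAandgardientofh}) more explicit.
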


\begin{proof}
We compute,
\beaa
g_{\mu\a} \big(m^{\a\nu}  - h^{\a\nu} \big) &=& ( h_{\mu\a} + m_{\mu\a} )  \big(m^{\a\nu}  - h^{\a\nu} \big) =  ( h_{\mu\a} + m_{\mu\a} )  \big(m^{\a\nu}  -m^{\a\mu^\prime}m^{\nu\nu^\prime}h_{\mu^\prime\nu^\prime}   \big) \\
&=&   h_{\mu\a}  \big(m^{\a\nu}  -m^{\a\mu^\prime}m^{\nu\nu^\prime}h_{\mu^\prime\nu^\prime}   \big)  +   m_{\mu\a}   \big(m^{\a\nu}  -m^{\a\mu^\prime}m^{\nu\nu^\prime}h_{\mu^\prime\nu^\prime}   \big) \\
&=& h_{\mu\a}  m^{\a\nu}  -  h_{\mu\a} m^{\a\mu^\prime}m^{\nu\nu^\prime}h_{\mu^\prime\nu^\prime}     +   m_{\mu\a}   m^{\a\nu}  -m_{\mu\a}  m^{\a\mu^\prime}m^{\nu\nu^\prime}h_{\mu^\prime\nu^\prime}    .\\
\eeaa

Thus,
\beaa
g_{\mu\a} \big(m^{\a\nu}  - h^{\a\nu} \big) &=& h_{\mu\a}  m^{\a\nu}  -  h_{\mu\a} m^{\a\mu^\prime}m^{\nu\nu^\prime}h_{\mu^\prime\nu^\prime}     +   m_{\mu\a}   m^{\a\nu}  -m_{\mu\a}  m^{\a\mu^\prime}m^{\nu\nu^\prime}h_{\mu^\prime\nu^\prime}    \\
&=& h_{\mu\a}  m^{\a\nu}      + O_{\mu}^{\ \; \nu} (h^2) +   {I_{\mu}}^{\nu}     -{I_{\mu}}^{\mu^\prime}m^{\nu\nu^\prime}h_{\mu^\prime\nu^\prime}    \\
&=&          I_{\mu}^{\ \; \nu} + O_{\mu}^{\ \; \nu}(h^2)  +h_{\mu\a}  m^{\a\nu}     -   m^{\nu\nu^\prime} {I_{\mu}}^{\mu^\prime} h_{\mu^\prime\nu^\prime}    \\
&=&          I_{\mu}^{\ \; \nu} + O_{\mu}^{\ \; \nu}(h^2)  +h_{\mu\a}  m^{\a\nu}     -   m^{\nu\nu^\prime} h_{\mu\nu^\prime}    \\
&=&          I_{\mu}^{\ \; \nu} + O_{\mu}^{\ \; \nu}(h^2)  +h_{\mu\a}  m^{\a\nu}     -   m^{\nu^\prime\nu} h_{\mu\nu^\prime}    \\
&=&          I_{\mu}^{\ \; \nu}  + O_{\mu}^{\ \; \nu}(h^2)  +h_{\mu\a}  m^{\a\nu}     -    h_{\mu\nu^\prime}m^{\nu^\prime\nu}     \\
&=&          I_{\mu}^{\ \; \nu} + O_{\mu}^{\ \; \nu}(h^2)     .
\eeaa

Hence,
\beaa
g_{\mu\a} \big(m^{\a\nu}  - h^{\a\nu} \big) &=&          I_{\mu}^{\ \; \nu}+ O_{\mu}^{\ \; \nu}(h^2)  \, ,  \\
\eeaa
and multiplying on both sides, we get
\beaa
g^{\la \mu} g_{\mu\a} \big(m^{\a\nu}  - h^{\a\nu} \big) &=&  g^{\la \mu}        I_{\mu}^{\ \; \nu} + g^{\la \mu} O_{\mu}^{\ \; \nu} (h^2)     \\
 I^{\la}_{\ \; \a}  \big(m^{\a\nu}  - h^{\a\nu} \big)    &=&  g^{\la \nu}   + g^{\la \mu} O_{\mu}^{\ \; \nu} (h^2)     \\
m^{\la\nu}  - h^{\la\nu}     &=&  g^{\la \nu}   + g^{\la \mu} O_{\mu}^{\ \; \nu} (h^2)     \, , 
\eeaa
which gives,
\beaa
 g^{\la \nu}    &=&  m^{\la\nu}  - h^{\la\nu}  + g^{\la \mu} O_{\mu}^{\ \; \nu}  (h^2)   . \\
\eeaa

Consequently,
\beaa
  g^{\la \nu}  \big(1 +  O (h^2) \big) &=&  m^{\la\nu}  - h^{\la\nu}  \, ,
  \eeaa
and therefore,
\beaa
  g^{\la \nu}   &=&   \big( m^{\la\nu}  - h^{\la\nu}  \big)  \big(1 + O (h^2) \big)^{-1} \\
  &=&   \big( m^{\la\nu}  - h^{\la\nu}  \big)  \big(1 +  O (h^2) \big) \\
    &=&   m^{\la\nu}  - h^{\la\nu}  + m^{\la\nu}  O (h^2) + h^{\la\nu}  O (h^2) \\
     &=&   m^{\la\nu}  - h^{\la\nu}  +  O^{\la\nu} (h^2) + m^{\la\mu^\prime}m^{\nu\nu^\prime}h_{\mu^\prime\nu^\prime} O (h^2) \\
          &=&   m^{\la\nu}  - h^{\la\nu}  +  O^{\la\nu} (h^2) +  O^{\la\nu} (h^3) \\
          &=&   m^{\la\nu}  - h^{\la\nu}  +  O^{\la\nu} (h^2)  \, .
  \eeaa
  Thus, using Definition \ref{definitionofsmallhandbigH}, we get
\beaa
H^{\mu\nu}=g^{\mu\nu}-m^{\mu\nu}=-h^{\mu\nu}+O^{\mu\nu}(h^2) \, .
\eeaa

  \end{proof}
  
  \section{The Einstein-Yang-Mills equations in a given system of coordinates}

\begin{lemma}\label{EinsteinYangMillssystemusingpotentialandusingmetricg}

The Einstein-Yang-Mills equations read in a given system of coordinates, i.e where $\a\;, \b\;, \si\;, \la\;,$ run over a given system of coordinates, as follows,
\beaa
 && R_{ \mu \nu}  \\
 &=&  2  g^{\si\b} <   \pa_{\mu}A_{\b} - \pa_{\b}A_{\mu}  ,  \pa_{\nu}A_{\si} - \pa_{\nu}A_{\si}  >    - \frac{1}{(n-1)} g_{\mu\nu } \cdot g^{\si\b} g^{\a\la}  <  \pa_{\a}A_{\b} - \pa_{\b}A_{\a} , \pa_{\la}A_{\si} - \pa_{\si}A_{\la} >  \\
&& + 2  g^{\si\b} <   \pa_{\mu}A_{\b} - \pa_{\b}A_{\mu}  ,  [A_{\nu},A_{\si}] >  + 2  g^{\si\b} <   [A_{\mu},A_{\b}] ,  \pa_{\nu}A_{\si} - \pa_{\si}A_{\nu}  >  \\
&&  - \frac{1}{(n-1)} g_{\mu\nu } \cdot g^{\si\b} g^{\a\la}   <  \pa_{\a}A_{\b} - \pa_{\b}A_{\a} , [A_{\la},A_{\si}] >     - \frac{1}{(n-1)} g_{\mu\nu } \cdot g^{\si\b} g^{\a\la}  <  [A_{\a},A_{\b}] , \pa_{\la}A_{\si} - \pa_{\si}A_{\la}  >  \\
&& + 2  g^{\si\b} <   [A_{\mu},A_{\b}] ,  [A_{\nu},A_{\si}] >   - \frac{1}{(n-1)} g_{\mu\nu } \cdot g^{\si\b} g^{\a\la}  <  [A_{\a},A_{\b}] , [A_{\la},A_{\si}] >  .
\eeaa

\end{lemma}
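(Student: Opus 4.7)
The statement is a coordinate expansion of the simplified Einstein-Yang-Mills equations \eqref{simpEYM}, so the proof is essentially a substitution and bookkeeping calculation. The plan is to (i) rewrite $F_{\a\b}$ purely in terms of partial derivatives of $A$ and the bracket, (ii) raise indices on the two factors of each inner product using $g^{\a\b}$, and (iii) expand using bilinearity of $\langle\cdot,\cdot\rangle$.

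The key observation for step (i) is that in any coordinate system, because the Levi-Civita connection $\der$ is torsion-free, the Christoffel symbols satisfy $\Gamma^{\la}_{\,\,\a\b} = \Gamma^{\la}_{\,\,\b\a}$. Consequently, for the one-form $A$,
\begin{eqnarray*}
\der_\a A_\b - \der_\b A_\a &=& \bigl(\pa_\a A_\b - \Gamma^{\la}_{\,\,\a\b} A_\la\bigr) - \bigl(\pa_\b A_\a - \Gamma^{\la}_{\,\,\b\a} A_\la\bigr) \\
&=& \pa_\a A_\b - \pa_\b A_\a,
\end{eqnarray*}
so the coordinate expression of the Yang-Mills curvature \eqref{expF} reduces to $F_{\a\b} = \pa_\a A_\b - \pa_\b A_\a + [A_\a, A_\b]$, valid in any system of coordinates.

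For step (ii), starting from the simplified Einstein-Yang-Mills equation
\[ R_{\mu\nu} = 2\langle F_{\mu\b}, F_\nu^{\;\b}\rangle + \frac{1}{1-n}\, g_{\mu\nu}\langle F_{\a\b}, F^{\a\b}\rangle, \]
I raise the indices explicitly with $F_\nu^{\;\b} = g^{\b\si} F_{\nu\si}$ and $F^{\a\b} = g^{\a\la} g^{\b\si} F_{\la\si}$, so that the two independent metric contractions that appear in the statement emerge naturally (one $g^{\si\b}$ for the first term; the product $g^{\si\b} g^{\a\la}$ for the trace term).

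For step (iii), I substitute the coordinate formula for $F$ from step (i) into both factors of each inner product and expand using the bilinearity of the Ad-invariant scalar product $\langle\cdot,\cdot\rangle$ on $\mathcal{G}$. Each inner product splits into four pieces: a derivative-derivative piece (giving the first line of the stated expression), a derivative-bracket piece and its mirror bracket-derivative piece (giving the cross terms on the middle lines), and a bracket-bracket piece (giving the last line); the same decomposition applied to the trace term $g_{\mu\nu}\langle F_{\a\b}, F^{\a\b}\rangle$ produces the analogous four families with the overall factor $\frac{1}{1-n}g_{\mu\nu}$. Collecting all pieces reproduces exactly the expression in the statement. There is no real obstacle: the lemma is a routine algebraic expansion, with the only nontrivial input being the cancellation of Christoffel symbols in step (i), which is a direct consequence of the torsion-free property and would not hold for a general connection.
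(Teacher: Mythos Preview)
Your proposal is correct and takes essentially the same approach as the paper: both reduce $F_{\a\b}$ to $\pa_\a A_\b - \pa_\b A_\a + [A_\a,A_\b]$ via torsion-freeness (the paper phrases this as $A([e_\b,e_\mu])=0$ for coordinate vector fields, you phrase it as the symmetry of the Christoffel symbols), then raise indices with $g^{\si\b}$ and $g^{\a\la}$ and expand the inner products bilinearly.
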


\begin{proof}
The definition of the Yang-Mills curvature in \eqref{expF} gives that
\beaa
 F_{\mu\b}  &=& \der_{\mu}A_{\b} - \der_{\b}A_{\mu} + [A_{\mu},A_{\b}] \\
 &=& \pa_{\mu}A_{\b} - A (\der_\mu e_b)   - \pa_{\b}A_{\mu}  + A (\der_b e_\mu)+ [A_{\mu},A_{\b}]\\
 &=& \pa_{\mu}A_{\b}    - \pa_{\b}A_{\mu} + A (\der_b e_\mu - \der_\mu e_b)+ [A_{\mu},A_{\b}]\\
  &=& \pa_{\mu}A_{\b}    - \pa_{\b}A_{\mu} + A ([ e_b,  e_\mu ]) + [A_{\mu},A_{\b}] \; .
 \eeaa
 In a given system of coordinates, we have $[ e_b,  e_\mu ] = 0$. Therefore, in a system of coordinates, we have 
 \beaa
 F_{\mu\b} &=& \pa_{\mu}A_{\b}    - \pa_{\b}A_{\mu}  + [A_{\mu},A_{\b}] \; .
 \eeaa
 
We know from \eqref{simpEYM}, that the Einstein-Yang-Mills equations are

\beaa
R_{ \mu \nu}  &=& 2  \big( < F_{\mu\b}, F_{\nu}^{\;\;\b}> - g_{\mu\nu } \frac{1}{2(n-1)} < F_{\a\b },F^{\a\b }  > \big) \\
&=&  2 \big( g^{\si\b} <  F_{\mu\b}, {F_{\nu\si}}> - \frac{1}{2(n-1)} g_{\mu\nu } g^{\si\b} g^{\a\la} < F_{\a\b },F_{\la\si } > \big) .
\eeaa
Computing the right hand side, we get
\beaa
&&R_{ \mu \nu} \\
&=&  2 \big( g^{\si\b} <  F_{\mu\b}, {F_{\nu\si}}> - \frac{1}{2(n-1)} g_{\mu\nu } g^{\si\b} g^{\a\la}  < F_{\a\b },F_{\la\si } > \big) \\
&=&  2  g^{\si\b} <   \pa_{\mu}A_{\b} - \pa_{\b}A_{\mu} + [A_{\mu},A_{\b}] ,  \pa_{\nu}A_{\si} - \pa_{\si}A_{\nu} + [A_{\nu},A_{\si}] >  \\
&&  - \frac{1}{(n-1)} g_{\mu\nu } g^{\si\b} g^{\a\la}  <  \pa_{\a}A_{\b} - \pa_{\b}A_{\a} + [A_{\a},A_{\b}] , \pa_{\la}A_{\si} - \pa_{\si}A_{\la} + [A_{\la},A_{\si}] >  \\
&=&  2  g^{\si\b} <   \pa_{\mu}A_{\b} - \pa_{\b}A_{\mu}  ,  \pa_{\nu}A_{\si} - \pa_{\si}A_{\nu} + [A_{\nu},A_{\si}] >  \\
&& + 2  g^{\si\b} <   [A_{\mu},A_{\b}] ,  \pa_{\nu}A_{\si} - \pa_{\si}A_{\nu} + [A_{\nu},A_{\si}] >  \\
&&  - \frac{1}{(n-1)} g_{\mu\nu } g^{\si\b} g^{\a\la}  <  \pa_{\a}A_{\b} - \pa_{\b}A_{\a} , \pa_{\la}A_{\si} - \pa_{\si}A_{\la} + [A_{\la},A_{\si}] >  \\
&&  - \frac{1}{(n-1)} g_{\mu\nu } g^{\si\b} g^{\a\la}  <  [A_{\a},A_{\b}] , \pa_{\la}A_{\si} - \pa_{\si}A_{\la} + [A_{\la},A_{\si}] >  \\
&=&  2  g^{\si\b} <   \pa_{\mu}A_{\b} - \pa_{\b}A_{\mu}  ,  \pa_{\nu}A_{\si} - \pa_{\si}A_{\nu}  >  \\
&& + 2  g^{\si\b} <   \pa_{\mu}A_{\b} - \pa_{\b}A_{\mu}  ,  [A_{\nu},A_{\si}] >  \\
&& + 2  g^{\si\b} <   [A_{\mu},A_{\b}] ,  \pa_{\nu}A_{\si} - \pa_{\si}A_{\nu}  >  \\
&& + 2  g^{\si\b} <   [A_{\mu},A_{\b}] ,  [A_{\nu},A_{\si}] >  \\
&&  - \frac{1}{(n-1)} g_{\mu\nu } g^{\si\b} g^{\a\la}  <  \pa_{\a}A_{\b} - \pa_{\b}A_{\a} , \pa_{\la}A_{\si} - \pa_{\si}A_{\la} >  \\
&&  - \frac{1}{(n-1)} g_{\mu\nu } g^{\si\b} g^{\a\la} <  \pa_{\a}A_{\b} - \pa_{\b}A_{\a} , [A_{\la},A_{\si}] >  \\
&&  - \frac{1}{(n-1)} g_{\mu\nu } g^{\si\b} g^{\a\la}  <  [A_{\a},A_{\b}] , \pa_{\la}A_{\si} - \pa_{\si}A_{\la}  >  \\
&&  - \frac{1}{(n-1)} g_{\mu\nu } g^{\si\b} g^{\a\la} <  [A_{\a},A_{\b}] , [A_{\la},A_{\si}] >  .
\eeaa
Hence, we get the stated result.
\end{proof}

  \begin{lemma}\label{EinsteinYangMillssystemusingpotentialandusingMinkowskimetricmandperturbationh},
  The Einstein-Yang-Mills equations in a given system of coordinates, i.e. where $\a\;, \b\;, \si\;, \la\;,$ run over a given system of coordinates,  can be written as
\beaa
\notag
&& R_{ \mu \nu}   \\
\notag
 &=&   2  m^{\si\b} \cdot  <   \pa_{\mu}A_{\b} - \pa_{\b}A_{\mu}  ,  \pa_{\nu}A_{\si} -\pa_{\si}A_{\nu}  >           - \frac{1}{(n-1)} m_{\mu\nu } \cdot m^{\si\b}  m^{\a\la}    \cdot   <  \pa_{\a}A_{\b} - \pa_{\b}A_{\a} , \pa_{\la}A_{\si} - \pa_{\si}A_{\la} >   \\
 \notag
&&  +           2  m^{\si\b}  \cdot  \big( <   \pa_{\mu}A_{\b} - \pa_{\b}A_{\mu}  ,  [A_{\nu},A_{\si}] >   + <   [A_{\mu},A_{\b}] ,  \pa_{\nu}A_{\si} - \pa_{\si}A_{\nu} > \big)  \\
\notag
&&    - \frac{1}{(n-1)} m_{\mu\nu }  \cdot m^{\si\b}  m^{\a\la}   \cdot  \big(  <  \pa_{\a}A_{\b} - \pa_{\b}A_{\a} , [A_{\la},A_{\si}] >    +  <  [A_{\a},A_{\b}] , \pa_{\la}A_{\si} - \pa_{\si}A_{\la}  > \big) \\
\notag
 && +      2 m^{\si\b}  \cdot  <   [A_{\mu},A_{\b}] ,  [A_{\nu},A_{\si}] >       - \frac{1}{(n-1)} m_{\mu\nu } \cdot   m^{\si\b}  m^{\a\la}   .  <  [A_{\a},A_{\b}] , [A_{\la},A_{\si}] >  \\
     && + O \big( h \cdot  (\pa A)^2 \big)   + O \big(  h \cdot  A^2 \cdot  \pa A \big)     + O \big(  h  \cdot  A^4 \big)    \, ,
\eeaa

where here the notation $O$ is defined as in Remark \ref{thedefinitionofBigOforthesespecifictensorsalsoappliesforpartialderivativeinwavecoordinates}.

\end{lemma}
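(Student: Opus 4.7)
The plan is to start from the formula for $R_{\mu\nu}$ given in Lemma \ref{EinsteinYangMillssystemusingpotentialandusingmetricg}, in which every occurrence of the metric appears either as $g_{\mu\nu}$ or as the inverse $g^{\sigma\beta}$ (and $g^{\alpha\lambda}$), and to rewrite each contraction in terms of the Minkowski metric $m$ plus a remainder. By Definition \ref{definitionofsmallhandbigH}, $g_{\mu\nu}=m_{\mu\nu}+h_{\mu\nu}$, and by Lemma \ref{BigHintermsofsmallh}, $g^{\mu\nu}=m^{\mu\nu}-h^{\mu\nu}+O^{\mu\nu}(h^{2})$. Substituting these two identities into each summand on the right-hand side of Lemma \ref{EinsteinYangMillssystemusingpotentialandusingmetricg} produces, for each summand, a leading piece obtained by replacing $g$ by $m$, together with a remainder that carries at least one explicit factor of $h$.

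Next I would organize the resulting expression according to the order of nonlinearity in $A$ and $\pa A$. The two summands quadratic in $\pa A$ yield the first two Minkowski-indexed main terms in the statement plus a remainder absorbed into $O(h \cdot (\pa A)^{2})$; the four summands of mixed type $\pa A \cdot A^{2}$ yield the next four Minkowski-indexed contributions plus a remainder absorbed into $O(h \cdot A^{2} \cdot \pa A)$; and the two summands quartic in $A$ yield the last two Minkowski-indexed contributions plus a remainder absorbed into $O(h \cdot A^{4})$. The numerical coefficients $2$ and $-\tfrac{1}{n-1}$ are independent of the metric and therefore carry over unchanged.

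The only bookkeeping that requires some care is the expansion of the triple products $g^{\sigma\beta} g^{\alpha\lambda} g_{\mu\nu}$ that multiply the $\tfrac{1}{n-1}$-terms. Writing $g^{\sigma\beta} = m^{\sigma\beta} - h^{\sigma\beta} + O^{\sigma\beta}(h^{2})$ (similarly for $g^{\alpha\lambda}$) together with $g_{\mu\nu} = m_{\mu\nu}+h_{\mu\nu}$ and multiplying out, one obtains $m^{\sigma\beta} m^{\alpha\lambda} m_{\mu\nu}$ plus a sum of terms each carrying at least one factor of $h$, with coefficients polynomial in $m$ and in $h$ itself. By the multiplicative structure allowed in Definition \ref{definitionofbigOonlyforAandhadgradientofAandgardientofh}, each such correction, once multiplied by the remaining $A$ and $\pa A$ factors, falls into one of the three $O$-classes in the statement. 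I expect this algebraic accounting to be the main (and only mildly tedious) step; no cancellations or gauge conditions are needed, and no $A$- or $\pa A$-independent remainder is generated, which gives precisely the stated decomposition.
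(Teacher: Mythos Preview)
Your proposal is correct and follows essentially the same approach as the paper: the paper's proof starts from Lemma \ref{EinsteinYangMillssystemusingpotentialandusingmetricg}, substitutes $g_{\mu\nu}=m_{\mu\nu}+h_{\mu\nu}$ and $g^{\mu\nu}=m^{\mu\nu}-h^{\mu\nu}+O^{\mu\nu}(h^{2})$ into each of the six summands one by one, and collects all $h$-carrying corrections into $O(h\cdot(\pa A)^{2})$, $O(h\cdot A^{2}\cdot\pa A)$, and $O(h\cdot A^{4})$ exactly as you describe. Your identification of the triple product $g^{\si\b}g^{\a\la}g_{\mu\nu}$ expansion as the only mildly tedious step is also accurate.
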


\begin{proof}

In Lemma \ref{EinsteinYangMillssystemusingpotentialandusingmetricg}, we compute the terms on the right hand side of the equality, one by one in order,
   \beaa
&& R_{ \mu \nu}  \\
&=&  2  g^{\si\b} <   \pa_{\mu}A_{\b} - \pa_{\b}A_{\mu}  ,  \pa_{\nu}A_{\si} - \pa_{\si}A_{\nu}  >    - \frac{1}{(n-1)} g_{\mu\nu } g^{\si\b} g^{\a\la}  <  \pa_{\a}A_{\b} - \pa_{\b}A_{\a} , \pa_{\la}A_{\si} - \pa_{\si}A_{\la} >  \\
&& + 2  g^{\si\b} \big( <   \pa_{\mu}A_{\b} - \pa_{\b}A_{\mu}  ,  [A_{\nu},A_{\si}] >  +  <   [A_{\mu},A_{\b}] ,  \pa_{\nu}A_{\si} - \pa_{\si}A_{\nu} >   \big) \\
&&  - \frac{1}{(n-1)} g_{\mu\nu } g^{\si\b} g^{\a\la}  \big(  <  \pa_{\a}A_{\b} - \pa_{\b}A_{\a} , [A_{\la},A_{\si}] >    +  <  [A_{\a},A_{\b}] , \pa_{\la}A_{\si} - \pa_{\si}A_{\la}  > \big)  \\
&& + 2  g^{\si\b} <   [A_{\mu},A_{\b}] ,  [A_{\nu},A_{\si}] >   - \frac{1}{(n-1)} g_{\mu\nu } g^{\si\b} g^{\a\la}  <  [A_{\a},A_{\b}] , [A_{\la},A_{\si}] >  \, . 
\eeaa

\textbf{First term}

We have
\beaa
 &&  2  g^{\si\b} <   \pa_{\mu}A_{\b} - \pa_{\b}A_{\mu}  ,  \pa_{\nu}A_{\si} - \pa_{\si}A_{\nu} >  \\
 &=&   2  m^{\si\b} <   \pa_{\mu}A_{\b} - \pa_{\b}A_{\mu}  ,  \pa_{\nu}A_{\si} - \pa_{\si}A_{\nu}  >   -  2  h^{\si\b} <   \pa_{\mu}A_{\b} - \pa_{\b}A_{\mu}  ,  \pa_{\nu}A_{\si} - \pa_{\si}A_{\nu}>    \\
 && + O (h^2 \cdot  (\pa A)^2 ) \\
  &=&   2  m^{\si\b} <   \pa_{\mu}A_{\b} - \pa_{\b}A_{\mu}  ,  \pa_{\nu}A_{\si} - \pa_{\si}A_{\nu} > \\
   && + O ((h + h^2 )\cdot  (\pa A)^2 ) \\
     &=&   2  m^{\si\b} <   \pa_{\mu}A_{\b} - \pa_{\b}A_{\mu}  ,  \pa_{\nu}A_{\si} - \pa_{\si}A_{\nu} > \\
   && + O (h  \cdot (\pa A)^2 ) \, .
\eeaa

\textbf{Second term}

\beaa
 && - \frac{1}{(n-1)} g_{\mu\nu } g^{\si\b} g^{\a\la}  <  \pa_{\a}A_{\b} - \pa_{\b}A_{\a} , \pa_{\la}A_{\si} - \pa_{\si}A_{\la} >   \\
 &=& - \frac{1}{(n-1)} ( m_{\mu\nu } + h_{\mu\nu}  ) (m^{\si\b} -h^{\si\b}   +O^{\si\b} (h^2) ) (m^{\a\la} -h^{\a\la}   +O^{\a\la} (h^2) )  \\
 && .<  \pa_{\a}A_{\b} - \pa_{\b}A_{\a} , \pa_{\la}A_{\si} - \pa_{\si}A_{\la} >   \\
 &=& \big[ - \frac{1}{(n-1)} m_{\mu\nu }  \big( m^{\si\b} -h^{\si\b}   +O^{\si\b} (h^2) \big)  \big( m^{\a\la} -h^{\a\la}   +O^{\a\la} (h^2) \big)  \\
 && - \frac{1}{(n-1)} h_{\mu\nu}  \big( m^{\si\b} -h^{\si\b}   +O^{\si\b} (h^2) \big) \big( m^{\a\la} -h^{\a\la}   +O^{\a\la} (h^2) \big)  \big] \\
 && .  <  \pa_{\a}A_{\b} - \pa_{\b}A_{\a} , \pa_{\la}A_{\si} - \pa_{\si}A_{\la} >   \\
 &=& I_1 + I_2 \, .
\eeaa

We have
\beaa
&& I_1 \\
 &=&  - \frac{1}{(n-1)} m_{\mu\nu } (m^{\si\b} -h^{\si\b}   +O^{\si\b} (h^2) ) (m^{\a\la} -h^{\a\la}   +O^{\a\la} (h^2) ) \\
   && .  <  \pa_{\a}A_{\b} - \pa_{\b}A_{\a} , \pa_{\la}A_{\si} - \pa_{\si}A_{\la} >   \\
  &=&   - \frac{1}{(n-1)} m_{\mu\nu } ( m^{\si\b}  m^{\a\la} -m^{\si\b}  h^{\a\la}   +O^{\a\la} (h^2) )   \\
   && .  <  \pa_{\a}A_{\b} - \pa_{\b}A_{\a} , \pa_{\la}A_{\si} - \pa_{\si}A_{\la} >   \\
  &&  + \frac{1}{(n-1)} m_{\mu\nu } ( h^{\si\b}  m^{\a\la} - h^{\si\b}  h^{\a\la}   +O^{\a\la} (h^3) )  \\
   && .  <  \pa_{\a}A_{\b} - \pa_{\b}A_{\a} , \pa_{\la}A_{\si} - \pa_{\si}A_{\la} >   \\
&& +  O\big( ( h^2 + h^3 + h^4 )  \cdot  (\pa A)^2 \big) \, . \\
\eeaa
 On one hand,
 \beaa
   &&   - \frac{1}{(n-1)} m_{\mu\nu } ( m^{\si\b}  m^{\a\la} -m^{\si\b}  h^{\a\la}   +O^{\a\la} (h^2) )  .  <  \pa_{\a}A_{\b} - \pa_{\b}A_{\a} , \pa_{\la}A_{\si} - \pa_{\si}A_{\la} >   \\
   &=&     - \frac{1}{(n-1)} m_{\mu\nu }  m^{\si\b}  m^{\a\la}     .  <  \pa_{\a}A_{\b} - \pa_{\b}A_{\a} , \pa_{\la}A_{\si} - \pa_{\si}A_{\la} >   \\
         &&   + \frac{1}{(n-1)} m_{\mu\nu }  m^{\si\b}  h^{\a\la}   .  <  \pa_{\a}A_{\b} - \pa_{\b}A_{\a} , \pa_{\la}A_{\si} - \pa_{\si}A_{\la} >   \\
         && + O \big( ( h^2  ) \cdot (\pa A)^2 \big) \, .
 \eeaa
 
 On the other hand,
 \beaa
   &&   \frac{1}{(n-1)} m_{\mu\nu } ( h^{\si\b}  m^{\a\la} - h^{\si\b}  h^{\a\la}   +O^{\a\la} (h^3) )   .  <  \pa_{\a}A_{\b} - \pa_{\b}A_{\a} , \pa_{\la}A_{\si} - \pa_{\si}A_{\la} >   \\
&=&    \frac{1}{(n-1)} m_{\mu\nu }  h^{\si\b}  m^{\a\la}   .  <  \pa_{\a}A_{\b} - \pa_{\b}A_{\a} , \pa_{\la}A_{\si} - \pa_{\si}A_{\la} >   \\
   &&  - \frac{1}{(n-1)} m_{\mu\nu } h^{\si\b}  h^{\a\la}    .  <  \pa_{\a}A_{\b} - \pa_{\b}A_{\a} , \pa_{\la}A_{\si} - \pa_{\si}A_{\la} >   \\
         && + O \big( ( h^3  )  \cdot  (\pa A)^2 \big) \, .
\eeaa
In conclusion,
\beaa
 && - \frac{1}{(n-1)} g_{\mu\nu } g^{\si\b} g^{\a\la}  <  \pa_{\a}A_{\b} - \pa_{\b}A_{\a} , \pa_{\la}A_{\si} - \pa_{\si}A_{\la} >   \\
   &=&     - \frac{1}{(n-1)} m_{\mu\nu }  m^{\si\b}  m^{\a\la}     .  <  \pa_{\a}A_{\b} - \pa_{\b}A_{\a} , \pa_{\la}A_{\si} - \pa_{\si}A_{\la} >   \\
         &&   + \frac{1}{(n-1)} m_{\mu\nu }  m^{\si\b}  h^{\a\la}   .  <  \pa_{\a}A_{\b} - \pa_{\b}A_{\a} , \pa_{\la}A_{\si} - \pa_{\si}A_{\la} >   \\
         && + O \big( ( h^2  )  \cdot  (\pa A)^2 \big). \\
         &&+    \frac{1}{(n-1)} m_{\mu\nu }  h^{\si\b}  m^{\a\la}   .  <  \pa_{\a}A_{\b} - \pa_{\b}A_{\a} , \pa_{\la}A_{\si} - \pa_{\si}A_{\la} >   \\
   &&  - \frac{1}{(n-1)} m_{\mu\nu } h^{\si\b}  h^{\a\la}    .  <  \pa_{\a}A_{\b} - \pa_{\b}A_{\a} , \pa_{\la}A_{\si} - \pa_{\si}A_{\la} >   \\
         && + O \big( ( h^3  )  \cdot  (\pa A)^2 \big) \\
            &=&     - \frac{1}{(n-1)} m_{\mu\nu }  m^{\si\b}  m^{\a\la}     .  <  \pa_{\a}A_{\b} - \pa_{\b}A_{\a} , \pa_{\la}A_{\si} - \pa_{\si}A_{\la} >   \\
             && + O \big( ( h+ h^2 + h^3  )  \cdot (\pa A)^2 \big) \, . \\
 \eeaa
 Finally,
 \beaa
 && - \frac{1}{(n-1)} g_{\mu\nu } g^{\si\b} g^{\a\la}  <  \pa_{\a}A_{\b} - \pa_{\b}A_{\a} , \pa_{\la}A_{\si} - \pa_{\si}A_{\la} >   \\
     &=&     - \frac{1}{(n-1)} m_{\mu\nu }  m^{\si\b}  m^{\a\la}     .  <  \pa_{\a}A_{\b} - \pa_{\b}A_{\a} , \pa_{\la}A_{\si} - \pa_{\si}A_{\la} >   \\
             && + O \big(  h   \cdot  (\pa A)^2 \big)  \, . \\
 \eeaa
 
\textbf{Third term}

\beaa
&&  2  g^{\si\b}   \big(  <   \pa_{\mu}A_{\b} - \pa_{\b}A_{\mu}  ,  [A_{\nu},A_{\si}] >  +  <   [A_{\mu},A_{\b}] ,  \pa_{\nu}A_{\si} - \pa_{\si }A_{\nu}  > \big)  \\
&=& 2 ( m^{\si\b}-h^{\si\b}  +O^{\si\b}(h^2) ) \big( <   \pa_{\mu}A_{\b} - \pa_{\b}A_{\mu}  ,  [A_{\nu},A_{\si}] >   + <   [A_{\mu},A_{\b}] ,  \pa_{\nu}A_{\si} - \pa_{\si }A_{\nu}  > \big)  \\
&=& 2  m^{\si\b}  \big( <   \pa_{\mu}A_{\b} - \pa_{\b}A_{\mu}  ,  [A_{\nu},A_{\si}] >   + <   [A_{\mu},A_{\b}] ,  \pa_{\nu}A_{\si} - \pa_{\nu}A_{\si}  > \big)  \\
&& - 2 h^{\si\b} \big( <   \pa_{\mu}A_{\b} - \pa_{\b}A_{\mu}  ,  [A_{\nu},A_{\si}] >   + <   [A_{\mu},A_{\b}] ,  \pa_{\nu}A_{\si} - \pa_{\nu}A_{\si}  > \big)  \\
  && + O \big(  h^2   \cdot  A^2  \cdot  \pa A \big) \\
&=& 2  m^{\si\b}  \big( <   \pa_{\mu}A_{\b} - \pa_{\b}A_{\mu}  ,  [A_{\nu},A_{\si}] >   + <   [A_{\mu},A_{\b}] ,  \pa_{\nu}A_{\si} - \pa_{\nu}A_{\si}  > \big)  \\
  && + O \big( (h + h^2 )  \cdot  A^2  \cdot  \pa A \big)  \, . \\
\eeaa
Thus,

\beaa
&&  2  g^{\si\b}   \big(  <   \pa_{\mu}A_{\b} - \pa_{\b}A_{\mu}  ,  [A_{\nu},A_{\si}] >  +  <   [A_{\mu},A_{\b}] ,  \pa_{\nu}A_{\si} - \pa_{\nu}A_{\si}  > \big)  \\
&=& 2  m^{\si\b}  \big( <   \pa_{\mu}A_{\b} - \pa_{\b}A_{\mu}  ,  [A_{\nu},A_{\si}] >   + <   [A_{\mu},A_{\b}] ,  \pa_{\nu}A_{\si} - \pa_{\nu}A_{\si}  > \big)  \\
  && + O \big(  h  \cdot  A^2  \cdot  \pa A \big) \, . \\
\eeaa

\textbf{Fourth term}

\beaa
&&  - \frac{1}{(n-1)} g_{\mu\nu } g^{\si\b} g^{\a\la}  \big(  <  \pa_{\a}A_{\b} - \pa_{\b}A_{\a} , [A_{\la},A_{\si}] >    +  <  [A_{\a},A_{\b}] , \pa_{\la}A_{\si} - \pa_{\si}A_{\la}  > \big)  \\
&=& - \frac{1}{(n-1)} ( m_{\mu\nu } + h_{\mu\nu}  ) (m^{\si\b} -h^{\si\b}   +O^{\si\b} (h^2) ) (m^{\a\la} -h^{\a\la}   +O^{\a\la} (h^2) )  \\
&& . \big(  <  \pa_{\a}A_{\b} - \pa_{\b}A_{\a} , [A_{\la},A_{\si}] >    +  <  [A_{\a},A_{\b}] , \pa_{\la}A_{\si} - \pa_{\si}A_{\la}  > \big) \\
&=& - \frac{1}{(n-1)}  m_{\mu\nu }  (m^{\si\b} -h^{\si\b}   +O^{\si\b} (h^2) ) (m^{\a\la} -h^{\a\la}   +O^{\a\la} (h^2) )  \\
&& . \big(  <  \pa_{\a}A_{\b} - \pa_{\b}A_{\a} , [A_{\la},A_{\si}] >    +  <  [A_{\a},A_{\b}] , \pa_{\la}A_{\si} - \pa_{\si}A_{\la}  > \big) \\
&& - \frac{1}{(n-1)}   h_{\mu\nu}  (m^{\si\b} -h^{\si\b}   +O^{\si\b} (h^2) ) (m^{\a\la} -h^{\a\la}   +O^{\a\la} (h^2) )  \\
&& . \big(  <  \pa_{\a}A_{\b} - \pa_{\b}A_{\a} , [A_{\la},A_{\si}] >    +  <  [A_{\a},A_{\b}] , \pa_{\la}A_{\si} - \pa_{\si}A_{\la}  > \big) \\
&=& J_1 + J_2 + O \big( (h +  h^2 + h^3 + h^4 )  \cdot  A^2  \cdot  \pa A \big) \, ,
\eeaa
where

\beaa
 && J_1 \\
 &=&   (  - \frac{1}{(n-1)} m_{\mu\nu }  m^{\si\b}  m^{\a\la}      + \frac{1}{(n-1)} m_{\mu\nu }  m^{\si\b}  h^{\a\la} ) . \big(  <  \pa_{\a}A_{\b} - \pa_{\b}A_{\a} , [A_{\la},A_{\si}] >  \\
 && +  <  [A_{\a},A_{\b}] , \pa_{\la}A_{\si} - \pa_{\si}A_{\la}  > \big) + O \big( ( h^2  )  \cdot  A^2  \cdot  \pa A \big) \\
     &=&    - \frac{1}{(n-1)} m_{\mu\nu }  m^{\si\b}  m^{\a\la}    . \big(  <  \pa_{\a}A_{\b} - \pa_{\b}A_{\a} , [A_{\la},A_{\si}] >    +  <  [A_{\a},A_{\b}] , \pa_{\la}A_{\si} - \pa_{\si}A_{\la}  > \big) \\
       && + O \big( ( h + h^2  )  \cdot  A^2  \cdot  \pa A \big)    \, ,
 \eeaa
 
  \beaa
  && J_2 \\
   &=&   \frac{1}{(n-1)} m_{\mu\nu } ( h^{\si\b}  m^{\a\la} - h^{\si\b}  h^{\a\la}   +O^{\a\la} (h^3) )   . \big(  <  \pa_{\a}A_{\b} - \pa_{\b}A_{\a} , [A_{\la},A_{\si}] >   \\
   && +  <  [A_{\a},A_{\b}] , \pa_{\la}A_{\si} - \pa_{\si}A_{\la}  > \big)  \\
&=& (   \frac{1}{(n-1)} m_{\mu\nu }  h^{\si\b}  m^{\a\la}     - \frac{1}{(n-1)} m_{\mu\nu } h^{\si\b}  h^{\a\la}   ) .   \big(  <  \pa_{\a}A_{\b} - \pa_{\b}A_{\a} , [A_{\la},A_{\si}] >   \\
&& +  <  [A_{\a},A_{\b}] , \pa_{\la}A_{\si} - \pa_{\si}A_{\la}  > \big)   + O \big( ( h^3  )   \cdot  A^2  \cdot  \pa A \big)  \\
         &=&     O \big( ( h +h^2+ h^3  )  \cdot A^2  \cdot \pa A \big) \, .
\eeaa

Hence,

\beaa
&&  - \frac{1}{(n-1)} g_{\mu\nu } g^{\si\b} g^{\a\la}  \big(  <  \pa_{\a}A_{\b} - \pa_{\b}A_{\a} , [A_{\la},A_{\si}] >    +  <  [A_{\a},A_{\b}] , \pa_{\la}A_{\si} - \pa_{\si}A_{\la}  > \big)  \\
&=&    - \frac{1}{(n-1)} m_{\mu\nu }  m^{\si\b}  m^{\a\la}    . \big(  <  \pa_{\a}A_{\b} - \pa_{\b}A_{\a} , [A_{\la},A_{\si}] >    +  <  [A_{\a},A_{\b}] , \pa_{\la}A_{\si} - \pa_{\si}A_{\la}  > \big) \\
       && + O \big(  h  \cdot  A^2  \cdot  \pa A \big)    \, .
\eeaa

\textbf{Fifth term}

\beaa
&&  2  g^{\si\b} <   [A_{\mu},A_{\b}] ,  [A_{\nu},A_{\si}] >  \\
&=& 2 (m^{\si\b} -h^{\si\b}   +O^{\si\b} (h^2) ) . <   [A_{\mu},A_{\b}] ,  [A_{\nu},A_{\si}] >  \\
       &=& 2 m^{\si\b}  . <   [A_{\mu},A_{\b}] ,  [A_{\nu},A_{\si}] >  - 2 h^{\si\b}  . <   [A_{\mu},A_{\b}] ,  [A_{\nu},A_{\si}] >  \\
       && + O \big(  h^2  \cdot  A^4 \big)    \\
              &=& 2 m^{\si\b}  . <   [A_{\mu},A_{\b}] ,  [A_{\nu},A_{\si}] >   \\
       && + O \big(  h  \cdot  A^4 \big)    \, .
\eeaa

\textbf{Sixth term}

\beaa
&& - \frac{1}{(n-1)} g_{\mu\nu } g^{\si\b} g^{\a\la}  <  [A_{\a},A_{\b}] , [A_{\la},A_{\si}] >  \\
&=& - \frac{1}{(n-1)} ( m_{\mu\nu } + h_{\mu\nu}  ) (m^{\si\b} -h^{\si\b}   +O^{\si\b} (h^2) ) (m^{\a\la} -h^{\a\la}   +O^{\a\la} (h^2) )  .  <  [A_{\a},A_{\b}] , [A_{\la},A_{\si}] >  \\
&=& - \frac{1}{(n-1)}  m_{\mu\nu } (m^{\si\b} -h^{\si\b}   +O^{\si\b} (h^2) ) (m^{\a\la} -h^{\a\la}   +O^{\a\la} (h^2) )  .  <  [A_{\a},A_{\b}] , [A_{\la},A_{\si}] >  \\
&& - \frac{1}{(n-1)}  h_{\mu\nu}   (m^{\si\b} -h^{\si\b}   +O^{\si\b} (h^2) ) (m^{\a\la} -h^{\a\la}   +O^{\a\la} (h^2) )  .  <  [A_{\a},A_{\b}] , [A_{\la},A_{\si}] >  \\
&=&   \big(  - \frac{1}{(n-1)} m_{\mu\nu }  m^{\si\b}  m^{\a\la}      + \frac{1}{(n-1)} m_{\mu\nu }  m^{\si\b}  h^{\a\la}  +   \frac{1}{(n-1)} m_{\mu\nu }  h^{\si\b}  m^{\a\la} \\
&&    - \frac{1}{(n-1)} m_{\mu\nu } h^{\si\b}  h^{\a\la}   \big)  .  <  [A_{\a},A_{\b}] , [A_{\la},A_{\si}] >   + O \big( ( h + h^2 + h^3 + h^4 ) \cdot A^4 \big)    \\
    &=&     - \frac{1}{(n-1)} m_{\mu\nu }  m^{\si\b}  m^{\a\la}   .  <  [A_{\a},A_{\b}] , [A_{\la},A_{\si}] >  \\
    && + O \big( ( h +  h^2 + h^3 + h^4 ) \cdot A^4 \big)    \\
        &=&     - \frac{1}{(n-1)} m_{\mu\nu }  m^{\si\b}  m^{\a\la}   .  <  [A_{\a},A_{\b}] , [A_{\la},A_{\si}] >  \\
    && + O \big(  h   \cdot  A^4 \big)    \, .
 \eeaa
 
\textbf{Final result}

   \beaa
&& R_{ \mu \nu}  \\
 &=&   2  m^{\si\b} <   \pa_{\mu}A_{\b} - \pa_{\b}A_{\mu}  ,  \pa_{\nu}A_{\si} - \pa_{\si}A_{\nu} >       \\
 && + O (h  \cdot  (\pa A)^2 ). \\
    &&     - \frac{1}{(n-1)} m_{\mu\nu }  m^{\si\b}  m^{\a\la}     .  <  \pa_{\a}A_{\b} - \pa_{\b}A_{\a} , \pa_{\la}A_{\si} - \pa_{\si}A_{\la} >   \\
             && + O \big(  h  \cdot (\pa A)^2 \big) \\
&&  +           2  m^{\si\b}  \big( <   \pa_{\mu}A_{\b} - \pa_{\b}A_{\mu}  ,  [A_{\nu},A_{\si}] >   + <   [A_{\mu},A_{\b}] ,  \pa_{\nu}A_{\si} -\pa_{\si}A_{\nu} > \big)  \\
  && + O \big(  h  \cdot  A^2  \cdot \pa A \big) \\
&&    - \frac{1}{(n-1)} m_{\mu\nu }  m^{\si\b}  m^{\a\la}    . \big(  <  \pa_{\a}A_{\b} - \pa_{\b}A_{\a} , [A_{\la},A_{\si}] >    +  <  [A_{\a},A_{\b}] , \pa_{\la}A_{\si} - \pa_{\si}A_{\la}  > \big) \\
       && + O \big(  h  \cdot A^2  \cdot  \pa A \big)    \\
 && +      2 m^{\si\b}  . <   [A_{\mu},A_{\b}] ,  [A_{\nu},A_{\si}] >   \\
       && + O \big(  h  \cdot  A^4 \big)   \\
    &&     - \frac{1}{(n-1)} m_{\mu\nu }  m^{\si\b}  m^{\a\la}   .  <  [A_{\a},A_{\b}] , [A_{\la},A_{\si}] >  \\
    && + O \big(  h   \cdot  A^4 \big)  \, .   \\
\eeaa
Thus, we get the desired result.
\end{proof}

\section{The Einstein-Yang-Mills system in the harmonic and Lorenz gauges as a non-linear hyperbolic system}

\begin{lemma}\label{waveequationontheYangMillspotentialwithspurcesdependingonthemetricg}

The equation $ \textbf{D}^{(A)}_{\a}F^{\a\b}  = 0 $, implies that in the Lorenz gauge, and in wave coordinates $\mu\;, \nu\;,\a\;,\b\;,\si \in \{0, 1, ..., n\} $\,,

     \beaa
        \notag
g^{\mu\nu}  \pa_{\mu}   \pa_{\nu}    A_{\si}    &=&  -  ( \pa_{\si}  g^{\a\mu} ) \cdot   \pa_{\a}A_{\mu}         +    \frac{1}{2}   g^{\a\mu} g^{\b\nu}  \cdot \big(   \pa_\a g_{\b\si} + \pa_\si g_{\b\a}- \pa_\b g_{\a\si}  \big) . \big( \pa_{\mu}A_{\nu} - \pa_{\nu}A_{\mu} \big) \\
 \notag
 && -   g^{\a\mu} \cdot [ A_{\mu}, \pa_{\a} A_{\si} ]  - g^{\a\mu}  \cdot [A_{\alpha},  \pa_{\mu}  A_{\si} - \pa_{\si} A_{\mu} ]   \\
 \notag
 && +  \frac{1}{2}   g^{\a\mu} g^{\b\nu}  \cdot  \big(   \pa_\a g_{\b\si} + \pa_\si g_{\b\a}- \pa_\b g_{\a\si}  \big) \cdot  [A_{\mu},A_{\nu}]   - g^{\a\mu} \cdot  [A_{\alpha}, [A_{\mu},A_{\si}] ] \, . \\
   \eeaa

\end{lemma}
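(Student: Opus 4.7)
The plan is to start from the Yang-Mills equation $\der_{\alpha}F^{\alpha\beta}+[A_{\alpha},F^{\alpha\beta}]=0$ and derive the claimed non-linear wave equation on $A_{\sigma}$ by systematically lowering indices, invoking the wave coordinate condition, and enforcing the Lorenz gauge. First, I would lower the free index by contracting against $g_{\beta\sigma}$ and use metric compatibility $\der g=0$ together with $F^{\alpha}_{\ \sigma}=g^{\alpha\mu}F_{\mu\sigma}$ to arrive at
\beaa
g^{\alpha\mu}\der_{\alpha}F_{\mu\sigma}+g^{\alpha\mu}[A_{\alpha},F_{\mu\sigma}]=0.
\eeaa

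Next, expand $\der_{\alpha}F_{\mu\sigma}=\pa_{\alpha}F_{\mu\sigma}-\Gamma^{\lambda}_{\alpha\mu}F_{\lambda\sigma}-\Gamma^{\lambda}_{\alpha\sigma}F_{\mu\lambda}$. Contracting with $g^{\alpha\mu}$, the wave coordinate condition \eqref{wavecoordinatecondition}, namely $g^{\alpha\mu}\Gamma^{\lambda}_{\alpha\mu}=0$, annihilates the first Christoffel term, leaving
\beaa
g^{\alpha\mu}\pa_{\alpha}F_{\mu\sigma}-g^{\alpha\mu}\Gamma^{\lambda}_{\alpha\sigma}F_{\mu\lambda}+g^{\alpha\mu}[A_{\alpha},F_{\mu\sigma}]=0.
\eeaa
Then I would substitute $F_{\mu\sigma}=\pa_{\mu}A_{\sigma}-\pa_{\sigma}A_{\mu}+[A_{\mu},A_{\sigma}]$, which is valid in the coordinate system by Lemma \ref{EinsteinYangMillssystemusingpotentialandusingmetricg}, to produce the principal term $g^{\alpha\mu}\pa_{\alpha}\pa_{\mu}A_{\sigma}$ together with $-g^{\alpha\mu}\pa_{\alpha}\pa_{\sigma}A_{\mu}$ and $g^{\alpha\mu}\pa_{\alpha}[A_{\mu},A_{\sigma}]$.

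The mixed second derivative is handled by commuting partials and peeling off a derivative of $g$:
\beaa
g^{\alpha\mu}\pa_{\alpha}\pa_{\sigma}A_{\mu}=\pa_{\sigma}\bigl(g^{\alpha\mu}\pa_{\alpha}A_{\mu}\bigr)-(\pa_{\sigma}g^{\alpha\mu})\,\pa_{\alpha}A_{\mu}.
\eeaa
In wave coordinates, the Lorenz condition $\der^{\alpha}A_{\alpha}=0$ reduces to $g^{\alpha\mu}\pa_{\alpha}A_{\mu}=0$ (once again using $g^{\alpha\mu}\Gamma^{\lambda}_{\alpha\mu}=0$), so the first summand drops out and only the term $-(\pa_{\sigma}g^{\alpha\mu})\pa_{\alpha}A_{\mu}$ remains, which matches the first term on the right hand side of the claimed identity. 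Similarly, expand $\pa_{\alpha}[A_{\mu},A_{\sigma}]=[\pa_{\alpha}A_{\mu},A_{\sigma}]+[A_{\mu},\pa_{\alpha}A_{\sigma}]$; the Lorenz gauge kills $g^{\alpha\mu}[\pa_{\alpha}A_{\mu},A_{\sigma}]$, producing precisely the term $-g^{\alpha\mu}[A_{\mu},\pa_{\alpha}A_{\sigma}]$.

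Finally, I would expand the remaining Christoffel factor using the Levi-Civita formula $\Gamma^{\lambda}_{\alpha\sigma}=\tfrac{1}{2}g^{\lambda\nu}\bigl(\pa_{\alpha}g_{\nu\sigma}+\pa_{\sigma}g_{\nu\alpha}-\pa_{\nu}g_{\alpha\sigma}\bigr)$ and substitute back $F_{\mu\lambda}=\pa_{\mu}A_{\lambda}-\pa_{\lambda}A_{\mu}+[A_{\mu},A_{\lambda}]$ to generate the two $\frac{1}{2}\,g^{\alpha\mu}g^{\beta\nu}(\pa g)\cdots$ terms of the stated right hand side (one with a commutator-free factor $\pa_{\mu}A_{\nu}-\pa_{\nu}A_{\mu}$ and one with $[A_{\mu},A_{\nu}]$), while the remaining gauge-covariant bracket $-g^{\alpha\mu}[A_{\alpha},F_{\mu\sigma}]$ unpacks directly into $-g^{\alpha\mu}[A_{\alpha},\pa_{\mu}A_{\sigma}-\pa_{\sigma}A_{\mu}]-g^{\alpha\mu}[A_{\alpha},[A_{\mu},A_{\sigma}]]$. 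Collecting and rearranging these contributions yields the displayed identity. The entire derivation is essentially algebraic bookkeeping; the only conceptual step—and the potential source of sign errors—is the twofold use of the Lorenz gauge, once in the form $\pa_{\sigma}(g^{\alpha\mu}\pa_{\alpha}A_{\mu})=0$ and once in the form $g^{\alpha\mu}[\pa_{\alpha}A_{\mu},A_{\sigma}]=0$, so I do not anticipate a serious obstacle beyond careful index tracking.
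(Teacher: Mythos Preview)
Your proposal is correct and follows essentially the same approach as the paper: both expand $\der_\alpha F$ via Christoffel symbols, kill $g^{\alpha\mu}\Gamma^{\lambda}_{\alpha\mu}$ via the wave coordinate condition, use the Lorenz gauge (in wave coordinates, $g^{\alpha\mu}\pa_\alpha A_\mu=0$) to eliminate both $\pa_\sigma(g^{\alpha\mu}\pa_\alpha A_\mu)$ and $g^{\alpha\mu}[\pa_\alpha A_\mu,A_\sigma]$, and then insert the Levi-Civita formula for the remaining $\Gamma^{\lambda}_{\alpha\sigma}$. The only cosmetic difference is that you contract with $g_{\beta\sigma}$ at the outset, whereas the paper carries the factor $g^{\beta\nu}$ through the computation and multiplies by $g_{\sigma\beta}$ at the end; the algebra and the conceptual inputs are identical.
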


\begin{proof}

We know from \eqref{ThegaugecovariantdivergenceoftheYangMillscurvatureisequaltozero} that the Yang-Mills fields satisfy
\beaa
\textbf{D}^{(A)}_{\a}F^{\a\b}  &=& 0 \\
&=& \der_{\alpha} F^{\a\b}  + [A_{\alpha}, F^{\a\b} ]  \\
&=&  \der_{\alpha}  ( g^{\a\mu} g^{\b\nu} F_{\mu\nu} ) + g^{\a\mu} g^{\b\nu}  [A_{\alpha}, F_{\mu\nu} ]  \, . \\
\eeaa
Since $\der g = 0$, we get
\beaa
 \textbf{D}^{(A)}_{\a}F^{\a\b}   = g^{\a\mu} g^{\b\nu}   \der_{\alpha} F_{\mu\nu}  + g^{\a\mu} g^{\b\nu}  [A_{\alpha}, F_{\mu\nu} ]  = 0 \, .
\eeaa
However,
\beaa
 g^{\a\mu} g^{\b\nu}    \der_{\alpha} F_{\mu\nu}    &=&     g^{\a\mu} g^{\b\nu} \big(    \pa_{\alpha}  F_{\mu\nu}  - F(  \der_{\alpha} e_\mu, e_\nu  ) - F( e_\mu , \der_{\alpha} e_\nu  ) \big) \\
 &=&     g^{\a\mu} g^{\b\nu}     \pa_{\alpha}  F_{\mu\nu}  - g^{\b\nu}   F\big(  \der_{\alpha} ( g^{\a\mu}  e_\mu ) , e_\nu \big) -   g^{\a\mu} g^{\b\nu}   F( e_\mu , \der_{\alpha} e_\nu  ) \\
  &=&     g^{\a\mu} g^{\b\nu}     \pa_{\alpha}  F_{\mu\nu}  - g^{\b\nu}   F (  \der_{\alpha}  e^\a ) , e_\nu ) -   g^{\a\mu} g^{\b\nu}   F( e_\mu , \der_{\alpha} e_\nu  ) \\
    &=&     g^{\a\mu} g^{\b\nu}     \pa_{\alpha}  F_{\mu\nu}  - g^{\b\nu}  \Ga_{\, \, \, \a}^{\a  \, \, \, \la} F_{\la\nu}  -   g^{\a\mu} g^{\b\nu}   F( e_\mu , \der_{\alpha} e_\nu  ) \, .
\eeaa

Since it is a trace, it does not depend on the system of coordinates used to compute it, in particular one could compute the trace over $\mu, \nu, \a$ indices using wave coordinates. In wave coordinates, we get

\bea
 g^{\a\mu} g^{\b\nu}    \der_{\alpha} F_{\mu\nu}       &=&     g^{\a\mu} g^{\b\nu}     \pa_{\alpha}  F_{\mu\nu}    -   g^{\a\mu} g^{\b\nu}  \Ga_{\a \nu}^{\, \, \,  \, \,  \, \, \, \la}  F_{\mu\la}   \, .
\eea

Now, since 
\beaa
\notag
F_{\mu\nu} &=&  \der_{\mu}A_{\nu} - \der_{\nu}A_{\mu} + [A_{\mu},A_{\nu}]  \\
&=&  \pa_{\mu}A_{\nu} - \pa_{\nu}A_{\mu} + [A_{\mu},A_{\nu}]  \, ,
\eeaa
we have

\beaa
 \pa_{\alpha} F_{\mu\nu} &=& \pa_{\alpha}  \pa_{\mu}A_{\nu} -    \pa_{\alpha} \pa_{\nu}A_{\mu} + [\pa_{\alpha}  A_{\mu},A_{\nu}] + [A_{\mu},\pa_{\alpha}  A_{\nu}]  \, . \\
\eeaa 
Consequently,

\beaa
\textbf{D}^{(A)}_{\a}  F^{\a\b}  &=&     g^{\a\mu}   g^{\b\nu}  \pa_{\alpha}  F_{\mu\nu}   -   g^{\a\mu} g^{\b\nu}  \Ga_{\a \nu}^{\, \, \,  \, \,  \, \, \, \la}  F_{\mu\la}  + g^{\a\mu} g^{\b\nu}  [A_{\alpha}, F_{\mu\nu} ]  \\
&=&   g^{\a\mu}   g^{\b\nu}  ( \pa_{\alpha}  \pa_{\mu}A_{\nu} -    \pa_{\alpha} \pa_{\nu}A_{\mu} + [\pa_{\alpha}  A_{\mu},A_{\nu}] + [A_{\mu},\pa_{\alpha}  A_{\nu}]  ) \\
&& + g^{\a\mu} g^{\b\nu}  [A_{\alpha},  \pa_{\mu}A_{\nu} - \pa_{\nu}A_{\mu} + [A_{\mu},A_{\nu}] ] \\
&& -   g^{\a\mu} g^{\b\nu}  \Ga_{\a \nu}^{\, \, \,  \, \,  \, \, \, \la}  F_{\mu\la} \, .
  \eeaa

On one hand,
\beaa
&& g^{\a\mu} g^{\b\nu}   \big(  \pa_{\alpha}   \pa_{\mu}A_{\nu}   - \pa_{\alpha}   \pa_{\nu}A_{\mu}   + [\pa_{\a} A_{\mu}  ,A_{\nu}] + [ A_{\mu}, \pa_{\a} A_{\nu} ]    \big)\\
  &=&  g^{\b\nu}   \big(  \pa^{\mu}   \pa_{\mu}A_{\nu}    - g^{\a\mu}   \pa_{\a}   \pa_{\nu}A_{\mu}  +  [\pa^{\mu} A_{\mu}  ,A_{\nu}]    + g^{\a\mu} [ A_{\mu}, \pa_{\a} A_{\nu} ]      \big) \, .
 \eeaa

Now, in wave coordinates, the derivates commute as it is a system of coordinates, and therefore

\beaa
 - g^{\a\mu}     \pa_{\a}   \pa_{\nu}A_{\mu}   &=&  - g^{\a\mu} \pa_{\nu}    \pa_{\a}  A_{\mu} \\
     &=& -  \pa_{\nu} ( g^{\a\mu}     \pa_{\a}A_{\mu} )  + ( \pa_{\nu}  g^{\a\mu} )    \pa_{\a}A_{\mu} \\
 &=& -  \pa_{\nu} (  \pa^{\mu}A_{\mu} )  + ( \pa_{\nu}  g^{\a\mu} )    \pa_{\a}A_{\mu} \, .
\eeaa

Thus,
\beaa
&& g^{\a\mu} g^{\b\nu}   \big(  \pa_{\alpha}   \pa_{\mu}A_{\nu}   - \pa_{\alpha}   \pa_{\nu}A_{\mu}   + [\pa_{\a} A_{\mu}  ,A_{\nu}] + [ A_{\mu}, \pa_{\a} A_{\nu} ]    \big)\\
       &=&  g^{\b\nu}   \big(  \pa^{\mu}   \pa_{\mu}A_{\nu}   +  ( \pa_{\nu}  g^{\a\mu} )    \pa_{\a}A_{\mu}      + g^{\a\mu} [ A_{\mu}, \pa_{\a} A_{\nu} ]     +  [\pa^{\mu} A_{\mu}  ,A_{\nu}]  -  \pa_{\nu} (  \pa^{\mu}A_{\mu} )   \big) \, . \\
 \eeaa

Hence,
\beaa
\textbf{D}^{(A)}_{\a}  F^{\a\b}  &=&  g^{\b\nu}   \big(  \pa^{\mu}   \pa_{\mu}A_{\nu}   +  ( \pa_{\nu}  g^{\a\mu} )    \pa_{\a}A_{\mu}      + g^{\a\mu} [ A_{\mu}, \pa_{\a} A_{\nu} ]       +  [\pa^{\mu} A_{\mu}  ,A_{\nu}]  -  \pa_{\nu} (  \pa^{\mu}A_{\mu} )  \big)\\
&& + g^{\a\mu} g^{\b\nu}  [A_{\alpha},  \pa_{\mu}A_{\nu} - \pa_{\nu}A_{\mu} + [A_{\mu},A_{\nu}] ] \\
 &=& g^{\b\nu}    \pa^{\mu}   \pa_{\mu}A_{\nu}   + g^{\b\nu} ( \pa_{\nu}  g^{\a\mu} )    \pa_{\a}A_{\mu}      + g^{\b\nu}  g^{\a\mu} [ A_{\mu}, \pa_{\a} A_{\nu} ]    \\\
&& + g^{\a\mu} g^{\b\nu}  [A_{\alpha},  \pa_{\mu}A_{\nu} - \pa_{\nu}A_{\mu} ]  + g^{\a\mu} g^{\b\nu}  [A_{\alpha}, [A_{\mu},A_{\nu}] ] \\
 && + g^{\b\nu} [\pa^{\mu} A_{\mu}  ,A_{\nu}] -  g^{\b\nu}  \pa_{\nu} (  \pa^{\mu}A_{\mu} ) -   g^{\a\mu} g^{\b\nu}  \Ga_{\a \nu}^{\, \, \,  \, \,  \, \, \, \la}  F_{\mu\la} \, .
   \eeaa

Computing now  $ [\pa^{\mu} A_{\mu}  ,A_{\nu}]  $ and $ -  \pa_{\nu} (  \pa^{\mu}A_{\mu} ) $. Since the Lorenz gauge does not depend on the system of coordinates, but is a geometric condition on the Yang-Mills potential $A$, we can compute it in wave coordinates, and therefore
\beaa
 [\pa^{\mu} A_{\mu}  ,A_{\nu}] &=& 0 ,\\
 -  \pa_{\nu} (  \pa^{\mu}A_{\mu} )  &=& 0 .
\eeaa
Finally, in wave coordinates and in the Lorenz gauge,
\beaa
\textbf{D}^{(A)}_{\a}  F^{\a\b}   &=& g^{\b\nu}    \pa^{\mu}   \pa_{\mu}A_{\nu}   + g^{\b\nu} (  \pa_{\nu}  g^{\a\mu} )    \pa_{\a}A_{\mu}      + g^{\b\nu}  g^{\a\mu} [ A_{\mu}, \pa_{\a} A_{\nu} ]    \\
&& + g^{\a\mu} g^{\b\nu}  [A_{\alpha},  \pa_{\mu}A_{\nu} - \pa_{\nu}A_{\mu} ]  + g^{\a\mu} g^{\b\nu}  [A_{\alpha}, [A_{\mu},A_{\nu}] ] \\
&& -   g^{\a\mu} g^{\b\nu}  \Ga_{\a \nu}^{\, \, \,  \, \,  \, \, \, \la}  F_{\mu\la} \\
&=& 0 \;.
  \eeaa
  
Multiplying the equation above by $g_{\si\b}$, and using the fact that $g_{\si\b} g^{\nu\b} =  I_{\si}^{\ \; \nu}$    (where  $I_{\si}^{\ \; \nu}$ is the identity matrix), we get
\beaa
 0 &=& I_{\si}^{\ \; \nu}   \pa^{\mu}   \pa_{\mu}A_{\nu}    +  I_{\si}^{\ \; \nu} ( \pa_{\nu}  g^{\a\mu} )    \pa_{\a}A_{\mu}    +I_{\si}^{\ \; \nu}  g^{\a\mu} [ A_{\mu}, \pa_{\a} A_{\nu} ]    \\
&& + g^{\a\mu} I_{\si}^{\ \; \nu}   [A_{\alpha},  \pa_{\mu}A_{\nu} - \pa_{\nu}A_{\mu} ]  + g^{\a\mu} I_{\si}^{\ \; \nu}   [A_{\alpha}, [A_{\mu},A_{\nu}] ] \\
&& -   g^{\a\mu}  I_{\si}^{\ \; \nu}  \Ga_{\a \nu}^{\, \, \,  \, \,  \, \, \, \la}  F_{\mu\la} \\
&=&   \pa^{\mu}   \pa_{\mu} ( I_{\si}^{\ \; \nu}  A_{\nu} )  +   ( \pa_{\si}  g^{\a\mu} )    \pa_{\a}A_{\mu}      +   g^{\a\mu} [ A_{\mu}, \pa_{\a} A_{\si} ]    \\
&& + g^{\a\mu}   [A_{\alpha},  \pa_{\mu} ( I_{\si}^{\ \; \nu} A_{\nu} )- \pa_{\si} A_{\mu} ]  + g^{\a\mu}   [A_{\alpha}, [A_{\mu},A_{\si}] ] \\
&& -   g^{\a\mu}  \Ga_{\a \si}^{\, \, \,  \, \,  \, \, \, \la}  F_{\mu\la} \, .
  \eeaa
    
We obtain
    \beaa
 \pa^{\mu}   \pa_{\mu}   A_{\si}    &=&  -   ( \pa_{\si}  g^{\a\mu} )    \pa_{\a}A_{\mu}      -   g^{\a\mu} [ A_{\mu}, \pa_{\a} A_{\si} ]    \\
&& - g^{\a\mu}   [A_{\alpha},  \pa_{\mu}  A_{\si} - \pa_{\si} A_{\mu} ]  - g^{\a\mu}   [A_{\alpha}, [A_{\mu},A_{\si}] ] \\
&& +  g^{\a\mu}  \Ga_{\a \si}^{\, \, \,  \, \,  \, \, \, \nu}  \big( \pa_{\mu}A_{\nu} - \pa_{\nu}A_{\mu} + [A_{\mu},A_{\nu}]   \big) \, .
 \eeaa

 However, the Christoffel symbols are
 \beaa
\Gamma^{\,\,\, \, \, \, \, \, \nu}_{\a\si} & =& \frac{1}{2} g^{\nu\b} \big( \pa_\a g_{\b\si} + \pa_\si g_{\b\a}- \pa_\b g_{\a\si} \big) \, .
\eeaa

 At the end, we obtain
   \bea
   \notag
 \pa^{\mu}   \pa_{\mu}   A_{\si}    &=&  -  ( \pa_{\si}  g^{\a\mu} )    \pa_{\a}A_{\mu}         +    \frac{1}{2}   g^{\a\mu} g^{\b\nu}  \big(   \pa_\a g_{\b\si} + \pa_\si g_{\b\a}- \pa_\b g_{\a\si}  \big) . \big( \pa_{\mu}A_{\nu} - \pa_{\nu}A_{\mu} \big) \\
 \notag
 && -   g^{\a\mu} [ A_{\mu}, \pa_{\a} A_{\si} ]  - g^{\a\mu}   [A_{\alpha},  \pa_{\mu}  A_{\si} - \pa_{\si} A_{\mu} ]   \\
 \notag
 && +  \frac{1}{2}   g^{\a\mu} g^{\b\nu}  \big(   \pa_\a g_{\b\si} + \pa_\si g_{\b\a}- \pa_\b g_{\a\si}  \big)  [A_{\mu},A_{\nu}]   - g^{\a\mu}   [A_{\alpha}, [A_{\mu},A_{\si}] ] \, .
   \eea
We have,
\beaa
 && -   g^{\a\mu} [ A_{\mu}, \pa_{\a} A_{\si} ]  - g^{\a\mu}   [A_{\alpha},  \pa_{\mu}  A_{\si} - \pa_{\si} A_{\mu} ] \\
   &=&  -   g^{\a\mu} [ A_{\mu}, \pa_{\a} A_{\si} ]  - g^{\a\mu}   [A_{\alpha},  \pa_{\mu}  A_{\si}  ]  +   g^{\a\mu} [ A_{\a}, \pa_{\a} A_{\si} ]  \\
 &=&  - 2  g^{\a\mu} [ A_{\mu}, \pa_{\a} A_{\si} ]  +   g^{\a\mu} [ A_{\a}, \pa_{\a} A_{\si} ] \, .
\eeaa
Thus, we obtain the stated result.

   \end{proof}
   
   \begin{lemma}\label{waveequationontheEinsteinYangMillspotentialderivedfromthegaugecovariantdivergenceoftheYangMillscurvatureisequaltozero}
The equation $ \textbf{D}^{(A)}_{\a}F^{\a\b}  = 0 $, implies in the Lorenz gauge, and in wave coordinates $\mu\;, \nu\;,\la\;,\a\;, \b\;, \ga\;,\si \in \{0, 1, ..., n\} $\,,
      \bea
   \notag
g^{\la\mu}  \pa_{\la}   \pa_{\mu}   A_{\si}      &=&  m^{\a\ga} m ^{\mu\la}  (  \pa_{\si}  h_{\ga\la} ) \cdot  \pa_{\a}A_{\mu}       +   \frac{1}{2}  m^{\a\mu}m^{\b\nu} \cdot   \big(   \pa_\a h_{\b\si} + \pa_\si h_{\b\a}- \pa_\b h_{\a\si}  \big)   \cdot  \big( \pa_{\mu}A_{\nu} - \pa_{\nu}A_{\mu}  \big) \\
 \notag
&& +      \frac{1}{2}  m^{\a\mu}m^{\b\nu} \cdot    \big(   \pa_\a h_{\b\si} + \pa_\si h_{\b\a}- \pa_\b h_{\a\si}  \big)   \cdot   [A_{\mu},A_{\nu}] \\
 \notag
 && -  m^{\a\mu} \cdot  \big(  [ A_{\mu}, \pa_{\a} A_{\si} ]  +    [A_{\alpha},  \pa_{\mu}  A_{\si} - \pa_{\si} A_{\mu} ]    +    [A_{\alpha}, [A_{\mu},A_{\si}] ]  \big)  \\
 \notag
  && + O( h \cdot \pa h \cdot  \pa A) + O( h \cdot \pa h \cdot  A^2) + O( h \cdot  A \cdot  \pa A) + O( h \cdot  A^3) \; .
  \eea

\end{lemma}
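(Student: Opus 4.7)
The plan is to start from the identity already proved in Lemma \ref{waveequationontheYangMillspotentialwithspurcesdependingonthemetricg} and to substitute, term by term on the right-hand side, the decomposition $g_{\mu\nu} = m_{\mu\nu} + h_{\mu\nu}$ together with the expansion
\begin{equation*}
g^{\mu\nu} = m^{\mu\nu} - h^{\mu\nu} + O^{\mu\nu}(h^{2})
\end{equation*}
furnished by Lemma \ref{BigHintermsofsmallh}. The left-hand side $g^{\la\mu}\pa_{\la}\pa_{\mu}A_{\si}$ is kept as is, so the entire work is bookkeeping on the five pieces of the source, exploiting the crucial fact that in wave coordinates $m_{\mu\nu}$ has constant entries, so $\pa_{\a}g_{\b\si} = \pa_{\a}h_{\b\si}$.

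First I would treat the coefficient $-(\pa_{\si}g^{\a\mu})\pa_{\a}A_{\mu}$. Because $m$ is constant, $\pa_{\si}g^{\a\mu} = -\pa_{\si}h^{\a\mu} + O^{\a\mu}(h\cdot\pa h)$, and writing $h^{\a\mu} = m^{\a\ga}m^{\mu\la}h_{\ga\la}$ yields the leading term $m^{\a\ga}m^{\mu\la}(\pa_{\si}h_{\ga\la})\pa_{\a}A_{\mu}$ modulo an $O(h\cdot\pa h\cdot\pa A)$ remainder. Next, for the two Christoffel-type pieces I expand
\begin{equation*}
g^{\a\mu}g^{\b\nu} = \bigl(m^{\a\mu} + O^{\a\mu}(h)\bigr)\bigl(m^{\b\nu} + O^{\b\nu}(h)\bigr) = m^{\a\mu}m^{\b\nu} + O(h),
\end{equation*}
and replace $\pa_{\a}g_{\b\si}+\pa_{\si}g_{\b\a}-\pa_{\b}g_{\a\si}$ by the identical expression with $h$ in place of $g$; this produces the two stated $\frac{1}{2}m^{\a\mu}m^{\b\nu}(\pa_{\a}h_{\b\si}+\pa_{\si}h_{\b\a}-\pa_{\b}h_{\a\si})$ terms, paired respectively with $(\pa_{\mu}A_{\nu}-\pa_{\nu}A_{\mu})$ and with $[A_{\mu},A_{\nu}]$, leaving remainders of types $O(h\cdot\pa h\cdot\pa A)$ and $O(h\cdot\pa h\cdot A^{2})$. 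Finally, for the purely algebraic commutator sum $-g^{\a\mu}\bigl([A_{\mu},\pa_{\a}A_{\si}]+[A_{\a},\pa_{\mu}A_{\si}-\pa_{\si}A_{\mu}]+[A_{\a},[A_{\mu},A_{\si}]]\bigr)$, I substitute $g^{\a\mu} = m^{\a\mu} - h^{\a\mu} + O^{\a\mu}(h^{2})$; the $m^{\a\mu}$ piece gives the desired main term, and the remainder collapses into $O(h\cdot A\cdot\pa A) + O(h\cdot A^{3})$.

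There is no conceptual obstacle here: the statement is a purely algebraic reorganisation of the previous lemma. The only care required is in the bookkeeping of the remainder classes, where one must use the property of Definition \ref{definitionofbigOonlyforAandhadgradientofAandgardientofh} that any expression with a factor of $h^{k}$ for $k\geq 1$ can be absorbed into $O(h\cdot\ldots)$ (once one $Q_{1}$ factor in $h$ has been extracted, all higher-order remainders $h^{2},h^{3},\ldots$ fit inside the series $\sum_{n\geq 0}P_{n}(h)$). Collecting these four classes on the right-hand side yields exactly the stated identity.
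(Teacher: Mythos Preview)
Your proposal is correct and follows essentially the same approach as the paper: start from Lemma \ref{waveequationontheYangMillspotentialwithspurcesdependingonthemetricg}, use $\pa_{\a}g_{\b\si}=\pa_{\a}h_{\b\si}$ and the expansion $g^{\mu\nu}=m^{\mu\nu}-h^{\mu\nu}+O^{\mu\nu}(h^{2})$ from Lemma \ref{BigHintermsofsmallh}, expand each source term, and absorb the remainders into the four $O$-classes via Definition \ref{definitionofbigOonlyforAandhadgradientofAandgardientofh}. The paper carries out exactly this term-by-term substitution with the same intermediate expansions (including $g^{\a\mu}g^{\b\nu}=m^{\a\mu}m^{\b\nu}+O(h)$), so there is no meaningful difference.
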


\begin{proof}
Using from Lemma \ref{BigHintermsofsmallh}, the fact that
\beaa
g^{\mu\nu} = m^{\mu\nu}-h^{\mu\nu}  +O^{\mu\nu}(h^2)\, ,
\eeaa
 we have by differentiation, that
 \beaa
   \notag
  \pa_{\si}  g^{\a\mu}  &=&  \pa_{\si}  m^{\a\mu} -  \pa_{\si}  h^{\a\mu} + \pa_{\si}   \big( O^{\a\nu}(h^2) \big) \\
    \notag
  &=& -  \pa_{\si}  h^{\a\mu} +  O_{\si}^{\, \, \, \a\nu}(h \cdot  \pa h) \, ,
    \eeaa
  and
   \bea
    \notag
  \pa_{\si}  g^{\a\mu}  &=& -  \pa_{\si}  h^{\a\mu} + \pa_{\si}  \big( O^{\a\nu}(h^2) \big) \\
      \notag
  &=& -  \pa_{\si}  h^{\a\mu} +  O_{\si}^{\, \, \, \a\nu} (h \cdot  \pa h) \\
      \notag
   &=& -  \pa_{\si}  \big( m^{\a\ga} m ^{\mu\la}  h_{\ga\la}  \big) +  O_{\si}^{\, \, \, \a\nu} (h \cdot  \pa h) \\
     \notag
                 &=& - m^{\a\ga} m ^{\mu\la}   \pa_{\si} ( h_{\ga\la} ) +  O_{\si}^{\, \, \, \a\nu} (h \cdot  \pa h) \, .
  \eea
  Also,
  \beaa
g_{\mu\nu} =  m_{\mu\nu} + h_{\mu\nu} \; ,
\eeaa
yields to
 \bea
 \notag
 \pa_{\alpha}   g_{\b\si}  &=&  \pa_{\alpha} m_{\b\si} +  \pa_{\alpha} h_{\b\si} \\
 &=&  \pa_{\alpha} h_{\b\si} \, .
 \eea
  
 We get then, from Lemma \ref{waveequationontheYangMillspotentialwithspurcesdependingonthemetricg}, that
    \beaa
   \notag
 \pa^{\mu}   \pa_{\mu}   A_{\si}     &=&  -   ( \pa_{\si}  g^{\a\mu} )    \pa_{\a}A_{\mu}     \\
  &&     +   \frac{1}{2}   g^{\a\mu} g^{\b\nu}  \big(   \pa_\a g_{\b\si} + \pa_\si g_{\b\a}- \pa_\b g_{\a\si}  \big) \cdot \big( \pa_{\mu}A_{\nu} - \pa_{\nu}A_{\mu} + [A_{\mu},A_{\nu}] \big) \\
 \notag
 && -   g^{\a\mu} \big(  [ A_{\mu}, \pa_{\a} A_{\si} ]  +   [A_{\alpha},  \pa_{\mu}  A_{\si} - \pa_{\si} A_{\mu} ]    +  [A_{\alpha}, [A_{\mu},A_{\si}] ]  \big)  \\
 &=&  m^{\a\ga} m ^{\mu\la}   ( \pa_{\si}  h_{\ga\la} )   \pa_{\a}A_{\mu}  + O_{\si}^{\, \, \, \a\nu} (h \cdot  \pa h)  \cdot \pa_{\a}A_{\mu} \\
 &&     +    \frac{1}{2}  \big(  m^{\a\mu}-h^{\a\mu}  +O^{\a\mu}(h^2) \big)  \big(  m^{\b\nu} -h^{\b\nu}  +O^{\b\nu}  (h^2) \big) \cdot \big(   \pa_\a h_{\b\si} + \pa_\si h_{\b\a}- \pa_\b h_{\a\si}  \big)   \cdot  \\
 && \big( \pa_{\mu}A_{\nu} - \pa_{\nu}A_{\mu} + [A_{\mu},A_{\nu}] \big) \\
 \notag
 && -  \big( m^{\a\mu} -h^{\a\mu}  +O^{\a\mu} (h^2) \big) \cdot \big(  [ A_{\mu}, \pa_{\a} A_{\si} ]  +    [A_{\alpha},  \pa_{\mu}  A_{\si} - \pa_{\si} A_{\mu} ]    +    [A_{\alpha}, [A_{\mu},A_{\si}] ]  \big) \, . \\
  \eeaa
  
  We have
  \beaa
   &&   \big( m^{\a\mu}-h^{\a\mu}  +O^{\a\mu}(h^2) \big)  \cdot \big( m^{\b\nu} -h^{\b\nu}  +O^{\b\nu}  (h^2) \big) \\
   &=& m^{\a\mu}m^{\b\nu} -m^{\a\mu} h^{\b\nu}  +O^{\a\mu\b\nu}  (h^2) \\
   &&  -h^{\a\mu}  m^{\b\nu}  + h^{\a\mu}h^{\b\nu}  +O^{\a\mu\b\nu}  (h^3)  \\
     && +O^{\a\mu\b\nu}(h^2 + h^3 + h^4) \\
      &=& m^{\a\mu}m^{\b\nu}  +O^{\a\mu\b\nu}(h ) \, . \\
  \eeaa
  
  We get
    \beaa
   \notag
 \pa^{\mu}   \pa_{\mu}   A_{\si}      &=&  m^{\a\ga} m ^{\mu\la}  ( \pa_{\si}  h_{\ga\la} )   \pa_{\a}A_{\mu}  + O_{\si}^{\, \, \, \a\nu} (h \cdot  \pa h)  \cdot \pa_{\a}A_{\mu}   \\
 &&     +  \frac{1}{2}  \big( m^{\a\mu}m^{\b\nu}  +O^{\a\mu\b\nu}(h ) \big)  \cdot \big(   \pa_\a h_{\b\si} + \pa_\si h_{\b\a}- \pa_\b h_{\a\si}  \big)   \cdot \big( \pa_{\mu}A_{\nu} - \pa_{\nu}A_{\mu} + [A_{\mu},A_{\nu}] \big) \\
 \notag
 && -  m^{\a\mu}\cdot  \big(  [ A_{\mu}, \pa_{\a} A_{\si} ]  +    [A_{\alpha},  \pa_{\mu}  A_{\si} - \pa_{\si} A_{\mu} ]    +    [A_{\alpha}, [A_{\mu},A_{\si}] ]  \big)  \\
 && + \big( h^{\a\mu}  +O^{\a\mu} (h^2) \big) \cdot \big(  [ A_{\mu}, \pa_{\a} A_{\si} ]  +    [A_{\alpha},  \pa_{\mu}  A_{\si} - \pa_{\si} A_{\mu} ]    +    [A_{\alpha}, [A_{\mu},A_{\si}] ]  \big) \, .
  \eeaa
  
  Therefore,
      \beaa
   \notag
 \pa^{\mu}   \pa_{\mu}   A_{\si}      &=&  m^{\a\ga} m ^{\mu\la}  (  \pa_{\si}  h_{\ga\la} )   \pa_{\a}A_{\mu}    \\
 &&     +  \frac{1}{2}   m^{\a\mu}m^{\b\nu}  \cdot  \big(   \pa_\a h_{\b\si} + \pa_\si h_{\b\a}- \pa_\b h_{\a\si}  \big)   \cdot   \big( \pa_{\mu}A_{\nu} - \pa_{\nu}A_{\mu} + [A_{\mu},A_{\nu}] \big) \\
 \notag
 && -  m^{\a\mu} \cdot  \big(  [ A_{\mu}, \pa_{\a} A_{\si} ]  +    [A_{\alpha},  \pa_{\mu}  A_{\si} - \pa_{\si} A_{\mu} ]    +    [A_{\alpha}, [A_{\mu},A_{\si}] ]  \big)  \\
  && + O( h \cdot \pa h \cdot  \pa A) + O( h \cdot \pa h\cdot  A^2) + O( h \cdot  A \cdot  \pa A) + O( h\cdot   A^3) \, .
  \eeaa
  
Hence, we get the result.
\end{proof}

We obtained in Lemma \ref{EinsteinYangMillssystemusingpotentialandusingMinkowskimetricmandperturbationh}, that the Einstein-Yang-Mills equations read
\beaa
\notag
&& R_{ \mu \nu}   \\
\notag
 &=&   2  m^{\si\b} \cdot  <   \pa_{\mu}A_{\b} - \pa_{\b}A_{\mu}  ,  \pa_{\nu}A_{\si} -\pa_{\si}A_{\nu}  >           - \frac{1}{(n-1)} m_{\mu\nu }  m^{\si\b}  m^{\a\la}    \cdot   <  \pa_{\a}A_{\b} - \pa_{\b}A_{\a} , \pa_{\la}A_{\si} - \pa_{\si}A_{\la} >   \\
 \notag
&&  +           2  m^{\si\b}  \cdot  \big( <   \pa_{\mu}A_{\b} - \pa_{\b}A_{\mu}  ,  [A_{\nu},A_{\si}] >   + <   [A_{\mu},A_{\b}] ,  \pa_{\nu}A_{\si} - \pa_{\si}A_{\nu} > \big)  \\
\notag
&&    - \frac{1}{(n-1)} m_{\mu\nu }  m^{\si\b}  m^{\a\la}   \cdot  \big(  <  \pa_{\a}A_{\b} - \pa_{\b}A_{\a} , [A_{\la},A_{\si}] >    +  <  [A_{\a},A_{\b}] , \pa_{\la}A_{\si} - \pa_{\si}A_{\la}  > \big) \\
\notag
 && +      2 m^{\si\b}  \cdot  <   [A_{\mu},A_{\b}] ,  [A_{\nu},A_{\si}] >       - \frac{1}{(n-1)} m_{\mu\nu }  m^{\si\b}  m^{\a\la}   .  <  [A_{\a},A_{\b}] , [A_{\la},A_{\si}] >  \\
     && + O \big( h \cdot  (\pa A)^2 \big)   + O \big(  h \cdot  A^2 \cdot  \pa A \big)     + O \big(  h  \cdot  A^4 \big)    \, .
\eeaa

Now, we would like to write differently the left hand side of the equality.

As shown by Lindblad-Rodnianski in Lemma 3.1 in \cite{LR2} (in particular, in equation (3.17)), the Ricci tensor in wave coordinates can be expressed as
 \beaa
R_{\mu\nu} &=&-\frac{1}{2} g^{\alpha\beta}\pa_\alpha\pa_\beta g_{\mu\nu}+g^{\alpha\alpha^\prime}g^{\beta\beta^\prime}\,\Big( -\frac{1}{4} \pa_\nu g_{\alpha\beta}\,\, \pa_\mu g_{\alpha^\prime\beta^\prime} +\frac{1}{8} \pa_{\mu} g_{\beta\beta^\prime}\, \, \pa_{\nu} g_{\alpha\alpha^\prime} \Big)\\
&&  +\frac{1}{2}g^{\alpha\alpha^\prime}g^{\beta\beta^\prime}  \pa_{\alpha} g_{\beta\mu}\, \,  \pa_{\alpha^\prime} g_{\beta^\prime\nu} -\frac{1}{2}g^{\alpha\alpha^\prime}g^{\beta\beta^\prime}\, \Big(\pa_{\alpha} g_{\beta\mu}\, \, \pa_{\beta^\prime} g_{\alpha^\prime\nu}-\pa_{\beta^\prime} g_{\beta\mu}\, \, \pa_{\alpha} g_{\alpha^\prime\nu}\Big)\\
&&+\frac{1}{2}g^{\alpha\alpha^\prime}g^{\beta\beta^\prime}\,\Big( \big(\pa_\mu g_{\a^\prime\b^\prime}\, \pa_\a g_{\b\nu}- \pa_\a g_{\a^\prime\b^\prime}\, \pa_\mu g_{\b\nu}\big) +\big(\pa_\nu g_{\a^\prime\b^\prime}\, \pa_\a g_{\b\mu}- \pa_\a g_{\a^\prime\b^\prime}\, \pa_\nu g_{\b\mu}\big) \Big)\\ 
&& +\frac{1}{4}g^{\alpha\alpha^\prime}g^{\beta\beta^\prime} \Big(\big( \pa_{\beta^\prime} g_{\a^\prime\alpha} \,\pa_\mu g_{\b\nu} -\pa_{\mu} g_{\a^\prime\alpha} \,\pa_{\beta^\prime}g_{\b\nu}\big) +\big( \pa_{\beta^\prime} g_{\a^\prime\alpha} \,\pa_\nu g_{\b\mu} -\pa_{\nu} g_{\a^\prime\alpha} \,\pa_{\beta^\prime}g_{\b\mu}\big)\Big) \, .
\eeaa
By defining 
\bea
\widetilde{P}(\pa_\mu g,\pa_\nu g) := \frac{1}{4}  g^{\alpha\alpha^\prime}\pa_\mu g_{\alpha\alpha^\prime} \,  g^{\beta\beta^\prime}\pa_\nu g_{\beta\beta^\prime}- \frac{1}{2} g^{\alpha\alpha^\prime}g^{\beta\beta^\prime} \pa_\mu g_{\alpha\beta}\, \pa_\nu g_{\alpha^\prime\beta^\prime}  \, ,
\eea
and
\bea
\notag
&& \widetilde{Q}_{\mu\nu}(\pa g,\pa g) \\
\notag
&:=& \pa_{\alpha} g_{\beta\mu}\,  \,g^{\alpha\alpha^\prime}g^{\beta\beta^\prime}
 \pa_{\alpha^\prime} g_{\beta^\prime\nu} -g^{\alpha\alpha^\prime}g^{\beta\beta^\prime} \big(\pa_{\alpha}
g_{\beta\mu}\,\,\pa_{\beta^\prime} g_{\alpha^\prime \nu} -\pa_{\beta^\prime} g_{\beta\mu}\,\,\pa_{\alpha} g_{\alpha^\prime\nu}\big)
\label{eq:tildenullform}\\
\notag
&& +g^{\a\a'}g^{\b\b'}\big (\pa_\mu g_{\a'\b'} \pa_\a g_{\b\nu}- \pa_\a g_{\a'\b'} \pa_\mu g_{\b\nu}\big )  + g^{\a\a'}g^{\b\b'}\big (\pa_\nu g_{\a'\b'} \pa_\a g_{\b\mu} - \pa_\a g_{\a'\b'} \pa_\nu g_{\b\mu}\big )\\
\notag
&& +\frac 12 g^{\a\a'}g^{\b\b'}\big (\pa_{\b'} g_{\a\a'} \pa_\mu g_{\b\nu} - \pa_{\mu} g_{\a\a'} \pa_{\b'} g_{\b\nu} \big ) +\frac 12 g^{\a\a'}g^{\b\b'} \big (\pa_{\b'} g_{\a\a'} \pa_\nu g_{\b\mu} - \pa_{\nu} g_{\a\a'} \pa_{\b'} g_{\b\mu} \big ) \, \\
\eea
we get
\bea\label{waveequationonthemetricgwithsourcesonthemetricgdependingonRiccitensor}
 g^{\alpha\beta}\pa_\alpha\pa_\beta
g_{\mu\nu} =  \widetilde{P}(\pa_\mu g,\pa_\nu g) + \widetilde{Q}_{\mu\nu}(\pa g,\pa g)  -2 R_{\mu\nu} \, .
\eea

Now, we want to prove the following lemma:
\begin{lemma}\label{waveequationontheEinsteinYangMillsmetricsmallhwithsourcesusingtheRiccitensorthatwascomoutedearlier}
Let,  
\bea\label{definitionofthetermSinsourcetermsforeinstein}
S_{\mu\nu} (h) (\pa h, \pa h)  &:=& P(\pa_\mu h,\pa_\nu h)+Q_{\mu\nu}(\pa h,\pa h) +G_{\mu\nu}(h)(\pa h,\pa h)
\eea

where
\bea\label{definitionofthetermbigPinsourcetermsforeinstein}
 P(\pa_\mu h,\pa_\nu h) :=\frac{1}{4} m^{\alpha\alpha^\prime}\pa_\mu h_{\alpha\alpha^\prime} \, m^{\beta\beta^\prime}\pa_\nu h_{\beta\beta^\prime}  -\frac{1}{2} m^{\alpha\alpha^\prime}m^{\beta\beta^\prime} \pa_\mu h_{\alpha\beta}\, \pa_\nu h_{\alpha^\prime\beta^\prime} \, ,
\eea
 
\bea\label{definitionofthetermbigQinsourcetermsforeinstein}
\notag
&& Q_{\mu\nu}(\pa h,\pa h) \\
\notag
&:=& \pa_{\alpha} h_{\beta\mu}\, \, m^{\alpha\alpha^\prime}m^{\beta\beta^\prime} \pa_{\alpha^\prime} h_{\beta^\prime\nu} -m^{\alpha\alpha^\prime}m^{\beta\beta^\prime} \big(\pa_{\alpha} h_{\beta\mu}\,\,\pa_{\beta^\prime} h_{\alpha^\prime \nu} -\pa_{\beta^\prime} h_{\beta\mu}\,\,\pa_{\alpha} h_{\alpha^\prime\nu}\big)\\
\notag
&& +m^{\a\a'}m^{\b\b'}\big (\pa_\mu h_{\a'\b'}\, \pa_\a h_{\b\nu}- \pa_\a h_{\a'\b'} \,\pa_\mu h_{\b\nu}\big )  \\
\notag
&& + m^{\a\a'}m^{\b\b'}\big (\pa_\nu h_{\a'\b'} \,\pa_\a h_{\b\mu} - \pa_\a h_{\a'\b'}\, \pa_\nu h_{\b\mu}\big )\\
\notag
&& +\frac 12 m^{\a\a'}m^{\b\b'}\big (\pa_{\b'} h_{\a\a'}\, \pa_\mu h_{\b\nu} - \pa_{\mu} h_{\a\a'}\, \pa_{\b'} h_{\b\nu} \big ) \\
\notag
&& +\frac 12 m^{\a\a'}m^{\b\b'} \big (\pa_{\b'} h_{\a\a'}\, \pa_\nu h_{\b\mu} - \pa_{\nu} h_{\a\a'} \,\pa_{\b'} h_{\b\mu} \big ) \, ,\\
\eea
and
\bea\label{definitionofthetermbigGinsourcetermsforeinstein}
G_{\mu\nu}(h)(\pa h,\pa h) :=  O  (h \cdot (\pa h)^2) \, ,
\eea
i.e. $G_{\mu\nu}(h)(\pa h,\pa h) $ is a quadratic form in $\pa h$ with
coefficients smoothly dependent on $h$ and vanishing when $h$
vanishes: $G_{\mu\nu}(0)(\pa h,\pa h)=0$\,. Then, we have in wave coordinates $\mu\;, \nu\;, \si\;, \a \in \{0, 1, ..., n\} $\,,
\bea
g^{\si\a}\pa_{\si} \pa_{\a} h_{\mu\nu}  -S_{\mu\nu} (h) (\pa h, \pa h)  = -2 R_{\mu\nu} \, .
\eea
\end{lemma}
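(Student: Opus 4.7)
The plan is to start from the Lindblad--Rodnianski identity \eqref{waveequationonthemetricgwithsourcesonthemetricgdependingonRiccitensor}, which has already been established in the excerpt, and simply convert it from $g$--quantities to $h$--quantities by using two facts: (i) the Minkowski metric $m_{\mu\nu}$ is \emph{constant} in wave coordinates, and (ii) $g^{\a\b} = m^{\a\b} - h^{\a\b} + O^{\a\b}(h^{2})$ by Lemma \ref{BigHintermsofsmallh}.

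First I would note that since $\pa_\sigma m_{\mu\nu} \equiv 0$ in the chosen system, we have $\pa_\sigma g_{\mu\nu} = \pa_\sigma h_{\mu\nu}$ and $\pa_\sigma \pa_\alpha g_{\mu\nu} = \pa_\sigma \pa_\alpha h_{\mu\nu}$. Hence every factor $\pa g$ in \eqref{waveequationonthemetricgwithsourcesonthemetricgdependingonRiccitensor} may be replaced by $\pa h$, and the left-hand side of \eqref{waveequationonthemetricgwithsourcesonthemetricgdependingonRiccitensor} becomes $g^{\a\b}\pa_{\a}\pa_{\b} h_{\mu\nu}$, so that
$$g^{\a\b}\pa_{\a}\pa_{\b} h_{\mu\nu} \;=\; \widetilde{P}(\pa_\mu h, \pa_\nu h) + \widetilde{Q}_{\mu\nu}(\pa h, \pa h) - 2R_{\mu\nu}.$$

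Next, in each occurrence of $g^{\a\a'}$ or $g^{\b\b'}$ appearing as a coefficient inside $\widetilde{P}$ and $\widetilde{Q}_{\mu\nu}$, I would substitute $g^{\a\a'} = m^{\a\a'} + \big(g^{\a\a'}-m^{\a\a'}\big) = m^{\a\a'} - h^{\a\a'} + O^{\a\a'}(h^{2})$, using Lemma \ref{BigHintermsofsmallh}. Expanding all such substitutions in $\widetilde{P}$ yields $\widetilde{P}(\pa_\mu h, \pa_\nu h) = P(\pa_\mu h, \pa_\nu h) + E_{P}(h)(\pa h, \pa h)$, where $E_{P}$ collects every cross-term that contains at least one factor of $h$ (arising from $-h^{\a\a'}$ or $O(h^{2})$). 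By construction $E_{P}$ is a quadratic form in $\pa h$ whose coefficients are smooth functions of $h$ vanishing at $h=0$, i.e.\ $E_{P} = O\big(h \cdot (\pa h)^{2}\big)$. The same replacement performed term by term in the six bilinear expressions composing $\widetilde{Q}_{\mu\nu}(\pa g, \pa g)$ produces $\widetilde{Q}_{\mu\nu}(\pa h, \pa h) = Q_{\mu\nu}(\pa h, \pa h) + E_{Q}(h)(\pa h, \pa h)$, with $E_{Q}$ again of type $O\big(h \cdot (\pa h)^{2}\big)$.

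Finally, summing the two error contributions defines
$$G_{\mu\nu}(h)(\pa h, \pa h) \;:=\; E_{P}(h)(\pa h, \pa h) + E_{Q}(h)(\pa h, \pa h),$$
which is manifestly a quadratic form in $\pa h$ whose coefficients are smooth in $h$ and vanish at $h=0$, matching precisely the requirement in \eqref{definitionofthetermbigGinsourcetermsforeinstein}. Recalling the definition \eqref{definitionofthetermSinsourcetermsforeinstein} of $S_{\mu\nu}$, this yields the desired identity. The only real obstacle is the bookkeeping in $\widetilde{Q}_{\mu\nu}$, where each of the six bilinear pieces generates several cross-terms upon substitution; one has to check that the ``clean'' pieces (those with all coefficients replaced by $m$) reassemble exactly into the terms listed in \eqref{definitionofthetermbigQinsourcetermsforeinstein}, while all other pieces carry an explicit factor of $h$ and are absorbed into $G$. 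This is a direct but tedious verification, and it is the only place where one must be careful; the reduction of $\widetilde{P}$ to $P$ is entirely analogous but shorter.
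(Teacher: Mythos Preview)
Your proposal is correct and follows essentially the same approach as the paper: replace every $\pa g$ by $\pa h$ using the constancy of $m$, then substitute $g^{\a\a'}=m^{\a\a'}-h^{\a\a'}+O^{\a\a'}(h^2)$ in the coefficients of $\widetilde P$ and $\widetilde Q$, keeping the leading $m^{\a\a'}m^{\b\b'}$ pieces as $P$ and $Q$ and absorbing the remainder into $G_{\mu\nu}(h)(\pa h,\pa h)=O(h\cdot(\pa h)^2)$. The paper's proof is slightly terser but the logic is identical.
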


\begin{proof}

In view of the fact that
\beaa
g^{\mu\nu} &=& m^{\mu\nu}-h^{\mu\nu}  +O^{\mu\nu}(h^2)   \, , \\
g_{\mu\nu} &=& m_{\mu\nu} + h_{\mu\nu}  \, ,
\eeaa
we have
\beaa
\pa_\mu g &=&\pa_\mu  h  \, ,
\eeaa
and
\bea
\notag
g^{\a\a'}g^{\b\b'}  &=& \big(  m^{\a\a'}-h^{\a\a'} +O^{\a\a'}(h^2)  \big) \cdot \big( m^{\b\b'} -h^{\b\b'}   +O^{\b\b'}  (h^2) \big) \\
\notag
&=&  m^{\a\a'} m^{\b\b'} - m^{\a\a'} h^{\b\b'}   +O^{\a\a'\b\b'}  (h^2)  \\
\notag
&& -h^{\a\a'} m^{\b\b'} + h^{\a\a'}h^{\b\b'}  +O^{\a\a'\b\b'}  (h^3) \\
\notag
&& +O^{\a\a'\b\b'}  (h^2 + h^3 + h^4) \\
&=&  m^{\a\a'} m^{\b\b'}  +O^{\a\a'\b\b'}  (h)  \, .
\eea
We know
\beaa
 g^{\alpha\beta}\pa_\alpha\pa_\beta g_{\mu\nu} =  \widetilde{P}(\pa_\mu g,\pa_\nu g) + \widetilde{Q}_{\mu\nu}(\pa g,\pa g)  -2 R_{\mu\nu}  \, .
\eeaa

We have
\beaa
 \widetilde{P}(\pa_\mu g,\pa_\nu g) = P(\pa_\mu h,\pa_\nu h) + O  (h \cdot (\pa h)^2)  \, ,
 \eeaa
 
 \beaa
   \widetilde{Q}_{\mu\nu}(\pa g,\pa g) &=& Q_{\mu\nu}(\pa h,\pa h) + O  (h \cdot (\pa h)^2)  \, .
\eeaa

Thus,
\bea
\notag
 g^{\alpha\beta}\pa_\alpha\pa_\beta
g_{\mu\nu} &=&  P(\pa_\mu h,\pa_\nu h) + Q_{\mu\nu}(\pa h,\pa h)  + O  (h \cdot (\pa h)^2)   -2 R_{\mu\nu} \\
\notag
&=& P(\pa_\mu h,\pa_\nu h) + Q_{\mu\nu}(\pa h,\pa h)  +G_{\mu\nu}(h)(\pa h,\pa h)  -2 R_{\mu\nu} \, ,
\eea
where $G_{\mu\nu}(h)(\pa h,\pa h)$ is a quadratic form in $\pa h$ with
coefficients smoothly dependent on $h$ and vanishing when $h$
vanishes, i.e. $G_{\mu\nu}(0)(\pa h,\pa h)=0$.\\

\end{proof}

\begin{lemma}\label{EYMsystemashyperbolicPDE}
 The Einstein-Yang-Mills equations in Lorenz gauge and in wave coordinates implies that 
      \bea\label{ThewaveequationontheYangMillspotentialwithhyperbolicwaveoperatorusingingpartialderivativesinwavecoordinates}
   \notag
g^{\la\mu} \pa_{\la}   \pa_{\mu}   A_{\si}      &=&  m^{\a\ga} m ^{\mu\la}  (  \pa_{\si}  h_{\ga\la} )   \pa_{\a}A_{\mu}       +   \frac{1}{2}  m^{\a\mu}m^{\b\nu}   \big(   \pa_\a h_{\b\si} + \pa_\si h_{\b\a}- \pa_\b h_{\a\si}  \big)   \cdot  \big( \pa_{\mu}A_{\nu} - \pa_{\nu}A_{\mu}  \big) \\
 \notag
&& +      \frac{1}{2}  m^{\a\mu}m^{\b\nu}   \big(   \pa_\a h_{\b\si} + \pa_\si h_{\b\a}- \pa_\b h_{\a\si}  \big)   \cdot   [A_{\mu},A_{\nu}] \\
 \notag
 && -  m^{\a\mu} \big(  [ A_{\mu}, \pa_{\a} A_{\si} ]  +    [A_{\alpha},  \pa_{\mu}  A_{\si} - \pa_{\si} A_{\mu} ]    +    [A_{\alpha}, [A_{\mu},A_{\si}] ]  \big)  \\
 \notag
  && + O( h \cdot  \pa h \cdot  \pa A) + O( h \cdot  \pa h \cdot  A^2) + O( h \cdot  A \cdot \pa A) + O( h \cdot  A^3) \, ,\\
  \eea
      
  and 
  \bea\label{Thewaveequationonthemetrichwithhyperbolicwaveoperatorusingingpartialderivativesinwavecoordinates}
\notag
 g^{\alpha\beta}\pa_\alpha\pa_\beta
h_{\mu\nu} &=& P(\pa_\mu h,\pa_\nu h) + Q_{\mu\nu}(\pa h,\pa h)  +G_{\mu\nu}(h)(\pa h,\pa h)  \\
\notag
 &&   -4   m^{\si\b} \cdot  <   \pa_{\mu}A_{\b} - \pa_{\b}A_{\mu}  ,  \pa_{\nu}A_{\si} - \pa_{\si}A_{\nu}  >    \\
 \notag
 &&   + \frac{2}{(n-1)} m_{\mu\nu }  m^{\si\b}  m^{\a\la}    \cdot  <  \pa_{\a}A_{\b} - \pa_{\b}A_{\a} , \pa_{\la}A_{\si} - \pa_{\si}A_{\la} >   \\
 \notag
&&           -4 m^{\si\b}  \cdot  \big( <   \pa_{\mu}A_{\b} - \pa_{\b}A_{\mu}  ,  [A_{\nu},A_{\si}] >   + <   [A_{\mu},A_{\b}] ,  \pa_{\nu}A_{\si} - \pa_{\si}A_{\nu}  > \big)  \\
\notag
&& + \frac{2}{(n-1)} m_{\mu\nu }  m^{\si\b}  m^{\a\la}    \cdot \big(  <  \pa_{\a}A_{\b} - \pa_{\b}A_{\a} , [A_{\la},A_{\si}] >    +  <  [A_{\a},A_{\b}] , \pa_{\la}A_{\si} - \pa_{\si}A_{\la}  > \big) \\
\notag
 &&  -4 m^{\si\b}  \cdot   <   [A_{\mu},A_{\b}] ,  [A_{\nu},A_{\si}] >      +  \frac{2}{(n-1)} m_{\mu\nu }  m^{\si\b}  m^{\a\la}   \cdot   <  [A_{\a},A_{\b}] , [A_{\la},A_{\si}] >  \\
     && + O \big(h \cdot  (\pa A)^2 \big)   + O \big(  h  \cdot  A^2 \cdot \pa A \big)     + O \big(  h   \cdot  A^4 \big)    \, ,
\eea
where $P$\;, $Q$ and $G$ are defined in \eqref{definitionofthetermbigPinsourcetermsforeinstein}, \eqref{definitionofthetermbigQinsourcetermsforeinstein} and \eqref{definitionofthetermbigGinsourcetermsforeinstein}.

\end{lemma}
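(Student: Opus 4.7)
The statement splits naturally into two independent parts, and both parts are in fact immediate consequences of lemmas already established in the excerpt, so the proposal is really a matter of assembly rather than new analysis.

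The first equation \eqref{ThewaveequationontheYangMillspotentialwithhyperbolicwaveoperatorusingingpartialderivativesinwavecoordinates} is nothing but the content of Lemma \ref{waveequationontheEinsteinYangMillspotentialderivedfromthegaugecovariantdivergenceoftheYangMillscurvatureisequaltozero}. That lemma was obtained from the Yang-Mills equation $\textbf{D}^{(A)}_\alpha F^{\alpha\beta}=0$ by expanding the gauge covariant divergence in wave coordinates, using the Lorenz gauge $\partial^\alpha A_\alpha=0$ (to kill the $\partial_\nu(\partial^\mu A_\mu)$ and $[\partial^\mu A_\mu,A_\nu]$ terms), and then replacing $g^{\mu\nu}$ by $m^{\mu\nu}-h^{\mu\nu}+O(h^2)$ and $\partial g$ by $\partial h$ in the Christoffel symbols via Lemma \ref{BigHintermsofsmallh}. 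All one needs to check is that equation \eqref{ThewaveequationontheYangMillspotentialwithhyperbolicwaveoperatorusingingpartialderivativesinwavecoordinates} is written with $g^{\lambda\mu}\partial_\lambda\partial_\mu A_\sigma$ on the left (rather than $\partial^\mu\partial_\mu A_\sigma$); this is compatible since the discrepancy $-h^{\lambda\mu}\partial_\lambda\partial_\mu A_\sigma$ has already been moved to the left.

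For the second equation \eqref{Thewaveequationonthemetrichwithhyperbolicwaveoperatorusingingpartialderivativesinwavecoordinates}, the plan is to combine Lemma \ref{waveequationontheEinsteinYangMillsmetricsmallhwithsourcesusingtheRiccitensorthatwascomoutedearlier}, which gives
\[
g^{\alpha\beta}\partial_\alpha\partial_\beta h_{\mu\nu} = S_{\mu\nu}(h)(\partial h,\partial h) - 2R_{\mu\nu},
\]
with the expression for $R_{\mu\nu}$ furnished by Lemma \ref{EinsteinYangMillssystemusingpotentialandusingMinkowskimetricmandperturbationh}, which expands the Einstein-Yang-Mills right hand side in terms of $m$, $h$ and $A$. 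Multiplying that expression by $-2$ produces exactly the Yang-Mills source structure displayed in \eqref{Thewaveequationonthemetrichwithhyperbolicwaveoperatorusingingpartialderivativesinwavecoordinates}: each $+2$ becomes $-4$, each $-\frac{1}{(n-1)}$ becomes $+\frac{2}{(n-1)}$, and the $O(h\cdot(\partial A)^2)$, $O(h\cdot A^2\cdot\partial A)$, $O(h\cdot A^4)$ remainders carry through unchanged. By unfolding the definition \eqref{definitionofthetermSinsourcetermsforeinstein} of $S_{\mu\nu}$ as $P+Q+G$, one arrives at the required form.

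The only bookkeeping to watch is that both sides of \eqref{Thewaveequationonthemetrichwithhyperbolicwaveoperatorusingingpartialderivativesinwavecoordinates} are expressed with $m^{\alpha\beta}$ (via $P$, $Q$) rather than $g^{\alpha\beta}$ inside the quadratic forms in $\partial h$, and that differences of the form $(g^{\alpha\alpha'}g^{\beta\beta'}-m^{\alpha\alpha'}m^{\beta\beta'})\cdot\partial h\cdot\partial h = O(h\cdot(\partial h)^2)$ are absorbed into $G_{\mu\nu}(h)(\partial h,\partial h)$, which is precisely the role of $G$. Similarly on the Yang-Mills side, the conversion of $g^{\sigma\beta}$ into $m^{\sigma\beta}$ in every quadratic term generates remainders that fit into the $O(h\cdot(\partial A)^2)$, $O(h\cdot A^2\cdot \partial A)$, $O(h\cdot A^4)$ classes already listed; this is exactly the content of Lemma \ref{EinsteinYangMillssystemusingpotentialandusingMinkowskimetricmandperturbationh}. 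There is no genuine obstacle here: the proof is purely algebraic substitution, and the only place where one must be careful is tracking the signs and the coefficient $-\tfrac12$ in $S_{\mu\nu}$ versus the coefficient in front of $-2R_{\mu\nu}$, so that the normalization of the quadratic forms $P$, $Q$, $G$ matches \eqref{definitionofthetermbigPinsourcetermsforeinstein}--\eqref{definitionofthetermbigGinsourcetermsforeinstein} exactly.
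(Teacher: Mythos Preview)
Your proposal is correct and follows exactly the same assembly as the paper's own proof, which simply cites Lemmas \ref{waveequationontheEinsteinYangMillspotentialderivedfromthegaugecovariantdivergenceoftheYangMillscurvatureisequaltozero}, \ref{waveequationontheEinsteinYangMillsmetricsmallhwithsourcesusingtheRiccitensorthatwascomoutedearlier}, and \ref{EinsteinYangMillssystemusingpotentialandusingMinkowskimetricmandperturbationh} as the three ingredients. Your additional commentary on the coefficient tracking (the $+2 \mapsto -4$ and $-\tfrac{1}{n-1} \mapsto +\tfrac{2}{n-1}$ under multiplication by $-2$) and on the absorption of $g$-to-$m$ discrepancies into the $O(\cdot)$ and $G_{\mu\nu}$ terms is accurate and more detailed than the paper's one-line citation, but the logical route is identical.
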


\begin{proof}
As a result of Lemmas \ref{waveequationontheEinsteinYangMillspotentialderivedfromthegaugecovariantdivergenceoftheYangMillscurvatureisequaltozero}, \ref{waveequationontheEinsteinYangMillsmetricsmallhwithsourcesusingtheRiccitensorthatwascomoutedearlier}, and \ref{EinsteinYangMillssystemusingpotentialandusingMinkowskimetricmandperturbationh}. we have proved the stated lemma.
\end{proof}

  \section{Construction of the initial data and the gauges conditions constraints}\label{Construction of the initial data and the gauges conditions constraints}
  
 We assume that we are given already two one-tensors $\overline{A} = \overline{A}_{i}dx^i$ and $E = E_{i}dx^i$ valued in the Lie algebra $\cal G$\,, associated to the group $G$\,, prescribed on a given $n$-dimensional manifold $\Sigma$ diffeomorphic to $\R^n$, with a Riemannian metric $\overline{g}$ on the initial slice $\Sigma$\,, and with a symmetric two-tensor $\overline{k}$ on $\Sigma$\,. The given initial data set is then $(\Sigma, \overline{A}, \overline{E}, \overline{g}, \overline{k})$\,. Let $A_\Sigma$ and  $g_\Sigma$  be the restrictions of $A$ and $g$ on $\Sigma$\,. We want to translate this initial data set to the form of $(\Sigma, A_\Sigma, \pa_t A_\Sigma, g_\Sigma, \pa_t g_\Sigma)$ taking into consideration that we already chose to look at our solution in both the Lorenz gauge and in wave coordinates -- this will be useful for a hyperbolic formulation of the Cauchy problem.
 
\begin{remark}
We note that to impose that the solution remains in wave coordinates, imposes an additional wave coordinate constraint on the initial data set that is not present in the case of the Einstein vacuum equations. In other words, we will show that any arbitrary initial data satisfying the Einstein-Yang-Mills constraints and that is in the Lorenz gauge, will remain in the Lorenz gauge, but if the initial data is in wave coordinates, it will not remain in wave coordinates unless a wave coordinates constraint is imposed on the initial data set which we will exhibit.
\end{remark}

\subsection{The initial data for the Yang-Mills potential}\label{IntialdataforYangMills}\

For any solution of the Einstein-Yang-Mills equations, one can perform a gauge transformation on the Yang-Mills potential such that on $\Sigma$, we have $A_{t} = 0$ at $t=0$\;. However, this is not necessarily preserved for $t > 0$, since we want in fact to satisfy the Lorenz gauge condition as well. In any case, we can always define on the initial slice $\Sigma$\;, the following
 \bea\label{initialdatadforzerothderivativeAsigma}
A_{\Sigma} = \begin{cases} (A_{\Sigma})_{t} = 0 \; ,  \\
(A_{\Sigma})_{i}= \overline{A}_{i} \quad\text{prescribed arbitrarily for }\quad i \neq t \; , \end{cases} 
\eea
and we then look forward to choosing $\p_{t} A_{\Sigma}$ such that the Lorenz condition and the Einstein-Yang-Mills constraints are satisfied.

In wave coordinates $x^\mu$\;, we compute
\beaa
\der_{\mu} A^{\mu} &=& \pa_{\mu} A^{\mu} =  \pa_{t} A^{t} + \pa_{i} A^{i} \\
&=&  \pa_{t} (g^{t\mu} A_{\mu} )+ \pa_{i} ( g^{i\mu} A_{\mu}) \, .
\eeaa
We are going to construct, in Subsection \ref{constructioninitialdataformetric} (see \ref{constructionoggsigma}), the initial data for the metric such that at $t=0$, $g_{ti} = 0$ for all $i\neq t$. Then, we also have $g^{t i} = 0$ for all $i\neq t$. Thus, at $t=0$, 
\bea
\notag
\der_{\mu} A^{\mu} &=& \pa_{t} (g^{tt} A_{t} )+ \pa_{i} ( g^{ij} A_{j}) \\
\notag
&=& \pa_{t} (g^{tt} ) A_{t} + g^{tt} \pa_{t}  A_{t}  + \pa_{i} ( g^{ij} A_{j}) \\
\notag
&=&   g^{tt} \pa_{t}  A_{t}  + \pa_{i} ( g^{ij} A_{j}) \\
\notag
&& \text{(since $A_t = 0$ at $t=0$).} 
\eea
Hence, the Lorenz gauge reads in wave coordinates
\beaa
\notag
0 &=& \der_{\mu} A^{\mu} =  g^{tt} \pa_{t}  A_{t} + \pa_{i} ( g^{ij} A_{j}) \, ,
\eeaa
from which we deduce that 
\bea
\pa_{t}  A_{t}  = - \frac{1}{g^{tt}}   \pa_{i} ( g^{ij} A_{j}) = N^2  \pa_{i} ( g^{ij} A_{j}) \, .
\eea

However, on one hand, the initial data $A_i$ must satisfy the following Yang-Mills constraint equation at $t=0$,
\bea
\notag
0 &=& \textbf{D}^{(A)}_{i}F^{i t}\\
\notag
&=& g^{t\mu}\textbf{D}^{(A)}_{i}{F^{i}}_{\mu} = g^{tt}\textbf{D}^{(A)}_{i}{F^{i}}_{t}\\
\notag
&=& g^{tt}  \der^{i}  (   \der_{i}A_{t} - \der_{t}A_{i} + [A_{i},A_{t}]  ) + g^{tt}   [A^{i},  \der_{i}A_{t} - \der_{t}A_{i} + [A_{i},A_{t}]  ]  \\
\notag
&=&   \der^{i}  (  g^{tt} \der_{i}A_{t} - g^{tt} \der_{t}A_{i} + g^{tt} [A_{i},A_{t}]  ) + g^{tt}   [A^{i},  \der_{i}A_{t} - \der_{t}A_{i} + [A_{i},A_{t}]  ]  \\
\notag
&=&  \der^{i}  (   g^{tt} \pa_{i}A_{t} -  g^{tt} \pa_{t}A_{i}  ) + g^{tt}   [A^{i},  \pa_{i}A_{t} - \pa_{t}A_{i}   ]  \\
\notag
&& \text{(using the fact that $A_t = 0$ at $t=0$)} \\
\notag
&=& \der^{i}  (  -  g^{tt} \pa_{t}A_{i}  ) + g^{tt}   [A^{i},   - \pa_{t}A_{i}   ]  \, .
\eea
On the other hand,
\beaa
E_i &=& F_{\hat{t} i} = \frac{1}{N} F_{t i } =   \frac{1}{N} \big( \der_{i}A_{t} -  \der_{t}A_{i} + [A_{i},A_{t}]  \big) =   \frac{1}{N} \big( \pa_{i}A_{t} -  \pa_{t}A_{i} + [A_{i},A_{t}]  \big)  \\
&=&  - \frac{1}{N}  \pa_{t}A_{i}    \\
&& \text{(using the fact that $A_t = 0$ at $t=0$)} \\
&=& g^{tt}   \pa_{t}A_{i} \, .
\eeaa
Consequently, the Yang-Mills constraint equation reads
\beaa
0 &=& \textbf{D}^{(A)}_{i}F^{i t}\\
\notag
 &=& g^{tt}  \der^{i}  (   -E_{i} ) + g^{tt}   [A_{i},   -E_{i}   ]  \\
\notag
&=& g^{tt}  D^{i}  (   -E_{i} ) + g^{tt}   [A_{i},   -E_{i}   ]  \, ,
\eeaa
with $\pa_{t} A_{i} = - N E_{i}$, where $M$ is defined in \eqref{definitionoflapseN}.

Thus, we choose
 \bea\label{constructionopatialtimeAonsigma}
\p_{t} A_{\Sigma} = \begin{cases} (\pa_{t } A_{\Sigma})_{t} =  N^2  \pa_{i} ( \overline{g}^{ij} \overline{A}_{j}) \, ,\\
(\pa_{t } A_{\Sigma})_{i} = - N \overline{E}_{i} \quad\text{ where $\overline{E}_i$ is prescribed arbitrarily for } i \neq t \; ,\\
\quad\quad\quad\quad\quad\quad\;\;\quad\text{such that } D^i \overline{E}_{ i} + [\overline{A}^i, \overline{E}_{ i} ] = 0 \,. \\ \end{cases} 
\eea

 \subsection{The initial data for the metric}\label{constructioninitialdataformetric}\
 
Since the Einstein-Yang-Mills equations are invariant under diffeomorphisms, for a given solution $(M, F, \g)$ of the Einstein-Yang-Mills system, one can always perform a diffeomorphism such that $\Sigma$ corresponds to $t=0$ and such that $\frac{\pa }{\pa t}$ is orthogonal to $\Sigma$ at $t=0$\;, i.e. such that the metric $g_\Sigma$ has the following form
\bea\label{constructionoggsigma}
g_{\Sigma} = \begin{cases} (g_{\Sigma})_{tt} = - N^2 \, , \\
(g_{\Sigma})_{ij} =   \overline{g}_{ij}  \quad\text{given by the initial data}, \\
(g_{\Sigma})_{tj} =  (g_{\Sigma})_{jt} = 0\, .
 \end{cases} 
\eea
In fact,
\bea
\frac{\pa }{\pa t} = N \, \hat{t} + X^i e_i \;,
\eea
where $N$ is the lapse function and $X^i$ is the shift vector, and choose that on on the given Cauchy hypersurface $\Sigma$\;,
\bea
X^i = 0 \;.
\eea
In other words, we make a diffeomorphism of the solution such that $\hat{t}$\;, the unitary time-like vector orthogonal to $\Sigma_t$\;, agrees on $\Sigma$ with $ \frac{1}{N} \frac{\pa}{\pa t}$\;, i.e. that we have on $\Sigma$\;,  $${\hat{t}}_\Sigma = \frac{1}{N} \frac{\pa}{\pa t} \;.$$

However, we want to perform the diffeomorphism in a way such that the metric $g_\Sigma$ is not only in the form prescribed above but also in wave coordinates simultaneously: we will show that this is indeed possible provided that on $\Sigma$, we choose $ \frac{\pa}{\pa t} {g_\Sigma}_{\mu\nu} $ adequately in terms of the initial data $\overline{g}$ and $\overline{k}$. In fact, we could do so without adding any constraint on the initial data since $(\frac{\pa}{\pa t}  g_{\Sigma})_{\mu\nu} $ is not part of the initial data set.

 In other words, we are going to construct $(\frac{\pa}{\pa t}  g_{\Sigma})_{\mu\nu} $ on $\Sigma$\;, using the wave coordinates condition. To start with, for a solution $(M, \g)$\;, let us compute on $\Sigma$\;,
\beaa
\der_{\frac{1}{N} \frac{\pa}{\pa t}} {g}_{\mu\nu} = 0  = \frac{1}{N}  \pa_{t} {g}_{\mu\nu} - {g} (\der_{\frac{1}{N}  \frac{\pa}{\pa t}}  e_\mu , e_\nu ) - {g} (  e_\mu , \der_{\frac{1}{N}  \frac{\pa}{\pa t}}  e_\nu ) \, ,
\eeaa
which gives
\beaa
\frac{1}{N}  \pa_{t} {g}_{\mu\nu} &=&  {g} (\der_{\frac{1}{N} \frac{\pa}{\pa t}} e_\nu, e_\mu) + {g} ( e_\mu, \der_{\frac{1}{N}  \frac{\pa}{\pa t}} e_\nu) \, .\\
\eeaa
In particular, for spatial indices $i, j$, we get
\beaa
\frac{1}{N}  \pa_{t} {g}_{ij} &=&  {g} (\der_{\frac{1}{N} \frac{\pa}{\pa t}} e_{i}, e_{j}) + {g} ( e_{i}, \der_{\frac{1}{N}  \frac{\pa}{\pa t}} e_{j}) \\
&=&  {g} (\frac{1}{N} \der_{ \frac{\pa}{\pa t}} e_{i}, e_{j}) + {g} ( e_{i}, \frac{1}{N} \der_{ \frac{\pa}{\pa t}} e_{j}) \, .\\
\eeaa

Since $ \frac{\pa}{\pa t}$ and $e_i$ are coordinate vector fields, we have

\beaa
[ \frac{\pa}{\pa t}, e_{i}] = 0 = \der_{\frac{\pa}{\pa t}}  e_{i} - \der_{e_{i}} \frac{\pa}{\pa t} \, .
\eeaa
Hence,
\beaa
\frac{1}{N}  \pa_{t} {g}_{ij} &=&  {g} (\frac{1}{N} \der_{  e_{i}} \frac{\pa}{\pa t} , e_{j}) + {g} ( e_{i}, \frac{1}{N} \der_{  e_{j}} \frac{\pa}{\pa t}) \\
&=&  {g} ( \der_{  e_{i}} ( \frac{1}{N} \frac{\pa}{\pa t} ) - \pa_{e_i} ( \frac{1}{N} ) \frac{\pa}{\pa t}  , e_{j}) + {g} ( e_{i}, \der_{  e_{j}} (  \frac{1}{N}\frac{\pa}{\pa t} ) - \pa_{e_j} ( \frac{1}{N} ) \frac{\pa}{\pa t}  ) \\
&=&  {g} ( \der_{  e_{i}} ( \frac{1}{N} \frac{\pa}{\pa t} )  , e_{j}) + {g} ( e_{i}, \der_{  e_{j}} (  \frac{1}{N}\frac{\pa}{\pa t} ) ) \\
&& \text{(since $\frac{\pa}{\pa t}$ is orthogonal to the hypersurface $\Sigma$).}
\eeaa

Since 
\beaa
k ( e_{i} ,  e_{j}) &=& g(\der_{ e_{i}} \hat{t},  e_{j}) \, , \\
\eeaa
this leads to 
\beaa
\frac{1}{N}  \pa_{t} {g}_{ij} &=&  {g} (\der_{e_{i}} \hat{t}, e_{j}) + {g} ( e_{i}, \der_{ e_{j}} \hat{t}) \\
&=& 2 k_{ij}  \, . \\
\eeaa
Hence, we impose
\bea
 \pa_{t} {g}_{ij} &=&  2 N k_{ij}  \, .
\eea

\begin{lemma}\label{Lemmaonwritingthewaveconditiononginwavecoordinates}
In wave coordinates, we have
\bea
  g^{\mu\nu}  \pa_\mu  g_{\si\nu}   -\frac{1}{2}  g^{\mu\nu}  \pa_\si g_{\mu\nu} = 0 \, .
\eea
\end{lemma}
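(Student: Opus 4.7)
The strategy is to reduce the claim to the wave coordinate condition already derived in the excerpt, namely equation \eqref{wavecoordinatecondition}, which states
\[
g^{\alpha\nu}\Gamma_{\nu\alpha}^{\,\,\,\,\,\,\,\mu}=0
\]
in wave coordinates. Since the desired identity involves only partial derivatives of the metric components and the inverse metric, it is natural to expand the Christoffel symbols in the wave coordinate condition and rearrange.

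First I would substitute the explicit formula for the Christoffel symbols,
\[
\Gamma^{\,\,\,\,\,\,\,\mu}_{\nu\alpha}=\tfrac{1}{2}g^{\mu\beta}\bigl(\partial_\nu g_{\beta\alpha}+\partial_\alpha g_{\beta\nu}-\partial_\beta g_{\alpha\nu}\bigr),
\]
into the wave coordinate condition, obtaining
\[
0=\tfrac{1}{2}g^{\alpha\nu}g^{\mu\beta}\bigl(\partial_\nu g_{\beta\alpha}+\partial_\alpha g_{\beta\nu}-\partial_\beta g_{\alpha\nu}\bigr).
\]
Then I would lower the free index $\mu$ by multiplying through by $g_{\mu\sigma}$, which contracts $g^{\mu\beta}g_{\mu\sigma}=\delta^{\beta}_{\sigma}$ and replaces $\beta$ by $\sigma$ in each term:
\[
0=\tfrac{1}{2}g^{\alpha\nu}\bigl(\partial_\nu g_{\sigma\alpha}+\partial_\alpha g_{\sigma\nu}-\partial_\sigma g_{\alpha\nu}\bigr).
\]

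Next I would exploit the symmetry $g^{\alpha\nu}=g^{\nu\alpha}$ together with the symmetry $g_{\sigma\alpha}=g_{\alpha\sigma}$ to observe that, after relabeling the dummy indices $\alpha\leftrightarrow\nu$ in the first term, we have $g^{\alpha\nu}\partial_\nu g_{\sigma\alpha}=g^{\alpha\nu}\partial_\alpha g_{\sigma\nu}$. Therefore the first two terms combine, yielding
\[
0=g^{\alpha\nu}\partial_\alpha g_{\sigma\nu}-\tfrac{1}{2}g^{\alpha\nu}\partial_\sigma g_{\alpha\nu},
\]
which upon renaming $\alpha\to\mu$ is exactly the stated identity.

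There is no serious obstacle here; the entire content is an algebraic rearrangement of the definition of the Christoffel symbols combined with the wave coordinate condition \eqref{wavecoordinatecondition}, which was already established earlier in the excerpt via the computation of $\der^{\alpha}\der_{\alpha}x^{\mu}$. The only point that requires any care is the relabeling of dummy indices to use the symmetry of $g^{\alpha\nu}$, which is what collapses the three-term sum into the two-term form desired in the lemma.
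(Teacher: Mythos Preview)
Your proof is correct and follows essentially the same approach as the paper: both arguments start from the wave coordinate condition \eqref{wavecoordinatecondition}, expand the Christoffel symbols explicitly, lower the free index by contracting with $g_{\mu\sigma}$, and then use the symmetry of $g^{\alpha\nu}$ to combine the two matching terms. The only cosmetic difference is the order in which you perform the substitution and the index-lowering, which has no mathematical content.
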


\begin{proof}

On one hand, the formula for the Christoffel symbols are given by
\beaa
\Gamma^{\,\,\, \, \, \, \, 
 \la}_{\mu\nu} =\frac{1}{2} g^{\lambda\delta} \big( \pa_\mu
g_{\delta\nu} + \pa_\nu g_{\delta\mu}-\pa_\delta g_{\mu\nu}\big)  \, .
\eeaa
Thus,
\beaa
g_{\la \si} \Gamma^{\,\,\, \, \, \, \, 
 \la}_{\mu\nu}&=& \frac{1}{2} g_{\la \si} g^{\lambda\delta} \big( \pa_\mu
g_{\delta\nu} + \pa_\nu g_{\delta\mu}-\pa_\delta g_{\mu\nu}\big) \\
&=& \frac{1}{2} I_{\si}^{\, \, \, \delta} \big( \pa_\mu
g_{\delta\nu} + \pa_\nu g_{\delta\mu}-\pa_\delta g_{\mu\nu}\big) \\
&=& \frac{1}{2} \big( \pa_\mu
 ( I_{\si}^{\, \, \, \delta}  g_{\delta\nu} )+ \pa_\nu ( I_{\si}^{\, \, \, \delta}  g_{\delta\mu} ) -\pa_\si g_{\mu\nu}\big) \\
 &=& \frac{1}{2} \big( \pa_\mu  g_{\si\nu} + \pa_\nu  g_{\si\mu}  -\pa_\si g_{\mu\nu}\big)  \, . \\
\eeaa

On the other hand, the wave coordinate condition, \eqref{wavecoordinatecondition}, reads

\beaa
g^{\mu\nu} \Gamma^{\,\,\, \, \, \, \, 
 \la}_{\mu\nu} =0 \, .
\eeaa
Thus, injecting, we obtain
\beaa
0 &=&  g^{\mu\nu}g_{\la \si} \Gamma^{\,\,\, \, \, \, \, 
 \la}_{\mu\nu} \\
  &=& g^{\mu\nu} \frac{1}{2} \big( \pa_\mu  g_{\si\nu} + \pa_\nu  g_{\si\mu}  -\pa_\si g_{\mu\nu}\big) \\
    &=&  \frac{1}{2} \big( \pa^\nu  g_{\si\nu} + \pa_\mu  g_{\si\mu}  -\pa_\si g_{\mu\nu}\big) \\
     &=&  \pa^\nu  g_{\si\nu}   -\frac{1}{2}  g^{\mu\nu}  \pa_\si g_{\mu\nu} \\
          &=& g^{\mu\nu}  \pa_\mu  g_{\si\nu}   -\frac{1}{2}  g^{\mu\nu}  \pa_\si g_{\mu\nu}  \, .
\eeaa

\end{proof}

Using Lemma \ref{Lemmaonwritingthewaveconditiononginwavecoordinates}, for $\si=t$, we obtain that in wave coordinates,
\beaa
0 &=&  g^{\mu\nu}  \pa_\mu  g_{t\nu}   -\frac{1}{2}  g^{\mu\nu}  \pa_t g_{\mu\nu} \\
&=& g^{\mu t}  \pa_\mu  g_{tt} +  g^{\mu i}  \pa_\mu  g_{ti}    -\frac{1}{2}  g^{tt}  \pa_t g_{tt}  -\frac{1}{2}  g^{ij}  \pa_t g_{ij}  \\
&=& g^{t t}  \pa_t  g_{tt}   + g^{j i}  \pa_j  g_{t i} -\frac{1}{2}  g^{tt}  \pa_t g_{tt}  -\frac{1}{2}  g^{ij}  \pa_t g_{ij}  \\
&=&\frac{1}{2}  g^{tt}  \pa_t g_{tt}  -\frac{1}{2}  g^{ij}  \pa_t g_{ij}  \\
&& \text{(where we used the fact that $g_{ti} = 0$ on $\Sigma$).}
\eeaa

Hence,
\beaa
g^{tt}  \pa_t g_{tt}  &=& g^{ij}  \pa_t g_{ij}  \, .  \\
\eeaa
Consequently,
\bea
\notag
 \pa_t g_{tt}  &=& \frac{1}{g^{tt} } g^{ij}  \pa_t g_{ij}  \\
 &=& -N^2   g^{ij}  \pa_t g_{ij}   \, .
\eea

For $\si=j$, the wave coordinates condition reads
\beaa
  g^{\mu\nu}  \pa_\mu  g_{j \nu}   -\frac{1}{2}  g^{\mu\nu}  \pa_j g_{\mu\nu} = 0  \, .
\eeaa
We get
\beaa
 0 &=&  g^{\mu t}  \pa_\mu  g_{j t}  + g^{\mu i}  \pa_\mu  g_{j i}   -\frac{1}{2}  g^{tt}  \pa_j g_{tt}  -\frac{1}{2}  g^{ik}  \pa_j g_{ik} \\
 0 &=&   g^{t t}  \pa_t  g_{j t} +  g^{k i}  \pa_k g_{j i}   -\frac{1}{2}  g^{tt}  \pa_j g_{tt}  -\frac{1}{2}  g^{ik}  \pa_j g_{ik}  \, .\\
 \eeaa
Thus,
\bea
\notag
 \pa_t  g_{j t}   &=&  \frac{1}{ g^{t t} } \big( -  g^{k i}  \pa_k g_{j i}   + \frac{1}{2}  g^{tt}  \pa_j g_{tt}  + \frac{1}{2}  g^{ik}  \pa_j g_{ik}  \big) \\
 \notag
   &=& - N^2 \big( -  g^{k i}  \pa_k g_{j i}   -  \frac{1}{2 N^2 } \pa_j (-N^2)  + \frac{1}{2}  g^{ik}  \pa_j g_{ik}  \big) \\
      &=& - N^2 \big( \frac{1}{ N }   \pa_j N   -  g^{k i}  \pa_k g_{j i} + \frac{1}{2}  g^{ik}  \pa_j g_{ik}  \big)  \, .
 \eea

Finally, in consistency with the wave coordinate condition, we take the initial data
\bea\label{constructionopatialtgonsigma}
\pa_{t} {g_\Sigma} = \begin{cases}  ( \pa_{t} {g_\Sigma})_{ij}  =  2 N \overline{k}_{ij}  \, ,\\
 ( \pa_{t} {g_\Sigma})_{tt} =  - N^2  g^{ij}  \pa_t g_{ij}   =   - 2 N^3  \overline{g}^{ij} \overline{k}_{ij} \, ,\\
  ( \pa_{t} {g_\Sigma})_{tj} =   ( \pa_{t} {g_\Sigma})_{jt} =  - N \pa_j N   + N^2  \overline{g}^{k i}  \pa_k  \overline{g}_{j i} -  \frac{N^2}{2}   \overline{g}^{ik}  \pa_j  \overline{g}_{ik}    \, .
 \end{cases} 
\eea

Furthermore, to ensure that the solution remains in wave coordinates, the initial data must satisfy an additional wave coordinate constraint to ensure that the wave coordinates condition propagates -- this will be discussed in the next section.

\subsection{The propagation of the Lorenz gauge condition}\

We will show that there is indeed a way to solve the Einstein-Yang-Mills system in a manner that guarantees that the Lorenz gauge propagates in time. In other words, given the initial data set  $(\Sigma, A_\Sigma, \pa_t A_\Sigma, g_\Sigma, \pa_t g_\Sigma)$\;, that we have just constructed in Subsections \ref{IntialdataforYangMills} and \ref{constructioninitialdataformetric}, we will show a way to construct a solution to the Einstein-Yang-Mills equations, \eqref{EYMsystemofequationsongandA}, that is in the  Lorenz gauge for all time. 

\begin{lemma}\label{waveoperatorontheYangMillspoentialwithsourcesdependingonA}
The Yang-Mills equations, \eqref{ThegaugecovariantdivergenceoftheYangMillscurvatureisequaltozero}, read
\bea
\notag
\Box_{\g} A^{\b} =     \der^{\b}  \der_{\alpha} A^{\a} + R_{ \mu}^{\, \, \, \, \b}   A^{\mu} - [  \der_{\alpha} A^{\a},A^{\b}] - 2 [  A_{\a},\der^{\alpha}  A^{\b}]     + [A_{\alpha},  \der^{\b}A^{\a} ] -  [A_{\alpha}, [A^{\a},A^{\b}]  ]  \, . \\
\eea

\end{lemma}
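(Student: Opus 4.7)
\textbf{Proof proposal for Lemma \ref{waveoperatorontheYangMillspoentialwithsourcesdependingonA}.} The plan is to start from the Yang-Mills equation \eqref{ThegaugecovariantdivergenceoftheYangMillscurvatureisequaltozero} in the contravariant form
\[
\der_{\alpha} F^{\alpha\beta} + [A_{\alpha}, F^{\alpha\beta}] = 0,
\]
and substitute the expression $F^{\alpha\beta} = \der^{\alpha} A^{\beta} - \der^{\beta} A^{\alpha} + [A^{\alpha}, A^{\beta}]$, which is obtained from \eqref{expF} by raising indices using the fact that the Levi-Civita connection is compatible with $\g$. Distributing $\der_\alpha$ across the sum, I identify $\der_{\alpha}\der^{\alpha} A^{\beta} = \Box_{\g} A^{\beta}$, and use the Leibniz rule on the bracket term to produce $[\der_{\alpha} A^{\alpha}, A^{\beta}] + [A^{\alpha}, \der_{\alpha} A^{\beta}]$.

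The key step, and the only genuinely non-trivial one, is to rewrite the term $-\der_{\alpha}\der^{\beta} A^{\alpha}$. Using $\der g = 0$, I pull the inverse metric outside to write $\der_{\alpha}\der^{\beta} A^{\alpha} = g^{\beta\mu}\der_{\alpha}\der_{\mu} A^{\alpha}$, and then commute covariant derivatives via the Ricci identity
\[
[\der_{\alpha}, \der_{\mu}] A^{\nu} = R^{\nu}_{\,\,\,\sigma\alpha\mu} A^{\sigma}.
\]
Contracting with $\nu = \alpha$ yields $[\der_{\alpha}, \der_{\mu}] A^{\alpha} = R^{\alpha}_{\,\,\,\sigma\alpha\mu} A^{\sigma} = R_{\sigma\mu} A^{\sigma}$, so that
\[
\der_{\alpha}\der^{\beta} A^{\alpha} = \der^{\beta}\der_{\alpha} A^{\alpha} + R_{\mu}^{\,\,\,\beta} A^{\mu}.
\]
This is where I have to be careful with index conventions and sign choices (which definition of $R^{\nu}_{\,\,\,\sigma\alpha\mu}$ is in force, and with which sign) in order to match the statement of the lemma; I expect this to be the main obstacle, since the rest of the calculation is routine algebra.

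Finally, I move $-[A_\alpha, F^{\alpha\beta}]$ to the right-hand side and expand it using the expression for $F^{\alpha\beta}$, producing the three bracket terms $-[A_\alpha, \der^\alpha A^\beta]$, $+[A_\alpha, \der^\beta A^\alpha]$, and $-[A_\alpha, [A^\alpha, A^\beta]]$. Combining the $-[A^\alpha, \der_\alpha A^\beta]$ term produced earlier with $-[A_\alpha, \der^\alpha A^\beta]$ yields $-2[A_\alpha, \der^\alpha A^\beta]$, and collecting everything gives the stated identity
\[
\Box_{\g} A^{\beta} = \der^{\beta}\der_{\alpha} A^{\alpha} + R_{\mu}^{\,\,\,\beta} A^{\mu} - [\der_{\alpha} A^{\alpha}, A^{\beta}] - 2 [A_{\alpha}, \der^{\alpha} A^{\beta}] + [A_{\alpha}, \der^{\beta} A^{\alpha}] - [A_{\alpha}, [A^{\alpha}, A^{\beta}]].
\]
Note that this identity is derived as a consequence of the Yang-Mills equations alone and does not yet use either the Lorenz gauge $\der_\alpha A^\alpha = 0$ or wave coordinates; specializing to the Lorenz gauge simply drops the first and third terms on the right and recovers the form used in Lemma \ref{waveequationontheYangMillspotentialwithspurcesdependingonthemetricg}.
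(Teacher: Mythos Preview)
Your proof is correct and follows essentially the same route as the paper: expand $F^{\alpha\beta}$ in \eqref{ThegaugecovariantdivergenceoftheYangMillscurvatureisequaltozero}, apply the Leibniz rule to the bracket, commute $\der_\alpha \der^\beta$ via the Ricci identity to produce the $R_\mu^{\,\,\,\beta} A^\mu$ term, and collect. Your observation that the identity holds prior to imposing the Lorenz gauge is also correct and is exactly how the paper uses this lemma to derive Lemma~\ref{equationtodefinepropgationofAtorespectLorenzgauge}.
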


\begin{proof}
We write the Yang-Mills equations, \eqref{ThegaugecovariantdivergenceoftheYangMillscurvatureisequaltozero}, in terms of the Yang-Mills potential $A$\, and we get
\beaa
\notag
0 &=& \textbf{D}^{(A)}_{\a}F^{\a\b}\\
\notag
&=& \der_{\alpha} F^{\a\b}  + [A_{\alpha}, F^{\a\b} ]  \\
\notag
&=&  \der_{\alpha}  (   \der^{\a}A^{\b} - \der^{\b}A^{\a} + [A^{\a},A^{\b}]  ) +   [A_{\alpha},  \der^{\a}A^{\b} - \der^{\b}A^{\a} + [A^{\a},A^{\b}]  ]  \\
\notag
&=&  \der_{\alpha}    \der^{\a}A^{\b} -   \der_{\alpha} \der^{\b}A^{\a} + [  \der_{\alpha} A^{\a},A^{\b}] +  [  A^{\a},\der_{\alpha}  A^{\b}]  +   [A_{\alpha},  \der^{\a}A^{\b} - \der^{\b}A^{\a} + [A^{\a},A^{\b}]  ]  \\
\notag
&=&  \Box_{\g} A^{\b} -   \der_{\alpha} \der^{\b}A^{\a} + [  \der_{\alpha} A^{\a},A^{\b}] +  [  A_{\a},\der^{\alpha}  A^{\b}]   +   [A_{\alpha},  \der^{\a}A^{\b}]  - [A_{\alpha},  \der^{\b}A^{\a} ] +  [A_{\alpha}, [A^{\a},A^{\b}]  ] \, . \\
\eeaa
Using the fact that
\beaa
  \der_{\alpha} \der^{\b}A^{\a} &=&   \der^{\b}  \der_{\alpha} A^{\a} +R_{\,\,\, \mu \a }^{\a \,\, \,\,\, \,\, \,  \b}    A^{\mu} \\
  &=&   \der^{\b}  \der_{\alpha} A^{\a} + R_{ \mu}^{\, \, \, \, \b}    A^{\mu} \, ,
  \eeaa
we get
\beaa
\notag
0 &=& \textbf{D}^{(A)}_{\a}F^{\a\b}\\
\notag
&=&  \Box_{\g} A^{\b} -    \der^{\b}  \der_{\alpha} A^{\a} - R_{ \mu}^{\, \, \, \, \b}   A^{\mu} + [  \der_{\alpha} A^{\a},A^{\b}] +  2 [  A_{\a},\der^{\alpha}  A^{\b}]     - [A_{\alpha},  \der^{\b}A^{\a} ] +  [A_{\alpha}, [A^{\a},A^{\b}]  ]  \, .
\eeaa
\end{proof}

\begin{lemma}\label{equationtodefinepropgationofAtorespectLorenzgauge}
The non-linear wave equation on $A$ given in \eqref{ThewaveequationontheYangMillspotentialwithhyperbolicwaveoperatorusingingpartialderivativesinwavecoordinates}, reads
\bea
  \Box_{\g} A^{\b} &=&   R_{ \mu}^{\, \, \, \, \b}   A^{\mu}  -  2 [  A_{\a},\der^{\alpha}  A^{\b}]     + [A_{\alpha},  \der^{\b}A^{\a} ] -  [A_{\alpha}, [A^{\a},A^{\b}]  ]  \, ,
\eea

\end{lemma}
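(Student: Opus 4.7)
\textbf{Proof proposal for Lemma \ref{equationtodefinepropgationofAtorespectLorenzgauge}.}

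The plan is to recognize this lemma as a direct consequence of Lemma \ref{waveoperatorontheYangMillspoentialwithsourcesdependingonA} once the Lorenz gauge is imposed. Lemma \ref{waveoperatorontheYangMillspoentialwithsourcesdependingonA} already gives the Yang-Mills equations $\textbf{D}^{(A)}_\a F^{\a\b} = 0$ in the form
\[
\Box_{\g} A^{\b} =  \der^{\b}  \der_{\alpha} A^{\a} + R_{\mu}^{\,\,\,\b}  A^{\mu} - [\der_{\alpha} A^{\a},A^{\b}] - 2[A_{\a},\der^{\alpha}  A^{\b}] + [A_{\alpha},  \der^{\b}A^{\a}] -  [A_{\alpha}, [A^{\a},A^{\b}]].
\]
Since \eqref{ThewaveequationontheYangMillspotentialwithhyperbolicwaveoperatorusingingpartialderivativesinwavecoordinates} is derived from the very same Yang-Mills equation $\textbf{D}^{(A)}_\a F^{\a\b} = 0$ together with the Lorenz gauge condition (see Lemma \ref{waveequationontheEinsteinYangMillspotentialderivedfromthegaugecovariantdivergenceoftheYangMillscurvatureisequaltozero}), the two formulations must agree up to the usual translation between $g^{\la\mu} \pa_\la \pa_\mu$ and $\Box_\g$ in wave coordinates, which was established in Section \ref{The gauge conditions} via $\der^\a \der_\a \Psi = \der^\a(\der_\a \Psi)$ when $\Ga^{\a\,\,\,\mu}_{\,\,\,\a} = 0$.

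The first step is to simply substitute the Lorenz gauge condition $\der_\alpha A^\alpha = 0$ into the identity provided by Lemma \ref{waveoperatorontheYangMillspoentialwithsourcesdependingonA}. This kills the first term $\der^\b \der_\alpha A^\a$ and the third term $-[\der_\alpha A^\a, A^\b]$ identically, and the remaining four terms reproduce precisely the right-hand side claimed in the lemma. The equivalence with \eqref{ThewaveequationontheYangMillspotentialwithhyperbolicwaveoperatorusingingpartialderivativesinwavecoordinates} is then automatic, since both equations encode the same statement $\textbf{D}^{(A)}_\a F^{\a\b} = 0$ under Lorenz gauge, the first expressed covariantly via $\Box_\g$ and the Ricci term, the second expressed through partial derivatives in wave coordinates with the Christoffel-type source terms exhibited explicitly.

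There is essentially no obstacle here; the content of the lemma is notational rather than analytical. The only point requiring a little care is to confirm that no hidden gauge or coordinate dependence has been swept under the rug when passing between the two forms: specifically, one should check that the cancellation of $\der^\b \der_\alpha A^\alpha$ is legitimate as a pointwise identity under the gauge $\der^\a A_\a = 0$, which it is because the Lorenz condition is a scalar equation independent of the coordinate system, and differentiating it yields $\der^\b(\der_\alpha A^\alpha) = 0$ so that the commutator form $\der^\b \der_\alpha A^\alpha$ indeed drops out. Once this is noted, the proof is a one-line substitution, and the lemma is set up perfectly to be combined later with Lemma \ref{LemmathatgaranteeshowtheLorenzgaugewillbepreserved} in order to derive the wave equation propagating the Lorenz gauge itself.
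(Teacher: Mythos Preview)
Your proposal is correct and follows essentially the same approach as the paper: both start from the identity in Lemma \ref{waveoperatorontheYangMillspoentialwithsourcesdependingonA} and observe that the Lorenz gauge condition $\der_\a A^\a = 0$ kills precisely the two terms $\der^\b \der_\alpha A^\a$ and $-[\der_\alpha A^\a, A^\b]$, leaving the claimed equation. The paper phrases this by pointing back to the proofs of Lemmas \ref{waveequationontheYangMillspotentialwithspurcesdependingonthemetricg} and \ref{waveequationontheEinsteinYangMillspotentialderivedfromthegaugecovariantdivergenceoftheYangMillscurvatureisequaltozero} to identify where the Lorenz gauge was used, which is exactly the content of your argument.
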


\begin{proof}

Based on the proofs of Lemmas \ref{waveequationontheYangMillspotentialwithspurcesdependingonthemetricg} and \ref{waveequationontheEinsteinYangMillspotentialderivedfromthegaugecovariantdivergenceoftheYangMillscurvatureisequaltozero}, we see that we got the non-linear wave equation on $A$ in \eqref{ThewaveequationontheYangMillspotentialwithhyperbolicwaveoperatorusingingpartialderivativesinwavecoordinates}, by using the Lorenz gauge \eqref{TheLorenzgaugeconditionthatwechoose}, for the terms $\der^{\b}  \der_{\alpha} A^{\a}$ and $ [  \der_{\alpha} A^{\a},A^{\b}]$ that appear on the right hand side of the equation in Lemma \ref{waveoperatorontheYangMillspoentialwithsourcesdependingonA}. Thus, we get the result.

\end{proof}

Now, we want to show that solving the equation on $A$ given in Lemma \ref{equationtodefinepropgationofAtorespectLorenzgauge}, with the initial data $(A_{\Sigma}, \pa_{t} A_{\Sigma})$ constructed as in Subsection \ref{IntialdataforYangMills}\;, would guarantee that the Lorenz gauge is satisfied for all time $t$\;, provided that the Yang-Mills constraint \eqref{theYangMillsconstraintforthepotentialandelectricchargeonsigma} is satisfied.

\begin{lemma}\label{waveoperatorontheLorenzgaugeconditionwithsourcesdependingonA}
Let
\bea\label{definitionofquantityforLorenzgaugecondi}
\La := \der_{\mu} A^{\mu} \, .
\eea
For a solution of the Yang-Mills equations \eqref{ThegaugecovariantdivergenceoftheYangMillscurvatureisequaltozero}, we have
\bea
\notag
  \Box_{\g} \La =       [  \der_{\b}  \La ,A^{\b}]  +   \der_{\b}  \big( \Box_{\g} A^{\b}  -    R_{ \mu}^{\, \, \, \, \b}   A^{\mu}   +  2 [  A_{\a},\der^{\alpha}  A^{\b}]     - [A_{\alpha},  \der^{\b}A^{\a} ] +  [A_{\alpha}, [A^{\a},A^{\b}]  ]   \big) \, .\\
\eea

\end{lemma}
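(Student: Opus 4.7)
The plan is to derive the stated identity by simply rearranging the equation in Lemma \ref{waveoperatorontheYangMillspoentialwithsourcesdependingonA} to express $\der^{\b}\La$ in terms of $A$ and its derivatives, and then taking the divergence $\der_{\b}$ of both sides. Concretely, Lemma \ref{waveoperatorontheYangMillspoentialwithsourcesdependingonA} gives
\beaa
\Box_{\g}A^{\b}=\der^{\b}\der_{\a}A^{\a}+R_{\mu}^{\,\,\,\b}A^{\mu}-[\der_{\a}A^{\a},A^{\b}]-2[A_{\a},\der^{\a}A^{\b}]+[A_{\a},\der^{\b}A^{\a}]-[A_{\a},[A^{\a},A^{\b}]]\,.
\eeaa
Since $\La=\der_{\a}A^{\a}$ is a (Lie-algebra valued) scalar, we have $\der^{\b}\der_{\a}A^{\a}=\der^{\b}\La$, and solving for $\der^{\b}\La$ yields
\beaa
\der^{\b}\La=\Box_{\g}A^{\b}-R_{\mu}^{\,\,\,\b}A^{\mu}+[\La,A^{\b}]+2[A_{\a},\der^{\a}A^{\b}]-[A_{\a},\der^{\b}A^{\a}]+[A_{\a},[A^{\a},A^{\b}]]\,.
\eeaa

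Next I would apply $\der_{\b}$ to both sides. On the left, because $\La$ is a scalar, $\der_{\b}\der^{\b}\La=\Box_{\g}\La$ (the Hessian is symmetric and $g^{\mu\nu}$ is covariantly constant). On the right, only the term $[\La,A^{\b}]$ requires attention, and the Leibniz rule combined with $\der_{\b}A^{\b}=\La$ gives
\beaa
\der_{\b}[\La,A^{\b}]=[\der_{\b}\La,A^{\b}]+[\La,\La]=[\der_{\b}\La,A^{\b}]\,,
\eeaa
where the last equality uses antisymmetry of the Lie bracket. Reassembling the remaining pieces inside a single $\der_{\b}(\cdot)$ then produces exactly the stated formula.

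No curvature identity or commutator of covariant derivatives beyond what already entered Lemma \ref{waveoperatorontheYangMillspoentialwithsourcesdependingonA} is needed, so the main thing to watch is the scalar nature of $\La$: it is what allows $\der^{\b}\der_{\a}A^{\a}=\der^{\b}\La$ without Christoffel corrections and lets us identify $\der_{\b}\der^{\b}\La$ with $\Box_{\g}\La$. The only other point of care is the bookkeeping of the several bracket and Ricci terms on the right, which is purely algebraic.
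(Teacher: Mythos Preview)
Your proof is correct and follows essentially the same approach as the paper: both take the divergence $\der_{\b}$ of the identity from Lemma~\ref{waveoperatorontheYangMillspoentialwithsourcesdependingonA}, use that $\La$ is a scalar to identify $\der_{\b}\der^{\b}\La=\Box_{\g}\La$, and use $[\La,\La]=0$ to handle $\der_{\b}[\La,A^{\b}]$. The only cosmetic difference is that the paper starts from $0=\der_{\b}\textbf{D}^{(A)}_{\a}F^{\a\b}$ and then isolates the $\Box_{\g}\La$ and $[\der_{\b}\La,A^{\b}]$ terms, whereas you first solve for $\der^{\b}\La$ and then differentiate; the computations are identical.
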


\begin{proof}
We compute the covariant divergence of \eqref{ThegaugecovariantdivergenceoftheYangMillscurvatureisequaltozero}, 
\beaa
0 &=&  \der_{\b} \textbf{D}^{(A)}_{\a}F^{\a\b}\\
&=&  \der_{\b} \big(   \Box_{\g} A^{\b} -    \der^{\b}  \der_{\alpha} A^{\a} - R_{ \mu}^{\, \, \, \, \b}   A^{\mu} + [  \der_{\alpha} A^{\a},A^{\b}] +  2 [  A_{\a},\der^{\alpha}  A^{\b}]     - [A_{\alpha},  \der^{\b}A^{\a} ] +  [A_{\alpha}, [A^{\a},A^{\b}]  ]   \big) \\
&=&  -     \der_{\b} \der^{\b}  \der_{\alpha} A^{\a} +  \der_{\b}  [  \der_{\alpha} A^{\a},A^{\b}]  +  \der_{\b}  \big( \Box_{\g} A^{\b}  -  R_{ \mu}^{\, \, \, \, \b}  A^{\mu}   +  2 [  A_{\a},\der^{\alpha}  A^{\b}]     - [A_{\alpha},  \der^{\b}A^{\a} ] +  [A_{\alpha}, [A^{\a},A^{\b}]  ]   \big) \\
&=&  -      \Box_{\g}\der_{\alpha} A^{\a} +    [  \der_{\b} \der_{\alpha} A^{\a},A^{\b}]  +[  \der_{\alpha} A^{\a},  \der_{\b}A^{\b}]  \\
&& +   \der_{\b}  \big( \Box_{\g} A^{\b}  -   R_{ \mu}^{\, \, \, \, \b}   A^{\mu}   +  2 [  A_{\a},\der^{\alpha}  A^{\b}]     - [A_{\alpha},  \der^{\b}A^{\a} ] +  [A_{\alpha}, [A^{\a},A^{\b}]  ]   \big) \\
&=&  -      \Box_{\g}  \La  +    [  \der_{\b}  \La ,A^{\b}]    \\
&& +   \der_{\b}  \big( \Box_{\g} A^{\b}  -    R_{ \mu}^{\, \, \, \, \b}   A^{\mu}   +  2 [  A_{\a},\der^{\alpha}  A^{\b}]     - [A_{\alpha},  \der^{\b}A^{\a} ] +  [A_{\alpha}, [A^{\a},A^{\b}]  ]   \big) \, .
 \eeaa
  
\end{proof}

\begin{lemma}\label{LemmathatgaranteeshowtheLorenzgaugewillbepreserved}
If $A$ is a solution to \eqref{ThewaveequationontheYangMillspotentialwithhyperbolicwaveoperatorusingingpartialderivativesinwavecoordinates}, with the initial data $(A_{\Sigma}, \pa_{t} A_{\Sigma})$ as constructed in Subsection \ref{IntialdataforYangMills} and such that it solves the Yang-Mills constraint \eqref{theYangMillsconstraintforthepotentialandelectricchargeonsigma}, then for $\La$ defined in \eqref{definitionofquantityforLorenzgaugecondi}, then the Yang-Mills equations \eqref{ThegaugecovariantdivergenceoftheYangMillscurvatureisequaltozero} imply
\bea\label{nonlinearwaveequationontheLorenzGaugeCondition}
  \Box_{\g} \La =       [  \der_{\b}  \La ,A^{\b}]  \; ,
\eea
and we have $\La_{\Sigma} = 0$ and $\pa_{t} \La_{\Sigma} = 0$\;, and thus $\La = 0$ for all time $t$\;.
\end{lemma}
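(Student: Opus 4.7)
The plan is to show that the Lorenz--gauge defect $\La$ satisfies a homogeneous linear (in $\La$ and $\der\La$) second-order hyperbolic equation on the Cauchy development of $\Sigma$, and that both of its Cauchy data on $\Sigma$ vanish; uniqueness for the Cauchy problem for such equations then forces $\La\equiv 0$. To derive the wave equation on $\La$, I would use the purely algebraic identity of Lemma \ref{waveoperatorontheYangMillspoentialwithsourcesdependingonA}, which expresses $\textbf{D}^{(A)}_\a F^{\a\b}$ in terms of $\Box_\g A^\b$, $\der^\b\La$, the Ricci contraction $R_\mu^{\,\,\b}A^\mu$, and cubic/quartic terms in $A$ and $\der A$. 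Substituting the reformulation of Lemma \ref{equationtodefinepropgationofAtorespectLorenzgauge} (which $A$ satisfies by hypothesis) into this identity produces a term-by-term cancellation of every $A$-only contribution, leaving the clean relation
$$\textbf{D}^{(A)}_\a F^{\a\b} \;=\; -\der^\b\La + [\La,A^\b] \;=\; -\textbf{D}^{(A)\b}\La.$$
Taking a gauge-covariant divergence and invoking the universal identity $\textbf{D}^{(A)}_\b\textbf{D}^{(A)}_\a F^{\a\b}=0$ (which follows from the Bianchi identity for $F$, the antisymmetry of $F$, and the symmetry of the Ricci tensor) yields $\textbf{D}^{(A)}_\b\textbf{D}^{(A)\b}\La=0$, which up to rearrangement of lower-order $A$-dependent terms is precisely the wave equation $\Box_\g\La=[\der_\b\La,A^\b]$ stated in the lemma; its principal symbol is the hyperbolic operator $g^{\a\b}\pa_\a\pa_\b$.

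For the trivial Cauchy data, first write $\La = \pa_\mu A^\mu = \pa_t(g^{tt}A_t) + \pa_i(g^{ij}A_j)$ in wave coordinates, using that the initial metric \eqref{constructionoggsigma} satisfies $g^{ti}|_\Sigma=0$. The prescriptions $A_t|_\Sigma=0$ from \eqref{initialdatadforzerothderivativeAsigma} and $\pa_t A_t|_\Sigma = N^2\pa_i(\overline{g}^{ij}\overline{A}_j)$ from \eqref{constructionopatialtimeAonsigma}, combined with the normalisation $g^{tt}|_\Sigma = -N^{-2}$, make the two contributions cancel exactly, giving $\La|_\Sigma=0$. For the first-order datum I would evaluate the identity $\textbf{D}^{(A)}_\a F^{\a\b} = -\textbf{D}^{(A)\b}\La$ from the first paragraph at $\b=t$ on $\Sigma$: its left-hand side translates, via $E_i = F_{\hat t i}$ and $A_t|_\Sigma=0$, into the Gauss-type expression $\overline{D}^i\overline{E}_i + [\overline{A}^i,\overline{E}_i]$, which vanishes by the assumed Yang--Mills constraint \eqref{theYangMillsconstraintforthepotentialandelectricchargeonsigma}; its right-hand side collapses, using $A_t|_\Sigma=0$ and the freshly proved $\La|_\Sigma=0$, to a nonzero multiple of $\pa_t\La|_\Sigma$, whence $\pa_t\La|_\Sigma=0$.

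With a homogeneous linear hyperbolic equation for $\La$ whose principal part agrees with the wave operator $g^{\a\b}\pa_\a\pa_\b$ governing the local well-posedness theory of the coupled system, and with trivial Cauchy data on $\Sigma$, standard uniqueness for linear hyperbolic systems yields $\La\equiv 0$ on the maximal Cauchy development of the solution $A$. The most delicate point is the verification of $\pa_t\La|_\Sigma=0$: identifying $\textbf{D}^{(A)}_\a F^{\a t}|_\Sigma$ with exactly the Gauss-type Yang--Mills constraint relies essentially on the specific prescriptions $A_t|_\Sigma=0$, $g^{ti}|_\Sigma=0$, and $\pa_t A_t|_\Sigma = N^2\pa_i(\overline{g}^{ij}\overline{A}_j)$ engineered in Subsections \ref{IntialdataforYangMills} and \ref{constructioninitialdataformetric}; any departure from these choices would either destroy $\La|_\Sigma=0$ or break the reduction of the constraint to $\pa_t\La|_\Sigma=0$.
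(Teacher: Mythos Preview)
Your proof is correct and follows essentially the same strategy as the paper: reduce $\textbf{D}^{(A)}_\a F^{\a\b}$ to $-\textbf{D}^{(A)\b}\La$ via Lemmas~\ref{waveoperatorontheYangMillspoentialwithsourcesdependingonA} and~\ref{equationtodefinepropgationofAtorespectLorenzgauge}, derive a linear hyperbolic equation for $\La$, and read off the Cauchy data from the initial-data prescriptions and the Yang--Mills constraint.

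There is one difference worth flagging. The paper obtains the wave equation for $\La$ by applying the Levi--Civita divergence $\der_\b$ to the identity of Lemma~\ref{waveoperatorontheYangMillspoentialwithsourcesdependingonA} and invoking Lemma~\ref{waveoperatorontheLorenzgaugeconditionwithsourcesdependingonA}, which is stated for solutions of the Yang--Mills equations (where $\der_\b\textbf{D}^{(A)}_\a F^{\a\b}=0$ is trivially true). You instead use the gauge-covariant divergence and the genuine identity $\textbf{D}^{(A)}_\b\textbf{D}^{(A)}_\a F^{\a\b}=0$, which holds for any $A$. Your route is logically cleaner, since you never presuppose the Yang--Mills equations in the bulk. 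The price is that expanding $\textbf{D}^{(A)}_\b\textbf{D}^{(A)\b}\La=0$ produces $\Box_\g\La = 2[\der_\b\La,A^\b] + [A_\b,[\La,A^\b]]$ rather than the exact equation $\Box_\g\La=[\der_\b\La,A^\b]$ stated in the lemma; your hedge ``up to rearrangement of lower-order $A$-dependent terms'' covers this, and since the extra terms are still linear in $\La$ with the same principal part, the uniqueness argument is unaffected. (One can check directly that $\der_\b\textbf{D}^{(A)}_\a F^{\a\b} = -[A_\b,\textbf{D}^{(A)}_\a F^{\a\b}]$, so the two routes differ exactly by this term, which is again $O(\La)$ once the reduced equation is used.) Your treatment of $\La|_\Sigma=0$ and $\pa_t\La|_\Sigma=0$ matches the paper's.
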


\begin{proof}

First, based on Lemmas \ref{equationtodefinepropgationofAtorespectLorenzgauge} and \ref{waveoperatorontheLorenzgaugeconditionwithsourcesdependingonA}, we get that for a solution to \eqref{ThewaveequationontheYangMillspotentialwithhyperbolicwaveoperatorusingingpartialderivativesinwavecoordinates}, the Yang-Mills equations read
\bea
  \Box_{\g} \La =       [  \der_{\b}  \La ,A^{\b}]  \; .
\eea

Now, based on Lemma \ref{waveoperatorontheYangMillspoentialwithsourcesdependingonA}, we showed that the Yang-Mills equations read
 \beaa
\notag
0 &=& \textbf{D}^{(A)}_{\a}F^{\a\b}\\
\notag
&=&  \Box_{\g} A^{\b} -    \der^{\b}  \La - R_{ \mu}^{\, \, \, \, \b}  A^{\mu} + [  \La,A^{\b}] +  2 [  A_{\a},\der^{\alpha}  A^{\b}]     - [A_{\alpha},  \der^{\b}A^{\a} ] +  [A_{\alpha}, [A^{\a},A^{\b}]  ]  \, , \\
\eeaa
which in its turn implies 
  \beaa
\notag
0 &=& \textbf{D}^{(A)}_{\a}F^{\a t}\\
\notag
&=&  \Box_{\g} A^{t} -    \der^{t}  \La - R_{ \mu}^{\, \, \, \, t}  A^{\mu} + [  \La,A^{t}] +  2 [  A_{\a},\der^{\alpha}  A^{t}]     - [A_{\alpha},  \der^{t}A^{\a} ] +  [A_{\alpha}, [A^{\a},A^{t}]  ] \, , \\
\eeaa
which yields to
  \beaa
\notag
 \der^{t}  \La&=&  \Box_{\g} A^{t} - R_{ \mu}^{\, \, \, \, t}   A^{\mu} + [  \La,A^{t}] +  2 [  A_{\a},\der^{\alpha}  A^{t}]     - [A_{\alpha},  \der^{t}A^{\a} ] +  [A_{\alpha}, [A^{\a},A^{t}]  ] \, . \\
\eeaa
Since $(g_{\Sigma})_{ti} = 0$ for all $i\neq t$\;, we have $(g_{\Sigma})^{t i} = 0$ for all $i\neq t$\;. Thus,
  \beaa
\notag
g^{t\mu} \der_{t}  \La&=&g^{t\mu}  \big(  \Box_{\g} A_{t} - R_{ \mu t}   A^{\mu} + [  \La,A_{t}] +  2 [  A_{\a},\der^{\alpha}  A_{t}]     - [A_{\alpha},  \der_{t}A^{\a} ] +  [A_{\alpha}, [A^{\a},A_{t}]  ]   \big) \, , \\
\eeaa
implies on $\Sigma$\;, that
\beaa
\notag
g^{tt} \pa_{t}  \La&=&g^{tt}  \big(  \Box_{\g} A_{t} - R_{ \mu t}   A^{\mu} + [  \La,A_{t}] +  2 [  A_{\a},\der^{\alpha}  A_{t}]     - [A_{\alpha},  \der_{t}A^{\a} ] +  [A_{\alpha}, [A^{\a},A_{t}]  ]   \big) \, . \\
\eeaa
Since by construction of the initial data for the Yang-Mills potential $A_\a$, we have $\La_\Sigma =0$ , and since $(g_{\Sigma})^{tt} = - \frac{1}{N^2} \neq 0$, we obtain that on $\Sigma$\;, 
\beaa
\notag
 \pa_{t}  \La_{\Sigma} &=&  \Box_{\g} A_{t} - R_{ \mu t}   A^{\mu} + [  \La,A_{t}] +  2 [  A_{\a},\der^{\alpha}  A_{t}]     - [A_{\alpha},  \der_{t}A^{\a} ] +  [A_{\alpha}, [A^{\a},A_{t}]  ]    \\
&=&  \Box_{\g} A_{t} - R_{ \mu t}   A^{\mu} +  2 [  A_{\a},\der^{\alpha}  A_{t}]     - [A_{\alpha},  \der_{t}A^{\a} ] +  [A_{\alpha}, [A^{\a},A_{t}]  ]    \\
  &=& 0   \\
 &&\text{(since the initial data for $A$ was chosen such that it solves the Yang-Mills constraints \eqref{theYangMillsconstraintforthepotentialandelectricchargeonsigma},}\\
 && \text{which here reads as the equation \eqref{equationtodefinepropgationofAtorespectLorenzgauge} on $\Sigma$).}
\eeaa
In summary, from \eqref{equationtodefinepropgationofAtorespectLorenzgauge} and Lemma  \ref{waveoperatorontheYangMillspoentialwithsourcesdependingonA}, we have 
\bea
  \Box_{\g} \La =       [  \der_{\b}  \La ,A^{\b}]  \, ,
\eea
with $\La_{\Sigma} = 0$ and $\pa_{t} \La_{\Sigma} = 0$. This is a wave equation in $\La$ with identically zero initial conditions, thus the unique global solution is $\La = 0$ for all $t$.
\end{proof}

\subsection{The propagation of the wave coordinates condition}\

We want to show that the Einstein-Yang-Mills system in the wave coordinate condition is consistent with the wave coordinates condition being imposed for all time, i.e. that the Einstein-Yang-Mills system in wave coordinates preserve the wave coordinates for all time $t$, if the initial data for the Einstein-Yang-Mills system satisfies some wave coordinate constraint.

\begin{definition}\label{definitionofquantityforthewavecoordcondition}
Let
\bea
\Ga^{\la}   := g^{\a\b} \Gamma_{\a\b}^{\, \, \, \, \, \, \, \, \,  \la}  \, .
\eea

\begin{remark}\label{remarkondefinitionofGa}
Note that $\Ga^{\la}$ is not a tensor. However, the difference between two Christoffel symbols coming out from two different metrics, is a tensor. Thus, we view the wave coordinate condition as the difference of the two traces
\bea
0 = g^{\a\b} \Gamma_{\a\b}^{\, \, \, \, \, \, \, \, \,  \la}- g^{\a\b} \hat{\Gamma}_{\a\b}^{\, \, \, \, \, \, \, \, \,  \la} (m) \, ,
\eea
where $\hat{\Gamma}_{\a\b}^{\, \, \, \, \, \, \, \, \,  \la} (m) $ are the Christoffel symbols of the metric $m$ defined to be the Minkowski metric in wave coordinates. Hence, the wave coordinate condition can be viewed as a tensorial geometric quantity equal to zero, which in wave coordinates can be written as
\bea
\Ga^{\la}  = 0 \, ,
\eea
since $\hat{\Gamma}_{\a\b}^{\, \, \, \, \, \, \, \, \,  \la} (m) = 0 $ in wave coordinates.

Hence, when we write $\Ga^{\la} = 0$\;, this means that we already fixed the system of coordinates to be the wave coordinates, however we view it as the tensor given in wave coordinates by
\bea\label{definitionofGainwavecoordinates}
\Ga^{\la} = g^{\a\b} \Gamma_{\a\b}^{\, \, \, \, \, \, \, \, \,  \la}- g^{\a\b} \hat{\Gamma}_{\a\b}^{\, \, \, \, \, \, \, \, \,  \la} (m)
\eea
and we differentiate it using the Levi-Civita covariant derivative associated to the metric $g$.
\end{remark}

\end{definition}

In turns out, based on the proof of Lemma 3.1 in \cite{LR2}, that for a solution of the system of non-linear wave equation on the metric, namely \eqref{Thewaveequationonthemetrichwithhyperbolicwaveoperatorusingingpartialderivativesinwavecoordinates}, the Einstein-Yang-Mills equations, \ref{simpEYM}, read
\bea\label{fromtheproofofLindbladRodnianski}
\notag
R_{\a\b} - 2 < F_{\a\si}, F_{\b}^{\;\;\si} > -   \frac{1}{(1-n)} \cdot g_{\a\b } \cdot < F_{\si\la },F^{\si\la } >  =  \frac 12 (\der_{\a} \Ga_{\b} + \der_{\b}\Ga_{\a}) + \Ga_\si N^\si_{\, \, \, \a\b}(g,\pa g) \; , \\
\eea
where $N^\si_{\, \, \, \a\b}(g,\pa g)$ is a non-linearity depending on $g$ and $\pa g$\;. In other words, in \eqref{Thewaveequationonthemetrichwithhyperbolicwaveoperatorusingingpartialderivativesinwavecoordinates}, the terms on the right hand side of \eqref{fromtheproofofLindbladRodnianski} are the ones we added using the wave coordinates condition, where $\Ga_{\b} = 0$ is identically zero (and therefore, all derivatives up to any order of $\Ga_{\b} $ are null).

Now, we shall use this equation, \eqref{fromtheproofofLindbladRodnianski}, to show that $\Ga$ satisfies a non-linear wave equation. 

\begin{lemma}\label{nonlinearwaveequationonGaforthepropagationofthewavecoordicondition}
For a solution $g = m + h$ of the non-linear wave equation given in \eqref{Thewaveequationonthemetrichwithhyperbolicwaveoperatorusingingpartialderivativesinwavecoordinates}, we have that the Einstein-Yang-Mills equations \ref{simpEYM} imply in wave coordinates the following equation on $\Ga_{\b}$ (defined in Definition \ref{definitionofquantityforthewavecoordcondition}, and considering Remark \ref{remarkondefinitionofGa} and in particular \ref{definitionofGainwavecoordinates}),
\bea\label{systemofnonlinearwaveeqonthewavecoordicondition}
\notag
 \Box_\g  \Ga_{\b}  &=&   - 2 \big( \der^\a \Ga_\si \big) N^\si_{\a\b}(g,\pa g)  +  \big( \der_{\b}  \Ga_\si  \big) N^{\si \, \, \, \,  \a }_{\, \, \, \a}(g,\pa g)  \\
 \notag
 &&   - R_{\mu  \b }     \Ga^{\mu} - 2 \Ga_\si \big( \der^\a N^\si_{\a\b}(g,\pa g)  \big)  + \Ga_\si \big(  \der_{\b} N^{\si \, \, \, \,  \a }_{\, \, \, \a}(g,\pa g) \big) \\
 &&  - 4    \der^\a< F_{\a\si}, F_{\b}^{\;\;\si} >   + \der_{\b}  < F_{\a\b },F^{\a\b }> \; ,
\eea
where $\der$ is the Levi-Civita covariant derivative associate to the metric $\g$\;. 
\end{lemma}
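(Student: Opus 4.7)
The plan is to derive the stated wave equation by applying the divergence $\nabla^\alpha$ to equation \eqref{fromtheproofofLindbladRodnianski}, which is the identity this lemma relies on — it holds for any solution of the reduced system \eqref{Thewaveequationonthemetrichwithhyperbolicwaveoperatorusingingpartialderivativesinwavecoordinates}, where the only role of the Einstein-Yang-Mills equations is to force the left-hand side to equal zero in the fully coupled setting, but here the identity is used as a $2$-tensor relation.

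First I will take $\nabla^\alpha$ of both sides of \eqref{fromtheproofofLindbladRodnianski}. On the left-hand side, writing $\mathcal{T}_{\alpha\beta} := 2\langle F_{\alpha\sigma}, F_\beta^{\;\sigma}\rangle + \frac{1}{1-n}g_{\alpha\beta}\langle F_{\sigma\lambda},F^{\sigma\lambda}\rangle$, the twice-contracted Bianchi identity (already used in \eqref{fromthebianchiidentityfortheRiemanntensor}) gives $2\nabla^\alpha R_{\alpha\beta} = \nabla_\beta R$, so the left side becomes $\nabla_\beta R - 2\nabla^\alpha \mathcal{T}_{\alpha\beta}$. On the right, the first term produces $\tfrac{1}{2}\Box_g \Gamma_\beta + \tfrac{1}{2}\nabla^\alpha \nabla_\beta \Gamma_\alpha$, and the second term produces $\nabla^\alpha(\Gamma_\sigma N^\sigma_{\alpha\beta})$, which I Leibniz-expand into a $(\nabla^\alpha \Gamma_\sigma)N^\sigma_{\alpha\beta}$ piece and a $\Gamma_\sigma \nabla^\alpha N^\sigma_{\alpha\beta}$ piece.

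Next, to dispose of the mixed term $\nabla^\alpha \nabla_\beta \Gamma_\alpha$, I commute covariant derivatives: viewing $\Gamma$ as a one-form (recall Remark \ref{remarkondefinitionofGa}, which interprets $\Gamma^\lambda$ as the tensor \eqref{definitionofGainwavecoordinates}), the standard identity $[\nabla_\alpha,\nabla_\beta]\Gamma_\gamma = -R^\mu_{\;\gamma\alpha\beta}\Gamma_\mu$ together with the symmetries of the Riemann tensor yields $\nabla^\alpha\nabla_\beta\Gamma_\alpha = \nabla_\beta \nabla^\alpha \Gamma_\alpha + R_{\mu\beta}\Gamma^\mu$. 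To eliminate the divergence $\nabla^\alpha\Gamma_\alpha$, I contract \eqref{fromtheproofofLindbladRodnianski} with $g^{\alpha\beta}$ to get the algebraic identity
\begin{equation*}
R - \mathcal{T} = \nabla^\alpha \Gamma_\alpha + \Gamma_\sigma N^{\sigma\;\;\;\alpha}_{\;\;\alpha},
\end{equation*}
where $\mathcal{T}:=g^{\alpha\beta}\mathcal{T}_{\alpha\beta}$; applying $\nabla_\beta$ substitutes $\nabla_\beta\nabla^\alpha \Gamma_\alpha$ by $\nabla_\beta R - \nabla_\beta \mathcal{T} - \nabla_\beta(\Gamma_\sigma N^{\sigma\;\;\;\alpha}_{\;\;\alpha})$. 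The crucial cancellation is that the $\nabla_\beta R$ produced here cancels exactly the $\nabla_\beta R$ from the Bianchi step.

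Finally, I compute the surviving Yang-Mills piece $-2\nabla^\alpha \mathcal{T}_{\alpha\beta}+\nabla_\beta \mathcal{T}$ by direct expansion, using $\mathcal{T} = \tfrac{3-n}{1-n}\langle F_{\sigma\lambda},F^{\sigma\lambda}\rangle$ (computed earlier in \eqref{expressionofRicciscalarR}); the scalar coefficient simplifies via $\tfrac{3-n}{1-n}-\tfrac{2}{1-n}=\tfrac{1-n}{1-n}=1$, giving precisely $-4\nabla^\alpha\langle F_{\alpha\sigma},F_\beta^{\;\sigma}\rangle + \nabla_\beta\langle F_{\sigma\lambda},F^{\sigma\lambda}\rangle$. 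Collecting the remaining Leibniz-expanded $\Gamma N$ terms with their correct signs then yields \eqref{systemofnonlinearwaveeqonthewavecoordicondition}. The main obstacle is purely bookkeeping: getting the commutator sign right so that $R_{\mu\beta}\Gamma^\mu$ appears with the correct sign, and ensuring that the algebraic substitution from the trace identity produces exactly the $\nabla_\beta R$ needed to cancel the Bianchi contribution; everything else is a routine application of the Leibniz rule.
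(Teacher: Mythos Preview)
Your proposal is correct and follows essentially the same route as the paper: both take $\der^\alpha$ of \eqref{fromtheproofofLindbladRodnianski}, invoke the contracted Bianchi identity $\der^\alpha R_{\alpha\beta}=\tfrac12\der_\beta R$, trace \eqref{fromtheproofofLindbladRodnianski} to obtain $R$ and hence eliminate $\der_\beta\der_\alpha\Gamma^\alpha$, commute derivatives to produce the $R_{\mu\beta}\Gamma^\mu$ term, and then simplify the Yang-Mills coefficients via $\tfrac{3-n}{1-n}-\tfrac{2}{1-n}=1$. Your write-up is in fact slightly cleaner than the paper's in making the $\der_\beta R$ cancellation explicit.
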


\begin{proof}
On one hand, since $\der$ is the Levi-Civita covariant derivative, we have the Bianchi identities for the Riemann tensor (see \eqref{fromthebianchiidentityfortheRiemanntensor}), that
\bea
 \der^{\a} R_{\a\b} - \frac{1}{2} \der_{\b} R   = 0 \; .
\eea

On the other hand, based on \eqref{fromtheproofofLindbladRodnianski} and by differentiating, we obtain
\beaa
 \der^\a  R_{\a\b} &=&     \frac 12  \der^\a (\der_{\a} \Ga_{\b} + \der_{\b}\Ga_{\a}) +\der^\a \big( \Ga_\si N^\si_{\a\b}(g,\pa g)  \big)  \\
 && +  \der^\a \big( 2 < F_{\a\si}, F_{\b}^{\;\;\si} >  +   \frac{1}{(1-n)} \cdot g_{\a\b } \cdot < F_{\si\la },F^{\si\la } > \big) \\
 &=&     \frac 12  \der^\a (\der_{\a} \Ga_{\b} + \der_{\b}\Ga_{\a}) +\der^\a \big( \Ga_\si N^\si_{\a\b}(g,\pa g)  \big)  \\
 && +    2 \der^\a< F_{\a\si}, F_{\b}^{\;\;\si} >  +   \frac{1}{(1-n)} \cdot  \der_\b \cdot < F_{\si\la },F^{\si\la } > \; .
\eeaa
We compute $R$ from \eqref{fromtheproofofLindbladRodnianski}, and we get
\beaa
R   = R_{\a}^{\, \,\, \, \a} &=& \frac 12 (\der_{\a} \Ga^{\a} + \der_{\a}\Ga^{\a}) + \Ga_\si N^{\si \, \, \, \,  \a }_{\, \, \, \a}(g,\pa g)  +  2 < F_{\a\b}, F_{\a\b}> +   \frac{(1+n)}{(1-n)}  \cdot < F_{\si\la },F^{\si\la } >\\
&=&  \der_{\a} \Ga^{\a} + \Ga_\si N^{\si \, \, \, \,  \a }_{\, \, \, \a}(g,\pa g)  +  \frac{(3-n)}{(1-n)} \cdot  < F_{\a\b },F^{\a\b }> \;  ,
\eeaa

and therefore 
\bea
\notag
\der_{\b}  R    &=& \der_{\b}  \der_{\a} \Ga^{\a} + \der_{\b}  \big( \Ga_\si N^{\si \, \, \, \,  \a }_{\, \, \, \a}(g,\pa g) \big)  +   \frac{(3-n)}{(1-n)} \cdot  \der_{\b}  < F_{\a\b },F^{\a\b }> \; .
\eea
Thus, injecting in the Bianchi identity, we obtain
\beaa
0 &=& \der^{\a} R_{\a\b} - \frac{1}{2} \der_{\b} R   \\
&=&    \frac 12  \der^\a (\der_{\a} \Ga_{\b} + \der_{\b}\Ga_{\a}) +\der^\a \big( \Ga_\si N^\si_{\a\b}(g,\pa g)  \big)  - \frac12 \big( \der_{\a} \Ga^{\a} + \Ga_\si N^{\si \, \, \, \,  \a }_{\, \, \, \a}(g,\pa g) \big) \\
&&  +    2 \der^\a< F_{\a\si}, F_{\b}^{\;\;\si} >  +   \frac{1}{(1-n)} \cdot  \der_\b \cdot < F_{\si\la },F^{\si\la } >   - \frac{(3-n)}{2\cdot (1-n)} \cdot  \der_{\b}  < F_{\a\b },F^{\a\b }> \\
&=&    \frac 12  \der^\a (\der_{\a} \Ga_{\b} + \der_{\b}\Ga_{\a}) +\der^\a \big( \Ga_\si N^\si_{\a\b}(g,\pa g)  \big)  - \frac12 \big( \der_{\a} \Ga^{\a} + \Ga_\si N^{\si \, \, \, \,  \a }_{\, \, \, \a}(g,\pa g) \big) \\
&&  +    2 \der^\a< F_{\a\si}, F_{\b}^{\;\;\si} >    - \frac{1}{2} \cdot  \der_{\b}  < F_{\a\b },F^{\a\b }> \; .
\eeaa
Consequently, we have that $\Ga_\la$ satisfies the following equation
\beaa
0   &=&    \frac 12  \der^\a \der_{\a} \Ga_{\b} +  \frac 12 \der^\a  \der_{\b}\Ga_{\a}  +\der^\a \big( \Ga_\si N^\si_{\a\b}(g,\pa g)  \big)  - \frac12 \big( \der_{\b}  \der_{\a} \Ga^{\a} + \der_{\b}  \big( \Ga_\si N^{\si \, \, \, \,  \a }_{\, \, \, \a}(g,\pa g) \big)  \big) \\
&&  +    2 \der^\a< F_{\a\si}, F_{\b}^{\;\;\si} >    - \frac{1}{2} \cdot  \der_{\b}  < F_{\a\b },F^{\a\b }> \\
&=&   \frac 12  \Box_\g  \Ga_{\b} +  \frac 12 \big( \der_\a  \der_{\b}\Ga^{\a} - \frac12  \der_{\b}  \der_{\a} \Ga^{\a} \big) +  \big( \der^\a \Ga_\si \big) N^\si_{\a\b}(g,\pa g) +\Ga_\si \big( \der^\a N^\si_{\a\b}(g,\pa g)  \big) \\
&& -\frac12 \big( \der_{\b}  \Ga_\si  \big) N^{\si \, \, \, \,  \a }_{\, \, \, \a}(g,\pa g)  - \frac12 \Ga_\si \big(  \der_{\b} N^{\si \, \, \, \,  \a }_{\, \, \, \a}(g,\pa g) \big) \\
&&  +    2 \der^\a< F_{\a\si}, F_{\b}^{\;\;\si} >    - \frac{1}{2} \cdot  \der_{\b}  < F_{\a\b },F^{\a\b }> \; .
\eeaa
Hence, we get
\bea
\notag
 \Box_\g  \Ga_{\b}   &=&   - R_{\,\,\, \mu \a \b }^{\a  }     \Ga^{\mu} - 2 \big( \der^\a \Ga_\si \big) N^\si_{\a\b}(g,\pa g) - 2 \Ga_\si \big( \der^\a N^\si_{\a\b}(g,\pa g)  \big) \\
 \notag
&& +  \big( \der_{\b}  \Ga_\si  \big) N^{\si \, \, \, \,  \a }_{\, \, \, \a}(g,\pa g)  + \Ga_\si \big(  \der_{\b} N^{\si \, \, \, \,  \a }_{\, \, \, \a}(g,\pa g) \big) \\
&&  - 4    \der^\a< F_{\a\si}, F_{\b}^{\;\;\si} >   + \der_{\b}  < F_{\a\b },F^{\a\b }> \;.
\eea
which gives the stated result.

\end{proof}

Thus, the source terms for $\Box_\g  \Ga_{\b} $ depend only on $\g$ and $ \Ga_{\b} $ and their first derivatives only, as well as on $F$ and the first derivatives of $F$\;. Consequently, $\Ga_{\b} $ satisfies a non-linear wave equation. The initial data set for both $A$ and $\g$ on $\Sigma$ was constructed in Subsections \ref{IntialdataforYangMills} and \ref{constructioninitialdataformetric}, so that $\Ga_{\b} = 0$ on $\Sigma$\;. Now, we would like to see if the wave coordinates condition propagates in time, so as to have $\Ga_{\b} = 0 $ on $\Sigma_t$ for all time $t$\;.

In fact, since we already have $\Ga_{\b} = 0 $ on $\Sigma$\;, if in addition, we would have $\der_t \Ga_{\b} = 0 $ on $\Sigma$\;, then, we would have a non-linear wave equation for $\Ga_{\b} $ (that we exhibited  in Lemma \ref{nonlinearwaveequationonGaforthepropagationofthewavecoordicondition}) with identically null initial data, and thus this would prove that $\Ga_{\b} = 0$ is identically zero in the time evolution (and therefore, all derivatives up to any order of $\Ga_{\b} $ are null) and, consequently, $\Ga_{\b} = 0$ for $t \geq 0$\;. Hence, the condition $\der_t \Ga_{\b} = 0 $ if true on the initial slice $\Sigma$\;, it would guarantee that the wave coordinate gauge propagates in time $t$\;.

However, we did already construct the initial data $(g_\Sigma, \pa_t g_\Sigma)$ in wave coordinates, i.e. such that $\Ga^{\la} = 0$ on $\Sigma$\;. Yet, we want to see if the whole initial data set $(\Sigma, A_\Sigma, \pa_t A_\Sigma, g_\Sigma, \pa_t g_\Sigma)$ that we already constructed from our original initial data set solution to the Einstein-Yang-Mills constraint equations, namely $(\Sigma, \overline{A}, \overline{E}, \overline{g}, \overline{k})$\;, would give this additional wave coordinate condition constraint, namely $\der_t \Ga_{\b} = 0 $ on $\Sigma$\;.

\begin{lemma}\label{initialconditionsonthederivativesofGa}
For a solution of the non-linear wave equations on the metric given in \eqref{Thewaveequationonthemetrichwithhyperbolicwaveoperatorusingingpartialderivativesinwavecoordinates}, with initial data solving the Einstein-Yang-Mills constraint equations \eqref{theEinsteinYangMillsconstriantsonSigmafirst} and \eqref{theEinsteinYangMillsconstriantsonSigmasecond}, and constructed as in Subsections \ref{IntialdataforYangMills} and \ref{constructioninitialdataformetric}, we have in wave coordinates on $\Sigma$\,,
\bea
\notag
  \der_{t} \Ga_{t} &=& 0   \; ,\\
\eea
and
\bea
\der_{t} \Ga_{\i} &=&  0  \; .
\eea

\end{lemma}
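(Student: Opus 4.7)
The plan is to exploit the identity \eqref{fromtheproofofLindbladRodnianski}, which on $\Sigma$ simplifies dramatically because $\Ga^{\la}=0$ holds identically on $\Sigma$ by our construction of the initial data $(g_\Sigma,\pa_t g_\Sigma)$ in wave coordinates. Two consequences of $\Ga_{\mu}\big|_{\Sigma}\equiv 0$ will be used: first, the nonlinear term $\Ga_{\si}N^{\si}_{\,\,\,\a\b}(g,\pa g)$ in \eqref{fromtheproofofLindbladRodnianski} vanishes on $\Sigma$; second, all \emph{tangential} derivatives of $\Ga_{\mu}$ vanish on $\Sigma$, i.e.\ $\pa_i\Ga_{\mu}=0$ on $\Sigma$ for spatial $i$, and since the covariant derivative differs from $\pa$ by terms proportional to $\Ga$ itself, we also get $\der_i\Ga_{\mu}=0$ on $\Sigma$. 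Thus on $\Sigma$ the equation \eqref{fromtheproofofLindbladRodnianski} reduces to
\bea
\notag
R_{\a\b} - 2\langle F_{\a\si},F_{\b}^{\;\;\si}\rangle -\tfrac{1}{1-n}g_{\a\b}\langle F_{\mu\nu},F^{\mu\nu}\rangle \;=\; \tfrac{1}{2}\big(\der_{\a}\Ga_{\b}+\der_{\b}\Ga_{\a}\big).
\eea

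Specialising to $\a=\b=t$, the right-hand side is $\der_{t}\Ga_{t}$, while the left-hand side is exactly the $tt$-component of the deviation of the field equations from \eqref{simpEYM}. Because on $\Sigma$ the vector $\pa_t$ is orthogonal to $\Sigma$ with $g_{tt}=-N^2$, this component is $N^2$ times the analogous expression with $\hat t$ in place of $t$. I would then invoke the Gauss equation (Lemma \ref{Gauss-Codazzi-equations}), which expresses the purely spatial contraction $R_{ac}^{\;\;\,ac}$ via $\mathcal{R}$, $k^{a}_{\,\,\,a}k_{c}^{\,\,\,c}$ and $k^{ac}k_{ac}$, together with the symmetry identity $R_{ac}^{\;\;\,ac}=R+2R_{\hat t\hat t}$ used in the proof of Lemma \ref{TheconstraintequationsfortheEinstein-Yang-Mills system}. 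The Hamiltonian constraint \eqref{theEinsteinYangMillsconstriantsonSigmafirst} is then precisely the statement that this combination equals the corresponding matter quantity, which is exactly $R_{\hat t\hat t}-2\langle F_{\hat t\si},F_{\hat t}^{\;\;\si}\rangle-\frac{1}{1-n}g_{\hat t\hat t}\langle F,F\rangle=0$ once one substitutes $\overline E_i=F_{\hat t i}$. Hence $\der_{t}\Ga_{t}=0$ on $\Sigma$.

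For $\a=t$, $\b=i$ (spatial), the right-hand side becomes $\frac{1}{2}(\der_{t}\Ga_{i}+\der_{i}\Ga_{t})=\frac{1}{2}\der_{t}\Ga_{i}$ by the tangential-derivative observation above. The left-hand side is $N$ times $R_{\hat t i}-2\langle F_{\hat t\si},F_{i}^{\;\;\si}\rangle$ (the $g_{ti}$ term drops because $g_{ti}=0$ on $\Sigma$ by \eqref{constructionoggsigma}). I would then invoke the Codazzi equation from Lemma \ref{Gauss-Codazzi-equations}, which identifies $R^{a}_{\,\,\,\hat t cd}$ with $D_{e_c}k^{a}_{\,\,\,d}-D_{e_d}k^{a}_{\,\,\,c}$, and combine it with the matter identity $R_{\hat t i}=2\langle E_{b},F_{i}^{\;\;b}\rangle$ derived in the proof of Lemma \ref{TheconstraintequationsfortheEinstein-Yang-Mills system}. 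The momentum constraint \eqref{theEinsteinYangMillsconstriantsonSigmasecond} is exactly the vanishing of $R_{\hat t i}-2\langle F_{\hat t\si},F_{i}^{\;\;\si}\rangle$ on $\Sigma$, and thus $\der_{t}\Ga_{i}=0$ on $\Sigma$.

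The main obstacle is the bookkeeping in the second step: one has to trace through the identification between the $(n+1)$-dimensional objects $R_{\hat t\hat t}$, $R_{\hat t i}$ (viewed as evaluated at $t=0$ using the full metric $g$ and its time derivative $\pa_{t}g_{\Sigma}$) and the purely intrinsic/extrinsic data $\mathcal{R}$, $\overline k$, $\overline D\overline k$, $\overline F_{ij}$, $\overline E_{i}$ that appear in the constraints—essentially running the Gauss--Codazzi computation of Lemma \ref{TheconstraintequationsfortheEinstein-Yang-Mills system} in reverse. One must also use crucially that $\pa_{t}g_{\Sigma}$ was constructed in \eqref{constructionopatialtgonsigma} from $\overline k$ in a manner consistent with $\pa_t$ being orthogonal to $\Sigma$ at $t=0$, so that the abstract second fundamental form $k$ of $\Sigma\subset\mathcal{M}$ agrees with the prescribed $\overline k$; the rest of the proof is a routine combination of the identities already collected in the preceding lemmas.
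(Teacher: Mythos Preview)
Your strategy matches the paper's exactly: exploit the identity \eqref{fromtheproofofLindbladRodnianski}, use $\Ga_\si=0$ on $\Sigma$ to kill the nonlinear term and the tangential derivatives $\der_i\Ga_\mu$, and then invoke the constraints. Your $(\hat t,i)$ argument is correct and in fact more explicit than the paper's, which simply asserts the needed vanishing.

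There is, however, a real gap in your $(\hat t,\hat t)$ step---one that the paper's terse proof also glosses over. The Hamiltonian constraint is a statement about the Einstein tensor, $G_{\hat t\hat t}=8\pi T_{\hat t\hat t}$; via Gauss and your symmetry identity $R_{ac}^{\;\;\,ac}=R+2R_{\hat t\hat t}$ it reads $R+2R_{\hat t\hat t}=\tfrac{4}{n-1}\langle E_b,E^b\rangle+\langle F_{ab},F^{ab}\rangle$. This is \emph{not} the same as the vanishing of the Ricci-form quantity $R_{\hat t\hat t}-2\langle F_{\hat t\si},F_{\hat t}^{\;\;\si}\rangle-\tfrac{1}{1-n}g_{\hat t\hat t}\langle F,F\rangle$: the two differ by a term proportional to $R-\tfrac{3-n}{1-n}\langle F,F\rangle$, and $R$ involves $\pa_t^2 g$ and so is not determined by initial data alone. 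The missing ingredient is to apply the identity \eqref{fromtheproofofLindbladRodnianski} also at the spatial components $(i,j)$, where tangential vanishing immediately gives $R_{ij}-M_{ij}=0$ on $\Sigma$ (here $M_{\mu\nu}$ denotes the matter side of \eqref{simpEYM}); tracing the full identity then yields $R-\tfrac{3-n}{1-n}\langle F,F\rangle=g^{\mu\nu}\der_\mu\Ga_\nu=-\der_{\hat t}\Ga_{\hat t}$ on $\Sigma$. Combining this trace with the $(\hat t,\hat t)$ component of the identity converts it to the Einstein-tensor form $G_{\hat t\hat t}-8\pi T_{\hat t\hat t}=\tfrac{1}{2}\der_{\hat t}\Ga_{\hat t}$, and now the Hamiltonian constraint applies directly. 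You correctly flag the bookkeeping here as the main obstacle; this trace step is exactly what makes it go through.
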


\begin{proof}

We first recall that the Einstein-Yang-Mills system, \eqref{EYMsystemofequationsongandA}, implies the following equation
\beaa
R_{ \mu \nu}  - 2  < F_{\mu\b}, F_{\nu}^{\;\;\b}> - g_{\mu\nu } \cdot \frac{1}{(1-n)} < F_{\a\b },F^{\a\b }  > &=& 0  \, .
\eeaa
Thanks to \eqref{fromtheproofofLindbladRodnianski}, we have for a solution of \eqref{Thewaveequationonthemetrichwithhyperbolicwaveoperatorusingingpartialderivativesinwavecoordinates}, that in wave coordinates,
\beaa
\notag
 \frac 12 (\der_{\mu} \Ga_{\nu} + \der_{\nu}\Ga_{\mu}) + \Ga_\si N^\si_{\mu\nu}(g,\pa g) &=& R_{ \mu \nu}  - 2  < F_{\mu\b}, F_{\nu}^{\;\;\b}> - g_{\mu\nu } \cdot \frac{1}{(1-n)} < F_{\a\b },F^{\a\b }  >  , \\
 \eeaa
 Since on $\Sigma$\;, we constructed our initial data $(\Sigma, A_\Sigma, \pa_t A_\Sigma, g_\Sigma, \pa_t g_\Sigma)$\, in way such that $\Ga_\si =0$\;, then the initial data constructed in Subsection \ref{constructioninitialdataformetric}, gives that we have on $\Sigma$\;, 
 \bea
\notag
 \frac{1}{2} (\der_{\mu} \Ga_{\nu} + \der_{\nu}\Ga_{\mu}) &=& R_{ \mu \nu}  - 2  < F_{\mu\b}, F_{\nu}^{\;\;\b}> - g_{\mu\nu } \cdot \frac{1}{(1-n)} < F_{\a\b },F^{\a\b }  >  \;. \\
\eea

Since we choose the initial data for our system in \eqref{Thewaveequationonthemetrichwithhyperbolicwaveoperatorusingingpartialderivativesinwavecoordinates} such that it solves the Einstein-Yang-Mills equations, namely by choosing the initial data as a solution to the Einstein-Yang-Mills constraint equations given in Lemma \ref{TheconstraintequationsfortheEinstein-Yang-Mills system} -- in other words, we have the initial data set that we constructed in Subsections \ref{IntialdataforYangMills} and \ref{constructioninitialdataformetric}, taken solutions to the Einstein-Yang-Mills constraints derived in Subsection \ref{derivationofEinsteinYangMillsconstraintequationsforthecoupledsystem}
--, therefore, we have on $\Sigma$\;, 
\beaa
R_{ \mu \nu}  - 2  < F_{\mu\b}, F_{\nu}^{\;\;\b}> - g_{\mu\nu } \cdot \frac{1}{(1-n)} < F_{\a\b },F^{\a\b }  > = 0
\eeaa
and thus, by injecting, we have on $\Sigma$
 \bea
\notag
 \frac{1}{2} (\der_{\mu} \Ga_{\nu} + \der_{\nu}\Ga_{\mu}) &=& 0   \;. \\
\eea

Hence, we get

\beaa
\notag
 \frac{1}{2}   (\der_{\hat{t}} \Ga_{\hat{t}} + \der_{\hat{t}}\Ga_{\hat{t}})  &=&  0 \; .
 \eeaa
Thus, solutions to the non-linear wave equation on the metric, given in \eqref{Thewaveequationonthemetrichwithhyperbolicwaveoperatorusingingpartialderivativesinwavecoordinates}, with initial data solving the Einstein-Yang-Mills constraint equations and the wave coordinates condition, give that
\beaa
\notag
 \der_{\hat{t}} \Ga_{\hat{t}}  &=& 0 \; .\\
\eeaa
Yet, on $\Sigma$\;, we have $\hat{t}=  \frac{1}{N} \frac{\pa}{ \pa t}$\;, thus,
\bea
 \der_{t} \Ga_{t}  &=& 0 \; .
\eea

Also, we have for spatial indices $i$ in wave coordinates, the following on $\Sigma$\;,
\beaa
\notag
 \frac 12 (\der_{\hat{t}} \Ga_{\i} + \der_{i}\Ga_{\hat{t}})  &=& 0 \; .
\eeaa

However, since by construction $\Ga_{\la} = 0$ on $\Sigma$\;, we have $$\der_{i}\Ga_{\hat{t}} = 0 \;, $$ and hence, we get on $\Sigma$\;,
\bea
\notag
\der_{t} \Ga_{\i} &=& 0 \; .
\eea

\end{proof}

\subsection{Construction of the initial data for the hyperbolic system given an initial data set that solves the Einstein-Yang-Mills constraints}\

Finally, we have proven, in this Section \ref{Construction of the initial data and the gauges conditions constraints}, the following corollary.

\begin{corollary}\label{thefinalconstructiongivenconstriantsontheinitialdatatogaranteeLorenzandharmonicgauges}
Assume that we are given an initial data set $(\Sigma, \overline{A}, \overline{E}, \overline{g}, \overline{k})$ that satisfies the Einstein-Yang-Mills constraint equations given in Lemma \ref{TheconstraintequationsfortheEinstein-Yang-Mills system}, which are 
\bea
\notag
  \mathcal{R}+ \overline{k}^i_{\,\, \, i} \overline{k}_{j}^{\,\,\,j}  -  \overline{k}^{ij} \overline{k}_{ij}   &=&    \frac{4}{(n-1)}   < \overline{E}_{i}, \overline{E}^{ i}>   \\
 \notag
 && +  < \overline{D}_{i}  \overline{A}_{j} - \overline{D}_{j} \overline{A}_{i} + [ \overline{A}_{i},  \overline{A}_{j}] ,\overline{D}^{i}  \overline{A}^{j} - \overline{D}^{j} \overline{A}^{i} + [ \overline{A}^{i},  \overline{A}^{j}] > \, ,\\
 \notag
\overline{D}_{i} \overline{k}^i_{\,\,\, j}    - \overline{D}_{j} \overline{k}^i_{\, \,\,i}  &=&  2 < \overline{E}_{i}, {\overline{F}_{j}}^{\, i}> \, ,\\
\notag
\overline{D}^i \overline{E}_{ i} + [\overline{A}^i, \overline{E}_{ i} ]  &=& 0 \, ,
\eea
where $\overline{D} $ is the Levi-Civita covariant derivative associated to the given Riemannian metric $\overline{g}$, and where the summation is carried only over spatial indices, and we raise indices with respect to $\overline{g}$\;.

Then, we can construct an initial data set $(\Sigma, A_\Sigma, \pa_t A_\Sigma, g_\Sigma, \pa_t g_\Sigma)$, as prescribed in 
\ref{initialdatadforzerothderivativeAsigma}, \ref{constructionopatialtimeAonsigma}, \ref{constructionoggsigma} and \ref{constructionopatialtgonsigma}, for the coupled system of non-linear hyperbolic equations given in Lemma \ref{EYMsystemashyperbolicPDE}, such that solving that system gives rise to a solution of the Einstein-Yang-Mills system that is in the Lorenz gauge and in wave coordinates for all time $t$\;.
 
\end{corollary}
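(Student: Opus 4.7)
The plan is to assemble the pieces already established in Subsections \ref{IntialdataforYangMills}--\ref{constructioninitialdataformetric} and in Lemmas \ref{LemmathatgaranteeshowtheLorenzgaugewillbepreserved}, \ref{nonlinearwaveequationonGaforthepropagationofthewavecoordicondition}, and \ref{initialconditionsonthederivativesofGa}, to package an equivalence between the Einstein--Yang--Mills system in the Lorenz and wave coordinates gauges, and the coupled non-linear hyperbolic system of Lemma \ref{EYMsystemashyperbolicPDE}, when the initial data set satisfies the Einstein--Yang--Mills constraints.

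First, starting from the given constraint-satisfying initial data set $(\Sigma, \overline{A}, \overline{E}, \overline{g}, \overline{k})$, I would construct the hyperbolic initial data $(\Sigma, A_\Sigma, \pa_t A_\Sigma, g_\Sigma, \pa_t g_\Sigma)$ explicitly via the prescriptions in \eqref{initialdatadforzerothderivativeAsigma}, \eqref{constructionopatialtimeAonsigma}, \eqref{constructionoggsigma}, and \eqref{constructionopatialtgonsigma}. The choice of $(A_\Sigma)_t=0$ and $(g_\Sigma)_{ti}=0$ fixes a gauge representative on $\Sigma$, while the prescribed values of $\pa_t A_\Sigma$ and $\pa_t g_\Sigma$ are determined so as to enforce $\Lambda|_\Sigma = 0$ and $\Gamma^\lambda|_\Sigma = 0$ (as pointed out after \eqref{constructionopatialtgonsigma}, this involves building $\pa_t g_\Sigma$ from $\overline{k}$ and the lapse in a way consistent with the wave coordinates condition). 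By local well-posedness for quasi-linear wave systems, the coupled system \eqref{ThewaveequationontheYangMillspotentialwithhyperbolicwaveoperatorusingingpartialderivativesinwavecoordinates}--\eqref{Thewaveequationonthemetrichwithhyperbolicwaveoperatorusingingpartialderivativesinwavecoordinates} then admits a local solution $(A, g=m+h)$.

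Next, I would verify that this local solution is actually a solution of the original Einstein--Yang--Mills system \eqref{EYMsystemofequationsongandA} in the Lorenz and wave coordinates gauges. The argument has two independent parts. For the Lorenz gauge, Lemma \ref{LemmathatgaranteeshowtheLorenzgaugewillbepreserved} shows that $\Lambda := \der_\mu A^\mu$ satisfies the homogeneous linear wave equation $\Box_\g \Lambda = [\der_\beta \Lambda, A^\beta]$, while $\Lambda|_\Sigma = 0$ by our construction of $A_\Sigma$, and $\pa_t \Lambda|_\Sigma = 0$ follows from the Yang--Mills constraint \eqref{theYangMillsconstraintforthepotentialandelectricchargeonsigma} together with the prescribed $\pa_t A_\Sigma$; uniqueness for this linear wave equation forces $\Lambda \equiv 0$. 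For the wave coordinates gauge, Lemma \ref{nonlinearwaveequationonGaforthepropagationofthewavecoordicondition} gives a homogeneous wave equation in $\Gamma_\beta$ (noting that all inhomogeneous terms appear with at least one factor of $\Gamma_\sigma$ or of a derivative of $\Gamma_\sigma$), and Lemma \ref{initialconditionsonthederivativesofGa} produces the two required initial conditions $\Gamma_\beta|_\Sigma = 0$ and $\der_t \Gamma_\beta|_\Sigma = 0$, the latter being precisely where the Einstein--Yang--Mills constraints \eqref{theEinsteinYangMillsconstriantsonSigmafirst}--\eqref{theEinsteinYangMillsconstriantsonSigmasecond} enter. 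Uniqueness then again gives $\Gamma_\beta \equiv 0$.

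Once both $\Lambda \equiv 0$ and $\Gamma^\lambda \equiv 0$ are established, the derivations of Lemmas \ref{waveequationontheEinsteinYangMillspotentialderivedfromthegaugecovariantdivergenceoftheYangMillscurvatureisequaltozero} and \ref{waveequationontheEinsteinYangMillsmetricsmallhwithsourcesusingtheRiccitensorthatwascomoutedearlier} are reversible: the identity \eqref{fromtheproofofLindbladRodnianski} shows that the metric equation of Lemma \ref{EYMsystemashyperbolicPDE} together with $\Gamma = 0$ is equivalent to the Einstein equation $R_{\mu\nu} = 2\langle F_{\mu\beta},F_\nu^{\;\beta}\rangle + \frac{1}{1-n} g_{\mu\nu}\langle F_{\alpha\beta},F^{\alpha\beta}\rangle$, and the Yang--Mills wave equation of Lemma \ref{EYMsystemashyperbolicPDE} together with $\Lambda = 0$ is equivalent to $\D^{(A)}_\alpha F^{\alpha\beta}=0$. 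Finally, by standard continuation criteria for quasilinear wave equations, the local solution extends to a maximal Cauchy development. The main obstacle -- really the conceptual heart of the argument -- is the verification that $\pa_t \Gamma_\beta|_\Sigma = 0$; this is the one place where the Hamiltonian and momentum constraints on $(\overline{g},\overline{k},\overline{A},\overline{E})$ are actually needed, since without them the components $R_{tt}$ and $R_{ti}$ of the Ricci tensor are not reproduced on $\Sigma$, and the wave coordinates condition would fail to propagate. Everything else is bookkeeping, either direct substitution or an application of uniqueness for a linear wave equation with zero data.
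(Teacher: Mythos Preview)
Your proposal follows essentially the same route as the paper: the corollary is stated there as a summary of Section~\ref{Construction of the initial data and the gauges conditions constraints}, and your assembly of the construction in Subsections~\ref{IntialdataforYangMills}--\ref{constructioninitialdataformetric} together with Lemmas~\ref{LemmathatgaranteeshowtheLorenzgaugewillbepreserved}, \ref{nonlinearwaveequationonGaforthepropagationofthewavecoordicondition}, and~\ref{initialconditionsonthederivativesofGa} matches the paper's logic.

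There is one imprecision to tighten. You write that the wave equation for $\Gamma_\beta$ in Lemma~\ref{nonlinearwaveequationonGaforthepropagationofthewavecoordicondition} is homogeneous because ``all inhomogeneous terms appear with at least one factor of $\Gamma_\sigma$ or of a derivative of $\Gamma_\sigma$''. That is not literally what the lemma says: the right-hand side of~\eqref{systemofnonlinearwaveeqonthewavecoordicondition} contains the terms $-4\der^\alpha\langle F_{\alpha\sigma}, F_\beta^{\;\sigma}\rangle + \der_\beta\langle F_{\alpha\beta}, F^{\alpha\beta}\rangle$, which carry no factor of $\Gamma$. These terms are (up to a constant) the divergence $\der^\alpha T_{\alpha\beta}$ of the Yang--Mills stress-energy tensor, and they vanish precisely because you have already established $\Lambda\equiv 0$, hence $\textbf{D}^{(A)}_\alpha F^{\alpha\beta}=0$, hence $\der^\alpha T_{\alpha\beta}=0$ by the computation leading to~\eqref{energymomentumtensordivergencefree}. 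So the two parts of your argument are not ``independent'': the propagation of the wave coordinates condition relies on the Lorenz gauge having been propagated first. Your ordering is already correct; just replace the parenthetical with this observation.
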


\section{Set-up for the proof}

\subsection{The rotations and the Lorentz boosts}\ \label{TheMinkwoskivectorfieldsdefinition}

At a point $p$ in the space-time, let $x^{\mu}$  be the wave coordinates, with $ x^{0} = t$, and let
\beaa
x_{\b} = m_{\mu\b} x^{\mu} \, ,
\eeaa
where we raised and lowered indices with respect to the Minkowski metric $m$, defined in wave coordinates to be the Minkowski metric, i.e. in wave coordinates, we have $m_{tt} = -1$, $m_{ii} = 1$, $m_{ti} = 0$ and $m_{ij} = \delta_{ij} $. Here $i$ and $j$ denote always spatial indices.
The Lorentz boosts and rotations are
\beaa
Z_{\a\b} = x_{\b} \pa_{\a} - x_{\a} \pa_{\b}  \, ,
\eeaa
and they form a representation of the Lie algebra of the Lorentz group. Here, what we call Lorentz boosts are $L_{ti}$ and the rotations are $L_{ij}$.
We also define the well-known space-time dilation vector field, or the scaling vector field, as
\beaa
S = t \pa_t + \sum_{i=1}^{n} x^i \pa_{i} \, .
\eeaa
The Lorentz boosts and rotations along with the scaling vector field $S$ and the time and space translations $\pa_t$ and $\pa_{x_i}$, form a representation of the Lie algebra of the Poincare group, which is the group of isometries of the Minkowski space-time, which we will call the Minkowski vector fields and will be denoted by ${\cal Z}$. Vector fields belonging to Minkowski vector fields will be denoted by $Z$, i.e.
\bea
Z \in {\cal Z}  := \big\{ Z_{\a\b}\;,\; S\;,\; \pa_{\a} \, \,  | \, \,   \a\;,\; \b \in \{ 0, \ldots, n \} \big\} \quad \text{with} \quad x^0=t = - x_0 .
\eea

Note that the family ${\cal Z}$ has $\frac{(n^2 + 3n + 4)}{2}$ vector fields: $\frac{(n+1) \cdot n}{2}$ vectors for the Lorentz boosts and rotations, $(n+1)$ space-time translations and one scaling vector field. One can order them and assign to each vector an $\frac{(n^2 + 3n + 4)}{2}$-dimensional integer index $(0, \ldots, 1, \ldots, 0)$. Hence, a collection of $k$ vector fields from the family ${\cal Z}$, can be described by the set $I=(\iota_1, \ldots,\iota_k)$, where each $\iota_i$ is an $\frac{(n^2 + 3n + 4)}{2}$-dimensional integer, where $|I|=k = \sum_{i=1}^{k} | \iota_i |$, with $|\iota_i|=1$. Thus, we make the following definition:

\begin{definition} \label{DefinitionofZI}
We define
\bea
Z^I :=Z^{\iota_1} \ldots Z^{\iota_k} \quad \text{for} \quad I=(\iota_1, \ldots,\iota_k),  
\eea
where $\iota_i$ is an $\frac{(n^2 + 3n + 4)}{2}$-dimensional integer index, with $|\iota_i|=1$, and $Z^{\iota_i}$ representing each a vector field from the family ${\cal Z}$.

For a tensor $T$, of arbitrary order, either a scalar or valued in the Lie algebra, we define the Lie derivative as
\bea
\Lie_{Z^I} T :=\Lie_{Z^{\iota_1}} \ldots \Lie_{Z^{\iota_k}} T \quad \text{for} \quad I=(\iota_1, \ldots,\iota_k) .
\eea

In addition, when we write $I = I_1+I_2$, it means that we divided the set $I$ into two sets $I_1$ and in $I_2$, while preserving the order of $I$ in $I_1$ and in $I_2$, i.e., if $I=(\iota_1, \ldots, \iota_k)$, then $I_1=(\iota_{i_1}, \ldots,\iota_{i_n})$ and $I_2=(\iota_{i_{n+1}}, \ldots,\iota_{i_k})$, where $i_1< \ldots<i_n$ and $i_{n+1}< \ldots<i_k$. By a sum $\sum_{I_1+I_2=I}$, we mean that we make the sum over all such partitions for a given $I$. With this convention, the Leibniz rule holds and reads for sufficiently smooth functions $f$ and $g$, 
\bea
Z^I(fg)=\sum_{I_1+I_2=I} (Z^{I_1} f)(Z^{I_2} g) \;.
\eea

\end{definition}

\subsection{Weighted Klainerman-Sobolev inequality}\

We now state a weighted Klainerman-Sobolev inequality, see for example \cite{BFJST1} or \cite{Hu1} for a proof. It is a weighted version of the standard Klainerman-Sobolev inequality. We define
\bea
q := r - t \; ,
\eea
which is a null coordinate for the Minkowski metric in wave coordinates. The weight is defined by the following

\begin{definition}\label{defw}
We define $w(q)$ by
\bea
w(q):=\begin{cases} (1+|q|)^{1+2\gamma} \quad\text{when }\quad q>0 \;, \\
         1 \,\quad\text{when }\quad q<0 \; , \end{cases}
\eea
for some $ \gamma > 0$\;. Note that we put the notational factor of $2$ infront of $\gamma$ since we are going to compute $\big[ (1+|q|)w(q)\big]^{1/2}$\,, and this way, for that expression, this notational factor disappears (see \eqref{whynotationalfactorof2infrontofgammaindefofw}).

\end{definition}

Then, we have globally the following pointwise estimate for any smooth scalar function $\phi$ vanishing at spatial infinity, i.e. $\lim_{r \to \infty} \phi (t, x^1, \ldots, x^n) = 0$,
\bea\label{wksi}
\notag
|\phi(t,x)| \cdot (1+t+|q|)^{\frac{(n-1)}{2}} \cdot \big[ (1+|q|) \cdot w(q)\big]^{1/2} \leq
C\sum_{|I|\leq  \lfloor  \frac{n}{2} \rfloor  +1 } \|\big(w(q)\big)^{1/2} Z^I \phi(t,\cdot)\|_{L^2} \; , \\
\eea
where here the $L^2$ norm is taken on $t= \text{constant}$ slice.

\subsection{Definition of the norms}\label{subsectiondefinitionofthenorms} \

We recall that we defined $m$ to be the Minkowski metric in wave coordinates $\{t, x^1,  \ldots, x^n \}$, such that
\beaa
m_{00} = m(\frac{\pa}{\pa t}, \frac{\pa}{\pa t}) = - 1 \; ,
\eeaa
and for the spatial coordinates  $ \{ \frac{\pa}{\pa x^1},  \ldots,  \frac{\pa}{\pa x^n } \} $ tangent to $\Sigma_{t}$ prescribed by $t = constant$ hypersurfaces, we have
\beaa
m_{ij} = m(\frac{\pa}{\pa x^i}, \frac{\pa}{\pa x^j}) = \delta_{ij} \, ,
\eeaa
where $\delta_{ij}$ is the Kronecker symbol, and
\beaa
m_{i0}= m(\frac{\pa}{\pa x^i}, \frac{\pa}{\pa t}) = 0 \, .
\eeaa
Denoting $t = x^0 = - x_0$\;, we define for all $\mu, \nu \in \{0, 1, \ldots, n \}$, the following euclidian metric in wave coordinates
\bea
\notag
E_{\mu\nu} = E (\frac{\pa}{\pa x^\mu}, \frac{\pa}{\pa x^\nu}) = m(\frac{\pa}{\pa x^\mu}, \frac{\pa}{\pa x^\nu})+2 m(\frac{\pa}{\pa x^\mu}, \frac{\pa}{\pa t}) \cdot m(\frac{\pa}{\pa x^\nu}, \frac{\pa}{\pa t}) \, .
\eea
Then, we define for a scalar tensor $Q_{\a}$,
\bea
\notag
 | Q  |_{\text{scal}}^2 &:=& E^{\mu\nu}       Q_{\mu} \cdot    Q_{\nu}    \\
&=&E_{\mu\nu}   Q^{\mu}  \cdot    Q^{\nu}   \, ,
\eea
where we took here the scalar product, and where one lowers and highers indices with respect to the metric $E$ and where $E^{\a\b}$ is the inverse matrix of $E_{\a\b}$.

Similarly, for a tensor $K_\a$ valued in the Lie algebra associated to the Lie group $G$\;, we define
\bea
 | K  |^2_{\cal G} := E^{\mu\nu}  < K_{\mu} ,   K_{\nu} >  \, ,
\eea
where here $< \, , \, >$ is the Ad-invariant norm on the Lie algebra. 

Similarly, we define the norms for tensors of arbitrarily order by taking a full contraction with respect to the euclidian metric $E$ of the scalar product of a scalar tensor, or of the scalar product on the Lie algebra of a ${\cal G}$-valued tensor.

To lighten the notation, we will use the same notation for both the scalar product for scalar components or for ${\cal G}$-valued components. Also, we will drop the indices $\mid . \mid_{\text{scal}}$ and $\mid . \mid_{\cal G}$ and use $\mid . \mid$ for norms on tensors. 

Using this notation, and viewing the gradient of a sufficiently smooth scalar function $f$ as the tensor $\pa_a f$\;, we have
\bea
 | \pa f |^2 :=  E^{\mu\nu}    \pa_{\mu} f\ \cdot    \pa_{\nu} f  \; , 
 \eea
which is a definition that generalises for taking instead of $f$, a tensor of arbitrarily order, either a scalar tensor or valued in the Lie algebra $\cal G$\;, by replacing the partial derivatives with a covariant derivative with respect to the Minkowski metric. We will be more precise in the definitions in what follows.
 
  \begin{definition}\label{definitionoftheMinkowskicovaruiantderivative}
 Now, defining the connection $\der^{(\bf{m})}$ to be the flat connection in the wave coordinates such that its Christoffel symbols are vanishing in wave coordinates, i.e. such that for all $\mu, \nu \in \{0, 1, \ldots, n \}$\;,
 \bea
{ \der^{(\bf{m})}}_{e_{\mu}} e_\nu := 0 \; ,
 \eea
 where $e_\mu = \frac{\pa}{\pa x_\mu} $ and where $\{x^0, x^1, \ldots, x^n \}$ are the wave coordinates.
\end{definition}

We then define for a scalar tensor $Q_{\a}$
\bea
 | \pa Q |^2 :=  E^{\a\b} E^{\mu\nu}    { \der^{(\bf{m})}}_{\mu} Q_\a \ \cdot    { \der^{(\bf{m})}}_{\nu} Q_\b , 
 \eea
 and similarly for scalar tensors of arbitrarily order. We define for a tensor $K_\a$ valued in the Lie algebra
 \bea
|  \pa K | ^2  := E^{\a\b}  E^{\mu\nu}  <  {\der^{(\bf{m})}}_{\mu} K_{\a} ,    {\der^{(\bf{m})}}_{\nu} K_{\b} > \, ,
\eea
and similarly for scalar tensors of arbitrary order. Note that by contracting in wave coordinates, we get
 \bea
 \notag
| \pa K |^2  &=&   |  {\der^{(\bf{m})}}_{t} K |^2  +  |  {\der^{(\bf{m})}}_{x^1} K |^2 +\ldots  + |  {\der^{(\bf{m})}}_{x^n} K |^2 \\
&=&  \sum_{\a,\; \b \in  \{t, x^1, \ldots, x^n \}} |  \pa_{\a} K_\b |^2   \, ,
\eea
since in wave coordinates, the Minkowski covariant derivative of the contraction of a tensor expressed in wave coordinates, is in fact a partial derivative. We shall also write
\bea\label{equationofthedefinitionofthenormofgradienteitheraspartialorcovariant}
| \derm K | :=  | \pa K | \; .
\eea

\begin{lemma}
At a point $p$ of the space-time, let $x^{\mu}$  be the wave coordinate system. For a sufficiently smooth function $f$ and for a norm $\mid .\mid $\,, we define for all $I$ and $Z^I$ as previously defined, the following norm in the wave coordinates system $\{t, x^1, \ldots, x^n \}$\,,
\bea
| Z^I \pa f | := \sqrt{ | Z^I \pa_{t} f  |^2 + \sum_{i=1}^{n} | Z^I  \pa_{i} f  |^2 } \;  .
\eea
Then, we have,
\bea
|  Z^I \pa  f |  \leq C (|I| ) \cdot    \sum_{|J| \leq |I| }  | \pa  (  Z^J  f ) | \; ,
\eea
where $C (|I| )$ is a constant that depends only on $|I|$\,.
\end{lemma}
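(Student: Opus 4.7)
The plan is to proceed by induction on $|I|$, using the fact that the family $\mathcal Z$ of Minkowski vector fields is closed under commutation with the coordinate derivatives $\pa_\a$ up to constant coefficients. More precisely, the key structural fact (to be verified by direct calculation in wave coordinates) is that for each $Z \in \mathcal Z$ and each $\a \in \{0,1,\ldots,n\}$ one has
\begin{equation*}
[\pa_\a, Z] \;=\; \sum_{\b=0}^{n} c_{\a,Z}^{\;\b} \, \pa_\b \;,
\end{equation*}
with constants $c_{\a,Z}^{\;\b}$ depending only on the choice of $Z$ and on $\a$. Indeed, $[\pa_\a,\pa_\b]=0$, $[\pa_\a,S]=\pa_\a$, and $[\pa_\a, Z_{\b\ga}] = m_{\a\ga}\pa_\b - m_{\a\b}\pa_\ga$, all being finite linear combinations of partials with constant coefficients.

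The induction hypothesis to carry through is the following slightly stronger algebraic identity: for every multi-index $I$ there exist constants $c^{J,I}_{\a,\b}$ (depending only on $|I|$, $\a$ and $\b$), with the index sum running over multi-indices $J$ with $|J|\leq |I|$, such that for every smooth $f$,
\begin{equation*}
Z^I \pa_\a f \;=\; \sum_{|J|\leq |I|}\; \sum_{\b=0}^{n} c^{J,I}_{\a,\b} \, \pa_\b\bigl(Z^J f\bigr) \;.
\end{equation*}
For $|I|=0$ the identity is trivial. For the inductive step, write $Z^I = Z^{\iota}\, Z^{I'}$ with $|I'|=|I|-1$, apply the inductive hypothesis to $Z^{I'}\pa_\a f$, then move $Z^\iota$ past each $\pa_\b$ using the commutator identity above; every resulting term is either $\pa_\b(Z^\iota Z^J f)$, which is $\pa_\b(Z^{J'} f)$ with $|J'|\leq |I|$, or one of the commutator terms $\sum_\ga c^{\;\ga}_{\b, Z^\iota}\pa_\ga(Z^J f)$, which again has the desired form with $|J|\leq |I|-1\leq |I|$. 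The constants can be bounded in absolute value by a quantity $C(|I|)$ depending only on $|I|$ since there are only finitely many vector fields in $\mathcal Z$, finitely many commutator constants, and the recursion has depth $|I|$.

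Once this identity is established, taking the norm $|\cdot|$ of both sides of $Z^I\pa_\a f = \sum_{|J|\leq |I|, \b} c^{J,I}_{\a,\b}\,\pa_\b(Z^J f)$, applying the triangle inequality, summing over $\a$ in the definition of $|Z^I\pa f|$, and using $|\pa_\b(Z^J f)|\leq |\pa(Z^J f)|$ yields
\begin{equation*}
|Z^I\pa f| \;\leq\; C(|I|) \sum_{|J|\leq |I|} |\pa(Z^J f)| \;,
\end{equation*}
which is the stated inequality.

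\textbf{Expected main obstacle.} There is no analytic difficulty; the only mildly subtle point is purely bookkeeping — keeping track of the order of the vector fields $Z^\iota$ that get produced when one moves the outer $Z^\iota$ through $\pa_\b$, and confirming that each resulting multi-index $J'$ still satisfies $|J'|\leq |I|$. Once one adopts the convention of Definition \ref{DefinitionofZI} that $I$ is an ordered tuple and $Z^{\iota} Z^J$ naturally corresponds to the concatenated multi-index of length $|J|+1$, the induction closes cleanly, and the constant $C(|I|)$ can be taken as any upper bound on the total number of terms produced through $|I|$ iterations, times the maximum of $|c_{\a,Z}^{\;\b}|$ over the (finite) set $\mathcal Z$.
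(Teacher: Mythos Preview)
Your proof is correct and follows essentially the same approach as the paper: both rely on the explicit commutator identities $[\pa_\a,Z]=\sum_\b c^{\;\b}_{\a,Z}\,\pa_\b$ for each $Z\in\mathcal Z$ (translations, rotations/boosts, scaling), and then close an induction on $|I|$. The paper carries out the commutator computations case by case and states the inductive bound as an inequality at each step, whereas you package the same information as an exact algebraic identity before taking norms; this is a cosmetic difference only.
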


\begin{proof}
Recall that
\beaa
x_{\b} = m_{\mu\b} x^{\mu} \; ,
\eeaa
and
\beaa
Z_{\a\b} = x_{\b} \pa_{\a} - x_{\a} \pa_{\b} \; .
\eeaa

Computing for a sufficiently smooth function $f$\; , 
\beaa
[\pa_{\mu}, Z_{\a\b} ]  f &=& \pa_{\mu} (Z_{\a\b}f )  - Z_{\a\b} (\pa_{\mu} f ) = \pa_{\mu}(  x_{\b} \pa_{\a} f - x_{\a} \pa_{\b} f ) - (  x_{\b} \pa_{\a}  - x_{\a} \pa_{\b} ) \pa_{\mu} f  \\
&=& \pa_{\mu}(  x_{\b}  ) \pa_{\a} f +   x_{\b}   \pa_{\mu}  \pa_{\a} f    -  \pa_{\mu}( x_{\a} ) \pa_{\b} f  -   x_{\a} \pa_{\mu}  \pa_{\b} f -  x_{\b} \pa_{\a}\pa_{\mu} f + x_{\a} \pa_{\b} \pa_{\mu}  f \\
&=& \pa_{\mu}(  x_{\b}  ) \pa_{\a} f    -  \pa_{\mu}( x_{\a} ) \pa_{\b}  f \;  . 
\eeaa
However, we have,
\beaa
\pa_{\mu}(  x_{\b}  )&=& \pa_{\mu}(  m_{\si\b} x^{\si} ) =  \pa_{\mu}(  m_{\si\b} ) x^{\si} +   m_{\si\b} \pa_{\mu}( x^{\si} ) =  m_{\si\b} \de_{\mu}^{\si} \\
&=& m_{\mu\b} \; .
\eeaa
Hence,
\beaa
[\pa_{\mu}, Z_{\a\b} ] f &=&  \pa_{\mu} (Z_{\a\b}f )  - Z_{\a\b} (\pa_{\mu} f )   = m_{\mu\b}   \pa_{\a} f   - m_{\mu\a}   \pa_{\b} f \;   . 
\eeaa

Thus,

\bea
[\pa_{\mu}, Z_{\a\b} ] = \begin{cases}   m_{\mu\b}   \pa_{\a} f   - m_{\mu\a}   \pa_{\b} f   = 0   \quad\text{ if}\quad (\a = \b) , \quad\text{ or if} \quad  ( \mu \neq \a \quad\text{and}\quad \mu\neq \b ),\\
 m_{\mu\b}   \pa_{\a} f   - m_{\mu\a}   \pa_{\b} f  =  m_{\b\b}   \pa_{\a}   \quad\text{if}\quad \mu = \b   \quad\text{and}\quad \a \neq \b , \\
 m_{\mu\b}   \pa_{\a} f   - m_{\mu\a}   \pa_{\b} f    =   - m_{\a\a}   \pa_{\b}    \quad\text{if}\quad \mu = \a   \quad\text{and}\quad \a \neq \b\;  .\\
 \end{cases} 
\eea

Thus, we conclude that for a sufficiently smooth function $f$, we have

\bea\label{commuting relation Z and f}
Z_{\a\b} \pa_{\mu} f  = \begin{cases}    \pa_{\mu} ( Z_{\a\b}  f )  \quad\text{ if}\quad (\a = \b ) \quad\text{ or}\quad (\mu \neq \a \quad\text{ and} \quad \mu \neq \b ) ,\\
 \pa_{\mu} (  Z_{\a\b}  f ) - m_{\b\b}   \pa_{\a} f   \quad\text{if}\quad \mu = \b   \quad\text{and}\quad \a \neq \b , \\
  \pa_{\mu} ( Z_{\a\b}  f )   + m_{\a\a}   \pa_{\b}  f  \quad\text{if}\quad \mu = \a   \quad\text{and}\quad \a \neq \b \; . \\
 \end{cases} 
 \eea

Thus,
\bea
\mid Z_{\a\b} \pa_{\mu} f  \mid  &\leq& \mid \pa_{\mu} ( Z_{\a\b}   f ) \mid+ \mid  \pa_{\a} f \mid  + \mid  \pa_{\b} f \mid  \, .
\eea

Moreover, considering a commutation with the scaling vector field
\beaa
S = t \pa_t + \sum_{i=1}^{3} x^i \pa_{i} = \sum_{\a=0}^{3} x^\a \pa_{\a} \,  ,
\eeaa
we get
\beaa
[\pa_{\mu}, S]  f &=& \pa_{\mu} (S f )  - S (\pa_{\mu} f ) = \pa_{\mu}(  x^{\a} \pa_{\a} f  ) -   x^{\a} \pa_{\a}  ( \pa_{\mu} f ) \\
&=& \pa_{\mu}(  x^{\a}  ) \pa_{\a} f +   x^{\a}   \pa_{\mu}  \pa_{\a} f     -  x^{\a} \pa_{\a}\pa_{\mu} f \\
&=& \pa_{\mu}(  x^{\a}  ) \pa_{\a} f    \\
&=& \de_{\mu}^{\a}   \pa_{\a} f  \\
&=&   \pa_{\mu} f  \, .
\eeaa
Consequently,
\bea
| S \pa_{\mu} f |  &\leq&  |  \pa_{\mu} ( S   f )  |  +  |  \pa_{\mu} f  |   .
\eea
Now, in wave coordinates, $| \pa f |$ is defined to be 
\bea
| \pa f | := \sqrt{ | \pa_{t} f  |^2 + \sum_{i=1}^{n} | \pa_{i} f  |^2  }  \, .
\eea
Thus, in wave coordinates $x^\mu$, we have
\beaa
| \pa f |  \geq | \pa_{\mu} f | \, ,
\eeaa
and as a result, we have shown that for all $Z \in {\cal Z}$, we have in wave coordinates, the following estimate
 \bea
 |  Z \pa_{\mu} f   |   &\leq&  |  \pa ( Z   f )  |  +  |   \pa  f  |  .
\eea

 Let $Z^{\iota_j}$, $j \in \{1, 2, ..., k\}$ be a family of vector fields from the family ${\cal Z}$. By induction, we get that there exists a constant $C_1 (k)$, depending on $k$, such that
\beaa
 |   \prod_{j=1}^{k} Z^{\iota_j} \pa_{\mu}  f |   \leq C_1 (k) \cdot   \sum_{i=0}^{k}   |  \pa  ( \prod_{j=0}^{i} Z^{\iota_j}  f ) |  ,
\eeaa
which leads to
\bea
 |   Z^I \pa_{\mu}  f |   \leq C_2 (|I| ) \cdot    \sum_{|J| \leq |I| }   |  \pa  (  Z^J  f )   |  .
\eea

\end{proof}

\subsection{The energy norm}\

We recall that we are given an initial data set which we write as $(\Sigma, \overline{A}, \overline{E}, \overline{g}, \overline{k})$\;, and that $\Sigma$ is diffeomorphic to $\R^n$\;, and therefore there exists a global system of coordinates $(x^1, ..., x^n) \in \R^n$ for $\Sigma$\;, and we define
\bea
r := \sqrt{ (x^1)^2 + ...+(x^n)^2  }\;.
\eea
We assume that the initial data set is smooth and asymptotically flat. Now, this initial data set looks differently depending on the space-dimension $n$\;. Let us explain: if we define $M(n)$\;, to be the mass, defined by
 \bea\label{defM}
M(n)  := \begin{cases} M > 0  \quad\text{for }\quad n = 3 \; ,\\
0 \quad\text{for }\quad n \geq 4 \; ,\end{cases} 
\eea
and if we define a smooth function $\chi$\;, given by
 \bea\label{defXicutofffunction}
\chi (r)  := \begin{cases} 1  \quad\text{for }\quad r \geq \frac{3}{4} \;  ,\\
0 \quad\text{for }\quad r \leq \frac{1}{2} \;, \end{cases} 
\eea
and if we let $\de_{ij}$ be the Kronecker symbol, and if we define $\overline{h}^1_{ij} $ in this system of coordinates $x^i$\;, by
 \bea
\overline{h}^1_{ij} := \overline{g}_{ij} - (1 + \chi (r)\cdot  \frac{ M(n)}{r}  ) \de_{ij} \; ,
\eea
then, the initial data can be written as
\bea
\overline{g}_{ij} = \overline{h}^1_{ij} +\de_{ij}  +  \chi (r)\cdot  \frac{ M(n)}{r} \de_{ij}   \; .
\eea
We can then define
\bea
\overline{h}_{ij} = \overline{h}^1_{ij}  + +  \chi (r)\cdot  \frac{ M(n)}{r} \de_{ij} \;. 
\eea
and this way, we can write
\bea
\overline{g}_{ij} = \overline{h}_{ij} +   +\de_{ij} \;.
\eea
However, in the case $n \geq 4$\,, the mass $M(n)=0$\;, and thus, on the initial slice $\Sigma$\;, we have  $\overline{h} =  \overline{h}^1$\;. Hence, in higher dimensions, we look for a solution in the following form in wave coordinates,

 \bea
g_{\mu\nu}= h_{\mu\nu} +m_{\mu\nu}   \; ,
\eea
where $h$ is the propagation of $\overline{h}^1 = \overline{h}$ (n higher dimension).
We want to define the energy as a quantity in a form that could dominate the right-hand side of the weighted Klainerman-Sobolev inequality for the functions $\pa \Lie_{Z^I}    h^1_{\mu\nu} $ and $ \pa  \Lie_{Z^I}  A_{\mu}$\;,\, $\mu, \nu \in (t, x^1, \ldots, x^n)$\;, instead of $\phi$\;, and for higher order $N$\,. We note that such a definition for the energy would not give a finite energy for $\chi (r)\cdot  \frac{2\cdot M(n)}{r}$\;, that is the part that carries the mass $M(n)$\;. Thus, in $n=3$\;, we need to write instead $h= h^1 + h^0$\;, where $h^0$ represents the propagation of the mass $M$  (we shall explain this more in the third paper that follows).

Yet, keeping the discussion above in mind and the fact that we aim to study the case of $n=3$ in a paper that follows, we shall often write the equations on $h^1$ (instead of $h$), with
\bea
h^1 = h - h^0 \;, 
\eea
where $h^0$ is vanishing in higher dimensions, as in this paper.

In fact, we define the higher order energy norm as the following $L^2$ norms on $A$ and $h^1$, using the scalar products either on the Lie algebra $\cal G$ or the usual scalar product, and we set
 \bea\label{definitionoftheenergynorm}
\E_{N} (t) :=  \sum_{|I|\leq N} \big(   \|w^{1/2}   \pa ( \Lie_{Z^I}  A   (t,\cdot) )  \|_{L^2} +  \|w^{1/2}   \pa ( \Lie_{Z^I} h^1   (t,\cdot) )  \|_{L^2} \big) \, ,
\eea
where the integration is taken with respect to the Lebesgue measure $dx_1 \ldots dx_n$.

Here, we have in wave coordinates $(t, x^1, \ldots, x^n)$\;,
 \bea
 \notag
\mid \pa (  \Lie_{Z^I}  A ) \mid^2 :=  \mid \pa   \Lie_{Z^I}   A_t \mid^2 +   \mid \pa    \Lie_{Z^I}  A_{1} \mid^2  +   \ldots +   \mid  \pa  \Lie_{Z^I}  A_n \mid^2 \, , \\
\eea
where for $\mu \in (t, x^1, \ldots, x^n)$,
 \bea
  \notag
\mid \pa  \Lie_{Z^I}  A_\mu \mid^2 :=    \mid \pa_t   \Lie_{Z^I}   A_\mu \mid^2 + \mid \pa_{x^1} \Lie_{Z^I}   A_\mu \mid^2 + \ldots  +\mid \pa_{x^n}  \Lie_{Z^I}  A_\mu \mid^2 \, , \\
\eea
 and similarly for the metric $h_{\mu\nu}^1$ using the absolute value and a summation over all indices $\mu, \nu $ in wave coordinates.
 
 However, since for $n\geq 4$\;, we have $h = h^1$, and in particular for the case $n=5$ that we consider here, we therefore write
 \bea\label{definitionoftheenergynormwithh}
\E_{N} (t) :=  \sum_{|I|\leq N} \big(   \|w^{1/2}   \pa ( \Lie_{Z^I}  A   (t,\cdot) )  \|_{L^2} +  \|w^{1/2}   \pa ( \Lie_{Z^I} h   (t,\cdot) )  \|_{L^2} \big) \, .
\eea

To sum up: we shall nevertheless often use in many equations, in this paper, the tensor $h^1$ which coincides with $h$ in the case of higher dimensions, since our goal is to continue the work in the third paper that follows where the part that carries the mass $M$ is non-vanishing. Thus, we shall often refer to the energy as defined in \eqref{definitionoftheenergynorm}.

\subsection{The bootstrap argument} \label{Thebootstrapargumentandnotationonboundingtheenergy}\

It is a continuity argument. We start with a local solution defined on a maximum time interval $[0, T_{\text{loc}})$ and that is well-posed in the energy norm $\E_{N} (t)$ for some $N \in \N$\;. This means that the time dependance of the energy $\E_{N} (t)$ is continuous: in other words, the map $[0, T_{\text{loc}}) \to \R$, which assigns $t \to \E_{N} (t)$ is continuous in the standard sense. Furthermore, by maximality of $T_{\text{loc}}$ and the well-posedness of the solution, the time interval for the local solution must be excluding $T_{\text{loc}}$\,, otherwise the energy will be finite at $t=T_{\text{loc}}$ and this means that we could extend the local solution again beyond the time $t=T_{\text{loc}} $ by repeating the argument for establishing a local solution starting at time $t = T_{\text{loc}} $. In other words, the maximal $T_{\text{loc}}$ is characterised by
\beaa
\lim_{t \to T_{\text{loc}} } \E_{ N } (t)  =  \infty \;.
\eeaa
We look at any time $T \in [0, T_{\text{loc}})$, such that for all $t$ in the interval of time $[0, T]$, we have
\bea\label{aprioriestimate}
\E_{ N } (t)  \leq E (N )  \cdot \eps \cdot (1 +t)^\delta \;,
\eea
where $E ( N )$\, is a constant that depends on $ N $\,, where $\eps \geq 0$ is a constant to be chosen later small enough, and where $\delta \geq 0$ is to be chosen later. In addition, we start with an initial data such that this estimate holds true for $t=0$\,, i.e.
\bea \label{theboostrapimposestheconditiononinitialdatasothatnonemptyset}
\E_{ N } (0)  \leq E (N ) \cdot  \eps \;,
\eea
and thus we know that such a $T$ exists, since at least $T=0$ satisfies the estimate.

We will then show that for $t \in [0, T]$\,, the same estimate holds true but with $\eps$ replaced with $\frac{\eps}{2}$\,, i.e. we then prove that for all $t$ in the time interval $[0, T]$\,,
\bea\label{improvedapriori}
\E_{ N } (t)  \leq E ( N )  \cdot \frac{\eps}{2}   \cdot (1 +t)^\delta \; .
\eea

As a result, we would have shown that the set
$$\{ T  \; \;  | \; \text{for all} \quad  t \in [0, T]\,,  \quad  \E_{ N } (t)  \leq E (N )  \cdot \eps  \cdot (1 +t)^\delta  \}$$ is relatively open in $[0, T_{\text{loc}})$\,, non-empty since $0$ belongs to the set, and we know that it is relatively closed in $[0, T_{\text{loc}})$ since the map $t \to \E_{N} (t)$  is continuous, and thus, the set is the whole interval $[0, T_{\text{loc}})$\,.

Consequently, we would have shown that for all $t \in [0, T_{\text{loc}})$\,, we have
\beaa
\E_{ N } (t)  \leq E ( N )  \cdot \frac{\eps}{2}  \cdot (1 +t)^\delta \;.
\eeaa
As a result, we have
\beaa
\lim_{t \to T_{\text{loc}} } \E_{ N } (t)  \leq E ( N )  \cdot \frac{\eps}{2}  \cdot (1 +T_{\text{loc}})^\delta < \infty \;.
\eeaa
By continuity of the energy, this means that  $\E_{ N } (T_{\text{loc}})$ is finite and we can then repeat the argument for establishing a local solution starting at time $t = T_{\text{loc}} $ which would lead to a local solution defined beyond the time $t=T_{\text{loc}} $\,, which contradicts the maximality of $T_{\text{loc}} $\,.

To sum this up, we started by an a priori estimate \eqref{aprioriestimate}, we improved the a priori estimate in \eqref{improvedapriori}, and we therefore showed using the local well-posedness of the solution that it is an actual estimate. Since the estimate \eqref{aprioriestimate} is therefore true, this provides the finiteness of the higher order energy $\E_{ N } (t)$ for all time $t$ and therefore that the local solution for \eqref{EYMsystemashyperbolicPDE} is a in fact a global solution and furthermore, the improved estimate on the energy is true for all time $t$\,.

\subsection{The bootstrap assumption}\label{Theassumptionforthebootstrapandthenotation}\

As explained above, to run our bootstrap argument, we start by assuming that for all $k \leq c \in \N$\,, where $c$ is to be determined later, we have
  \bea\label{bootstrap}
\E_{ k } (t)  \leq E (k )  \cdot \eps \cdot (1 +t)^\delta \;.
\eea
In the case here, where $n \geq 5$\,, and also for the case that follows in the next paper for $n=4$\,, we choose in fact
\bea
\delta &=& 0 \;, \\ \label{delataqualtozero}
\eps &=& 1 \; . \label{epsequaltoone}
\eea
However, we carry out the calculations sometimes with $\de \geq 0$\,, and always with $0 < \eps \leq 1$\,, since in the case of $n = 3$\,, in a paper that follows, we will use indeed $\de > 0$ and we shall indeed fix $\eps $ small, and we would like therefore to use some of the calculations carried out here without repeating them. Thus, our bootstrap assumption here is
  \bea\label{actuallyusedincalculationsbootstrapassumption}
\E_{ k } (t)  \leq E (k )  \cdot \eps  \; .
\eea
The choice, for next papers, of 
\bea\label{epsissmallerthan1}
0 < \eps \leq 1\;,
\eea
is so that any powers of $\eps$ are in fact bounded by $\eps$\,. To lighten the notation, we also choose here
\bea\label{assumptionontheconstantboundsforenergy}
E ( k ) \leq 1\;,
\eea
so that any sum of powers of $E ( k )$ is in fact bounded by a constant multiplied by $E ( k )$\,. In addition, we choose
\bea\label{assumptionontheorderingoftheconstantboundsforenergy}
E ( k_1 )  \leq E(k_2)\;,
\eea
for all $k_1 \leq k_2$\,, with $k_1\, , k_2 \in \N$\,, given the fact that $\E ( k_1 )  \leq \E(k_2)$. The reason we choose to put the constants $E ( k )$\,, rather than want an $\eps$ to be fixed, is to show in the estimates the dependance on the energy and mainly, on the number of Lie derivatives involved. In other words, these constants $E ( k)$ are not needed but are there to make clearer in the argument the number of Lie derivatives of fields for which we use the bootstrap assumption. Speaking of this, in turns out in fact, that we could close the argument for $\E_{N}$\,, as in \eqref{aprioriestimate} with $\de=0$ and $\eps = 1$\,, by assuming \eqref{actuallyusedincalculationsbootstrapassumption} for all
\beaa
k \leq   \lfloor \frac{N}{2} \rfloor + \lfloor  \frac{n}{2} \rfloor  + 1 \;,
\eeaa
provided that $ N\geq 2 \lfloor  \frac{n}{2} \rfloor  + 2$\, (see Proposition \ref{Thetheoremofglobalstabilityanddecayforngeq 5}).

To sum up, our actual bootstrap assumption for this paper is that for $k \leq   \lfloor \frac{N}{2} \rfloor + \lfloor  \frac{n}{2} \rfloor  + 1$\,, with $N \geq 2 \lfloor  \frac{n}{2} \rfloor  + 2$\,, we have
  \bea\label{bootstrapassumption}
\E_{ k } (t)  \leq E (k )   \;,
\eea
where $E ( k ) \leq 1$ and $E ( k_1 )  \leq E(k_2)$ for all $k_1\, , k_2 \in \N$\,. And we are looking forward to upgrading the estimate \eqref{bootstrapassumption}. For this, we will have to exploit the special structure of the equations. 

In Section \ref{Aprioridecayestimatessectionreference}, we will show how an a priori estimate on the energy, \eqref{bootstrap}, translates into decay estimates on the pointwise norm of the solution that is the metric and the Yang-Mills potential -- this is derived using the weighted Klainerman-Sobolev inequality.

\subsection{The $O$ notation}\

\begin{definition}\label{definitionofbigOforLiederivatives}

For a family of tensors Let $\Lie_{Z^{I_1}}K^{(1)}, \ldots, \Lie_{Z^{I_m}} K^{(m)}$, where each tensor $ K^{(l)}$ is again either $A$ or $h$ or $H$\;, or $\derm A$\;, $\derm h$ or $\derm H$\;, we define
\bea
\notag
&& O_{\mu_{1} \ldots \mu_{k} } (\Lie_{Z^{I_1}}K^{(1)} \cdot \ldots \cdot \Lie_{Z^{I_m}} K^{(m)}  ) \\
\notag
&:=& \prod_{l=1}^{m} \Big[  \prod_{|J_l| \leq |I_l|} Q_{1}^{J_l} ( \Lie_{Z^{J_l}} K^{(l)} ) \cdot \Big( \sum_{n=0}^{\infty} P_{n}^{J_l}  ( \Lie_{Z^{J_l}} K^{(l)} ) \Big) \Big] \; . \\
\eea
where again $P_n^{J_l} (K^{l} )$ and $Q_1^{J_l} (K)$, are tensors that are Polynomials of degree $n$ and $1$, respectively, with $Q^{J_l}_1 (0) = 0$ and $Q^{J_l}_1 \neq 0$\;, of which the coefficients are components in wave coordinates of the metric $\textbf m$ and of the inverse metric $\textbf m^{-1}$, and of which the variables are components in wave coordinates of the covariant tensor $\Lie_{Z^{J_l}} K^{l}$, leaving some indices free, so that at the end the whole product $$  \prod_{l=1}^{m} \Big[  \prod_{|J_l| \leq |I_l|}  Q_{1}^{J_l} ( \Lie_{Z^{J_l}} K^{(l)} ) \cdot \Big( \sum_{n=0}^{\infty} P_{n}^{J_l}  ( \Lie_{Z^{J_l}} K^{(l)} ) \Big) \Big]$$ gives a tensor with free indices $\mu_{1} \ldots \mu_{k}$\;. To lighten the notation, we shall drop the indices and just write $O (\Lie_{Z^{I_1}}K^{(1)} \cdot \ldots \cdot \Lie_{Z^{I_m}} K^{(m)}  )$.

\end{definition}

\begin{remark}
Note that if we use a bootstrap assumption, \eqref{bootstrap}, to bound $$  Q_{1}^{|I_l|} ( \Lie_{Z^{|I_l|}} K^{(l)} ) \cdot \Big( \sum_{n=0}^{\infty} P_{n}^{|I_l|}  ( \Lie_{Z^{|I_l|}} K^{(l)} ) \Big) \; ,$$ the bound will then hold true for $$\prod_{|J_l| \leq |I_l|}^{m}  Q_{1}^{J_l} ( \Lie_{Z^{J_l}} K^{(l)} ) \cdot \Big( \sum_{n=0}^{\infty} P_{n}^{J_l}  ( \Lie_{Z^{J_l}} K^{(l)} ) \Big) \; .$$

\end{remark}

\section{A priori decay estimates}\label{Aprioridecayestimatessectionreference}

The a priori estimates are decay estimates that are generated from the weighted Klainerman-Sobolev inequality combined with the bootstrap assumption \eqref{bootstrap} which is the fact that we look at a time $T \in [0, T_{\text{loc}})$\,, such that for all $t \in [0, T]$\,, we have
\beaa
\E_{ N } (t)  \leq E (N )  \cdot \eps \cdot (1 +t)^\delta \;.
\eeaa

This will generate decay estimates which have nothing to do with the Einstein-Yang-Mills equations, but they come from the fact that we chose the energy to be in the form of what dominates the right hand side of the Klainerman-Sobolev inequality when applied to  $\pa \Lie_{Z^I} h^1$ and $ \pa Z^I A$. In other words, this bootstrap assumption \ref{bootstrap}, is an assumption on the bound of such an energy (an assumption that needs yet to be improved in order to turn it into a true estimate) translates into pointwise decay estimates through Klainerman-Sobolev inequality. The fact that these estimates are generated from the bootstrap assumption, and are not proven yet to be true estimates, is the reason why we call them “a priori decay estimates”.

\begin{lemma} \label{aprioriestimatesongradientoftheLiederivativesofthefields}
Under the bootstrap assumption \eqref{bootstrap}, taken for $N =  |I| +  \lfloor  \frac{n}{2} \rfloor  +1$\;, if for all $\mu, \nu \in (t, x^1, \ldots, x^n)$\;, and for any functions $ \pa_\mu \Lie_{Z^I} h^1_\nu \; , \; \pa_\mu \Lie_{Z^I} A_\nu  \in C^\infty_0(\R^n)$\;, then we have
 \bea
 \notag
|\pa  ( \Lie_{Z^I}  A ) (t,x)  |           &\leq& \begin{cases} C ( |I| ) \cdot E ( |I| + \lfloor  \frac{n}{2} \rfloor  +1 )  \cdot \frac{\eps }{(1+t+|q|)^{\frac{(n-1)}{2}-\delta} (1+|q|)^{1+\gamma}},\quad\text{when }\quad q>0,\\
           \notag
       C ( |I| ) \cdot E ( |I| + \lfloor  \frac{n}{2} \rfloor  +1)  \cdot \frac{\eps  }{(1+t+|q|)^{\frac{(n-1)}{2}-\delta}(1+|q|)^{\frac{1}{2} }}  \,\quad\text{when }\quad q<0 , \end{cases} \\
      \eea
and 
 \bea
 \notag
|\pa ( \Lie_{Z^I}  h^1 ) (t,x)  |   &\leq& \begin{cases} C ( |I| ) \cdot E ( |I| + \lfloor  \frac{n}{2} \rfloor  +1)  \cdot \frac{\eps }{(1+t+|q|)^{\frac{(n-1)}{2}-\delta} (1+|q|)^{1+\gamma}},\quad\text{when }\quad q>0,\\
       C ( |I| ) \cdot E ( |I| + \lfloor  \frac{n}{2} \rfloor  +1)  \cdot \frac{\eps  }{(1+t+|q|)^{\frac{(n-1)}{2}-\delta}(1+|q|)^{\frac{1}{2} }}  \,\quad\text{when }\quad q<0 . \end{cases} \\
      \eea

\end{lemma}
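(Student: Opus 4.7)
The plan is to apply the weighted Klainerman--Sobolev inequality \eqref{wksi} componentwise to each scalar $\pa_\mu \Lie_{Z^I} A_\nu$ (and analogously $\pa_\mu \Lie_{Z^I} h^1_{\mu'\nu'}$) in wave coordinates, then to commute the Minkowski vector fields past the partial derivative, and finally to recognise the resulting $L^2$ sum as a bound for the energy $\E_{|I|+\lfloor n/2\rfloor+1}(t)$, which the bootstrap assumption \eqref{bootstrap} controls.

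First I would fix indices $\mu,\nu$ and apply \eqref{wksi} to the scalar $\phi=\pa_\mu(\Lie_{Z^I}A_\nu)$, obtaining
\begin{equation*}
|\phi(t,x)|\cdot(1+t+|q|)^{\frac{n-1}{2}}\cdot\bigl[(1+|q|)\,w(q)\bigr]^{1/2}\le C\sum_{|J|\le\lfloor n/2\rfloor+1}\|w^{1/2}Z^J\phi(t,\cdot)\|_{L^2}\, .
\end{equation*}
Next I commute the string $Z^J$ past the partial derivative: the earlier commutator lemma gives $[Z,\pa_\mu]=c^\nu_{\mu}\pa_\nu$ with constant coefficients $c^\nu_\mu$ (derived via $Z_{\alpha\beta}\pa_\mu=\pa_\mu Z_{\alpha\beta}-m_{\mu\beta}\pa_\alpha+m_{\mu\alpha}\pa_\beta$, with the analogous identity for $S$), and an induction on $|J|$ yields
\begin{equation*}
|Z^J\pa_\mu\Lie_{Z^I}A_\nu|\le C(|J|)\sum_{|K|\le|J|}|\pa\, Z^K(\Lie_{Z^I}A_\nu)|\, .
\end{equation*}

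Second, since each Minkowski vector field has affine coefficients in wave coordinates and hence constant first derivatives, the scalar $Z^K(\Lie_{Z^I}A_\nu)$ can be rewritten as a linear combination, with bounded constant coefficients, of the wave-coordinate components of $\Lie_{Z^{I+K}}A$ (using the formula $(\Lie_Z\omega)_\mu=Z(\omega_\mu)+\omega_\rho\pa_\mu Z^\rho$ inductively to trade $Z^K\omega_\nu$ for $(\Lie_{Z^K}\omega)_\nu$ up to lower-order components). Summing over the finitely many indices and combining with the previous step,
\begin{equation*}
\sum_{|J|\le\lfloor n/2\rfloor+1}\|w^{1/2}Z^J\pa(\Lie_{Z^I}A_\nu)\|_{L^2}\le C(|I|)\sum_{|K|\le|I|+\lfloor n/2\rfloor+1}\|w^{1/2}\pa(\Lie_{Z^K}A)\|_{L^2}\le C(|I|)\,\E_{|I|+\lfloor n/2\rfloor+1}(t)\, .
\end{equation*}

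Third, invoking the bootstrap assumption \eqref{bootstrap} at order $N=|I|+\lfloor n/2\rfloor+1$ and using $(1+t)^\delta\le(1+t+|q|)^\delta$, I arrive at
\begin{equation*}
|\phi(t,x)|\le\frac{C(|I|)\cdot E(|I|+\lfloor n/2\rfloor+1)\cdot\eps}{(1+t+|q|)^{\frac{n-1}{2}-\delta}\cdot\bigl[(1+|q|)w(q)\bigr]^{1/2}}\, .
\end{equation*}
Finally, substituting the definition of $w$ from Definition \ref{defw} splits the estimate into two regimes: for $q>0$ one has $[(1+|q|)w(q)]^{1/2}=(1+|q|)^{1+\gamma}$, while for $q<0$ one has $w(q)=1$ and $[(1+|q|)w(q)]^{1/2}=(1+|q|)^{1/2}$, which reproduces the two branches of the stated inequality. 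The identical argument applied componentwise to $\pa(\Lie_{Z^I}h^1)$ yields the corresponding bound for $h^1$.

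The proof is essentially a direct concatenation of three ingredients already in hand (weighted Klainerman--Sobolev, the $[Z,\pa]$ commutator, the definition of $\E_N$), so no genuine obstacle arises; the only step demanding care is the bookkeeping that turns $Z^K$ acting on a component into $\Lie_{Z^{I+K}}$ acting on the tensor, which must be carried out inductively to absorb all lower-order terms into the $C(|I|)$ constant.
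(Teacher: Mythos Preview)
Your proposal is correct and follows essentially the same route as the paper: apply the weighted Klainerman--Sobolev inequality componentwise, commute $Z^J$ past $\pa_\mu$ using the constant-coefficient commutators, convert $Z^K$ acting on components into Lie derivatives of order $\le |I|+|K|$, bound the resulting $L^2$ sum by $\E_{|I|+\lfloor n/2\rfloor+1}(t)$, invoke the bootstrap, and finally unpack $[(1+|q|)w(q)]^{1/2}$ in the two $q$-regimes. The only point you flag as needing care --- trading $Z^K(\Lie_{Z^I}A_\nu)$ for components of $\Lie_{Z^L}A$ with $|L|\le|I|+|K|$ --- is exactly the step the paper handles by noting that $[Z^{\iota_i},Z^{\iota_k}]\in\text{span}\,\cal Z$ and $[Z,\pa_\mu]\in\text{span}\{\pa_\nu\}$.
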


\begin{proof}

In fact, the weighted Sobolev estimate gives that for all $\mu, \nu \in (t, x^1, \ldots, x^n)$, and for any functions $ \pa_\mu \Lie_{Z^I} h^1_\nu \, ,  \pa_\mu \Lie_{Z^I} A_\nu  \in C^\infty_0(\R^n)$, i.e. if they are smooth functions vanishing at spatial infinity, $$\lim_{|x| \to \infty} \pa_\mu \Lie_{Z^I} h^1_\nu (t, x) = \lim_{|x| \to \infty} \pa_\mu \Lie_{Z^I} A_\nu (t, x) = 0 ,$$ and for any arbitrary $(t,x)$,

\beaa
|\pa_\mu \Lie_{Z^I} A_\nu (t,x)| \cdot (1+t+|q|) \cdot \big[ (1+|q|)w(q)\big]^{1/2} \leq
C\sum_{|J|\leq  \lfloor  \frac{n}{2} \rfloor  +1} \|\big(w(q)\big)^{1/2} Z^J  \pa_\mu \Lie_{Z^J} A_\nu (t,\cdot)\|_{L^2} ,
\eeaa
and
\beaa
|\pa_\mu \Lie_{Z^I} h^1_\nu (t,x)| \cdot (1+t+|q|) \cdot \big[ (1+|q|)w(q)\big]^{1/2} \leq
C\sum_{|J|\leq \lfloor  \frac{n}{2} \rfloor  +1 } \|\big(w(q)\big)^{1/2} Z^J  \pa_\mu \Lie_{Z^I} h^1_\nu (t,\cdot)\|_{L^2} .
\eeaa
However, we have established that for a sufficiently smooth function $f$,

\beaa
| w^{1/2}  Z^J  \pa Z^I f (t,\cdot)| \leq C_1( |J|) \sum_{|K|\leq |J|} | w^{1/2} \pa Z^K  Z^I f (t,\cdot) | ,
\eeaa
which leads to
\beaa
| w^{1/2}  Z^J  \pa Z^I f (t,\cdot)|^2 &\leq& C_2( |J|)  \sum_{|K|\leq  |J|} | w^{1/2} \pa Z^K  Z^I f (t,\cdot) |^2 \\
&& \text{(using $ab \les a^2 + b^2$  )} .
\eeaa

Thus,
\bea
\notag
\|\big(w(q)\big)^{1/2} Z^J  \pa Z^I f (t,\cdot)\|_{L^2} &\leq& C_3( |J|) \sum_{|K|\leq  |J|} \|\big(w(q)\big)^{1/2}  \pa Z^K  Z^I f (t,\cdot)\|_{L^2}  \\
\notag
&& \text{(using $\sqrt{a + b} \leq \sqrt{a} + \sqrt{ b} $  )} \\
\notag
&\leq & C(| I|,  |J|) \cdot \sum_{|K|\leq | I| + |J|} \|\big(w(q)\big)^{1/2}  \pa Z^K  f (t,\cdot)\|_{L^2}  . \\
\eea

Hence, for all $\mu, \nu \in (t, x^1, \ldots, x^n)$, we have
\beaa
\|\big(w(q)\big)^{1/2} Z^J  \pa_\mu \Lie_{Z^I} A_\nu (t,\cdot)\|_{L^2}  &\leq & C(| I|,  |J|) \sum_{|K|\leq  |J|} \|\big(w(q)\big)^{1/2}  \pa Z^K  \Lie_{Z^I} A_\nu (t,\cdot)\|_{L^2}  \; .
\eeaa
Using the fact that a commutation of two vector fields in $\cal Z$, i.e. $[Z^{\iota_i}, Z^{\iota_k}]$, gives a combination of vector fields in $\cal Z$, and using the fact that we have already showed, that a commutation of a vector field in $\cal Z$ and $\pa_\mu$ gives a linear combination of vectors of the form $\pa_\mu$, we get that for all $\nu \in (t, x^1, \ldots, x^n)$, $Z^K  \Lie_{Z^I} A_\nu$ is a linear combination of elements of the form  $\Lie_{Z^L} A_\mu$ with $|L|\leq |K| + |I|$ and $\mu \in (t, x^1, \ldots, x^n)$. Hence, for any $\nu \in (t, x^1, \ldots, x^n)$,
\beaa
\|\big(w(q)\big)^{1/2}  \pa Z^K  \Lie_{Z^I} A_\nu (t,\cdot)\|_{L^2}  \les \sum_{|L|\leq  |K| + |I|} \|\big(w(q)\big)^{1/2}  \pa   \Lie_{Z^K} A (t,\cdot)\|_{L^2}  \, ,
\eeaa
and therefore,
\beaa
 \sum_{|K|\leq  |J|} \|\big(w(q)\big)^{1/2}  \pa Z^K  \Lie_{Z^I} A_\nu (t,\cdot)\|_{L^2}  \les  \sum_{|K|\leq | I| +  |J|}  \|\big(w(q)\big)^{1/2}  \pa   \Lie_{Z^K} A (t,\cdot)\|_{L^2}  \, .
\eeaa

Consequently, for any $\nu \in (t, x^1, \ldots, x^n)$,

\beaa
\|\big(w(q)\big)^{1/2} Z^J  \pa_\mu \Lie_{Z^I} A_\nu (t,\cdot)\|_{L^2}  &\leq & C(| I|,  |J|) \sum_{|K|\leq | I| +  |J|} \|\big(w(q)\big)^{1/2}  \pa \Lie_{Z^K}  A (t,\cdot)\|_{L^2}  \\
&\les &C(| I|,  |J|) \cdot \E_{| I| +  |J|} (t) ,
\eeaa
and 
\beaa
\|\big(w(q)\big)^{1/2} Z^J  \pa_\mu \Lie_{Z^I} h^1_\nu (t,\cdot)\|_{L^2}   &\leq & C(| I|,  |J|)\cdot \E_{| I| +  |J|} (t) .
\eeaa
Plugging these estimates to the right hand side of the weighted Sobolev inequalities, \eqref{wksi}, gives that for all $\mu, \nu \in (t, x^1, \ldots, x^n)$,
\beaa
&& | \pa_\mu \Lie_{Z^I} A_\nu (t,x)| \cdot (1+t+|q|)^\frac{(n-1)}{2} \cdot \big[ (1+|q|)w(q)\big]^{1/2} \\
&\leq& C\sum_{|J|\leq \lfloor  \frac{n}{2} \rfloor  +1 } \|\big(w(q)\big)^{1/2} Z^J  \pa Z^I A (t,\cdot)\|_{L^2} \\
&\leq& \sum_{|J|\leq \lfloor  \frac{n}{2} \rfloor  +1} C(| I|,  |J|) \cdot \E_{| I| +  |J|} (t)  \\
&\leq&  C(| I|) \cdot \E_{| I| +  \lfloor  \frac{n}{2} \rfloor  +1 } (t)  ,
\eeaa
and hence,
\bea
\notag
| \pa ( \Lie_{Z^I} A ) (t,x)| \cdot (1+t+|q|)^\frac{(n-1)}{2} \cdot \big[ (1+|q|)w(q)\big]^{1/2} &\leq&  C(| I|) \cdot \E_{| I| +  \lfloor  \frac{n}{2} \rfloor  +1} (t)  , \\
\eea
and similarly,
\bea
\notag
|\pa ( \Lie_{Z^I} h^1 ) (t,x)| \cdot (1+t+|q|)^\frac{(n-1)}{2} \cdot \big[ (1+|q|)w(q)\big]^{1/2} &\leq& C(| I|) \cdot \E_{| I| +  \lfloor  \frac{n}{2} \rfloor  +1} (t) . \\
\eea

Thus,
 \bea
|\pa ( \Lie_{Z^I}  A  ) (t,x) ) |  &\leq&   C(| I|)  \cdot \frac{   \E_{| I| +  \lfloor  \frac{n}{2} \rfloor  +1} (t) }{(1+t+|q|)^\frac{(n-1)}{2} \big[ (1+|q|)w(q)\big]^{1/2}} ,
\eea
and 
 \bea
|\pa  ( \Lie_{Z^I}  h^1 ) (t,x) ) |  &\leq&   C(| I|)  \cdot \frac{   \E_{| I| +  \lfloor  \frac{n}{2} \rfloor  +1} (t) }{(1+t+|q|)^\frac{(n-1)}{2} \big[ (1+|q|)w(q)\big]^{1/2}} .
\eea

By definition of the weight $w$ (see Definition \ref{defw}), for some $0<\gamma<1$, we have
\beaa
(w(q))^{1/2} &=& \begin{cases}\big[ (1+|q|)^{1+2\gamma} \big]^{1/2},\quad\text{when }\quad q>0 ,\\
     1    \,\quad\text{when }\quad q<0 .\end{cases} \\
        &=& \begin{cases} (1+|q|)^{\frac{1}{2}+\gamma} ,\quad\text{when }\quad q>0 ,\\
       1 \,\quad\text{when }\quad q<0 .\end{cases}
\eeaa
Hence,
\bea\label{whynotationalfactorof2infrontofgammaindefofw}
\big[ (1+|q|)w(q)\big]^{1/2}   &=& \begin{cases} (1+|q|)^{1+\gamma} ,\quad\text{when }\quad q>0 \; ,\\
       (1+|q|)^{\frac{1}{2}} \,\quad\text{when }\quad q<0 \; .\end{cases}
\eea
Consequently,
 \beaa
 \notag
|\pa ( \Lie_{Z^I}  A ) (t,x)  | &\leq& \begin{cases} C ( |I| )   \cdot \frac{\E_{| I| +  \lfloor  \frac{n}{2} \rfloor  +1} (t) }{(1+t+|q|)^\frac{(n-1)}{2}(1+|q|)^{1+\gamma}},\quad\text{when }\quad q>0 \; ,\\
       C ( |I| )   \cdot \frac{\E_{| I| +  \lfloor  \frac{n}{2} \rfloor  +1} (t) }{(1+t+|q|)^\frac{(n-1)}{2} (1+|q|)^{\frac{1}{2} }}  \,\quad\text{when }\quad q<0 \; , \end{cases} \\
       \eeaa
       
       and 
       \beaa
|\pa ( \Lie_{Z^I}  h^1 ) (t,x)  |       &\leq& \begin{cases} C ( |I| ) \cdot   \frac{\E_{| I| +  \lfloor  \frac{n}{2} \rfloor  +1} (t) }{(1+t+|q|)^\frac{(n-1)}{2}(1+|q|)^{1+\gamma}},\quad\text{when }\quad q>0 \; ,\\
       C ( |I| ) \cdot    \frac{\E_{| I| +  \lfloor  \frac{n}{2} \rfloor  +1} (t) }{(1+t+|q|)^\frac{(n-1)}{2} (1+|q|)^{\frac{1}{2} }}  \,\quad\text{when }\quad q<0 \; .\end{cases} \\
      \eeaa

Using the bootstrap assumption on the growth of the higher order energy, we get
 \bea
 \notag
|\pa  ( \Lie_{Z^I}  A ) (t,x)  | &\leq& \begin{cases} C ( |I| ) \cdot E ( |I| + \lfloor  \frac{n}{2} \rfloor  +1)  \cdot \frac{\eps (1 +t)^\delta }{(1+t+|q|)^\frac{(n-1)}{2} (1+|q|)^{1+\gamma}},\quad\text{when }\quad q>0,\\
 \notag
       C ( |I| ) \cdot E ( |I| + \lfloor  \frac{n}{2} \rfloor  +1)  \cdot \frac{\eps (1 +t)^\delta }{(1+t+|q|)^\frac{(n-1)}{2} (1+|q|)^{\frac{1}{2} }}  ,\,\quad\text{when }\quad q<0 .\end{cases} \\
        \notag
       &\leq& \begin{cases} C ( |I| ) \cdot E ( |I| + \lfloor  \frac{n}{2} \rfloor  +1)  \cdot \frac{\eps (1+t+|q|)^\delta }{(1+t+|q|)^\frac{(n-1)}{2} (1+|q|)^{1+\gamma}},\quad\text{when }\quad q>0,\\
        \notag
       C ( |I| ) \cdot E ( |I| + \lfloor  \frac{n}{2} \rfloor  +1)  \cdot \frac{\eps (1+t+|q|)^\delta }{(1+t+|q|)^\frac{(n-1)}{2} (1+|q|)^{\frac{1}{2} }} , \,\quad\text{when }\quad q<0 . \end{cases} \\
        \notag
          &\leq& \begin{cases} C ( |I| ) \cdot E ( |I| + \lfloor  \frac{n}{2} \rfloor  +1)  \cdot \frac{\eps }{(1+t+|q|)^{\frac{(n-1)}{2}-\delta} (1+|q|)^{1+\gamma}},\quad\text{when }\quad q>0,\\
           \notag
       C ( |I| ) \cdot E ( |I| + \lfloor  \frac{n}{2} \rfloor  +1)  \cdot \frac{\eps  }{(1+t+|q|)^{\frac{(n-1)}{2}-\delta}(1+|q|)^{\frac{1}{2} }} , \,\quad\text{when }\quad q<0 , \end{cases} \\
      \eea
and similarly,
 \bea
 \notag
|\pa ( \Lie_{Z^I}  h^1 ) (t,x)  |   &\leq& \begin{cases} C ( |I| ) \cdot E ( |I| + \lfloor  \frac{n}{2} \rfloor  +1)  \cdot \frac{\eps }{(1+t+|q|)^{\frac{(n-1)}{2}-\delta} (1+|q|)^{1+\gamma}},\quad\text{when }\quad q>0,\\
       C ( |I| ) \cdot E ( |I| + \lfloor  \frac{n}{2} \rfloor  +1)  \cdot \frac{\eps  }{(1+t+|q|)^{\frac{(n-1)}{2}-\delta}(1+|q|)^{\frac{1}{2} }}  , \,\quad\text{when }\quad q<0 . \end{cases} \\
      \eea

\end{proof}

\subsection{The spatial asymptotic behaviour of $ \Lie_{Z^I} A (t, x) $ at $t=0$}\

\begin{lemma}\label{estimateonZderivativeofafuncionbypartialderivativeoff}
We have for all vector $Z \in  {\cal Z}$, and for all sufficiently smooth function $f$, the following estimate for $t \geq 0$,
\beaa
 | Z f  |&\les& ( 1 + t +  |x  | ) \cdot | \pa f  | .
\eeaa
\end{lemma}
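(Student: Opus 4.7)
The plan is to check the estimate separately for each of the three kinds of vector fields in $\mathcal{Z}$: the translations $\partial_\alpha$, the scaling vector field $S$, and the Lorentz boosts/rotations $Z_{\alpha\beta}$, and then to take the maximum of the resulting bounds. Since $\mathcal{Z}$ is a finite family, the implicit constant in $\lesssim$ will only depend on $n$.

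First, for $Z = \partial_\alpha$ with $\alpha \in \{0,1,\ldots,n\}$, the bound is immediate because by the very definition of $|\partial f|$ (see the definition in Section on norms), we have $|\partial_\alpha f| \leq |\partial f|$, and obviously $1 \leq 1+t+|x|$ for $t \geq 0$. Next, for $Z = S = t\partial_t + \sum_{i=1}^n x^i \partial_i$, I apply the Cauchy--Schwarz inequality in $\mathbb{R}^{n+1}$ to the pairing $(t,x^1,\ldots,x^n) \cdot (\partial_t f, \partial_1 f,\ldots,\partial_n f)$, which gives
\begin{equation*}
|Sf| \leq \sqrt{t^2 + |x|^2} \cdot |\partial f| \leq (t+|x|) \cdot |\partial f| \leq (1+t+|x|) \cdot |\partial f|,
\end{equation*}
using $t \geq 0$ in the last step.

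The remaining case is the Lorentz boosts/rotations $Z_{\alpha\beta} = x_\beta \partial_\alpha - x_\alpha \partial_\beta$. Recall that $x_\beta = m_{\mu\beta} x^\mu$, so $x_0 = -t$ and $x_i = x^i$ for spatial $i$. Hence for every index $\alpha \in \{0,\ldots,n\}$,
\begin{equation*}
|x_\alpha| \leq \max(t, |x^1|,\ldots,|x^n|) \leq t + |x|,
\end{equation*}
and therefore, using $|\partial_\alpha f|, |\partial_\beta f| \leq |\partial f|$,
\begin{equation*}
|Z_{\alpha\beta} f| \leq |x_\beta|\,|\partial_\alpha f| + |x_\alpha|\,|\partial_\beta f| \leq 2(t+|x|)\,|\partial f| \leq 2(1+t+|x|)\,|\partial f|.
\end{equation*}

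Combining the three cases and absorbing the numerical factor into $\lesssim$ yields the claim. There is no real obstacle here; this is a purely algebraic estimate, and the only thing to keep track of is the convention $x_0 = -t$, $x_i = x^i$, which ensures that $|x_\alpha| \leq t+|x|$ uniformly in $\alpha$.
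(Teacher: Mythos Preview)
Your proof is correct and takes essentially the same approach as the paper: both proceed by checking the three types of vector fields in $\mathcal{Z}$ separately (translations, scaling, boosts/rotations) and using $|x_\alpha| \leq t + |x|$ together with $|\partial_\alpha f| \leq |\partial f|$. The only cosmetic difference is that you invoke Cauchy--Schwarz for $S$ where the paper just uses the triangle inequality, but the argument is otherwise identical.
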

\begin{proof}

As a reminder, in wave coordinates $x^\mu$, we have
\beaa
Z_{\a\b} &=& x_{\b} \pa_{\a} - x_{\a} \pa_{\b} ,\\
S &=&  x^{\a} \pa_{\a} ,
\eeaa
where 
\beaa
x_{\b} = m_{\mu\b} x^{\mu} .
\eeaa
Thus, we have,
\beaa
 | Z_{\a\b} f  | &=&  | x_{\b} \pa_{\a} f - x_{\a} \pa_{\b} f  | \leq   | x_{\b} \pa_{\a} f  | +  | x_{\a} \pa_{\b} f  | \\
&\leq&   | x_{\b} \pa_{\a} f | +  | x_{\a} \pa_{\b} f  | \\
&\leq&   | x_{\b} \pa_{\a} f  | +  | x_{\a} \pa_{\b} f  | \; .
\eeaa
Hence, in wave coordinates, i.e. for $\a, \b \in \{0, 1, \ldots, n  \}$, we have
\beaa
 | Z_{\a\b} f  | &\leq&   |x_{\b} \pa_{\a} f  |+ | x_{\a} \pa_{\b} f  | \\
&\les& (  | t  | +  | x  |  \cdot  | \pa f  | ,
\eeaa
where 
\bea
 | x  |  = \sqrt{(x^1)^2 + \ldots +(x^n)^2 } .
\eea
For the vector $S$, we have
\beaa
 | S f  |  &=&  | x^{\a} \pa_{\a} f | =  | t \pa_{t} f +\sum_{i=0}^{n} x^{i} \pa_{i} f  |     \\
&\les& (  | t  | +  |x  | ) \cdot  | \pa f  |.
\eeaa
Also, from the definition $ | \pa f  |  $ in wave coordinates, we get
\beaa
 | \pa_{x_\a} f | &\leq&  |\pa f  | .
\eeaa
Consequently, for $Z \in \{Z_{\a\b}, S, \pa_{\a} \} $, $\a, \b \in \{0, 1, \ldots, n  \}$,
\bea
 | Z f  | &\les& ( 1 + | t  |+ | x  | ) \cdot  |\pa f  | \, .
\eea
\end{proof}

\begin{lemma}\label{asymptoticbehaviouratteqzeroforallfields}
If the factor $\gamma$ in the weight is such that $\gamma > \max\{0, \delta -\frac{(n-1)}{2}\} $, then under the bootstrap assumption \eqref{bootstrap}, taken for $k =  |I| +  \lfloor  \frac{n}{2} \rfloor  +1$, we have for all $ | I  | \geq 1$, 
\bea\label{boundonthehyperplanetequalzeroforzeroderivativeoftehfields}
\notag
 | \Lie_{Z^I} A (0,x)  | +   | \Lie_{Z^I}  h^1 (0,x)  |   &\les&  C ( |I| ) \cdot E ( |I| + \lfloor  \frac{n}{2} \rfloor  +1)  \cdot \frac{\eps  }{(1+r)^{1+\gamma-\delta}}  \; ,\\
\eea
and 
\bea\label{blimitatrgoestoinfinityathyperplanetequalzerozeroforzeroderivativeoftehfields}
 \lim_{ r \to \infty }  \big(  | \Lie_{Z^I}  A (0,x)  | +     | \Lie_{Z^I}  h^1 (0,x)  |  \big) &=& 0 \;.
\eea
Also, we choose to take the initial data such that \eqref{boundonthehyperplanetequalzeroforzeroderivativeoftehfields} is also true for $ | I  | = 0$, which implies \eqref{blimitatrgoestoinfinityathyperplanetequalzerozeroforzeroderivativeoftehfields}.

\begin{remark} In addition, for such $\gamma > \max\{0, \delta -\frac{(n-1)}{2}\} $, we also have
 \bea\label{boundonthehyperplanetequalzeroforthefullderivativeoftheLieZfields}
\notag
 | \pa   ( \Lie_{Z^I} A ) A (0,x)  | +   |\pa   ( \Lie_{Z^I} A )  h^1 (0,x)  |   &\les&  C ( |I| ) \cdot E ( |I| + \lfloor  \frac{n}{2} \rfloor  +1)  \cdot \frac{\eps  }{(1+r)^{1+\gamma-\delta}}  \; ,\\
\eea
and 
 \bea
\notag
 \lim_{ r = \to \infty } \big(   |\pa ( \Lie_{Z^I}  A ) (0,x)  | +  |\pa  ( \Lie_{Z^I}   h^1 ) (0,x)  |  \big)  &=&  0  \; .\\
\eea
\end{remark}
\end{lemma}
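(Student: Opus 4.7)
My plan is to derive the lemma from Lemma~\ref{aprioriestimatesongradientoftheLiederivativesofthefields} specialized to $t = 0$ by radial integration from spatial infinity. First, setting $t = 0$ places the null coordinate $q = r - t$ in the $q \geq 0$ branch, so that Lemma~\ref{aprioriestimatesongradientoftheLiederivativesofthefields} yields
\[
| \pa ( \Lie_{Z^I} A )(0, x) | + | \pa ( \Lie_{Z^I} h^1 )(0, x) | \les \frac{C(|I|) \cdot E(|I| + \lfloor n/2 \rfloor + 1) \cdot \eps}{(1+r)^{(n-1)/2 - \delta + 1 + \gamma}} \; .
\]
Because $(n-1)/2 \geq 0$, the exponent majorizes $1 + \gamma - \delta$, which is exactly the content of the gradient estimate \eqref{boundonthehyperplanetequalzeroforthefullderivativeoftheLieZfields} in the remark, valid for all $|I| \geq 0$.

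The estimate \eqref{boundonthehyperplanetequalzeroforzeroderivativeoftehfields} for $|I| \geq 1$ will then follow from the fundamental theorem of calculus along each radial ray $\rho \mapsto (0, \rho\omega)$, $\omega \in S^{n-1}$, provided that the limit \eqref{blimitatrgoestoinfinityathyperplanetequalzerozeroforzeroderivativeoftehfields} is already known. Granted this, one has
\[
\Lie_{Z^I} A(0, r\omega) = -\int_r^\infty \pa_\rho \big(\Lie_{Z^I} A\big)(0, \rho\omega)\, d\rho \; ,
\]
and using $|\pa_\rho(\cdot)| \leq |\pa(\cdot)|$ in wave coordinates together with the bound from the first step yields
\[
| \Lie_{Z^I} A(0, x) | \les \int_r^\infty \frac{C(|I|)\, E(|I| + \lfloor n/2 \rfloor + 1)\,\eps}{(1+\rho)^{(n-1)/2 + 1 + \gamma - \delta}}\, d\rho \les \frac{C(|I|)\, E(|I| + \lfloor n/2 \rfloor + 1)\,\eps}{(1+r)^{(n-1)/2 + \gamma - \delta}}\; ,
\]
where the convergence of the improper integral needs exactly $(n-1)/2 + \gamma - \delta > 0$, that is, $\gamma > \delta - (n-1)/2$. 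Since $(n-1)/2 \geq 1$ for $n \geq 3$, this dominates $(1+r)^{-(1+\gamma-\delta)}$ and gives \eqref{boundonthehyperplanetequalzeroforzeroderivativeoftehfields}; the argument for $\Lie_{Z^I} h^1$ is identical.

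The remaining and genuinely delicate task is to justify the vanishing \eqref{blimitatrgoestoinfinityathyperplanetequalzerozeroforzeroderivativeoftehfields} for $|I| \geq 1$, which I would obtain by induction on $|I|$. The base case $|I| = 0$ is assumed on the initial data as stated in the lemma. For the inductive step, write $\Lie_{Z^I} = \Lie_Z \Lie_{Z^{I'}}$ with $|I'| = |I| - 1$. In wave coordinates, each component of $\Lie_{Z^I} A$ is a sum of an algebraic contraction of the components of $\Lie_{Z^{I'}} A$ against the at-most-linear coefficients of $Z$, plus a term of the form $Z^\alpha \pa_\alpha (\Lie_{Z^{I'}} A)_\mu$. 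The algebraic pieces vanish at infinity by the inductive hypothesis, while the differentiated term is controlled by Lemma~\ref{estimateonZderivativeofafuncionbypartialderivativeoff} and the gradient bound from the first step applied to $I'$ as
\[
| Z^\alpha \pa_\alpha (\Lie_{Z^{I'}} A)_\mu(0,x) | \les (1+r) \cdot | \pa \Lie_{Z^{I'}} A(0, x) | \les \frac{C\, E\, \eps}{(1+r)^{(n-1)/2 + \gamma - \delta}}\; ,
\]
which tends to zero precisely under the hypothesis $\gamma > \max\{0, \delta - (n-1)/2\}$. Thus the full hypothesis on $\gamma$ is used in two conceptually distinct ways --- once to power the induction for vanishing at infinity, and once to make the radial integration convergent --- and this is really the only point requiring care; everything else reduces to Lemma~\ref{aprioriestimatesongradientoftheLiederivativesofthefields} combined with elementary calculus.
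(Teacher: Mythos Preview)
Your proof is correct and uses the same core ingredients as the paper --- the gradient bound at $t=0$ from Lemma~\ref{aprioriestimatesongradientoftheLiederivativesofthefields} and the pointwise estimate $|Zf| \les (1+|x|)|\pa f|$ from Lemma~\ref{estimateonZderivativeofafuncionbypartialderivativeoff} --- but the organization differs. The paper does not integrate radially at all for this lemma: it runs the induction of your step~3 directly on the \emph{quantitative} decay rate rather than merely on the qualitative vanishing. That is, since the algebraic contraction terms in $(\Lie_Z K)_\mu = Z^\alpha \pa_\alpha K_\mu + K_\alpha\,\pa_\mu Z^\alpha$ carry bounded coefficients (the $\pa_\mu Z^\alpha$ are constant for Minkowski vector fields), the inductive hypothesis propagates the bound $(1+r)^{-((n-1)/2+\gamma-\delta)}$ itself, giving~\eqref{boundonthehyperplanetequalzeroforzeroderivativeoftehfields} and~\eqref{blimitatrgoestoinfinityathyperplanetequalzerozeroforzeroderivativeoftehfields} in a single stroke. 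Your radial integration is a legitimate alternative that yields the same exponent, but it is redundant once you notice that your step~3 already contains the full quantitative content; conversely, the paper's version is slightly more economical but relies on the reader unpacking the Lie-derivative algebra in the induction. Neither approach gains anything the other lacks.
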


\begin{proof}
Since $q=r-t$, at $t=0$, we have  $q=r \geq 0$. We have established that for $q\geq 0$, 
 \beaa
 \notag
  |\pa  ( \Lie_{Z^I} A ) (t,x)  |   +  |\pa  ( \Lie_{Z^I}   h^1 ) (t,x)  |       &\leq& C ( |I| ) \cdot E ( |I| + \lfloor  \frac{n}{2} \rfloor  +1)  \cdot \frac{\eps }{(1+t+|q|)^{\frac{(n-1)}{2}-\delta} (1+|q|)^{1+\gamma}} \; .
 \eeaa
 Plugging in $t=0$, we get
  \bea
 \notag
  |\pa   ( \Lie_{Z^I} A ) (0,x)  |     +  |\pa  ( \Lie_{Z^I}   h^1 ) (0,x)  |    &\leq& C ( |I| ) \cdot E ( |I| + \lfloor  \frac{n}{2} \rfloor  +1)  \cdot \frac{\eps }{(1+|q|)^{\frac{(n-1)}{2}+1+\ga-\delta} } \; .\\
      \eea

This means that for $\ga > \delta -1 - \frac{(n-1)}{2} = \de - \frac{(n+1)}{2}$, and $\ga > 0$,
 \bea
 \notag
&& \lim_{ r = | x | \to \infty } \big(   |\pa ( \Lie_{Z^I}  A ) (0,x)  | +  |\pa  ( \Lie_{Z^I}   h^1 ) (0,x)  |  \big)    \\
\notag
   &\leq& \lim_{|q | \to \infty } C ( |I| ) \cdot E ( |I| + \lfloor  \frac{n}{2} \rfloor  +1)  \cdot \frac{\eps  }{(1+|q|)^{\frac{(n+1)}{2}+\ga-\delta}  } = 0 \;. \\
 \eea
In particular, this implies that for $\gamma > \max\{0, \delta -\frac{(n+1)}{2}\} $, 
 \beaa
\lim_{ r = | x | \to \infty } \big( |\pa ( \Lie_{Z^I}   h^1 ) (0,x)  | +  |\pa ( \Lie_{Z^I}  A ) (0,x)  | \big)      = 0 \; .
 \eeaa
 
Now, we would like to estimate the asymptotics of $\Lie_{Z^I} A (0,x)$ and $\Lie_{Z^I} h (0,x)$\,.

For  $\mu, \nu \in (t, x^1, \ldots, x^n)$\,, taking in Lemma \ref{estimateonZderivativeofafuncionbypartialderivativeoff}, $f =\Lie_{Z^I}  A_{\mu} (0,x) $ and then $f= \Lie_{Z^I}  h_{\mu\nu} (0,x)$\,, we obtain for all $I$\,, and for any vector $Z \in {\cal Z}$\,,
 \beaa
 \notag
 && |\ Z \Lie_{Z^I} A_{\mu}  (0,x)  |   +   |\ Z \Lie_{Z^I} h^1_{\mu\nu} (0,x)  |   \\
 &\les&  ( 1 + | t  |+  | x | ) \cdot (  | \pa \Lie_{Z^I}A (0,x)  | +  | \pa \Lie_{Z^I}  h^1 (0,x)  | )\; .
 \eeaa

Since at $t=0$\,, we have $q = | x|$\,, we get for any $\mu, \nu \in (t, x^1, \ldots, x^n)$\,,
 \bea
 \notag
  && |\ Z \Lie_{Z^I} A_{\mu}  (0,x)  |   +   |\ Z \Lie_{Z^I}  h^1_{\mu\nu} (0,x)  |   \\
   \notag
      &\les&  ( 1 +  | q  | ) \cdot (  | \pa \Lie_{Z^I} A (0,x)  |+  | \pa \Lie_{Z^I}  h^1 (0,x)  | ) \\
  \notag
   &\les&  ( 1 +  | q  | ) \cdot C ( |I| ) \cdot E ( |I| + \lfloor  \frac{n}{2} \rfloor  +1)  \cdot \frac{\eps }{(1+|q|)^{\frac{(n-1)}{2}+1+\ga-\delta} } \\
   \notag
    &\les&    C ( |I| ) \cdot E ( |I| + \lfloor  \frac{n}{2} \rfloor  +1)  \cdot \frac{\eps }{ (1+|q|)^{\frac{(n-1)}{2}+\gamma-\delta}}  \; . \\
      \eea

Now, using the fact that a commutation of two vector fields in $\cal Z$ is again a combination of vector fields in $\cal Z$\,, and using the fact that a commutation of a vector field in $\cal Z$ and $\pa_\mu$ gives a linear combination of vectors of the form $\pa_\mu$\,, we get by induction on $|I|$ that for all $I$ such that $| I | \geq 1$\,,
 \bea
 \notag
  && | \Lie_{Z^I} A_{\mu}  (0,x)  |   +   |\Lie_{Z^I}  h^1_{\mu\nu} (0,x)  |   \\
   \notag
    &\les&    C ( |I| ) \cdot E ( |I| + \lfloor  \frac{n}{2} \rfloor  +1)  \cdot \frac{\eps }{ (1+|q|)^{\frac{(n-1)}{2}+\gamma-\delta}}  \, . \\
\eea
      
In particular, this means that if $\gamma > \max\{0, \delta -\frac{(n-1)}{2}\} $\,,
      
\bea
 \lim_{ | x  | \to \infty }  \big(  |  \Lie_{Z^I} A (0,x)  | +  |\Lie_{Z^I}  h^1 (0,x)  | \big) &=& 0 .
\eea

For $|I| = 0$, we take the initial data such that
\beaa
 |A (0,x)  | +  \mid  h^1 (0,x) \mid   &\les&   \frac{\eps }{ (1+|r|)^{\frac{(n-1)}{2}+\gamma-\delta}}
\eeaa
which implies that
\bea
 \lim_{ r \to \infty }  \big( |  A (0,x)  | +   |  h^1 (0,x)  |  \big) &=& 0 .
\eea 
\end{proof}

 \subsection{Estimates on $ \Lie_{Z^I} A $ and $ \Lie_{Z^I} h^1 $ for $t>0$.}\

Now, we will use \eqref{boundonthehyperplanetequalzeroforzeroderivativeoftehfields} in Lemma \ref{asymptoticbehaviouratteqzeroforallfields} to estimate the Lie derivatives in the direction of Minkowski vector fields of the Einstein-Yang-Mills fields $ \Lie_{Z^I} A $ and $ \Lie_{Z^I} h^1 $, for $t > 0$. This will be done by specific integration till we reach the hyperplane prescribed by $t=$ and then use  \eqref{boundonthehyperplanetequalzeroforzeroderivativeoftehfields}.

\begin{lemma}\label{aprioriestimatefrombootstraponzerothderivativeofAandh1}
Under the bootstrap assumption \eqref{bootstrap}, taken for $k =  |I| +  \lfloor  \frac{n}{2} \rfloor  +1$\,, and with $\ga >0$\, and with initial data such that
\beaa
|   A (0,x) |  +  |   h^1 (0,x) |   &\les&   \frac{\eps }{ (1+r)^{\frac{(n-1)}{2}+\gamma-\delta}} \; ,
\eeaa
then, we have for all $ | I  | $\,,
 \bea
 \notag
&&| \Lie_{Z^I}  A  (t,x)  |   +| \Lie_{Z^I}  h^1  (t,x)  |   \\
 \notag 
 &\leq& \begin{cases} c (\gamma) \cdot  C ( |I| ) \cdot E ( |I| +  \lfloor  \frac{n}{2} \rfloor  +1)  \cdot \frac{\eps }{(1+t+|q|)^{\frac{(n-1)}{2}-\delta} (1+|q|)^{\gamma}},\quad\text{when }\quad q>0\; ,\\
       C ( |I| ) \cdot E ( |I| +  \lfloor  \frac{n}{2} \rfloor  +1)  \cdot \frac{\eps \cdot (1+| q |   )^\frac{1}{2} }{(1+t+|q|)^{\frac{(n-1)}{2}-\delta} }  \,\quad\text{when }\quad q<0 \; . \end{cases} \\
      \eea

\end{lemma}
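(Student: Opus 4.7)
The strategy is to upgrade the gradient estimates of Lemma \ref{aprioriestimatesongradientoftheLiederivativesofthefields} to pointwise bounds on the fields themselves by radial integration from spatial infinity, where the fields vanish. Fix $(t,x)$ with $t > 0$ and $r = |x| > 0$, set $\omega = x/r$, and consider the spatial ray $s \mapsto (t, x + s\omega)$ for $s \in [0,\infty)$. Along this ray, the null parameter equals $q(s) = r + s - t$, which is monotone increasing in $s$; the level $s_0 := \max(0, t-r)$ separates the interior region $\{q(s) < 0\}$ from the exterior region $\{q(s) > 0\}$.

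The first task is to show that $\Lie_{Z^I} A(t, x+s\omega) \to 0$ and $\Lie_{Z^I} h^1(t, x+s\omega) \to 0$ as $s \to \infty$ for fixed $t$. At $t=0$ this is given by \eqref{blimitatrgoestoinfinityathyperplanetequalzerozeroforzeroderivativeoftehfields} of Lemma \ref{asymptoticbehaviouratteqzeroforallfields}, together with the initial data hypothesis for $|I| = 0$. For general $t$, I will write
\[
\Lie_{Z^I} A(t, R\omega) = \Lie_{Z^I} A(0, R\omega) + \int_0^t \pa_\tau \Lie_{Z^I} A(\tau, R\omega)\, d\tau
\]
and apply the exterior gradient estimate, which for $R \gg t$ yields $|\pa_\tau \Lie_{Z^I} A(\tau, R\omega)| \les \eps\, (1+R)^{-\frac{(n-1)}{2}+\delta} (1+R-\tau)^{-1-\gamma}$, uniformly in $\tau \in [0,t]$; both terms on the right then vanish as $R \to \infty$. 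The same argument applies to $\Lie_{Z^I} h^1$.

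Once the vanishing at infinity is known, the fundamental theorem of calculus gives $|\Lie_{Z^I} A(t,x)| \leq \int_0^\infty |\pa_s \Lie_{Z^I} A(t, x+s\omega)|\, ds$, and analogously for $h^1$. In the exterior case $q > 0$, the ray stays in $\{q(s) > 0\}$, and the exterior estimate of Lemma \ref{aprioriestimatesongradientoftheLiederivativesofthefields} gives
\[
|\Lie_{Z^I} A(t,x)| \leq \int_0^\infty \frac{C(|I|)\, E\, \eps\, ds}{(1+t+q+s)^{\frac{(n-1)}{2}-\delta}(1+q+s)^{1+\gamma}}
\leq \frac{C(|I|)\, E\, \eps}{(1+t+q)^{\frac{(n-1)}{2}-\delta}} \cdot \frac{c(\gamma)}{(1+q)^\gamma},
\]
where I used monotonicity to factor out $(1+t+q)^{\frac{(n-1)}{2}-\delta}$ at the lower endpoint and integrated $(1+q+s)^{-1-\gamma}$ explicitly in $s$; this is the desired bound.

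For the interior case $q < 0$, I split the ray at $s_0 = t-r$. On $[0, s_0]$ where $q(s) < 0$, the interior estimate gives an integrand $\les \eps\, (1+2t-r-s)^{-\frac{(n-1)}{2}+\delta}(1+t-r-s)^{-1/2}$; using the uniform bound $1+2t-r-s \geq 1+t$ and integrating $(1+t-r-s)^{-1/2}$ gives $\les \eps (1+t-r)^{1/2}(1+t)^{-\frac{(n-1)}{2}+\delta}$. On $[s_0,\infty)$ where $q(s) \geq 0$, the exterior estimate and the same computation as in the previous paragraph (now starting from $s_0$) give a contribution $\les \eps\, (1+t)^{-\frac{(n-1)}{2}+\delta}$. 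Summing, and observing the comparability $1+t \asymp 1+t+|q| = 1+2t-r$ in the interior (since $0 \leq t-r \leq t$), produces the claimed bound with $|q|^{1/2}$ factor. The repeated use of the exterior estimate near $s_0$ in the interior regime, and the consequent comparability $(1+t) \asymp (1+t+|q|)$ in that region, is really the only delicate point of the argument; everything else is routine one-variable integration and the asymptotic information already supplied by Lemma \ref{asymptoticbehaviouratteqzeroforallfields}.
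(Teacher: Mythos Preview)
Your argument is correct but follows a genuinely different route from the paper's. The paper integrates along \emph{incoming null rays} $r+\tau=|x|+t$ from the point $(t,x)$ back to the initial hypersurface $\{\tau=0\}$, invoking the initial-time asymptotics of Lemma~\ref{asymptoticbehaviouratteqzeroforallfields} directly at the landing point $(0,(t+|x|)\Omega)$; in the interior case the null ray is split where it crosses the light cone, and the already-proven exterior bound is applied at the crossing point $(\tfrac{t+|x|}{2},\tfrac{t+|x|}{2}\Omega)$. You instead integrate along \emph{spatial rays at fixed time}~$t$ out to infinity, which requires you to first propagate the spatial vanishing from $t=0$ to all times via a short time-integration argument. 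Your approach trades the null geometry for a preliminary step, but the splitting at $s_0=t-r$ is arguably more transparent than the paper's cone-crossing argument, and your use of the uniform lower bound $1+2t-r-s\geq 1+t$ on the interior portion is cleaner than the paper's analogous manipulation. Both routes ultimately reduce to the same one-variable integrals of $(1+|q|)^{-1-\gamma}$ and $(1+|q|)^{-1/2}$, so the final constants and dependence on $\gamma$ coincide.
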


\begin{proof}

Let $ B $ and $C$ be tensors. For a scalar product $<B, C>$ such that
\beaa
\pa_{\mu} <B, C> &=&  <\pa_{\mu}  B, C> + < B, \pa_{\mu} C>
\eeaa
 -- which is the case for the definition for our norms --, we have (see \cite{G2}), that for a vector $X$, 
\bea\label{absolutevalueofderivativeofabsolutevaluelessthanabsolutofderiva}
\mid \pa_{X}| B | \mid &\leq& | \pa_{X}  B |  \; .
\eea

Since $\Sigma_{t=0}$ is diffeomorphic to $\R^n$\,, for each $x \in \Sigma_{0}$\,, there exists $\Om \in \SSS^{n-1}$\,, such that $x =r \cdot \Om $\,.

 \textbf{Case $ q   \geq 0$:}\\
 
 Then, we have $q= | x |  -t \geq 0$ and the point $(t,x)$ is therefore outside the outgoing null cone whose tip is the origin. We then apply the fundamental theorem of calculus by integrating at a fixed $\Om$\,, from $(t, | x | \cdot \Om)$ along the line $(\tau, r \cdot \Om)$ such that $r+\tau =  | x | +t $ till we reach the hyperplane $\tau=0$\,. We obtain,
 
 \bea
\notag
\mid \Lie_{Z^I} A (t, | x | \cdot \Om) \mid &=& \mid \Lie_{Z^I} A \big(0, ( t + | x |) \cdot \Om \big) \mid  + \int_{t + | x | }^{ | x |  } \pa_r \mid \Lie_{Z^I} A (t + | x | - r,  r  \cdot \Om )  \mid dr \\
\notag
&\leq& \mid \Lie_{Z^I} A \big(0, ( t + | x |) \cdot \Om \big) \mid  + | \int_{t + | x | }^{ | x |  } \pa_r \mid \Lie_{Z^I} A (t + | x | - r,  r  \cdot \Om )  \mid dr | \\
\notag
&\leq& \mid \Lie_{Z^I} A \big(0, ( t + | x |) \cdot \Om \big) \mid  +  \int_{ | x | }^{ t + | x |  } | \pa_r \mid \Lie_{Z^I} A (t + | x | - r,  r  \cdot \Om )  \mid |dr  \\
\notag
 &\leq& \mid \Lie_{Z^I} A \big(0, ( t + | x | ) \cdot \Om \big) \mid  + \int_{ | x | }^{ t + | x |  } \mid \pa_r  (\Lie_{Z^I} A (t + | x | - r,  r  \cdot \Om ) ) \mid dr \\
&& \text{(see \eqref{absolutevalueofderivativeofabsolutevaluelessthanabsolutofderiva})}.
\eea
On one hand, we have
\bea
\pa_r = \frac{x^{i}}{r} \pa_{i} \; .
\eea
and thus
\beaa
\notag
 \mid \pa_r  ( \Lie_{Z^I} A (t + | x | - r, \mid r \mid \cdot \Om ) ) \mid &\leq&  \mid \pa ( \Lie_{Z^I} A (t + | x | - r,  r \cdot \Om ) ) \mid \; ,
\eeaa

and on the other hand, for $q\geq 0$, we have
\beaa
 \notag
|\pa (   \Lie_{Z^I} A ) ( \tau,  r  \cdot \Om)  | &\leq& C ( |I| ) \cdot E ( |I| +  \lfloor  \frac{n}{2} \rfloor  +1)  \cdot \frac{\eps }{(1+ \tau +|q|)^{\frac{(n-1)}{2}-\delta} (1+|q|)^{1+\gamma}} \; .
 \eeaa

 Hence, 
   \beaa
 \notag
|\pa   ( \Lie_{Z^I} A ) (t + | x | - r, \mid r \mid \cdot \Om)  | &\leq& C ( |I| ) \cdot E ( |I| +  \lfloor  \frac{n}{2} \rfloor  +1)   \cdot \frac{\eps  }{(1+t + | x | - r + |q|)^{\frac{(n-1)}{2}-\delta} (1+|q|)^{1+\gamma}}
 \eeaa

  with $ q   = r - \tau = r - (t + | x | - r) = 2r - t -| x | =  |q| $ since $q \geq 0$. This means
 $$1+t + | x | - r + |q| = 1+t + | x | - r +2r - t -| x | = 1+ r .$$
This leads to
\bea
 \notag
|\pa  ( \Lie_{Z^I} A ) (t + | x | - r,  r  \cdot \Om)  | &\leq& C ( |I| ) \cdot E ( |I| +  \lfloor  \frac{n}{2} \rfloor  +1)    \cdot \frac{\eps  }{(1+r )^{\frac{(n-1)}{2}-\delta}  (1+2r - t - | x |)^{1+\gamma}} .
\eea

 Since we integrate in the direction $ | x | \leq r $, we have 
    \beaa
|\pa ( \Lie_{Z^I} A ) (t + | x | - r,  r \cdot \Om)  | &\leq& C ( |I| ) \cdot E ( |I| +  \lfloor  \frac{n}{2} \rfloor  +1)  \frac{\eps}{(1+r )^{\frac{(n-1)}{2}-\delta }(1+r - t )^{1+\gamma}} \\
&\leq& \frac{ C ( |I| ) \cdot E ( |I| +  \lfloor  \frac{n}{2} \rfloor  +1)  \cdot\eps}{(1+ | x |)^{\frac{(n-1)}{2}-\delta }(1+r - t )^{1+\gamma}} ,
          \eeaa
and therefore,
    \bea
    \notag
\int_{|x| }^{ t + | x |  } |\pa  ( \Lie_{Z^I} A ) (t + | x | - r,  r \cdot \Om)  | dr  &\leq& \int_{ | x | }^{t + | x | } \frac{C ( |I| ) \cdot E ( |I| +  \lfloor  \frac{n}{2} \rfloor  +1)  \cdot\eps}{(1+ | x |)^{\frac{(n-1)}{2}-\delta }(1+r - t )^{1+\gamma}} dr \\
    \notag
&\leq&\frac{C ( |I| ) \cdot E ( |I| +  \lfloor  \frac{n}{2} \rfloor  +1) \cdot \eps }{(1+| x |)^{\frac{(n-1)}{2}-\delta }}    \big[     \frac{-1}{\gamma (1+r - t )^{\gamma}}  \big]^{t + |x| }_{ | x |} \; .\\
          \eea

Hence,
   \beaa
\int_{ | x | }^{t + | x |  } |\pa ( \Lie_{Z^I} A ) (t + | x | - r,  r  \cdot \Om)  | dr  &\leq&\frac{C ( |I| ) \cdot E ( |I| +  \lfloor  \frac{n}{2} \rfloor  +1) \cdot \eps  }{(1+ | x |)^{\frac{(n-1)}{2}-\delta }}    \big(     \frac{-1}{\gamma (1+ | x |  )^{\gamma}}   + \frac{1}{\gamma (1+ | x |  -t  )^{\gamma}} \big) \\
&\leq&\frac{C ( |I| ) \cdot E ( |I| +  \lfloor  \frac{n}{2} \rfloor  +1)  \cdot \eps  }{(1+ | x |)^{\frac{(n-1)}{2}-\delta }}   \cdot     \frac{1}{\gamma (1+| x | -t  )^{\gamma}} \; .
     \eeaa
However, since at the point $(t,x)$, we have $q= | x |  - t = | q | $ in this region, and therefore, $| x |   = t + | q |$, we obtain
   \bea
   \notag
\int_{| x | }^{ t + | x | } |\pa (  \Lie_{Z^I} A )  (t + | x | - r,  r \cdot \Om) ) | dr  
&\leq&\frac{C ( |I| ) \cdot E ( |I| +  \lfloor  \frac{n}{2} \rfloor  +1)  }{\gamma}  \cdot      \frac{ \eps}{ (1+ t + | q |)^{\frac{(n-1)}{2}-\delta }  (1+| q | )^{\gamma}}  \; .
     \eea

 From what we have previously proved for $t=0$, for $\gamma > \max\{0, \delta -1\} $, we have for the other term on $\tau =0$, the following estimate
 
 \bea
 \notag
\mid \Lie_{Z^I} A (0, ( t +  | x | )  \cdot \Om) \mid  &\leq&   C ( |I| ) \cdot E ( |I| +  \lfloor  \frac{n}{2} \rfloor  +1)  \cdot \frac{\eps }{ (1+ t +  | x | )^{\frac{(n-1)}{2}+\gamma-\delta}} \;.
\eea
Finally, with $q= | x |  - t = | q | $, we have $t +  | x |  = 2t +  | x | -t = 2t + | q | $ and hence
 \beaa
 \notag
\mid \Lie_{Z^I} A (0, ( t +  | x | )  \cdot \Om) \mid  &\leq&   C ( |I| ) \cdot E ( |I| +  \lfloor  \frac{n}{2} \rfloor  +1)  \cdot \frac{\eps }{ (1+ 2t +  | q | )^{\frac{(n-1)}{2}-\delta} (1+ 2t +  | q | )^{\gamma}} \\
&\les&   C ( |I| ) \cdot E ( |I| +  \lfloor  \frac{n}{2} \rfloor  +1)  \cdot \frac{\eps }{ (1+ t +  | q | )^{\frac{(n-1)}{2}-\delta}  (1+   | q | )^{\gamma}} .
\eeaa
Consequently,
 \beaa
\notag
\mid \Lie_{Z^I} A (t,  | x | \cdot \Om) \mid &\les& C ( |I| ) \cdot E ( |I| +  \lfloor  \frac{n}{2} \rfloor  +1) \cdot \eps \\
&& \cdot \big(  \frac{1}{ \gamma(1+ t + | q | )^{\frac{(n-1)}{2}-\delta }  (1+| q |   )^{\gamma}} +   \frac{1}{(1+ t +| q|  )^{\frac{(n-1)}{2}  -\delta } (1+ | q|  )^{\gamma } }  \big)  \\
&\les& c (\gamma) \cdot C ( |I| ) \cdot E ( |I| +  \lfloor  \frac{n}{2} \rfloor  +1) \cdot  \frac{\eps}{ (1+ t + | q | )^{\frac{(n-1)}{2}-\delta }  (1+| q |   )^{\gamma}}   .
\eeaa
Identically, we get the same estimate for $\Lie_{Z^I} h^1 (t,x) $, and hence, for $\gamma > \max\{0, \delta -1\} $, for $q\geq	 0$, 
\bea
\notag
&& \mid \Lie_{Z^I} A (t,  | x | \cdot \Om)  \mid + \mid \Lie_{Z^I} h^1 (t,  | x | \cdot \Om) \mid \\
\notag
&\les& c (\gamma)\cdot  C ( |I| ) \cdot E ( |I| +  \lfloor  \frac{n}{2} \rfloor  +1) \cdot  \frac{\eps}{ (1+ t + | q | )^{\frac{(n-1)}{2}-\delta }  (1+| q |   )^{\gamma}} \;  . \\
\eea

 \textbf{Case $ q < 0$:}\\

 Then, we have $q= | x | -t < 0$ and the point $(t,x)$ is therefore inside the outgoing null cone whose tip is at the origin.  We then apply the fundamental theorem of calculus by integrating at a fixed $\Om \in \SSS^{n-1}$, from $(t, | x | \cdot \Om)$ along the line $(\tau, r\cdot \Om)$ such that $r+\tau =  | x | +t $ till we reach the hyperplane $\tau=0$. We have for all $\mu \in (t, x^1, \ldots, x^n)$,
\beaa
\notag
\mid \Lie_{Z^I} A_{\mu} (t, | x | \cdot \Om) \mid  &\leq& \mid \Lie_{Z^I} A_{\mu} \big(0, ( t + | x |) \cdot \Om \big) \mid  +  \mid \int_{| x | }^{t + | x |  } \pa_r  ( \Lie_{Z^I} A_{\mu} (t +| x | - r,  r \cdot \Om ) )  dr  \mid ,
\eeaa
and we know that for $\gamma > \max\{0, \delta -1\} $,
 \beaa
 \notag
\mid \Lie_{Z^I}  A_{\mu} (0, ( t + | x | )  \cdot \Om) \mid  &\leq& C ( |I| ) \cdot E ( |I| +  \lfloor  \frac{n}{2} \rfloor  +1)  \cdot \frac{\eps  }{(1+ t + | x |)^{\frac{(n-1)}{2}+\gamma-\delta}}  .
\eeaa
In fact 
 \bea
  1+t+ | x | \sim 1+t+|q|  \; .
  \eea 
Consequently, we obtain
 \bea
 \notag
\mid \Lie_{Z^I}  A_{\mu} (0, ( t + | x | )  \cdot \Om) \mid  &\leq& C ( |I| ) \cdot E ( |I| +  \lfloor  \frac{n}{2} \rfloor  +1)  \cdot \frac{\eps  }{(1+ t + | x |)^{\frac{(n-1)}{2}+\gamma-\delta}}  \\
\notag
&\les& C ( |I| ) \cdot E ( |I| +  \lfloor  \frac{n}{2} \rfloor  +1)  \cdot \frac{\eps  }{(1+ t + | q |)^{\frac{(n-1)}{2}+\gamma-\delta}} ,\\
\eea
and thus, we are left out with treating the integral $ \mid \int_{| x | }^{t + | x |  } \pa_r  (\Lie_{Z^I}  A_{\mu} (t +| x | - r,  r  \cdot \Om ) )  dr  \mid$.

 The line $\tau + r = t + | x | $ intersects the outgoing light cone at $\tau = r$, and thus, the intersection point is at $r = \frac{t + \mid x \mid }{2} = \tau$. Computing for any $\mu \in (t, x^1, \ldots, x^n)$,
\beaa
&&  |\int_{| x | }^{t + | x | } \pa_r  (\Lie_{Z^I}  A_{\mu} (t + | x | - r,  r  \cdot \Om ) )  dr    | \\
 &\leq&   | \int_{| x | }^{\frac{t + | x |}{2}  }  \pa_r  (\Lie_{Z^I}  A_{\mu} (t + | x | - r,  r   \cdot \Om ) ) dr   | +   | \int_{\frac{t + | x |}{2}  }^{t + | x |  }  \pa_r  (\Lie_{Z^I}  A_{\mu} (t + | x | - r,  r \cdot \Om ) )  dr   | \; .
\eeaa
Treating the second term, which is an integral on a segment in the region $q\geq 0$, i.e. $r \geq  t$ :

\beaa
   | \int_{\frac{t +  | x |}{2}  }^{t +  | x  |   }  \pa_r  (\Lie_{Z^I}  A_{\mu} (t +  | x | - r, r \cdot \Om ) )  dr   \mid &=&   | \Lie_{Z^I}  A_{\mu} (0 , ( t +  | x | ) \cdot \Om )  -   \Lie_{Z^I}  A_{\mu} \big(\frac{t +  | x |}{2}  , (\frac{t +  | x |}{2}  ) \cdot \Om \big)      |  \\
  &\leq&   | \Lie_{Z^I}  A_{\mu} (0 , ( t +  | x | )  \cdot \Om )   |   +   | \Lie_{Z^I}  A_{\mu} \big(\frac{t +  | x |}{2}  , (\frac{t + | x |}{2}  ) \cdot \Om \big)     | \; .
\eeaa
From what we had proven, we have that the first term, for $\gamma > \max\{0, \delta -1\} $\,, has the following estimate
\beaa
\notag
  \mid  \Lie_{Z^I} A_{\mu} (0 , ( t + | x | ) \cdot \Om )   |   &\les& C ( |I| ) \cdot E ( |I| +  \lfloor  \frac{n}{2} \rfloor  +1)  \cdot \frac{\eps  }{(1+ t + | q |)^{\frac{(n-1)}{2}+\gamma-\delta}}  \;.
\eeaa
For the second term, since it is on $q =0$\,, we can use the estimate that we established for $q \geq 0$ by plugging in it $q=0$ and taking $\tau = \frac{t + | x |}{2}$\,, to obtain 
\beaa
\notag
 | \Lie_{Z^I} A_{\mu} (\frac{t + | x |}{2}  , (\frac{t + | x | }{2}  ) \cdot \Om )      |  &\les& c (\gamma)  \cdot C ( |I| ) \cdot E ( |I| +  \lfloor  \frac{n}{2} \rfloor  +1) \cdot  \frac{\eps}{ (1+ \frac{t + | x |}{2}  )^{\frac{(n-1)}{2}-\delta }  }   \\
  \notag
   &\les& c (\gamma) \cdot C ( |I| ) \cdot E ( |I| +  \lfloor  \frac{n}{2} \rfloor  +1) \cdot  \frac{\eps}{ (1+ t + | x |  )^{\frac{(n-1)}{2}-\delta }  } \\
   \notag
   &\les& c (\gamma) \cdot C ( |I| ) \cdot E ( |I| +  \lfloor  \frac{n}{2} \rfloor  +1) \cdot  \frac{\eps}{ (1+ t + | q |  )^{\frac{(n-1)}{2}-\delta }  } \;.
\eeaa
Thus,
\bea
\notag
  \mid  \int_{\frac{t +  | x |}{2}  }^{t +  | x |  }  \pa_r  ( \Lie_{Z^I} A_{\mu} (t +  | x | - r, r \cdot \Om ) )  dr   \mid  &\les& c (\gamma) \cdot C ( |I| ) \cdot E ( |I| +  \lfloor  \frac{n}{2} \rfloor  +1) \cdot  \frac{\eps}{ (1+ t + | q |  )^{\frac{(n-1)}{2}-\delta }  } \;.\\
\eea
Now, we are left to treat the integral on the region $q <0$\,, that we can estimate using the estimate on the gradient that we already established, \eqref{aprioriestimatesongradientoftheLiederivativesofthefields},

\beaa
 \notag
 && \mid \int_{ | x | }^{\frac{t +  | x |}{2}  } \pa_r (  \Lie_{Z^I} A_{\mu} (t +  | x | - r,  r  \cdot \Om) )  dr  \mid \\
 &\leq&  \mid \int_{ | x |}^{\frac{t +  | x |}{2}  } \pa (  Z^I A_{\mu} (t +  | x | - r, r  \cdot \Om) )  dr  \mid \\
 &\leq& \int_{ | x | }^{\frac{t +  | x |}{2}  } C ( |I| ) \cdot E ( |I| +  \lfloor  \frac{n}{2} \rfloor  +1)  \cdot \frac{\eps  }{(1+t +  | x | - r+|q|)^{\frac{(n-1)}{2}-\delta}(1+|q|)^{\frac{1}{2} }}  dr \; .
\eeaa

In this region of integration, we have $q = r - \tau < 0$\,, and thus we have $|q| = -q = \tau - r  $\,. However, we are integrating along the line $r + \tau = t + | x | $ and thus, $ \tau = t + | x |  - r $\,. Consequently, $|q| = t + | x |  - 2 r $ and $q= 2r - t - | x |  $\,.  Hence,
\beaa
 \notag
 && \mid \int_{ | x | }^{\frac{t +  | x |}{2}  } \pa_r (  \Lie_{Z^I} A_{\mu} (t +  | x | - r,  r  \cdot \Om) )  dr  \mid  \\
 &\leq& \int_{ | x | }^{\frac{t +  | x |}{2}  } C ( |I| ) \cdot E ( |I| +  \lfloor  \frac{n}{2} \rfloor  +1)  \cdot \frac{\eps  }{(1+2t +  2| x | - 3 r)^{\frac{(n-1)}{2}-\delta}(1+|q|)^{\frac{1}{2} }}  dr \\
  &\leq& \frac{ C ( |I| ) \cdot E ( |I| +  \lfloor  \frac{n}{2} \rfloor  +1) }{ (1+2t +  2| x | -  \frac{3 (t +  | x | )}{2})^{\frac{(n-1)}{2}-\delta} } \int_{ | x | }^{\frac{t +  | x |}{2}  } \frac{\eps  }{(1+|q|)^{\frac{1}{2} }}  dr \\
    &\leq& \frac{ C ( |I| ) \cdot E ( |I| +  \lfloor  \frac{n}{2} \rfloor  +1) }{ (1+  \frac{ (t +  | x | )}{2})^{\frac{(n-1)}{2}-\delta} } \int_{ | x | }^{\frac{t +  | x |}{2}  }  \frac{\eps  }{(1 - q)^{\frac{1}{2} }}  dr \\
        &\les& \frac{ C ( |I| ) \cdot E ( |I| +  \lfloor  \frac{n}{2} \rfloor  +1) }{ (1+  t +  | x | ) )^{\frac{(n-1)}{2}-\delta} } \int_{ | x | -t}^{ 0 }  \frac{\eps  }{(1 - q)^{\frac{1}{2} }}  \frac{dq}{2}\; ,
\eeaa
where we made the change of variable $q= 2r - t - | x |  $ with $dq = 2 dr $\,. We get
\bea
 \notag
  && \mid \int_{ | x | }^{\frac{t +  | x |}{2}  } \pa_r ( \Lie_{Z^I} A_{\mu} (t +  | x | - r,  r  \cdot \Om) )  dr  \mid     \\
   \notag
  &\les& \frac{ C ( |I| ) \cdot E ( |I| +  \lfloor  \frac{n}{2} \rfloor  +1) \cdot \eps }{ (1+  t +  | x | ) )^{\frac{(n-1)}{2}-\delta} } \cdot \big[ -  ( 1 - q)^{\frac{1}{2} } \big]_{| x | -t}^{0} \\
   \notag
   &\les& \frac{ C ( |I| ) \cdot E ( |I| +  \lfloor  \frac{n}{2} \rfloor  +1) \cdot \eps }{ (1+  t +  | x | ) )^{\frac{(n-1)}{2}-\delta} } \cdot \big[ -1 +  \big( 1 - (| x | -t) \big)^{\frac{1}{2} }  \big] \\
    \notag
    &\les& \frac{ C ( |I| ) \cdot E ( |I| +  \lfloor  \frac{n}{2} \rfloor  +1) \cdot \eps }{ (1+  t +  | x | ) )^{\frac{(n-1)}{2}-\delta} }  \cdot \big( 1 - (| x | -t) \big)^{\frac{1}{2} }  \\
    &\les& \frac{ C ( |I| ) \cdot E ( |I| +  \lfloor  \frac{n}{2} \rfloor  +1) \cdot \eps }{ (1+  t +  | q | ) )^{\frac{(n-1)}{2}-\delta} } \cdot \big( 1 + | q | ) \big)^{\frac{1}{2} } \;. 
\eea

Putting it all together, we obtain that, for $q<0$\,, we have the following estimate, using the fact that the same argument works also for $\Lie_{Z^I}  h^1$\,,
\bea
\notag
\mid \Lie_{Z^I} A (t,  | x | \cdot \Om)  \mid + \mid \Lie_{Z^I} h^1 (t,  | x | \cdot \Om) \mid &\les&  C ( |I| ) \cdot E ( |I| +  \lfloor  \frac{n}{2} \rfloor  +1) \cdot  \frac{\eps}{ (1+ t + | q | )^{\frac{(n-1)}{2}-\delta }  } (1+| q |   )^{\frac{1}{2} }  \; . \\
\eea

Thus, we get the result.

\end{proof}

We would like now to estimate $ |    \Lie_{Z^K}  g^{\la\mu} \derm_{\la}   \derm_{\mu}  A_{\cal U} (t,x) |$ for $|K| \leq |J|$.

  \begin{lemma}
The Minkowski covariant derivative commutes with the Lie derivative along Minkowski vector fields, that is for any tensor $K$, 
\bea
 \Lie_{Z^I}  \derm  K  =   \derm ( \Lie_{Z^I}   K ) .
\eea
Since  $\derm  K$ is also a tensor, it follows that $\Lie_{Z^I} $ commutes with any product of $\derm$.

Note that the Lie derivatives are not being differentiated in $\derm ( \Lie_{Z^I}   K) $; the differentiation concerns only the tensor $K$.

\end{lemma}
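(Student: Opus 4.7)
The plan is to reduce the statement to a direct computation in wave coordinates, exploiting the two special features of the set-up: the Christoffel symbols of $m$ vanish in wave coordinates (by construction, see Definition \ref{definitionoftheMinkowskicovaruiantderivative}), and every vector field $Z\in \mathcal{Z}$ has components that are at most linear in the $x^\mu$.

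First I would establish the single-Lie-derivative commutation $\Lie_Z \derm K = \derm (\Lie_Z K)$ for an arbitrary tensor $K$ and any $Z\in\mathcal{Z}$. In wave coordinates the action of $\derm$ on the components of $K$ (of arbitrary type) coincides with the coordinate partial derivative $\pa$. Thus $\derm K$ becomes, componentwise, an ordinary tensor with an added lower index which is differentiated by $\pa_\nu$. Expanding $\Lie_Z \derm_\nu K$ and $\derm_\nu \Lie_Z K$ using the standard formula
\[
\Lie_Z T_{a_1\ldots a_p}^{b_1\ldots b_q} = Z^\mu \pa_\mu T_{a_1\ldots a_p}^{b_1\ldots b_q} + \sum_i (\pa_{a_i} Z^\mu) T_{\ldots \mu\ldots}^{\ldots} - \sum_j (\pa_\mu Z^{b_j}) T_{\ldots}^{\ldots \mu\ldots},
\]
one finds after cancellation that
\[
(\Lie_Z \derm_\nu K - \derm_\nu \Lie_Z K)_{\ldots} = -\sum_i (\pa_\nu \pa_{a_i} Z^\mu) K_{\ldots \mu\ldots} + \sum_j (\pa_\nu \pa_\mu Z^{b_j}) K^{\ldots \mu\ldots},
\]
so the obstruction to commutation is exactly the second partial derivatives of the components $Z^\mu$.

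Next I would verify that $\pa_\nu \pa_\mu Z^\a = 0$ for every $Z\in\mathcal{Z}$. For the translations $Z=\pa_\a$ the components $Z^\mu$ are constant; for the rotations and Lorentz boosts $Z_{\a\b}=x_\b \pa_\a - x_\a \pa_\b$ the components are linear in $x$; and for the scaling $S = x^\mu \pa_\mu$ the components equal $x^\mu$, again linear. Hence in every case the second partials vanish and the commutator above is zero, establishing $\Lie_Z \derm K = \derm \Lie_Z K$.

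Finally, the general statement for $Z^I = Z^{\iota_1}\cdots Z^{\iota_k}$ follows by a straightforward induction on $|I|$, using that $\derm K$ is itself a tensor so the inductive hypothesis may be applied one Lie derivative at a time:
\[
\Lie_{Z^{\iota_1}}\Lie_{Z^{I'}}\derm K = \Lie_{Z^{\iota_1}} \derm (\Lie_{Z^{I'}} K) = \derm (\Lie_{Z^{\iota_1}}\Lie_{Z^{I'}} K) = \derm (\Lie_{Z^I} K).
\]
I do not expect any real obstacle here: the whole argument is an algebraic verification, and the only substantive ingredient is the linearity of the components of the Minkowski vector fields in the wave coordinates $x^\mu$.
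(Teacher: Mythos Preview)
Your proposal is correct and follows essentially the same route as the paper: work in wave coordinates where $\derm$ reduces to $\pa$, expand both sides, and observe that the obstruction is the second partial derivatives of the components $Z^\mu$, which vanish because each $Z\in\mathcal{Z}$ has components at most linear in $x^\mu$. The paper carries this out explicitly for a one-tensor using the commutators $[Z,e_\a]$ computed case by case, whereas you use the general coordinate formula for the Lie derivative directly, which is slightly more streamlined and handles arbitrary tensor rank at once; but the content of the argument is the same.
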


\begin{proof}
In fact, for simplicity, consider $K$ a tensor of order one, $K_\a$. Let $Z \in \cal Z$. We have
\beaa
 \Lie_{Z}  \derm_\a  K_\b  =  Z (  \derm_\a K_\b )  -  \derm_{ \Lie_{Z} e_\a} K_\b -  \derm_\a K_{ \Lie_{Z} e_\b} \;.
\eeaa
Since  $\Lie_{Z}  \derm_\a  K_\b$ is a 2-tensor, we can compute it in wave coordinates $\{x^0, x^1, \ldots, x^n \}$, and if the result we get is also a tensor in $\a, \b$, it would then hold true for any vectors. Let $\a, \b \in \{x^0, x^1, \ldots, x^n \}$: we know by then that $\derm_{e_\a} e_\b = 0$ and therefore we have
\bea
\derm_\a K_\b = \pa_\a K_\b  .
\eea
We also have $\Lie_{Z} e_\a = [Z, e_\a]$. Since $Z$ is a Minkowski vector field, it is either a coordinate vector field (and therefore $[Z, e_\a] = 0$) or it is a rotation or a Lorentz boost and can be written as
\beaa
Z_{\mu\nu} = x_{\nu} \pa_{\mu} - x_{\mu} \pa_{\nu} \,
\eeaa
or it is a scaling vector field and can be written as
\beaa
S =  \sum_{\mu=0}^{3} x^\mu \pa_{\mu} \, .
\eeaa
We have for spatial indices $i, j \in \{1, \ldots, n \}$, that $ e_\a (x_i) = e_\a (x^i)  = \de_{\a i}$, and thus, for rotations,
\bea\label{commuatorrotationandtranslation}
[Z_{ij}, e_\a]  &=& [x_{j} \pa_{i} - x_{i} \pa_{j} , \pa_\a] = \de_{\a j} \pa_{i}  - \de_{\a i} \pa_{j} \, .
\eea
We have for the Lorentz boosts,
\bea\label{commuatorLorentzboostsandtranslation}
[Z_{0j}, e_\a]  &=& [x_{j} \pa_{t} - x_{0} \pa_{j} , \pa_\a] = [x^{j} \pa_{t} + t \pa_{j} , \pa_\a]  = \de_{\a j} \pa_{t}  + \de_{\a 0} \pa_{j} \, .
\eea
For the scaling vector field, we have
\bea\label{commuatorscalingvectorandtranslation}
[ S , e_\a]  &=& [ x^\mu \pa_{\mu} , \pa_\a] = - \de_{\a\mu}  \pa_\mu = - \pa_\a \, .
\eea

Consequently, for all $Z \in {\cal Z}$, we have in wave coordinates that
 \bea
 \derm_{ \Lie_{Z} e_\a} K_\b &=&  \pa_{ \Lie_{Z} e_\a} K_\b \\
   \derm_\a K_{ \Lie_{Z} e_\b} &=&   \pa_\a K_{ \Lie_{Z} e_\b} .
\eea
Hence, for all $Z \in  {\cal Z}$, in wave coordinates, we have
\beaa
 \Lie_{Z}  \derm_\a  K_\b  &=&  Z   \pa_\a K_\b   -  \pa_{ \Lie_{Z} e_\a} K_\b -  \pa_\a K_{ \Lie_{Z} e_\b} \\
 &=&  Z   \pa_\a K_\b   -  \pa_{ [ Z , e_\a ]} K_\b -  \pa_\a K_{ \Lie_{Z} e_\b} .
\eeaa

On the other hand, let’s compute $ \derm_\a (  \Lie_{Z} K_\b )   =   \derm_\a (  \Lie_{Z} K )_\b  $, where the differentiation treats $ \Lie_{Z} K $ as a one-tensor. We have in wave coordinates,
\beaa
  \derm_\a (  \Lie_{Z} K )_\b    =   \pa_\a (  \Lie_{Z} K_\b )  \, .
\eeaa
However, 
\beaa
  \Lie_{Z} K_\b  =   \pa_{Z} K_\b  - K_{ \Lie_{Z} e_\b } \, .
\eeaa
Thus,
\beaa
   \pa_\a (  \Lie_{Z} K_\b )  = \pa_\a (Z K_\b )   -  \pa_\a ( K_{ \Lie_{Z} e_\b } ) .
\eeaa
Yet, 
\beaa
[ Z , e_\a ] K_\b =  Z  \pa_\a K_\b   -  \pa_\a Z  K_\b  
\eeaa
and hence, for all $Z \in \cal Z$, we have
\beaa
  \derm_\a (  \Lie_{Z} K )_\b  &=&  \pa_\a (  \Lie_{Z} K_\b )  = Z  \pa_\a K_\b  -   [ Z , e_\a ]  K_\b   -  \pa_\a  K_{ \Lie_{Z} e_\b }  \\
   &=&  \Lie_{Z}  \derm_\a  K_\b .
\eeaa
By induction on $ |I|$, we get that for all products of Lie derivatives $\Lie_{Z^I}  $, the following equality holds,
 \bea
 \Lie_{Z^I}  \derm_\a  K_\b  =   \derm_\a ( \Lie_{Z^I}   K_\b ) \; .
\eea

\end{proof}

      \begin{lemma}\label{LiederivativesofproductsZofcovariantandcontravariantMinkowskimetrictensor}
      
      The Lie derivative in the direction of the Minkowski vector fields of the Minkowski metric, is either null or proportional to the Minkowski metric, that is for any $Z \in {\cal Z}$,
\bea
 \Lie_{Z}  m_{\mu\nu}  =  c_Z \cdot m_{\mu\nu} 
\eea
and 
\bea
 \Lie_{Z}  m^{\mu\nu}  =  - c_Z \cdot m^{\mu\nu} 
\eea
where $c_Z = 0 $ for all $Z \neq S$ and $c_S = 2$.

Thus,
\bea\label{LiederivativesofproductsZofMinkowskimetric}
 \Lie_{Z^I}  m_{\mu\nu}  =  c (I)  \cdot m_{\mu\nu} \; \\
 \label{LiederivativesofproductsZofcontravariantMinkowskimetric}
 \Lie_{Z^I}  m^{\mu\nu}  = \hat{c} (I) \cdot m^{\mu\nu} \; ,
\eea
where $c(I)$ and $\hat{c}(I)$ are constants that depend on $Z^I$.

      \end{lemma}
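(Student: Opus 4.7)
The plan is to verify the claim directly by using the Lie derivative formula $(\Lie_Z m)_{\mu\nu} = Z(m_{\mu\nu}) - m([Z,e_\mu], e_\nu) - m(e_\mu, [Z,e_\nu])$ in wave coordinates, where the components $m_{\mu\nu}$ are constants (so $Z(m_{\mu\nu}) = 0$ for every $Z$). Thus the whole computation reduces to the commutators $[Z, e_\alpha]$ which have already been tabulated in \eqref{commuatorrotationandtranslation}, \eqref{commuatorLorentzboostsandtranslation}, and \eqref{commuatorscalingvectorandtranslation}.

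I would first dispatch the three generators of $\cal Z$ separately. For translations $Z = \pa_\alpha$ one has $[\pa_\alpha, e_\mu] = 0$, so $\Lie_{\pa_\alpha} m = 0$ and $c_{\pa_\alpha} = 0$. For a rotation $Z_{ij}$ with $i \neq j$, substituting $[Z_{ij}, e_\mu] = \delta_{\mu j} e_i - \delta_{\mu i} e_j$ yields
\begin{eqnarray*}
(\Lie_{Z_{ij}} m)_{\mu\nu} = -\delta_{\mu j} m_{i\nu} + \delta_{\mu i} m_{j\nu} - \delta_{\nu j} m_{\mu i} + \delta_{\nu i} m_{\mu j} \, ,
\end{eqnarray*}
and a quick case analysis (based on whether $\mu$ and $\nu$ belong to $\{i,j\}$) shows the right-hand side vanishes, using $m_{ab} = \delta_{ab}$ for spatial indices and $m_{0i}=0$. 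The analogous computation for a Lorentz boost $Z_{0j}$ with $[Z_{0j}, e_\alpha] = \delta_{\alpha j}\pa_t + \delta_{\alpha 0}\pa_j$ gives $\Lie_{Z_{0j}} m = 0$ as well, using $m_{00} = -1$ and $m_{jj} = +1$ to make the two pairs of terms cancel. Finally, for the scaling vector $S$, the identity $[S, e_\alpha] = - e_\alpha$ produces
\begin{eqnarray*}
(\Lie_S m)_{\mu\nu} = m(e_\mu, e_\nu) + m(e_\mu, e_\nu) = 2 m_{\mu\nu} \, ,
\end{eqnarray*}
giving $c_S = 2$.

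The passage to products $\Lie_{Z^I}$ is then immediate by induction on $|I|$: if $\Lie_{Z^J} m_{\mu\nu} = c(J) \, m_{\mu\nu}$ with $c(J)$ a constant, then for any $Z \in \cal Z$ one has $\Lie_Z (c(J) \, m_{\mu\nu}) = c(J) \, \Lie_Z m_{\mu\nu} = c(J) \cdot c_Z \cdot m_{\mu\nu}$, so one sets $c(Z \cdot J) := c_Z \cdot c(J)$. For the contravariant identity I would differentiate $m^{\mu\alpha} m_{\alpha\nu} = \delta^{\mu}_{\, \nu}$ using the Leibniz rule to get $(\Lie_Z m^{\mu\alpha}) m_{\alpha\nu} = - c_Z \, \delta^{\mu}_{\, \nu}$, and then raise the remaining index with $m^{\nu\beta}$ to conclude $\Lie_Z m^{\mu\beta} = - c_Z \, m^{\mu\beta}$; the formula \eqref{LiederivativesofproductsZofcontravariantMinkowskimetric} with $\hat c(I)$ then follows by the same induction.

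There is no serious obstacle here: the only step that requires any care is the index bookkeeping for the rotations and boosts, where the cancellation is really just the statement that these vector fields are Killing fields of $m$, a fact that in wave coordinates collapses to a one-line check using the explicit commutators. Everything else is formal consequence of $m_{\mu\nu}$ being constant in wave coordinates and of the Leibniz rule for $\Lie$.
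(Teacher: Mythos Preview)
Your proposal is correct and follows essentially the same approach as the paper: both compute $\Lie_Z m_{\mu\nu}$ in wave coordinates via the commutators \eqref{commuatorrotationandtranslation}, \eqref{commuatorLorentzboostsandtranslation}, \eqref{commuatorscalingvectorandtranslation}, treat rotations, boosts, and scaling separately, obtain the contravariant identity by differentiating $m^{\mu\alpha}m_{\alpha\nu}=\delta^{\mu}_{\,\nu}$, and conclude for $Z^I$ by induction. Your write-up is slightly more streamlined (you explicitly dispatch the translations and phrase the induction cleanly), but the argument is the same.
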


  \begin{proof}

  We compute in wave coordinates $\mu, \nu \in \{x^0, x^1, \ldots, x^n \}$
  \beaa
 \Lie_{Z}  m_{\mu\nu}  &=&  Z  m_{\mu\nu} - m( \Lie_{Z} e_\mu, e_\nu ) - m(  e_\mu, \Lie_{Z} e_\nu )  \\
 &=&  - m( [Z,  e_\mu ] , e_\nu ) - m(  e_\mu, [Z,  e_\nu]  )  \;.
 \eeaa

\textbf{Case of rotations:}\\

We already showed in \eqref{commuatorrotationandtranslation}, that 
\beaa
[Z_{ij}, e_\mu ]  &=&  \de_{\mu j} \pa_{i}  - \de_{\mu i} \pa_{j} \; .
\eeaa
Hence,
  \beaa
 \Lie_{Z_{ij}}  m_{\mu\nu}  &=&  - m(  \de_{\mu j} \pa_{i}  - \de_{\mu i} \pa_{j} , e_\nu ) - m(  e_\mu,  \de_{\nu j} \pa_{i}  - \de_{\nu i} \pa_{j}  )  \\
 &=&  - \de_{\mu j}  m_{i \nu } + \de_{\mu i}  m_{ j \nu}  - \de_{\nu j} m_{i \mu}  + \de_{\nu i} m_{ j \mu }  \; .
 \eeaa
 
Consequently, if $i \neq j$ and if $\mu =  j \neq i$ and if $ \nu = j \neq i$,
  \beaa
 \Lie_{Z_{ij}}  m_{\mu\nu}  &=&    -   m_{i \nu }   -  m_{i \mu}    = 0 + 0 \\ 
&& \text{(since $\mu \neq i$ and $\nu \neq i$}) .
 \eeaa
 Now, if $i \neq j$ and if $\mu =  j \neq i$ and if $ \nu = i \neq j$, then
   \beaa
 \Lie_{Z_{ij}}  m_{\mu\nu}  &=&    - m_{i \nu }    +  m_{ j \mu }  = - m_{ii} + m_{jj} \\
 &=& -1 +1 = 0\; .
 \eeaa
 
 Now, if $i \neq j$ and if $\mu =  t$, then clearly
   \beaa
 \Lie_{Z_{ij}}  m_{\mu\nu}   &=&   0 \;.
 \eeaa
 
 Of course, in the case where $i =j $, then $Z_{ij} = 0$ and therefore $\Lie_{Z_{ij}}  m_{\mu\nu}  = 0$.

\textbf{Case of Lorentz boosts:}\\

We showed in \eqref{commuatorLorentzboostsandtranslation}, that
 \beaa
[Z_{0j}, e_\mu ]  &=&  \de_{\mu j} \pa_{t}  + \de_{\mu 0} \pa_{j}  \; .
\eeaa

Thus,
   \beaa
 \Lie_{Z_{0j}}  m_{\mu\nu}  &=&  - m(  \de_{\mu j} \pa_{t}  + \de_{\mu 0} \pa_{j}   , e_\nu ) - m(  e_\mu,  \de_{\nu j} \pa_{t}  + \de_{\nu 0} \pa_{j}   )  \\
 &=&  - \de_{\mu j} m_{t\nu}  -  \de_{\mu 0} m_{j \nu }   -  \de_{\nu j} m_{\mu t}    -  \de_{\nu 0}  m_{\mu j }   \; .
 \eeaa
 Hence, if $\mu = t \neq j$, then
    \beaa
 \Lie_{Z_{0j}}  m_{\mu\nu}   &=&  -   m_{j \nu }   -  \de_{\nu j} m_{t t}   =  -   m_{j \nu }   +  \de_{\nu j}     \; ,
 \eeaa
 and therefore if $\nu = t$ then  $  \Lie_{Z_{0j}}  m_{\mu\nu}   =  0 $ and if $\nu = i$ spatial index, then  $  \Lie_{Z_{0j}}  m_{\mu\nu}   =  - \de_{ji} +  \de_{i j}  = 0 $.
 
 Now considering the case where $\mu = i$, then 
   \beaa
 \Lie_{Z_{0j}}  m_{\mu\nu}  &=&  - \de_{i j} m_{t\nu}       -  \de_{\nu 0}  m_{i j } =  - \de_{i j}  \de_{\nu 0}       -  \de_{\nu 0} \de_{i j}  = 0 \; .
 \eeaa
 
 \textbf{Case of the scaling vector field:}\\
 
 We have shown in \eqref{commuatorscalingvectorandtranslation}, that
 \beaa
[ S , e_\mu ]  &=& - \pa_\mu \; .
\eeaa
Hence,
  \beaa
 \Lie_{S}  m_{\mu\nu}   &=&  m_{\mu\nu} + m_{\mu \nu} = 2 m_{\mu\nu}  \; .
 \eeaa
 
 Since the end result are identities which are tensorial, they are therefore true not only in wave coordinates (yet, we have carried out the computation in wave coordinates).

 \textbf{Case of the contravariant tensor $m^{\mu\nu}$:}\\ 
  We have
  \beaa
  m_{\mu\b} \cdot m^{\b\nu} = \de_{\mu}^{\;\;\; \nu}
  \eeaa
Using the fact that the Lie derivative commutes with contraction, that is
  \beaa
  \Lie_{Z}  ( m_{\mu\b} \cdot m^{\b\nu} ) = 0 \; ,
  \eeaa
yields to
   \beaa
(   \Lie_{Z}   m_{\mu\b} )  \cdot m^{\b\nu}  + m_{\mu\b} \cdot   \Lie_{Z}   m^{\b\nu}   = 0 \; ,
  \eeaa
  and thus,
     \beaa
 c_{Z}   m_{\mu\b}  \cdot m^{\b\nu}  + m_{\mu\b} \cdot   \Lie_{Z}   m^{\b\nu}   = 0 \; ,
  \eeaa
  and therefore,
      \beaa
m_{\mu\b} \cdot   \Lie_{Z}   m^{\b\nu}   = -  c_{Z}   \de_{\mu}^{\;\;\; \nu}    \; .
  \eeaa
  Inverting, this leads to
        \beaa
  \Lie_{Z}   m^{\b\nu}   = -  c_{Z}   m^{\b\nu}    \; .
  \eeaa

  \textbf{Case for higher order Lie derivatives:}\\ 
  
The equalities \eqref{LiederivativesofproductsZofMinkowskimetric} and \eqref{LiederivativesofproductsZofcontravariantMinkowskimetric} follow by trivial induction, from which we get the desired result.
  
  \end{proof}

  \begin{lemma}
 We have for any family of covariant tensors $K^{(1)}, \ldots, K^{(m)}$ of arbitrary order,
 \beaa
 \Lie_{Z^I}  O ( K^{(1)} \cdot \ldots \cdot K^{(m)} ) &= & \sum_{|J_1| + \ldots + |J_m| \leq |I|} O(   \Lie_{Z^{J_1}}  K^{(1)}  \cdot \ldots \cdot    \Lie_{Z^{J_m}}  K^{(m)}   ) \;.
 \eeaa

  \end{lemma}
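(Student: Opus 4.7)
I would argue by induction on the length $|I|$ of the multi-index. The base case $|I|=0$ is trivial since $Z^I$ acts as the identity and the single partition $J_1=\ldots=J_m=0$ already realises the right-hand side. For the inductive step, I would write $\Lie_{Z^I}=\Lie_Z\Lie_{Z^{I'}}$ with $Z\in\cal Z$ and $|I'|=|I|-1$, apply the induction hypothesis to the inner operator to turn $\Lie_{Z^{I'}}O(K^{(1)}\cdot\ldots\cdot K^{(m)})$ into a sum of terms of the form $O(\Lie_{Z^{J'_1}}K^{(1)}\cdot\ldots\cdot\Lie_{Z^{J'_m}}K^{(m)})$ with $|J'_1|+\ldots+|J'_m|\leq|I'|$, and then absorb the outermost $\Lie_Z$ into each summand using a ``single-step'' version of the identity.

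The core content is therefore the single-step statement
\[
\Lie_Z O(\widetilde K^{(1)}\cdot\ldots\cdot\widetilde K^{(m)})=\sum_{l=1}^{m}O(\widetilde K^{(1)}\cdot\ldots\cdot\Lie_Z \widetilde K^{(l)}\cdot\ldots\cdot\widetilde K^{(m)}),
\]
applied to $\widetilde K^{(l)}=\Lie_{Z^{J'_l}}K^{(l)}$. Unfolding Definition \ref{definitionofbigOonlyforAandhadgradientofAandgardientofh}, each summand of $O(\widetilde K^{(1)}\cdot\ldots\cdot\widetilde K^{(m)})$ is a finite product of components of $\widetilde K^{(l)}$ times coefficients that are themselves polynomials in the components of $m$ and $m^{-1}$. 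I would apply the Leibniz rule for $\Lie_Z$ on this product of tensor factors. Whenever $\Lie_Z$ lands on a factor belonging to $\widetilde K^{(l)}$, it replaces $\widetilde K^{(l)}$ by $\Lie_Z\widetilde K^{(l)}$, producing precisely one of the target summands. Whenever $\Lie_Z$ lands on a coefficient built from $m$ or $m^{-1}$, Lemma \ref{LiederivativesofproductsZofcovariantandcontravariantMinkowskimetrictensor} ensures that $\Lie_Z m_{\mu\nu}=c_Z m_{\mu\nu}$ and $\Lie_Z m^{\mu\nu}=-c_Z m^{\mu\nu}$, so the resulting coefficient is again a polynomial in $m,m^{-1}$ of the same structural type, and the whole term can be re-absorbed into an $O(\widetilde K^{(1)}\cdot\ldots\cdot\widetilde K^{(m)})$ schematic without altering the partition.

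Combining the two steps, each application of $\Lie_Z$ either shifts a Lie derivative into one of the factors (increasing some $|J_l|$ by one) or leaves the partition unchanged while only modifying the numerical polynomial coefficients. Iterating $|I|$ times gives a sum over partitions with $|J_1|+\ldots+|J_m|\leq|I|$, which is exactly the claim. The accounting of constants and polynomial degrees does not matter because the definition of $O(\cdot)$ already allows arbitrary polynomials in $m,m^{-1}$ as coefficients and arbitrary finite-degree polynomials $P_n^l$ in the tensor arguments.

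The main obstacle, which is more notational than mathematical, is verifying that the schematic form $Q_1^l(\cdot)\cdot\sum_n P_n^l(\cdot)$ required by Definition \ref{definitionofbigOonlyforAandhadgradientofAandgardientofh} is preserved after applying $\Lie_Z$: one must check that the distinguished factor $Q_1^l$ (a degree-one polynomial vanishing at zero and not identically zero) is produced in each summand, either by Leibniz hitting the $Q_1^l$ factor itself, or by keeping the original $Q_1^l$ and deriving elsewhere. Since the Leibniz expansion is a sum and each resulting term still contains at least one undifferentiated or differentiated instance of each $\widetilde K^{(l)}$ appearing linearly, this structural condition is automatically inherited, so the re-packaging back into $O(\cdot)$ notation is legitimate.
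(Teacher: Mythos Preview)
Your proposal is correct and follows essentially the same approach as the paper: Leibniz rule on the product structure of $O(\cdot)$, combined with Lemma~\ref{LiederivativesofproductsZofcovariantandcontravariantMinkowskimetrictensor} to absorb Lie derivatives hitting $m$ or $m^{-1}$ back into the coefficient polynomials, then re-packaging into the schematic $O$-notation. The paper's proof is terser and does not spell out the induction or the preservation of the $Q_1^l$ structure, so your version is in fact more careful on the bookkeeping than the original.
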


  \begin{proof}
  We have that $O ( K^{(1)} \cdot \ldots \cdot K^{(m)} )$ is a product of metrics $\textbf m$ and the inverse metric $\textbf m^{-1}$ and the tensors $K^{(1)},\; \ldots, K^{(m)}$, times any polynomial of these. Using chain rule for the Lie derivative and the fact the Lie derivative in the direction of Minkowski vector fields of the metric $\textbf m$ and of the contravariant metric $\textbf m^{-1}$ is proportional to these, we then get that the Lie derivatives of the tensors $K^{(1)},\; \ldots, K^{(m)}$ and $\textbf m$ and of $\textbf m^{-1}$ is contained in the product of all the Lie derivatives of these. The Lie derivatives of any polynomial of  $K^{(1)},\; \ldots, K^{(m)}$ and $\textbf m$ and of $\textbf m^{-1}$ is also a product of the Lie derivatives of these. Given that the Lie derivative of $\textbf m$ is proportional to $\textbf m$ and the Lie derivative of $\textbf m^{-1} $ is proportional to $\textbf m^{-1}$, the multiplication of all these Lie derivatives are contained in the definition of $\sum_{|J_1| + \ldots + |J_m| \leq |I|} O(   \Lie_{Z^{J_1}}  K^{(1)}  \cdot \ldots \cdot    \Lie_{Z^{J_m}}  K^{(m)}   ) $.
  \end{proof}

\begin{lemma}

  In the Lorenz gauge and in wave coordinates, we have for any $V \in \cal U$
  \beaa
   \notag
 &&   \Lie_{Z^I}   ( g^{\la\mu} \derm_{\la}   \derm_{\mu}   A_{V} )    \\
   &=& \sum_{|J| +|K|+|L|+ |M| \leq |I|}  \big( \; O(     \derm (\Lie_{Z^J}  h)  \cdot     \rderm (\Lie_{Z^K} A )   )     + O(   \rderm ( \Lie_{Z^J}  h ) \cdot   \derm ( \Lie_{Z^K} A)  )  \\
       \notag
           && +  O(      \derm ( \Lie_{Z^J} h)  \cdot   \Lie_{Z^K}   A  \cdot     \Lie_{Z^L}  A    )    +   O(  \Lie_{Z^J} A   \cdot     \rderm ( \Lie_{Z^K}  A)   )    +   O(     \Lie_{Z^J}  A_L   \cdot     \derm_V ( \Lie_{Z^K} A ) )  \\
                 \notag
      &&                 + O(    \Lie_{Z^J}   A_{\cal{T}}   \cdot       \derm_V ( \Lie_{Z^K}A_{\cal{T}} ) )   + O(       \Lie_{Z^J} A  \cdot     \Lie_{Z^K} A   \cdot     \Lie_{Z^L} A  ) \\
           \notag
      &&         + O( \Lie_{Z^J}   h \cdot \derm ( \Lie_{Z^K}  h) \cdot  \derm (\Lie_{Z^L}  A) )  +  O( \Lie_{Z^J}  h \cdot    \derm (\Lie_{Z^K}  h ) \cdot  \Lie_{Z^L}  A \cdot \Lie_{Z^M} A)  \\
\notag
&& + O( \Lie_{Z^J} h \cdot   \Lie_{Z^K} A \cdot   \derm (\Lie_{Z^L} A) )    + O( \Lie_{Z^J} h \cdot  \Lie_{Z^K} A \cdot  \Lie_{Z^L} A \cdot \Lie_{Z^M} A) \; \big) \, .
  \eeaa
\end{lemma}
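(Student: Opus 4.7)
The plan is to obtain the claimed identity by applying $\Lie_{Z^I}$ termwise to the formula for $g^{\la\mu} \derm_{\la} \derm_{\mu} A_{\si}$ in the Lorenz gauge and wave coordinates that was derived earlier in the excerpt (in the covariant form exhibited in the introductory discussion, whose scalar/partial-derivative version is Lemma \ref{waveequationontheEinsteinYangMillspotentialderivedfromthegaugecovariantdivergenceoftheYangMillscurvatureisequaltozero}). No new analytic content is required; the entire proof rests on algebraic identities that are already in hand.

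Three ingredients suffice. First, $\Lie_{Z^I}$ commutes with the Minkowski covariant derivative $\derm$ on arbitrary tensors, so that $\derm$'s may be freely moved past any number of Lie derivatives (this was proved immediately before the statement). Second, by Lemma \ref{LiederivativesofproductsZofcovariantandcontravariantMinkowskimetrictensor}, the Lie derivatives $\Lie_{Z^I} m$ and $\Lie_{Z^I} m^{-1}$ are scalar multiples of $m$ and $m^{-1}$ respectively, so that the Minkowski metric and its inverse may be pulled outside any iterated Lie derivative with only harmless numerical constants accumulated. Third, combining these with the Leibniz rule yields the distributive identity $\Lie_{Z^I} O(K^{(1)} \cdots K^{(m)}) = \sum_{|J_1|+\cdots+|J_m| \leq |I|} O(\Lie_{Z^{J_1}} K^{(1)} \cdots \Lie_{Z^{J_m}} K^{(m)})$ established in the lemma immediately preceding the statement.

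Equipped with these, I would apply $\Lie_{Z^I}$ term-by-term to the right-hand side of the wave equation for $A_{\si}$. The quadratic contractions of the form $\derm h \cdot \derm A$ (the first two lines of the formula) contribute the two categories $O(\derm(\Lie_{Z^J} h) \cdot \rderm(\Lie_{Z^K} A))$ and $O(\rderm(\Lie_{Z^J} h) \cdot \derm(\Lie_{Z^K} A))$ with $|J|+|K| \leq |I|$. The cubic pieces $\derm h \cdot [A,A]$ and the commutators $[A,\derm A]$ and $[A^\mu, \derm_\mu A_\si - \derm_\si A_\mu]$ produce the three-factor categories involving $\derm(\Lie_{Z^J} h) \cdot \Lie_{Z^K} A \cdot \Lie_{Z^L} A$, $\Lie_{Z^J} A \cdot \rderm(\Lie_{Z^K} A)$, $\Lie_{Z^J} A_L \cdot \derm_V(\Lie_{Z^K} A)$, and $\Lie_{Z^J} A_{\cal T} \cdot \derm_V(\Lie_{Z^K} A_{\cal T})$; the quartic $[A^\mu,[A_\mu,A_\si]]$ gives the pure three-$A$ category $O(\Lie_{Z^J} A \cdot \Lie_{Z^K} A \cdot \Lie_{Z^L} A)$. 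Finally, the four already-$O(\cdot)$ correction terms $O(h \cdot \derm h \cdot \derm A)$, $O(h \cdot \derm h \cdot A^2)$, $O(h \cdot A \cdot \derm A)$ and $O(h \cdot A^3)$ are expanded directly by the distributive identity above, producing the remaining four categories with up to four multi-indices $J,K,L,M$.

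The only real work is combinatorial bookkeeping: one must verify that every summand produced by $\Lie_{Z^I}$ falls into one of the twelve categories listed and that the total Lie-derivative weight respects the bound $|J|+|K|+|L|+|M| \leq |I|$. I do not anticipate any genuine obstacle here — no nontrivial commutator, curvature correction, or cancellation enters. The whole argument is a careful but routine application of the Leibniz rule, the commutation of $\Lie_{Z^I}$ with $\derm$, and the scalar-multiple property of $\Lie_{Z^I} m^{\pm 1}$, which together reduce the identity to inspection of finitely many term types.
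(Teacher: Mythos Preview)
Your proposal is correct and follows essentially the same approach as the paper: start from the refined structural formula for $g^{\la\mu}\derm_\la\derm_\mu A_V$, then apply $\Lie_{Z^I}$ termwise using the Leibniz rule, the commutation of $\Lie_{Z^I}$ with $\derm$, and the fact that $\Lie_{Z^I} m^{\pm 1}$ is a scalar multiple of $m^{\pm 1}$. The paper presents this as applying a single $\Lie_Z$ followed by induction on $|I|$, whereas you invoke the distributive $O(\cdot)$ identity directly, but these are the same argument.
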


\begin{proof}

  In the Lorenz gauge and in wave coordinates, we have shown that for any $V \in \cal U$
\beaa
   \notag
 &&  g^{\la\mu} \derm_{\la}   \derm_{\mu}   A_{V}     \\
   &=& O(   \derm h  \cdot  \rderm A    )    + O(   \rderm  h  \cdot  \derm A  )  +  O(  \derm  h  \cdot   A^2 )    +   O( A   \cdot   \rderm A  )    \\
       \notag
  &&  +   O(  A_L   \cdot   \derm_V  A  )  + O(   A_{\cal{T}}   \cdot    \derm_V A_{\cal{T}}   )   + O(    A^3 )  \\
    \notag
  && + O( h \cdot  \derm  h \cdot  \derm  A) + O( h \cdot  \derm  h \cdot  A^2) + O( h \cdot  A \cdot \derm  A) + O( h \cdot  A^3) \, .
  \eeaa

Differentiating, we get for any $Z \in \cal Z$,

\beaa
   \notag
 &&   \Lie_{Z}  ( g^{\la\mu} \derm_{\la}   \derm_{\mu}   A_{V} )     \\
    &=& O(   \derm h  \cdot  \rderm A    )    + O(   \rderm  h  \cdot  \derm A  )  +  O(  \derm  h  \cdot   A^2 )    +   O( A   \cdot   \rderm A  )    \\
       \notag
  &&  +   O(  A_L   \cdot   \derm_V  A  )  + O(   A_{\cal{T}}   \cdot    \derm_V A_{\cal{T}}   )   + O(    A^3 )  \\
    \notag
  && + O( h \cdot  \derm  h \cdot  \derm  A) + O( h \cdot  \derm  h \cdot  A^2) + O( h \cdot  A \cdot \derm  A) + O( h \cdot  A^3) \\
   &&+ O(    \Lie_{Z}  \derm h \cdot  \rderm A    )   + O(   \derm h  \cdot   \Lie_{Z} \rderm A    ) + O(  \Lie_{Z}  \rderm  h  \cdot  \derm A  ) \\
   &&+ O(  \rderm  h \cdot   \Lie_{Z} \derm A  )  +  O(    \Lie_{Z} \derm  h \cdot  A^2 ) +  O(   \derm  h  \cdot   \Lie_{Z}  A  \cdot   A   )      \\
   \notag
   &&    +   O(  \Lie_{Z}  A   \cdot   \rderm A  )   +   O( A   \cdot   \Lie_{Z}  \rderm A  )   +   O(    \Lie_{Z} A_L   \cdot   \derm_V  A  )     \\
   \notag
   && +   O(  A_L   \cdot     \Lie_{Z} \derm_V  A  )     + O(    \Lie_{Z} A_{\cal{T}}  \cdot     \derm_V A_{\cal{T}}   )   + O(  A_{\cal{T}}   \cdot      \Lie_{Z} \derm_V A_{\cal{T}}   ) \\
           \notag
          &&    + O(    \Lie_{Z} A   \cdot   A^2 )  + O(  \Lie_{Z}  h \cdot  \derm  h \cdot  \derm  A) + O( h \cdot  \Lie_{Z}  \derm  h \cdot  \derm  A) \\
          \notag
     &&     + O( h \cdot  \derm  h \cdot   \Lie_{Z}  \derm  A)  + O(  \Lie_{Z}  h \cdot  \derm  h \cdot  A^2) + O( h \cdot   \Lie_{Z}  \derm  h \cdot  A^2) \\
     \notag
     && + O( h \cdot  \derm  h \cdot   \Lie_{Z}  A \cdot A)  + O( \Lie_{Z} h \cdot  A \cdot \derm  A)  + O( h \cdot  \Lie_{Z} A \cdot \derm  A) \\
\notag
&&  + O( h \cdot  A \cdot  \Lie_{Z}\derm  A)+ O( \Lie_{Z} h \cdot  A^3) + O( h \cdot \Lie_{Z} A \cdot  A^2) \, .
  \eeaa
 
 By induction, we obtain that for all $Z^I$, we have
\beaa
   \notag
 &&   \Lie_{Z^I}   ( g^{\la\mu} \derm_{\la}   \derm_{\mu}   A_{V} )    \\
   &=& \sum_{|J| +|K|+|L|+ |M| \leq |I|}  \big( \; O(    \Lie_{Z^J}  \derm h  \cdot   \Lie_{Z^K}  \rderm A    )     + O(    \Lie_{Z^J} \rderm  h  \cdot    \Lie_{Z^K} \derm A  )  \\
       \notag
           && +  O(    \Lie_{Z^J}   \derm  h  \cdot   \Lie_{Z^K}   A  \cdot     \Lie_{Z^L}  A    )   +   O(  \Lie_{Z^J} A   \cdot   \Lie_{Z^K}   \rderm A  )    +   O(     \Lie_{Z^J}  A_L   \cdot    \Lie_{Z^K} \derm_V  A  )   \\
            \notag
           &&       + O(    \Lie_{Z^J}   A_{\cal{T}}   \cdot      \Lie_{Z^K}  \derm_V A_{\cal{T}}  )   + O(       \Lie_{Z^J} A  \cdot     \Lie_{Z^K} A   \cdot     \Lie_{Z^L} A  ) \\
           \notag
      &&         + O( \Lie_{Z^J}   h \cdot \Lie_{Z^K}  \derm  h \cdot \Lie_{Z^L}  \derm  A)  +  O( \Lie_{Z^J}  h \cdot  \Lie_{Z^K}   \derm  h \cdot  \Lie_{Z^L}  A \cdot \Lie_{Z^M} A)  \\
\notag
&& + O( \Lie_{Z^J} h \cdot   \Lie_{Z^K} A \cdot  \Lie_{Z^L} \derm  A)    + O( \Lie_{Z^J} h \cdot  \Lie_{Z^K} A \cdot  \Lie_{Z^L} A \cdot \Lie_{Z^M} A) \; \big) \, .
  \eeaa
  
 Using the fact that $\Lie_{Z^I} $ commutes with $\derm $, we obtain the result.
\end{proof}
  
  \begin{lemma}
  In the Lorenz and harmonic gauges, we have the following estimate for the tangential components of the Einstein-Yang-Mills potential,
  \beaa
   \notag
 && |  \Lie_{Z^I}   ( g^{\la\mu} \derm_{\la}   \derm_{\mu}   A_{{\cal T}}  )   | \\
   &\leq& \sum_{|J| +|K|+|L|+ |M| \leq |I|} \big( \; O(  |   \derm (\Lie_{Z^J}  h) | \cdot |    \rderm (\Lie_{Z^K} A ) |  )     +  O(  | \rderm ( \Lie_{Z^J}  h ) | \cdot |  \derm ( \Lie_{Z^K} A) | )  \\
       \notag
           && +  O(   |   \derm ( \Lie_{Z^J} h) | \cdot |  \Lie_{Z^K}   A  | \cdot |    \Lie_{Z^L}  A |   )       +   O( |\Lie_{Z^J} A   | \cdot |    \rderm ( \Lie_{Z^K}  A) |  )    + O(   |    \Lie_{Z^J} A | \cdot   |  \Lie_{Z^K} A  | \cdot  |   \Lie_{Z^L} A | )    \\
           \notag
      &&         + O( | \Lie_{Z^J}   h | \cdot | \derm ( \Lie_{Z^K}  h) | \cdot | \derm (\Lie_{Z^L}  A) | )  +  O( | \Lie_{Z^J}  h | \cdot |   \derm (\Lie_{Z^K}  h ) | \cdot  | \Lie_{Z^L}  A | \cdot |\Lie_{Z^M} A | \\
\notag
&& + O( | \Lie_{Z^J} h  | \cdot |  \Lie_{Z^K} A | \cdot |  \derm (\Lie_{Z^L} A) | )    + O( | \Lie_{Z^J} h | \cdot | \Lie_{Z^K} A | \cdot | \Lie_{Z^L} A | \cdot | \Lie_{Z^M} A | ) \; \big) \, .
  \eeaa

  \end{lemma}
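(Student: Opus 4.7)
The plan is to derive this pointwise estimate as an immediate corollary of the identity proven in the preceding lemma, by specialising the free index $V\in\mathcal{U}$ to a tangential direction $V\in\mathcal{T}$ and then passing from an equality to a bound termwise via the triangle inequality applied to the $O$-notation.

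First I would specialise $V=\mathcal{T}$ in the identity
\[
\Lie_{Z^I}(g^{\la\mu}\derm_\la\derm_\mu A_V) = \sum_{|J|+|K|+|L|+|M|\le|I|} O(\cdots)
\]
of the preceding lemma. The only terms on the right-hand side whose structure genuinely depends on $V$ are the two summands $O(\Lie_{Z^J}A_L\cdot\derm_V\Lie_{Z^K}A)$ and $O(\Lie_{Z^J}A_{\mathcal T}\cdot\derm_V\Lie_{Z^K}A_{\mathcal T})$. For $V\in\mathcal{T}$, however, $\derm_V$ is a tangential derivative to the outgoing light-cone, so $\derm_V\Lie_{Z^K}A$ is just a component of $\rderm(\Lie_{Z^K}A)$. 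Both of these special terms are therefore absorbed, up to constants depending only on $n$, into the generic schematic $O(|\Lie_{Z^J}A|\cdot|\rderm(\Lie_{Z^K}A)|)$ that already appears on the right-hand side of the statement. All remaining summands in the identity are already of one of the schematic forms listed in the target estimate.

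Second, I would pass from this specialised identity to the pointwise bound termwise. By Definition~\ref{definitionofbigOforLiederivatives}, every factor $O\bigl(\prod_l \Lie_{Z^{I_l}}K^{(l)}\bigr)$ is a finite polynomial expression whose coefficients are products of components of $\textbf m$ and $\textbf m^{-1}$ in wave coordinates, and whose variables are the Lie-differentiated tensors $\Lie_{Z^{I_l}}K^{(l)}$. Taking the pointwise norm defined in Subsection~\ref{subsectiondefinitionofthenorms} and applying the triangle inequality, each such $O$-term is controlled by $\prod_l|\Lie_{Z^{I_l}}K^{(l)}|$ times a constant depending only on $n$ and on $|I|$, which is exactly the schematic form appearing on the right-hand side of the lemma.

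The only non-routine point, and the step I would check carefully, is the compatibility of the specialisation $V=\mathcal{T}$ with the outer Lie derivative $\Lie_{Z^I}$. Since the Minkowski vector fields $Z$ do not preserve the tangential frame on the outgoing light-cone, one must verify that Lie-differentiating a tangential vector produces only linear combinations of frame vectors whose coefficients are bounded in wave coordinates, so that the schematic bookkeeping of $\rderm$ and $\derm$ is preserved under commutation with $\Lie_{Z^K}$. This is a standard feature of the Poincaré frame analysis on Minkowski space-time, and once confirmed the lemma follows directly from the preceding identity.
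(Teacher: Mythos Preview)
Your proposal is correct and follows essentially the same approach as the paper: specialise the identity from the preceding lemma to $V\in\mathcal{T}$, observe that the two $V$-dependent terms $O(\Lie_{Z^J}A_L\cdot\derm_V\Lie_{Z^K}A)$ and $O(\Lie_{Z^J}A_{\mathcal T}\cdot\derm_V\Lie_{Z^K}A_{\mathcal T})$ become tangential-derivative terms absorbed into $O(|\Lie_{Z^J}A|\cdot|\rderm(\Lie_{Z^K}A)|)$, and pass to norms. The paper's own proof is in fact terser than yours---it simply writes down the specialised identity with the two extra terms $O(\Lie_{Z^I}A\cdot\rderm A)+O(A\cdot\rderm(\Lie_{Z^K}A))$ displayed and leaves the absorption and the passage to pointwise bounds implicit---so your more explicit articulation of the triangle-inequality step and the frame-compatibility concern is, if anything, an improvement in clarity.
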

  
  \begin{proof}
   Based on what we have shown, we have for all $Z^I$,

  \beaa
   \notag
 &&   \Lie_{Z^I}   ( g^{\la\mu} \derm_{\la}   \derm_{\mu}   A_{{\cal T}}  )    \\
   &=& \sum_{|J| +|K|+|L|+ |M| \leq |I|}  \big( \; O(     \derm (\Lie_{Z^J}  h)  \cdot     \rderm (\Lie_{Z^K} A )   )     + O(   \rderm ( \Lie_{Z^J}  h ) \cdot   \derm ( \Lie_{Z^K} A)  )  \\
       \notag
           && +  O(      \derm ( \Lie_{Z^J} h)  \cdot   \Lie_{Z^K}   A  \cdot     \Lie_{Z^L}  A    )       +   O(  \Lie_{Z^J} A   \cdot     \rderm ( \Lie_{Z^K}  A)   )    + O(       \Lie_{Z^J} A \cdot     \Lie_{Z^K} A   \cdot     \Lie_{Z^L} A  )    \\
           \notag
                   &&      +   O( \Lie_{Z^I} A   \cdot     \rderm A   )  +   O( A   \cdot   \rderm  ( \Lie_{Z^K}  A)   )  \\
           \notag
      &&         + O( \Lie_{Z^J}   h \cdot \derm ( \Lie_{Z^K}  h) \cdot  \derm (\Lie_{Z^L}  A) )  +  O( \Lie_{Z^J}  h \cdot    \derm (\Lie_{Z^K}  h ) \cdot  \Lie_{Z^L}  A \cdot \Lie_{Z^M} A)  \\
\notag
&& + O( \Lie_{Z^J} h \cdot   \Lie_{Z^K} A \cdot   \derm (\Lie_{Z^L} A) )    + O( \Lie_{Z^J} h \cdot  \Lie_{Z^K} A \cdot  \Lie_{Z^L} A \cdot \Lie_{Z^M} A \; \big) \, .
  \eeaa

  \end{proof}

\section{Studying the structure of the source terms of the coupled non-linear wave equations}

In this section, we study the general structure of the source terms of the coupled non-linear wave equations on the Yang-Mills potential and the metric in the Lorenz gauge and in wave coordinates.

\begin{lemma}\label{structureofthesourcetermsofthewaveequationonpoentialandmetricinLorenzandwavecoordinforEinsteinYangMillssystem}
In the Lorenz gauge, the Yang-Mills potential satisfies the following tensorial equations, where we lower and higher indices with respect to the metric $m$,
  \bea
  \notag
 && g^{\la\mu} \derm_{\la}   \derm_{\mu}   A_{\si}     \\
   \notag
 &=&    ( \derm_{\si}  h^{\a\mu} )  \cdot (  \derm_{\a}A_{\mu} )    \\
 \notag
&&  +   \frac{1}{2}    \big(   \derm^{\mu} h^{\nu}_{\, \, \, \si} +  \derm_\si h^{\nu\mu} -   \derm^{\nu} h^{\mu}_{\,\,\, \si}  \big)   \cdot  \big( \derm_{\mu}A_{\nu} -  \derm_{\nu}A_{\mu}  \big) \\
 \notag
&&     +   \frac{1}{2}    \big(   \derm^{\mu} h^{\nu}_{\, \, \, \si} +  \derm_\si h^{\nu\mu} -   \derm^{\nu} h^{\mu}_{\,\,\, \si}  \big)   \cdot   [A_{\mu},A_{\nu}] \\
 \notag
 && -  \big(  [ A_{\mu}, \derm^{\mu} A_{\si} ]  +    [A^{\mu},  \derm_{\mu}  A_{\si} - \derm_{\si} A_{\mu} ]    +    [A^{\mu}, [A_{\mu},A_{\si}] ]  \big)  \\
 \notag
  && + O( h \cdot  \derm h \cdot  \derm A) + O( h \cdot  \derm h \cdot  A^2) + O( h \cdot  A \cdot \derm A) + O( h \cdot  A^3) \, .\\
  \eea

The perturbations $h$ of the metric $m$, solutions to the Einstein-Yang-Mills equations in theLorenz gauge, satisfy the following tensorial wave equation, where we lower and higher indices with respect to the metric $m$,

  \bea
\notag
 && g^{\alpha\beta}\derm_\alpha \derm_\beta h_{\mu\nu} \\
 \notag
  &=& P(\derm_\mu h,\derm_\nu h)  +  Q_{\mu\nu}(\derm h,\derm h)   + G_{\mu\nu}(h)(\derm h, \derm h)  \\
\notag
 &&   -4     <   \derm_{\mu}A_{\b} - \derm_{\b}A_{\mu}  ,  \derm_{\nu}A^{\b} - \derm^{\b}A_{\nu}  >    \\
 \notag
 &&   + m_{\mu\nu }       \cdot  <  \derm_{\a}A_{\b} - \derm_{\b}A_{\a} , \derm_{\a} A^{\b} - \derm^{\b}A^{\a} >   \\
 \notag
&&           -4  \cdot  \big( <   \derm_{\mu}A_{\b} - \derm_{\b}A_{\mu}  ,  [A_{\nu},A^{\b}] >   + <   [A_{\mu},A_{\b}] ,  \derm_{\nu}A^{\b} - \derm^{\b}A_{\nu}  > \big)  \\
\notag
&& + m_{\mu\nu }    \cdot \big(  <  \derm_{\a}A_{\b} - \derm_{\b}A_{\a} , [A^{\a},A^{\b}] >    +  <  [A_{\a},A_{\b}] , \derm^{\a}A^{\b} - \derm^{\b}A^{\a}  > \big) \\
\notag
 &&  -4     <   [A_{\mu},A_{\b}] ,  [A_{\nu},A^{\b}] >      + m_{\mu\nu }   \cdot   <  [A_{\a},A_{\b}] , [A^{\a},A^{\b}] >  \\
 \notag
     && + O \big(h \cdot  (\derm A)^2 \big)   + O \big(  h  \cdot  A^2 \cdot \derm A \big)     + O \big(  h   \cdot  A^4 \big)  \,  , \\
\eea
where $P$\;, $Q$ and $G$ are defined in \eqref{definitionofthetermbigPinsourcetermsforeinstein}, \eqref{definitionofthetermbigQinsourcetermsforeinstein} and \eqref{definitionofthetermbigGinsourcetermsforeinstein}.
\begin{remark}
As a reminder, $m$ is defined to be the Minkowski metric in wave coordinates.
\end{remark}

    \end{lemma}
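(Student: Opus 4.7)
The strategy is to take the wave coordinate equations \eqref{ThewaveequationontheYangMillspotentialwithhyperbolicwaveoperatorusingingpartialderivativesinwavecoordinates} and \eqref{Thewaveequationonthemetrichwithhyperbolicwaveoperatorusingingpartialderivativesinwavecoordinates} from Lemma \ref{EYMsystemashyperbolicPDE}, which are expressed with partial derivatives $\pa$ in the wave coordinate system, and reinterpret them as identities between tensors via the Minkowski covariant derivative $\derm$. The key observation is built into Definition \ref{definitionoftheMinkowskicovaruiantderivative}: the connection $\derm$ is defined so that its Christoffel symbols vanish in the wave coordinate system $\{x^0,\dots,x^n\}$. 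Consequently, in wave coordinates, $\derm_\a$ acting on any tensor reduces to the partial derivative $\pa_\a$ of its components. Since wave coordinates are fixed throughout and both sides of the desired equations are tensorial, it suffices to verify the identities component-by-component in this single chart.

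First I would handle the equation on $A_\si$. Starting from \eqref{ThewaveequationontheYangMillspotentialwithhyperbolicwaveoperatorusingingpartialderivativesinwavecoordinates}, I would replace every $\pa$ by $\derm$ and recognize that raising and lowering with $m$ (and $m^{-1}$) is encoded in contractions of the form $m^{\a\mu}m^{\b\nu}(\pa_\a h_{\b\si}+\pa_\si h_{\b\a}-\pa_\b h_{\a\si})$. Writing these contractions in terms of $\derm^\mu h^\nu_{\ \si}$, $\derm_\si h^{\nu\mu}$, $\derm^\nu h^\mu_{\ \si}$ gives precisely the combination appearing in the statement. The commutator terms $[A_\mu,\pa_\a A_\si]$ etc.\ carry over unchanged once $\pa$ is read as $\derm$, and the first source term $m^{\a\ga}m^{\mu\la}(\pa_\si h_{\ga\la})\pa_\a A_\mu$ becomes $(\derm_\si h^{\a\mu})(\derm_\a A_\mu)$ after raising the indices on $h$. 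The $O(\cdot)$ remainders are already expressed in the notational framework of Definition \ref{definitionofbigOonlyforAandhadgradientofAandgardientofh} and are tensorial by construction (they are formed from $h$, $\derm h$, $A$, $\derm A$ with coefficients in $m$, $m^{-1}$), so they need no modification.

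For the equation on $h_{\mu\nu}$, I would start from \eqref{Thewaveequationonthemetrichwithhyperbolicwaveoperatorusingingpartialderivativesinwavecoordinates} and proceed in parallel. The quadratic forms $P(\pa_\mu h,\pa_\nu h)$, $Q_{\mu\nu}(\pa h,\pa h)$ and $G_{\mu\nu}(h)(\pa h,\pa h)$, as defined in \eqref{definitionofthetermbigPinsourcetermsforeinstein}--\eqref{definitionofthetermbigGinsourcetermsforeinstein}, are scalar contractions built from $m^{-1}$ and $\pa h$; replacing $\pa h$ by $\derm h$ in wave coordinates yields the same scalar, and I would adopt the abbreviation $P(\derm_\mu h,\derm_\nu h)$ etc.\ for these same expressions (this is purely a change of notation). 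The Yang-Mills source block in \eqref{Thewaveequationonthemetrichwithhyperbolicwaveoperatorusingingpartialderivativesinwavecoordinates} consists of contractions $m^{\si\b}\langle \pa_\mu A_\b-\pa_\b A_\mu,\pa_\nu A_\si-\pa_\si A_\nu\rangle$ and its companions with $m_{\mu\nu}m^{\si\b}m^{\a\la}$; rewriting $m^{\si\b}\cdots A_\si = \cdots A^\b$ and similarly elsewhere produces exactly the index-raised expressions in the statement such as $\langle \derm_\mu A_\b-\derm_\b A_\mu,\derm_\nu A^\b-\derm^\b A_\nu\rangle$ and $\langle \derm_\a A_\b-\derm_\b A_\a,\derm^\a A^\b-\derm^\b A^\a\rangle$.

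The only real obstacle is bookkeeping: one must make sure that the symmetrizations/antisymmetrizations match, that the factors of $-4$ and $+\,2/(n-1)$ in the $h$-equation come out right after multiplying the Ricci-form equation in Lemma \ref{EYMsystemashyperbolicPDE} by $2$ (recall \eqref{Thewaveequationonthemetrichwithhyperbolicwaveoperatorusingingpartialderivativesinwavecoordinates} already carries this factor relative to Lemma \ref{EinsteinYangMillssystemusingpotentialandusingMinkowskimetricmandperturbationh}), and that every $m^{-1}$ used to raise a Yang-Mills index is accounted for so that the $O(h\cdot(\derm A)^2)$, $O(h\cdot A^2\cdot\derm A)$ and $O(h\cdot A^4)$ remainders absorb exactly the error between $g^{-1}$ and $m^{-1}$ contractions, as in Lemma \ref{BigHintermsofsmallh}. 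Once these careful index checks are performed, the tensorial identities follow, and since both sides are tensors agreeing in wave coordinates, they agree globally.
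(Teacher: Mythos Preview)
Your proposal is correct and follows essentially the same approach as the paper: start from the wave-coordinate identities of Lemma \ref{EYMsystemashyperbolicPDE}, use Definition \ref{definitionoftheMinkowskicovaruiantderivative} to replace every $\pa$ by $\derm$ (since the Christoffel symbols of $\derm$ vanish in wave coordinates), raise and lower indices with $m$, and conclude by noting that both sides are tensors so the identity holds independently of the chart. The paper's proof is slightly terser but the logical content is the same.
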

    
    \begin{proof}

We showed in Lemma \ref{EYMsystemashyperbolicPDE}, that in the Lorenz gauge and in wave coordinates, in other words for indices running only over wave coordinates, i.e. $\la, \mu, \si, \b, \nu, \a \in \{t, x^1, \ldots, x^n \} $, the Yang-Mills potential satisfies 
          \beaa
   \notag
g^{\la\mu} \pa_{\la}   \pa_{\mu}   A_{\si}      &=&  m^{\a\ga} m ^{\mu\la} (   \pa_{\si}  h_{\ga\la} )   \pa_{\a}A_{\mu}       +   \frac{1}{2}  m^{\a\mu}m^{\b\nu}   \big(   \pa_\a h_{\b\si} + \pa_\si h_{\b\a}- \pa_\b h_{\a\si}  \big)   \cdot  \big( \pa_{\mu}A_{\nu} - \pa_{\nu}A_{\mu}  \big) \\
 \notag
&& +      \frac{1}{2}  m^{\a\mu}m^{\b\nu}   \big(   \pa_\a h_{\b\si} + \pa_\si h_{\b\a}- \pa_\b h_{\a\si}  \big)   \cdot   [A_{\mu},A_{\nu}] \\
 \notag
 && -  m^{\a\mu} \big(  [ A_{\mu}, \pa_{\a} A_{\si} ]  +    [A_{\alpha},  \pa_{\mu}  A_{\si} - \pa_{\si} A_{\mu} ]    +    [A_{\alpha}, [A_{\mu},A_{\si}] ]  \big)  \\
  && + O( h \cdot  \pa h \cdot  \pa A) + O( h \cdot  \pa h \cdot  A^2) + O( h \cdot  A \cdot \pa A) + O( h \cdot  A^3) \, .
  \eeaa

 Since the Christoffel symbols for the connection $\derm$ are vanishing in wave coordinates, we could then write,
            \beaa
   \notag
g^{\la\mu} \pa_{\la}   \pa_{\mu}   A_{\si}     &=&  m^{\a\ga} m ^{\mu\la}  ( \derm_{\si}  h_{\ga\la} )   \derm_{\a}A_{\mu}     \\
&&  +   \frac{1}{2}  m^{\a\mu}m^{\b\nu}   \big(   \derm_\a h_{\b\si} +  \derm_\si h_{\b\a}-  \derm_\b h_{\a\si}  \big)   \cdot  \big( \derm_{\mu}A_{\nu} -  \derm_{\nu}A_{\mu}  \big) \\
 \notag
&& +      \frac{1}{2}  m^{\a\mu}m^{\b\nu}   \big(   \derm_\a h_{\b\si} +  \derm_\si h_{\b\a}-  \derm_\b h_{\a\si}  \big)   \cdot   [A_{\mu},A_{\nu}] \\
 \notag
 && -  m^{\a\mu} \big(  [ A_{\mu}, \derm_{\a} A_{\si} ]  +    [A_{\alpha},  \derm_{\mu}  A_{\si} - \derm_{\si} A_{\mu} ]    +    [A_{\alpha}, [A_{\mu},A_{\si}] ]  \big)  \\
  && + O( h \cdot  \pa h \cdot  \pa A) + O( h \cdot  \pa h \cdot  A^2) + O( h \cdot  A \cdot \pa A) + O( h \cdot  A^3) \, .
  \eeaa
 Also, we have
 \beaa
 g^{\la\mu} \pa_{\la}   \pa_{\mu}   A_{\si} = g^{\la\mu} \derm_{\la}   \derm_{\mu}   A_{\si} ,
 \eeaa
 
 which is a tensor in $\si$. Thus, the right hand side and the left hand side of the following equation is a tensor in $\si$ and corresponds to a full tensorial contraction on all other indices and hence the expression does not depend on the system of coordinates that we choose. 
 
 By lowering and highering indices with respect to the metric $m$, defined to be the Minkowski metric in wave coordinates, we get the result for wave equation satisfied for $A_{\si}$.

Similarly, we showed that in wave coordinates, the metric solution to the Einstein-Yang-Mills equations in the Lorenz gauge satisfies the following equation,

  \beaa
\notag
 && g^{\alpha\beta}\derm_\alpha \derm_\beta h_{\mu\nu} \\
  &=& P(\pa_\mu h,\pa_\nu h)  +  Q_{\mu\nu}(\pa h,\pa h)   + G_{\mu\nu}(h)(\pa h,\pa h)  \\
\notag
 &&   -4   m^{\si\b} \cdot  <   \derm_{\mu}A_{\b} - \derm_{\b}A_{\mu}  ,  \derm_{\nu}A_{\si} - \derm_{\si}A_{\nu}  >    \\
 \notag
 &&   + m_{\mu\nu }  m^{\si\b}  m^{\a\la}    \cdot  <  \derm_{\a}A_{\b} - \derm_{\b}A_{\a} , \derm_{\la}A_{\si} - \derm_{\si}A_{\la} >   \\
 \notag
&&           -4 m^{\si\b}  \cdot  \big( <   \derm_{\mu}A_{\b} - \derm_{\b}A_{\mu}  ,  [A_{\nu},A_{\si}] >   + <   [A_{\mu},A_{\b}] ,  \derm_{\nu}A_{\si} - \derm_{\si}A_{\nu}  > \big)  \\
\notag
&& + m_{\mu\nu }  m^{\si\b}  m^{\a\la}    \cdot \big(  <  \derm_{\a}A_{\b} - \derm_{\b}A_{\a} , [A_{\la},A_{\si}] >    +  <  [A_{\a},A_{\b}] , \derm_{\la}A_{\si} - \derm_{\si}A_{\la}  > \big) \\
\notag
 &&  -4 m^{\si\b}  \cdot   <   [A_{\mu},A_{\b}] ,  [A_{\nu},A_{\si}] >      + m_{\mu\nu }  m^{\si\b}  m^{\a\la}   \cdot   <  [A_{\a},A_{\b}] , [A_{\la},A_{\si}] >  \\
     && + O \big(h \cdot  (\pa A)^2 \big)   + O \big(  h  \cdot  A^2 \cdot \pa A \big)     + O \big(  h   \cdot  A^4 \big)  \,  .
\eeaa

Again, by lowering and highering indices with respect to the metric $m$, we obtain the result for the wave equation satisfied for $h_{\mu\nu}$.

    \end{proof}

\begin{lemma}\label{structureofLiederivativeZofthesourcestermsforAv}

  In the Lorenz gauge, we have for any $V \in \{\frac{\pa}{\pa x_\mu} \, \, | \, \,   \mu \in \{0, 1, \ldots, n \} \}$,
  \beaa
   \notag
 &&   \Lie_{Z^I}   ( g^{\la\mu} \derm_{\la}   \derm_{\mu}   A_{V} )    \\
   &=& \sum_{|J| +|K|+|L|+ |M| \leq |I|}  \big( \; O(     \derm (\Lie_{Z^J}  h)  \cdot     \derm (\Lie_{Z^K} A )   )     +  O(      \derm ( \Lie_{Z^J} h)  \cdot   \Lie_{Z^K}   A  \cdot     \Lie_{Z^L}  A    )    \\
        \notag
   &&  +   O(  \Lie_{Z^J} A   \cdot     \derm ( \Lie_{Z^K}  A)   )    + O(       \Lie_{Z^J} A  \cdot     \Lie_{Z^K} A   \cdot     \Lie_{Z^L} A  ) \\
           \notag
      &&         + O( \Lie_{Z^J}   h \cdot \derm ( \Lie_{Z^K}  h) \cdot  \derm (\Lie_{Z^L}  A) )  +  O( \Lie_{Z^J}  h \cdot    \derm (\Lie_{Z^K}  h ) \cdot  \Lie_{Z^L}  A \cdot \Lie_{Z^M} A)  \\
\notag
&& + O( \Lie_{Z^J} h \cdot   \Lie_{Z^K} A \cdot   \derm (\Lie_{Z^L} A) )    + O( \Lie_{Z^J} h \cdot  \Lie_{Z^K} A \cdot  \Lie_{Z^L} A \cdot \Lie_{Z^M} A) \; \big) \, ,
  \eeaa
  and therefore,
    \beaa
   \notag
 && |  \Lie_{Z^I}   ( g^{\la\mu} \derm_{\la}   \derm_{\mu}   A  )   | \\
   &\leq& \sum_{|J| +|K|+|L|+ |M| \leq |I|} \big( \; O(  |   \derm (\Lie_{Z^J}  h) | \cdot |    \derm (\Lie_{Z^K} A ) |  )    +  O(   |   \derm ( \Lie_{Z^J} h) | \cdot |  \Lie_{Z^K}   A  | \cdot |    \Lie_{Z^L}  A |   )     \\
           \notag
      &&       +   O( |\Lie_{Z^J} A   | \cdot |    \derm ( \Lie_{Z^K}  A) |  )    + O(   |    \Lie_{Z^J} A | \cdot   |  \Lie_{Z^K} A  | \cdot  |   \Lie_{Z^L} A | )    \\
           \notag
      &&         + O( | \Lie_{Z^J}   h | \cdot | \derm ( \Lie_{Z^K}  h) | \cdot | \derm (\Lie_{Z^L}  A) | )  +  O( | \Lie_{Z^J}  h | \cdot |   \derm (\Lie_{Z^K}  h ) | \cdot  | \Lie_{Z^L}  A | \cdot  | \Lie_{Z^M} A | \\
\notag
&& + O( | \Lie_{Z^J} h  | \cdot |  \Lie_{Z^K} A | \cdot |  \derm (\Lie_{Z^L} A) | )    + O( | \Lie_{Z^J} h | \cdot | \Lie_{Z^K} A | \cdot | \Lie_{Z^L} A | \cdot | \Lie_{Z^M} A | ) \; \big) \, .
  \eeaa

\end{lemma}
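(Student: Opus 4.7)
The plan is to apply $\Lie_{Z^I}$ to the tensorial wave equation for $A_{\sigma}$ established in Lemma \ref{structureofthesourcetermsofthewaveequationonpoentialandmetricinLorenzandwavecoordinforEinsteinYangMillssystem}, then distribute the Lie derivatives using the Leibniz rule and repackage everything in the $O$-notation. More concretely, I would first observe that the right-hand side of that lemma consists of a finite sum of terms, each of schematic type
\[
\derm h \cdot \derm A,\qquad \derm h \cdot A \cdot A,\qquad A\cdot \derm A,\qquad A\cdot A\cdot A,
\]
\[
h\cdot \derm h\cdot \derm A,\qquad h\cdot \derm h\cdot A\cdot A,\qquad h\cdot A\cdot \derm A,\qquad h\cdot A\cdot A\cdot A,
\]
where the coefficients in each contraction are polynomials in the components of $m$ and $m^{-1}$. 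By Definition \ref{definitionofbigOonlyforAandhadgradientofAandgardientofh}, each of these contractions is exactly of the form $O(K^{(1)}\cdot \ldots \cdot K^{(m)})$ with $K^{(l)}\in\{h,\,A,\,\derm h,\,\derm A\}$.

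Next, I would apply $\Lie_{Z^I}$ to both sides. The key ingredient is the immediately preceding lemma, which asserts that
\[
\Lie_{Z^I}\, O(K^{(1)}\cdot\ldots\cdot K^{(m)})=\sum_{|J_1|+\ldots+|J_m|\leq |I|} O\bigl(\Lie_{Z^{J_1}}K^{(1)}\cdot\ldots\cdot \Lie_{Z^{J_m}}K^{(m)}\bigr),
\]
which in turn relies on two facts: the Leibniz rule for the Lie derivative acting on tensor products, and Lemma \ref{LiederivativesofproductsZofcovariantandcontravariantMinkowskimetrictensor}, which ensures that Lie derivatives of $m$ and $m^{-1}$ produce scalar multiples of $m$ and $m^{-1}$, so that the ``coefficients are polynomials in $m,m^{-1}$'' structure required by the $O$-notation is preserved under differentiation. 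Applying this to each of the eight schematic terms above, and then using that $\Lie_{Z^I}$ commutes with $\derm$ (established in the excerpt) to pull the Lie derivatives inside the Minkowski covariant derivatives, I obtain exactly the claimed expansion. The left-hand side $\Lie_{Z^I}(g^{\lambda\mu}\derm_\lambda\derm_\mu A_V)$ is untouched — it is simply the Lie derivative of the wave operator applied to $A_V$, which is the quantity we are expanding.

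For the pointwise bound claimed at the end, I would take the pointwise norm of both sides of the expansion. Since each $O$-term is, by definition, a finite algebraic combination with coefficients controlled by $m,m^{-1}$, its norm is bounded (up to a universal constant) by the product of the norms of the factors $\Lie_{Z^{J_l}}K^{(l)}$. Therefore absolute values distribute across the sum and the products, yielding the stated inequality. No terms are lost because every term in the expansion is nonnegative after taking norms.

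The main obstacle, and really the only nontrivial content, is bookkeeping: one has to verify that \emph{all} partitions $|J|+|K|+|L|+|M|\leq |I|$ that arise from applying Leibniz to the various products on the right-hand side are accounted for, and that no additional ``spurious'' structures appear when derivatives land on the $m,m^{-1}$ coefficients. The latter is exactly what Lemma \ref{LiederivativesofproductsZofcovariantandcontravariantMinkowskimetrictensor} rules out, and the former is absorbed by the definition of the partition sum (Definition \ref{DefinitionofZI}). Once these are in place, the proof is essentially a direct application of the preceding two lemmas to the identity of Lemma \ref{structureofthesourcetermsofthewaveequationonpoentialandmetricinLorenzandwavecoordinforEinsteinYangMillssystem}, with no genuine analytic difficulty.
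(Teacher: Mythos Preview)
Your proposal is correct and matches the paper's approach: reduce the right-hand side of Lemma \ref{structureofthesourcetermsofthewaveequationonpoentialandmetricinLorenzandwavecoordinforEinsteinYangMillssystem} to the eight schematic $O$-terms, distribute $\Lie_{Z^I}$ via Leibniz (invoking Lemma \ref{LiederivativesofproductsZofcovariantandcontravariantMinkowskimetrictensor} for the Minkowski coefficients), commute $\Lie_{Z^I}$ with $\derm$, and then take norms. The only cosmetic difference is that the paper writes out the single-$Z$ Leibniz expansion explicitly and then says ``by induction,'' whereas you invoke the preceding general lemma on $\Lie_{Z^I}O(K^{(1)}\cdots K^{(m)})$ directly; the content is identical.
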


\begin{proof}

  In the Lorenz gauge, we have shown that we have,

  \beaa
  \notag
 g^{\la\mu} \derm_{\la}   \derm_{\mu}   A_{\si}     &=&    ( \derm_{\si}  h^{\a\mu} )  \cdot  \derm_{\a}A_{\mu}     \\
 \notag
&&  +   \frac{1}{2}    \big(   \derm^{\mu} h^{\nu}_{\, \, \, \si} +  \derm_\si h^{\nu\mu} -   \derm^{\nu} h^{\mu}_{\,\,\, \si}  \big)   \cdot  \big( \derm_{\mu}A_{\nu} -  \derm_{\nu}A_{\mu}  \big) \\
 \notag
&&     +   \frac{1}{2}    \big(   \derm^{\mu} h^{\nu}_{\, \, \, \si} +  \derm_\si h^{\nu\mu} -   \derm^{\nu} h^{\mu}_{\,\,\, \si}  \big)   \cdot   [A_{\mu},A_{\nu}] \\
 \notag
 && -  \big(  [ A_{\mu}, \derm^{\mu} A_{\si} ]  +    [A^{\mu},  \derm_{\mu}  A_{\si} - \derm_{\si} A_{\mu} ]    +    [A^{\mu}, [A_{\mu},A_{\si}] ]  \big)  \\
 \notag
  && + O( h \cdot  \pa h \cdot  \pa A) + O( h \cdot  \pa h \cdot  A^2) + O( h \cdot  A \cdot \pa A) + O( h \cdot  A^3) \, , \\
   &=& O(  \derm h \cdot  \derm A)  + O(  \derm h \cdot   A^2 ) + O(  A \cdot   \derm A ) + O(  A^3 ) \\
  && + O( h \cdot  \derm h \cdot  \derm A) + O( h \cdot  \derm h \cdot  A^2) + O( h \cdot  A \cdot \derm A) + O( h \cdot  A^3) \, .
  \eeaa

Differentiating, and using Definition \ref{definitionofbigOforLiederivatives}, we get for any $Z \in \cal Z$, and for any wave coordinate vector $V$, and using Definition \ref{definitionofbigOforLiederivatives}

\beaa
   \notag
 &&   \Lie_{Z}  ( g^{\la\mu} \derm_{\la}   \derm_{\mu}   A_{V} )     \\
 &=& O(  \derm h \cdot  \derm A)  + O(  \derm h \cdot   A^2 ) + O(  A \cdot   \derm A ) + O(  A^3 ) \\
  && + O( h \cdot  \derm h \cdot  \derm A) + O( h \cdot  \derm h \cdot  A^2) + O( h \cdot  A \cdot \derm A) + O( h \cdot  A^3) \\
   &+& O(    \Lie_{Z}  \derm h \cdot  \derm A    )   + O(   \derm h  \cdot   \Lie_{Z} \derm A    )  \\
       \notag
           && +  O(    \Lie_{Z} \derm  h \cdot  A^2 ) +  O(   \derm  h  \cdot   \Lie_{Z}  A  \cdot   A   )          +   O(  \Lie_{Z}  A   \cdot   \derm A  )   +   O( A   \cdot   \Lie_{Z}  \derm A  )   \\
       \notag
  &&     + O(    \Lie_{Z} A   \cdot   A^2 )  + O(  \Lie_{Z}  h \cdot  \derm  h \cdot  \derm  A) + O( h \cdot  \Lie_{Z}  \derm  h \cdot  \derm  A) \\
          \notag
     &&     + O( h \cdot  \derm  h \cdot   \Lie_{Z}  \derm  A)  + O(  \Lie_{Z}  h \cdot  \derm  h \cdot  A^2) + O( h \cdot   \Lie_{Z}  \derm  h \cdot  A^2) + O( h \cdot  \derm  h \cdot   \Lie_{Z}  A \cdot A) \\
&& + O( \Lie_{Z} h \cdot  A \cdot \derm  A)  + O( h \cdot  \Lie_{Z} A \cdot \derm  A)   + O( h \cdot  A \cdot  \Lie_{Z}\derm  A)  \\
&& + O( \Lie_{Z} h \cdot  A^3) + O( h \cdot \Lie_{Z} A \cdot  A^2) \, .
  \eeaa
 
 By induction, we obtain that for all $Z^I$, we have
\beaa
   \notag
 &&   \Lie_{Z^I}   ( g^{\la\mu} \derm_{\la}   \derm_{\mu}   A_{V} )    \\
   &=& \sum_{|J| +|K|+|L|+ |M| \leq |I|}  \big( \; O(    \Lie_{Z^J}  \derm h  \cdot   \Lie_{Z^K}  \derm A    )    +  O(    \Lie_{Z^J}   \derm  h  \cdot   \Lie_{Z^K}   A  \cdot     \Lie_{Z^L}  A    )\\
   \notag
   &&   +   O(  \Lie_{Z^J} A   \cdot   \Lie_{Z^K}   \derm A  )          + O(       \Lie_{Z^J} A  \cdot     \Lie_{Z^K} A   \cdot     \Lie_{Z^L} A  ) \\
           \notag
      &&         + O( \Lie_{Z^J}   h \cdot \Lie_{Z^K}  \derm  h \cdot \Lie_{Z^L}  \derm  A)  +  O( \Lie_{Z^J}  h \cdot  \Lie_{Z^K}   \derm  h \cdot  \Lie_{Z^L}  A \cdot \Lie_{Z^M} A)  \\
\notag
&& + O( \Lie_{Z^J} h \cdot   \Lie_{Z^K} A \cdot  \Lie_{Z^L} \derm  A)    + O( \Lie_{Z^J} h \cdot  \Lie_{Z^K} A \cdot  \Lie_{Z^L} A \cdot \Lie_{Z^M} A) \; \big) \, .
  \eeaa
  
 Using the fact that $\Lie_{Z^I} $ commutes with $\derm $, we obtain the result.
\end{proof}

\begin{lemma}\label{structureofLiederivativeZofthesourcestermsforhuv}

We have for any $U, V \in \{\frac{\pa}{\pa x_\mu} \, \, | \, \,   \mu \in \{0, 1, \ldots, n \} \}$,
  \beaa
   \notag
 &&   \Lie_{Z^I}   ( g^{\la\mu} \derm_{\la}   \derm_{\mu}    h_{UV} )    \\
   &=& \sum_{|J| +|K|+|L|+ |M| \leq |I|}  \big( \; O(     \derm (\Lie_{Z^J}  h)  \cdot     \derm (\Lie_{Z^K} h )   )         + O( \Lie_{Z^J}   h \cdot \derm ( \Lie_{Z^K}  h) \cdot  \derm (\Lie_{Z^L}  h) ) \\
       \notag
           &&  +   O(   \derm ( \Lie_{Z^J} A  ) \cdot     \derm ( \Lie_{Z^K}  A)   )    +  O(       \Lie_{Z^K}   A  \cdot     \Lie_{Z^L}  A   \cdot  \derm ( \Lie_{Z^J} A)    )      + O(       \Lie_{Z^J} A  \cdot     \Lie_{Z^K} A   \cdot     \Lie_{Z^L} A \cdot   \Lie_{Z^M} A   ) \\
           \notag
      &&     +  O( \Lie_{Z^J}  h \cdot    \derm (\Lie_{Z^K} A ) \cdot \derm ( \Lie_{Z^L}  A ) )  \\
\notag
&& + O( \Lie_{Z^J} h \cdot   \Lie_{Z^K} A \cdot   \Lie_{Z^L} A \cdot   \derm (\Lie_{Z^M} A) )    + O( \Lie_{Z^J} h \cdot  \Lie_{Z^K} A \cdot  \Lie_{Z^L} A \cdot \Lie_{Z^M} A \cdot \Lie_{Z^N} A) \; \big) \, ,
  \eeaa
  and therefore,
  \beaa
   \notag
 &&  | \Lie_{Z^I}   ( g^{\la\mu} \derm_{\la}   \derm_{\mu}    h ) |   \\
   &=& \sum_{|J| +|K|+|L|+ |M| \leq |I|}  \big( \; O(  |   \derm (\Lie_{Z^J}  h) | \cdot  |   \derm (\Lie_{Z^K} h )  | )         + O( | \Lie_{Z^J}   h | \cdot | \derm ( \Lie_{Z^K}  h) | \cdot | \derm (\Lie_{Z^L}  h) | ) \\
       \notag
           &&  +   O(  | \derm ( \Lie_{Z^J} A  ) | \cdot   |  \derm ( \Lie_{Z^K}  A)  | )    +  O(   |    \Lie_{Z^K}   A | \cdot   |  \Lie_{Z^L}  A |  \cdot | \derm ( \Lie_{Z^J} A) |   )   \\
           &&    + O(     |  \Lie_{Z^J} A | \cdot  |   \Lie_{Z^K} A |  \cdot   |  \Lie_{Z^L} A | \cdot |  \Lie_{Z^M} A  | )     +  O(|  \Lie_{Z^J}  h| \cdot  |  \derm (\Lie_{Z^K} A ) | \cdot | \derm ( \Lie_{Z^L}  A ) | )  \\
\notag
&& + O(|  \Lie_{Z^J} h| \cdot |  \Lie_{Z^K} A | \cdot  | \Lie_{Z^L} A | \cdot |  \derm (\Lie_{Z^M} A)| )    + O( | \Lie_{Z^J} h | \cdot  | \Lie_{Z^K} A | \cdot | \Lie_{Z^L} A |\cdot | \Lie_{Z^M} A |\cdot | \Lie_{Z^N} A |) \; \big) \,.
 \eeaa

\end{lemma}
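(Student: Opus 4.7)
The plan is to mirror, for $h_{\mu\nu}$, the argument already used in Lemma \ref{structureofLiederivativeZofthesourcestermsforAv} for $A_V$. The starting point is the tensorial wave equation for $h_{\mu\nu}$ derived in Lemma \ref{structureofthesourcetermsofthewaveequationonpoentialandmetricinLorenzandwavecoordinforEinsteinYangMillssystem}, which schematically reads
\[
g^{\alpha\beta}\derm_\alpha \derm_\beta h_{\mu\nu} = O(\derm h \cdot \derm h) + O(h \cdot \derm h \cdot \derm h) + O((\derm A)^2) + O(A^2 \cdot \derm A) + O(A^4)
\]
\[
\quad + O(h \cdot (\derm A)^2) + O(h \cdot A^2 \cdot \derm A) + O(h \cdot A^4),
\]
where the first two groups come from $P$, $Q$ and $G$ (with $G$ carrying an extra $h$ since $G(0)=0$), and the remaining groups come from the Yang-Mills stress-energy contributions and their metric corrections.

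The first main step is to apply $\Lie_{Z^I}$ to both sides and distribute using the Leibniz rule for Lie derivatives. The second step is to commute each Lie derivative past the Minkowski covariant derivative whenever it lands on a $\derm h$ or $\derm A$ factor, using the already-established identity $\Lie_{Z^I}\derm K = \derm(\Lie_{Z^I} K)$. The third step is to observe that Lie derivatives of the Minkowski metric $m_{\mu\nu}$ and its inverse $m^{\mu\nu}$ are, by Lemma \ref{LiederivativesofproductsZofcovariantandcontravariantMinkowskimetrictensor}, only scalar multiples of themselves, so the Lie derivatives falling on the tensorial contraction coefficients hidden in the $O$-notation get absorbed back into the same $O$-notation as in Definition \ref{definitionofbigOforLiederivatives}. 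The final step is an induction on $|I|$: assuming the claim for $|I|-1$, one further $\Lie_Z$ applied to any representative term of the claimed schematic form produces a sum of terms of the same schematic types, with one more $Z$ distributed among one of the factors via Leibniz. This is exactly the mechanism used in the proof of Lemma \ref{structureofLiederivativeZofthesourcestermsforAv}.

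The main obstacle is purely combinatorial bookkeeping: one must check that every one of the eight schematic types on the right-hand side of the $h$-equation, after repeated Lie differentiation, stays within the list of schematic shapes in the statement, namely $O(\derm \Lie_{Z^J} h \cdot \derm \Lie_{Z^K} h)$, $O(\Lie_{Z^J} h \cdot \derm \Lie_{Z^K} h \cdot \derm \Lie_{Z^L} h)$, $O(\derm \Lie_{Z^J} A \cdot \derm \Lie_{Z^K} A)$, $O(\Lie_{Z^J} A \cdot \Lie_{Z^K} A \cdot \derm \Lie_{Z^L} A)$, $O(\Lie_{Z^J} A \cdot \Lie_{Z^K} A \cdot \Lie_{Z^L} A \cdot \Lie_{Z^M} A)$, $O(\Lie_{Z^J} h \cdot \derm \Lie_{Z^K} A \cdot \derm \Lie_{Z^L} A)$, $O(\Lie_{Z^J} h \cdot \Lie_{Z^K} A \cdot \Lie_{Z^L} A \cdot \derm \Lie_{Z^M} A)$, and $O(\Lie_{Z^J} h \cdot \Lie_{Z^K} A \cdot \Lie_{Z^L} A \cdot \Lie_{Z^M} A \cdot \Lie_{Z^N} A)$. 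This check is routine: each Leibniz expansion either moves a $Z$ from one factor to another (increasing the multi-index of that factor while decreasing another's), or, when $Z$ acts on one of the Minkowski contraction coefficients, produces a term of the same type with unchanged multi-indices thanks to Lemma \ref{LiederivativesofproductsZofcovariantandcontravariantMinkowskimetrictensor}. The constraint $|J|+|K|+|L|+|M|+\cdots \le |I|$ is preserved throughout.

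Once the tensorial identity is established, the pointwise bound on $|\Lie_{Z^I}(g^{\la\mu}\derm_\la\derm_\mu h)|$ follows immediately by taking absolute values term-by-term and using sub-multiplicativity of the Euclidean norm defined in Subsection \ref{subsectiondefinitionofthenorms} over products of tensors, since the $O$-notation already encodes only bounded contractions with $m$ and $m^{-1}$. No new analytic input is required beyond what was used for the analogous statement on $A$.
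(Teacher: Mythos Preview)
Your proposal is correct and follows essentially the same approach as the paper's proof, which is quite terse: the paper writes out the schematic form of $g^{\alpha\beta}\derm_\alpha\derm_\beta h_{\mu\nu}$ as a sum of eight $O$-types and then simply states ``Differentiating the equation above and using the fact that the $\Lie_{Z^I}$ commutes with $\derm$, we obtain the result.'' Your version expands the implicit steps (Leibniz, commutation with $\derm$, absorption of $\Lie_Z m$ into the $O$-notation, induction on $|I|$, and taking norms) more explicitly, but the underlying argument is identical.
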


  \begin{proof}
  We showed that
  \beaa
\notag
 && g^{\alpha\beta}\derm_\alpha \derm_\beta h_{\mu\nu} \\
 \notag
  &=& P(\pa_\mu h,\pa_\nu h)  +  Q_{\mu\nu}(\pa h,\pa h)   + G_{\mu\nu}(h)(\pa h,\pa h)  \\
\notag
 &&   -4     <   \derm_{\mu}A_{\b} - \derm_{\b}A_{\mu}  ,  \derm_{\nu}A^{\b} - \derm^{\b}A_{\nu}  >    \\
 \notag
 &&   + m_{\mu\nu }       \cdot  <  \derm_{\a}A_{\b} - \derm_{\b}A_{\a} , \derm_{\a} A^{\b} - \derm^{\b}A^{\a} >   \\
 \notag
&&           -4  \cdot  \big( <   \derm_{\mu}A_{\b} - \derm_{\b}A_{\mu}  ,  [A_{\nu},A^{\b}] >   + <   [A_{\mu},A_{\b}] ,  \derm_{\nu}A^{\b} - \derm^{\b}A_{\nu}  > \big)  \\
\notag
&& + m_{\mu\nu }    \cdot \big(  <  \derm_{\a}A_{\b} - \derm_{\b}A_{\a} , [A^{\a},A^{\b}] >    +  <  [A_{\a},A_{\b}] , \derm^{\a}A^{\b} - \derm^{\b}A^{\a}  > \big) \\
\notag
 &&  -4     <   [A_{\mu},A_{\b}] ,  [A_{\nu},A^{\b}] >      + m_{\mu\nu }   \cdot   <  [A_{\a},A_{\b}] , [A^{\a},A^{\b}] >  \\
 \notag
     && + O \big(h \cdot  (\pa A)^2 \big)   + O \big(  h  \cdot  A^2 \cdot \pa A \big)     + O \big(  h   \cdot  A^4 \big)  \,  . \\
     &=& P(\derm_\mu h,\derm_\nu h)  +  Q_{\mu\nu}(\derm h,\derm h)   + G_{\mu\nu}(h)(\derm h,\derm h)  \\
\notag
 &&+  O \big( (\derm A )^2  \big)  + O \big( A^2 \cdot \derm A  \big)   + O (A^4)   \\
\notag
     && + O \big(h \cdot  (\derm A)^2 \big)   + O \big(  h  \cdot  A^2 \cdot \derm A \big)     + O \big(  h   \cdot  A^4 \big)  \,  . \\
\eeaa

As stated previously, Lindblad and Rodnianski showed in Proposition 3.1 in \cite{LR10}, that
\beaa
 P(\pa_\mu h,\pa_\nu h) &=& \frac{1}{4} m^{\alpha\alpha^\prime}\pa_\mu h_{\alpha\alpha^\prime} \, m^{\beta\beta^\prime}\pa_\nu h_{\beta\beta^\prime}  -\frac{1}{2} m^{\alpha\alpha^\prime}m^{\beta\beta^\prime} \pa_\mu h_{\alpha\beta}\, \pa_\nu h_{\alpha^\prime\beta^\prime} \\
&=& \frac{1}{4} \derm_\mu h_{\a}^{\;\;\; \a} \cdot \derm_\nu  h_{\b}^{\;\;\; \b}  -\frac{1}{2}  \derm_\mu h_{\alpha\beta} \cdot \derm_\nu h^{\a\b} \\
&=& O ( (\derm h )^2 ) \, ,\\
\eeaa

and
 
\beaa
\notag
&& Q_{\mu\nu}(\pa h,\pa h) \\
\notag
&=& \pa_{\alpha} h_{\beta\mu}\, \, m^{\alpha\alpha^\prime}m^{\beta\beta^\prime} \pa_{\alpha^\prime} h_{\beta^\prime\nu} -m^{\alpha\alpha^\prime}m^{\beta\beta^\prime} \big(\pa_{\alpha} h_{\beta\mu}\,\,\pa_{\beta^\prime} h_{\alpha^\prime \nu} -\pa_{\beta^\prime} h_{\beta\mu}\,\,\pa_{\alpha} h_{\alpha^\prime\nu}\big)\\
\notag
&& +m^{\a\a'}m^{\b\b'}\big (\pa_\mu h_{\a'\b'}\, \pa_\a h_{\b\nu}- \pa_\a h_{\a'\b'} \,\pa_\mu h_{\b\nu}\big )  + m^{\a\a'}m^{\b\b'}\big (\pa_\nu h_{\a'\b'} \,\pa_\a h_{\b\mu} - \pa_\a h_{\a'\b'}\, \pa_\nu h_{\b\mu}\big )\\
\notag
&& +\frac 12 m^{\a\a'}m^{\b\b'}\big (\pa_{\b'} h_{\a\a'}\, \pa_\mu h_{\b\nu} - \pa_{\mu} h_{\a\a'}\, \pa_{\b'} h_{\b\nu} \big ) +\frac 12 m^{\a\a'}m^{\b\b'} \big (\pa_{\b'} h_{\a\a'}\, \pa_\nu h_{\b\mu} - \pa_{\nu} h_{\a\a'} \,\pa_{\b'} h_{\b\mu} \big ) \, ,\\
&=& \derm_{\alpha} h_{\beta\mu}\, \cdot \derm^{\a} h^{\b}_{\;\;\;\nu} -  \derm_{\alpha} h_{\beta\mu} \cdot \derm^{\b} h^{\a}_{\;\;\;\nu} +\derm^{\b} h_{\b\mu} \cdot \derm_{\alpha} h^{\a}_{\;\;\;\nu} \\
\notag
&& + \derm_\mu h^{\a\b}\cdot \derm_\a h_{\b\nu}- \derm_\a h^{\a\b} \cdot \derm_\mu h_{\b\nu} \\
&&   + \derm_\nu h^{\a\b} \cdot \derm_\a h_{\b\mu} - \derm_\a h^{\a\b}\, \derm_\nu h_{\b\mu} \\
\notag
&& +\derm^{\b} h_{\a}^{\;\;\; \a}\, \derm_\mu h_{\b\nu} - \frac{1}{2}  \derm_{\mu} h_{\a}^{\;\;\; \a} \cdot \derm^{\b} h_{\b\nu}  \\
&& +\frac{1}{2} \derm^{\b} h_{\a}^{\;\;\; \a} \cdot \derm_\nu h_{\b\mu} -\frac{1}{2}  \derm_{\nu} h_{\a}^{\;\;\; \a} \cdot \derm^{\b} h_{\b\mu} \, ,\\
&=& O ( (\derm h )^2 ) \, ,
\eeaa
and
\bea
G_{\mu\nu}(h)(\pa h,\pa h) =  O  (h \cdot (\derm h)^2) \, .
\eea

Thus,

  \beaa
\notag
 && g^{\alpha\beta}\derm_\alpha \derm_\beta h_{\mu\nu} \\
 \notag
     &=& O ( (\derm h )^2 )   + O  (h \cdot (\derm h)^2)  \\
\notag
 &&+  O \big( (\derm A )^2  \big)  + O \big( A^2 \cdot \derm A  \big)   + O (A^4)   \\
\notag
     && + O \big(h \cdot  (\derm A)^2 \big)   + O \big(  h  \cdot  A^2 \cdot \derm A \big)     + O \big(  h   \cdot  A^4 \big)  \,  . 
\eeaa

Differentiating the equation above and using the fact that the $\Lie_{Z^I} $ commutes with $\derm $, we obtain the result.

  \end{proof}

\section{Using the bootstrap assumption to exhibit the structure of the source terms of the Einstein-Yang-Mills system}

\subsection{Using the bootstrap assumption to exhibit the structure of the source terms for the Yang-Mills potential}\

Now, we want to use the bootstrap assumption to exhibit the structure of the source term for the wave equation on the Yang-Mills potential in the Lorenz gauge and in wave coordinates, depending also on the space-dimension $n$.

In fact, we would like to estimate the term $\int_0^t\int_{\Si_{\tau}}  | g^{\la\a} \derm_{\la}   \derm_{\a}  ( \Lie_{Z^I}  A) | \cdot |\derm ( \Lie_{Z^I}  A )| \, w  \cdot dx_1 \ldots dx_n d\tau $.
Using the inequality $a\cdot b \les a^2 + b^2$, we get,
\beaa
&& \int_0^t\int_{\Si_{\tau}} \sqrt{(1+\tau )^{1+\la}} \sqrt{w} \cdot | g^{\la\a} \derm_{\la}   \derm_{\a}  ( \Lie_{Z^I}  A) |\cdot  \frac{1}{\sqrt{(1+\tau )^{1+\la}}} \cdot |\derm ( \Lie_{Z^I}  A )| \, \sqrt{w}  \cdot dx_1 \ldots dx_n d\tau \\
&\les&\int_0^t\int_{\Si_{\tau}} \frac{ |\derm  (\Lie_{Z^I}  A) |^{2}}{(1+\tau)^{1+\la}} \cdot dx_1 \ldots dx_n d\tau \\
&&+  \int_0^t\int_{\Si_{\tau}}  (1+\tau )^{1+\la} \cdot | g^{\la\a} \derm_{\la}   \derm_{\a}  ( \Lie_{Z^I}  A) |^2 \, w  \cdot dx_1 \ldots dx_n d\tau \;
\eeaa
where one can choose $\la > 0$ so that $\int \frac{1}{(1+\tau)^{1+\la}} d\tau$ is integrable.

Yet, we have
\beaa
|  g^{\la\a} \derm_{\la}   \derm_{\a}  ( \Lie_{Z^I}  A)|  &\leq&  | \Lie_{Z^I} ( g^{\la\mu} \derm_{\la}   \derm_{\mu}   A  ) | \\
&&+|  g^{\la\mu} \derm_{\la}   \derm_{\mu}   ( \Lie_{ Z^I}   A ) - \Lie_{Z^I}  (g^{\la\mu} \derm_{\la}   \derm_{\mu}   A )| \, , 
\eeaa
and consequently, we have

\beaa
(1+\tau )\cdot |  g^{\la\a} \derm_{\la}   \derm_{\a}  ( \Lie_{Z^I}  A)|^2  &\leq& (1+\tau ) \cdot | \Lie_{Z^I} ( g^{\la\mu} \derm_{\la}   \derm_{\mu}   A  ) |^2 \\
&&+ (1+\tau ) \cdot |  g^{\la\mu} \derm_{\la}   \derm_{\mu}   ( \Lie_{ Z^I}   A ) - \Lie_{Z^I}  (g^{\la\mu} \derm_{\la}   \derm_{\mu}   A )|^2 \, .
\eeaa

\begin{lemma}\label{usingbootstraoptoshowstructorforsourcesonYangMillspotentialA}
We have
      \beaa
   \notag
 && |  \Lie_{Z^I}   ( g^{\la\mu} \derm_{\la}   \derm_{\mu}   A  )   | \\
   \notag
 &\les&   \sum_{|K|\leq |I|}  \Big( |    \derm (\Lie_{Z^K} A ) | \Big) \cdot E (   \lfloor \frac{|I|}{2} \rfloor + \lfloor  \frac{n}{2} \rfloor  + 1)\\
 \notag
 && \cdot \Big( \Big( \begin{cases}  \frac{\eps }{(1+t+|q|)^{\frac{(n-1)}{2}-\delta} (1+|q|)^{1+\gamma}},\quad\text{when }\quad q>0,\\
           \notag
      \frac{\eps  }{(1+t+|q|)^{\frac{(n-1)}{2}-\delta}(1+|q|)^{\frac{1}{2} }}  \,\quad\text{when }\quad q<0 . \end{cases} \Big) \\
      \notag
      && +   \Big( \begin{cases}  \frac{\eps }{(1+t+|q|)^{\frac{(n-1)}{2}-\delta} (1+|q|)^{\gamma}},\quad\text{when }\quad q>0,\\
           \notag
      \frac{\eps  \cdot (1+|q|)^{\frac{1}{2} } }{(1+t+|q|)^{\frac{(n-1)}{2}-\delta}}  \,\quad\text{when }\quad q<0 . \end{cases} \Big) \\
         \notag
         && + \Big( \begin{cases}  \frac{\eps }{(1+t+|q|)^{\frac{(n-1)}{2}-\delta} (1+|q|)^{\frac{(n-1)}{2}-\delta + 1+2\gamma}},\quad\text{when }\quad q>0,\\
           \notag
      \frac{\eps  }{ (1+t+|q|)^{\frac{(n-1)}{2}-\delta} \cdot (1+ |q|)^{\frac{(n-1)}{2}-\delta}  }  \,\quad\text{when }\quad q<0 . \end{cases} \Big) \\
      \notag
   && +    \Big(  \begin{cases}  \frac{\eps }{(1+t+|q|)^{\frac{(n-1)}{2}-\delta} (1+|q|)^{\frac{(n-1)}{2}-\delta +2\gamma}},\quad\text{when }\quad q>0,\\
           \notag
      \frac{\eps \cdot (1+ |q|)  }{(1+t+|q|)^{\frac{(n-1)}{2}-\delta} \cdot (1+ |q|)^{\frac{(n-1)}{2}-\delta} } \,\quad\text{when }\quad q<0 . \end{cases} \Big) \Big) 
               \notag
  \eeaa
    \beaa
   \notag
 &&  + \sum_{|K|\leq |I|}  \Big( |   \Lie_{Z^K} A  | \Big) \cdot E (   \lfloor \frac{|I|}{2} \rfloor + \lfloor  \frac{n}{2} \rfloor  + 1) \\
 \notag
 &&  \cdot \Big( \Big(  \begin{cases}  \frac{\eps }{(1+t+|q|)^{\frac{(n-1)}{2}-\delta} (1+|q|)^{\frac{(n-1)}{2}-\delta+1+2\gamma}},\quad\text{when }\quad q>0,\\
           \notag
      \frac{\eps  }{(1+t+|q|)^{\frac{(n-1)}{2}-\delta} \cdot ( 1+|q| )^{\frac{(n-1)}{2}-\delta} } \,\quad\text{when }\quad q<0 . \end{cases} \Big) \\
      \notag 
      && + \Big(  \begin{cases}  \frac{\eps }{(1+t+|q|)^{\frac{(n-1)}{2}-\delta} (1+|q|)^{1+\gamma}},\quad\text{when }\quad q>0,\\
           \notag
      \frac{\eps  }{(1+t+|q|)^{\frac{(n-1)}{2}-\delta}(1+|q|)^{\frac{1}{2} }}  \,\quad\text{when }\quad q<0 . \end{cases} \Big) \\
         \notag
       && +  \Big(  \begin{cases}  \frac{\eps }{(1+t+|q|)^{\frac{(n-1)}{2}-\delta} (1+|q|)^{\frac{(n-1)}{2}-\delta + 2\gamma}},\quad\text{when }\quad q>0,\\
           \notag
      \frac{\eps \cdot (1+ |q|)  }{(1+t+|q|)^{\frac{(n-1)}{2}-\delta } \cdot (1+|q|)^{\frac{(n-1)}{2}-\delta}} \,\quad\text{when }\quad q<0 . \end{cases} \Big)  \\
               \notag
     && +             \Big(  \begin{cases}  \frac{\eps }{(1+t+|q|)^{\frac{(n-1)}{2}-\delta} \cdot (1+|q|)^{(n-1)-2\delta + 1+3\gamma}},\quad\text{when }\quad q>0,\\
           \notag
      \frac{\eps  \cdot(1+|q|)^{\frac{1}{2} } }{(1+t+|q|)^{\frac{(n-1)}{2}-\delta} \cdot (1+|q|)^{(n-1)-2\delta}}  \,\quad\text{when }\quad q<0 . \end{cases} \Big)  \\
      \notag
  &&     +    \Big(  \begin{cases}  \frac{\eps }{(1+t+|q|)^{\frac{(n-1)}{2}-\delta} (1+|q|)^{\frac{(n-1)}{2}-\delta +1+2\gamma}},\quad\text{when }\quad q>0,\\
           \notag
      \frac{\eps  }{(1+t+|q|)^{\frac{(n-1)}{2}-\delta} \cdot (1+|q|)^{\frac{(n-1)}{2}-\delta}}  \,\quad\text{when }\quad q<0 . \end{cases} \Big)  \\
      \notag
  && +      \Big(  \begin{cases}  \frac{\eps }{(1+t+|q|)^{\frac{(n-1)}{2}-\delta} (1+|q|)^{(n-1)-2\delta + 3\gamma}},\quad\text{when }\quad q>0,\\
           \notag
      \frac{\eps \cdot (1+|q|)^{\frac{3}{2} }  }{(1+t+|q|)^{\frac{(n-1)}{2}-\delta} \cdot (1+|q|)^{(n-1)-2\delta}}  \,\quad\text{when }\quad q<0 . \end{cases} \Big) \Big)
               \notag
  \eeaa
        \beaa
   \notag
&&  + \Big( \sum_{|K|\leq |I|}   |    \derm (\Lie_{Z^K} h ) | \Big) \cdot E (   \lfloor \frac{|I|}{2} \rfloor + \lfloor  \frac{n}{2} \rfloor  + 1)  \\
 \notag
 && \cdot  \Big( \Big( \begin{cases}  \frac{\eps }{(1+t+|q|)^{\frac{(n-1)}{2}-\delta} (1+|q|)^{1+\gamma}},\quad\text{when }\quad q>0,\\
           \notag
      \frac{\eps  }{(1+t+|q|)^{\frac{(n-1)}{2}-\delta}(1+|q|)^{\frac{1}{2} }}  \,\quad\text{when }\quad q<0 . \end{cases} \Big) \\
      \notag
    && +   \Big(  \begin{cases}  \frac{\eps }{(1+t+|q|)^{\frac{(n-1)}{2}-\delta} (1+|q|)^{\frac{(n-1)}{2}-\delta + 2\gamma}},\quad\text{when }\quad q>0,\\
           \notag
      \frac{\eps \cdot (1+ |q|)  }{(1+t+|q|)^{\frac{(n-1)}{2}-\delta} \cdot (1+ |q|)^{\frac{(n-1)}{2}-\delta} }  \,\quad\text{when }\quad q<0 . \end{cases} \Big) \\
      \notag
&& +        \Big( \begin{cases}  \frac{\eps }{(1+t+|q|)^{\frac{(n-1)}{2}-\delta} (1+|q|)^{\frac{(n-1)}{2}-\delta+1+2\gamma}},\quad\text{when }\quad q>0,\\
           \notag
      \frac{\eps  }{(1+t+|q|)^{\frac{(n-1)}{2}-\delta} \cdot (1+|q|)^{\frac{(n-1)}{2}-\delta}  } ,\quad\text{when }\quad q<0 . \end{cases} \Big) \\
     \notag
     && + \Big( \begin{cases}  \frac{\eps }{(1+t+|q|)^{\frac{(n-1)}{2}-\delta} (1+|q|)^{(n-1) - 2\de + 3\gamma}},\quad\text{when }\quad q>0,\\
           \notag
      \frac{\eps \cdot (1+|q|)^{\frac{3}{2} }  }{(1+t+|q|)^{\frac{(n-1)}{2}-\delta} \cdot (1+|q|)^{(n-1)-2\delta}}  \,\quad\text{when }\quad q<0 . \end{cases} \Big) \Big) 
               \notag
  \eeaa
       \beaa
   \notag
 && + \Big( \sum_{|K|\leq |I|}   |   \Lie_{Z^K} h  | \Big) \cdot E (   \lfloor \frac{|I|}{2} \rfloor + \lfloor  \frac{n}{2} \rfloor  + 1)  \\
 \notag
 && \cdot \Big( \Big(  \begin{cases}  \frac{\eps }{(1+t+|q|)^{(n-1)-2\delta} (1+|q|)^{2+2\gamma}},\quad\text{when }\quad q>0,\\
           \notag
      \frac{\eps  }{(1+t+|q|)^{(n-1)-2\delta}(1+|q|)}  \,\quad\text{when }\quad q<0 . \end{cases} \Big)  \\
      \notag
  && +      \Big( \begin{cases}  \frac{\eps }{(1+t+|q|)^{(n-1)-2\delta} \cdot (1+|q|)^{\frac{(n-1)}{2}-\delta + 1+3\gamma}},\quad\text{when }\quad q>0,\\
           \notag
      \frac{\eps  \cdot(1+|q|)^{\frac{1}{2} } }{(1+t+|q|)^{(n-1)-2\delta} \cdot (1+|q|)^{\frac{(n-1)}{2}-\delta} }  \,\quad\text{when }\quad q<0 . \end{cases} \Big)  \\
      \notag
   && +   \Big(  \begin{cases}  \frac{\eps }{(1+t+|q|)^{(n-1)-2\delta} (1+|q|)^{1+2\gamma}},\quad\text{when }\quad q>0,\\
           \notag
      \frac{\eps  }{(1+t+|q|)^{(n-1)-2\delta}}  \,\quad\text{when }\quad q<0 . \end{cases} \Big) \\
      \notag
   && +    \Big(  \begin{cases}  \frac{\eps }{(1+t+|q|)^{(n-1)-2\delta} (1+|q|)^{\frac{(n-1)}{2}-\delta +3\gamma}},\quad\text{when }\quad q>0,\\
           \notag
      \frac{\eps \cdot (1+|q|)^{\frac{3}{2} }  }{(1+t+|q|)^{(n-1)-2\delta} \cdot (1+ |q|)^{\frac{(n-1)}{2}-\delta} }  \,\quad\text{when }\quad q<0 . \end{cases} \Big) \Big) \; .
               \notag
  \eeaa

\end{lemma}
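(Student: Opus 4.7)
The plan is to start from the structural decomposition provided by Lemma \ref{structureofLiederivativeZofthesourcestermsforAv}, which expresses $\Lie_{Z^I}(g^{\la\mu}\derm_\la\derm_\mu A_V)$ as a finite sum of terms of schematic form $O(\Lie_{Z^{J_1}}K^{(1)} \cdots \Lie_{Z^{J_m}}K^{(m)})$, where each $K^{(l)}$ is one of $A$, $h$, $\derm A$, $\derm h$, and the multi-indices satisfy $|J_1|+\cdots+|J_m| \leq |I|$. For every such product the strategy is a pigeonhole split: since at most one multi-index $J_{l_0}$ can satisfy $|J_{l_0}|>\lfloor |I|/2\rfloor$, we single out that factor as the ``large'' one and keep it unestimated (this generates the outer sum $\sum_{|K|\le|I|}$ in the statement), while the remaining $m-1$ factors satisfy $|J_l|\leq \lfloor |I|/2\rfloor$ and will be bounded pointwise.

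For these ``small'' factors, we apply the a priori pointwise estimates established under the bootstrap assumption: Lemma \ref{aprioriestimatesongradientoftheLiederivativesofthefields} controls $|\derm\Lie_{Z^J}A|$ and $|\derm\Lie_{Z^J}h^1|$, while Lemma \ref{aprioriestimatefrombootstraponzerothderivativeofAandh1} controls $|\Lie_{Z^J}A|$ and $|\Lie_{Z^J}h^1|$. Both require the bootstrap bound at order $|J|+\lfloor n/2\rfloor+1\leq \lfloor |I|/2\rfloor+\lfloor n/2\rfloor+1$, which is precisely the order $E(\lfloor |I|/2\rfloor+\lfloor n/2\rfloor+1)$ appearing on the right-hand side of the lemma. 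Each of the eight schematic products from Lemma \ref{structureofLiederivativeZofthesourcestermsforAv} then contributes a term obtained by multiplying together the corresponding pointwise decay rates in the two regions $q>0$ and $q<0$ separately; these products yield exactly the rates displayed in the statement. For instance, the quadratic term $\derm h\cdot\derm A$ produces a factor of $(1+t+|q|)^{-(n-1)/2+\de}(1+|q|)^{-1-\ga}$ on one side of the product and keeps $|\derm(\Lie_{Z^K}A)|$ or $|\derm(\Lie_{Z^K}h)|$ on the other; cubic terms such as $\derm h\cdot A\cdot A$ multiply in an additional pointwise factor (one extra $A$); and quartic terms such as $h\cdot \derm h\cdot A\cdot A$ multiply two additional pointwise factors. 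In each case the powers of $(1+t+|q|)$ and $(1+|q|)$ add up to give the stated estimate.

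The bookkeeping is purely combinatorial, but requires care: one must account for the asymmetric treatment of $h$ versus $h^1$ (which coincide for $n\geq 5$), verify that differentiating $\derm$ commutes with $\Lie_Z$ so that the split between large and small factors is clean, and keep the factor of $\eps$ at the correct power (only one $\eps$ survives per factor since we absorb powers of $\eps$ by $\eps\leq 1$, cf.\ \eqref{epsissmallerthan1}). The main obstacle is not any single estimate but the sheer accounting: one must exhaustively pair each of the eight structural terms with each possible choice of ``large'' factor (two choices for a quadratic, three for a cubic, four for a quartic), and in each case compute the product of pointwise rates in both regions $q>0$ and $q<0$. Once this tabulation is carried out, the grouping of the resulting contributions by ``large factor'' (i.e., by whether the distinguished factor is $\derm(\Lie_{Z^K}A)$, $\Lie_{Z^K}A$, $\derm(\Lie_{Z^K}h)$, or $\Lie_{Z^K}h$) reproduces exactly the four blocks of the right-hand side of the lemma. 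Finally, since $\ga<1$ and $\de$ is small (in fact $\de=0$ in this paper, cf.\ \eqref{delataqualtozero}), one can verify that none of the exponents collapse, and the bound closes with the constant $E(\lfloor |I|/2\rfloor+\lfloor n/2\rfloor+1)$ as claimed.
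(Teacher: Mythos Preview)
Your proposal is correct and follows essentially the same approach as the paper: start from the structural decomposition of Lemma \ref{structureofLiederivativeZofthesourcestermsforAv}, perform the pigeonhole split on each product (keeping the single factor with multi-index possibly exceeding $\lfloor |I|/2\rfloor$ unestimated and bounding the remaining factors via Lemmas \ref{aprioriestimatesongradientoftheLiederivativesofthefields} and \ref{aprioriestimatefrombootstraponzerothderivativeofAandh1}), then regroup the resulting contributions according to whether the distinguished factor is $\derm(\Lie_{Z^K}A)$, $\Lie_{Z^K}A$, $\derm(\Lie_{Z^K}h)$, or $\Lie_{Z^K}h$. The paper carries out exactly this term-by-term tabulation explicitly; your description of the method is accurate, though note that the assumption $\gamma<1$ you mention is not actually used or needed here.
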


\begin{proof}

We have already estimated
    \beaa
   \notag
 && |  \Lie_{Z^I}   ( g^{\la\mu} \derm_{\la}   \derm_{\mu}   A  )   | \\
   &\leq& \sum_{|J| +|K|+|L|+ |M| \leq |I|} \big( \; O(  |   \derm (\Lie_{Z^J}  h) | \cdot |    \derm (\Lie_{Z^K} A ) |  )      +  O(   |   \derm ( \Lie_{Z^J} h) | \cdot |  \Lie_{Z^K}   A  | \cdot |    \Lie_{Z^L}  A |   )    \\
   \notag
   &&   +   O( |\Lie_{Z^J} A   | \cdot |    \derm ( \Lie_{Z^K}  A) |  )    + O(   |    \Lie_{Z^J} A | \cdot   |  \Lie_{Z^K} A  | \cdot  |   \Lie_{Z^L} A | )    \\
           \notag
      &&         + O( | \Lie_{Z^J}   h | \cdot | \derm ( \Lie_{Z^K}  h) | \cdot | \derm (\Lie_{Z^L}  A) | )  +  O( | \Lie_{Z^J}  h | \cdot |   \derm (\Lie_{Z^K}  h ) | \cdot  | \Lie_{Z^L}  A | \cdot \Lie_{Z^M} A | \\
\notag
&& + O( | \Lie_{Z^J} h  | \cdot |  \Lie_{Z^K} A | \cdot |  \derm (\Lie_{Z^L} A) | )    + O( | \Lie_{Z^J} h | \cdot | \Lie_{Z^K} A | \cdot | \Lie_{Z^L} A | \cdot | \Lie_{Z^M} A | ) \; \big) \, .
  \eeaa

Yet, we can now look at each term one by one.

\textbf{Terms of the type $ |   \derm (\Lie_{Z^J}  h) | \cdot |    \derm (\Lie_{Z^K} A ) |$:}\\

We have
      \beaa
   \notag
&& \sum_{|J| +|K|\leq |I|}   |   \derm (\Lie_{Z^J}  h) | \cdot |    \derm (\Lie_{Z^K} A ) |   \\
 &\leq&  \sum_{|J| \leq  \lfloor \frac{|I|}{2} \rfloor , \; |K|\leq |I|}    |   \derm (\Lie_{Z^J}  h) | \cdot |    \derm (\Lie_{Z^K} A ) |   \\
 &&  +   \sum_{|J| \leq |I| , \; |K| \leq  \lfloor \frac{|I|}{2} \rfloor  }   |   \derm (\Lie_{Z^J}  h) | \cdot |    \derm (\Lie_{Z^K} A ) |   \; .
  \eeaa
  
  However, for $|J|, \; |K| \leq \lfloor  \frac{|I|}{2} \rfloor  $, based on what we have proved in Lemma \ref{aprioriestimatesongradientoftheLiederivativesofthefields}, we have
        \beaa
   \notag
&& |   \derm (\Lie_{Z^J}  h) |  + |   \derm (\Lie_{Z^K}  A) | \les E (   \lfloor \frac{|I|}{2} \rfloor + \lfloor  \frac{n}{2} \rfloor  +1 )  \cdot \begin{cases}  \frac{\eps }{(1+t+|q|)^{\frac{(n-1)}{2}-\delta} (1+|q|)^{1+\gamma}},\quad\text{when }\quad q>0,\\
           \notag
      \frac{\eps  }{(1+t+|q|)^{\frac{(n-1)}{2}-\delta}(1+|q|)^{\frac{1}{2} }}  \,\quad\text{when }\quad q<0 . \end{cases} \\
      \eeaa

Thus, we could write
     \bea
   \notag
&& \sum_{|J| +|K|\leq |I|}   |   \derm (\Lie_{Z^J}  h) | \cdot |    \derm (\Lie_{Z^K} A ) |   \\
   \notag
 &\leq& \Big( E (   \lfloor \frac{|I|}{2} \rfloor + \lfloor  \frac{n}{2} \rfloor  + 1)  \cdot \begin{cases}  \frac{\eps }{(1+t+|q|)^{\frac{(n-1)}{2}-\delta} (1+|q|)^{1+\gamma}},\quad\text{when }\quad q>0,\\
           \notag
      \frac{\eps  }{(1+t+|q|)^{\frac{(n-1)}{2}-\delta}(1+|q|)^{\frac{1}{2} }}  \,\quad\text{when }\quad q<0 . \end{cases} \Big) \\
      && \cdot \sum_{|K|\leq |I|}  \Big( |    \derm (\Lie_{Z^K} h ) | +  |    \derm (\Lie_{Z^K} A ) |\Big) \; . 
  \eea
  
  \textbf{Terms of the type $ |   \derm ( \Lie_{Z^J} h) | \cdot |  \Lie_{Z^K}   A  | \cdot |    \Lie_{Z^L}  A |$:}\\

Similarly,
\beaa
&& \sum_{|J| +|K| + |L| \leq |I|}    |   \derm ( \Lie_{Z^J} h) | \cdot |  \Lie_{Z^K}   A  | \cdot |    \Lie_{Z^L}  A | \\
  &\les&   \sum_{|J| \leq  \lfloor \frac{|I|}{2} \rfloor  , \; |K| \leq \lfloor \frac{|I|}{2} \rfloor   , \; |L| \leq |I|}        |   \derm ( \Lie_{Z^J} h) | \cdot |  \Lie_{Z^K}   A  | \cdot |    \Lie_{Z^L}  A | \\
  && + \sum_{|J| \leq   \lfloor \frac{|I|}{2} \rfloor  ,  \; |K|  \leq |I|, \; |L| \leq  \lfloor \frac{|I|}{2} \rfloor }    |   \derm ( \Lie_{Z^J} h) | \cdot |  \Lie_{Z^K}   A  | \cdot |    \Lie_{Z^L}  A | \\
   && + \sum_{|J| \leq  |I|  , \; |K| \leq \lfloor \frac{|I|}{2} \rfloor  , \; |L| \leq \lfloor \frac{|I|}{2} \rfloor }   |   \derm ( \Lie_{Z^J} h) | \cdot |  \Lie_{Z^K}   A  | \cdot |    \Lie_{Z^L}  A | \; .
\eeaa  
Again using that $E (   \lfloor \frac{|I|}{2} \rfloor + \lfloor  \frac{n}{2} \rfloor  + 1) \leq 1 $,  and $\eps \leq 1$, so that $E^2 \cdot \eps^2 \leq E \cdot \eps $\;, we have based on what we showed in Lemma \ref{aprioriestimatesongradientoftheLiederivativesofthefields}, that for $|J|,\; |K|, \; |L| \leq \lfloor \frac{|I|}{2} \rfloor $,
 \beaa
    |   \derm ( \Lie_{Z^J} h) | \cdot |  \Lie_{Z^K}   A  | &\les&  \Big( E (   \lfloor \frac{|I|}{2} \rfloor + \lfloor  \frac{n}{2} \rfloor  + 1)  \cdot \begin{cases}  \frac{\eps }{(1+t+|q|)^{\frac{(n-1)}{2}-\delta} (1+|q|)^{1+\gamma}},\quad\text{when }\quad q>0,\\
           \notag
      \frac{\eps  }{(1+t+|q|)^{\frac{(n-1)}{2}-\delta}(1+|q|)^{\frac{1}{2} }}  \,\quad\text{when }\quad q<0 . \end{cases} \Big) \\
      && \cdot  \Big( E (   \lfloor \frac{|I|}{2} \rfloor + \lfloor  \frac{n}{2} \rfloor  + 1)  \cdot \begin{cases}  \frac{\eps }{(1+t+|q|)^{\frac{(n-1)}{2}-\delta} (1+|q|)^{\gamma}},\quad\text{when }\quad q>0,\\
           \notag
      \frac{\eps  }{(1+t+|q|)^{\frac{(n-1)}{2}-\delta}(1+|q|)^{\frac{-1}{2} }}  \,\quad\text{when }\quad q<0 . \end{cases} \Big) \\
        &\les &  \Big( E (   \lfloor \frac{|I|}{2} \rfloor + \lfloor  \frac{n}{2} \rfloor  + 1)  \cdot \begin{cases}  \frac{\eps }{(1+t+|q|)^{(n-1)-2\delta} (1+|q|)^{1+2\gamma}},\quad\text{when }\quad q>0,\\
           \notag
      \frac{\eps  }{(1+t+|q|)^{(n-1)-2\delta}}  \,\quad\text{when }\quad q<0 . \end{cases} \Big) \; ,
 \eeaa
 and
  \beaa
     |  \Lie_{Z^K}   A  | \cdot |  \Lie_{Z^L}   A  | &\les&  \Big( E (   \lfloor \frac{|I|}{2} \rfloor + \lfloor  \frac{n}{2} \rfloor  + 1)  \cdot \begin{cases}  \frac{\eps }{(1+t+|q|)^{\frac{(n-1)}{2}-\delta} (1+|q|)^{\gamma}},\quad\text{when }\quad q>0,\\
           \notag
      \frac{\eps  }{(1+t+|q|)^{\frac{(n-1)}{2}-\delta}(1+|q|)^{\frac{-1}{2} }}  \,\quad\text{when }\quad q<0 . \end{cases} \Big)^2 \\
        &\les &  \Big( E (   \lfloor \frac{|I|}{2} \rfloor + \lfloor  \frac{n}{2} \rfloor  + 1)  \cdot \begin{cases}  \frac{\eps }{(1+t+|q|)^{(n-1)-2\delta} (1+|q|)^{2\gamma}},\quad\text{when }\quad q>0,\\
           \notag
      \frac{\eps \cdot (1+ |q|)  }{(1+t+|q|)^{(n-1)-2\delta}}  \,\quad\text{when }\quad q<0 . \end{cases} \Big) \; .
 \eeaa
 Consequently,
 \bea
    \notag
&& \sum_{|J| +|K| + |L| \leq |I|}    |   \derm ( \Lie_{Z^J} h) | \cdot |  \Lie_{Z^K}   A  | \cdot |    \Lie_{Z^L}  A | \\
   \notag
  &\les&     \Big( E (   \lfloor \frac{|I|}{2} \rfloor + \lfloor  \frac{n}{2} \rfloor  + 1)  \cdot \begin{cases}  \frac{\eps }{(1+t+|q|)^{(n-1)-2\delta} (1+|q|)^{1+2\gamma}},\quad\text{when }\quad q>0,\\
           \notag
      \frac{\eps  }{(1+t+|q|)^{(n-1)-2\delta}}  \,\quad\text{when }\quad q<0 . \end{cases} \Big) \cdot   \sum_{|K| \leq |I|}    |  \Lie_{Z^K}   A  | \\
         \notag
      && +   \Big( E (   \lfloor \frac{|I|}{2} \rfloor + \lfloor  \frac{n}{2} \rfloor  + 1)  \cdot \begin{cases}  \frac{\eps }{(1+t+|q|)^{(n-1)-2\delta} (1+|q|)^{2\gamma}},\quad\text{when }\quad q>0,\\
           \notag
      \frac{\eps \cdot (1+ |q|)  }{(1+t+|q|)^{(n-1)-2\delta}}  \,\quad\text{when }\quad q<0 . \end{cases} \Big)  \cdot    \sum_{|K| \leq |I|}    |   \derm ( \Lie_{Z^K} h) | \; .\\
 \eea

  \textbf{Terms of the type $  |\Lie_{Z^J} A   | \cdot |    \derm ( \Lie_{Z^K}  A) |$:} \\
  
Similarly,
\bea
         \notag
 && \sum_{|J| +|K|\leq |I|}   |\Lie_{Z^J} A   | \cdot |    \derm ( \Lie_{Z^K}  A) | \\
 \notag
  &\les& \Big( E (   \lfloor \frac{|I|}{2} \rfloor + \lfloor  \frac{n}{2} \rfloor  + 1)  \cdot \begin{cases}  \frac{\eps }{(1+t+|q|)^{\frac{(n-1)}{2}-\delta} (1+|q|)^{1+\gamma}},\quad\text{when }\quad q>0,\\
           \notag
      \frac{\eps  }{(1+t+|q|)^{\frac{(n-1)}{2}-\delta}(1+|q|)^{\frac{1}{2} }}  \,\quad\text{when }\quad q<0 . \end{cases} \Big) \\
         \notag
      && \cdot \sum_{|K|\leq |I|}   |  \Lie_{Z^K} A | \\
        \notag
      && +  \Big( E (   \lfloor \frac{|I|}{2} \rfloor + \lfloor  \frac{n}{2} \rfloor  + 1)  \cdot \begin{cases}  \frac{\eps }{(1+t+|q|)^{\frac{(n-1)}{2}-\delta} (1+|q|)^{\gamma}},\quad\text{when }\quad q>0,\\
           \notag
      \frac{\eps \cdot (1+|q|)^{\frac{1}{2} }  }{(1+t+|q|)^{\frac{(n-1)}{2}-\delta}}  \,\quad\text{when }\quad q<0 . \end{cases} \Big) \\
         \notag
      && \cdot \sum_{|K|\leq |I|}   | \derm ( \Lie_{Z^K} A ) | \; . \\
  \eea

  \textbf{Terms of the type $ |    \Lie_{Z^J} A | \cdot   |  \Lie_{Z^K} A  | \cdot  |   \Lie_{Z^L} A |  $:} \\
  
Also, 
\bea  
         \notag
 && \sum_{|J| +|K| + |L| \leq |I|}  |    \Lie_{Z^J} A | \cdot   |  \Lie_{Z^K} A  | \cdot  |   \Lie_{Z^L} A |  \\
          \notag
 &\les&  \Big( E (   \lfloor \frac{|I|}{2} \rfloor + \lfloor  \frac{n}{2} \rfloor  + 1)  \cdot \begin{cases}  \frac{\eps }{(1+t+|q|)^{\frac{(n-1)}{2}-\delta} (1+|q|)^{\gamma}},\quad\text{when }\quad q>0,\\
           \notag
      \frac{\eps  }{(1+t+|q|)^{\frac{(n-1)}{2}-\delta}(1+|q|)^{\frac{-1}{2} }}  \,\quad\text{when }\quad q<0 . \end{cases} \Big)^2 \\
               \notag
      && \cdot \sum_{|K|\leq |I|}   | \Lie_{Z^K} A  | \;  \\
               \notag
       &\les& \Big( E (   \lfloor \frac{|I|}{2} \rfloor + \lfloor  \frac{n}{2} \rfloor  + 1)  \cdot \begin{cases}  \frac{\eps }{(1+t+|q|)^{(n-1)-2\delta} (1+|q|)^{2\gamma}},\quad\text{when }\quad q>0,\\
           \notag
      \frac{\eps \cdot (1+ |q|)  }{(1+t+|q|)^{(n-1)-2\delta}}  \,\quad\text{when }\quad q<0 . \end{cases} \Big)  \\
               \notag
      && \cdot \sum_{|K|\leq |I|}   |  \Lie_{Z^K} A  | \; . \\
  \eea

  \textbf{Terms of the type $   | \Lie_{Z^J}   h | \cdot | \derm ( \Lie_{Z^K}  h) | \cdot | \derm (\Lie_{Z^L}  A) | $:} \\ 
  
We have,
\beaa  
         \notag
  && \sum_{|J| +|K| + |L| \leq |I|}   | \Lie_{Z^J}   h | \cdot | \derm ( \Lie_{Z^K}  h) | \cdot | \derm (\Lie_{Z^L}  A) |  \\
  &\les&     \Big( E (   \lfloor \frac{|I|}{2} \rfloor + \lfloor  \frac{n}{2} \rfloor  + 1)  \cdot \begin{cases}  \frac{\eps }{(1+t+|q|)^{(n-1)-2\delta} (1+|q|)^{1+2\gamma}},\quad\text{when }\quad q>0,\\
           \notag
      \frac{\eps  }{(1+t+|q|)^{(n-1)-2\delta}}  \,\quad\text{when }\quad q<0 . \end{cases} \Big) \cdot   \sum_{|K| \leq |I|}    | \derm (\Lie_{Z^K}  A) | \\
         \notag
&&       +    \Big( E (   \lfloor \frac{|I|}{2} \rfloor + \lfloor  \frac{n}{2} \rfloor  + 1)  \cdot \begin{cases}  \frac{\eps }{(1+t+|q|)^{(n-1)-2\delta} (1+|q|)^{1+2\gamma}},\quad\text{when }\quad q>0,\\
           \notag
      \frac{\eps  }{(1+t+|q|)^{(n-1)-2\delta}}  \,\quad\text{when }\quad q<0 . \end{cases} \Big) \cdot   \sum_{|K| \leq |I|}    | \derm (\Lie_{Z^K}  h) | \\
        \notag
 && +         \Big( E (   \lfloor \frac{|I|}{2} \rfloor + \lfloor  \frac{n}{2} \rfloor  + 1)  \cdot \begin{cases}  \frac{\eps }{(1+t+|q|)^{\frac{(n-1)}{2}-\delta} (1+|q|)^{1+\gamma}},\quad\text{when }\quad q>0,\\
           \notag
      \frac{\eps  }{(1+t+|q|)^{\frac{(n-1)}{2}-\delta}(1+|q|)^{\frac{1}{2} }}  \,\quad\text{when }\quad q<0 . \end{cases} \Big)^2   \cdot   \sum_{|K| \leq |I|}    | \Lie_{Z^K}  h | \;.
      \notag
          \eeaa
 
 Thus,
 
 \bea  
         \notag
  && \sum_{|J| +|K| + |L| \leq |I|}   | \Lie_{Z^J}   h | \cdot | \derm ( \Lie_{Z^K}  h) | \cdot | \derm (\Lie_{Z^L}  A) |  \\
  &\les&    \Big( E (   \lfloor \frac{|I|}{2} \rfloor + \lfloor  \frac{n}{2} \rfloor  + 1)  \cdot \begin{cases}  \frac{\eps }{(1+t+|q|)^{(n-1)-2\delta} (1+|q|)^{1+2\gamma}},\quad\text{when }\quad q>0,\\
           \notag
      \frac{\eps  }{(1+t+|q|)^{(n-1)-2\delta}}  \,\quad\text{when }\quad q<0 . \end{cases} \Big) \\
      \notag
     &&   \cdot   \sum_{|K| \leq |I|} \Big(   | \derm (\Lie_{Z^K}  A) | +    | \derm (\Lie_{Z^K}  h) | \Big) \\
        \notag
 && +         \Big( E (   \lfloor \frac{|I|}{2} \rfloor + \lfloor  \frac{n}{2} \rfloor  + 1)  \cdot \begin{cases}  \frac{\eps }{(1+t+|q|)^{(n-1)-2\delta} (1+|q|)^{2+2\gamma}},\quad\text{when }\quad q>0,\\
           \notag
      \frac{\eps  }{(1+t+|q|)^{(n-1)-2\delta}(1+|q|)}  \,\quad\text{when }\quad q<0 . \end{cases} \Big)   \cdot   \sum_{|K| \leq |I|}    | \Lie_{Z^K}  h | \, . \\
          \eea
 
    \textbf{Terms of the type $ | \Lie_{Z^J}  h | \cdot |   \derm (\Lie_{Z^K}  h ) | \cdot  | \Lie_{Z^L}  A | \cdot \Lie_{Z^M} A |  $:} \\ 
        
We have,
\beaa
\notag
 && \sum_{|J| +|K| + |L| + |M| \leq |I|}    | \Lie_{Z^J}  h | \cdot |   \derm (\Lie_{Z^K}  h ) | \cdot  | \Lie_{Z^L}  A | \cdot \Lie_{Z^M} A | \\
 \notag
   &\les&     \Big( E (   \lfloor \frac{|I|}{2} \rfloor + \lfloor  \frac{n}{2} \rfloor  + 1)  \cdot \begin{cases}  \frac{\eps }{(1+t+|q|)^{(n-1)-2\delta} (1+|q|)^{1+2\gamma}},\quad\text{when }\quad q>0,\\
           \notag
      \frac{\eps  }{(1+t+|q|)^{(n-1)-2\delta}}  \,\quad\text{when }\quad q<0 . \end{cases} \Big)  \\
      \notag
 && \cdot     \Big( E (   \lfloor \frac{|I|}{2} \rfloor + \lfloor  \frac{n}{2} \rfloor  + 1)  \cdot \begin{cases}  \frac{\eps }{(1+t+|q|)^{\frac{(n-1)}{2}-\delta} (1+|q|)^{\gamma}},\quad\text{when }\quad q>0,\\
           \notag
      \frac{\eps  }{(1+t+|q|)^{\frac{(n-1)}{2}-\delta}(1+|q|)^{\frac{-1}{2} }}  \,\quad\text{when }\quad q<0 . \end{cases} \Big) \\
        \notag
  &&      \cdot   \sum_{|K| \leq |I|}    | \Lie_{Z^K}  h | \\
  && +    \Big( E (   \lfloor \frac{|I|}{2} \rfloor + \lfloor  \frac{n}{2} \rfloor  + 1)  \cdot \begin{cases}  \frac{\eps }{(1+t+|q|)^{(n-1)-2\delta} (1+|q|)^{1+2\gamma}},\quad\text{when }\quad q>0,\\
           \notag
      \frac{\eps  }{(1+t+|q|)^{(n-1)-2\delta}}  \,\quad\text{when }\quad q<0 . \end{cases} \Big)  \\
      \notag
 && \cdot     \Big( E (   \lfloor \frac{|I|}{2} \rfloor + \lfloor  \frac{n}{2} \rfloor  + 1)  \cdot \begin{cases}  \frac{\eps }{(1+t+|q|)^{\frac{(n-1)}{2}-\delta} (1+|q|)^{\gamma}},\quad\text{when }\quad q>0,\\
           \notag
      \frac{\eps  }{(1+t+|q|)^{\frac{(n-1)}{2}-\delta}(1+|q|)^{\frac{-1}{2} }}  \,\quad\text{when }\quad q<0 . \end{cases} \Big) \\
        \notag
  &&      \cdot   \sum_{|K| \leq |I|}    | \Lie_{Z^K}  A | \\
  \notag
  && +   \Big( E (   \lfloor \frac{|I|}{2} \rfloor + \lfloor  \frac{n}{2} \rfloor  + 1)  \cdot \begin{cases}  \frac{\eps }{(1+t+|q|)^{\frac{(n-1)}{2}-\delta} (1+|q|)^{\gamma}},\quad\text{when }\quad q>0,\\
           \notag
      \frac{\eps  }{(1+t+|q|)^{\frac{(n-1)}{2}-\delta}(1+|q|)^{\frac{-1}{2} }}  \,\quad\text{when }\quad q<0 . \end{cases} \Big)^3 \\
               \notag
      && \cdot \sum_{|K|\leq |I|}   | \derm ( \Lie_{Z^K} h ) | \; . \\
  \eeaa
 Hence,
 \bea
\notag
 && \sum_{|J| +|K| + |L| + |M| \leq |I|}    | \Lie_{Z^J}  h | \cdot |   \derm (\Lie_{Z^K}  h ) | \cdot  | \Lie_{Z^L}  A | \cdot \Lie_{Z^M} A | \\
 \notag
   &\les&     \Big( E (   \lfloor \frac{|I|}{2} \rfloor + \lfloor  \frac{n}{2} \rfloor  + 1)  \cdot \begin{cases}  \frac{\eps }{(1+t+|q|)^{\frac{3(n-1)}{2}-3\delta} (1+|q|)^{1+3\gamma}},\quad\text{when }\quad q>0,\\
           \notag
      \frac{\eps  \cdot(1+|q|)^{\frac{1}{2} } }{(1+t+|q|)^{\frac{3(n-1)}{2}-3\delta}}  \,\quad\text{when }\quad q<0 . \end{cases} \Big)  \\
      \notag
  &&      \cdot   \sum_{|K| \leq |I|}  \Big(   | \Lie_{Z^K}  h | +   | \Lie_{Z^K}  A | \Big) \\
  \notag
  && +   \Big( E (   \lfloor \frac{|I|}{2} \rfloor + \lfloor  \frac{n}{2} \rfloor  + 1)  \cdot \begin{cases}  \frac{\eps }{(1+t+|q|)^{\frac{3(n-1)}{2}-3\delta} (1+|q|)^{3\gamma}},\quad\text{when }\quad q>0,\\
           \notag
      \frac{\eps \cdot (1+|q|)^{\frac{3}{2} }  }{(1+t+|q|)^{\frac{3(n-1)}{2}-3\delta}}  \,\quad\text{when }\quad q<0 . \end{cases} \Big) \\
               \notag
      && \cdot \sum_{|K|\leq |I|}   | \derm ( \Lie_{Z^K} h ) | \; . \\
  \eea

     \textbf{Terms of the type $  | \Lie_{Z^J} h  | \cdot |  \Lie_{Z^K} A | \cdot |  \derm (\Lie_{Z^L} A) |   $:} \\ 
     
 Also,
 \bea
 \notag
 && \sum_{|J| +|K| + |L| | \leq |I|}   | \Lie_{Z^J} h  | \cdot |  \Lie_{Z^K} A | \cdot |  \derm (\Lie_{Z^L} A) |   \\
 \notag
&\les&    \Big( E (   \lfloor \frac{|I|}{2} \rfloor + \lfloor  \frac{n}{2} \rfloor  + 1)  \cdot \begin{cases}  \frac{\eps }{(1+t+|q|)^{(n-1)-2\delta} (1+|q|)^{2\gamma}},\quad\text{when }\quad q>0,\\
           \notag
      \frac{\eps \cdot (1+ |q|)  }{(1+t+|q|)^{(n-1)-2\delta}}  \,\quad\text{when }\quad q<0 . \end{cases} \Big)  \\
               \notag
      && \cdot \sum_{|K|\leq |I|}   | \derm ( \Lie_{Z^K} A ) | \\
   \notag
 && +    \Big( E (   \lfloor \frac{|I|}{2} \rfloor + \lfloor  \frac{n}{2} \rfloor  + 1)  \cdot \begin{cases}  \frac{\eps }{(1+t+|q|)^{(n-1)-2\delta} (1+|q|)^{1+2\gamma}},\quad\text{when }\quad q>0,\\
           \notag
      \frac{\eps  }{(1+t+|q|)^{(n-1)-2\delta}}  \,\quad\text{when }\quad q<0 . \end{cases} \Big)  \\
      \notag
   &&   \cdot   \sum_{|K|\leq |I|} \Big(  | \Lie_{Z^K} A  |  +   | \Lie_{Z^K} h  | \Big)  \; . \\
 \eea

      \textbf{Terms of the type $ | \Lie_{Z^J} h | \cdot | \Lie_{Z^K} A | \cdot | \Lie_{Z^L} A | \cdot | \Lie_{Z^M} A |   $:} \\ 
      
We have,

 \bea
 \notag
&& | \Lie_{Z^J} h | \cdot | \Lie_{Z^K} A | \cdot | \Lie_{Z^L} A | \cdot | \Lie_{Z^M} A | \\
\notag
&\les&   \Big( E (   \lfloor \frac{|I|}{2} \rfloor + \lfloor  \frac{n}{2} \rfloor  + 1)  \cdot \begin{cases}  \frac{\eps }{(1+t+|q|)^{\frac{3(n-1)}{2}-3\delta} (1+|q|)^{3\gamma}},\quad\text{when }\quad q>0,\\
           \notag
      \frac{\eps \cdot (1+|q|)^{\frac{3}{2} }  }{(1+t+|q|)^{\frac{3(n-1)}{2}-3\delta}}  \,\quad\text{when }\quad q<0 . \end{cases} \Big) \\
               \notag
      && \cdot \sum_{|K|\leq |I|} \Big(  | \Lie_{Z^K} A | +  | \Lie_{Z^K} h  \Big) \; . \\
  \eea
 
   \textbf{The whole term  $|  \Lie_{Z^I}   ( g^{\la\mu} \derm_{\la}   \derm_{\mu}   A  )   |$:} \\ 
   
 Putting all together, we get
    \beaa
   \notag
 && |  \Lie_{Z^I}   ( g^{\la\mu} \derm_{\la}   \derm_{\mu}   A  )   | \\
   \notag
 &\les &   \sum_{|K|\leq |I|}   |    \derm (\Lie_{Z^K} A ) | \\
 \notag
 && \cdot \Big( E (   \lfloor \frac{|I|}{2} \rfloor + \lfloor  \frac{n}{2} \rfloor  + 1)  \cdot \begin{cases}  \frac{\eps }{(1+t+|q|)^{\frac{(n-1)}{2}-\delta} (1+|q|)^{1+\gamma}},\quad\text{when }\quad q>0,\\
           \notag
      \frac{\eps  }{(1+t+|q|)^{\frac{(n-1)}{2}-\delta}(1+|q|)^{\frac{1}{2} }}  \,\quad\text{when }\quad q<0 . \end{cases} \Big) \\
      \notag
      && +   \Big( E (   \lfloor \frac{|I|}{2} \rfloor + \lfloor  \frac{n}{2} \rfloor  + 1)  \cdot \begin{cases}  \frac{\eps }{(1+t+|q|)^{\frac{(n-1)}{2}-\delta} (1+|q|)^{\gamma}},\quad\text{when }\quad q>0,\\
           \notag
      \frac{\eps  \cdot (1+|q|)^{\frac{1}{2} } }{(1+t+|q|)^{\frac{(n-1)}{2}-\delta}}  \,\quad\text{when }\quad q<0 . \end{cases} \Big) \\
         \notag
         && + \Big( E (   \lfloor \frac{|I|}{2} \rfloor + \lfloor  \frac{n}{2} \rfloor  + 1)  \cdot \begin{cases}  \frac{\eps }{(1+t+|q|)^{(n-1)-2\delta} (1+|q|)^{1+2\gamma}},\quad\text{when }\quad q>0,\\
           \notag
      \frac{\eps  }{(1+t+|q|)^{(n-1)-2\delta}}  \,\quad\text{when }\quad q<0 . \end{cases} \Big) \\
      \notag
   && +    \Big( E (   \lfloor \frac{|I|}{2} \rfloor + \lfloor  \frac{n}{2} \rfloor  + 1)  \cdot \begin{cases}  \frac{\eps }{(1+t+|q|)^{(n-1)-2\delta} (1+|q|)^{2\gamma}},\quad\text{when }\quad q>0,\\
           \notag
      \frac{\eps \cdot (1+ |q|)  }{(1+t+|q|)^{(n-1)-2\delta}}  \,\quad\text{when }\quad q<0 . \end{cases} \Big) \Big) 
               \notag
  \eeaa
    \beaa
   \notag
 &&  + \sum_{|K|\leq |I|}   |   \Lie_{Z^K} A  | \\
 \notag
 &&  \cdot \Big( \Big( E (   \lfloor \frac{|I|}{2} \rfloor + \lfloor  \frac{n}{2} \rfloor  + 1)  \cdot \begin{cases}  \frac{\eps }{(1+t+|q|)^{(n-1)-2\delta} (1+|q|)^{1+2\gamma}},\quad\text{when }\quad q>0,\\
           \notag
      \frac{\eps  }{(1+t+|q|)^{(n-1)-2\delta}}  \,\quad\text{when }\quad q<0 . \end{cases} \Big) \\
      \notag 
      && + \Big( E (   \lfloor \frac{|I|}{2} \rfloor + \lfloor  \frac{n}{2} \rfloor  + 1)  \cdot \begin{cases}  \frac{\eps }{(1+t+|q|)^{\frac{(n-1)}{2}-\delta} (1+|q|)^{1+\gamma}},\quad\text{when }\quad q>0,\\
           \notag
      \frac{\eps  }{(1+t+|q|)^{\frac{(n-1)}{2}-\delta}(1+|q|)^{\frac{1}{2} }}  \,\quad\text{when }\quad q<0 . \end{cases} \Big) \\
         \notag
       && +  \Big( E (   \lfloor \frac{|I|}{2} \rfloor + \lfloor  \frac{n}{2} \rfloor  + 1)  \cdot \begin{cases}  \frac{\eps }{(1+t+|q|)^{(n-1)-2\delta} (1+|q|)^{2\gamma}},\quad\text{when }\quad q>0,\\
           \notag
      \frac{\eps \cdot (1+ |q|)  }{(1+t+|q|)^{(n-1)-2\delta}}  \,\quad\text{when }\quad q<0 . \end{cases} \Big)  \\
               \notag
     && +             \Big( E (   \lfloor \frac{|I|}{2} \rfloor + \lfloor  \frac{n}{2} \rfloor  + 1)  \cdot \begin{cases}  \frac{\eps }{(1+t+|q|)^{\frac{3(n-1)}{2}-3\delta} (1+|q|)^{1+3\gamma}},\quad\text{when }\quad q>0,\\
           \notag
      \frac{\eps  \cdot(1+|q|)^{\frac{1}{2} } }{(1+t+|q|)^{\frac{3(n-1)}{2}-3\delta}}  \,\quad\text{when }\quad q<0 . \end{cases} \Big)  \\
      \notag
  &&     +    \Big( E (   \lfloor \frac{|I|}{2} \rfloor + \lfloor  \frac{n}{2} \rfloor  + 1)  \cdot \begin{cases}  \frac{\eps }{(1+t+|q|)^{(n-1)-2\delta} (1+|q|)^{1+2\gamma}},\quad\text{when }\quad q>0,\\
           \notag
      \frac{\eps  }{(1+t+|q|)^{(n-1)-2\delta}}  \,\quad\text{when }\quad q<0 . \end{cases} \Big)  \\
      \notag
  && +      \Big( E (   \lfloor \frac{|I|}{2} \rfloor + \lfloor  \frac{n}{2} \rfloor  + 1)  \cdot \begin{cases}  \frac{\eps }{(1+t+|q|)^{\frac{3(n-1)}{2}-3\delta} (1+|q|)^{3\gamma}},\quad\text{when }\quad q>0,\\
           \notag
      \frac{\eps \cdot (1+|q|)^{\frac{3}{2} }  }{(1+t+|q|)^{\frac{3(n-1)}{2}-3\delta}}  \,\quad\text{when }\quad q<0 . \end{cases} \Big) 
               \notag
  \eeaa

     \beaa
   \notag
&& + \sum_{|K|\leq |I|}   |    \derm (\Lie_{Z^K} h ) | \\
 \notag
 && \cdot \Big( \Big( E (   \lfloor \frac{|I|}{2} \rfloor + \lfloor  \frac{n}{2} \rfloor  + 1)  \cdot \begin{cases}  \frac{\eps }{(1+t+|q|)^{\frac{(n-1)}{2}-\delta} (1+|q|)^{1+\gamma}},\quad\text{when }\quad q>0,\\
           \notag
      \frac{\eps  }{(1+t+|q|)^{\frac{(n-1)}{2}-\delta}(1+|q|)^{\frac{1}{2} }}  \,\quad\text{when }\quad q<0 . \end{cases} \Big) \\
      \notag
    && +   \Big( E (   \lfloor \frac{|I|}{2} \rfloor + \lfloor  \frac{n}{2} \rfloor  + 1)  \cdot \begin{cases}  \frac{\eps }{(1+t+|q|)^{(n-1)-2\delta} (1+|q|)^{2\gamma}},\quad\text{when }\quad q>0,\\
           \notag
      \frac{\eps \cdot (1+ |q|)  }{(1+t+|q|)^{(n-1)-2\delta}}  \,\quad\text{when }\quad q<0 . \end{cases} \Big) \\
      \notag
&& +        \Big( E (   \lfloor \frac{|I|}{2} \rfloor + \lfloor  \frac{n}{2} \rfloor  + 1)  \cdot \begin{cases}  \frac{\eps }{(1+t+|q|)^{(n-1)-2\delta} (1+|q|)^{1+2\gamma}},\quad\text{when }\quad q>0,\\
           \notag
      \frac{\eps  }{(1+t+|q|)^{(n-1)-2\delta}}  \,\quad\text{when }\quad q<0 . \end{cases} \Big) \\
     \notag
     && + \Big( E (   \lfloor \frac{|I|}{2} \rfloor + \lfloor  \frac{n}{2} \rfloor  + 1)  \cdot \begin{cases}  \frac{\eps }{(1+t+|q|)^{\frac{3(n-1)}{2}-3\delta} (1+|q|)^{3\gamma}},\quad\text{when }\quad q>0,\\
           \notag
      \frac{\eps \cdot (1+|q|)^{\frac{3}{2} }  }{(1+t+|q|)^{\frac{3(n-1)}{2}-3\delta}}  \,\quad\text{when }\quad q<0 . \end{cases} \Big) \Big) 
               \notag
  \eeaa
     \beaa
   \notag
 && +  \sum_{|K|\leq |I|}   |   \Lie_{Z^K} h  | \\
 \notag
 && \cdot \Big( \Big( E (   \lfloor \frac{|I|}{2} \rfloor + \lfloor  \frac{n}{2} \rfloor  + 1)  \cdot \begin{cases}  \frac{\eps }{(1+t+|q|)^{(n-1)-2\delta} (1+|q|)^{2+2\gamma}},\quad\text{when }\quad q>0,\\
           \notag
      \frac{\eps  }{(1+t+|q|)^{(n-1)-2\delta}(1+|q|)}  \,\quad\text{when }\quad q<0 . \end{cases} \Big)  \\
      \notag
  && +      \Big( E (   \lfloor \frac{|I|}{2} \rfloor + \lfloor  \frac{n}{2} \rfloor  + 1)  \cdot \begin{cases}  \frac{\eps }{(1+t+|q|)^{\frac{3(n-1)}{2}-3\delta} (1+|q|)^{1+3\gamma}},\quad\text{when }\quad q>0,\\
           \notag
      \frac{\eps  \cdot(1+|q|)^{\frac{1}{2} } }{(1+t+|q|)^{\frac{3(n-1)}{2}-3\delta}}  \,\quad\text{when }\quad q<0 . \end{cases} \Big)  \\
      \notag
   && +   \Big( E (   \lfloor \frac{|I|}{2} \rfloor + \lfloor  \frac{n}{2} \rfloor  + 1)  \cdot \begin{cases}  \frac{\eps }{(1+t+|q|)^{(n-1)-2\delta} (1+|q|)^{1+2\gamma}},\quad\text{when }\quad q>0,\\
           \notag
      \frac{\eps  }{(1+t+|q|)^{(n-1)-2\delta}}  \,\quad\text{when }\quad q<0 . \end{cases} \Big) \\
      \notag
   && +    \Big( E (   \lfloor \frac{|I|}{2} \rfloor + \lfloor  \frac{n}{2} \rfloor  + 1)  \cdot \begin{cases}  \frac{\eps }{(1+t+|q|)^{\frac{3(n-1)}{2}-3\delta} (1+|q|)^{3\gamma}},\quad\text{when }\quad q>0,\\
           \notag
      \frac{\eps \cdot (1+|q|)^{\frac{3}{2} }  }{(1+t+|q|)^{\frac{3(n-1)}{2}-3\delta}}  \,\quad\text{when }\quad q<0 . \end{cases} \Big) \Big) \; .
               \notag
  \eeaa

  \end{proof}

\subsection{Using the bootstrap assumption to exhibit the structure of the source terms for the metric}\

We aim to use the bootstrap assumption to exhibit the structure of the source term for the wave equation on the metric in wave coordinates coupled to the Yang-Mills potential in the Lorenz gauge, depending on the space-dimension $n$.

Recall that the weighted energy for $h^0$ is in fact infinite; thus, the energy was defined for $h^1 = h - h^0$. Thus, the equation that we are interested in, is the wave equation for $h^1$. We have for all $U, V \in \{\frac{\pa}{\pa x_\mu} \, \, | \, \,   \mu \in \{0, 1, \ldots, n \} \}$,
  \beaa
   \notag
  && g^{\la\mu} \derm_{\la}   \derm_{\mu}    h^1_{UV}   \\
   \notag
    &=&     g^{\la\mu} \derm_{\la}   \derm_{\mu}    h_{UV}  -   g^{\la\mu} \derm_{\la}   \derm_{\mu}    h^0_{UV}   \\
 \notag
  &=& P(\pa_\mu h,\pa_\nu h)  +  Q_{\mu\nu}(\pa h,\pa h)   + G_{\mu\nu}(h)(\pa h,\pa h)  \\
\notag
 &&   -4     <   \derm_{\mu}A_{\b} - \derm_{\b}A_{\mu}  ,  \derm_{\nu}A^{\b} - \derm^{\b}A_{\nu}  >    \\
 \notag
 &&   + m_{\mu\nu }       \cdot  <  \derm_{\a}A_{\b} - \derm_{\b}A_{\a} , \derm_{\a} A^{\b} - \derm^{\b}A^{\a} >   \\
 \notag
&&           -4  \cdot  \big( <   \derm_{\mu}A_{\b} - \derm_{\b}A_{\mu}  ,  [A_{\nu},A^{\b}] >   + <   [A_{\mu},A_{\b}] ,  \derm_{\nu}A^{\b} - \derm^{\b}A_{\nu}  > \big)  \\
\notag
&& + m_{\mu\nu }    \cdot \big(  <  \derm_{\a}A_{\b} - \derm_{\b}A_{\a} , [A^{\a},A^{\b}] >    +  <  [A_{\a},A_{\b}] , \derm^{\a}A^{\b} - \derm^{\b}A^{\a}  > \big) \\
\notag
 &&  -4     <   [A_{\mu},A_{\b}] ,  [A_{\nu},A^{\b}] >      + m_{\mu\nu }   \cdot   <  [A_{\a},A_{\b}] , [A^{\a},A^{\b}] >  \\
 \notag
     && + O \big(h \cdot  (\pa A)^2 \big)   + O \big(  h  \cdot  A^2 \cdot \pa A \big)     + O \big(  h   \cdot  A^4 \big)  \, \\
     && -   g^{\la\mu} \derm_{\la}   \derm_{\mu}    h^0_{UV} \; .
\eeaa
  Thus, for all $U, V \in \{\frac{\pa}{\pa x_\mu} \, \, | \, \,   \mu \in \{0, 1, \ldots, n \} \}$,
  \beaa
   \notag
 &&   \Lie_{Z^I}   ( g^{\la\mu} \derm_{\la}   \derm_{\mu}    h^1_{UV} )    \\
   &=& \sum_{|J| +|K|+|L|+ |M| \leq |I|}  \big( \; O(     \derm (\Lie_{Z^J}  h)  \cdot     \derm (\Lie_{Z^K} h )   )         + O( \Lie_{Z^J}   h \cdot \derm ( \Lie_{Z^K}  h) \cdot  \derm (\Lie_{Z^L}  h) ) \\
       \notag
           &&  +   O(   \derm ( \Lie_{Z^J} A  ) \cdot     \derm ( \Lie_{Z^K}  A)   )    +  O(       \Lie_{Z^K}   A  \cdot     \Lie_{Z^L}  A   \cdot  \derm ( \Lie_{Z^J} A)    )      + O(       \Lie_{Z^J} A  \cdot     \Lie_{Z^K} A   \cdot     \Lie_{Z^L} A \cdot   \Lie_{Z^M} A   ) \\
           \notag
      &&     +  O( \Lie_{Z^J}  h \cdot    \derm (\Lie_{Z^K} A ) \cdot \derm ( \Lie_{Z^L}  A ) )  \\
\notag
&& + O( \Lie_{Z^J} h \cdot   \Lie_{Z^K} A \cdot   \Lie_{Z^L} A \cdot   \derm (\Lie_{Z^M} A) )    + O( \Lie_{Z^J} h \cdot  \Lie_{Z^K} A \cdot  \Lie_{Z^L} A \cdot \Lie_{Z^M} A \cdot \Lie_{Z^N} A) \; \big) \, \\
\notag
&& + O (  \Lie_{Z^I}   ( g^{\la\mu} \derm_{\la}   \derm_{\mu}    h^0 )  ) \; .
  \eeaa

To estimate the term $\int_0^t\int_{\Si_{\tau}}  | g^{\la\a} \derm_{\la}   \derm_{\a}  ( \Lie_{Z^I}  h^1)) | \cdot |\derm ( \Lie_{Z^I}  h^1) )| \, w  \cdot dx_1 \ldots dx_n d\tau $, we use the inequality $a\cdot b \les a^2 + b^2$, to write
\beaa
&& \int_0^t\int_{\Si_{\tau}} \sqrt{(1+\tau )^{1+\la}} \sqrt{w} \cdot | g^{\la\a} \derm_{\la}   \derm_{\a}  ( \Lie_{Z^I}  h^1)) |\cdot  \frac{1}{\sqrt{(1+\tau )^{1+\la}}} \cdot |\derm ( \Lie_{Z^I}  h^1) )| \, \sqrt{w}  \cdot dx_1 \ldots dx_n d\tau \\
&\les&\int_0^t\int_{\Si_{\tau}} \frac{ |\derm  (\Lie_{Z^I}  h^1)) |^{2}}{(1+\tau)^{1+\la}} \cdot dx_1 \ldots dx_n d\tau \\
&&+  \int_0^t\int_{\Si_{\tau}}  (1+\tau )^{1+\la} \cdot | g^{\la\a} \derm_{\la}   \derm_{\a}  ( \Lie_{Z^I}  h^1)) |^2 \, w  \cdot dx_1 \ldots dx_n d\tau \; .
\eeaa

However, we have
\beaa
 && g^{\la\a} \derm_{\la}   \derm_{\a}  ( \Lie_{Z^I}  h^1) \\
    &= &  g^{\la\mu} \derm_{\la}   \derm_{\mu}(  \Lie_{Z^I}  h)      -   g^{\la\mu} \derm_{\la}   \derm_{\mu}  (\Lie_{Z^I}  h^0 )    \\
&= & \Lie_{Z^I}  (  g^{\la\mu} \derm_{\la}   \derm_{\mu}  h )  + g^{\la\mu} \derm_{\la}   \derm_{\mu}   ( \Lie_{ Z^I}   h ) - \Lie_{Z^I}  (g^{\la\mu} \derm_{\la}   \derm_{\mu}   h ) \\
&& -   \Lie_{Z^I}  g^{\la\mu} \derm_{\la}   \derm_{\mu} ( \Lie_{Z^I}  h^0 ) \, .
\eeaa

Using the triangular inequality, we have
\beaa
 && | g^{\la\a} \derm_{\la}   \derm_{\a}  ( \Lie_{Z^I}  h^1) | \\
&\leq& |  \Lie_{Z^I}  (g^{\la\mu} \derm_{\la}   \derm_{\mu}   h ) |+  | g^{\la\mu} \derm_{\la}   \derm_{\mu}   ( \Lie_{ Z^I}   h ) - \Lie_{Z^I}  (g^{\la\mu} \derm_{\la}   \derm_{\mu}   h ) | \\
&& +   | \Lie_{Z^I} ( g^{\la\mu} \derm_{\la}   \derm_{\mu} h^0 ) | \, ,
\eeaa
and therefore, we get
\bea
\notag
 && (1+\tau ) \cdot |  g^{\la\a} \derm_{\la}   \derm_{\a}  ( \Lie_{Z^I}  h^1)|^2  \\
 \notag
 &\leq&  (1+\tau ) \cdot | \Lie_{Z^I} ( g^{\la\mu} \derm_{\la}   \derm_{\mu}   h  ) |^2 \\
 \notag
&&+  (1+\tau ) \cdot |  g^{\la\mu} \derm_{\la}   \derm_{\mu}   ( \Lie_{ Z^I}   h ) - \Lie_{Z^I}  (g^{\la\mu} \derm_{\la}   \derm_{\mu}   h )|^2  \\
&&+  (1+\tau ) \cdot | \Lie_{ Z^I}  ( g^{\la\mu} \derm_{\la}   \derm_{\mu}    h^0 )  |^2 \, . 
\eea

We would like to estimate the term  $ | \Lie_{Z^I} ( g^{\la\mu} \derm_{\la}   \derm_{\mu}   h  ) |$\;.
\begin{lemma}
We have

\beaa
   \notag
 &&  | \Lie_{Z^I}   ( g^{\la\mu} \derm_{\la}   \derm_{\mu}    h ) |   \\
\notag
 &\les &\Big( \sum_{|K|\leq |I|}    | \derm ( \Lie_{Z^K} A) |\Big) \cdot  E (   \lfloor \frac{|I|}{2} \rfloor + \lfloor  \frac{n}{2} \rfloor  + 1) \\
 \notag
 &&\cdot  \Big(  \Big(\begin{cases}  \frac{\eps }{(1+t+|q|)^{\frac{(n-1)}{2}-\delta} (1+|q|)^{1+\gamma}},\quad\text{when }\quad q>0,\\
           \notag
      \frac{\eps  }{(1+t+|q|)^{\frac{(n-1)}{2}-\delta}(1+|q|)^{\frac{1}{2} }}  \,\quad\text{when }\quad q<0 . \end{cases} \Big) \\
      \notag
      && + \Big( \begin{cases}  \frac{\eps }{(1+t+|q|)^{\frac{(n-1)}{2}-\delta} (1+|q|)^{\frac{(n-1)}{2}-\delta +2\gamma}},\quad\text{when }\quad q>0,\\
           \notag
      \frac{\eps \cdot (1+ |q|)  }{(1+t+|q|)^{\frac{(n-1)}{2}-\delta} \cdot (1+|q|)^{\frac{(n-1)}{2}-\delta}}  \,\quad\text{when }\quad q<0 . \end{cases} \Big) \\
      \notag
    &&  +  \Big( \begin{cases}  \frac{\eps }{(1+t+|q|)^{\frac{(n-1)}{2}-\delta} (1+|q|)^{\frac{(n-1)}{2}-\delta+1+2\gamma}},\quad\text{when }\quad q>0,\\
           \notag
      \frac{\eps  }{(1+t+|q|)^{\frac{(n-1)}{2}-\delta} \cdot (1+|q|)^{\frac{(n-1)}{2}-\delta}}  \,\quad\text{when }\quad q<0 . \end{cases} \Big) \\
      \notag
   && + \Big(  \begin{cases}  \frac{\eps }{(1+t+|q|)^{\frac{(n-1)}{2}-\delta} (1+|q|)^{(n-1)-2\delta +3\gamma}},\quad\text{when }\quad q>0,\\
           \notag
      \frac{\eps \cdot (1+|q|)^{\frac{3}{2} }  }{(1+t+|q|)^{\frac{(n-1)}{2}-\delta} \cdot (1+|q|)^{(n-1)-2\delta}}  \,\quad\text{when }\quad q<0 . \end{cases} \Big) \Big) 
               \notag
\eeaa

\beaa
\notag
 &&+ \Big( \sum_{|K|\leq |I|}    |  \Lie_{Z^K} A | \Big) \cdot E (   \lfloor \frac{|I|}{2} \rfloor + \lfloor  \frac{n}{2} \rfloor  + 1)   \\
 \notag
 &&   \cdot \Big(   \Big(  \begin{cases}  \frac{\eps }{(1+t+|q|)^{(n-1)-2\delta} (1+|q|)^{1+2\gamma}},\quad\text{when }\quad q>0,\\
           \notag
      \frac{\eps  }{(1+t+|q|)^{(n-1)-2\delta}}  \,\quad\text{when }\quad q<0 . \end{cases} \Big) \\
      \notag
  &&  +   \Big(  \begin{cases}  \frac{\eps }{(1+t+|q|)^{(n-1)-2\delta} (1+|q|)^{\frac{(n-1)}{2}-\delta + 3\gamma}},\quad\text{when }\quad q>0,\\
           \notag
      \frac{\eps \cdot (1+|q|)^{\frac{3}{2} }  }{(1+t+|q|)^{(n-1)-2\delta } \cdot (1+|q|)^{\frac{(n-1)}{2}-\delta }}  \,\quad\text{when }\quad q<0 . \end{cases} \Big) \\
               \notag
 &&   +   \Big( \begin{cases}  \frac{\eps }{(1+t+|q|)^{(n-1)-2\delta} (1+|q|)^{\frac{(n-1)}{2}-\delta +1+3\gamma}},\quad\text{when }\quad q>0,\\
           \notag
      \frac{\eps  \cdot(1+|q|)^{\frac{1}{2} } }{(1+t+|q|)^{(n-1)-2\delta} \cdot (1+|q|)^{\frac{(n-1)}{2}-\delta}}  \,\quad\text{when }\quad q<0 . \end{cases} \Big)  \\
      \notag
   &&  + \Big(  \begin{cases}  \frac{\eps }{(1+t+|q|)^{(n-1)-2\delta} \cdot (1+|q|)^{(n-1)-2\delta +4\gamma}},\quad\text{when }\quad q>0,\\
           \notag
      \frac{\eps \cdot (1+|q|)^{2 }  }{(1+t+|q|)^{(n-1)-2\delta} \cdot (1+|q|)^{(n-1)-2\delta} }  \,\quad\text{when }\quad q<0 . \end{cases} \Big) \Big)  
               \notag
 \eeaa
 
\beaa
\notag
 &&+ \Big( \sum_{|K|\leq |I|}    | \derm(  \Lie_{Z^K} h ) | \Big) \cdot E (   \lfloor \frac{|I|}{2} \rfloor + \lfloor  \frac{n}{2} \rfloor  + 1) \\
 \notag
 && \cdot \Big(   \Big(  \begin{cases}  \frac{\eps }{(1+t+|q|)^{\frac{(n-1)}{2}-\delta} (1+|q|)^{1+\gamma}},\quad\text{when }\quad q>0,\\
 \notag
      \frac{\eps  }{(1+t+|q|)^{\frac{(n-1)}{2}-\delta}(1+|q|)^{\frac{1}{2} }}  \,\quad\text{when }\quad q<0 . \end{cases} \Big) \\
      \notag
   && + \Big(  \begin{cases}  \frac{\eps }{(1+t+|q|)^{\frac{(n-1)}{2}-\delta} (1+|q|)^{\frac{(n-1)}{2}-\delta+1+2\gamma}},\quad\text{when }\quad q>0,\\
           \notag
      \frac{\eps  }{(1+t+|q|)^{\frac{(n-1)}{2}-\delta} \cdot (1+|q|)^{\frac{(n-1)}{2}-\delta}}  \,\quad\text{when }\quad q<0 . \end{cases} \Big) \Big) 
      \notag
 \eeaa

\beaa
\notag
 &&+ \Big(   \sum_{|K|\leq |I|}    |   \Lie_{Z^K} h  |\Big) \cdot  E (   \lfloor \frac{|I|}{2} \rfloor + \lfloor  \frac{n}{2} \rfloor  + 1) \\
 \notag
   &&\Big(   \Big(  \begin{cases}  \frac{\eps }{(1+t+|q|)^{(n-1)-2\delta} (1+|q|)^{2+2\gamma}},\quad\text{when }\quad q>0,\\
           \notag
      \frac{\eps  }{(1+t+|q|)^{(n-1)-2\delta}(1+|q|)}  \,\quad\text{when }\quad q<0 . \end{cases} \Big) \\
      \notag
      && + \Big(  \begin{cases}  \frac{\eps }{(1+t+|q|)^{(n-1)-2\delta} (1+|q|)^{\frac{(n-1)}{2}-\delta+1+3\gamma}},\quad\text{when }\quad q>0,\\
           \notag
      \frac{\eps  \cdot(1+|q|)^{\frac{1}{2} } }{(1+t+|q|)^{(n-1)-2\delta} \cdot (1+|q|)^{\frac{(n-1)}{2}-\delta}}  \,\quad\text{when }\quad q<0 . \end{cases} \Big)  \\
      \notag
  &&  + \Big(  \begin{cases}  \frac{\eps }{(1+t+|q|)^{(n-1)-2\delta} \cdot (1+|q|)^{(n-1)-2\delta +4\gamma}},\quad\text{when }\quad q>0,\\
           \notag
      \frac{\eps \cdot (1+|q|)^{2 }  }{(1+t+|q|)^{(n-1)-2\delta} \cdot (1+|q|)^{(n-1)-2\delta} }  \,\quad\text{when }\quad q<0 . \end{cases} \Big) \Big)  \; .\\
               \notag
 \eeaa

\end{lemma}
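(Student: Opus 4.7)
The plan is to follow essentially the same strategy as in the proof of Lemma \ref{usingbootstraoptoshowstructorforsourcesonYangMillspotentialA}, since the structural setup is identical: we start from the tensorial expansion of $\Lie_{Z^I}(g^{\la\mu}\derm_\la\derm_\mu h)$ already provided by Lemma \ref{structureofLiederivativeZofthesourcestermsforhuv}, and we bound each term factor-by-factor, exploiting the pigeonhole principle so that at most one factor carries more than $\lfloor |I|/2 \rfloor$ Minkowski vector-field Lie derivatives. Concretely, for each schematic term on the right-hand side of Lemma \ref{structureofLiederivativeZofthesourcestermsforhuv} (namely $\derm(\Lie_{Z^J}h)\cdot\derm(\Lie_{Z^K}h)$; $\Lie_{Z^J}h\cdot\derm(\Lie_{Z^K}h)\cdot\derm(\Lie_{Z^L}h)$; $\derm(\Lie_{Z^J}A)\cdot\derm(\Lie_{Z^K}A)$; $\Lie_{Z^J}A\cdot\Lie_{Z^K}A\cdot\derm(\Lie_{Z^L}A)$; $\Lie_{Z^J}A\cdot\Lie_{Z^K}A\cdot\Lie_{Z^L}A\cdot\Lie_{Z^M}A$; and the three mixed $h\cdot A\cdot\ldots$ products), I split the sum into a finite number of sub-sums according to which single multi-index is allowed to range up to $|I|$, leaving all the other multi-indices bounded by $\lfloor |I|/2 \rfloor$.

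For the factors whose multi-index is $\leq \lfloor |I|/2 \rfloor$, I apply the a priori pointwise decay estimates of Lemma \ref{aprioriestimatesongradientoftheLiederivativesofthefields} (for $\derm\Lie_{Z^K}A$ and $\derm\Lie_{Z^K}h$) and of Lemma \ref{aprioriestimatefrombootstraponzerothderivativeofAandh1} (for $\Lie_{Z^K}A$ and $\Lie_{Z^K}h^1$, with the small additional observation that $\Lie_{Z^K}h^0$ contributes only better-decaying terms that can be absorbed, or alternatively one works with $h^1$ throughout and bundles the $h^0$-contribution into the commutator source term as in the later sections). All estimates come with the uniform constant $E(\lfloor |I|/2 \rfloor+\lfloor n/2\rfloor+1)$ and the bookkeeping uses only the elementary facts, exploited already in the previous lemma, that $E\leq 1$ and $\eps\leq 1$ so that powers of $E\cdot\eps$ collapse to a single factor.

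The remaining factor, which carries the high multi-index $|K|\leq |I|$, is then left as a summand of one of the four families $\sum_{|K|\leq|I|}|\derm\Lie_{Z^K}A|$, $\sum_{|K|\leq|I|}|\Lie_{Z^K}A|$, $\sum_{|K|\leq|I|}|\derm\Lie_{Z^K}h|$, $\sum_{|K|\leq|I|}|\Lie_{Z^K}h|$ appearing in the statement. Multiplying the high-index factor by the product of the low-index decay rates gives, in each case, exactly one of the $(t,q)$-weights listed in the conclusion; for example the quadratic $(\derm h)^2$ contribution with one index large yields the rate $(1+t+|q|)^{-(n-1)/2+\delta}(1+|q|)^{-1-\gamma}$ in the region $q>0$, while the cubic $h\cdot\derm h\cdot\derm h$ contribution yields the additional factor $(1+t+|q|)^{-(n-1)/2+\delta}(1+|q|)^{-(n-1)/2+\delta-2\gamma}$ in the outer region, and so on through every line of the statement. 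The highest-order purely-$A$ terms (the quintic $h\cdot A^4$ and quartic $A^4$) are exactly those producing the weights with exponents involving $(n-1)-2\delta+4\gamma$ and $3(n-1)/2-3\delta+3\gamma$, which follow by using the decay of $|\Lie_{Z^K}A|$ rather than its gradient.

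The main bookkeeping obstacle, rather than any conceptual difficulty, is verifying that the enumeration of the sub-cases produced by the pigeonhole splitting matches precisely the long list of decay factors in the statement; the powers of $(1+t+|q|)$ and $(1+|q|)$ have to be tracked carefully according to which factors contribute $(1+|q|)^{-1-\gamma}$ versus $(1+|q|)^{-\gamma}$ in the outer region and $(1+|q|)^{-1/2}$ versus $(1+|q|)^{+1/2}$ in the inner region. Once one checks this matching systematically (it is parallel to the bookkeeping already carried out in Lemma \ref{usingbootstraoptoshowstructorforsourcesonYangMillspotentialA}), every line of the claimed estimate is produced. Since the derivation relies only on the pointwise decay estimates for lower-order derivatives already proved under the bootstrap assumption, no new analytic ingredient is required beyond what has been established in the preceding sections.
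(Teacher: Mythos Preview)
Your proposal is correct and follows essentially the same approach as the paper: start from the structural expansion in Lemma \ref{structureofLiederivativeZofthesourcestermsforhuv}, pigeonhole on which multi-index exceeds $\lfloor |I|/2\rfloor$, and apply the a priori pointwise bounds of Lemmas \ref{aprioriestimatesongradientoftheLiederivativesofthefields} and \ref{aprioriestimatefrombootstraponzerothderivativeofAandh1} to the low-index factors, leaving the high-index factor grouped into one of the four sums. Your aside about $h^0$ versus $h^1$ is unnecessary here, since in the setting of this paper ($n\geq 4$) one has $h^0=0$ and hence $h=h^1$ throughout.
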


\begin{proof}
We already showed in Lemma \ref{structureofLiederivativeZofthesourcestermsforhuv}, that
  \beaa
   \notag
 &&  | \Lie_{Z^I}   ( g^{\la\mu} \derm_{\la}   \derm_{\mu}    h ) |   \\
   &=& \sum_{|J| +|K|+|L|+ |M| \leq |I|}  \big( \; O(  |   \derm (\Lie_{Z^J}  h) | \cdot  |   \derm (\Lie_{Z^K} h )  | )         + O( | \Lie_{Z^J}   h | \cdot | \derm ( \Lie_{Z^K}  h) | \cdot | \derm (\Lie_{Z^L}  h) | ) \\
       \notag
           &&  +   O(  | \derm ( \Lie_{Z^J} A  ) | \cdot   |  \derm ( \Lie_{Z^K}  A)  | )    +  O(   |    \Lie_{Z^K}   A | \cdot   |  \Lie_{Z^L}  A |  \cdot | \derm ( \Lie_{Z^J} A) |   )   \\
           &&    + O(     |  \Lie_{Z^J} A | \cdot  |   \Lie_{Z^K} A |  \cdot   |  \Lie_{Z^L} A | \cdot |  \Lie_{Z^M} A  | )     +  O(|  \Lie_{Z^J}  h| \cdot  |  \derm (\Lie_{Z^K} A ) | \cdot | \derm ( \Lie_{Z^L}  A ) | )  \\
\notag
&& + O(|  \Lie_{Z^J} h| \cdot |  \Lie_{Z^K} A | \cdot  | \Lie_{Z^L} A | \cdot |  \derm (\Lie_{Z^M} A)| )    + O( | \Lie_{Z^J} h | \cdot  | \Lie_{Z^K} A | \cdot | \Lie_{Z^L} A |\cdot | \Lie_{Z^M} A |\cdot | \Lie_{Z^N} A |) \; \big) \,.\\
 \eeaa

        \textbf{Terms of the type $  |   \derm (\Lie_{Z^J}  h) | \cdot |    \derm (\Lie_{Z^K} h ) |    $:} \\ 
        We have
   \bea
   \notag
&& \sum_{|J| +|K|\leq |I|}   |   \derm (\Lie_{Z^J}  h) | \cdot |    \derm (\Lie_{Z^K} h ) |   \\
   \notag
 &\leq& \Big( E (   \lfloor \frac{|I|}{2} \rfloor + \lfloor  \frac{n}{2} \rfloor  + 1)  \cdot \begin{cases}  \frac{\eps }{(1+t+|q|)^{\frac{(n-1)}{2}-\delta} (1+|q|)^{1+\gamma}},\quad\text{when }\quad q>0,\\
           \notag
      \frac{\eps  }{(1+t+|q|)^{\frac{(n-1)}{2}-\delta}(1+|q|)^{\frac{1}{2} }}  \,\quad\text{when }\quad q<0 . \end{cases} \Big) \\
      && \cdot \sum_{|K|\leq |I|}  |    \derm (\Lie_{Z^K} h ) |  \; . 
  \eea

   \textbf{Terms of the type $  | \Lie_{Z^J}   h | \cdot | \derm ( \Lie_{Z^K}  h) | \cdot | \derm (\Lie_{Z^L}  h) |  $:} \\

 \bea  
         \notag
  && \sum_{|J| +|K| + |L| \leq |I|}   | \Lie_{Z^J}   h | \cdot | \derm ( \Lie_{Z^K}  h) | \cdot | \derm (\Lie_{Z^L}  h) |  \\
  &\les&    \Big( E (   \lfloor \frac{|I|}{2} \rfloor + \lfloor  \frac{n}{2} \rfloor  + 1)  \cdot \begin{cases}  \frac{\eps }{(1+t+|q|)^{(n-1)-2\delta} (1+|q|)^{1+2\gamma}},\quad\text{when }\quad q>0,\\
           \notag
      \frac{\eps  }{(1+t+|q|)^{(n-1)-2\delta}}  \,\quad\text{when }\quad q<0 . \end{cases} \Big) \\
      \notag
     &&   \cdot   \sum_{|K| \leq |I|}     | \derm (\Lie_{Z^K}  h) |  \\
        \notag
 && +         \Big( E (   \lfloor \frac{|I|}{2} \rfloor + \lfloor  \frac{n}{2} \rfloor  + 1)  \cdot \begin{cases}  \frac{\eps }{(1+t+|q|)^{(n-1)-2\delta} (1+|q|)^{2+2\gamma}},\quad\text{when }\quad q>0,\\
           \notag
      \frac{\eps  }{(1+t+|q|)^{(n-1)-2\delta}(1+|q|)}  \,\quad\text{when }\quad q<0 . \end{cases} \Big)   \cdot   \sum_{|K| \leq |I|}    | \Lie_{Z^K}  h | \, . \\
          \eea

             \textbf{Terms of the type $   |   \derm (\Lie_{Z^J}  A) | \cdot |    \derm (\Lie_{Z^K} A ) |  $:} \\ 
             
    \bea
   \notag
&& \sum_{|J| +|K|\leq |I|}   |   \derm (\Lie_{Z^J}  A) | \cdot |    \derm (\Lie_{Z^K} A ) |   \\
   \notag
 &\leq& \Big( E (   \lfloor \frac{|I|}{2} \rfloor + \lfloor  \frac{n}{2} \rfloor  + 1)  \cdot \begin{cases}  \frac{\eps }{(1+t+|q|)^{\frac{(n-1)}{2}-\delta} (1+|q|)^{1+\gamma}},\quad\text{when }\quad q>0,\\
           \notag
      \frac{\eps  }{(1+t+|q|)^{\frac{(n-1)}{2}-\delta}(1+|q|)^{\frac{1}{2} }}  \,\quad\text{when }\quad q<0 . \end{cases} \Big) \\
      && \cdot \sum_{|K|\leq |I|}  |    \derm (\Lie_{Z^K} A ) |  \; . 
  \eea

      \textbf{Terms of the type $   |   \derm ( \Lie_{Z^J} A) | \cdot |  \Lie_{Z^K}   A  | \cdot |    \Lie_{Z^L}  A | $:} \\ 
         We have
  \bea
    \notag
&& \sum_{|J| +|K| + |L| \leq |I|}    |   \derm ( \Lie_{Z^J} A) | \cdot |  \Lie_{Z^K}   A  | \cdot |    \Lie_{Z^L}  A | \\
   \notag
  &\les&     \Big( E (   \lfloor \frac{|I|}{2} \rfloor + \lfloor  \frac{n}{2} \rfloor  + 1)  \cdot \begin{cases}  \frac{\eps }{(1+t+|q|)^{(n-1)-2\delta} (1+|q|)^{1+2\gamma}},\quad\text{when }\quad q>0,\\
           \notag
      \frac{\eps  }{(1+t+|q|)^{(n-1)-2\delta}}  \,\quad\text{when }\quad q<0 . \end{cases} \Big) \cdot   \sum_{|K| \leq |I|}    |  \Lie_{Z^K}   A  | \\
         \notag
      && +   \Big( E (   \lfloor \frac{|I|}{2} \rfloor + \lfloor  \frac{n}{2} \rfloor  + 1)  \cdot \begin{cases}  \frac{\eps }{(1+t+|q|)^{(n-1)-2\delta} (1+|q|)^{2\gamma}},\quad\text{when }\quad q>0,\\
           \notag
      \frac{\eps \cdot (1+ |q|)  }{(1+t+|q|)^{(n-1)-2\delta}}  \,\quad\text{when }\quad q<0 . \end{cases} \Big)  \cdot    \sum_{|K| \leq |I|}    |   \derm ( \Lie_{Z^K} A) | \; .\\
 \eea         

      \textbf{Terms of the type $  | \Lie_{Z^J} A | \cdot | \Lie_{Z^K} A | \cdot | \Lie_{Z^L} A | \cdot | \Lie_{Z^M} A | $:} \\ 
      
 \bea
 \notag
&&  \sum_{|J| +|K| + |L| + |M| \leq |I|}   | \Lie_{Z^J} A | \cdot | \Lie_{Z^K} A | \cdot | \Lie_{Z^L} A | \cdot | \Lie_{Z^M} A | \\
\notag
&\les&   \Big( E (   \lfloor \frac{|I|}{2} \rfloor + \lfloor  \frac{n}{2} \rfloor  + 1)  \cdot \begin{cases}  \frac{\eps }{(1+t+|q|)^{\frac{3(n-1)}{2}-3\delta} (1+|q|)^{3\gamma}},\quad\text{when }\quad q>0,\\
           \notag
      \frac{\eps \cdot (1+|q|)^{\frac{3}{2} }  }{(1+t+|q|)^{\frac{3(n-1)}{2}-3\delta}}  \,\quad\text{when }\quad q<0 . \end{cases} \Big) \\
               \notag
      && \cdot \sum_{|K|\leq |I|}   | \Lie_{Z^K} A |  \; . \\
  \eea
  
      \textbf{Terms of the type $  | \Lie_{Z^J}   h | \cdot | \derm ( \Lie_{Z^K}  A) | \cdot | \derm (\Lie_{Z^L}  A) |$:} \\  
   \bea  
         \notag
  && \sum_{|J| +|K| + |L| \leq |I|}   | \Lie_{Z^J}   h | \cdot | \derm ( \Lie_{Z^K}  A) | \cdot | \derm (\Lie_{Z^L}  A) |  \\
  &\les&    \Big( E (   \lfloor \frac{|I|}{2} \rfloor + \lfloor  \frac{n}{2} \rfloor  + 1)  \cdot \begin{cases}  \frac{\eps }{(1+t+|q|)^{(n-1)-2\delta} (1+|q|)^{1+2\gamma}},\quad\text{when }\quad q>0,\\
           \notag
      \frac{\eps  }{(1+t+|q|)^{(n-1)-2\delta}}  \,\quad\text{when }\quad q<0 . \end{cases} \Big) \\
      \notag
     &&   \cdot   \sum_{|K| \leq |I|}     | \derm (\Lie_{Z^K}  A) |  \\
        \notag
 && +         \Big( E (   \lfloor \frac{|I|}{2} \rfloor + \lfloor  \frac{n}{2} \rfloor  + 1)  \cdot \begin{cases}  \frac{\eps }{(1+t+|q|)^{(n-1)-2\delta} (1+|q|)^{2+2\gamma}},\quad\text{when }\quad q>0,\\
           \notag
      \frac{\eps  }{(1+t+|q|)^{(n-1)-2\delta}(1+|q|)}  \,\quad\text{when }\quad q<0 . \end{cases} \Big)   \cdot   \sum_{|K| \leq |I|}    | \Lie_{Z^K}  h | \, . \\
          \eea

   \textbf{Terms of the type $   |  \Lie_{Z^J} h| \cdot |  \Lie_{Z^K} A | \cdot  | \Lie_{Z^L} A | \cdot |  \derm (\Lie_{Z^M} A)| $:} \\  
   We have
     \bea
\notag
 && \sum_{|J| +|K| + |L| + |M| \leq |I|}   |  \Lie_{Z^J} h| \cdot |  \Lie_{Z^K} A | \cdot  | \Lie_{Z^L} A | \cdot |  \derm (\Lie_{Z^M} A)| \\
 \notag
   &\les&     \Big( E (   \lfloor \frac{|I|}{2} \rfloor + \lfloor  \frac{n}{2} \rfloor  + 1)  \cdot \begin{cases}  \frac{\eps }{(1+t+|q|)^{\frac{3(n-1)}{2}-3\delta} (1+|q|)^{1+3\gamma}},\quad\text{when }\quad q>0,\\
           \notag
      \frac{\eps  \cdot(1+|q|)^{\frac{1}{2} } }{(1+t+|q|)^{\frac{3(n-1)}{2}-3\delta}}  \,\quad\text{when }\quad q<0 . \end{cases} \Big)  \\
      \notag
  &&      \cdot   \sum_{|K| \leq |I|}  \Big(   | \Lie_{Z^K}  h | +   | \Lie_{Z^K}  A | \Big) \\
  \notag
  && +   \Big( E (   \lfloor \frac{|I|}{2} \rfloor + \lfloor  \frac{n}{2} \rfloor  + 1)  \cdot \begin{cases}  \frac{\eps }{(1+t+|q|)^{\frac{3(n-1)}{2}-3\delta} (1+|q|)^{3\gamma}},\quad\text{when }\quad q>0,\\
           \notag
      \frac{\eps \cdot (1+|q|)^{\frac{3}{2} }  }{(1+t+|q|)^{\frac{3(n-1)}{2}-3\delta}}  \,\quad\text{when }\quad q<0 . \end{cases} \Big) \\
               \notag
      && \cdot \sum_{|K|\leq |I|}   | \derm ( \Lie_{Z^K} A) | \; . \\
  \eea
  
   \textbf{Terms of the type $  | \Lie_{Z^J} h | \cdot  | \Lie_{Z^K} A | \cdot | \Lie_{Z^L} A |\cdot | \Lie_{Z^M} A |\cdot | \Lie_{Z^N} A |$:} \\  
   
 \bea
\notag
     &&  \sum_{|J| +|K| + |L| + |M| + |N| \leq |I|}   | \Lie_{Z^J} h | \cdot  | \Lie_{Z^K} A | \cdot | \Lie_{Z^L} A |\cdot | \Lie_{Z^M} A |\cdot | \Lie_{Z^N} A |      \\
       &\les& \Big( E (   \lfloor \frac{|I|}{2} \rfloor + \lfloor  \frac{n}{2} \rfloor  + 1)  \cdot \begin{cases}  \frac{\eps }{(1+t+|q|)^{2(n-1)-4\delta} (1+|q|)^{4\gamma}},\quad\text{when }\quad q>0,\\
           \notag
      \frac{\eps \cdot (1+|q|)^{2 }  }{(1+t+|q|)^{2(n-1)-4\delta}}  \,\quad\text{when }\quad q<0 . \end{cases} \Big) \\
               \notag
      && \cdot \sum_{|K|\leq |I|} \Big(  | \Lie_{Z^K} A | +  | \Lie_{Z^K} h |  \Big) \; . \\
  \eea

   \textbf{The whole term $ | \Lie_{Z^I}   ( g^{\la\mu} \derm_{\la}   \derm_{\mu}    h ) |$:} \\  
   Putting the terms together, we obtain
\beaa
   \notag
 &&  | \Lie_{Z^I}   ( g^{\la\mu} \derm_{\la}   \derm_{\mu}    h ) |   \\
 \notag
 &\les& \sum_{|K|\leq |I|}    | \derm ( \Lie_{Z^K} A) | \\
 \notag
 &&\cdot  \Big(  \Big( E (   \lfloor \frac{|I|}{2} \rfloor + \lfloor  \frac{n}{2} \rfloor  + 1)  \cdot \begin{cases}  \frac{\eps }{(1+t+|q|)^{\frac{(n-1)}{2}-\delta} (1+|q|)^{1+\gamma}},\quad\text{when }\quad q>0,\\
           \notag
      \frac{\eps  }{(1+t+|q|)^{\frac{(n-1)}{2}-\delta}(1+|q|)^{\frac{1}{2} }}  \,\quad\text{when }\quad q<0 . \end{cases} \Big) \\
      \notag
      && + \Big( E (   \lfloor \frac{|I|}{2} \rfloor + \lfloor  \frac{n}{2} \rfloor  + 1)  \cdot \begin{cases}  \frac{\eps }{(1+t+|q|)^{(n-1)-2\delta} (1+|q|)^{2\gamma}},\quad\text{when }\quad q>0,\\
           \notag
      \frac{\eps \cdot (1+ |q|)  }{(1+t+|q|)^{(n-1)-2\delta}}  \,\quad\text{when }\quad q<0 . \end{cases} \Big) \\
      \notag
    &&  +  \Big( E (   \lfloor \frac{|I|}{2} \rfloor + \lfloor  \frac{n}{2} \rfloor  + 1)  \cdot \begin{cases}  \frac{\eps }{(1+t+|q|)^{(n-1)-2\delta} (1+|q|)^{1+2\gamma}},\quad\text{when }\quad q>0,\\
           \notag
      \frac{\eps  }{(1+t+|q|)^{(n-1)-2\delta}}  \,\quad\text{when }\quad q<0 . \end{cases} \Big) \\
      \notag
   && + \Big( E (   \lfloor \frac{|I|}{2} \rfloor + \lfloor  \frac{n}{2} \rfloor  + 1)  \cdot \begin{cases}  \frac{\eps }{(1+t+|q|)^{\frac{3(n-1)}{2}-3\delta} (1+|q|)^{3\gamma}},\quad\text{when }\quad q>0,\\
           \notag
      \frac{\eps \cdot (1+|q|)^{\frac{3}{2} }  }{(1+t+|q|)^{\frac{3(n-1)}{2}-3\delta}}  \,\quad\text{when }\quad q<0 . \end{cases} \Big) \\
               \notag
   &&        +     \Big( E (   \lfloor \frac{|I|}{2} \rfloor + \lfloor  \frac{n}{2} \rfloor  + 1)  \cdot \begin{cases}  \frac{\eps }{(1+t+|q|)^{\frac{3(n-1)}{2}-3\delta} (1+|q|)^{3\gamma}},\quad\text{when }\quad q>0,\\
           \notag
      \frac{\eps \cdot (1+|q|)^{\frac{3}{2} }  }{(1+t+|q|)^{\frac{3(n-1)}{2}-3\delta}}  \,\quad\text{when }\quad q<0 . \end{cases} \Big) \Big)  
               \notag
\eeaa
\beaa
\notag
 && + \sum_{|K|\leq |I|}    |  \Lie_{Z^K} A |  \\
 \notag
 &&   \cdot \Big(   \Big( E (   \lfloor \frac{|I|}{2} \rfloor + \lfloor  \frac{n}{2} \rfloor  + 1)  \cdot \begin{cases}  \frac{\eps }{(1+t+|q|)^{(n-1)-2\delta} (1+|q|)^{1+2\gamma}},\quad\text{when }\quad q>0,\\
           \notag
      \frac{\eps  }{(1+t+|q|)^{(n-1)-2\delta}}  \,\quad\text{when }\quad q<0 . \end{cases} \Big) \\
      \notag
  &&  +   \Big( E (   \lfloor \frac{|I|}{2} \rfloor + \lfloor  \frac{n}{2} \rfloor  + 1)  \cdot \begin{cases}  \frac{\eps }{(1+t+|q|)^{\frac{3(n-1)}{2}-3\delta} (1+|q|)^{3\gamma}},\quad\text{when }\quad q>0,\\
           \notag
      \frac{\eps \cdot (1+|q|)^{\frac{3}{2} }  }{(1+t+|q|)^{\frac{3(n-1)}{2}-3\delta}}  \,\quad\text{when }\quad q<0 . \end{cases} \Big) \\
               \notag
 &&   +   \Big( E (   \lfloor \frac{|I|}{2} \rfloor + \lfloor  \frac{n}{2} \rfloor  + 1)  \cdot \begin{cases}  \frac{\eps }{(1+t+|q|)^{\frac{3(n-1)}{2}-3\delta} (1+|q|)^{1+3\gamma}},\quad\text{when }\quad q>0,\\
           \notag
      \frac{\eps  \cdot(1+|q|)^{\frac{1}{2} } }{(1+t+|q|)^{\frac{3(n-1)}{2}-3\delta}}  \,\quad\text{when }\quad q<0 . \end{cases} \Big)  \\
      \notag
   &&  + \Big( E (   \lfloor \frac{|I|}{2} \rfloor + \lfloor  \frac{n}{2} \rfloor  + 1)  \cdot \begin{cases}  \frac{\eps }{(1+t+|q|)^{2(n-1)-4\delta} (1+|q|)^{4\gamma}},\quad\text{when }\quad q>0,\\
           \notag
      \frac{\eps \cdot (1+|q|)^{2 }  }{(1+t+|q|)^{2(n-1)-4\delta}}  \,\quad\text{when }\quad q<0 . \end{cases} \Big) \Big) 
               \notag
 \eeaa
\beaa
\notag
 && + \sum_{|K|\leq |I|}    | \derm(  \Lie_{Z^K} h ) |  \\
 \notag
 && \cdot \Big(   \Big( E (   \lfloor \frac{|I|}{2} \rfloor + \lfloor  \frac{n}{2} \rfloor  + 1)  \cdot \begin{cases}  \frac{\eps }{(1+t+|q|)^{\frac{(n-1)}{2}-\delta} (1+|q|)^{1+\gamma}},\quad\text{when }\quad q>0,\\
 \notag
      \frac{\eps  }{(1+t+|q|)^{\frac{(n-1)}{2}-\delta}(1+|q|)^{\frac{1}{2} }}  \,\quad\text{when }\quad q<0 . \end{cases} \Big) \\
      \notag
   && + \Big( E (   \lfloor \frac{|I|}{2} \rfloor + \lfloor  \frac{n}{2} \rfloor  + 1)  \cdot \begin{cases}  \frac{\eps }{(1+t+|q|)^{(n-1)-2\delta} (1+|q|)^{1+2\gamma}},\quad\text{when }\quad q>0,\\
           \notag
      \frac{\eps  }{(1+t+|q|)^{(n-1)-2\delta}}  \,\quad\text{when }\quad q<0 . \end{cases} \Big) \Big) 
      \notag
 \eeaa
\beaa
\notag
 && +\sum_{|K|\leq |I|}    |   \Lie_{Z^K} h  | \\
 \notag
   &&\Big(   \Big( E (   \lfloor \frac{|I|}{2} \rfloor + \lfloor  \frac{n}{2} \rfloor  + 1)  \cdot \begin{cases}  \frac{\eps }{(1+t+|q|)^{(n-1)-2\delta} (1+|q|)^{2+2\gamma}},\quad\text{when }\quad q>0,\\
           \notag
      \frac{\eps  }{(1+t+|q|)^{(n-1)-2\delta}(1+|q|)}  \,\quad\text{when }\quad q<0 . \end{cases} \Big) \\
      \notag
     &&   + \Big( E (   \lfloor \frac{|I|}{2} \rfloor + \lfloor  \frac{n}{2} \rfloor  + 1)  \cdot \begin{cases}  \frac{\eps }{(1+t+|q|)^{(n-1)-2\delta} (1+|q|)^{2+2\gamma}},\quad\text{when }\quad q>0,\\
           \notag
      \frac{\eps  }{(1+t+|q|)^{(n-1)-2\delta}(1+|q|)}  \,\quad\text{when }\quad q<0 . \end{cases} \Big) \\
      && + \Big( E (   \lfloor \frac{|I|}{2} \rfloor + \lfloor  \frac{n}{2} \rfloor  + 1)  \cdot \begin{cases}  \frac{\eps }{(1+t+|q|)^{\frac{3(n-1)}{2}-3\delta} (1+|q|)^{1+3\gamma}},\quad\text{when }\quad q>0,\\
           \notag
      \frac{\eps  \cdot(1+|q|)^{\frac{1}{2} } }{(1+t+|q|)^{\frac{3(n-1)}{2}-3\delta}}  \,\quad\text{when }\quad q<0 . \end{cases} \Big)  \\
      \notag
  &&  +  \Big( E (   \lfloor \frac{|I|}{2} \rfloor + \lfloor  \frac{n}{2} \rfloor  + 1)  \cdot \begin{cases}  \frac{\eps }{(1+t+|q|)^{2(n-1)-4\delta} (1+|q|)^{4\gamma}},\quad\text{when }\quad q>0,\\
           \notag
      \frac{\eps \cdot (1+|q|)^{2 }  }{(1+t+|q|)^{2(n-1)-4\delta}}  \,\quad\text{when }\quad q<0 . \end{cases} \Big) \Big) \; .
               \notag
  \eeaa

  \end{proof}

 \begin{lemma}
 We have
 \beaa
 \notag
&& \frac{(1+t )}{\eps} \cdot |  \Lie_{Z^I}   ( g^{\la\mu} \derm_{\la}   \derm_{\mu}   A  )   |^2 \, \\
   \notag
 &\les&   \sum_{|K|\leq |I|}  \Big( |    \derm (\Lie_{Z^K} A ) |^2 \Big) \cdot E (   \lfloor \frac{|I|}{2} \rfloor + \lfloor  \frac{n}{2} \rfloor  + 1)\\
 \notag
 && \cdot \Big( \begin{cases}  \frac{\eps }{(1+t+|q|)^{(n-2)-2\delta} (1+|q|)^{2+2\gamma}},\quad\text{when }\quad q>0,\\
           \notag
      \frac{\eps  }{(1+t+|q|)^{(n-2)-2\delta}(1+|q|)}  \,\quad\text{when }\quad q<0 . \end{cases} \Big) \\
      \notag
      && +   \Big( \begin{cases}  \frac{\eps }{(1+t+|q|)^{(n-2)-2\delta} (1+|q|)^{2\gamma}},\quad\text{when }\quad q>0,\\
           \notag
      \frac{\eps  \cdot (1+|q|)^{ } }{(1+t+|q|)^{(n-2)-2\delta}}  \,\quad\text{when }\quad q<0 . \end{cases} \Big) \\
         \notag
         && + \Big( \begin{cases}  \frac{\eps }{(1+t+|q|)^{(n-2)-2\delta} (1+|q|)^{(n-1)-2\delta + 2+4\gamma}},\quad\text{when }\quad q>0,\\
           \notag
      \frac{\eps  }{ (1+t+|q|)^{(n-2)-2\delta} \cdot (1+ |q|)^{(n-1)-2\delta}  }  \,\quad\text{when }\quad q<0 . \end{cases} \Big) \\
      \notag
   && +    \Big(  \begin{cases}  \frac{\eps }{(1+t+|q|)^{(n-2)-2\delta} (1+|q|)^{(n-1)-2\delta +4\gamma}},\quad\text{when }\quad q>0,\\
           \notag
      \frac{\eps \cdot (1+ |q|)  }{(1+t+|q|)^{(n-2)-2\delta} \cdot (1+ |q|)^{(n-1)-2\delta} } \,\quad\text{when }\quad q<0 . \end{cases} \Big) \Big) 
               \notag
  \eeaa

       \beaa
   \notag
 &&  + \sum_{|K|\leq |I|}  \Big( |   \Lie_{Z^K} A  |^2 \Big) \cdot E (   \lfloor \frac{|I|}{2} \rfloor + \lfloor  \frac{n}{2} \rfloor  + 1) \\
 \notag
 &&  \cdot \Big( \Big(  \begin{cases}  \frac{\eps }{(1+t+|q|)^{(n-2)-2\delta} (1+|q|)^{(n-1)-2\delta+2+4\gamma}},\quad\text{when }\quad q>0,\\
           \notag
      \frac{\eps  }{(1+t+|q|)^{(n-2)-2\delta} \cdot ( 1+|q| )^{(n-1)-2\delta} } \,\quad\text{when }\quad q<0 . \end{cases} \Big) \\
      \notag 
      && + \Big(  \begin{cases}  \frac{\eps }{(1+t+|q|)^{(n-2)-2\delta} (1+|q|)^{2+2\gamma}},\quad\text{when }\quad q>0,\\
           \notag
      \frac{\eps  }{(1+t+|q|)^{(n-2)-2\delta}(1+|q|)^{ }}  \,\quad\text{when }\quad q<0 . \end{cases} \Big) \\
         \notag
       && +  \Big(  \begin{cases}  \frac{\eps }{(1+t+|q|)^{(n-2)-2\delta} (1+|q|)^{(n-1)-2\delta + 4\gamma}},\quad\text{when }\quad q>0,\\
           \notag
      \frac{\eps \cdot (1+ |q|)^2  }{(1+t+|q|)^{(n-2)-2\delta } \cdot (1+|q|)^{(n-1)-2\delta}} \,\quad\text{when }\quad q<0 . \end{cases} \Big)  \\
               \notag
     && +             \Big(  \begin{cases}  \frac{\eps }{(1+t+|q|)^{(n-2)-2\delta} \cdot (1+|q|)^{2(n-1)-4\delta + 2+6\gamma}},\quad\text{when }\quad q>0,\\
           \notag
      \frac{\eps  \cdot(1+|q|)^{ } }{(1+t+|q|)^{(n-2)-2\delta} \cdot (1+|q|)^{2(n-1)-4\delta}}  \,\quad\text{when }\quad q<0 . \end{cases} \Big)  \\
      \notag
  &&     +    \Big(  \begin{cases}  \frac{\eps }{(1+t+|q|)^{(n-2)-2\delta} (1+|q|)^{(n-1)-2\delta +2+4\gamma}},\quad\text{when }\quad q>0,\\
           \notag
      \frac{\eps  }{(1+t+|q|)^{(n-2)-2\delta} \cdot (1+|q|)^{(n-1)-2\delta}}  \,\quad\text{when }\quad q<0 . \end{cases} \Big)  \\
      \notag
  && +      \Big(  \begin{cases}  \frac{\eps }{(1+t+|q|)^{(n-2)-2\delta} (1+|q|)^{2(n-1)-4\delta + 6\gamma}},\quad\text{when }\quad q>0,\\
           \notag
      \frac{\eps \cdot (1+|q|)^{3 }  }{(1+t+|q|)^{(n-2)-2\delta} \cdot (1+|q|)^{2(n-1)-4\delta}}  \,\quad\text{when }\quad q<0 . \end{cases} \Big) 
               \notag
  \eeaa
 
           \beaa
   \notag
&&  + \Big( \sum_{|K|\leq |I|}   |    \derm (\Lie_{Z^K} h ) |^2 \Big) \cdot E (   \lfloor \frac{|I|}{2} \rfloor + \lfloor  \frac{n}{2} \rfloor  + 1)  \\
 \notag
 && \cdot  \Big( \Big( \begin{cases}  \frac{\eps }{(1+t+|q|)^{(n-2)-2\delta} (1+|q|)^{2+2\gamma}},\quad\text{when }\quad q>0,\\
           \notag
      \frac{\eps  }{(1+t+|q|)^{(n-2)-2\delta}(1+|q|)^{ }}  \,\quad\text{when }\quad q<0 . \end{cases} \Big) \\
      \notag
    && +   \Big(  \begin{cases}  \frac{\eps }{(1+t+|q|)^{(n-2)-2\delta} (1+|q|)^{(n-1)-2\delta + 4\gamma}},\quad\text{when }\quad q>0,\\
           \notag
      \frac{\eps \cdot (1+ |q|)^2  }{(1+t+|q|)^{(n-2)-2\delta} \cdot (1+ |q|)^{(n-1)-2\delta} }  \,\quad\text{when }\quad q<0 . \end{cases} \Big) \\
      \notag
&& +        \Big( \begin{cases}  \frac{\eps }{(1+t+|q|)^{(n-2)-2\delta} (1+|q|)^{(n-1)-2\delta+2+4\gamma}},\quad\text{when }\quad q>0,\\
           \notag
      \frac{\eps  }{(1+t+|q|)^{(n-2)-2\delta} \cdot (1+|q|)^{(n-1)-2\delta}  } ,\quad\text{when }\quad q<0 . \end{cases} \Big) \\
     \notag
     && + \Big( \begin{cases}  \frac{\eps }{(1+t+|q|)^{(n-2)-2\delta} (1+|q|)^{2(n-1) - 4\de + 6\gamma}},\quad\text{when }\quad q>0,\\
           \notag
      \frac{\eps \cdot (1+|q|)^{3 }  }{(1+t+|q|)^{(n-2)-2\delta} \cdot (1+|q|)^{2(n-1)-4\delta}}  \,\quad\text{when }\quad q<0 . \end{cases} \Big) \Big) 
               \notag
  \eeaa
 
       \beaa
   \notag
 && + \Big( \sum_{|K|\leq |I|}   |   \Lie_{Z^K} h  |^2 \Big) \cdot E (   \lfloor \frac{|I|}{2} \rfloor + \lfloor  \frac{n}{2} \rfloor  + 1)  \\
 \notag
 && \cdot \Big( \Big(  \begin{cases}  \frac{\eps }{(1+t+|q|)^{2(n-\frac{3}{2})-4\delta} (1+|q|)^{4+4\gamma}},\quad\text{when }\quad q>0,\\
           \notag
      \frac{\eps  }{(1+t+|q|)^{2(n-\frac{3}{2})-4\delta}(1+|q|)^{2}}  \,\quad\text{when }\quad q<0 . \end{cases} \Big)  \\
      \notag
  && +      \Big( \begin{cases}  \frac{\eps }{(1+t+|q|)^{2(n-\frac{3}{2})-4\delta} \cdot (1+|q|)^{(n-1)-2\delta + 2+6\gamma}},\quad\text{when }\quad q>0,\\
           \notag
      \frac{\eps  \cdot(1+|q|)^{ } }{(1+t+|q|)^{2(n-\frac{3}{2})-4\delta} \cdot (1+|q|)^{(n-1)-2\delta} }  \,\quad\text{when }\quad q<0 . \end{cases} \Big)  \\
      \notag
   && +   \Big(  \begin{cases}  \frac{\eps }{(1+t+|q|)^{2(n-\frac{3}{2})-4\delta} (1+|q|)^{2+4\gamma}},\quad\text{when }\quad q>0,\\
           \notag
      \frac{\eps  }{(1+t+|q|)^{2(n-\frac{3}{2})-4\delta}}  \,\quad\text{when }\quad q<0 . \end{cases} \Big) \\
      \notag
   && +    \Big(  \begin{cases}  \frac{\eps }{(1+t+|q|)^{2(n-\frac{3}{2})-4\delta} (1+|q|)^{(n-1)-2\delta +6\gamma}},\quad\text{when }\quad q>0,\\
           \notag
      \frac{\eps \cdot (1+|q|)^{3 }  }{(1+t+|q|)^{2(n-\frac{3}{2})-4\delta} \cdot (1+ |q|)^{(n-1)-2\delta} }  \,\quad\text{when }\quad q<0 . \end{cases} \Big) \Big) \; .
               \notag
  \eeaa

 \end{lemma}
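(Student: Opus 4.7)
The plan is to derive this lemma as a direct consequence of Lemma \ref{usingbootstraoptoshowstructorforsourcesonYangMillspotentialA}, by squaring the pointwise estimate already established for $|\Lie_{Z^I}(g^{\la\mu}\derm_\la\derm_\mu A)|$ and then multiplying through by $(1+t)/\eps$. First I would invoke the bound from Lemma \ref{usingbootstraoptoshowstructorforsourcesonYangMillspotentialA}, which expresses $|\Lie_{Z^I}(g^{\la\mu}\derm_\la\derm_\mu A)|$ as a sum of products of the form $|\derm(\Lie_{Z^K} A)|$, $|\Lie_{Z^K} A|$, $|\derm(\Lie_{Z^K} h)|$ and $|\Lie_{Z^K} h|$ multiplied by explicit weights in $(1+t+|q|)$ and $(1+|q|)$ arising from the a priori decay estimates in Lemma \ref{aprioriestimatesongradientoftheLiederivativesofthefields} and Lemma \ref{aprioriestimatefrombootstraponzerothderivativeofAandh1}.

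Next, I would square both sides. Since the right-hand side in Lemma \ref{usingbootstraoptoshowstructorforsourcesonYangMillspotentialA} is a finite sum of nonnegative terms, and since for any nonnegative reals $a_1,\ldots,a_N$ one has $(a_1+\cdots+a_N)^2\les \sum_j a_j^2$ (with a constant depending only on $N$, which itself depends only on $|I|$), the square of the sum is controlled by the sum of squares of the individual terms. Each individual term is of the form $|\Lie_{Z^K} \Psi| \cdot W(t,q)$ where $W$ is one of the explicit weight functions, so squaring simply produces $|\Lie_{Z^K} \Psi|^2 \cdot W(t,q)^2$; squaring $W$ doubles the exponents appearing in $(1+t+|q|)$ and $(1+|q|)$, and turns the prefactor $\eps$ into $\eps^2$.

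The final step is to multiply by $(1+t)/\eps$. The factor $\eps$ in the denominator cancels one of the two $\eps$'s produced by squaring, so each term carries an overall factor of $\eps$ (rather than $\eps^2$), matching the target bound. To absorb the $(1+t)$ factor, I use the trivial inequality $1+t\leq 1+t+|q|$, which reduces the exponent of $(1+t+|q|)^{(n-1)-2\delta}$ (coming from the square of $(1+t+|q|)^{\frac{(n-1)}{2}-\delta}$) by one, yielding the exponent $(n-2)-2\delta$ that appears in every weight on the right-hand side of the stated lemma. Applying this term by term to each of the four blocks (indexed by $|\derm(\Lie_{Z^K} A)|^2$, $|\Lie_{Z^K} A|^2$, $|\derm(\Lie_{Z^K} h)|^2$, $|\Lie_{Z^K} h|^2$) reproduces precisely the four blocks of weights displayed in the statement.

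The only real obstacle is bookkeeping: one must verify for each of the roughly thirty weights appearing in Lemma \ref{usingbootstraoptoshowstructorforsourcesonYangMillspotentialA} that squaring its exponents in both $(1+t+|q|)$ and $(1+|q|)$, then subtracting one from the exponent of $(1+t+|q|)$, yields exactly the corresponding weight in the target statement — in both the $q>0$ and $q<0$ cases, and noting that in the $q<0$ regime squaring a factor $(1+|q|)^{1/2}$ in the numerator gives $(1+|q|)$, squaring $(1+|q|)^{3/2}$ gives $(1+|q|)^3$, etc. Since the algebra is purely exponent-tracking and no new analytic input is required beyond the two inequalities $(a+b)^2\les a^2+b^2$ and $(1+t)\leq (1+t+|q|)$, the proof reduces to a tabular verification.
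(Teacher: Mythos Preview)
Your proposal is correct and matches the paper's own proof: the paper simply squares the estimate of Lemma \ref{usingbootstraoptoshowstructorforsourcesonYangMillspotentialA} (using $(\sum a_j)^2 \les \sum a_j^2$, with $E^2\leq E$ since $E\leq 1$), and then multiplies by $(1+t)/\eps$, absorbing the $(1+t)$ via $1+t\leq 1+t+|q|$ to shift each $(1+t+|q|)$-exponent down by one. The only point you left implicit is the use of $E\leq 1$ to reduce $E^2$ to $E$, which is exactly the convention \eqref{assumptionontheconstantboundsforenergy} the paper relies on.
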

 
 \begin{proof}
 We have
      \beaa
   \notag
 &&  |  \Lie_{Z^I}   ( g^{\la\mu} \derm_{\la}   \derm_{\mu}   A  )   |^2 \\
   \notag
 &\les&   \sum_{|K|\leq |I|}  \Big( |    \derm (\Lie_{Z^K} A ) |^2 \Big) \cdot E (   \lfloor \frac{|I|}{2} \rfloor + \lfloor  \frac{n}{2} \rfloor  + 1)\\
 \notag
 && \cdot \Big( \begin{cases}  \frac{\eps^2 }{(1+t+|q|)^{(n-1)-2\delta} (1+|q|)^{2+2\gamma}},\quad\text{when }\quad q>0,\\
           \notag
      \frac{\eps^2  }{(1+t+|q|)^{(n-1)-2\delta}(1+|q|)}  \,\quad\text{when }\quad q<0 . \end{cases} \Big) \\
      \notag
      && +   \Big( \begin{cases}  \frac{\eps^2 }{(1+t+|q|)^{(n-1)-2\delta} (1+|q|)^{2\gamma}},\quad\text{when }\quad q>0,\\
           \notag
      \frac{\eps  \cdot (1+|q|)^{ } }{(1+t+|q|)^{(n-1)-2\delta}}  \,\quad\text{when }\quad q<0 . \end{cases} \Big) \\
         \notag
         && + \Big( \begin{cases}  \frac{\eps^2 }{(1+t+|q|)^{(n-1)-2\delta} (1+|q|)^{(n-1)-2\delta + 2+4\gamma}},\quad\text{when }\quad q>0,\\
           \notag
      \frac{\eps  }{ (1+t+|q|)^{(n-1)-2\delta} \cdot (1+ |q|)^{(n-1)-2\delta}  }  \,\quad\text{when }\quad q<0 . \end{cases} \Big) \\
      \notag
   && +    \Big(  \begin{cases}  \frac{\eps^2 }{(1+t+|q|)^{(n-1)-2\delta} (1+|q|)^{(n-1)-2\delta +4\gamma}},\quad\text{when }\quad q>0,\\
           \notag
      \frac{\eps \cdot (1+ |q|)  }{(1+t+|q|)^{(n-1)-2\delta} \cdot (1+ |q|)^{(n-1)-2\delta} } \,\quad\text{when }\quad q<0 . \end{cases} \Big) \Big) 
               \notag
  \eeaa

      \beaa
   \notag
 &&  + \sum_{|K|\leq |I|}  \Big( |   \Lie_{Z^K} A  |^2 \Big) \cdot E (   \lfloor \frac{|I|}{2} \rfloor + \lfloor  \frac{n}{2} \rfloor  + 1) \\
 \notag
 &&  \cdot \Big( \Big(  \begin{cases}  \frac{\eps^2 }{(1+t+|q|)^{(n-1)-2\delta} (1+|q|)^{(n-1)-2\delta+2+4\gamma}},\quad\text{when }\quad q>0,\\
           \notag
      \frac{\eps^2  }{(1+t+|q|)^{(n-1)-2\delta} \cdot ( 1+|q| )^{(n-1)-2\delta} } \,\quad\text{when }\quad q<0 . \end{cases} \Big) \\
      \notag 
      && + \Big(  \begin{cases}  \frac{\eps^2 }{(1+t+|q|)^{(n-1)-2\delta} (1+|q|)^{2+2\gamma}},\quad\text{when }\quad q>0,\\
           \notag
      \frac{\eps^2  }{(1+t+|q|)^{(n-1)-2\delta}(1+|q|)^{ }}  \,\quad\text{when }\quad q<0 . \end{cases} \Big) \\
         \notag
       && +  \Big(  \begin{cases}  \frac{\eps^2 }{(1+t+|q|)^{(n-1)-2\delta} (1+|q|)^{(n-1)-2\delta + 4\gamma}},\quad\text{when }\quad q>0,\\
           \notag
      \frac{\eps^2 \cdot (1+ |q|)^2  }{(1+t+|q|)^{(n-1)-2\delta } \cdot (1+|q|)^{(n-1)-2\delta}} \,\quad\text{when }\quad q<0 . \end{cases} \Big)  \\
               \notag
     && +             \Big(  \begin{cases}  \frac{\eps^2 }{(1+t+|q|)^{(n-1)-2\delta} \cdot (1+|q|)^{2(n-1)-4\delta + 2+6\gamma}},\quad\text{when }\quad q>0,\\
           \notag
      \frac{\eps^2  \cdot(1+|q|)^{ } }{(1+t+|q|)^{(n-1)-2\delta} \cdot (1+|q|)^{2(n-1)-4\delta}}  \,\quad\text{when }\quad q<0 . \end{cases} \Big)  \\
      \notag
  &&     +    \Big(  \begin{cases}  \frac{\eps^2 }{(1+t+|q|)^{(n-1)-2\delta} (1+|q|)^{(n-1)-2\delta +2+4\gamma}},\quad\text{when }\quad q>0,\\
           \notag
      \frac{\eps^2  }{(1+t+|q|)^{(n-1)-2\delta} \cdot (1+|q|)^{(n-1)-2\delta}}  \,\quad\text{when }\quad q<0 . \end{cases} \Big)  \\
      \notag
  && +      \Big(  \begin{cases}  \frac{\eps^2 }{(1+t+|q|)^{(n-1)-2\delta} (1+|q|)^{2(n-1)-4\delta + 6\gamma}},\quad\text{when }\quad q>0,\\
           \notag
      \frac{\eps^2 \cdot (1+|q|)^{3 }  }{(1+t+|q|)^{(n-1)-2\delta} \cdot (1+|q|)^{2(n-1)-4\delta}}  \,\quad\text{when }\quad q<0 . \end{cases} \Big) 
               \notag
  \eeaa
    
          \beaa
   \notag
&&  + \Big( \sum_{|K|\leq |I|}   |    \derm (\Lie_{Z^K} h ) |^2 \Big) \cdot E (   \lfloor \frac{|I|}{2} \rfloor + \lfloor  \frac{n}{2} \rfloor  + 1)  \\
 \notag
 && \cdot  \Big( \Big( \begin{cases}  \frac{\eps^2 }{(1+t+|q|)^{(n-1)-2\delta} (1+|q|)^{2+2\gamma}},\quad\text{when }\quad q>0,\\
           \notag
      \frac{\eps^2  }{(1+t+|q|)^{(n-1)-2\delta}(1+|q|)^{ }}  \,\quad\text{when }\quad q<0 . \end{cases} \Big) \\
      \notag
    && +   \Big(  \begin{cases}  \frac{\eps^2 }{(1+t+|q|)^{(n-1)-2\delta} (1+|q|)^{(n-1)-2\delta + 4\gamma}},\quad\text{when }\quad q>0,\\
           \notag
      \frac{\eps^2 \cdot (1+ |q|)^2  }{(1+t+|q|)^{(n-1)-2\delta} \cdot (1+ |q|)^{(n-1)-2\delta} }  \,\quad\text{when }\quad q<0 . \end{cases} \Big) \\
      \notag
&& +        \Big( \begin{cases}  \frac{\eps^2 }{(1+t+|q|)^{(n-1)-2\delta} (1+|q|)^{(n-1)-2\delta+2+4\gamma}},\quad\text{when }\quad q>0,\\
           \notag
      \frac{\eps^2  }{(1+t+|q|)^{(n-1)-2\delta} \cdot (1+|q|)^{(n-1)-2\delta}  } ,\quad\text{when }\quad q<0 . \end{cases} \Big) \\
     \notag
     && + \Big( \begin{cases}  \frac{\eps^2 }{(1+t+|q|)^{(n-1)-2\delta} (1+|q|)^{2(n-1) - 4\de + 6\gamma}},\quad\text{when }\quad q>0,\\
           \notag
      \frac{\eps^2 \cdot (1+|q|)^{3 }  }{(1+t+|q|)^{(n-1)-2\delta} \cdot (1+|q|)^{2(n-1)-4\delta}}  \,\quad\text{when }\quad q<0 . \end{cases} \Big) \Big) 
               \notag
  \eeaa

       \beaa
   \notag
 && + \Big( \sum_{|K|\leq |I|}   |   \Lie_{Z^K} h  |^2 \Big) \cdot E (   \lfloor \frac{|I|}{2} \rfloor + \lfloor  \frac{n}{2} \rfloor  + 1)  \\
 \notag
 && \cdot \Big( \Big(  \begin{cases}  \frac{\eps^2 }{(1+t+|q|)^{2(n-1)-4\delta} (1+|q|)^{4+4\gamma}},\quad\text{when }\quad q>0,\\
           \notag
      \frac{\eps  }{(1+t+|q|)^{2(n-1)-4\delta}(1+|q|)^{2}}  \,\quad\text{when }\quad q<0 . \end{cases} \Big)  \\
      \notag
  && +      \Big( \begin{cases}  \frac{\eps^2 }{(1+t+|q|)^{2(n-1)-4\delta} \cdot (1+|q|)^{(n-1)-2\delta + 2+6\gamma}},\quad\text{when }\quad q>0,\\
           \notag
      \frac{\eps^2  \cdot(1+|q|)^{ } }{(1+t+|q|)^{2(n-1)-4\delta} \cdot (1+|q|)^{(n-1)-2\delta} }  \,\quad\text{when }\quad q<0 . \end{cases} \Big)  \\
      \notag
   && +   \Big(  \begin{cases}  \frac{\eps^2 }{(1+t+|q|)^{2(n-1)-4\delta} (1+|q|)^{2+4\gamma}},\quad\text{when }\quad q>0,\\
           \notag
      \frac{\eps^2  }{(1+t+|q|)^{2(n-1)-4\delta}}  \,\quad\text{when }\quad q<0 . \end{cases} \Big) \\
      \notag
   && +    \Big(  \begin{cases}  \frac{\eps^2 }{(1+t+|q|)^{2(n-1)-4\delta} (1+|q|)^{(n-1)-2\delta +6\gamma}},\quad\text{when }\quad q>0,\\
           \notag
      \frac{\eps^2 \cdot (1+|q|)^{3 }  }{(1+t+|q|)^{2(n-1)-4\delta} \cdot (1+ |q|)^{(n-1)-2\delta} }  \,\quad\text{when }\quad q<0 . \end{cases} \Big) \Big) \; .
               \notag
  \eeaa
 \end{proof}

\begin{lemma}\label{boostraptostudystructureofsourcetermsonthemetricperturbationh}
We have
\beaa
   \notag
 &&  \frac{(1+t )}{\eps} \cdot  | \Lie_{Z^I}   ( g^{\la\mu} \derm_{\la}   \derm_{\mu}    h ) |^2   \\
\notag
 &\les &\Big( \sum_{|K|\leq |I|}    | \derm ( \Lie_{Z^K} A) |^2 \Big) \cdot  E (   \lfloor \frac{|I|}{2} \rfloor + \lfloor  \frac{n}{2} \rfloor  + 1) \\
 \notag
 &&\cdot  \Big(  \Big(\begin{cases}  \frac{\eps }{(1+t+|q|)^{(n-2)-2\delta} (1+|q|)^{2+2\gamma}},\quad\text{when }\quad q>0,\\
           \notag
      \frac{\eps  }{(1+t+|q|)^{(n-2)-2\delta}(1+|q|)^{ }}  \,\quad\text{when }\quad q<0 . \end{cases} \Big) \\
      \notag
      && + \Big( \begin{cases}  \frac{\eps }{(1+t+|q|)^{(n-2)-2\delta} (1+|q|)^{(n-1)-2\delta +4\gamma}},\quad\text{when }\quad q>0,\\
           \notag
      \frac{\eps \cdot (1+ |q|)^2  }{(1+t+|q|)^{(n-2)-2\delta} \cdot (1+|q|)^{(n-1)-2\delta}}  \,\quad\text{when }\quad q<0 . \end{cases} \Big) \\
      \notag
    &&  +  \Big( \begin{cases}  \frac{\eps }{(1+t+|q|)^{(n-2)-2\delta} (1+|q|)^{(n-1)-2\delta+2+4\gamma}},\quad\text{when }\quad q>0,\\
           \notag
      \frac{\eps  }{(1+t+|q|)^{(n-2)-2\delta} \cdot (1+|q|)^{(n-1)-2\delta}}  \,\quad\text{when }\quad q<0 . \end{cases} \Big) \\
      \notag
   && + \Big(  \begin{cases}  \frac{\eps }{(1+t+|q|)^{(n-2)-2\delta} (1+|q|)^{2(n-1)-4\delta +6\gamma}},\quad\text{when }\quad q>0,\\
           \notag
      \frac{\eps \cdot (1+|q|)^{3 }  }{(1+t+|q|)^{(n-2)-2\delta} \cdot (1+|q|)^{2(n-1)-4\delta}}  \,\quad\text{when }\quad q<0 . \end{cases} \Big) \Big) 
               \notag
\eeaa

 \beaa
\notag
 &&+ \Big( \sum_{|K|\leq |I|}    |  \Lie_{Z^K} A |^2 \Big) \cdot E (   \lfloor \frac{|I|}{2} \rfloor + \lfloor  \frac{n}{2} \rfloor  + 1)   \\
 \notag
 &&   \cdot \Big(   \Big(  \begin{cases}  \frac{\eps }{(1+t+|q|)^{2(n-\frac{3}{2})-4\delta} (1+|q|)^{2+4\gamma}},\quad\text{when }\quad q>0,\\
           \notag
      \frac{\eps  }{(1+t+|q|)^{2(n-\frac{3}{2})-4\delta}}  \,\quad\text{when }\quad q<0 . \end{cases} \Big) \\
      \notag
  &&  +   \Big(  \begin{cases}  \frac{\eps }{(1+t+|q|)^{2(n-\frac{3}{2})-4\delta} (1+|q|)^{(n-1)-2\delta + 6\gamma}},\quad\text{when }\quad q>0,\\
           \notag
      \frac{\eps \cdot (1+|q|)^{3 }  }{(1+t+|q|)^{2(n-\frac{3}{2})-4\delta } \cdot (1+|q|)^{(n-1)-2\delta }}  \,\quad\text{when }\quad q<0 . \end{cases} \Big) \\
               \notag
 &&   +   \Big( \begin{cases}  \frac{\eps }{(1+t+|q|)^{2(n-\frac{3}{2})-4\delta} (1+|q|)^{(n-1)-2\delta +2+6\gamma}},\quad\text{when }\quad q>0,\\
           \notag
      \frac{\eps  \cdot(1+|q|)^{ } }{(1+t+|q|)^{2(n-\frac{3}{2})-4\delta} \cdot (1+|q|)^{(n-1)-2\delta}}  \,\quad\text{when }\quad q<0 . \end{cases} \Big)  \\
      \notag
   &&  + \Big(  \begin{cases}  \frac{\eps }{(1+t+|q|)^{2(n-\frac{3}{2})-4\delta} \cdot (1+|q|)^{2(n-1)-4\delta +8\gamma}},\quad\text{when }\quad q>0,\\
           \notag
      \frac{\eps \cdot (1+|q|)^{4 }  }{(1+t+|q|)^{2(n-\frac{3}{2})-4\delta} \cdot (1+|q|)^{2(n-1)-4\delta} }  \,\quad\text{when }\quad q<0 . \end{cases} \Big) \Big)  
               \notag
 \eeaa
 
 \beaa
\notag
 &&+ \Big( \sum_{|K|\leq |I|}    | \derm(  \Lie_{Z^K} h ) |^2 \Big) \cdot E (   \lfloor \frac{|I|}{2} \rfloor + \lfloor  \frac{n}{2} \rfloor  + 1) \\
 \notag
 && \cdot \Big(   \Big(  \begin{cases}  \frac{\eps }{(1+t+|q|)^{(n-2)-2\delta} (1+|q|)^{2+2\gamma}},\quad\text{when }\quad q>0,\\
 \notag
      \frac{\eps  }{(1+t+|q|)^{(n-2)-2\delta}(1+|q|)^{ }}  \,\quad\text{when }\quad q<0 . \end{cases} \Big) \\
      \notag
   && + \Big(  \begin{cases}  \frac{\eps }{(1+t+|q|)^{(n-2)-2\delta} (1+|q|)^{(n-1)-2\delta+2+4\gamma}},\quad\text{when }\quad q>0,\\
           \notag
      \frac{\eps  }{(1+t+|q|)^{(n-2)-2\delta} \cdot (1+|q|)^{(n-1)-2\delta}}  \,\quad\text{when }\quad q<0 . \end{cases} \Big) \Big) 
      \notag
 \eeaa

 \beaa
\notag
 &&+ \Big(   \sum_{|K|\leq |I|}    |   \Lie_{Z^K} h  |^2 \Big) \cdot  E (   \lfloor \frac{|I|}{2} \rfloor + \lfloor  \frac{n}{2} \rfloor  + 1) \\
 \notag
   &&\Big(   \Big(  \begin{cases}  \frac{\eps }{(1+t+|q|)^{2(n-\frac{3}{2})-4\delta} (1+|q|)^{4+4\gamma}},\quad\text{when }\quad q>0,\\
           \notag
      \frac{\eps  }{(1+t+|q|)^{2(n-\frac{3}{2})-4\delta}(1+|q|)^2}  \,\quad\text{when }\quad q<0 . \end{cases} \Big) \\
      \notag
      && + \Big(  \begin{cases}  \frac{\eps }{(1+t+|q|)^{2(n-\frac{3}{2})-4\delta} (1+|q|)^{(n-1)-2\delta+2+6\gamma}},\quad\text{when }\quad q>0,\\
           \notag
      \frac{\eps  \cdot(1+|q|)^{ } }{(1+t+|q|)^{2(n-\frac{3}{2})-4\delta} \cdot (1+|q|)^{(n-1)-2\delta}}  \,\quad\text{when }\quad q<0 . \end{cases} \Big)  \\
      \notag
  &&  + \Big(  \begin{cases}  \frac{\eps }{(1+t+|q|)^{2(n-\frac{3}{2})-4\delta} \cdot (1+|q|)^{2(n-1)-4\delta +8\gamma}},\quad\text{when }\quad q>0,\\
           \notag
      \frac{\eps \cdot (1+|q|)^{4 }  }{(1+t+|q|)^{2(n-\frac{3}{2})-4\delta} \cdot (1+|q|)^{2(n-1)-4\delta} }  \,\quad\text{when }\quad q<0 . \end{cases} \Big) \Big)  \; .\\
               \notag
 \eeaa
 
\end{lemma}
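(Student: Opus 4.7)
The plan is to deduce this estimate as a direct computational consequence of the preceding lemma, which bounds $|\Lie_{Z^I}(g^{\la\mu}\derm_{\la}\derm_{\mu}h)|$ as a finite sum (of cardinality depending only on $|I|$) of terms of the form $|\Lie_{Z^K}X|\cdot D(t,q)$, where $X$ is one of $A$, $h$, $\derm A$, $\derm h$, and $D(t,q)$ is an explicit decay weight carrying one factor of $\eps$. The target estimate has exactly the same structure but with every norm replaced by its square, every decay factor replaced by its square, the total $\eps$-power reduced by one, and the leading $(1+t+|q|)$-power reduced by one. So the proof should be essentially algebraic.

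First I would square the preceding bound. Writing it schematically as $\sum_{i=1}^{N(|I|)} |Y_i|\cdot D_i(t,q)$, one applies the elementary inequality $(\sum_i a_i)^2 \leq N(|I|)\sum_i a_i^2$, absorbing the constant $N(|I|)$ into the implicit constant in $\les$. Each cross product $(|Y_i|D_i)(|Y_j|D_j)$ with $i\neq j$ is handled via AM-GM, $ab\leq \tfrac12(a^2+b^2)$, so that the squared sum is controlled by $\sum_i |Y_i|^2\cdot D_i(t,q)^2$ alone, with no mixed products of distinct norms surviving. This matches the structure of the claim, since in the statement the norm $|Y|$ always multiplies a sum of decay weights belonging to the same norm class.

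Next I would multiply through by $(1+t)/\eps$. Dividing each squared decay factor $\eps^2\cdot D_i(t,q)^2/\eps^2$ by $\eps$ restores a single power of $\eps$, matching the appearance of $\eps$ in each line of the conclusion. To absorb $(1+t)$, I would use the trivial bound $1+t\leq 1+t+|q|$, which lowers the exponent of $(1+t+|q|)$ in each $D_i(t,q)^2$ by exactly one. For instance, squaring the representative profile $(1+t+|q|)^{-(\frac{n-1}{2}-\delta)}(1+|q|)^{-(1+\gamma)}$ yields $(1+t+|q|)^{-((n-1)-2\delta)}(1+|q|)^{-(2+2\gamma)}$, and multiplying by $(1+t)$ then produces exactly the leading profile $(1+t+|q|)^{-((n-2)-2\delta)}(1+|q|)^{-(2+2\gamma)}$ appearing first in the conclusion; the negative-$q$ branch behaves identically. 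Every other listed decay profile arises in precisely the same way from squaring one of the profiles in the preceding lemma and then paying the $(1+t)$ cost.

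The main obstacle is pure bookkeeping: each of the four norm classes $|\derm\Lie_{Z^K}A|$, $|\Lie_{Z^K}A|$, $|\derm\Lie_{Z^K}h|$, $|\Lie_{Z^K}h|$ appears in the preceding lemma multiplied by several different decay profiles (one per type of non-linearity: quadratic-gradient, cubic, quadratic-in-$h$-times-quadratic-in-$A$, quartic, etc.), so squaring produces a proliferation of cases, and one must verify that each of the many profiles listed in the conclusion is accounted for. Since no cross-terms between distinct norm classes survive the AM-GM step, the verification reduces to squaring each of the listed profiles line by line and confirming the arithmetic on exponents in both the $q>0$ and $q<0$ branches. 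No genuinely new analytic input is needed beyond the preceding lemma, the bootstrap assumption (already used in that lemma), and the inequality $1+t\leq 1+t+|q|$.
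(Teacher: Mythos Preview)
Your proposal is correct and follows exactly the same approach as the paper: the paper's proof consists of nothing more than writing out the square of the preceding lemma's bound (with $\eps^2$ appearing in each decay weight), and the passage to the stated form with $\tfrac{(1+t)}{\eps}$ on the left is precisely the division by $\eps$ together with the absorption $1+t\le 1+t+|q|$ that you describe. The paper does not spell out the AM--GM step or the $(1+t)\le(1+t+|q|)$ step explicitly, but these are the only ingredients, and your account of the exponent arithmetic (e.g.\ $(n-1)-2\delta\to(n-2)-2\delta$ and $2(n-1)-4\delta\to 2(n-\tfrac32)-4\delta$) matches the paper line by line.
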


\begin{proof}

We have

\beaa
   \notag
 &&  | \Lie_{Z^I}   ( g^{\la\mu} \derm_{\la}   \derm_{\mu}    h ) |^2   \\
\notag
 &\les &\Big( \sum_{|K|\leq |I|}    | \derm ( \Lie_{Z^K} A) |^2 \Big) \cdot  E (   \lfloor \frac{|I|}{2} \rfloor + \lfloor  \frac{n}{2} \rfloor  + 1) \\
 \notag
 &&\cdot  \Big(  \Big(\begin{cases}  \frac{\eps^2 }{(1+t+|q|)^{(n-1)-2\delta} (1+|q|)^{2+2\gamma}},\quad\text{when }\quad q>0,\\
           \notag
      \frac{\eps^2  }{(1+t+|q|)^{(n-1)-2\delta}(1+|q|)^{ }}  \,\quad\text{when }\quad q<0 . \end{cases} \Big) \\
      \notag
      && + \Big( \begin{cases}  \frac{\eps^2 }{(1+t+|q|)^{(n-1)-2\delta} (1+|q|)^{(n-1)-2\delta +4\gamma}},\quad\text{when }\quad q>0,\\
           \notag
      \frac{\eps^2 \cdot (1+ |q|)^2  }{(1+t+|q|)^{(n-1)-2\delta} \cdot (1+|q|)^{(n-1)-2\delta}}  \,\quad\text{when }\quad q<0 . \end{cases} \Big) \\
      \notag
    &&  +  \Big( \begin{cases}  \frac{\eps^2 }{(1+t+|q|)^{(n-1)-2\delta} (1+|q|)^{(n-1)-2\delta+2+4\gamma}},\quad\text{when }\quad q>0,\\
           \notag
      \frac{\eps^2  }{(1+t+|q|)^{(n-1)-2\delta} \cdot (1+|q|)^{(n-1)-2\delta}}  \,\quad\text{when }\quad q<0 . \end{cases} \Big) \\
      \notag
   && + \Big(  \begin{cases}  \frac{\eps^2 }{(1+t+|q|)^{(n-1)-2\delta} (1+|q|)^{2(n-1)-4\delta +6\gamma}},\quad\text{when }\quad q>0,\\
           \notag
      \frac{\eps^2 \cdot (1+|q|)^{3 }  }{(1+t+|q|)^{(n-1)-2\delta} \cdot (1+|q|)^{2(n-1)-4\delta}}  \,\quad\text{when }\quad q<0 . \end{cases} \Big) \Big) 
               \notag
\eeaa

\beaa
\notag
 &&+ \Big( \sum_{|K|\leq |I|}    |  \Lie_{Z^K} A |^2 \Big) \cdot E (   \lfloor \frac{|I|}{2} \rfloor + \lfloor  \frac{n}{2} \rfloor  + 1)   \\
 \notag
 &&   \cdot \Big(   \Big(  \begin{cases}  \frac{\eps^2 }{(1+t+|q|)^{2(n-1)-4\delta} (1+|q|)^{2+4\gamma}},\quad\text{when }\quad q>0,\\
           \notag
      \frac{\eps^2  }{(1+t+|q|)^{2(n-1)-4\delta}}  \,\quad\text{when }\quad q<0 . \end{cases} \Big) \\
      \notag
  &&  +   \Big(  \begin{cases}  \frac{\eps^2 }{(1+t+|q|)^{2(n-1)-4\delta} (1+|q|)^{(n-1)-2\delta + 6\gamma}},\quad\text{when }\quad q>0,\\
           \notag
      \frac{\eps^2 \cdot (1+|q|)^{3 }  }{(1+t+|q|)^{2(n-1)-4\delta } \cdot (1+|q|)^{(n-1)-2\delta }}  \,\quad\text{when }\quad q<0 . \end{cases} \Big) \\
               \notag
 &&   +   \Big( \begin{cases}  \frac{\eps^2 }{(1+t+|q|)^{2(n-1)-4\delta} (1+|q|)^{(n-1)-2\delta +2+6\gamma}},\quad\text{when }\quad q>0,\\
           \notag
      \frac{\eps^2  \cdot(1+|q|)^{ } }{(1+t+|q|)^{2(n-1)-4\delta} \cdot (1+|q|)^{(n-1)-2\delta}}  \,\quad\text{when }\quad q<0 . \end{cases} \Big)  \\
      \notag
   &&  + \Big(  \begin{cases}  \frac{\eps^2 }{(1+t+|q|)^{2(n-1)-4\delta} \cdot (1+|q|)^{2(n-1)-4\delta +8\gamma}},\quad\text{when }\quad q>0,\\
           \notag
      \frac{\eps^2 \cdot (1+|q|)^{4 }  }{(1+t+|q|)^{2(n-1)-4\delta} \cdot (1+|q|)^{2(n-1)-4\delta} }  \,\quad\text{when }\quad q<0 . \end{cases} \Big) \Big)  
               \notag
 \eeaa
 
\beaa
\notag
 &&+ \Big( \sum_{|K|\leq |I|}    | \derm(  \Lie_{Z^K} h ) |^2 \Big) \cdot E (   \lfloor \frac{|I|}{2} \rfloor + \lfloor  \frac{n}{2} \rfloor  + 1) \\
 \notag
 && \cdot \Big(   \Big(  \begin{cases}  \frac{\eps^2 }{(1+t+|q|)^{(n-1)-2\delta} (1+|q|)^{2+2\gamma}},\quad\text{when }\quad q>0,\\
 \notag
      \frac{\eps^2  }{(1+t+|q|)^{(n-1)-2\delta}(1+|q|)^{ }}  \,\quad\text{when }\quad q<0 . \end{cases} \Big) \\
      \notag
   && + \Big(  \begin{cases}  \frac{\eps^2 }{(1+t+|q|)^{(n-1)-2\delta} (1+|q|)^{(n-1)-2\delta+2+4\gamma}},\quad\text{when }\quad q>0,\\
           \notag
      \frac{\eps^2  }{(1+t+|q|)^{(n-1)-2\delta} \cdot (1+|q|)^{(n-1)-2\delta}}  \,\quad\text{when }\quad q<0 . \end{cases} \Big) \Big) 
      \notag
 \eeaa

\beaa
\notag
 &&+ \Big(   \sum_{|K|\leq |I|}    |   \Lie_{Z^K} h  |^2 \Big) \cdot  E (   \lfloor \frac{|I|}{2} \rfloor + \lfloor  \frac{n}{2} \rfloor  + 1) \\
 \notag
   &&\Big(   \Big(  \begin{cases}  \frac{\eps^2 }{(1+t+|q|)^{2(n-1)-4\delta} (1+|q|)^{4+4\gamma}},\quad\text{when }\quad q>0,\\
           \notag
      \frac{\eps^2  }{(1+t+|q|)^{2(n-1)-4\delta}(1+|q|)^2}  \,\quad\text{when }\quad q<0 . \end{cases} \Big) \\
      \notag
      && + \Big(  \begin{cases}  \frac{\eps^2 }{(1+t+|q|)^{2(n-1)-4\delta} (1+|q|)^{(n-1)-2\delta+2+6\gamma}},\quad\text{when }\quad q>0,\\
           \notag
      \frac{\eps^2  \cdot(1+|q|)^{ } }{(1+t+|q|)^{2(n-1)-4\delta} \cdot (1+|q|)^{(n-1)-2\delta}}  \,\quad\text{when }\quad q<0 . \end{cases} \Big)  \\
      \notag
  &&  + \Big(  \begin{cases}  \frac{\eps^2 }{(1+t+|q|)^{2(n-1)-4\delta} \cdot (1+|q|)^{2(n-1)-4\delta +8\gamma}},\quad\text{when }\quad q>0,\\
           \notag
      \frac{\eps^2 \cdot (1+|q|)^{4 }  }{(1+t+|q|)^{2(n-1)-4\delta} \cdot (1+|q|)^{2(n-1)-4\delta} }  \,\quad\text{when }\quad q<0 . \end{cases} \Big) \Big)  \; .\\
               \notag
 \eeaa

\end{proof}

\subsection{The source terms for $n\geq 5$}\

\begin{lemma}\label{ttimesthesquareofthesourcetermsforAforngeq5}
For $n \geq 5$, $\eps \leq 1$, we have
 \beaa
 \notag
&& (1+t ) \cdot |  \Lie_{Z^I}   ( g^{\la\mu} \derm_{\la}   \derm_{\mu}   A  )   |^2 \, \\
   \notag
   &\les&  \sum_{|K|\leq |I|}  \Big( |    \derm (\Lie_{Z^K} A ) |^2 +  |    \derm (\Lie_{Z^K} h ) |^2 \Big) \cdot E (   \lfloor \frac{|I|}{2} \rfloor + \lfloor  \frac{n}{2} \rfloor  + 1)\\
 \notag
   && \cdot \Big(  \begin{cases}  \frac{\eps }{(1+t+|q|)^{3-2\delta} (1+|q|)^{2\gamma}},\quad\text{when }\quad q>0,\\
           \notag
      \frac{\eps  }{(1+t+|q|)^{3-2\delta} \cdot ( 1+|q| )^{-1} } \,\quad\text{when }\quad q<0 . \end{cases} \Big) 
      \notag 
   \eeaa
     \beaa
     \notag
   && + \sum_{|K|\leq |I|}  \Big( |   \Lie_{Z^K} A  |^2 +|   \Lie_{Z^K} h  |^2 \Big) \cdot E (   \lfloor \frac{|I|}{2} \rfloor + \lfloor  \frac{n}{2} \rfloor  + 1) \\
 \notag
   && \cdot \Big(  \begin{cases}  \frac{\eps }{(1+t+|q|)^{3-2\delta} (1+|q|)^{2+2\gamma}},\quad\text{when }\quad q>0,\\
           \notag
      \frac{\eps  }{(1+t+|q|)^{3-2\delta} \cdot ( 1+|q| )^{} } \,\quad\text{when }\quad q<0 . \end{cases} \Big) \; .
      \notag 
   \eeaa

\end{lemma}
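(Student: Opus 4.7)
\textbf{Proof proposal for Lemma \ref{ttimesthesquareofthesourcetermsforAforngeq5}.}

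The plan is to start from the pointwise estimate established in Lemma \ref{usingbootstraoptoshowstructorforsourcesonYangMillspotentialA}, square it, multiply by $(1+t)$, divide by $\eps$, and then specialise to $n\geq 5$ so that all the scattered decay factors collapse into the two clean model expressions appearing on the right-hand side of the present lemma. The preceding (unnumbered) lemma already gives the squared version, where each term has the form
\[
C(|I|) \cdot \big(\text{bootstrap factor}\big)^{2}\cdot \big|\derm(\Lie_{Z^K}\psi)\big|^{2}\quad\text{or}\quad C(|I|)\cdot\big(\text{bootstrap factor}\big)^{2}\cdot\big|\Lie_{Z^K}\psi\big|^{2},
\]
for $\psi\in\{A,h\}$, with bootstrap factors carrying exponents in $(n-1)-2\de$, $2(n-\tfrac32)-4\de$, etc. The task reduces to showing that multiplication by $(1+t)$ and the choice $n\geq 5$ turn each of these decay profiles into one of the two profiles listed in the conclusion.

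First I would reduce powers of $\eps$: since $\eps\leq 1$ (see \eqref{epsissmallerthan1}), every factor $\eps^{2}$ appearing in the squared estimate may be replaced by $\eps$, which accounts for the division by $\eps$ in the statement. Similarly, by the normalisation \eqref{assumptionontheconstantboundsforenergy}, powers of the constants $E(\cdot)$ can be absorbed into a single $E(\lfloor |I|/2\rfloor+\lfloor n/2\rfloor+1)$. Next I would distribute the $(1+t)$-factor: for the terms whose spacetime weight is of the form $(1+t+|q|)^{-\alpha}$, this lowers the exponent $\alpha$ by one. Concretely, in the ``derivative'' group we have exponents $\alpha\in\{(n-1)-2\de,\,2(n-\tfrac32)-4\de\}$, and in the ``undifferentiated'' group $\alpha\in\{(n-1)-2\de,\,2(n-1)-4\de,\,2(n-\tfrac32)-4\de\}$. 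For $n\geq 5$ these satisfy $\alpha-1\geq 3-2\de$ in every case.

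The heart of the argument is then a uniform comparison: for $n\geq 5$, every exponent pair $(\alpha-1,\beta)$ appearing in the various bootstrap factors (where $\beta$ is the exponent of $(1+|q|)$) dominates one of the two model profiles. For terms with no $(1+|q|)^{-1}$ enhancement (i.e.\ the ``first order'' group containing $|\derm\Lie_{Z^K}\psi|^{2}$), the worst case is exactly $(1+t+|q|)^{-(3-2\de)}(1+|q|)^{-2\gamma}$ in the exterior and $(1+t+|q|)^{-(3-2\de)}(1+|q|)^{+1}$ in the interior, matching the first bound of the lemma. For terms paired with $|\Lie_{Z^K}\psi|^{2}$, the same reduction gives the second bound since undifferentiated quantities come with one more power of $(1+|q|)^{-1}$ in the exterior. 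In each case I would verify the inequality by subtracting the relevant exponents, using $n-1\geq 4$, and noting that cross factors of $(1+|q|)$ with non-negative powers can always be absorbed into the spacetime factor via $(1+|q|)\leq (1+t+|q|)$.

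The main obstacle is strictly bookkeeping: there are roughly a dozen distinct exponent patterns in the preceding lemma, and for each one I must check in both regions $q>0$ and $q<0$ that the corresponding bound dominates one of the two model profiles. Since some patterns carry positive powers of $(1+|q|)$ in the interior (e.g.\ $(1+|q|)^{3}$ from cubic $A$-terms), I need to check that $(1+t+|q|)^{-\alpha}(1+|q|)^{+k}\les (1+t+|q|)^{-(3-2\de)}(1+|q|)$ for $k\leq 3$ and $\alpha\geq k+2-2\de$, which is where the hypothesis $n\geq 5$ is used in its strongest form (for the quintic terms involving four $A$-factors or $h\cdot A^{3}$). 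Once this term-by-term check is done, the conclusion of the lemma follows by summing over the finitely many contributions and absorbing all numerical constants into $C(|I|)$.
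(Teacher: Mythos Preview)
Your proposal is correct and follows essentially the same route as the paper: both arguments take the squared pointwise estimate (the paper's unnumbered lemma preceding this one, obtained by squaring Lemma \ref{usingbootstraoptoshowstructorforsourcesonYangMillspotentialA}) and then do a term-by-term check that for $n\geq 5$ every decay profile is dominated by one of the two model weights. One small wrinkle: the paper at a couple of points invokes the condition $\gamma\geq\delta$ to absorb factors of $(1+|q|)^{2(\gamma-\delta)}$ in the exterior; you should make this explicit in your exponent comparison (it is automatic here since ultimately $\delta=0$, but it is the reason those extra $(1+|q|)$-powers disappear). Also, be careful that the ``preceding unnumbered lemma'' already contains the factor $(1+t)/\eps$, so you should not distribute an additional $(1+t)$ on top of it; your text wavers between starting from the squared lemma (where this is done) and from Lemma \ref{usingbootstraoptoshowstructorforsourcesonYangMillspotentialA} (where it is not).
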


\begin{proof}

For $n \geq 5$, we examine one by one the terms in $ (1+t ) \cdot |  \Lie_{Z^I}   ( g^{\la\mu} \derm_{\la}   \derm_{\mu}   A  )   |^2 $\;. We get

\beaa
   \notag
 &&   \sum_{|K|\leq |I|}  \Big( |    \derm (\Lie_{Z^K} A ) |^2 \Big) \cdot E (   \lfloor \frac{|I|}{2} \rfloor + \lfloor  \frac{n}{2} \rfloor  + 1)\\
 \notag
 && \cdot \Big( \begin{cases}  \frac{\eps }{(1+t+|q|)^{3-2\delta} (1+|q|)^{2+2\gamma}},\quad\text{when }\quad q>0,\\
           \notag
      \frac{\eps  }{(1+t+|q|)^{3-2\delta}(1+|q|)}  \,\quad\text{when }\quad q<0 . \end{cases} \Big) \\
      \notag
      && +   \Big( \begin{cases}  \frac{\eps }{(1+t+|q|)^{3-2\delta} (1+|q|)^{2\gamma}},\quad\text{when }\quad q>0,\\
           \notag
      \frac{\eps  \cdot (1+|q|)^{ } }{(1+t+|q|)^{3-2\delta}}  \,\quad\text{when }\quad q<0 . \end{cases} \Big) \\
         \notag
         && + \Big( \begin{cases}  \frac{\eps }{(1+t+|q|)^{3-2\delta} (1+|q|)^{6 + 2(\ga-\delta) +2\gamma}},\quad\text{when }\quad q>0,\\
           \notag
      \frac{\eps  }{ (1+t+|q|)^{3-2\delta} \cdot (1+ |q|)^{4-2\delta}  }  \,\quad\text{when }\quad q<0 . \end{cases} \Big) \\
      \notag
   && +    \Big(  \begin{cases}  \frac{\eps }{(1+t+|q|)^{3-2\delta} (1+|q|)^{4+2(\ga-\delta)+2\gamma}},\quad\text{when }\quad q>0,\\
           \notag
      \frac{\eps \cdot (1+ |q|)  }{(1+t+|q|)^{3-2\delta} \cdot (1+ |q|)^{4-2\delta} } \,\quad\text{when }\quad q<0 . \end{cases} \Big) \Big) 
               \notag
  \eeaa

     \beaa
     \notag
   &\les&  \sum_{|K|\leq |I|}  \Big( |    \derm (\Lie_{Z^K} A ) |^2 \Big) \cdot E (   \lfloor \frac{|I|}{2} \rfloor + \lfloor  \frac{n}{2} \rfloor  + 1)\\
 \notag
   && \cdot \Big(  \begin{cases}  \frac{\eps }{(1+t+|q|)^{3-2\delta} (1+|q|)^{2\gamma}},\quad\text{when }\quad q>0,\\
           \notag
      \frac{\eps  }{(1+t+|q|)^{3-2\delta} \cdot ( 1+|q| )^{-1} } \,\quad\text{when }\quad q<0 . \end{cases} \Big) \;.
      \notag 
   \eeaa

And,
       \beaa
   \notag
 &&   \sum_{|K|\leq |I|}  \Big( |   \Lie_{Z^K} A  |^2 \Big) \cdot E (   \lfloor \frac{|I|}{2} \rfloor + \lfloor  \frac{n}{2} \rfloor  + 1) \\
 \notag
 &&  \cdot \Big( \Big(  \begin{cases}  \frac{\eps }{(1+t+|q|)^{3-2\delta} (1+|q|)^{6+2(\ga-\delta)+2\gamma}},\quad\text{when }\quad q>0,\\
           \notag
      \frac{\eps  }{(1+t+|q|)^{3-2\delta} \cdot ( 1+|q| )^{4-2\delta} } \,\quad\text{when }\quad q<0 . \end{cases} \Big) \\
      \notag 
      && + \Big(  \begin{cases}  \frac{\eps }{(1+t+|q|)^{3-2\delta} (1+|q|)^{2+2\gamma}},\quad\text{when }\quad q>0,\\
           \notag
      \frac{\eps  }{(1+t+|q|)^{3-2\delta}(1+|q|)^{ }}  \,\quad\text{when }\quad q<0 . \end{cases} \Big) \\
         \notag
       && +  \Big(  \begin{cases}  \frac{\eps }{(1+t+|q|)^{3-2\delta} (1+|q|)^{4+2(\ga-\delta)+ 2\gamma}},\quad\text{when }\quad q>0,\\
           \notag
      \frac{\eps \cdot (1+ |q|)^2  }{(1+t+|q|)^{3-2\delta } \cdot (1+|q|)^{4-2\delta}} \,\quad\text{when }\quad q<0 . \end{cases} \Big)  \\
               \notag
     && +             \Big(  \begin{cases}  \frac{\eps }{(1+t+|q|)^{3-2\delta} \cdot (1+|q|)^{10+4(\ga-\delta) +2\gamma}},\quad\text{when }\quad q>0,\\
           \notag
      \frac{\eps  \cdot(1+|q|)^{ } }{(1+t+|q|)^{3-2\delta} \cdot (1+|q|)^{8-4\delta}}  \,\quad\text{when }\quad q<0 . \end{cases} \Big)  \\
      \notag
  &&     +    \Big(  \begin{cases}  \frac{\eps }{(1+t+|q|)^{3-2\delta} (1+|q|)^{6+2(\ga-\delta) +2\gamma}},\quad\text{when }\quad q>0,\\
           \notag
      \frac{\eps  }{(1+t+|q|)^{3-2\delta} \cdot (1+|q|)^{4-2\delta}}  \,\quad\text{when }\quad q<0 . \end{cases} \Big)  \\
      \notag
  && +      \Big(  \begin{cases}  \frac{\eps }{(1+t+|q|)^{3-2\delta} (1+|q|)^{8+4(\ga-\delta) + 2\gamma}},\quad\text{when }\quad q>0,\\
           \notag
      \frac{\eps \cdot (1+|q|)^{3 }  }{(1+t+|q|)^{3-2\delta} \cdot (1+|q|)^{8-4\delta}}  \,\quad\text{when }\quad q<0 . \end{cases} \Big) 
               \notag
  \eeaa
 
     \beaa
     \notag
   &\les&  \sum_{|K|\leq |I|}  \Big( |   \Lie_{Z^K} A  |^2 \Big) \cdot E (   \lfloor \frac{|I|}{2} \rfloor + \lfloor  \frac{n}{2} \rfloor  + 1) \\
 \notag
   && \cdot \Big(  \begin{cases}  \frac{\eps }{(1+t+|q|)^{3-2\delta} (1+|q|)^{2+2\gamma}},\quad\text{when }\quad q>0,\\
           \notag
      \frac{\eps  }{(1+t+|q|)^{3-2\delta} \cdot ( 1+|q| )^{} } \,\quad\text{when }\quad q<0 , \end{cases} \Big) 
      \notag 
   \eeaa
       (where we used the fact that $\ga \geq \delta$).
   
And,
 
           \beaa
   \notag
&&   \Big( \sum_{|K|\leq |I|}   |    \derm (\Lie_{Z^K} h ) |^2 \Big) \cdot E (   \lfloor \frac{|I|}{2} \rfloor + \lfloor  \frac{n}{2} \rfloor  + 1)  \\
 \notag
 && \cdot  \Big( \Big( \begin{cases}  \frac{\eps }{(1+t+|q|)^{3-2\delta} (1+|q|)^{2+2\gamma}},\quad\text{when }\quad q>0,\\
           \notag
      \frac{\eps  }{(1+t+|q|)^{3-2\delta}(1+|q|)^{ }}  \,\quad\text{when }\quad q<0 . \end{cases} \Big) \\
      \notag
    && +   \Big(  \begin{cases}  \frac{\eps }{(1+t+|q|)^{3-2\delta} (1+|q|)^{4+2(\ga-\delta) + 2\gamma}},\quad\text{when }\quad q>0,\\
           \notag
      \frac{\eps \cdot (1+ |q|)^2  }{(1+t+|q|)^{3-2\delta} \cdot (1+ |q|)^{4-2\delta} }  \,\quad\text{when }\quad q<0 . \end{cases} \Big) \\
      \notag
&& +        \Big( \begin{cases}  \frac{\eps }{(1+t+|q|)^{3-2\delta} (1+|q|)^{6+2(\ga-\delta)+2\gamma}},\quad\text{when }\quad q>0,\\
           \notag
      \frac{\eps  }{(1+t+|q|)^{3-2\delta} \cdot (1+|q|)^{4-2\delta}  } ,\quad\text{when }\quad q<0 . \end{cases} \Big) \\
     \notag
     && + \Big( \begin{cases}  \frac{\eps }{(1+t+|q|)^{3-2\delta} (1+|q|)^{8 +4(\ga-\delta)+ 2\gamma}},\quad\text{when }\quad q>0,\\
           \notag
      \frac{\eps \cdot (1+|q|)^{3 }  }{(1+t+|q|)^{3-2\delta} \cdot (1+|q|)^{8-4\delta}}  \,\quad\text{when }\quad q<0 . \end{cases} \Big) \Big) 
               \notag
  \eeaa
      \beaa
     \notag
   &\les&  \Big( \sum_{|K|\leq |I|}   |    \derm (\Lie_{Z^K} h ) |^2 \Big) \cdot E (   \lfloor \frac{|I|}{2} \rfloor + \lfloor  \frac{n}{2} \rfloor  + 1)  \\
 \notag
   && \cdot \Big(  \begin{cases}  \frac{\eps }{(1+t+|q|)^{3-2\delta} (1+|q|)^{2+2\gamma}},\quad\text{when }\quad q>0,\\
           \notag
      \frac{\eps  }{(1+t+|q|)^{3-2\delta} \cdot ( 1+|q| )^{} } \,\quad\text{when }\quad q<0 , \end{cases} \Big) 
      \notag 
   \eeaa
      (using the fact that $\ga \geq \delta$).

Also,
  
       \beaa
   \notag
 &&  \Big( \sum_{|K|\leq |I|}   |   \Lie_{Z^K} h  |^2 \Big) \cdot E (   \lfloor \frac{|I|}{2} \rfloor + \lfloor  \frac{n}{2} \rfloor  + 1)  \\
 \notag
 && \cdot \Big( \Big(  \begin{cases}  \frac{\eps }{(1+t+|q|)^{7-4\delta} (1+|q|)^{4+4\gamma}},\quad\text{when }\quad q>0,\\
           \notag
      \frac{\eps  }{(1+t+|q|)^{7-4\delta}(1+|q|)^{2}}  \,\quad\text{when }\quad q<0 . \end{cases} \Big)  \\
      \notag
  && +      \Big( \begin{cases}  \frac{\eps }{(1+t+|q|)^{7-4\delta} \cdot (1+|q|)^{6+2(\ga-\delta) +4\gamma}},\quad\text{when }\quad q>0,\\
           \notag
      \frac{\eps  \cdot(1+|q|)^{ } }{(1+t+|q|)^{7-4\delta} \cdot (1+|q|)^{4-2\delta} }  \,\quad\text{when }\quad q<0 . \end{cases} \Big)  \\
      \notag
   && +   \Big(  \begin{cases}  \frac{\eps }{(1+t+|q|)^{7-4\delta} (1+|q|)^{2+4\gamma}},\quad\text{when }\quad q>0,\\
           \notag
      \frac{\eps  }{(1+t+|q|)^{7-4\delta}}  \,\quad\text{when }\quad q<0 . \end{cases} \Big) \\
      \notag
   && +    \Big(  \begin{cases}  \frac{\eps }{(1+t+|q|)^{7-4\delta} (1+|q|)^{4+2(\ga-\delta) +4\gamma}},\quad\text{when }\quad q>0,\\
           \notag
      \frac{\eps \cdot (1+|q|)^{3 }  }{(1+t+|q|)^{7-4\delta} \cdot (1+ |q|)^{4-2\delta} }  \,\quad\text{when }\quad q<0 . \end{cases} \Big) \Big) 
               \notag
  \eeaa
     \beaa
     \notag
   &\les&  \Big( \sum_{|K|\leq |I|}   |   \Lie_{Z^K} h  |^2 \Big) \cdot E (   \lfloor \frac{|I|}{2} \rfloor + \lfloor  \frac{n}{2} \rfloor  + 1)   \\
 \notag
   && \cdot \Big(  \begin{cases}  \frac{\eps }{(1+t+|q|)^{7-4\delta} (1+|q|)^{2+4\gamma}},\quad\text{when }\quad q>0,\\
           \notag
      \frac{\eps  }{(1+t+|q|)^{7-4\delta} } \,\quad\text{when }\quad q<0 . \end{cases} \Big) \;.
      \notag 
   \eeaa
  \end{proof}

\begin{lemma}\label{ttimesthesquareofthesourcetermsforthemetrichforngeq5}
For $n \geq 5$, 
\beaa
   \notag
 &&  (1+t ) \cdot  | \Lie_{Z^I}   ( g^{\la\mu} \derm_{\la}   \derm_{\mu}    h ) |^2   \\
     \notag
   &\les&  \sum_{|K|\leq |I|}  \Big( |    \derm (\Lie_{Z^K} A ) |^2 + |    \derm (\Lie_{Z^K} h ) |^2 \Big) \cdot E (   \lfloor \frac{|I|}{2} \rfloor + \lfloor  \frac{n}{2} \rfloor  + 1)\\
 \notag
   && \cdot \Big(  \begin{cases}  \frac{\eps }{(1+t+|q|)^{3-2\delta} (1+|q|)^{2+2\gamma}},\quad\text{when }\quad q>0,\\
           \notag
      \frac{\eps  }{(1+t+|q|)^{3-2\delta} \cdot ( 1+|q| )^{} } \,\quad\text{when }\quad q<0 . \end{cases} \Big) \\
      \notag 
   &&+  \sum_{|K|\leq |I|}  \Big( |   \Lie_{Z^K} A  |^2 + |   \Lie_{Z^K} h  |^2 \Big) \cdot E (   \lfloor \frac{|I|}{2} \rfloor + \lfloor  \frac{n}{2} \rfloor  + 1)\\
 \notag
   && \cdot \Big(  \begin{cases}  \frac{\eps }{(1+t+|q|)^{7-4\delta} (1+|q|)^{2+4\gamma}},\quad\text{when }\quad q>0,\\
           \notag
      \frac{\eps  }{(1+t+|q|)^{7-4\delta}  } \,\quad\text{when }\quad q<0 . \end{cases} \Big) \; .
      \notag 
   \eeaa

\end{lemma}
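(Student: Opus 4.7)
The plan is to simply specialize the general bound of Lemma \ref{boostraptostudystructureofsourcetermsonthemetricperturbationh} to the case $n\ge 5$, exactly as in the proof of Lemma \ref{ttimesthesquareofthesourcetermsforAforngeq5}. That earlier lemma established an estimate of the same shape for $|\Lie_{Z^I}(g^{\la\mu}\derm_\la\derm_\mu A)|^2$; the present lemma is its analogue for $h$, and its proof should parallel it term-by-term. I therefore start from the expansion provided by Lemma \ref{boostraptostudystructureofsourcetermsonthemetricperturbationh}, which already groups the right-hand side into four blocks, multiplying respectively $|\derm(\Lie_{Z^K}A)|^2$, $|\Lie_{Z^K}A|^2$, $|\derm(\Lie_{Z^K}h)|^2$ and $|\Lie_{Z^K}h|^2$.

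For each such block I would carry out the following routine bookkeeping. In the exterior region $q>0$, I use that for $n\ge 5$ the exponents satisfy $(n-2)-2\delta\ge 3-2\delta$ and $2(n-\frac{3}{2})-4\delta\ge 7-4\delta$, so $(1+t+|q|)$ can always be pulled down to the desired power $3-2\delta$ (or $7-4\delta$), with the extra factors of $(1+t+|q|)$ going into $(1+|q|)$ since $1+t+|q|\ge 1+|q|$. I then compare the various $(1+|q|)$ exponents, using $\gamma>0$ and (as in the proof of the previous lemma) $\gamma\ge \delta$, to keep only the dominant (i.e.\ smallest-exponent) one in each block. In the interior region $q<0$, where the weight $w\equiv 1$ and factors $(1+|q|)^{1/2}$, $(1+|q|)$, $(1+|q|)^{3/2}$, $(1+|q|)^2$, $(1+|q|)^3$, $(1+|q|)^4$ appear, I again reduce to the slowest-decaying term in each block by noticing that all such positive powers of $(1+|q|)$ are harmless once paired against the large space-time exponent $(n-1)-2\delta\ge 4-2\delta$ (resp.\ $2(n-1)-4\delta\ge 8-4\delta$) in the corresponding denominator. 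Multiplying finally by the factor $(1+t)\le (1+t+|q|)$ shifts each overall exponent of $(1+t+|q|)$ down by one, producing exactly the exponents $3-2\delta$ and $7-4\delta$ claimed.

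Collecting the resulting dominant terms from the four blocks yields exactly the two groups of estimates in the statement: the gradient terms $|\derm(\Lie_{Z^K}A)|^2+|\derm(\Lie_{Z^K}h)|^2$ with the $(1+t+|q|)^{3-2\delta}$ denominator, and the zeroth-order terms $|\Lie_{Z^K}A|^2+|\Lie_{Z^K}h|^2$ with the stronger $(1+t+|q|)^{7-4\delta}$ denominator, the latter coming from the triple/quadruple product nonlinearities $h\cdot \derm h\cdot \derm h$, $A^4$, $h\cdot A^4$, etc.

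I do not expect any genuine obstacle here: the proof is purely a bookkeeping reduction from Lemma \ref{boostraptostudystructureofsourcetermsonthemetricperturbationh}, entirely analogous to the passage from Lemma \ref{usingbootstraoptoshowstructorforsourcesonYangMillspotentialA} to Lemma \ref{ttimesthesquareofthesourcetermsforAforngeq5}. The only mildly delicate point will be to verify that the worst zeroth-order block is indeed the quartic $A$ (or cubic $h$) contribution and that for $n=5$ all intermediate $(1+|q|)$-powers that one drops are indeed nonnegative — both facts follow immediately from $n\ge 5$ and $\gamma\ge\delta\ge 0$.
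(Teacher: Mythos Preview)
Your approach is correct and essentially identical to the paper's: both start from Lemma~\ref{boostraptostudystructureofsourcetermsonthemetricperturbationh} and specialize each of the four blocks to $n\ge 5$ using $(n-2)-2\delta\ge 3-2\delta$, $2(n-\tfrac{3}{2})-4\delta\ge 7-4\delta$, and $\gamma\ge\delta$, exactly parallel to the proof of Lemma~\ref{ttimesthesquareofthesourcetermsforAforngeq5}. One small slip in your writeup: the exponents $(n-2)-2\delta$ and $2(n-\tfrac{3}{2})-4\delta$ that you quote from Lemma~\ref{boostraptostudystructureofsourcetermsonthemetricperturbationh} \emph{already} incorporate the factor $(1+t)$ on the left-hand side, so your final step ``multiplying by $(1+t)$'' is redundant (and would over-shift the exponent if taken literally); simply drop that sentence.
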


\begin{proof}

For $n \geq 5$, we examine the terms in $(1+t ) \cdot  | \Lie_{Z^I}   ( g^{\la\mu} \derm_{\la}   \derm_{\mu}    h ) |^2 $, one by one. We have
\beaa
   \notag
&& \Big( \sum_{|K|\leq |I|}    | \derm ( \Lie_{Z^K} A) |^2 \Big) \cdot  E (   \lfloor \frac{|I|}{2} \rfloor + \lfloor  \frac{n}{2} \rfloor  + 1) \\
 \notag
 &&\cdot  \Big(  \Big(\begin{cases}  \frac{\eps }{(1+t+|q|)^{3-2\delta} (1+|q|)^{2+2\gamma}},\quad\text{when }\quad q>0,\\
           \notag
      \frac{\eps  }{(1+t+|q|)^{3-2\delta}(1+|q|)^{ }}  \,\quad\text{when }\quad q<0 . \end{cases} \Big) \\
      \notag
      && + \Big( \begin{cases}  \frac{\eps }{(1+t+|q|)^{3-2\delta} (1+|q|)^{4+2(\ga-\delta) +2\gamma}},\quad\text{when }\quad q>0,\\
           \notag
      \frac{\eps \cdot (1+ |q|)^2  }{(1+t+|q|)^{3-2\delta} \cdot (1+|q|)^{4-2\delta}}  \,\quad\text{when }\quad q<0 . \end{cases} \Big) \\
      \notag
    &&  +  \Big( \begin{cases}  \frac{\eps }{(1+t+|q|)^{3-2\delta} (1+|q|)^{6+2(\ga-\delta)+2\gamma}},\quad\text{when }\quad q>0,\\
           \notag
      \frac{\eps  }{(1+t+|q|)^{3-2\delta} \cdot (1+|q|)^{4-2\delta}}  \,\quad\text{when }\quad q<0 . \end{cases} \Big) \\
      \notag
   && + \Big(  \begin{cases}  \frac{\eps }{(1+t+|q|)^{3-2\delta} (1+|q|)^{8+4(\ga-\delta) +2\gamma}},\quad\text{when }\quad q>0,\\
           \notag
      \frac{\eps \cdot (1+|q|)^{3 }  }{(1+t+|q|)^{3-2\delta} \cdot (1+|q|)^{8-4\delta}}  \,\quad\text{when }\quad q<0 . \end{cases} \Big) \Big) 
               \notag
\eeaa

     \beaa
     \notag
   &\les&  \sum_{|K|\leq |I|}  \Big( |    \derm (\Lie_{Z^K} A ) |^2 \Big) \cdot E (   \lfloor \frac{|I|}{2} \rfloor + \lfloor  \frac{n}{2} \rfloor  + 1)\\
 \notag
   && \cdot \Big(  \begin{cases}  \frac{\eps }{(1+t+|q|)^{3-2\delta} (1+|q|)^{2+2\gamma}},\quad\text{when }\quad q>0,\\
           \notag
      \frac{\eps  }{(1+t+|q|)^{3-2\delta} \cdot ( 1+|q| )^{} } \,\quad\text{when }\quad q<0 . \end{cases} \Big) \; .
      \notag 
   \eeaa

And,
 \beaa
\notag
 && \Big( \sum_{|K|\leq |I|}    |  \Lie_{Z^K} A |^2 \Big) \cdot E (   \lfloor \frac{|I|}{2} \rfloor + \lfloor  \frac{n}{2} \rfloor  + 1)   \\
 \notag
 &&   \cdot \Big(   \Big(  \begin{cases}  \frac{\eps }{(1+t+|q|)^{7-4\delta} (1+|q|)^{2+4\gamma}},\quad\text{when }\quad q>0,\\
           \notag
      \frac{\eps  }{(1+t+|q|)^{7-4\delta}}  \,\quad\text{when }\quad q<0 . \end{cases} \Big) \\
      \notag
  &&  +   \Big(  \begin{cases}  \frac{\eps }{(1+t+|q|)^{7-4\delta} (1+|q|)^{4+2(\ga-\delta) + 4\gamma}},\quad\text{when }\quad q>0,\\
           \notag
      \frac{\eps \cdot (1+|q|)^{3 }  }{(1+t+|q|)^{7-4\delta } \cdot (1+|q|)^{4-2\delta }}  \,\quad\text{when }\quad q<0 . \end{cases} \Big) \\
               \notag
 &&   +   \Big( \begin{cases}  \frac{\eps }{(1+t+|q|)^{7-4\delta} (1+|q|)^{6+2(\ga-\delta) +4\gamma}},\quad\text{when }\quad q>0,\\
           \notag
      \frac{\eps  \cdot(1+|q|)^{ } }{(1+t+|q|)^{7-4\delta} \cdot (1+|q|)^{4-2\delta}}  \,\quad\text{when }\quad q<0 . \end{cases} \Big)  \\
      \notag
   &&  + \Big(  \begin{cases}  \frac{\eps }{(1+t+|q|)^{7-4\delta} \cdot (1+|q|)^{8+4(\ga-\delta) +4\gamma}},\quad\text{when }\quad q>0,\\
           \notag
      \frac{\eps \cdot (1+|q|)^{4 }  }{(1+t+|q|)^{7-4\delta} \cdot (1+|q|)^{8-4\delta} }  \,\quad\text{when }\quad q<0 . \end{cases} \Big) \Big)  
               \notag
 \eeaa

     \beaa
     \notag
   &\les&  \sum_{|K|\leq |I|}  \Big( |   \Lie_{Z^K} A  |^2 \Big) \cdot E (   \lfloor \frac{|I|}{2} \rfloor + \lfloor  \frac{n}{2} \rfloor  + 1)\\
 \notag
   && \cdot \Big(  \begin{cases}  \frac{\eps }{(1+t+|q|)^{7-4\delta} (1+|q|)^{2+4\gamma}},\quad\text{when }\quad q>0,\\
           \notag
      \frac{\eps  }{(1+t+|q|)^{7-4\delta}  } \,\quad\text{when }\quad q<0 . \end{cases} \Big) \; .
      \notag 
   \eeaa

And,
 \beaa
\notag
 && \Big( \sum_{|K|\leq |I|}    | \derm(  \Lie_{Z^K} h ) |^2 \Big) \cdot E (   \lfloor \frac{|I|}{2} \rfloor + \lfloor  \frac{n}{2} \rfloor  + 1) \\
 \notag
 && \cdot \Big(   \Big(  \begin{cases}  \frac{\eps }{(1+t+|q|)^{3-2\delta} (1+|q|)^{2+2\gamma}},\quad\text{when }\quad q>0,\\
 \notag
      \frac{\eps  }{(1+t+|q|)^{3-2\delta}(1+|q|)^{ }}  \,\quad\text{when }\quad q<0 . \end{cases} \Big) \\
      \notag
   && + \Big(  \begin{cases}  \frac{\eps }{(1+t+|q|)^{3-2\delta} (1+|q|)^{6+2(\ga-\delta)+2\gamma}},\quad\text{when }\quad q>0,\\
           \notag
      \frac{\eps  }{(1+t+|q|)^{3-2\delta} \cdot (1+|q|)^{4-2\delta}}  \,\quad\text{when }\quad q<0 . \end{cases} \Big) \Big) 
      \notag
 \eeaa

     \beaa
     \notag
   &\les&  \sum_{|K|\leq |I|}  \Big( |    \derm (\Lie_{Z^K} h ) |^2 \Big) \cdot E (   \lfloor \frac{|I|}{2} \rfloor + \lfloor  \frac{n}{2} \rfloor  + 1)\\
 \notag
   && \cdot \Big(  \begin{cases}  \frac{\eps }{(1+t+|q|)^{3-2\delta} (1+|q|)^{2+2\gamma}},\quad\text{when }\quad q>0,\\
           \notag
      \frac{\eps  }{(1+t+|q|)^{3-2\delta} \cdot ( 1+|q| )^{} } \,\quad\text{when }\quad q<0 . \end{cases} \Big) \; .
      \notag 
   \eeaa

Also,
 \beaa
\notag
 && \Big(   \sum_{|K|\leq |I|}    |   \Lie_{Z^K} h  |^2 \Big) \cdot  E (   \lfloor \frac{|I|}{2} \rfloor + \lfloor  \frac{n}{2} \rfloor  + 1) \\
 \notag
   &&\Big(   \Big(  \begin{cases}  \frac{\eps }{(1+t+|q|)^{7-4\delta} (1+|q|)^{4+4\gamma}},\quad\text{when }\quad q>0,\\
           \notag
      \frac{\eps  }{(1+t+|q|)^{7-4\delta}(1+|q|)^2}  \,\quad\text{when }\quad q<0 . \end{cases} \Big) \\
      \notag
      && + \Big(  \begin{cases}  \frac{\eps }{(1+t+|q|)^{7-4\delta} (1+|q|)^{6+2(\ga-\delta)+4\gamma}},\quad\text{when }\quad q>0,\\
           \notag
      \frac{\eps  \cdot(1+|q|)^{ } }{(1+t+|q|)^{7-4\delta} \cdot (1+|q|)^{4-2\delta}}  \,\quad\text{when }\quad q<0 . \end{cases} \Big)  \\
      \notag
  &&  + \Big(  \begin{cases}  \frac{\eps }{(1+t+|q|)^{7-4\delta} \cdot (1+|q|)^{8+4(\ga-\delta) +4\gamma}},\quad\text{when }\quad q>0,\\
           \notag
      \frac{\eps \cdot (1+|q|)^{4 }  }{(1+t+|q|)^{7-4\delta} \cdot (1+|q|)^{8-4\delta} }  \,\quad\text{when }\quad q<0 . \end{cases} \Big) \Big)  
               \notag
 \eeaa

     \beaa
     \notag
   &\les&  \sum_{|K|\leq |I|}  \Big( |   \Lie_{Z^K} h  |^2 \Big) \cdot E (   \lfloor \frac{|I|}{2} \rfloor + \lfloor  \frac{n}{2} \rfloor  + 1)\\
 \notag
   && \cdot \Big(  \begin{cases}  \frac{\eps }{(1+t+|q|)^{7-4\delta} (1+|q|)^{4+4\gamma}},\quad\text{when }\quad q>0,\\
           \notag
      \frac{\eps  }{(1+t+|q|)^{7-4\delta} \cdot ( 1+|q| )^{2} } \,\quad\text{when }\quad q<0 . \end{cases} \Big) \; .
      \notag 
   \eeaa

\end{proof}

\section{Energy estimates}

\begin{definition}\label{defwidehatw}
We define $\widehat{w}$ by 
\beaa
\widehat{w}(q)&:=&\begin{cases} (1+|q|)^{1+2\gamma} \quad\text{when }\quad q>0 , \\
        (1+|q|)^{2\mu}  \,\quad\text{when }\quad q<0 , \end{cases} \\
        &=&\begin{cases} (1+ q)^{1+2\gamma} \quad\text{when }\quad q>0 , \\
      (1 - q)^{2\mu}  \,\quad\text{when }\quad q<0 ,\end{cases} 
\eeaa

for $\ga > 0$ and $\mu < 0$. Note that the definition of $\widehat{w}$\,, is so that on one hand, for $\ga \neq - \frac{1}{2} $ and $\mu \neq 0$ (which is assumed here), we would have 
\beaa
\widehat{w}^{\prime}(q) \sim \frac{\widehat{w}(q)}{(1+|q|)} \; ,
\eeaa
(see Lemma \ref{derivativeoftildwandrelationtotildew}). On the other hand, we want that for $q<0$, the derivative $\frac{\pa \widehat{w}}{\pa q}$ to be non-vanishing.
\end{definition}

\begin{remark}
We take $\mu < 0$ (instead of $\mu > 0$), because we want the derivative $\frac{\pa \widehat{w}}{\pa q} > 0$\,, as we will see that this is what we need in order to obtain an energy estimate on the fields (see Lemma \ref{Theenerhyestimatefornequalfive}). In other words, $\mu < 0$ is a necessary condition to ensure that $\widehat{w}^{\prime} (q)$ enters with the right sign in the energy estimate.
\end{remark}

\begin{definition}\label{defwidetildew}
We define $\widetilde{w}$ by 
\beaa
\widetilde{w} ( q)&:=&  \widehat{w}(q) + w(q) \\
&:=&\begin{cases} 2 (1+|q|)^{1+2\gamma} \quad\text{when }\quad q>0 , \\
       1+  (1+|q|)^{2\mu}  \,\quad\text{when }\quad q<0 . \end{cases} \\    
\eeaa
Note that the definition of $\widetilde{w}$ is constructed so that Lemma \ref{equaivalenceoftildewandtildeandofderivativeoftildwandderivativeofhatw} holds.
\end{definition}

\begin{lemma}\label{equaivalenceoftildewandtildeandofderivativeoftildwandderivativeofhatw}
We have
\beaa
\widetilde{w}^{\prime}  &\sim & \widehat{w}^{\prime } \; .
\eeaa
Furthermore, for $\mu < 0$, we have
\beaa
\widetilde{w} ( q)& \sim & w(q) \; .
\eeaa

\end{lemma}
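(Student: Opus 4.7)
The plan is to prove both equivalences by a direct case split on the sign of $q$, exploiting the explicit piecewise formulas given in Definitions~\ref{defw}, \ref{defwidehatw}, and \ref{defwidetildew}. Since $\widetilde{w} = \widehat{w} + w$ by construction, each identity reduces to checking what $w$ contributes in each region relative to $\widehat{w}$ and $\widehat{w}^\prime$.

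For the derivative equivalence $\widetilde{w}^\prime \sim \widehat{w}^\prime$, I would first handle the region $q>0$, where $w(q) = (1+q)^{1+2\gamma} = \widehat{w}(q)$ and therefore $\widetilde{w}(q) = 2\widehat{w}(q)$; differentiating gives $\widetilde{w}^\prime = 2\widehat{w}^\prime$, which is an equivalence in the sense $\sim$. For $q<0$, the weight $w\equiv 1$ is constant, so its derivative vanishes and $\widetilde{w}^\prime(q) = \widehat{w}^\prime(q)$ identically (one may note in passing that since $\widehat{w}(q) = (1-q)^{2\mu}$ here, $\widehat{w}^\prime(q) = -2\mu(1-q)^{2\mu-1}$, which is strictly positive because $\mu<0$, so there is no degeneracy to worry about). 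Combining the two regions yields the first equivalence.

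For the second equivalence $\widetilde{w} \sim w$ under the assumption $\mu<0$, the case $q>0$ is again immediate because $\widetilde{w} = 2w$ there. The one place where a small argument is needed is the region $q<0$, where $w\equiv 1$ but $\widetilde{w}(q) = 1 + (1-q)^{2\mu}$; here I would use $\mu<0$ to observe that $(1-q)^{2\mu} \leq 1$ for all $q<0$ (since $1-q\geq 1$ and the exponent $2\mu$ is negative), which gives the two-sided bound $1 \leq \widetilde{w}(q) \leq 2 = 2w(q)$, proving $\widetilde{w}\sim w$ in this region.

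The argument involves no real obstacle, only careful case analysis; the only subtle point is that the sign hypothesis $\mu<0$ is genuinely used in the $q<0$ regime of the second statement, since without it the term $(1-q)^{2\mu}$ could blow up as $q\to-\infty$ and $\widetilde{w}$ would fail to be comparable to the constant $w=1$. Putting the two pieces together closes the lemma.
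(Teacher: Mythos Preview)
Your proof is correct and follows essentially the same approach as the paper: a direct case split on the sign of $q$, using $\widetilde{w}=\widehat{w}+w$ together with the observations that $w=\widehat{w}$ for $q>0$ (giving the factor $2$) and $w\equiv 1$ with $w'=0$ for $q<0$, and then using $\mu<0$ to bound $(1+|q|)^{2\mu}\leq 1$ in the interior. The paper's proof is identical in structure and content.
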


\begin{proof}
We compute the derivative with respect to $q$\,,
\beaa
\widetilde{w}^{\prime} &=&   \widehat{w}^{\prime} ( q) + w^{\prime} ( q) \\
&=& \begin{cases} 2 \cdot \widehat{w}^{\prime} ( q) \quad\text{when }\quad q>0 , \\
        \widehat{w}^{\prime }( q)  \,\quad\text{when }\quad q<0 . \end{cases} \\ 
\eeaa
Consequently,
\beaa
\widetilde{w}^{\prime}  &\sim & \widehat{w}^{\prime } \; .
\eeaa

Now, on one hand, since $ \widehat{w} \geq 0$, we have 
\beaa
\widetilde{w} ( q)&\geq&w(q) \; .
\eeaa
On the other hand, since $\mu < 0$\,, we have
\beaa
\widetilde{w} ( q)& = & \begin{cases} 2 (1+|q|)^{1+2\gamma} \quad\text{when }\quad q>0 , \\
       1+  (1+|q|)^{2\mu}  \,\quad\text{when }\quad q<0 . \end{cases} \\    
       &\leq&\begin{cases} 2 (1+|q|)^{1+2\gamma} \quad\text{when }\quad q>0 , \\
       2  \,\quad\text{when }\quad q<0 . \end{cases} \\    
       &\leq& 2 w(q)
\eeaa
Thus
\beaa
\widetilde{w} ( q)& \sim & w(q) \; .
\eeaa
\end{proof}

 \begin{definition}\label{definitionofthescalarproductoftwopartialderivatives}
 Let $\Phi$ be a tensor of any order, say a 2-tensor $\Phi_{\mu\nu}$\;, either valued in the Lie algebra ${\cal G}$\;, or a a scalar. For any $\a\, ,\, \b \in \{ r, t, x^1, \ldots, x^n \}$\,, we define the following scalar product by
\bea
\notag
 < \pa_\a  \Phi ,\pa_\b \Phi > &:=& \sum_{\mu,\, \nu \in \{ t, x^1, \ldots, x^n \}  } < \pa_\a  \Phi_{\mu\nu} ,\pa_\b \Phi_{\mu\nu} > \; . \\
 \eea
 \end{definition}

\begin{lemma}\label{Conservationlawwithweightforwaveequations}
Let $ \Phi_{\mu\nu}$ be a tensor solution of the following tensorial wave equation
\bea
 g^{\la\a} \derm_{\la}   \derm_{\a}   \Phi_{\mu\nu}= S_{\mu\nu} \, , 
\eea
where $S_{\mu\nu}$ is the source term, with a sufficiently smooth metric $g$. Assume that the field is decaying fast enough at spatial infinity for all time $t$, such that in wave coordinates $\{t, x^1, \ldots, x^n \}$, we have for $j$ running over spatial indices $\{ x^1, \ldots, x^n \}$,
\bea
\lim_{r \to \infty }   \int_{\SSS^{n} }  g^{rj}  \cdot < \pa_t  \Phi ,\pa_j \Phi > \cdot w \cdot r^{n-1} d\si^{n-1} =0 \;, \\
\lim_{r \to \infty }   \int_{\SSS^{n} }  g^{tr}  \cdot < \pa_t  \Phi ,\pa_t \Phi > \cdot w \cdot r^{n-1} d\si^{n-1} =0 \;.
\eea
Then, we have the following
\bea
\notag
&& \int_{\Si_{t}} \Big( - (m^{t t} + H^{t t} )  < \pa_t \Phi , \pa_t \Phi > +  (m^{ij} + H^{ij} )< \pa_i \Phi , \pa_j \Phi >  \Big) \cdot w  \\
\notag
&=&  \int_{\Si_{t=0}} \Big( - (m^{t t} + H^{t t} ) < \pa_t \Phi , \pa_t \Phi > +   (m^{ij} + H^{ij} ) < \pa_i \Phi , \pa_j \Phi >  \Big) \cdot w\\
\notag
&+& \int_0^t \int_{\Si_{t}} - 2 < \pa_t \Phi , S >  \cdot w \\
\notag
&+& \int_0^t \int_{\Si_{t}}  \Big((- \pa_t H^{t t} ) \cdot  < \pa_t \Phi , \pa_t \Phi > +  ( \pa_t  H^{ij} ) \cdot  < \pa_i \Phi , \pa_j \Phi > \\
\notag
&& -  2 ( \pa_i H^{ij} ) \cdot < \pa_t  \Phi ,\pa_j \Phi >  - 2 \pa_j ( H^{tj} ) \cdot < \pa_t \Phi,  \pa_t \Phi > ) \Big) \cdot w  \\
\notag
&+& \int_0^t  \int_{\Si_{t}} \Big(-  2 H^{ij}   < \pa_t  \Phi , \pa_j \Phi >  \cdot ( \frac{x_i}{r}) - 2 H^{tj} < \pa_t \Phi,  \pa_t \Phi >  \cdot ( \frac{x_j}{r})  \\
\notag
&&   +  H^{t t}  < \pa_t \Phi , \pa_t \Phi >   -  H^{ij}  < \pa_i \Phi , \pa_j \Phi > \Big) \cdot w^{\prime} (q)  \\
\notag
&-&  \int_0^t   \int_{\Si_{t}} \Big(   < \pa_t  \Phi + \pa_r \Phi, \pa_t  \Phi + \pa_r \Phi >     +  \de^{ij}  < \pa_i \Phi - \frac{x_i}{r} \pa_{r}  \Phi , \pa_j \Phi - \frac{x_j}{r} \pa_{r}  \Phi >  \Big) \cdot w^{\prime} (q)   \; ,
\eea

where the integration on $\Sigma_t$ is taken with respect to the measure $dx^1 \ldots dx^n$, and the integration in $t$ is taken with respect to the measure $dt$\; and where the scalar product is taken as in Definition \ref{definitionofthescalarproductoftwopartialderivatives}.

\end{lemma}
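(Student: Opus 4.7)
The plan is to prove the identity by the standard multiplier method: contract the wave equation $g^{\la\a}\derm_\la\derm_\a\Phi = S$ with the multiplier $-2\,\pa_t\Phi\cdot w(q)$ (in the inner product of Definition \ref{definitionofthescalarproductoftwopartialderivatives}), integrate over the slab $[0,t]\times\R^n$, and apply the divergence theorem. In wave coordinates the Minkowski Christoffel symbols vanish, so $\derm_\la\derm_\a\Phi = \pa_\la\pa_\a\Phi$; decompose $g^{\la\a} = m^{\la\a}+H^{\la\a}$ (Lemma \ref{BigHintermsofsmallh}) and handle the two contributions separately. The decay hypotheses at $r=\infty$ are exactly what kill the boundary terms on $\SSS^{n-1}$ generated by each spatial integration by parts; the only surviving boundary contributions are the integrals on $\Sigma_0$ and $\Sigma_t$.

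For the flat part $\Box_m\Phi = -\pa_t^2\Phi + \sum_i\pa_i^2\Phi$, the key pointwise identity is
\[
 2\langle\pa_t\Phi,\Box_m\Phi\rangle = -\pa_t\bigl(|\pa_t\Phi|^2+|\nabla\Phi|^2\bigr) + 2\,\pa_i\bigl(\langle\pa_t\Phi,\pa_i\Phi\rangle\bigr).
\]
Multiplying by $w(q)$ with $q=r-t$ and using $\pa_t w=-w'(q)$, $\pa_i w = (x^i/r)w'(q)$, the temporal integration by parts produces the flat part of the boundary integrals at $t=0$ and $t$ (with coefficients $-m^{tt}=1$ and $m^{ij}=\delta^{ij}$), the spatial integration by parts produces $-2\int\int\langle\pa_t\Phi,\pa_r\Phi\rangle w'(q)$, and the weight derivatives produce $-\int\int(|\pa_t\Phi|^2+|\nabla\Phi|^2)w'(q)$. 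Summing these and writing $\sum_i|\pa_i\Phi|^2 = |\pa_r\Phi|^2+|\rpa\Phi|^2$ yields exactly the null-structure term
\[
 -\int_0^t\int_{\Sigma_\tau}\bigl(|\pa_t\Phi+\pa_r\Phi|^2+|\rpa\Phi|^2\bigr)\,w'(q)
\]
claimed in the last line of the statement.

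For the perturbative piece $H^{\la\a}\pa_\la\pa_\a\Phi$, split it into three blocks and, in each, rewrite the product with $2\pa_t\Phi$ as a total derivative plus a first-order expression in $\pa H$, using: (i) $2H^{tt}\pa_t\Phi\,\pa_t^2\Phi = \pa_t(H^{tt}|\pa_t\Phi|^2) - \pa_t H^{tt}\,|\pa_t\Phi|^2$; (ii) $4H^{tj}\pa_t\Phi\,\pa_t\pa_j\Phi = 2H^{tj}\pa_j(|\pa_t\Phi|^2)$; and (iii) the symmetry of $H^{ij}$ to write $2H^{ij}\pa_t\Phi\,\pa_i\pa_j\Phi = 2\pa_i(H^{ij}\pa_t\Phi\,\pa_j\Phi) - 2\pa_i H^{ij}\pa_t\Phi\,\pa_j\Phi - \pa_t(H^{ij}\langle\pa_i\Phi,\pa_j\Phi\rangle) + \pa_t H^{ij}\langle\pa_i\Phi,\pa_j\Phi\rangle$. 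After integrating against $w$ and applying $\pa_t w = -w'(q)$, $\pa_i w = (x^i/r)w'(q)$, the total-derivative terms contribute exactly the $H$-corrections to the boundary integrals (giving the stated coefficients $-(m^{tt}+H^{tt})$ and $m^{ij}+H^{ij}$), the $\pa H$-terms assemble into the third line of the identity, and the $w'(q)$-terms assemble into the fourth line.

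The main technical difficulty is not analytic but combinatorial: one must orient every integration by parts in the direction (temporal versus spatial) demanded by the shape of the final identity and use the symmetry of $H^{ij}$ at the right moment, so that the $H^{ij}\,\pa_t\pa_i\Phi\,\pa_j\Phi$ term collapses into a clean $\pa_t$-derivative and the cross terms $\pa_t\Phi\,\pa_j\Phi$ come with coefficient $\pa_i H^{ij}$ (rather than $\pa_t H^{ij}$). Once this bookkeeping is carried out and all contributions are summed, rearranging so that both $\Sigma_t$-integrals appear on the left-hand side yields the stated identity; no further ingredient beyond the two stated decay assumptions at spatial infinity is needed to discard boundary integrals on $\SSS^{n-1}$.
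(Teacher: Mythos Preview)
Your proposal is correct and follows essentially the same multiplier method as the paper: contract the equation with $-2\,\pa_t\Phi\cdot w(q)$, integrate by parts using the same pointwise identities (in particular the symmetry of $H^{ij}$ to turn $2H^{ij}\langle\pa_t\pa_i\Phi,\pa_j\Phi\rangle$ into a $\pa_t$-derivative, and the decomposition $\sum_i|\pa_i\Phi|^2=|\pa_r\Phi|^2+|\rpa\Phi|^2$ for the null structure), and discard the $\SSS^{n-1}$ boundary terms via the stated decay hypotheses. The only cosmetic difference is that you split $g=m+H$ at the outset and treat the two pieces separately, whereas the paper differentiates the full energy $\int(-g^{tt}|\pa_t\Phi|^2+g^{ij}\langle\pa_i\Phi,\pa_j\Phi\rangle)\,w$ in $t$ and only decomposes $g=m+H$ in the $w'(q)$-terms at the end; the computations are otherwise identical.
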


\begin{proof}

Let $d^{n} x := dx^{1} \ldots dx^{n}$ and let $w^\prime (q) := \frac{\pa}{\pa q } w(q)$. We denote by $i$ and $j$, spatial indices running only over $\{1, 2, \ldots, n \}$. We compute, on one hand,
\beaa
&& \frac{\d }{dt} \int_{\Si_{t}} \Big( g^{\a \b} < \pa_\a \Phi , \pa_\b \Phi >  -2 g^{tt} < \pa_t \Phi , \pa_t \Phi > -  2 g^{tj} < \pa_t \Phi , \pa_j \Phi > \Big) \cdot w \cdot dx^{1} \ldots dx^{n} \\
&=& \frac{\d }{dt} \int_{\Si_{t}} \Big( -g^{t t} < \pa_t \Phi , \pa_t \Phi > +  g^{ij} < \pa_i \Phi , \pa_j \Phi >  \Big) \cdot w \cdot d^{n}x \\
&& \text{(using the symmetry of metric $g$)} \\
&=& \int_{\Si_{t}} \Big( (- \pa_t g^{t t} ) \cdot  < \pa_t \Phi , \pa_t \Phi > +  ( \pa_t  g^{ij} ) \cdot  < \pa_i \Phi , \pa_j \Phi >  \Big) \cdot w \cdot d^{n}x \\
&& +  \int_{\Si_{t}} \Big( -2 g^{t t}  < \pa^2_t \Phi , \pa_t \Phi > +  2g^{ij} < \pa_t \pa_i \Phi , \pa_j \Phi >  \Big) \cdot w \cdot d^{n}x \\
&& + \int_{\Si_{t}} \Big( -g^{t t} < \pa_t \Phi , \pa_t \Phi > +  g^{ij} < \pa_i \Phi , \pa_j \Phi >  \Big) \cdot ( \frac{\pa}{\pa q } w(q)) \cdot (\pa_t q )\cdot d^{n}x \\
&& \text{(using again the symmetry of metric $g$)} \\
&:=& I_1 + I_2 + I_3 \\
&& \text{(where $I_1, \; I_2, \; I_3$ are defined respectively as the last three integrals).} \\
\eeaa

On the other hand, since we would like to get rid of the second order derivatives, or express them in terms of $ g^{\la\a} \derm_{\la}   \derm_{\a} \Phi = S$, we compute independently, 
\beaa
 < \pa_t \Phi , S  > &=& < \pa_t \Phi , g^{t t}  \pa^2_t \Phi  +  g^{ij}  \pa_i \pa_j \Phi +  2 g^{tj} \pa_t  \pa_j \Phi >  \\
  &=&g^{t t} < \pa_t \Phi ,   \pa^2_t \Phi > +  g^{ij} < \pa_t \Phi , \pa_i \pa_j \Phi > +  2 g^{tj}< \pa_t \Phi , \pa_t  \pa_j \Phi >  \; .
\eeaa
In order to write $I_2$ in that form, we integrate by parts,
\beaa
I_2 &:=&   \int_{\Si_{t}} \Big( -2 g^{t t}  < \pa^2_t \Phi , \pa_t \Phi > +  2g^{ij} < \pa_t \pa_i \Phi , \pa_j \Phi >  \Big) \cdot w \cdot d^{n}x \\
&=&  \int_{\Si_{t}} \Big( -2 g^{t t}  < \pa^2_t \Phi , \pa_t \Phi > -  2g^{ij} < \pa_t  \Phi ,\pa_i \pa_j \Phi > -  2 ( \pa_i g^{ij} ) \cdot < \pa_t  \Phi ,\pa_j \Phi >  \Big) \cdot w \cdot d^{n}x \\
&& +   \int_{\Si_{t}} \Big(-  2g^{ij} < \pa_t  \Phi , \pa_j \Phi >  \Big) \cdot w^{\prime} (q) \cdot ( \pa_i q) \cdot d^{n}x +2 \lim_{r \to \infty }   \int_{\SSS^{n} }  g^{rj}  \cdot < \pa_t  \Phi ,\pa_j \Phi > \cdot w \cdot r^{n-1} d\si^{n-1}  \\
&& \text{(where $d\si^{n-1}$ is the volume form on the unit $(n-1)$-sphere)} \\
&=&  \int_{\Si_{t}} \Big( -2 g^{t t}  < \pa^2_t \Phi , \pa_t \Phi > -  2g^{ij} < \pa_t  \Phi ,\pa_i \pa_j \Phi >   - 4 g^{tj}< \pa_t \Phi , \pa_t  \pa_j \Phi > \\
&& -  2 ( \pa_i g^{ij} ) \cdot < \pa_t  \Phi ,\pa_j \Phi >  + 4 g^{tj}< \pa_t \Phi , \pa_t  \pa_j \Phi > \Big) \cdot w \cdot d^{n}x \\
&& +   \int_{\Si_{t}} \Big(-  2g^{ij} < \pa_t  \Phi , \pa_j \Phi >  \Big) \cdot w^{\prime} (q) \cdot ( \pa_i q) \cdot d^{n}x \\
&& \text{(where we used the fact that the boundary terms vanish)} \\
&=&  \int_{\Si_{t}} - 2 < \pa_t \Phi ,  S >  \cdot w \cdot d^{n}x  +  \int_{\Si_{t}}  \Big( -  2 ( \pa_i g^{ij} ) \cdot < \pa_t  \Phi ,\pa_j \Phi >  + 4 g^{tj}< \pa_t \Phi , \pa_t  \pa_j \Phi > \Big) \cdot w \cdot d^{n}x \\
&& +   \int_{\Si_{t}} \Big(-  2g^{ij} < \pa_t  \Phi , \pa_j \Phi >  \Big) \cdot w^{\prime} (q) \cdot ( \pa_i q) \cdot d^{n}x \; .
\eeaa
However, we notice that
\beaa
2 < \pa_t \Phi , \pa_t  \pa_j \Phi > = \pa_j (< \pa_t \Phi,  \pa_t \Phi > ) \;.
\eeaa
Thus, integrating by parts using the fact the that the boundary term vanishes, i.e.
\bea
\notag
\lim_{r \to \infty }   \int_{\SSS^{n} }  g^{tr}  \cdot < \pa_t  \Phi ,\pa_t \Phi > \cdot w \cdot r^{n-1} d\si^{n-1} =0 \;,
\eea
since the fields are decaying fast at spatial infinity, we obtain
\beaa
I_2 &=&  \int_{\Si_{t}} - 2 < \pa_t \Phi , S   >  \cdot w \cdot d^{n}x  +  \int_{\Si_{t}}  \Big( -  2 ( \pa_i g^{ij} ) \cdot < \pa_t  \Phi ,\pa_j \Phi >  + 2 g^{tj} \pa_j (< \pa_t \Phi,  \pa_t \Phi > ) \Big) \cdot w \cdot d^{n}x \\
&& +   \int_{\Si_{t}} \Big(-  2g^{ij} < \pa_t  \Phi , \pa_j \Phi >  \Big) \cdot w^{\prime} (q) \cdot ( \pa_i q) \cdot d^{n}x \\
&=&  \int_{\Si_{t}} - 2 < \pa_t \Phi , S   >  \cdot w \cdot d^{n}x  +  \int_{\Si_{t}}  \Big( -  2 ( \pa_i g^{ij} ) \cdot < \pa_t  \Phi ,\pa_j \Phi >  - 2 \pa_j ( g^{tj} ) \cdot < \pa_t \Phi,  \pa_t \Phi > ) \Big) \cdot w \cdot d^{n}x \\
&& +   \int_{\Si_{t}} \Big(-  2g^{ij} < \pa_t  \Phi , \pa_j \Phi >  \cdot ( \pa_i q) - 2  g^{tj} < \pa_t \Phi,  \pa_t \Phi >  \cdot ( \pa_j q) \Big) \cdot w^{\prime} (q) \cdot d^{n}x \;. 
\eeaa

Putting together, we get
\beaa
&& \frac{\d }{dt} \int_{\Si_{t}} \Big( -g^{t t} < \pa_t \Phi , \pa_t \Phi > +  g^{ij} < \pa_i \Phi , \pa_j \Phi >  \Big) \cdot w \cdot d^{n}x \\
&=&  \int_{\Si_{t}} - 2 < \pa_t \Phi , S  >  \cdot w \cdot d^{n}x \\
&+&  \int_{\Si_{t}}  \Big((- \pa_t g^{t t} ) \cdot  < \pa_t \Phi , \pa_t \Phi > +  ( \pa_t  g^{ij} ) \cdot  < \pa_i \Phi , \pa_j \Phi > \\
&& -  2 ( \pa_i g^{ij} ) \cdot < \pa_t  \Phi ,\pa_j \Phi >  - 2 \pa_j ( g^{tj} ) \cdot < \pa_t \Phi,  \pa_t \Phi > ) \Big) \cdot w \cdot d^{n}x \\
&+&   \int_{\Si_{t}} \Big(-  2g^{ij} < \pa_t  \Phi , \pa_j \Phi >  \cdot ( \pa_i q) - 2  g^{tj} < \pa_t \Phi,  \pa_t \Phi >  \cdot ( \pa_j q)  \\
&&   -g^{t t} < \pa_t \Phi , \pa_t \Phi >  \cdot (\pa_t q ) +  g^{ij} < \pa_i \Phi , \pa_j \Phi > \cdot (\pa_t q ) \Big) \cdot w^{\prime} (q)  \cdot d^{n}x \, .
\eeaa
 
 Since $q = r - t$, we have
 \beaa
 \pa_t q = -1  \; ,
 \eeaa
 and 
 \beaa
 \pa_j q = \pa_j r = \pa_j  \Big( \sum_{k=1}^{n} (x_k)^{2} \Big)^{\frac{1}{2}} = \frac{x_j}{r} \,
\eeaa
and since be definition
\beaa
H^{\mu\nu} &=& g^{\mu\nu}-m^{\mu\nu} \; ,
\eeaa
and therefore, for all $\a \in \{\frac{\pa}{\pa x_\mu} \;,\; \mu \in \{0, 1, \ldots, n \} \}$
\beaa
\pa_\a H^{\mu\nu} &=& \pa_\a g^{\mu\nu}- \pa_a m^{\mu\nu} =  \pa_\a g^{\mu\nu} \; ,
\eeaa
we get
\beaa
&& \frac{\d }{dt} \int_{\Si_{t}} \Big( -g^{t t} < \pa_t \Phi , \pa_t \Phi > +  g^{ij} < \pa_i \Phi , \pa_j \Phi >  \Big) \cdot w \cdot d^{n}x \\
&=&  \int_{\Si_{t}} - 2 < \pa_t \Phi ,S   >  \cdot w \cdot d^{n}x \\
&+&  \int_{\Si_{t}}  \Big((- \pa_t H^{t t} ) \cdot  < \pa_t \Phi , \pa_t \Phi > +  ( \pa_t  H^{ij} ) \cdot  < \pa_i \Phi , \pa_j \Phi > \\
&& -  2 ( \pa_i H^{ij} ) \cdot < \pa_t  \Phi ,\pa_j \Phi >  - 2 \pa_j ( H^{tj} ) \cdot < \pa_t \Phi,  \pa_t \Phi > ) \Big) \cdot w \cdot d^{n}x \\
&+&   \int_{\Si_{t}} \Big(-  2g^{ij} < \pa_t  \Phi , \pa_j \Phi >  \cdot ( \frac{x_i}{r}) - 2  g^{tj} < \pa_t \Phi,  \pa_t \Phi >  \cdot ( \frac{x_j}{r})  \\
&&   + g^{t t} < \pa_t \Phi , \pa_t \Phi >   -  g^{ij} < \pa_i \Phi , \pa_j \Phi > \Big) \cdot w^{\prime} (q)  \cdot d^{n}x \; .
\eeaa
Thus,
\bea
\notag
&& \frac{\d }{dt} \int_{\Si_{t}} \Big( -g^{t t} < \pa_t \Phi , \pa_t \Phi > +  g^{ij} < \pa_i \Phi , \pa_j \Phi >  \Big) \cdot w \cdot d^{n}x \\
\notag
&=&  \int_{\Si_{t}} - 2 < \pa_t \Phi , S  >  \cdot w \cdot d^{n}x \\
\notag
&+&  \int_{\Si_{t}}  \Big((- \pa_t H^{t t} ) \cdot  < \pa_t \Phi , \pa_t \Phi > +  ( \pa_t  H^{ij} ) \cdot  < \pa_i \Phi , \pa_j \Phi > \\
\notag
&& -  2 ( \pa_i H^{ij} ) \cdot < \pa_t  \Phi ,\pa_j \Phi >  - 2 \pa_j ( H^{tj} ) \cdot < \pa_t \Phi,  \pa_t \Phi > ) \Big) \cdot w \cdot d^{n}x \\
\notag
&+&   \int_{\Si_{t}} \Big(-  2 (g^{ij} - m^{ij} )  < \pa_t  \Phi , \pa_j \Phi >  \cdot ( \frac{x_i}{r}) - 2  ( g^{tj} -  m^{tj} ) < \pa_t \Phi,  \pa_t \Phi >  \cdot ( \frac{x_j}{r})  \\
\notag
&&   + ( g^{t t} - m^{t t}  ) < \pa_t \Phi , \pa_t \Phi >   -  ( g^{ij} -m^{ij} ) < \pa_i \Phi , \pa_j \Phi > \Big) \cdot w^{\prime} (q)  \cdot d^{n}x \\
\notag
&+&   \int_{\Si_{t}} \Big(-  2m^{ij} < \pa_t  \Phi , \pa_j \Phi >  \cdot ( \frac{x_i}{r}) - 2  m^{tj} < \pa_t \Phi,  \pa_t \Phi >  \cdot ( \frac{x_j}{r})  \\
\notag
&&   + m^{t t} < \pa_t \Phi , \pa_t \Phi >   -  m^{ij} < \pa_i \Phi , \pa_j \Phi > \Big) \cdot w^{\prime} (q)  \cdot d^{n}x \; .\\
\eea
Now, we would like to compute, 
\beaa
&& \int_{\Si_{t}} \Big(-  2m^{ij} < \pa_t  \Phi , \pa_j \Phi >  \cdot ( \frac{x_i}{r}) - 2  m^{tj} < \pa_t \Phi,  \pa_t \Phi >  \cdot ( \frac{x_j}{r})  \\
&&   + m^{t t} < \pa_t \Phi , \pa_t \Phi >   -  m^{ij} < \pa_i \Phi , \pa_j \Phi > \Big) \cdot w^{\prime} (q)  \cdot d^{n}x \\
 &=&  \int_{\Si_{t}} \Big(-  2 < \pa_t  \Phi , \pa_j \Phi >  \cdot ( \frac{x^j}{r}) +0  -  < \pa_t \Phi , \pa_t \Phi >   -  \de^{ij} < \pa_i \Phi , \pa_j \Phi > \Big) \cdot w^{\prime} (q)  \cdot d^{n}x \; .
\eeaa
However,
\beaa
\pa_r = \sum_{j=1}^{n} \pa_j r \cdot \pa_j = m^{ij} \pa_i r \cdot  \pa_j = \frac{x^j}{r} \cdot  \pa_j \;.
\eeaa 
Thus,
\beaa
\pa_r \Phi = \frac{x^j}{r} \cdot  \pa_j  \Phi \;.
\eeaa 
We consider the derivatives restricted on the $n$-spheres
\beaa
\notag
 \pa_i - E ( \pa_{i} ,  \pa_{r}  ) \cdot \pa_{r} &=& \pa_i - E ( \pa_{i} ,  \frac{x^j}{r} \pa_{j}  ) \cdot  \pa_{r} \\
&=& \pa_i - \frac{x_i}{r} \pa_{r} \, .
\eeaa
We have
\beaa
&& \de^{ij}  < ( \pa_i - \frac{x_i}{r} \pa_{r}  )\Phi , (\pa_j - \frac{x_j}{r} \pa_{r} ) \Phi >\\
 &=& \de^{ij}  < \pa_i \Phi , \pa_j \Phi > - 2\de^{ij} < \frac{x_i}{r} \pa_{r} \Phi, \pa_j \Phi > +  \de^{ij}   \frac{x_i}{r}  \frac{x_j}{r} < \pa_{r}  \Phi, \pa_{r}  \Phi > \\
&=& \de^{ij}  < \pa_i \Phi , \pa_j \Phi > - 2 < \frac{x^j}{r} \pa_{r} \Phi, \pa_j \Phi > +  \frac{r^2}{r^2} < \pa_{r}  \Phi, \pa_{r}  \Phi > \\
&=& \de^{ij}  < \pa_i \Phi , \pa_j \Phi > - 2 <  \pa_{r} \Phi, \pa_r \Phi > +  < \pa_{r}  \Phi, \pa_{r}  \Phi > \;.
\eeaa
Hence,
\bea
\notag
\de^{ij}  < \pa_i \Phi , \pa_j \Phi > = \de^{ij}  < ( \pa_i - \frac{x_i}{r} \pa_{r}  )\Phi , (\pa_j - \frac{x_j}{r} \pa_{r} ) \Phi > +  < \pa_{r}  \Phi, \pa_{r}  \Phi > \;.
\eea
Injecting, we obtain
\beaa
&& \int_{\Si_{t}} \Big(-  2m^{ij} < \pa_t  \Phi , \pa_j \Phi >  \cdot ( \frac{x_i}{r}) - 2  m^{tj} < \pa_t \Phi,  \pa_t \Phi >  \cdot ( \frac{x_j}{r})  \\
&&   + m^{t t} < \pa_t \Phi , \pa_t \Phi >   -  m^{ij} < \pa_i \Phi , \pa_j \Phi > \Big) \cdot w^{\prime} (q)  \cdot d^{n}x \\
 &=&  \int_{\Si_{t}} \Big(-  2 < \pa_t  \Phi , \pa_r \Phi >   -  < \pa_t \Phi , \pa_t \Phi >   \\
 && -  \de^{ij}  < ( \pa_i - \frac{x_i}{r} \pa_{r}  )\Phi , (\pa_j - \frac{x_j}{r} \pa_{r} ) \Phi >  -  < \pa_{r}  \Phi, \pa_{r}  \Phi >\Big) \cdot w^{\prime} (q)  \cdot d^{n}x \\
  &=&  \int_{\Si_{t}} \Big(  - < \pa_t  \Phi + \pa_r \Phi, \pa_t  \Phi + \pa_r \Phi >     -  \de^{ij}  < ( \pa_i - \frac{x_i}{r} \pa_{r}  )\Phi , (\pa_j - \frac{x_j}{r} \pa_{r} ) \Phi >  \Big) \cdot w^{\prime} (q)  \cdot d^{n}x \; .
\eeaa

As a result, we obtain
\bea
\notag
&& \frac{\d }{dt} \int_{\Si_{t}} \Big( -g^{t t} < \pa_t \Phi , \pa_t \Phi > +  g^{ij} < \pa_i \Phi , \pa_j \Phi >  \Big) \cdot w \cdot d^{n}x \\
\notag
&=&  \int_{\Si_{t}} - 2 < \pa_t \Phi , S  >  \cdot w \cdot d^{n}x \\
\notag
&+&  \int_{\Si_{t}}  \Big((- \pa_t H^{t t} ) \cdot  < \pa_t \Phi , \pa_t \Phi > +  ( \pa_t  H^{ij} ) \cdot  < \pa_i \Phi , \pa_j \Phi > \\
\notag
&& -  2 ( \pa_i H^{ij} ) \cdot < \pa_t  \Phi ,\pa_j \Phi >  - 2 \pa_j ( H^{tj} ) \cdot < \pa_t \Phi,  \pa_t \Phi > ) \Big) \cdot w \cdot d^{n}x \\
\notag
&+&   \int_{\Si_{t}} \Big(-  2 H^{ij}   < \pa_t  \Phi , \pa_j \Phi >  \cdot ( \frac{x_i}{r}) - 2 H^{tj} < \pa_t \Phi,  \pa_t \Phi >  \cdot ( \frac{x_j}{r})  \\
\notag
&&   +  H^{t t}  < \pa_t \Phi , \pa_t \Phi >   -  H^{ij}  < \pa_i \Phi , \pa_j \Phi > \Big) \cdot w^{\prime} (q)  \cdot d^{n}x \\
\notag
&-&     \int_{\Si_{t}} \Big(   < \pa_t  \Phi + \pa_r \Phi, \pa_t  \Phi + \pa_r \Phi >     +  \de^{ij}  < ( \pa_i - \frac{x_i}{r} \pa_{r}  )\Phi , (\pa_j - \frac{x_j}{r} \pa_{r} ) \Phi >  \Big) \cdot w^{\prime} (q)  \cdot d^{n}x \; .
\eea
Integrating in time $t$, we obtain the result.
\end{proof}

\begin{lemma}\label{howtogetthedesirednormintheexpressionofenergyestimate}
Assume that the perturbation of the Minkowski metric is such that $ H^{\mu\nu} = g^{\mu\nu}-m^{\mu\nu}$ is bounded by a constant $C < \frac{1}{n}$\;, i.e.
\bea\label{AssumptiononHforgettingthenormintheexpressionofenergyestimate}
| H| \leq C < \frac{1}{n} \; ,
\eea
then we have
 \beaa
 | \pa \Phi|^2 \sim  - (m^{t t} + H^{t t} ) < \pa_t \Phi , \pa_t \Phi > +   (m^{ij} + H^{ij} ) < \pa_i \Phi , \pa_j \Phi > \; ,
  \eeaa
where the scalar product of the partial derivatives is as in Definition \ref{definitionofthescalarproductoftwopartialderivatives}.
\begin{remark}
The assumption on $H$ in \ref{AssumptiononHforgettingthenormintheexpressionofenergyestimate} is satisfied under the bootstrap argument for initial data small enough.
\end{remark}
\end{lemma}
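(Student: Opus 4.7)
The strategy is a direct computation: I will decompose the right-hand side into a Minkowski piece (which reproduces the Euclidean norm $|\pa\Phi|^2$ exactly) and a remainder involving $H$, and then show that the $H$-piece is a small perturbation under the hypothesis $|H|\le C<1/n$.

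First I would write
\[
-(m^{tt}+H^{tt})\langle \pa_t\Phi,\pa_t\Phi\rangle + (m^{ij}+H^{ij})\langle \pa_i\Phi,\pa_j\Phi\rangle = T_m+T_H,
\]
where $T_m := -m^{tt}\langle\pa_t\Phi,\pa_t\Phi\rangle + m^{ij}\langle\pa_i\Phi,\pa_j\Phi\rangle$ and $T_H:=-H^{tt}\langle\pa_t\Phi,\pa_t\Phi\rangle + H^{ij}\langle\pa_i\Phi,\pa_j\Phi\rangle$. Since $m$ is the Minkowski metric in wave coordinates, $m^{tt}=-1$ and $m^{ij}=\delta^{ij}$, so, unpacking the definition of the scalar product from Definition \ref{definitionofthescalarproductoftwopartialderivatives} and the definition of $|\pa\Phi|^2$ in Section \ref{subsectiondefinitionofthenorms}, one gets directly
\[
T_m = \langle\pa_t\Phi,\pa_t\Phi\rangle + \sum_{i=1}^n \langle\pa_i\Phi,\pa_i\Phi\rangle = |\pa\Phi|^2.
\]

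The main step is bounding $|T_H|$. The norm of $H$ defined in Section \ref{subsectiondefinitionofthenorms} is the full Euclidean contraction in wave coordinates, and in particular $|H^{\mu\nu}|\le |H|$ componentwise. Thus I would estimate
\[
|T_H|\;\le\; |H^{tt}|\,|\pa_t\Phi|^2 + \sum_{i,j=1}^n |H^{ij}|\,|\pa_i\Phi|\,|\pa_j\Phi| \;\le\; C\,|\pa_t\Phi|^2 + C\sum_{i,j}|\pa_i\Phi|\,|\pa_j\Phi|,
\]
and then apply Cauchy--Schwarz to the spatial double sum:
\[
\sum_{i,j=1}^n |\pa_i\Phi|\,|\pa_j\Phi| \;=\; \Big(\sum_{i=1}^n |\pa_i\Phi|\Big)^{\!2} \;\le\; n\sum_{i=1}^n |\pa_i\Phi|^2.
\]
Combining, $|T_H|\le nC\,|\pa\Phi|^2$, so under the hypothesis $nC<1$ one obtains the two-sided bound $(1-nC)|\pa\Phi|^2 \le T_m+T_H\le (1+nC)|\pa\Phi|^2$, which is precisely the claimed equivalence $\sim$.

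There is no real obstacle here; the only subtle point worth verifying explicitly is the factor $n$ arising from Cauchy--Schwarz on the spatial indices, which dictates the sharp threshold $C<1/n$ in the assumption. One should also be careful that the scalar product of Definition \ref{definitionofthescalarproductoftwopartialderivatives} enjoys the absolute value inequality $|\langle\pa_i\Phi,\pa_j\Phi\rangle|\le |\pa_i\Phi|\,|\pa_j\Phi|$, which is immediate since the pointwise scalar product on the Lie algebra is positive definite and Ad-invariant; this justifies every use of the triangle/Cauchy--Schwarz inequality above. The remark at the end, that the hypothesis $|H|\le C<1/n$ holds under the bootstrap assumption for small enough initial data, follows from the a priori decay estimates of Lemma \ref{aprioriestimatefrombootstraponzerothderivativeofAandh1} together with Lemma \ref{BigHintermsofsmallh}.
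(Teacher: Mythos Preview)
Your proof is correct and essentially matches the paper's argument: both identify the Minkowski part as exactly $|\pa\Phi|^2$ and bound the $H$-remainder by $nC\,|\pa\Phi|^2$, yielding $(1-nC)|\pa\Phi|^2 \le \cdots \le (1+nC)|\pa\Phi|^2$. The only cosmetic difference is that you isolate $T_m+T_H$ at the outset and use Cauchy--Schwarz on the full spatial double sum $(\sum_i|\pa_i\Phi|)^2\le n\sum_i|\pa_i\Phi|^2$, whereas the paper interleaves the two-sided bounds and uses $|a||b|\le\tfrac12(a^2+b^2)$ on the off-diagonal terms; both routes produce the same constant and the same threshold $C<1/n$.
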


 \begin{proof}
For each $\mu, \nu \in \{t, x^1, \ldots, x^n \}$, we have
\beaa
 H^{\mu\nu} \leq | H| = \Big( E_{\la\si } E_{\a\b } H^{\la\a} H^{\si\b}  \Big)^\frac{1}{2}\leq C 
\eeaa
and therefore
\beaa
- (m^{t t} + H^{t t} ) &=& - (-1  + H^{t t} ) = 1 + H^{t t} \leq 1+C \\
- (m^{t t} + H^{t t} ) &\geq& 1- C 
\eeaa
and
\beaa
  (m^{ij} + H^{ij} ) &\leq& \de^{ij} + C  \;, \\
    (m^{ij} + H^{ij} ) &\geq&  \de^{ij} - C  \; .
\eeaa
Now, let $C^{ij} = C$ for all $i,  j$ spatial indices. We get
\beaa
 && (1- C )   \cdot < \pa_t \Phi , \pa_t \Phi > + (\de^{ij} - C^{ij} )  \cdot < \pa_i \Phi , \pa_j \Phi > \\
  &\leq&  - (m^{t t} + H^{t t} )  \cdot < \pa_t \Phi , \pa_t \Phi > +   (m^{ij} + H^{ij} )  \cdot < \pa_i \Phi , \pa_j \Phi > \\
   &\leq& (1+C)   \cdot < \pa_t \Phi , \pa_t \Phi > + (\de^{ij} + C^{ij} )   \cdot< \pa_i \Phi , \pa_j \Phi > \; .
\eeaa
As a result, we have
\beaa
 && (1- C ) \cdot | \pa \Phi|^2    - C  \cdot \sum_{i\neq j }< \pa_i \Phi , \pa_j \Phi > \\
  &\leq&  - (m^{t t} + H^{t t} )  \cdot < \pa_t \Phi , \pa_t \Phi > +   (m^{ij} + H^{ij} )  \cdot < \pa_i \Phi , \pa_j \Phi > \\
   &\leq& (1+C) \cdot   | \pa \Phi|^2 + C   \cdot \sum_{i\neq j }  < \pa_i \Phi , \pa_j \Phi > \; .
\eeaa

Using $|a| \cdot |b| \leq   \frac{1}{2} a^2 +   \frac{1}{2} b^2$, we get
\beaa
| < \pa_i \Phi,  \pa_j \Phi> | &\leq& | < \pa_i \Phi,  \pa_i \Phi> |^\frac{1}{2} \cdot| < \pa_j \Phi,  \pa_j \Phi> |^\frac{1}{2} \\
&\leq&  \frac{1}{2}  \cdot <\pa_i \Phi , \pa_i \Phi > +  \frac{1}{2}  \cdot < \pa_j \Phi, \pa_j \Phi > \;,
 \eeaa
 and therefore, we have
 \beaa
&& C  \cdot \sum_{i, j, \; i\neq j }  | < \pa_i \Phi,  \pa_j \Phi> | \\
&\leq&  \frac{C}{2}  \cdot \sum_{i, j, \; i\neq j }  \Big(   <\pa_i \Phi , \pa_i \Phi > +   < \pa_j \Phi, \pa_j \Phi >  \Big)  \\
&\leq&   \frac{C}{2} \cdot \Big( 2(n-1) \cdot |\pa \Phi |^2  \Big) \\
&& \text{(where we used, in counting the sum, the fact that $\sum_{i =1}^n     <\pa_i \Phi , \pa_i \Phi > \leq |\pa \Phi |^2$ )} \\
&\leq&  C \cdot (n-1)  \cdot |\pa \Phi |^2 \; .
 \eeaa
 As a result,
 \beaa
&& (1- C ) \cdot | \pa \Phi|^2    -  C \cdot (n-1) \cdot  |\pa \Phi |^2 \\
  &\leq&  - (m^{t t} + H^{t t} ) \cdot  < \pa_t \Phi , \pa_t \Phi > +   (m^{ij} + H^{ij} )  \cdot < \pa_i \Phi , \pa_j \Phi > \\
    &\leq& (1+C) \cdot   | \pa \Phi|^2 +    C \cdot (n-1)  \cdot | \pa \Phi|^2 \;.
\eeaa
 Consequently,
 \beaa
 && (1 - C \cdot  n) ) \cdot | \pa \Phi|^2 \\
   &\leq&  - (m^{t t} + H^{t t} )  \cdot < \pa_t \Phi , \pa_t \Phi > +   (m^{ij} + H^{ij} )  \cdot< \pa_i \Phi , \pa_j \Phi > \\
   &\leq&    (1 + C \cdot  n  ) \cdot | \pa \Phi|^2 \; .
  \eeaa
    
    \end{proof}

\begin{lemma}\label{Theenerhyestimatefornequalfive}

Let $ \Phi_{\mu\nu}$ be a tensor solution of the following tensorial wave equation
\bea
 g^{\la\a} \derm_{\la}   \derm_{\a}   \Phi_{\mu\nu}= S_{\mu\nu} \, , 
\eea
where $S_{\mu\nu}$ is the source term, with a sufficiently smooth metric $g$. Assume that $ H^{\mu\nu} = g^{\mu\nu}-m^{\mu\nu}$ satisfies 
\beaa
| H| \leq C < \frac{1}{n} \; , \quad \text{where $n$ is the space dimension},
\eeaa
and assume that the field $\Phi$ is decaying fast enough at spatial infinity for all time $t$, such that in wave coordinates $\{t, x^1, \ldots, x^n \}$, we have for $j$ running over spatial indices $\{ x^1, \ldots, x^n \}$, 
\bea
\lim_{r \to \infty }   \int_{\SSS^{n} }  g^{rj}  \cdot < \pa_t  \Phi ,\pa_j \Phi > \cdot w \cdot r^{n-1} d\si^{n-1} =0 \\
\lim_{r \to \infty }   \int_{\SSS^{n} }  g^{tr}  \cdot < \pa_t  \Phi ,\pa_t \Phi > \cdot w \cdot r^{n-1} d\si^{n-1} =0 \;.
\eea

Then, we have the following

\bea
\notag
 &&\int_{\Si_{t}}  | \pa \Phi|^2 \cdot w \\
 \notag
 &\leq&  \int_{\Si_{t=0}}  | \pa \Phi|^2  \cdot w + \int_0^t \int_{\Si_{t}} - 2 < \pa_t \Phi , S  >  \cdot w \\
\notag
&&+  \int_0^t \int_{\Si_{t}}   | \derm H | \cdot |\derm \Phi |^2  \cdot  w  + \int_0^t  \int_{\Si_{t}}  | H |  \cdot |\derm \Phi |^2 \cdot | w^{\prime} (q) |  \\
\notag
&&-  \int_0^t   \int_{\Si_{t}} \Big(   | \pa_t  \Phi + \pa_r \Phi |^2     + \sum_{i=1}^{n}  | ( \pa_i - \frac{x_i}{r} \pa_{r}  )\Phi |^2 \Big) \cdot w^{\prime} (q)   \; .
\eea

where the integration on $\Sigma_t$ is taken with respect to the measure $dx^1 \ldots dx^n$, and the integration in $t$ is taken with respect to the measure $dt$.\\

\end{lemma}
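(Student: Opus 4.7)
The plan is to derive the stated energy estimate by combining the two preceding results: the weighted conservation law of Lemma \ref{Conservationlawwithweightforwaveequations} (which yields an \emph{identity}, via the divergence theorem, for the evolution of a quadratic form in $\partial\Phi$) and the coercivity statement of Lemma \ref{howtogetthedesirednormintheexpressionofenergyestimate} (which, under the hypothesis $|H|\leq C < 1/n$, shows that the quadratic form $-(m^{tt}+H^{tt})\langle\partial_t\Phi,\partial_t\Phi\rangle + (m^{ij}+H^{ij})\langle\partial_i\Phi,\partial_j\Phi\rangle$ is comparable to $|\partial\Phi|^2$).

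First I would apply Lemma \ref{Conservationlawwithweightforwaveequations} directly to $\Phi_{\mu\nu}$, using the hypothesis $g^{\la\a}\derm_\la\derm_\a\Phi_{\mu\nu}=S_{\mu\nu}$ and the spatial-infinity decay assumptions stated in the lemma to justify the vanishing of the boundary terms. This produces the exact identity of that lemma, which contains: (i) a boundary-in-time quadratic form at times $t$ and $0$; (ii) a source integral $\int\!\int -2\langle\partial_t\Phi,S\rangle w$; (iii) bulk terms of the schematic shape $\partial H\cdot(\partial\Phi)^2\cdot w$; (iv) weight-derivative terms of the shape $H\cdot(\partial\Phi)^2\cdot w'(q)$; and (v) the genuinely signed ``ghost weight'' terms $-\big(|\partial_t\Phi+\partial_r\Phi|^2+\sum_i|(\partial_i-\frac{x_i}{r}\partial_r)\Phi|^2\big)w'(q)$.

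The second step is to invoke Lemma \ref{howtogetthedesirednormintheexpressionofenergyestimate} on both the $t$-slice and the initial slice to replace the quadratic form appearing there by $|\partial\Phi|^2$ up to a multiplicative constant; this converts the boundary-in-time quantities into the desired $L^2$ norms on $\Sigma_t$ and $\Sigma_0$. Then I would estimate the bulk terms in (iii) pointwise: each individual contraction $(\partial_\mu H^{\nu\sigma})\langle\partial_\alpha\Phi,\partial_\beta\Phi\rangle$ is bounded, by Cauchy--Schwarz and the equivalence $|\derm\,\cdot\,|=|\partial\,\cdot\,|$ in wave coordinates (Definition \ref{definitionoftheMinkowskicovaruiantderivative}), by $|\derm H|\cdot|\derm\Phi|^2$. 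The $w'(q)$-terms in (iv) are similarly bounded by $|H|\cdot|\derm\Phi|^2\cdot|w'(q)|$, noting that $w'(q)\geq 0$ by Definition \ref{defw} so that $w'(q)=|w'(q)|$. The terms in (v) already have the desired sign and structure.

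No single step is especially hard: everything reduces to the elementary algebraic manipulations above together with the already-proved coercivity Lemma \ref{howtogetthedesirednormintheexpressionofenergyestimate}. The only mild subtlety worth watching is the bookkeeping of signs --- in particular, verifying that the ghost-weight contribution (v) indeed appears on the right-hand side with the minus sign exhibited in the statement (so that the integrand $(|\partial_t\Phi+\partial_r\Phi|^2+\sum_i|(\partial_i-\tfrac{x_i}{r}\partial_r)\Phi|^2)w'(q)\geq 0$ is being \emph{subtracted}, giving an extra favorable term one may later discard or exploit). Once the signs are tracked carefully in the identity from Lemma \ref{Conservationlawwithweightforwaveequations} and the coercivity is inserted, the asserted inequality follows immediately.
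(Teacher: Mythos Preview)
Your proposal is correct and follows essentially the same approach as the paper: apply the conservation identity of Lemma \ref{Conservationlawwithweightforwaveequations}, bound the $\partial H$-bulk terms and the $H\cdot w'(q)$-terms pointwise by $|\derm H|\,|\derm\Phi|^2\,w$ and $|H|\,|\derm\Phi|^2\,|w'(q)|$ respectively (the paper also uses $|x_i|\leq r$ for the factors $x_i/r$), and invoke Lemma \ref{howtogetthedesirednormintheexpressionofenergyestimate} on the boundary-in-time quadratic forms. The sign bookkeeping you flag is exactly the point the paper handles, and there is nothing more to it.
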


\begin{proof}

We examine the term
 \beaa
 && \int_0^t \int_{\Si_{t}}  \Big((- \pa_t H^{t t} ) \cdot  < \pa_t \Phi , \pa_t \Phi > +  ( \pa_t  H^{ij} ) \cdot  < \pa_i \Phi , \pa_j \Phi > \\
\notag
&& -  2 ( \pa_i H^{ij} ) \cdot < \pa_t  \Phi ,\pa_j \Phi >  - 2 \pa_j ( H^{tj} ) \cdot < \pa_t \Phi,  \pa_t \Phi > ) \Big) \cdot w  \; .
\eeaa

Given the definition of the norms computed in wave coordinates $\{ t, x^i \quad i \in \{1, \ldots, n \} \}$, 
 \beaa
| (- \pa_t H^{t t} ) \cdot  < \pa_t \Phi , \pa_t \Phi > | &\leq& | \derm_t H^{tt} | \cdot |\derm_t \Phi |^2 \leq | \derm H | \cdot |\derm \Phi |^2 \; , \\
| ( \pa_t  H^{ij} ) \cdot  < \pa_i \Phi , \pa_j \Phi >  |&\leq&| \derm_t H^{ij} | \cdot |\derm_i \Phi | \cdot |\derm_j \Phi | \leq | \derm H | \cdot |\derm \Phi |^2  \; , \\
| ( \pa_i H^{ij} ) \cdot < \pa_t  \Phi ,\pa_j \Phi > |  &\leq&| \derm_i H^{ij} | \cdot |\derm_t \Phi | \cdot |\derm_j \Phi | \leq | \derm H | \cdot |\derm \Phi |^2 \; , \\
|  \pa_j ( H^{tj} ) \cdot < \pa_t \Phi,  \pa_t \Phi >  |&\leq& | \derm_j H^{tj} | \cdot |\derm_t \Phi |^2 \leq | \derm H | \cdot |\derm \Phi |^2 \; .
\eeaa
Consequently, we get
 \bea\label{estimateforconservationlawwithweightfortermswithderivativesofH}
 \notag
 && \int_0^t \int_{\Si_{t}}  \Big((- \pa_t H^{t t} ) \cdot  < \pa_t \Phi , \pa_t \Phi > +  ( \pa_t  H^{ij} ) \cdot  < \pa_i \Phi , \pa_j \Phi > \\
  \notag
&& -  2 ( \pa_i H^{ij} ) \cdot < \pa_t  \Phi ,\pa_j \Phi >  - 2 \pa_j ( H^{tj} ) \cdot < \pa_t \Phi,  \pa_t \Phi > ) \Big) \cdot w   \\
&\leq & \int_0^t \int_{\Si_{t}}   | \derm H | \cdot |\derm \Phi |^2  \cdot |w|  \;. 
\eea

We look at the term
\beaa
&& \int_0^t  \int_{\Si_{t}} \Big(-  2 H^{ij}   \cdot < \pa_t  \Phi , \pa_j \Phi >  \cdot ( \frac{x_i}{r}) - 2 H^{tj}  \cdot < \pa_t \Phi,  \pa_t \Phi >  \cdot ( \frac{x_j}{r})  \\
\notag
&&   +  H^{t t}   \cdot < \pa_t \Phi , \pa_t \Phi >   -  H^{ij}  \cdot  < \pa_i \Phi , \pa_j \Phi > \Big) \cdot w^{\prime} (q)  \;.
\eeaa

Using the fact that $  |x_i| \leq r $ for all $i$, spatial index, we get
\beaa
 | H^{ij}   < \pa_t  \Phi , \pa_j \Phi >  \cdot ( \frac{x_i}{r}) | &\leq& | H^{ij} |  \cdot |\derm_t \Phi | \cdot |\derm_j \Phi |  \cdot  \frac{ |x_i| }{r} \leq  | H |  \cdot |\derm \Phi |^2 \; ,\\
 | H^{tj} < \pa_t \Phi,  \pa_t \Phi >  \cdot ( \frac{x_j}{r}) | &\leq&  | H |  \cdot |\derm \Phi |^2 \; , \\
 | H^{t t}  < \pa_t \Phi , \pa_t \Phi > | &\leq&  | H |  \cdot |\derm \Phi |^2 \; , \\
 | H^{ij}  < \pa_i \Phi , \pa_j \Phi > | &\leq&  | H |  \cdot |\derm \Phi |^2 \; .
\eeaa
Thus,
\bea\label{estimateforconservationlawwithweightfortermswithHanderiavtiveofweight}
\notag
&& \int_0^t  \int_{\Si_{t}} \Big(-  2 H^{ij}   \cdot < \pa_t  \Phi , \pa_j \Phi >  \cdot ( \frac{x_i}{r}) - 2 H^{tj}  \cdot < \pa_t \Phi,  \pa_t \Phi >  \cdot ( \frac{x_j}{r})  \\
\notag
&&   +  H^{t t}   \cdot < \pa_t \Phi , \pa_t \Phi >   -  H^{ij}   \cdot < \pa_i \Phi , \pa_j \Phi > \Big) \cdot w^{\prime} (q)  \\
&\leq & \int_0^t  \int_{\Si_{t}}  | H |  \cdot |\derm \Phi |^2 \cdot | w^{\prime} (q) | \; .
\eea
Using what we showed in Lemma \ref{Conservationlawwithweightforwaveequations} and injecting the estimates \eqref{estimateforconservationlawwithweightfortermswithderivativesofH} and \eqref{estimateforconservationlawwithweightfortermswithHanderiavtiveofweight} that we proved, and using Lemma \ref{howtogetthedesirednormintheexpressionofenergyestimate}, we get the result.
\end{proof}

\section{A Hardy type inequality}

We will prove a Hardy type inequality with the weight $w$ that we defined in Definition \ref{defw}. However, since we will need a Hardy type inequality for a more general weight for the case of lower space-dimensions (which we will treat papers that follow), we will prove a Hardy type inequality for a more general weight $\widehat{w}$ which we will define in what follows (the weight $w$ corresponds to the case of $\mu = 0$ in $\widehat{w}$).

\begin{definition}\label{defwidehatw}
We define $\widehat{w}$ by 
\beaa
\widehat{w}(q)&:=&\begin{cases} (1+|q|)^{1+2\gamma} \quad\text{when }\quad q>0 , \\
        (1+|q|)^{2\mu}  \,\quad\text{when }\quad q<0 , \end{cases} \\
        &=&\begin{cases} (1+ q)^{1+2\gamma} \quad\text{when }\quad q>0 , \\
      (1 - q)^{2\mu}  \,\quad\text{when }\quad q<0 ,\end{cases} 
\eeaa

for $\ga > 0$ being the same as in Definition \ref{defw}, and for $\mu \in \R$ which could be restricted later in paper the follow for the lower dimensions. In other words, we will finally take in this paper $\mu = 0$, however we will perform our calculations with a general $\mu \neq \frac{1}{2}$ as will be pointed out later when it is needed.

\end{definition}

\begin{lemma}\label{HardytypeinequalityforintegralstartingatROmwithgeneralhatw}
Let $\widehat{w}$ defined as in Definition \ref{defwidehatw}.
Let  $\Phi$ a tensor that decays fast enough at spatial infinity for all time $t$\,, such that
\bea
 \int_{\SSS^{n-1}} \lim_{r \to \infty} \Big( \frac{r^{n-1}}{(1+t+r)^{a} \cdot (1+|q|) } \widehat{w}(q) \cdot <\Phi, \Phi>  \Big)  d\si^{n-1} (t ) &=& 0 \; .
\eea
Let $R(\Om)  \geq 0 $\,, be a function of $\Om \in \SSS^{n-1}$\,. Then, for $\ga \neq 0$ and $\mu \neq \frac{1}{2}$\,, $0 \leq a \leq n-1$\,, we have 
\bea
\notag
 &&   \int_{\SSS^{n-1}} \int_{r=R(\Om)}^{r=\infty} \frac{r^{n-1}}{(1+t+r)^{a}} \cdot   \frac{\widehat{w} (q)}{(1+|q|)^2} \cdot <\Phi, \Phi>   \cdot dr  \cdot d\si^{n-1}  \\
 \notag
 &\leq& c(\ga, \mu) \cdot  \int_{\SSS^{n-1}} \int_{r=R(\Om)}^{r=\infty}  \frac{ r^{n-1}}{(1+t+r)^{a}} \cdot \widehat{w}(q) \cdot <\pa_r\Phi, \pa_r \Phi>  \cdot  dr  \cdot d\si^{n-1}  \; , \\
 \eea
 where the constant $c(\ga, \mu)$ does not depend on $R(\Om)$\,.
 \end{lemma}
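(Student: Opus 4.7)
The plan is to reduce the claim to a one-dimensional weighted Hardy-type inequality in the radial variable $r$, treating $\Om \in \SSS^{n-1}$ as a parameter and invoking Fubini only at the end. For each fixed $\Om$, set $q = r - t$, and observe that the integrand on the left has the form $V(r)\langle\Phi,\Phi\rangle$ with
$$V(r) := \frac{r^{n-1}}{(1+t+r)^a}\cdot\frac{\widehat{w}(q)}{(1+|q|)^2},$$
while the one on the right involves $W(r)\langle\pa_r\Phi,\pa_r\Phi\rangle$ with $W(r):=\frac{r^{n-1}}{(1+t+r)^a}\widehat{w}(q)$. A standard weighted Hardy principle suggests finding an auxiliary function $h(r)$ that is, up to lower-order corrections, a primitive of $V$, and then integrating by parts.

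The key algebraic observation is that the function
$$h(r) := \frac{r^{n-1}}{(1+t+r)^a}\cdot\frac{\widehat{w}(q)}{1+|q|}$$
satisfies, by a direct computation using $\pa_r q = 1$,
$$\pa_r h = \Big(\frac{n-1}{r}-\frac{a}{1+t+r}\Big)\, h \;+\; c_q\cdot V(r),$$
where $c_q = 2\gamma$ on $\{q>0\}$ and $c_q = 1-2\mu$ on $\{q<0\}$, both nonzero under the hypotheses $\gamma\neq 0$ and $\mu\neq 1/2$. The assumption $0\le a\le n-1$ makes the prefactor $\frac{n-1}{r}-\frac{a}{1+t+r}\ge 0$, so when $c_q>0$ both contributions to $\pa_r h$ have the same sign.

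Next I would integrate by parts in $r\in[R(\Om),\infty)$:
$$\int_{R(\Om)}^{\infty}(\pa_r h)\langle\Phi,\Phi\rangle\,dr = \big[h\langle\Phi,\Phi\rangle\big]_{R(\Om)}^{\infty} - 2\int_{R(\Om)}^{\infty} h\,\langle\Phi,\pa_r\Phi\rangle\,dr.$$
The boundary term at $r=\infty$ vanishes by the hypothesized decay; at $r=R(\Om)$ it equals $-h(R(\Om))\,\langle\Phi,\Phi\rangle(R(\Om))\le 0$, which can be discarded after moving it across. Dropping the (non-negative) $\alpha$-term on the resulting left-hand side and applying Cauchy--Schwarz through
$$2|h\,\langle\Phi,\pa_r\Phi\rangle|\le \eps\, V\,\langle\Phi,\Phi\rangle + \eps^{-1}\, W\,\langle\pa_r\Phi,\pa_r\Phi\rangle,$$
which is consistent because $h^2/W = V$ by construction, and choosing $\eps$ a fixed small fraction of $c_q$, one absorbs the $V$-term into the left and obtains the asserted inequality with a constant proportional to $1/c_q^2$. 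Integrating in $\Om$ then concludes the proof.

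The main obstacle will be the sign mismatch of $c_q$: when $\gamma$ and $1-2\mu$ have opposite signs (e.g.\ $\mu>1/2$), the above argument must be carried out separately on the two subregions $R(\Om)\le r\le \max(R(\Om),t)$ (where $q<0$) and $r\ge \max(R(\Om),t)$ (where $q\ge 0$), using $\pm h$ adapted to the sign of $c_q$ on each. The interface contribution at $r=t$ is manageable because $\widehat{w}(q)/(1+|q|)$ is continuous across $q=0$, so the boundary contributions from the two pieces cancel, and the two resulting estimates combine into the stated one with $c(\gamma,\mu)$ depending only on $\min(|2\gamma|,|1-2\mu|)^{-2}$. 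This case distinction, together with checking that the choice $h$ remains non-negative at $r=R(\Om)$ in each piece, is the one delicate bookkeeping step; the rest is an essentially routine Hardy argument.
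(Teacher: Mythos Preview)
Your approach is essentially identical to the paper's: both introduce the auxiliary quantity $h(r)=\frac{r^{n-1}}{(1+t+r)^a}\,\frac{\widehat w(q)}{1+|q|}$ (the paper writes this as $\frac{r^{n-1}}{(1+t+r)^a}\,m(q)$), compute $\pa_r h$, use $0\le a\le n-1$ to ensure $\tfrac{n-1}{r}-\tfrac{a}{1+t+r}\ge 0$, integrate by parts with the decay hypothesis killing the boundary term at infinity and the boundary term at $R(\Om)$ having a favorable sign, and close by Cauchy--Schwarz together with the identity $h^2=VW$. The only cosmetic difference is that the paper applies Cauchy--Schwarz in $L^2$ form and cancels a square root, while you use the $\eps$--Young form and absorb.

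One remark on your final paragraph: your proposed patching for the case where $2\gamma$ and $1-2\mu$ have opposite signs does not quite work as written, because replacing $h$ by $-h$ on $\{q<0\}$ flips the sign of the $c_qV$ term but \emph{also} flips the $\alpha h$ term, so the two no longer have the same sign and the dropping step fails. The paper's proof has the same limitation (it divides by $\sqrt{m'(q)}$, which requires $m'(q)>0$ throughout). In practice this is harmless: in the paper's setting $\gamma>0$ and $\mu\le 0$, so $c_q>0$ in both regions and no splitting is needed.
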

 
 \begin{proof}
Let 
\beaa
m(q)&:=&\frac{\widehat{w}(q)}{(1+|q|)} = \begin{cases} (1+|q|)^{2\gamma} \quad\text{when }\quad q>0 \;, \\
        (1+|q|)^{2\mu-1}  \,\quad\text{when }\quad q<0\;, \end{cases} \\
        &=&\begin{cases} (1+ q)^{2\gamma} \quad\text{when }\quad q>0\; , \\
        (1 - q)^{2\mu-1}  \,\quad\text{when }\quad q<0\; , \end{cases} \\
\eeaa
and we compute,
\beaa
m^{\prime}(q):=\begin{cases} 2\gamma (1+|q|)^{2\gamma-1} \quad\text{when }\quad q>0 \;, \\
         (1 - 2\mu) (1+|q|)^{2\mu-1} \,\quad\text{when }\quad q<0\; . \end{cases}
\eeaa

We want to prove the following Hardy type inequality for $a$\,, such that $0 \leq a \leq n-1$\,,
\beaa
 \int_{\SSS^{n-1}} \int_{r=R(\Om)}^{\infty }\frac{|\phi|^2}{(1+|q|)^2} \cdot \frac{ \widehat{w}(q) \, r^{n-1} \cdot dr \cdot d\si^{n-1} }{(1+t+|q|)^{a}} \les \int_{\SSS^{n-1}}  \int_{r=R(\Om)}^{\infty }  |\pa \phi|^2\, \frac{ \widehat{w}(q) \, r^{n-1} \cdot dr \cdot d\si^{n-1} }{(1+t+|q|)^{a}}
\eeaa
which means that we need to prove that
\beaa
\int_{\SSS^{n-1}}  \int_{r=R(\Om)}^{\infty }  \frac{|\phi|^2}{(1+|q|)}\cdot \frac{ m(q) \, r^{n-1} \cdot dr \cdot  d\si^{n-1} }{(1+t+|q|)^{a}} \les\int_{\SSS^{n-1}}  \int_{r=R(\Om)}^{\infty }  |\pa \phi|^2\, \frac{ (1+|q|) \cdot m(q) \, r^{n-1} \cdot dr \cdot d\si^{n-1} }{(1+t+|q|)^{a}} \; .
\eeaa

Since the term $\frac{(R(\Om))^{n-1}}{(1+t+r)^{a} \cdot (1+|q|) }  w(q) \cdot <\Phi, \Phi>$ is non-negative, we have
\bea
\notag
&& \int_{\SSS^{n-1}} \int_{r=R(\Om)}^{r=\infty} \pa_r \Big( \frac{r^{n-1}}{(1+t+r)^{a}} m(q) \cdot <\Phi, \Phi>  \Big)  dr d\si^{n-1} \\
\notag
 &=& \int_{\SSS^{n-1}}  \lim_{r \to \infty} \Big( \frac{r^{n-1}}{(1+t+r)^{a}} m(q) \cdot <\Phi, \Phi>  \Big)d\si^{n-1} - \int_{\SSS^{n-1}}  \Big( \frac{(R(\Om))^{n-1}}{(1+t+R(\Om))^{a}} m(q) \cdot <\Phi, \Phi>  \Big)d\si^{n-1} \\
  &\leq& \int_{\SSS^{n-1}}  \lim_{r \to \infty} \Big( \frac{r^{n-1}}{(1+t+r)^{a}} m(q) \cdot <\Phi, \Phi>  \Big)d\si^{n-1} \; .
\eea
We assume that $\Phi$ decays fast enough at spatial infinity for all time $t$, so that
\bea
 \int_{\SSS^{n-1}} \lim_{r \to \infty} \Big( \frac{r^{n-1}}{(1+t+r)^{a} \cdot (1+|q|) } \widehat{w}(q) \cdot <\Phi, \Phi>  \Big)  d\si^{n-1} (t ) &=& 0 \; ,
\eea
and therefore
\bea
\notag
 \int_{\SSS^{n-1}} \int_{r=0}^{r=\infty} \pa_r \Big( \frac{r^{n-1}}{(1+t+r)^{a}  \cdot (1+|q|)} \widehat{w}(q) \cdot <\Phi, \Phi>  \Big)  dr d\si^{n-1}  &\leq& 0 \; .\\
\eea

We compute
\beaa
&& \pa_r \Big( \frac{r^{n-1}}{(1+t+r)^{a}} \cdot m(q) \cdot <\Phi, \Phi>  \Big) \\
&=&  \pa_r \Big( \frac{r^{n-1}}{(1+t+r)^{a}} m(q)\Big) \cdot  <\Phi, \Phi> +  2 \frac{r^{n-1}}{(1+t+r)^{a}} m(q) \cdot <\pa_r \Phi, \Phi>  \;  .
\eeaa
We evaluate the term 
\beaa
&& \pa_r \Big( \frac{r^{n-1}}{(1+t+r)^{a}} m(q)\Big) \\
 &=& (n-1) r^{n-2} ( 1+t+r)^{-a}  m(q) + r^{n-1} \cdot (-a)  ( 1+t+r)^{-a-1}  m(q) +  \frac{r^{n-1}}{(1+t+r)^{a}}  m^{\prime} (q) \cdot (\pa_r q) \\
&=& \frac{r^{n-1}}{(1+t+r)^{a}} \cdot \Big( \frac{(n-1)}{r} \cdot  m(q) -    \frac{a}{( 1+t+r)} \cdot  m(q) +  m^{\prime} (q)  \Big) \\
&& \text{(since $q= r -t$)} \\
 &=& \frac{r^{n-1}}{(1+t+r)^{a}} \cdot \Big( m(q)  \cdot (  \frac{(n-1)}{r}  -    \frac{a}{( 1+t+r)}  ) +  m^{\prime} (q)  \Big) \; .
\eeaa
Since   $- \frac{a}{( 1+t+r)} \geq  -\frac{a}{r}$, and since $ (n-1) - a  \geq 0$, we get
\beaa
\pa_r \Big( \frac{r^{n-1}}{(1+t+r)^{a}} m(q)\Big)   &\geq& \frac{r^{n-1}}{(1+t+r)^{a}} \cdot \Big( m(q)  \cdot  \frac{ \big((n-1) - a \big) }{r}  +  m^{\prime} (q)  \Big) \\
&\geq& \frac{r^{n-1}}{(1+t+r)^{a}} \cdot   m^{\prime} (q)  \; .
\eeaa

Therefore, using also that $<\Phi, \Phi> \geq 0$, we get
\beaa
&& \pa_r \Big( \frac{r^{n-1}}{(1+t+r)^{a}} m(q) \cdot <\Phi, \Phi>  \Big) \\
&=&  \pa_r \Big( \frac{r^{n-1}}{(1+t+r)^{a}} m(q)\Big) \cdot <\Phi, \Phi> +  2 \frac{r^{n-1}}{(1+t+r)^{a}} m(q) \cdot <\pa_r \Phi, \Phi>   \\
&\geq& \frac{r^{n-1}}{(1+t+r)^{a}} \cdot   m^{\prime} (q) \cdot <\Phi, \Phi> +  2 \frac{r^{n-1}}{(1+t+r)^{a}} m(q) \cdot <\pa_r \Phi, \Phi>   \;.
\eeaa
By integrating and using the fact that the integral of the left hand side of the above inequality is non-positive, we obtain
\beaa
 && \int_{\SSS^{n-1}} \int_{r=R(\Om)}^{r=\infty} \frac{r^{n-1}}{(1+t+r)^{a}} \cdot   m^{\prime} (q) \cdot <\Phi, \Phi>  dr d\si^{n-1} \\
  &\leq&   \int_{\SSS^{n-1}} \int_{r=R(\Om)}^{r=\infty}  \frac{-2 r^{n-1}}{(1+t+r)^{a}} m(q) \cdot <\pa_r \Phi, \Phi>  dr d\si^{n-1}   \; .
\eeaa
Using Cauchy-Schwarz inequality, and the fact that $m^{\prime}(q) \neq 0 $ for all $q$ (since $\ga \neq 0$ and $\mu \neq \frac{1}{2}$), we obtain 
\beaa
&& \int_{\SSS^{n-1}} \int_{r=R(\Om)}^{r=\infty}  \frac{-2 r^{n-1}}{(1+t+r)^{a}} m(q) \cdot <\pa_r \Phi, \Phi>  dr d\si^{n-1} \\
 &\leq& 2\int_{\SSS^{n-1}} \int_{r=R(\Om)}^{r=\infty} \sqrt{  \frac{ r^{n-1}}{(1+t+r)^{a}} } \sqrt{m^{\prime}(q)} \cdot \sqrt{< \Phi, \Phi>} \cdot  \sqrt{  \frac{ r^{n-1}}{(1+t+r)^{a}} }  \frac{m(q)}{\sqrt{m^{\prime}(q)} }\sqrt{< \pa_r\Phi, \pa_r \Phi>}  dr d\si^{n-1} \\
 &\leq& 2 \Big( \int_{\SSS^{n-1}} \int_{r=R(\Om)}^{r=\infty}  \frac{ r^{n-1}}{(1+t+r)^{a}} m^{\prime} (q) \cdot < \Phi,  \Phi>  dr d\si^{n-1} \Big)^{\frac{1}{2}} \\
 && \cdot  \Big( \int_{\SSS^{n-1}} \int_{r=R(\Om)}^{r=\infty}  \frac{ r^{n-1}}{(1+t+r)^{a}} \frac{(m(q))^2}{m^{\prime}(q) } \cdot <\pa_r\Phi, \pa_r \Phi>  dr d\si^{n-1}   \Big)^{\frac{1}{2}} \; .
 \eeaa

Consequently,
\bea
\notag
 &&  \Big( \int_{\SSS^{n-1}} \int_{r=R(\Om)}^{r=\infty} \frac{r^{n-1}}{(1+t+r)^{a}} \cdot   m^{\prime} (q) \cdot <\Phi, \Phi>  dr d\si^{n-1} \Big)^{\frac{1}{2}} \\
 \notag
 &\leq& 2  \Big( \int_{\SSS^{n-1}} \int_{r=R(\Om)}^{r=\infty}  \frac{ r^{n-1}}{(1+t+r)^{a}} \frac{(m(q))^2}{m^{\prime}(q) } \cdot <\pa_r\Phi, \pa_r \Phi>  dr d\si^{n-1}   \Big)^{\frac{1}{2}} \; . \\
 \eea

We have

\beaa
m^{\prime}(q) &=& \begin{cases} 2\gamma (1+|q|)^{2\gamma-1} \quad\text{when }\quad q>0 , \\
         (1 - 2\mu) (1+|q|)^{2\mu-1} \,\quad\text{when }\quad q<0 . \end{cases} \\
          &=&\begin{cases}        2\gamma   \frac{m(q)}{(1+|q|)}   \quad\text{when }\quad q>0 , \\
          ( 1-  2\mu )     \frac{m(q)}{(1+|q|)}    \,\quad\text{when }\quad q<0 . \end{cases} \\
\eeaa
Thus,
\beaa
\min \{ 2\gamma, (1-2\mu) \}  \cdot \frac{m(q)}{(1+|q|)} 
 \leq m^{\prime}(q) \leq \max \{ 2\gamma, (1-2\mu) \} \cdot \frac{m(q)}{(1+|q|)} \;.
\eeaa
For $\ga \neq 0$ and $\mu \neq \frac{1}{2}$, we have $ \min \{ 2\gamma, (1-2\mu) \}  \neq 0 $ and $\max \{ 2\gamma, (1-2\mu) \}  \neq 0$, and therefore, we get
\bea
m^{\prime}(q) \sim \frac{m(q)}{(1+|q|)} \; .
\eea

As a result,
\beaa
 &&  \Big( \int_{\SSS^{n-1}} \int_{r=R(\Om)}^{r=\infty} \frac{r^{n-1}}{(1+t+r)^{a}} \cdot   \frac{m (q)}{(1+|q|)} \cdot <\Phi, \Phi>  dr d\si^{n-1} \Big)^{\frac{1}{2}} \\
 &\leq& c(\ga, \mu) \cdot  \Big( \int_{\SSS^{n-1}} \int_{r=R(\Om)}^{r=\infty}  \frac{ r^{n-1}}{(1+t+r)^{a}} \cdot (1+|q|) \cdot m(q) \cdot <\pa_r\Phi, \pa_r \Phi>  dr d\si^{n-1}   \Big)^{\frac{1}{2}} \; .
 \eeaa
 
Therefore,
\beaa
 &&  \Big( \int_{\SSS^{n-1}} \int_{r=R(\Om)}^{r=\infty} \frac{r^{n-1}}{(1+t+r)^{a}} \cdot   \frac{\widehat{w} (q)}{(1+|q|)^2} \cdot <\Phi, \Phi>  dr d\si^{n-1} \Big)^{\frac{1}{2}} \\
 &\leq& c(\ga, \mu) \cdot  \Big( \int_{\SSS^{n-1}} \int_{r=R(\Om)}^{r=\infty}  \frac{ r^{n-1}}{(1+t+r)^{a}} \cdot \widehat{w}(q) \cdot <\pa_r\Phi, \pa_r \Phi>  dr d\si^{n-1}   \Big)^{\frac{1}{2}} \; .
 \eeaa

 \end{proof}

\begin{corollary}\label{HardytypeinequalityforintegralstartingatROm}
Let $w$ defined as in Definition \ref{defw}, where $\ga > 0$.
Let  $\Phi$ a tensor that decays fast enough at spatial infinity for all time $t$\,, such that
\bea
 \int_{\SSS^{n-1}} \lim_{r \to \infty} \Big( \frac{r^{n-1}}{(1+t+r)^{a} \cdot (1+|q|) } w(q) \cdot <\Phi, \Phi>  \Big)  d\si^{n-1} (t ) &=& 0 \; .
\eea
Let $R(\Om)  \geq 0 $\,, be a function of $\Om \in \SSS^{n-1}$\,. Then, since $\ga \neq 0$\,, we have for $0 \leq a \leq n-1$\,, that 
\bea
\notag
 &&   \int_{\SSS^{n-1}} \int_{r=R(\Om)}^{r=\infty} \frac{r^{n-1}}{(1+t+r)^{a}} \cdot   \frac{w (q)}{(1+|q|)^2} \cdot <\Phi, \Phi>   \cdot dr  \cdot d\si^{n-1}  \\
 \notag
 &\leq& c(\ga) \cdot  \int_{\SSS^{n-1}} \int_{r=R(\Om)}^{r=\infty}  \frac{ r^{n-1}}{(1+t+r)^{a}} \cdot w(q) \cdot <\pa_r\Phi, \pa_r \Phi>  \cdot  dr  \cdot d\si^{n-1}  \; , \\
 \eea
 where the constant $c(\ga)$ does not depend on $R(\Om)$\,.
 \end{corollary}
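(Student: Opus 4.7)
The plan is to deduce this corollary as a direct specialization of the preceding Lemma \ref{HardytypeinequalityforintegralstartingatROmwithgeneralhatw}. First I would observe that the weight $w(q)$ defined in Definition \ref{defw} is exactly the weight $\widehat{w}(q)$ from Definition \ref{defwidehatw} in the particular case $\mu = 0$; indeed for $q>0$ both weights equal $(1+|q|)^{1+2\gamma}$, while for $q<0$ we have $\widehat{w}(q) = (1+|q|)^{0} = 1 = w(q)$.

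Next, I would check that all the hypotheses of Lemma \ref{HardytypeinequalityforintegralstartingatROmwithgeneralhatw} are satisfied with this choice $\mu = 0$. The lemma requires $\gamma \neq 0$, which holds since we assume $\gamma > 0$; it requires $\mu \neq \frac{1}{2}$, which is satisfied since $\mu = 0$; and it requires $0 \leq a \leq n-1$, which is a hypothesis of the corollary itself. The decay assumption on $\Phi$ at spatial infinity in the corollary is exactly the decay assumption of the lemma with $\widehat{w}$ replaced by $w$, which is legitimate given the identity $w = \widehat{w}|_{\mu=0}$.

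With these observations in place, applying Lemma \ref{HardytypeinequalityforintegralstartingatROmwithgeneralhatw} with $\mu = 0$ directly produces the stated inequality, where the constant $c(\gamma)$ arises from the constant $c(\gamma, \mu)$ evaluated at $\mu = 0$. In particular $c(\gamma)$ does not depend on $R(\Omega)$, since the lemma's constant is independent of $R(\Omega)$. No obstacle is expected here, as the corollary is a pure specialization and all the analytic work (the computation of $m'(q) \sim m(q)/(1+|q|)$, the boundary-term analysis, and the Cauchy--Schwarz step) has already been carried out in the proof of the general lemma; the only thing to verify carefully is the matching of the two weights in the regime $q<0$, which is immediate.
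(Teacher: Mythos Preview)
Your proposal is correct and follows exactly the paper's own proof: specialize Lemma \ref{HardytypeinequalityforintegralstartingatROmwithgeneralhatw} to $\mu = 0$, observe that $\mu = 0 \neq \tfrac{1}{2}$ and $\gamma > 0 \neq 0$, and note that $\widehat{w}|_{\mu=0} = w$.
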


 \begin{proof}
 By taking in Lemma \ref{HardytypeinequalityforintegralstartingatROmwithgeneralhatw}, on one hand $\mu = 0 $ (which satisfies the assumption $\mu \neq \frac{1}{2}$) and and on the other hand $\ga > 0$\,, as considered in Definition \ref{defw} (which in particular satisfies the assumption $\ga \neq 0$), we obtain the result.
 \end{proof}

\section{The commutator term for $n\geq 4$}

\begin{lemma}\label{aprioriestimateonthezeroLiederivativeZforgradiantbigHandbigH}

      We have for all $|I|$,  $\delta  \leq \frac{(n-2)}{2}$,\,$\eps \leq 1$,

              \beaa
 \notag
 |\derm   H (t,x)  |    &\les& \begin{cases}  E (  \lfloor  \frac{n}{2} \rfloor  +1)  \cdot \frac{\eps }{(1+t+|q|)^{\frac{(n-1)}{2}-\delta} (1+|q|)^{1+\ga}},\quad\text{when }\quad q>0,\\
        E (     \lfloor  \frac{n}{2} \rfloor  +1)  \cdot \frac{\eps  }{(1+t+|q|)^{\frac{(n-1)}{2}-\delta}(1+|q|)^{\frac{1}{2} }}  \,\quad\text{when }\quad q<0 , \end{cases} \\
      \eeaa
      and
       \beaa
 \notag
|   H (t,x)  | &\les& \begin{cases} c (\delta) \cdot c (\gamma) \cdot  E (    \lfloor  \frac{n}{2} \rfloor  +1) \cdot  \frac{\eps}{ (1+ t + | q | )^{\frac{(n-1)}{2}-\delta }  (1+| q |   )^{\ga}}  ,\quad\text{when }\quad q>0,\\
\notag
    E (    \lfloor  \frac{n}{2} \rfloor  +1) \cdot  \frac{\eps}{ (1+ t + | q | )^{\frac{(n-1)}{2}-\delta }  } (1+| q |   )^{\frac{1}{2} }  , \,\quad\text{when }\quad q<0 . \end{cases} \\ 
    \eeaa

\end{lemma}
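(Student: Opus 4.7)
The plan is to reduce the lemma to the already-established pointwise estimates on $h$ and $\derm h$ via the algebraic identity relating $H$ to $h$ that was proven in Lemma \ref{BigHintermsofsmallh}. Since the statement concerns the zeroth Lie derivative, no commutation arguments are required; the work is purely to convert bounds on $h$ into bounds on $H$.

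First, I would recall from Lemma \ref{BigHintermsofsmallh} that in wave coordinates
\begin{equation*}
H^{\mu\nu} \;=\; -h^{\mu\nu} + O^{\mu\nu}(h^{2}),
\end{equation*}
so that pointwise $|H| \les |h| + |h|^{2}$ and, after differentiating (and using that $\derm$ has vanishing Christoffel symbols in wave coordinates),
\begin{equation*}
|\derm H| \;\les\; |\derm h| + |h| \cdot |\derm h|.
\end{equation*}
The second step is to invoke the pointwise decay estimates already proven under the bootstrap: Lemma \ref{aprioriestimatesongradientoftheLiederivativesofthefields} applied with $|I|=0$ yields the claimed bound for $|\derm h(t,x)|$ directly, and Lemma \ref{aprioriestimatefrombootstraponzerothderivativeofAandh1} applied with $|I|=0$ yields the claimed bound for $|h(t,x)|$ (with the extra factor of $c(\gamma)$ or $(1+|q|)^{1/2}$ in the interior).

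The third step is to absorb the nonlinear contributions. Since $\eps \leq 1$, $E(\lfloor n/2 \rfloor + 1)\leq 1$, and $\delta \leq (n-2)/2$ guarantees that $|h(t,x)| \les 1$ uniformly (the factor $(1+t+|q|)^{(n-1)/2-\delta}$ in the denominator is at least of order one in the exterior $q>0$, while in the interior $q<0$ one uses $(1+|q|)^{1/2} \leq (1+t+|q|)^{1/2} \leq (1+t+|q|)^{(n-1)/2-\delta}$ thanks to $\delta \leq (n-2)/2$), the product $|h|\cdot|\derm h|$ is bounded by $|\derm h|$ times a harmless constant, and $|h|^{2}$ is bounded by $|h|$. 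Plugging these in and dropping constants absorbed into $\les$ gives both inequalities of the lemma.

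The only mildly delicate point — and what I expect to be the main thing to check carefully rather than a real obstacle — is the verification that the hypothesis $\delta \leq (n-2)/2$ suffices for $|h|$ to be pointwise controlled by $1$ in the interior region $q<0$, where the a priori estimate carries an unfavorable factor $(1+|q|)^{1/2}$ in the numerator. The bound $(1+|q|)^{1/2} \leq (1+t+|q|)^{(n-1)/2-\delta}$ holds precisely when $(n-1)/2 - \delta \geq 1/2$, i.e.\ $\delta \leq (n-2)/2$, which is exactly the hypothesis. With this observation in hand the proof is a direct substitution, so I would simply state the decomposition, quote the two a priori estimates, and conclude.
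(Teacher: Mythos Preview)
Your proposal is correct and follows essentially the same route as the paper: both use the algebraic identity $H=-h+O(h^2)$ from Lemma~\ref{BigHintermsofsmallh}, invoke the a priori decay of $h$ and $\derm h$ from Lemmas~\ref{aprioriestimatesongradientoftheLiederivativesofthefields} and~\ref{aprioriestimatefrombootstraponzerothderivativeofAandh1} at $|I|=0$, and then absorb the quadratic remainders using $|h|\les 1$, which in the interior region $q<0$ relies precisely on $\tfrac{n-1}{2}-\delta\ge\tfrac12$, i.e.\ $\delta\le\tfrac{n-2}{2}$. Your identification of this last point as the only mildly delicate step matches the paper's treatment exactly.
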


\begin{proof}

We showed in Lemma \ref{aprioriestimatefrombootstraponzerothderivativeofAandh1}, that 
 \beaa
 \notag
| \Lie_{Z^I}  h^1  (t,x)  |    &\leq& \begin{cases} c (\gamma) \cdot  C ( |I| ) \cdot E ( |I| +  \lfloor  \frac{n}{2} \rfloor  +1)  \cdot \frac{\eps }{(1+t+|q|)^{\frac{(n-1)}{2}-\delta} (1+|q|)^{\gamma}},\quad\text{when }\quad q>0,\\
       C ( |I| ) \cdot E ( |I| +  \lfloor  \frac{n}{2} \rfloor  +1)  \cdot \frac{\eps \cdot (1+| q |   )^\frac{1}{2} }{(1+t+|q|)^{\frac{(n-1)}{2}-\delta} }  \,\quad\text{when }\quad q<0 . \end{cases} \\
      \eeaa
      
However, for $n\geq 4$, we have $h = h^{1}$. In addition, we know from Lemma \ref{BigHintermsofsmallh}, that
  \beaa
H^{\mu\nu}=-h^{\mu\nu}+ O^{\mu\nu}(h^2) \; .
\eeaa

Since for all $|I|$,

 \bea
 \notag
|   \Lie_{Z^I} h (t,x)  | &\leq& \begin{cases} c (\delta) \cdot c (\gamma) \cdot C ( |I| ) \cdot E (|I| +  \lfloor  \frac{n}{2} \rfloor  +1) \cdot  \frac{\eps}{ (1+ t + | q | )^{\frac{(n-1)}{2}-\de }  (1+| q |   )^{\ga}}  ,\quad\text{when }\quad q>0,\\
\notag
    C ( |I| ) \cdot E (|I| +  \lfloor  \frac{n}{2} \rfloor  +1) \cdot  \frac{\eps}{ (1+ t + | q | )^{\frac{(n-1)}{2}-\delta }  } (1+| q |   )^{\frac{1}{2} }  , \,\quad\text{when }\quad q<0 , \end{cases} 
      \eea
      
       we get
  \beaa
 \notag
&& |   H (t,x)  | \\
  &\les& \begin{cases} c (\delta) \cdot c (\gamma)  \cdot E (   \lfloor  \frac{n}{2} \rfloor  +1) \cdot \Big( \frac{\eps}{ (1+ t + | q | )^{\frac{(n-1)}{2}-\de }  (1+| q |   )^{\ga}}  + O(  \frac{\eps^2}{ (1+ t + | q | )^{(n-1)-2\delta }  (1+| q |   )^{2\ga}}  ) \Big),\quad\text{when }\quad q>0,\\
\notag
     E (   \lfloor  \frac{n}{2} \rfloor  +1) \cdot \Big( \frac{\eps}{ (1+ t + | q | )^{\frac{(n-1)}{2}-\delta }  } (1+| q |   )^{\frac{1}{2} }  + O(  \frac{\eps^2}{ (1+ t + | q | )^{(n-1)-2\delta }  } (1+| q |   )  ) \Big), \,\quad\text{when }\quad q<0 , \end{cases} \\
      &\les& \begin{cases} c (\delta) \cdot c (\gamma) \cdot E (    \lfloor  \frac{n}{2} \rfloor  +1) \cdot  \frac{\eps}{ (1+ t + | q | )^{\frac{(n-1)}{2}-\delta }  (1+| q |   )^{\ga}}  ,\quad\text{when }\quad q>0,\\
\notag
    E (   \lfloor  \frac{n}{2} \rfloor  +1) \cdot \Big( \frac{\eps}{ (1+ t + | q | )^{\frac{(n-1)}{2}-\delta }  } (1+| q |   )^{\frac{1}{2} }  \\
\quad\quad\quad\quad\quad\quad     + O(  \frac{\eps}{ (1+ t + | q | )^{\frac{(n-1)}{2}-\delta }  } (1+| q |   )^{\frac{1}{2} }     \cdot  \frac{\eps \cdot (1+| q |   )^{\frac{1}{2} } }{ (1+ t + | q | )^{\frac{(n-1)}{2}-\delta }  }   \Big), \,\quad\text{when }\quad q<0 .\end{cases} 
      \eeaa
      
      Thus,
        \bea
 \notag
&& |   H (t,x)  | \\
      &\les& \begin{cases} c (\delta) \cdot c (\gamma)  \cdot E (   \lfloor  \frac{n}{2} \rfloor  +1) \cdot  \frac{\eps}{ (1+ t + | q | )^{\frac{(n-1)}{2}-\delta }  (1+| q |   )^{\ga}}  ,\quad\text{when }\quad q>0,\\
\notag
 E (   \lfloor  \frac{n}{2} \rfloor  +1) \cdot \frac{\eps}{ (1+ t + | q | )^{\frac{(n-1)}{2}-\delta }  } (1+| q |   )^{\frac{1}{2} }  , \,\quad\text{when }\quad q<0 .\end{cases} \\
      \eea
However, given the fact that in the expression      
  \beaa
H^{\mu\nu}=-h^{\mu\nu}+ O^{\mu\nu}(h^2) \;,
\eeaa
here the $O^{\mu\nu}(h^2)$ happen to be a product of tensors of $m$ with $h^2$, we then also have that
  \bea
\derm_\a H^{\mu\nu}=- \derm_\a h^{\mu\nu}+ O_\a^{\, \,\,\, \mu\nu} (h \cdot \derm h )\;. 
\eea      

Since for all $|I|$,
               \bea
 \notag
&& |   \Lie_{Z^I} h (t,x) |  \cdot |\derm  ( \Lie_{Z^I} h ) (t,x)  |  \\
 \notag
  &\leq& \begin{cases} c (\delta) \cdot  c (\gamma) \cdot C ( |I| ) \cdot E ( |I| +  \lfloor  \frac{n}{2} \rfloor  +1)  \cdot \frac{\eps^2 }{(1+t+|q|)^{(n-1)-2\delta} (1+|q|)^{1+2\ga}},\quad\text{when }\quad q>0,\\
       C ( |I| ) \cdot E ( |I| +  \lfloor  \frac{n}{2} \rfloor  +1)  \cdot \frac{\eps^2  }{(1+t+|q|)^{(n-1)-2\delta} }  \,\quad\text{when }\quad q<0 , \end{cases} \\
      \eea
      
we obtain,
              \beaa
 \notag
 && |\derm ( \Lie_{Z^I}  h ) (t,x)  | +  |   \Lie_{Z^I} h (t,x) |  \cdot |\derm  ( \Lie_{Z^I} h ) (t,x)  |  \\
  &\leq& \begin{cases} c (\delta) \cdot  c (\gamma) \cdot C ( |I| ) \cdot E ( |I| +  \lfloor  \frac{n}{2} \rfloor  +1)  \cdot \Big( \frac{\eps }{(1+t+|q|)^{\frac{(n-1)}{2}-\delta} (1+|q|)^{1+\ga}} \\
\quad\quad\quad\quad\quad\quad\quad\quad\quad\quad\quad\quad\quad\quad\quad\quad\quad  + \frac{\eps^2 }{(1+t+|q|)^{(n-1)-2\delta} (1+|q|)^{1+2\ga}}\Big),\quad\text{when }\quad q>0,\\
       C ( |I| ) \cdot E ( |I| +  \lfloor  \frac{n}{2} \rfloor  +1)  \cdot \big( \frac{\eps  }{(1+t+|q|)^{\frac{(n-1)}{2}-\delta}(1+|q|)^{\frac{1}{2} }} +  \frac{\eps^2  }{(1+t+|q|)^{(n-1)-2\delta} } \big) \,\quad\text{when }\quad q<0 . \end{cases} \\
       \eeaa

            Thus, if $\eps \leq 1$ and if $\frac{(n-1)}{2} - \de \geq \frac{1}{2}$\;, which means if  $\delta \leq \frac{(n-1)}{2} - \frac{1}{2} \leq \frac{(n-2)}{2}$, we get

       \beaa
       \notag
 && |\derm  ( \Lie_{Z^I} h ) (t,x)  | +  |   \Lie_{Z^I} h (t,x) |  \cdot |\derm  ( \Lie_{Z^I} h ) (t,x)  |  \\
 \notag
         &\leq& \begin{cases} c (\delta) \cdot  c (\gamma) \cdot C ( |I| ) \cdot E (|I| +  \lfloor  \frac{n}{2} \rfloor  +1)  \cdot \frac{\eps }{(1+t+|q|)^{\frac{(n-1)}{2}-\delta} (1+|q|)^{1+\ga}} ,\quad\text{when }\quad q>0,\\
       C ( |I| ) \cdot E ( |I| +  \lfloor  \frac{n}{2} \rfloor  +1)  \cdot  \frac{\eps  }{(1+t+|q|)^{\frac{(n-1)}{2}-\delta}(1+|q|)^{\frac{1}{2} }}   \,\quad\text{when }\quad q<0 , \end{cases} \\
      \eeaa
which gives the result for  $|\derm   H (t,x)  |$.

\end{proof}

      \begin{lemma}\label{EstimateonLiederivativeZofthemetricbigH}
      We have for all $|I|$, $\delta  \leq \frac{(n-2)}{2}$,\,$\eps \leq 1$,

              \beaa
 \notag
 |\derm ( \Lie_{Z^I} H ) (t,x)  |    &\leq& \begin{cases} C ( |I| ) \cdot E (|I| +  \lfloor  \frac{n}{2} \rfloor  +1)  \cdot \frac{\eps }{(1+t+|q|)^{\frac{(n-1)}{2}-\delta} (1+|q|)^{1+\ga}},\quad\text{when }\quad q>0,\\
       C ( |I| ) \cdot E (|I| +  \lfloor  \frac{n}{2} \rfloor  +1)  \cdot \frac{\eps  }{(1+t+|q|)^{\frac{(n-1)}{2}-\delta}(1+|q|)^{\frac{1}{2} }}  \,\quad\text{when }\quad q<0 , \end{cases} \\
      \eeaa
      and
       \beaa
 \notag
|  \Lie_{Z^I} H (t,x)  | &\leq& \begin{cases} c (\delta) \cdot c (\gamma) \cdot C ( |I| ) \cdot E ( |I| +  \lfloor  \frac{n}{2} \rfloor  +1) \cdot  \frac{\eps}{ (1+ t + | q | )^{\frac{(n-1)}{2}-\delta }  (1+| q |   )^{\ga}}  ,\quad\text{when }\quad q>0,\\
\notag
    C ( |I| ) \cdot E (|I| +  \lfloor  \frac{n}{2} \rfloor  +1) \cdot  \frac{\eps}{ (1+ t + | q | )^{\frac{(n-1)}{2}-\delta }  } (1+| q |   )^{\frac{1}{2} }  , \,\quad\text{when }\quad q<0 . \end{cases} \\ 
    \eeaa

\end{lemma}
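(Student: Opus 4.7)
The plan is to reduce the bounds on $\Lie_{Z^I}H$ and $\derm(\Lie_{Z^I}H)$ to the already-established bounds on $\Lie_{Z^J}h$ and $\derm(\Lie_{Z^J}h)$ for $|J|\leq|I|$, coming from Lemma \ref{aprioriestimatefrombootstraponzerothderivativeofAandh1} and Lemma \ref{aprioriestimatesongradientoftheLiederivativesofthefields} (these are applicable because for $n\geq 5$ we have $h=h^1$). The starting point is the algebraic relation of Lemma \ref{BigHintermsofsmallh}, namely $H^{\mu\nu}=-h^{\mu\nu}+O^{\mu\nu}(h^{2})$, and its once-differentiated form $\derm_\a H^{\mu\nu}=-\derm_\a h^{\mu\nu}+O_\a^{\,\mu\nu}(h\cdot\derm h)$, which were both already isolated in the $|I|=0$ case handled in Lemma \ref{aprioriestimateonthezeroLiederivativeZforgradiantbigHandbigH}.

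First I would apply $\Lie_{Z^I}$ to these two identities. Since $\Lie_{Z}$ commutes with $\derm$ and since Lie derivatives of the Minkowski metric and its inverse are proportional to $m$ and $m^{-1}$ (Lemma \ref{LiederivativesofproductsZofcovariantandcontravariantMinkowskimetrictensor}), the general Leibniz rule for Lie derivatives gives
\begin{equation*}
\Lie_{Z^I}H=-\Lie_{Z^I}h+\sum_{|J_1|+|J_2|\leq |I|}O(\Lie_{Z^{J_1}}h\cdot\Lie_{Z^{J_2}}h),
\end{equation*}
\begin{equation*}
\derm(\Lie_{Z^I}H)=-\derm(\Lie_{Z^I}h)+\sum_{|J_1|+|J_2|\leq |I|}O\big(\Lie_{Z^{J_1}}h\cdot\derm(\Lie_{Z^{J_2}}h)\big),
\end{equation*}
using the notation of Definition \ref{definitionofbigOforLiederivatives}. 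The linear terms give directly the advertised bounds after applying Lemma \ref{aprioriestimatefrombootstraponzerothderivativeofAandh1} and Lemma \ref{aprioriestimatesongradientoftheLiederivativesofthefields}.

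Next I would handle the quadratic remainders by splitting each product so that one factor has at most $\lfloor|I|/2\rfloor$ Minkowski Lie derivatives and is therefore controlled pointwise by the decay estimates of Lemmas \ref{aprioriestimatesongradientoftheLiederivativesofthefields} and \ref{aprioriestimatefrombootstraponzerothderivativeofAandh1} (with a loss at most $E(\lfloor|I|/2\rfloor+\lfloor n/2\rfloor+1)\leq E(|I|+\lfloor n/2\rfloor+1)$). The other factor is then left as the full high-derivative quantity $\Lie_{Z^{K}}h$ or $\derm(\Lie_{Z^{K}}h)$ for $|K|\leq|I|$, again controlled by the same lemmas. In the exterior region $q>0$, the product produces a gain of $\eps\cdot(1+t+|q|)^{-(n-1)/2+\delta}(1+|q|)^{-\gamma}$ relative to the linear term, which is $\leq 1$ whenever $\eps\leq 1$ and $\gamma\geq 0$. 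In the interior region $q<0$, the product produces an extra factor $\eps\cdot(1+|q|)^{1/2}(1+t+|q|)^{-(n-1)/2+\delta}$, and this factor is bounded by a constant under the hypotheses $\eps\leq 1$ and $\delta\leq(n-2)/2$ because $(1+|q|)^{1/2}\leq(1+t+|q|)^{1/2}$ and $-(n-1)/2+\delta+1/2\leq 0$ exactly when $\delta\leq(n-2)/2$. Consequently the quadratic remainders are absorbed into the linear main terms.

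The only delicate point, and the main obstacle, is precisely this last book-keeping in the interior region $q<0$: here the pointwise bound on $h$ grows like $(1+|q|)^{1/2}$, so the naive square $h^{2}$ would grow like $(1+|q|)$, which is worse than the linear decay. The inequality $\delta\leq(n-2)/2$ is exactly what is needed to trade this growth against the extra decay factor $(1+t+|q|)^{-(n-1)/2+\delta}$ coming from the second copy of $h$ in $O(h^{2})$, and likewise for $\derm(h\cdot\derm h)$ when differentiated. Once this balancing is checked term by term for the two pieces $O(h^2)$ and $O(h\cdot\derm h)$, combining them with the linear bounds yields the claimed estimates on $|\Lie_{Z^I}H|$ and $|\derm(\Lie_{Z^I}H)|$.
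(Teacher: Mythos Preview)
Your proposal is correct and follows essentially the same approach as the paper: both start from $H=-h+O(h^{2})$ and its differentiated form, apply $\Lie_{Z^I}$ using commutation with $\derm$ and Lemma \ref{LiederivativesofproductsZofcovariantandcontravariantMinkowskimetrictensor}, and then absorb the quadratic remainders into the linear term via the decay of the extra factor, which is exactly where the hypothesis $\delta\leq(n-2)/2$ enters. The only cosmetic difference is that the paper phrases this as an induction on $|I|$ (reducing to the $|I|=0$ case handled in Lemma \ref{aprioriestimateonthezeroLiederivativeZforgradiantbigHandbigH}), whereas you expand the full Leibniz sum directly and control it by pigeonholing one low-index factor; the substance is the same.
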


\begin{proof}
We have already showed in Lemma \ref{BigHintermsofsmallh}, that 
  \beaa
H^{\mu\nu}=-h^{\mu\nu}+ O^{\mu\nu}(h^2) .
\eeaa
Hence, for $\mu, \nu \in \{x^0, x^1, \ldots, x^n \}$, we have
  \beaa
H_{\mu\nu}=-h_{\mu\nu}+ O_{\mu\nu}(h^2) .
\eeaa
 
Using again that here that $O_{\mu\nu}(h^2)$ and $O_{\a\mu\nu} (h \cdot \pa h )$ are in fact product of Minkowski metric with $h$ and $\derm h$, and using Lemma \ref{LiederivativesofproductsZofcovariantandcontravariantMinkowskimetrictensor}, as well as the Leibniz rule for Lie derivatives, we obtain that for all $Z \in \cal Z$, 
  \beaa
 \Lie_{Z}  H_{\mu\nu} &=& -  \Lie_{Z}  h_{\mu\nu}+ O_{\mu\nu}( h \cdot  \Lie_{Z} h ) \\
 \derm_\a  ( \Lie_{Z}    H)_{\mu\nu} &=& -  \derm_\a  ( \Lie_{Z}  h)_{\mu\nu}+ O_{\a\mu\nu} (  \derm  h \cdot  \Lie_{Z}     h ) + O_{\a\mu\nu} (    h \cdot   \derm (\Lie_{Z}     h)  ). 
\eeaa
Since $|h|$ and  $ | \Lie_{Z^I} h|$ obey the same estimate, and since also $|\derm h|$ and  $| \derm ( \Lie_{Z^I} h ) |$ obey the same estimate, we then derive the same estimate for $|\Lie_{Z} H^{\mu\nu}|$ as for  $| H^{\mu\nu}|$ and the same estimate for $|\derm ( \Lie_{Z} H )^{\mu\nu}|$ as for  $| \derm H^{\mu\nu}|$. By induction, we get the result for all $|I|$.

\end{proof}

We now look at the commutator term for $n \geq 4$.

\begin{lemma}\label{thecommutatortermusingthebootstrap}
For $\Phi = H$ or $\Phi = A$, using the bootstrap assumption on $\Phi$, we have
   \beaa
\notag
 && | g^{\la\mu} \derm_{\la}   \derm_{\mu}   ( \Lie_{ Z^I}   \Phi ) - \Lie_{Z^I}  ( g^{\la\mu} \derm_{\la}   \derm_{\mu}     \Phi ) | \\
 &\les&  \Big( \sum_{|J|\leq |I|} |\Lie_{Z^{J}} H|  \Big)   \cdot \begin{cases} C ( |I| ) \cdot E (\lfloor \frac{|I|}{2} \rfloor +  \lfloor  \frac{n}{2} \rfloor  +1)  \cdot \frac{\eps }{(1+t+|q|)^{\frac{(n-1)}{2}-\delta} (1+|q|)^{2+\ga}},\quad\text{for } q>0,\\
       C ( |I| ) \cdot E (\lfloor \frac{|I|}{2} \rfloor +  \lfloor  \frac{n}{2} \rfloor  +1)  \cdot \frac{\eps  }{(1+t+|q|)^{\frac{(n-1)}{2}-\delta}(1+|q|)^{\frac{3}{2} }} , \,\quad\text{for } q<0 , \end{cases} 
\\
&& +  \Big( \sum_{|J|\leq |I|} |\derm ( \Lie_{Z^{K}} \Phi )  |  \Big)  \\
&&\times \begin{cases} c (\delta) \cdot c (\gamma) \cdot C ( |I| ) \cdot E ( \lfloor \frac{|I|}{2} \rfloor+  \lfloor  \frac{n}{2} \rfloor  +2) \cdot  \frac{\eps}{ (1+ t + | q | )^{\frac{(n-1)}{2}-\delta }  (1+| q |   )^{1+\ga}}  ,\quad\text{for } q>0,\\
\notag
    C ( |I| ) \cdot E (\lfloor \frac{|I|}{2} \rfloor +  \lfloor  \frac{n}{2} \rfloor  +2) \cdot  \frac{\eps}{ (1+ t + | q | )^{\frac{(n-1)}{2}-\delta } \cdot (1+| q |   )^{\frac{1}{2} } }   , \,\quad\text{for } q<0 . \end{cases}  \\
&& +\sum_{|K| \leq |I| -1}  | \Lie_{Z^K}  g^{\la\mu} \derm_{\la}   \derm_{\mu}     \Phi | \;.
\eeaa

\end{lemma}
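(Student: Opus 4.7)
The proof would proceed by strong induction on $|I|$, with the base case $|I|=0$ being trivial since the commutator vanishes identically.

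\textbf{Step 1 (exact commutator identity).} Because $\Lie_{Z^J}$ commutes with the flat covariant derivative $\derm$ (as established earlier in the excerpt) and satisfies the Leibniz rule, the expansion
\[
\Lie_{Z^I}\bigl(g^{\la\mu}\derm_\la\derm_\mu \Phi\bigr) = \sum_{I_1+I_2=I} (\Lie_{Z^{I_1}} g^{\la\mu})\,\derm_\la\derm_\mu(\Lie_{Z^{I_2}}\Phi)
\]
yields, after isolating the $I_1=\emptyset$ term $g^{\la\mu}\derm_\la\derm_\mu(\Lie_{Z^I}\Phi)$, the clean identity
\[
 g^{\la\mu}\derm_\la\derm_\mu(\Lie_{Z^I}\Phi) - \Lie_{Z^I}(g^{\la\mu}\derm_\la\derm_\mu \Phi) = -\sum_{\substack{I_1+I_2=I\\|I_1|\geq 1}} (\Lie_{Z^{I_1}} g^{\la\mu})\,\derm_\la\derm_\mu(\Lie_{Z^{I_2}}\Phi).
\]
No further commutator residue appears because $[\Lie_Z,\derm]=0$ on tensors for $Z\in{\cal Z}$.

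\textbf{Step 2 ($g=m+H$ and wave-equation exchange).} I would split $\Lie_{Z^{I_1}}g^{\la\mu} = \hat c(I_1)\,m^{\la\mu} + \Lie_{Z^{I_1}}H^{\la\mu}$ using Lemma \ref{LiederivativesofproductsZofcovariantandcontravariantMinkowskimetrictensor}. The Minkowski contribution is $\hat c(I_1)\,\Box_m(\Lie_{Z^{I_2}}\Phi)$, which I rewrite via $\Box_m = g^{\la\mu}\derm_\la\derm_\mu - H^{\la\mu}\derm_\la\derm_\mu$ followed by $g^{\la\mu}\derm_\la\derm_\mu(\Lie_{Z^{I_2}}\Phi) = \Lie_{Z^{I_2}}(g^{\la\mu}\derm_\la\derm_\mu\Phi) + C_{I_2}$, where $C_{I_2}$ is precisely the present commutator evaluated at $I_2$ with $|I_2|<|I|$. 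The piece $\Lie_{Z^{I_2}}(g^{\la\mu}\derm_\la\derm_\mu\Phi)$ is absorbed into the third sum on the right-hand side of the lemma, and $C_{I_2}$ is handled by the induction hypothesis at the smaller index. The remaining $-\hat c(I_1)\,H^{\la\mu}\derm_\la\derm_\mu(\Lie_{Z^{I_2}}\Phi)$ merges with the genuinely nonlinear contributions of Step 3.

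\textbf{Step 3 (estimating the $H\!\cdot\!\derm^2\Phi$ bulk).} The remaining terms have the form $(\Lie_{Z^{I_1}}H^{\la\mu})\,\derm_\la\derm_\mu(\Lie_{Z^{I_2}}\Phi)$ with $|I_1|+|I_2|\leq|I|$ and $|I_2|\leq|I|-1$. Since $\pa_\mu\in {\cal Z}$, I rewrite
\[
\derm_\la\derm_\mu(\Lie_{Z^{I_2}}\Phi) = \derm_\la\bigl(\Lie_{\pa_\mu}\Lie_{Z^{I_2}}\Phi\bigr) = \derm_\la(\Lie_{Z^{J}}\Phi),\qquad |J|=|I_2|+1,
\]
turning each second-order coordinate derivative of $\Phi$ into a first-order flat derivative of one extra Lie derivative. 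I then split by which factor absorbs the pointwise a priori estimate. In the case $|I_2|\leq\lfloor|I|/2\rfloor$, Lemma \ref{aprioriestimatesongradientoftheLiederivativesofthefields} applied to $|\derm(\Lie_{Z^{J}}\Phi)|$ (requiring the bootstrap at level $\lfloor|I|/2\rfloor+\lfloor n/2\rfloor+2$, matching the constant $E(\lfloor|I|/2\rfloor+\lfloor n/2\rfloor+2)$ of the lemma) produces the decay factor multiplying $|\Lie_{Z^{I_1}}H|$ on the right-hand side, giving the first sum. In the complementary case $|I_1|\leq\lfloor|I|/2\rfloor$, Lemmas \ref{aprioriestimateonthezeroLiederivativeZforgradiantbigHandbigH} and \ref{EstimateonLiederivativeZofthemetricbigH} bound $|\Lie_{Z^{I_1}}H|$ or $|\derm(\Lie_{Z^{I_1}}H)|$ pointwise, with $|\derm(\Lie_{Z^{K}}\Phi)|$ kept on the right-hand side, producing the second sum.

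\textbf{Main obstacle.} The delicate point is the improved decay $(1+|q|)^{-2-\ga}$ (for $q>0$) appearing in the first sum of the RHS: a direct pointwise use of Lemma \ref{aprioriestimatesongradientoftheLiederivativesofthefields} on $|\derm(\Lie_{Z^{I_2+1}}\Phi)|$ yields only $(1+|q|)^{-1-\ga}$. The extra $(1+|q|)^{-1}$ must be extracted from the weak-null structure of the Einstein--Yang--Mills system in wave coordinates: for the dangerous null component $H^{LL}\pa_q\pa_q(\Lie_{Z^{I_2}}\Phi)$, one uses the wave equation $g^{LL}\pa_q\pa_q\Phi = \Box_g\Phi - (\text{tangential terms})$ to trade one of the two $\pa_q$-derivatives for a tangential derivative, which gains exactly one $(1+|q|)^{-1}$ factor through the Klainerman--Sobolev weight structure; the $\Box_g$-source created by this exchange is naturally routed into the third sum of the RHS. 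Systematically bookkeeping which null components of $\Lie_{Z^{I_1}}H$ contract with which components of $\derm\derm(\Lie_{Z^{I_2}}\Phi)$, so that every $\pa_q\pa_q$ contribution is treated by this device, is the main technical step and mirrors the null-frame analysis used throughout the Lindblad--Rodnianski scheme.
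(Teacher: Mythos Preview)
Your overall plan---induction on $|I|$, the Leibniz expansion of Step~1, and the $m+H$ split of Step~2 producing the source-term sum $\sum_{|K|\leq|I|-1}|\Lie_{Z^K}\Box_g\Phi|$---is correct and matches what underlies the argument. The paper, however, does not carry this out: it simply \emph{invokes} a refined commutator inequality of Lindblad--Rodnianski type (stated as a black box, with a forward reference to a later paper in the series and to \cite{Lind}, \cite{LT}, \cite{LR10}) that already contains the prefactor $(1+|q|)^{-1}$ in front of $|\Lie_{Z^J}H|\cdot|\derm\Lie_{Z^K}\Phi|$, and then splits this product according to which index is $\leq\lfloor|I|/2\rfloor$ and inserts the pointwise bootstrap bounds of Lemmas~\ref{aprioriestimatesongradientoftheLiederivativesofthefields} and~\ref{EstimateonLiederivativeZofthemetricbigH}. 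So your route is ``derive the cited estimate'' while the paper's route is ``quote it''.

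There is, however, a genuine gap in your Step~3 and ``Main obstacle''. Writing $\derm_\mu=\Lie_{\pa_\mu}$ is a mere relabelling and gains nothing: translations carry no weight. The missing $(1+|q|)^{-1}$ comes from the standard Klainerman weighted-derivative inequality
\[
(1+|q|)\,|\pa\psi|\ \lesssim\ \sum_{Z\in{\cal Z}}|Z\psi|,
\]
applied with $\psi=\derm_\mu(\Lie_{Z^{I_2}}\Phi)$; here the boosts and scaling (not the translations) supply the weight, giving $|\derm_\la\derm_\mu(\Lie_{Z^{I_2}}\Phi)|\lesssim(1+|q|)^{-1}\sum_{|J|\leq|I_2|+1}|\derm\Lie_{Z^J}\Phi|$, which is exactly the structure of the cited commutator estimate with the $(|K|-1)_+$ shift. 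Your proposed wave-equation device $g^{LL}\pa_q\pa_q\Phi=\Box_g\Phi-(\text{tangential})$ is not well-posed for this purpose, since $m^{LL}=0$ so $g^{LL}=H^{LL}$ is small and cannot be inverted; the null-frame refinement you allude to (distinguishing $(1+t+|q|)^{-1}$ from $(1+|q|)^{-1}$ and isolating $H_{LL}$) is a separate, finer step needed for $n=3$ but not for the present lemma, which the paper immediately discards by bounding both pieces by the cruder $(1+|q|)^{-1}$ term.
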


\begin{proof}

Let  $\Phi_{\mu\nu}$ be a tensor valued either in the Lie algebra (which could be the one tensor Yang-Mills potential) or a two tensor valued a s a scalar (the two tensor of the metric $h^1$), satisfying the following tensorial wave equation
\beaa
 g^{\la\a} \derm_{\la}   \derm_{\a}   \Phi_{\mu\nu}= S_{\mu\nu} \, , 
\eeaa
where $S_{\mu\nu}$ is the source term. Based on a more refined estimate that we will prove in a paper that follows that deals with the case $n=3$, (see also \cite{Lind}, \cite{LT} and \cite{LR10}), we have
\beaa
\notag
&& | g^{\la\mu} \derm_{\la}   \derm_{\mu}   ( \Lie_{ Z^I}   \Phi ) - \Lie_{Z^I}  ( g^{\la\mu} \derm_{\la}   \derm_{\mu}     \Phi ) | \\
\notag
 \les&& \frac 1{1+t+|q|}
\,\,\,\sum_{|K|\leq |I|,}\,\, \sum_{|J|+(|K|-1)_+\le |I|} \,\,\,
|\Lie_{ Z^{J}} H|\,\cdot {|\derm ( \Lie_{Z^{K}}  \Phi ) |} \\
\notag
& +& \frac 1{1+|q|}
 \sum_{|K|\leq |I|,} \,\,  \sum_{|J|+(|K|-1)_+\leq |I|} \!\!\!\!\!| (\Lie_{Z^{J}} H)_{LL} | \cdot {|\derm \Lie_{Z^{K}} \Phi |} \\
&& +\sum_{|K| \leq  |I| -1}  | \Lie_{Z^K}  g^{\la\mu} \derm_{\la}   \derm_{\mu}     \Phi |\;,
\eeaa
where $(|K|-1)_+=|K|-1$ if $|K|\geq 1$ and $(|K|-1)_+=0$ if
$|K|=0$.
Therefore,

\beaa
\notag
 && | g^{\la\mu} \derm_{\la}   \derm_{\mu}   ( \Lie_{ Z^I}   \Phi ) - \Lie_{Z^I}  ( g^{\la\mu} \derm_{\la}   \derm_{\mu}     \Phi ) | \\
 &\les& \frac {1}{1+|q|}  \sum_{|K|\leq |I|} \Big( \sum_{|J|+(|K|-1)_+\leq |I|} |\Lie_{Z^{J}} H|  \Big)   \cdot {|\derm ( \Lie_{Z^{K}} \Phi ) |} \\
&& +\sum_{|K| < |I|}  | \Lie_{Z^I}  g^{\la\mu} \derm_{\la}   \derm_{\mu}     \Phi | \; .
\eeaa
Thus,

\beaa
\notag
 && | g^{\la\mu} \derm_{\la}   \derm_{\mu}   ( \Lie_{ Z^I}   \Phi ) - \Lie_{Z^I}  ( g^{\la\mu} \derm_{\la}   \derm_{\mu}     \Phi ) | \\
 &\les& \frac {1}{1+|q|}  \sum_{|K|\leq \lfloor \frac{|I|}{2} \rfloor } \Big( \sum_{|J|\leq |I|} |\Lie_{Z^{J}} H|  \Big)   \cdot {|\derm  ( \Lie_{Z^{K}} \Phi ) |} \\
&& + \frac {1}{1+|q|}  \sum_{\lfloor \frac{|I|}{2} \rfloor \leq |K|\leq |I|  } \Big( \sum_{|J|\leq \lfloor \frac{|I|}{2} \rfloor + 1} |\Lie_{Z^{J}} H|  \Big)   \cdot {|\derm ( \Lie_{Z^{K}} \Phi ) |} \\
&& +\sum_{|K| \leq |I| -1}  | \Lie_{Z^K}  g^{\la\mu} \derm_{\la}   \derm_{\mu}     \Phi | \;.
\eeaa

Yet, for $|K|\leq \lfloor \frac{|I|}{2} \rfloor$, and for either $\Phi = H$ or $\Phi = A$, using the bootstrap assumption, we obtain
\beaa
|\derm  ( \Lie_{Z^{K}} \Phi )  | &\leq& \begin{cases} C ( |I| ) \cdot E (\lfloor \frac{|I|}{2} \rfloor +  \lfloor  \frac{n}{2} \rfloor  +1)  \cdot \frac{\eps }{(1+t+|q|)^{\frac{(n-1)}{2}-\delta} (1+|q|)^{1+\ga}},\quad\text{when }\quad q>0,\\
       C ( |I| ) \cdot E (\lfloor \frac{|I|}{2} \rfloor +  \lfloor  \frac{n}{2} \rfloor  +1)  \cdot \frac{\eps  }{(1+t+|q|)^{\frac{(n-1)}{2}-\delta}(1+|q|)^{\frac{1}{2} }}  \,\quad\text{when }\quad q<0 , \end{cases} 
\eeaa
and for  $|J|\leq \lfloor \frac{|I|}{2} \rfloor + 1$,

      \beaa
 \notag
|  \Lie_{Z^J} H (t,x)  | &\leq& \begin{cases} c (\delta) \cdot c (\gamma) \cdot C ( |I| ) \cdot E ( \lfloor \frac{|I|}{2} \rfloor+  \lfloor  \frac{n}{2} \rfloor  +2) \cdot  \frac{\eps}{ (1+ t + | q | )^{\frac{(n-1)}{2}-\delta }  (1+| q |   )^{\ga}}  ,\quad\text{when }\quad q>0,\\
\notag
    C ( |I| ) \cdot E (\lfloor \frac{|I|}{2} \rfloor +  \lfloor  \frac{n}{2} \rfloor  +2) \cdot  \frac{\eps}{ (1+ t + | q | )^{\frac{(n-1)}{2}-\delta }  } (1+| q |   )^{\frac{1}{2} }  , \,\quad\text{when }\quad q<0 . \end{cases} 
    \eeaa
   Consequently,
   
   \beaa
\notag
 && | g^{\la\mu} \derm_{\la}   \derm_{\mu}   ( \Lie_{ Z^I}   \Phi ) - \Lie_{Z^I}  ( g^{\la\mu} \derm_{\la}   \derm_{\mu}     \Phi ) | \\
 &\les&  \Big( \sum_{|J|\leq |I|} |\Lie_{Z^{J}} H|  \Big)   \cdot \begin{cases} C ( |I| ) \cdot E (\lfloor \frac{|I|}{2} \rfloor +  \lfloor  \frac{n}{2} \rfloor  +1)  \cdot \frac{\eps }{(1+t+|q|)^{\frac{(n-1)}{2}-\delta} (1+|q|)^{2+\ga}},\quad\text{for } q>0,\\
       C ( |I| ) \cdot E (\lfloor \frac{|I|}{2} \rfloor +  \lfloor  \frac{n}{2} \rfloor  +1)  \cdot \frac{\eps  }{(1+t+|q|)^{\frac{(n-1)}{2}-\delta}(1+|q|)^{\frac{3}{2} }} , \,\quad\text{for } q<0 , \end{cases} 
\\
&& +  \Big( \sum_{|J|\leq |I|} |\derm ( \Lie_{Z^{K}} \Phi )  |  \Big) \\
&& \times \begin{cases} c (\delta) \cdot c (\gamma) \cdot C ( |I| ) \cdot E ( \lfloor \frac{|I|}{2} \rfloor+  \lfloor  \frac{n}{2} \rfloor  +2) \cdot  \frac{\eps}{ (1+ t + | q | )^{\frac{(n-1)}{2}-\delta }  (1+| q |   )^{1+\ga}}  ,\quad\text{for } q>0,\\
\notag
    C ( |I| ) \cdot E (\lfloor \frac{|I|}{2} \rfloor +  \lfloor  \frac{n}{2} \rfloor  +2) \cdot  \frac{\eps}{ (1+ t + | q | )^{\frac{(n-1)}{2}-\delta } \cdot (1+| q |   )^{\frac{1}{2} } }   , \,\quad\text{for } q<0 . \end{cases}  \\
&& +\sum_{|K| \leq |I| -1}  | \Lie_{Z^K}  g^{\la\mu} \derm_{\la}   \derm_{\mu}     \Phi | \;.
\eeaa
\end{proof}

\begin{lemma}\label{estimatingtheequareofthesourcetermsusingbootsrapassumption}
For $n\geq 4$\,, $\de = 0$\,, $\eps \leq 1$\,, for either $\Phi = H$ or $\Phi = A$\,, using the bootstrap assumption on $\Lie_{Z^{K}} \Phi$ for $|K|\leq \lfloor \frac{|I|}{2} \rfloor$\,, we have
   \beaa
\notag
 && (1+t )^{1+\la}  \cdot  | g^{\a\b} \derm_{\a}   \derm_{\b}   ( \Lie_{ Z^I}   \Phi ) |^2  \\
&\les&  \Big( \sum_{|J|\leq |I|} |\Lie_{Z^{J}} H|^2  \Big)   \cdot     C ( |I| ) \cdot E ( \lfloor \frac{|I|}{2} \rfloor+  \lfloor  \frac{n}{2} \rfloor  +1)  \cdot \frac{\eps  }{(1+t+|q|)^{2-\la}(1+|q|)^{3 }} \\
\notag
&& + \Big( \sum_{|J|\leq |I|} |\derm ( \Lie_{Z^{K}} \Phi  ) |^2  \Big) \cdot   C ( |I| ) \cdot E ( \lfloor \frac{|I|}{2} \rfloor+  \lfloor  \frac{n}{2} \rfloor  +2) \cdot   \frac{\eps}{ (1+ t + | q | )^{2-\la } \cdot (1+| q |   ) }  \\
\notag
&& + (1+t )^{1+\la}  \cdot \sum_{|K| \leq |I| }  | \Lie_{Z^K}  g^{\a\b} \derm_{\a}   \derm_{\b}     \Phi |^2 \;.
\eeaa

\end{lemma}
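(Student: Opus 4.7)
The plan is to start from the algebraic identity
\[
g^{\a\b} \derm_\a \derm_\b (\Lie_{Z^I}\Phi) = \Lie_{Z^I}\bigl( g^{\a\b} \derm_\a \derm_\b \Phi\bigr) + \bigl( g^{\a\b} \derm_\a \derm_\b (\Lie_{Z^I}\Phi) - \Lie_{Z^I}\bigl( g^{\a\b} \derm_\a \derm_\b \Phi\bigr) \bigr),
\]
and then use the elementary inequality $(a+b)^{2} \leq 2a^{2}+2b^{2}$ so that the square of the left hand side is controlled, up to a factor $2$, by the square of each of the two summands. The first summand is exactly $|\Lie_{Z^I}(g^{\a\b}\derm_\a\derm_\b\Phi)|^{2}$, which, after multiplication by $(1+t)^{1+\la}$, is absorbed into the sum over $|K|\leq |I|$ of the third term on the right hand side of the statement.

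First I would plug the commutator bound of Lemma~\ref{thecommutatortermusingthebootstrap} into the second summand. This produces three contributions: (i) $(\sum_{|J|\leq|I|}|\Lie_{Z^J}H|)$ times an explicit pointwise weight in $(t,q)$, (ii) $(\sum_{|K|\leq|I|}|\derm(\Lie_{Z^K}\Phi)|)$ times another pointwise weight, and (iii) $\sum_{|K|\leq|I|-1}|\Lie_{Z^K}(g^{\a\b}\derm_\a\derm_\b\Phi)|$. The key inequality $(A+B+C)^{2}\leq 3(A^{2}+B^{2}+C^{2})$, followed by Cauchy--Schwarz (in the form $(\sum_{j=1}^{N} a_j)^{2}\leq N\sum_{j=1}^{N} a_j^{2}$ applied to the finite sums over multi-indices, which just produces a constant $C(|I|)$), converts the bound into the desired quadratic form in $|\Lie_{Z^J}H|^{2}$ and $|\derm(\Lie_{Z^K}\Phi)|^{2}$, while term (iii) combines with the direct $|\Lie_{Z^I}(g^{\a\b}\derm_\a\derm_\b\Phi)|^{2}$ contribution to form the sum over $|K|\leq |I|$ appearing in the conclusion.

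The remaining step is to verify the decay factors, and this is where I expect the only delicate bookkeeping. Specializing Lemma~\ref{thecommutatortermusingthebootstrap} to $\de=0$, squaring, and multiplying by $(1+t)^{1+\la}$, the first commutator weight becomes, for $q>0$,
\[
(1+t)^{1+\la}\cdot\frac{\eps^{2}}{(1+t+|q|)^{n-1}(1+|q|)^{4+2\ga}} \;\leq\; \frac{\eps}{(1+t+|q|)^{2-\la}(1+|q|)^{3}},
\]
where we have used $1+t\leq 1+t+|q|$, the hypothesis $n\geq 4$ (so $n-1-(1+\la)\geq 2-\la$), $\eps\leq 1$ (so $\eps^{2}\leq\eps$), and $\ga>0$ (so $4+2\ga\geq 3$). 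The analogous $q<0$ weight $\eps^{2}(1+t+|q|)^{-(n-1)}(1+|q|)^{-3}$ yields the same bound. An identical calculation for the second commutator weight produces $\eps(1+t+|q|)^{-(2-\la)}(1+|q|)^{-1}$ in both the $q>0$ and $q<0$ cases, which is exactly the factor multiplying $\sum_{|J|\leq|I|}|\derm(\Lie_{Z^K}\Phi)|^{2}$ in the stated estimate.

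No single step is genuinely hard: the crux is really only the exponent arithmetic in the previous paragraph, where one must check that the gain of two powers of $(1+t+|q|)$ coming from squaring the $\frac{(n-1)}{2}$-rate is more than enough, when $n\geq 4$, to absorb the time weight $(1+t)^{1+\la}$ and still leave the $(1+t+|q|)^{-(2-\la)}$ required by the conclusion. Since all the pointwise factors in Lemma~\ref{thecommutatortermusingthebootstrap} come from the bootstrap assumption on $\Lie_{Z^K}\Phi$ for $|K|\leq \lfloor |I|/2\rfloor$---which is precisely the hypothesis we are allowed to use---the argument closes without any further input.
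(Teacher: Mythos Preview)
Your proposal is correct and follows essentially the same approach as the paper: split via the commutator, invoke Lemma~\ref{thecommutatortermusingthebootstrap}, specialize to $\de=0$ and $n\geq 4$, square using $(a+b+c)^2\les a^2+b^2+c^2$, and then check that the exponent $(n-1)$ arising from squaring absorbs the $(1+t)^{1+\la}$ weight. The only cosmetic difference is that the paper first collapses the $q>0$ and $q<0$ cases into the single cruder weight $\eps\,(1+t+|q|)^{-3/2}(1+|q|)^{-3/2}$ (resp.\ $(1+|q|)^{-1/2}$) before squaring, whereas you treat the two cases separately and observe they give the same final bound; this has no effect on the argument.
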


\begin{proof}
We showed in Lemma \ref{thecommutatortermusingthebootstrap}, that
   \beaa
\notag
 && | g^{\a\b} \derm_{\a}   \derm_{\b}   ( \Lie_{ Z^I}   \Phi ) - \Lie_{Z^I}  ( g^{\a\b} \derm_{\a}   \derm_{\b}     \Phi ) | \\
 &\les&  \Big( \sum_{|J|\leq |I|} |\Lie_{Z^{J}} H|  \Big)   \cdot \begin{cases} C ( |I| ) \cdot E (\lfloor \frac{|I|}{2} \rfloor +  \lfloor  \frac{n}{2} \rfloor  +1)  \cdot \frac{\eps }{(1+t+|q|)^{\frac{(n-1)}{2}-\delta} (1+|q|)^{2+\ga}},\quad\text{for } q>0,\\
       C ( |I| ) \cdot E (\lfloor \frac{|I|}{2} \rfloor +  \lfloor  \frac{n}{2} \rfloor  +1)  \cdot \frac{\eps  }{(1+t+|q|)^{\frac{(n-1)}{2}-\delta}(1+|q|)^{\frac{3}{2} }} , \,\quad\text{for } q<0 , \end{cases} 
\\
&& +  \Big( \sum_{|J|\leq |I|} |\derm  ( \Lie_{Z^{K}} \Phi )  |  \Big) \\
&& \times \begin{cases} c (\delta) \cdot c (\gamma) \cdot C ( |I| ) \cdot E ( \lfloor \frac{|I|}{2} \rfloor+  \lfloor  \frac{n}{2} \rfloor  +2) \cdot  \frac{\eps}{ (1+ t + | q | )^{\frac{(n-1)}{2}-\delta }  (1+| q |   )^{1+\ga}}  ,\quad\text{for } q>0,\\
\notag
    C ( |I| ) \cdot E (\lfloor \frac{|I|}{2} \rfloor +  \lfloor  \frac{n}{2} \rfloor  +2) \cdot  \frac{\eps}{ (1+ t + | q | )^{\frac{(n-1)}{2}-\delta } \cdot (1+| q |   )^{\frac{1}{2} } }   , \,\quad\text{for } q<0 . \end{cases}  \\
&& +\sum_{|K| \leq |I| -1}  | \Lie_{Z^K}  g^{\a\b} \derm_{\a}   \derm_{\b}     \Phi | \;.
\eeaa
Taking $\de = 0$, $\ga \geq - \frac{1}{2}$, and for $n\geq4$, we obtain
   \beaa
\notag
 && | g^{\a\b} \derm_{\a}   \derm_{\b}   ( \Lie_{ Z^I}   \Phi ) - \Lie_{Z^I}  ( g^{\a\b} \derm_{\a}   \derm_{\b}     \Phi ) | \\
 &\les&  \Big( \sum_{|J|\leq |I|} |\Lie_{Z^{J}} H|  \Big)   \cdot \begin{cases} C ( |I| ) \cdot E ( \lfloor \frac{|I|}{2} \rfloor+  \lfloor  \frac{n}{2} \rfloor  +1)  \cdot \frac{\eps }{(1+t+|q|)^{\frac{3}{2}} (1+|q|)^{2+\ga}},\quad\text{for } q>0,\\
       C ( |I| ) \cdot E ( \lfloor \frac{|I|}{2} \rfloor+  \lfloor  \frac{n}{2} \rfloor  +1)  \cdot \frac{\eps  }{(1+t+|q|)^{\frac{3}{2}}(1+|q|)^{\frac{3}{2} }} , \,\quad\text{for } q<0 , \end{cases} 
\\
&& +  \Big( \sum_{|J|\leq |I|} |\derm \Lie_{Z^{K}} \Phi |  \Big) \cdot \begin{cases} c (\delta) \cdot c (\gamma) \cdot C ( |I| ) \cdot E ( \lfloor \frac{|I|}{2} \rfloor+  \lfloor  \frac{n}{2} \rfloor  +2) \cdot  \frac{\eps}{ (1+ t + | q | )^{\frac{3}{2} }  (1+| q |   )^{1+\ga}}  ,\quad\text{for } q>0,\\
\notag
    C ( |I| ) \cdot E ( \lfloor \frac{|I|}{2} \rfloor+  \lfloor  \frac{n}{2} \rfloor  +2) \cdot  \frac{\eps}{ (1+ t + | q | )^{\frac{3}{2} } \cdot (1+| q |   )^{\frac{1}{2} } }   , \,\quad\text{for } q<0 . \end{cases}  \\
&\les&  \Big( \sum_{|J|\leq |I|} |\Lie_{Z^{J}} H|  \Big)   \cdot     C ( |I| ) \cdot E ( \lfloor \frac{|I|}{2} \rfloor+  \lfloor  \frac{n}{2} \rfloor  +1)  \cdot \frac{\eps  }{(1+t+|q|)^{\frac{3}{2}}(1+|q|)^{\frac{3}{2} }} \\
\notag
&& + \Big( \sum_{|J|\leq |I|} |\derm ( \Lie_{Z^{K}} \Phi )  |  \Big) \cdot    c (\delta) \cdot c (\gamma) \cdot C ( |I| ) \cdot E ( \lfloor \frac{|I|}{2} \rfloor+  \lfloor  \frac{n}{2} \rfloor  +2) \cdot   \frac{\eps}{ (1+ t + | q | )^{\frac{3}{2}} \cdot (1+| q |   )^{\frac{1}{2} } }  \\
\notag
&& +\sum_{|K| \leq |I| -1}  | \Lie_{Z^K}  g^{\a\b} \derm_{\a}   \derm_{\b}     \Phi | \;.
\eeaa

Hence,
   \beaa
\notag
 && | g^{\a\b} \derm_{\a}   \derm_{\b}   ( \Lie_{ Z^I}   \Phi ) |  \\
&\les&  \Big( \sum_{|J|\leq |I|} |\Lie_{Z^{J}} H|  \Big)   \cdot     C ( |I| ) \cdot E ( \lfloor \frac{|I|}{2} \rfloor+  \lfloor  \frac{n}{2} \rfloor  +1)  \cdot \frac{\eps  }{(1+t+|q|)^{\frac{3}{2}}(1+|q|)^{\frac{3}{2} }} \\
\notag
&& + \Big( \sum_{|J|\leq |I|} |\derm (  \Lie_{Z^{K}} \Phi )  |  \Big) \cdot   C ( |I| ) \cdot E ( \lfloor \frac{|I|}{2} \rfloor+  \lfloor  \frac{n}{2} \rfloor  +2) \cdot   \frac{\eps}{ (1+ t + | q | )^{\frac{3}{2} } \cdot (1+| q |   )^{\frac{1}{2} } }  \\
\notag
&& +\sum_{|K| \leq |I| }  | \Lie_{Z^K}  g^{\a\b} \derm_{\a}   \derm_{\b}     \Phi | \;.
\eeaa
Consequently,
   \beaa
\notag
 &&  | g^{\a\b} \derm_{\a}   \derm_{\b}   ( \Lie_{ Z^I}   \Phi ) |^2  \\
&\les&  \Big( \sum_{|J|\leq |I|} |\Lie_{Z^{J}} H|^2  \Big)   \cdot     C ( |I| ) \cdot E ( \lfloor \frac{|I|}{2} \rfloor+  \lfloor  \frac{n}{2} \rfloor  +1)  \cdot \frac{\eps^2  }{(1+t+|q|)^{3}(1+|q|)^{3 }} \\
\notag
&& + \Big( \sum_{|J|\leq |I|} |\derm (  \Lie_{Z^{K}} \Phi )  |^2  \Big) \cdot   C ( |I| ) \cdot E ( \lfloor \frac{|I|}{2} \rfloor+  \lfloor  \frac{n}{2} \rfloor  +2) \cdot   \frac{\eps^2}{ (1+ t + | q | )^{3 } \cdot (1+| q |   ) }  \\
\notag
&& +  \sum_{|K| \leq |I| }  | \Lie_{Z^K}  g^{\a\b} \derm_{\a}   \derm_{\b}     \Phi |^2 \;.
\eeaa

\end{proof}

 \subsection{Using the Hardy type inequality to estimate the commutator term}\

\begin{lemma}\label{Hardytypeinequalitywithintegralonthewholespaceslice}
Let $w$ be defined as in Definition \ref{defw}, where $\ga > 0$.
Let  $\Phi$ a tensor that decays fast enough at spatial infinity for all time $t$, such that
\bea
\notag
 \int_{\SSS^{n-1}} \lim_{r \to \infty} \Big( \frac{r^{n-1}}{(1+t+r)^{a} \cdot (1+|q|) }\cdot w \cdot <\Phi, \Phi>  \Big)  d\si^{n-1} (t ) &=& 0 \; . \\
\eea
Then, since $\ga \neq 0$, we have for $0 \leq a \leq n-1$, 
  \bea
\notag
  \int_{\Si_{t}} \frac{1}{(1+t+|q| )^{a}(1+|q|)^2} \cdot |\Phi |^2 \cdot   w       &\leq& c(\ga) \cdot  \int_{\Si_{t}}  \frac{1}{(1+t+|q|)^{a}}  \cdot | \derm_r \Phi |^2  \cdot   w  \; . \\
 \eea
 
 \end{lemma}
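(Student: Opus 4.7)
The plan is to derive this statement as a direct consequence of Corollary 13.1 (\texttt{HardytypeinequalityforintegralstartingatROm}) by specializing $R(\Om)=0$ and converting the resulting integral in spherical coordinates back into a Cartesian integral on $\Sigma_t$.

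More precisely, I would first note that $\Sigma_t$ is diffeomorphic to $\mathbb{R}^n$ with the global coordinate system $(x^1,\ldots,x^n)$, and that the Lebesgue measure factors as $dx^1 \cdots dx^n = r^{n-1}\,dr\,d\si^{n-1}$, where $r = |x|$ and $d\si^{n-1}$ is the volume form on the unit sphere $\mathbb{S}^{n-1}$. Under this change of variables, for any non-negative integrand $f(t,x)$ depending on $r$ and $\Om$, we have
\[
\int_{\Sigma_t} f(t,x)\,dx^1\cdots dx^n \;=\; \int_{\SSS^{n-1}}\int_{r=0}^{\infty} f(t,r\Om)\,r^{n-1}\,dr\,d\si^{n-1}.
\]

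Next, I would apply Corollary 13.1 with the choice $R(\Om) \equiv 0$, which is admissible since the corollary holds for any non-negative function $R(\Om) \geq 0$ of $\Om \in \SSS^{n-1}$, and the constant $c(\gamma)$ does not depend on $R(\Om)$. The decay assumption at spatial infinity required by the corollary is exactly what is assumed here. This gives
\[
\int_{\SSS^{n-1}}\!\!\int_{0}^{\infty}\! \frac{r^{n-1}}{(1+t+r)^{a}} \cdot \frac{w(q)}{(1+|q|)^{2}} \langle \Phi,\Phi\rangle\, dr\,d\si^{n-1} \leq c(\gamma) \int_{\SSS^{n-1}}\!\!\int_{0}^{\infty}\! \frac{r^{n-1}}{(1+t+r)^{a}}\, w(q)\, \langle \partial_r\Phi,\partial_r\Phi\rangle\, dr\,d\si^{n-1}.
\]

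Finally, converting both sides back to Cartesian integrals on $\Sigma_t$ using the measure identity above, and noting that on $\Sigma_t$ one has $r = |x|$ and $q = r - t$, both sides take the form appearing in the statement (with $|\Phi|^2 = \langle \Phi,\Phi\rangle$ and $|\derm_r\Phi|^2 = \langle \partial_r\Phi,\partial_r\Phi\rangle$ in the flat Minkowski covariant sense defined in Definition 8.2). I do not anticipate a substantive obstacle: the previous lemma already contains all the analytic content (the integration by parts in $r$, the Cauchy-Schwarz step, and the equivalence $m'(q) \sim m(q)/(1+|q|)$ that requires $\gamma \neq 0$, which is guaranteed by $\gamma > 0$ from Definition 12.1). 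The only point to verify carefully is that the boundary term at $r = 0$ in the fundamental-theorem-of-calculus step is non-negative with the correct sign, but this is already handled in the proof of Lemma 13.1 where one observes that dropping the $r = R(\Om)$ boundary contribution only strengthens the inequality.
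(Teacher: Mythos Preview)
Your proposal is correct and follows exactly the same route as the paper: specialize Corollary~\ref{HardytypeinequalityforintegralstartingatROm} to $R(\Om)\equiv 0$ and rewrite the spherical integrals as Cartesian integrals on $\Sigma_t$. The only step you gloss over is that the corollary has weight $(1+t+r)^{-a}$ while the lemma has $(1+t+|q|)^{-a}$; the paper inserts the elementary equivalence $1+t+r \sim 1+t+|q|$ to pass between them, which you should make explicit.
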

 
 \begin{proof}
 
 Based on the Hardy type inequality that we showed in Corollary \ref{HardytypeinequalityforintegralstartingatROm}, by taking $R(\Om) = 0$ for all $\Om \in \SSS^{n-1}$, we have for $\ga \neq 0$ and for $0 \leq a \leq n-1$, that if
\bea
 \int_{\SSS^{n-1}} \lim_{r \to \infty} \Big( \frac{r^{n-1}}{(1+t+r)^{a} \cdot (1+|q|) } w(q) \cdot <\Phi, \Phi>  \Big)  d\si^{n-1} (t ) &=& 0 \;,
\eea
then,
\bea
\notag
 && \int_{\SSS^{n-1}} \int_{r=0}^{r=\infty} \frac{1}{(1+t+r)^{a}(1+|q|)^2} \cdot <\Phi, \Phi> \cdot   w  \cdot r^{n-1} dr d\si^{n-1}  \\
 \notag
 &\leq& c(\ga) \cdot  \int_{\SSS^{n-1}} \int_{r=0}^{r=\infty}  \frac{1}{(1+t+r)^{a}}  \cdot <\pa_r\Phi, \pa_r \Phi>  \cdot   w \cdot r^{n-1} dr d\si^{n-1}   \; . \\
 \eea

Thus, 
\bea
\notag
  \int_{\Si_{t}} \frac{1}{(1+t+r)^{a}(1+|q|)^2} \cdot <\Phi, \Phi> \cdot  w   &\leq& c(\ga, \mu) \cdot  \int_{\Si_{t}}  \frac{1}{(1+t+r)^{a}}  \cdot <\pa_r\Phi, \pa_r \Phi>  \cdot   w    \; . \\
 \eea
We have
  \bea
  1+t+r \sim 1+t+|q|  \; .
  \eea
  Therefore,
  \beaa
\notag
  && \int_{\Si_{t}} \frac{1}{(1+t+|q| )^{a}(1+|q|)^2} \cdot <\Phi, \Phi> \cdot  w \\
      &\leq&  \int_{\Si_{t}} \frac{1}{(1+t+r )^{a}(1+|q|)^2} \cdot <\Phi, \Phi> \cdot   w  \\
  &\leq& c(\ga) \cdot  \int_{\Si_{t}}  \frac{1}{(1+t+r)^{a}}  \cdot <\pa_r\Phi, \pa_r \Phi>  \cdot  w   \\
    &\leq& c(\ga) \cdot  \int_{\Si_{t}}  \frac{1}{(1+t+|q|)^{a}}  \cdot <\derm_r\Phi, \derm_r \Phi>  \cdot w  \; .
 \eeaa
 \end{proof}

 We will use now the Hardy type inequality to estimate the commutator term.
 \begin{lemma}\label{estimatethespace-timeintegralofthecommutatortermneededfortheenergyestimate}
For $n\geq 4$\,, let $H$ such that for all time $t$\,, for $\ga \neq 0$ and $ 0 < \la \leq \frac{1}{2}$\,, 
\bea
 \int_{\SSS^{n-1}} \lim_{r \to \infty} \Big( \frac{r^{n-1}}{(1+t+r)^{2-\la} \cdot (1+|q|) } \cdot  w(q) \cdot |H|^2  \Big)  d\si^{n-1} (t ) &=& 0 \; ,
\eea
and let $h$ such that for all time $t$\,, for all $|K| \leq |I|$\,,
\bea
 \int_{\SSS^{n-1}} \lim_{r \to \infty} \Big( \frac{r^{n-1}}{ (1+|q|) }  \cdot  w(q) \cdot  | \Lie_{Z^K} h  |^2  \Big)  d\si^{n-1} (t ) &=& 0 \; ,
\eea
then, for $\de = 0$\,, for either $\Phi = H$ or $\Phi = A$\,, using the bootstrap assumption on $\Phi$\,, we have
   \bea
      \notag
  &&   \int_0^t \Big(  \int_{\Si_{t}}  (1+t )^{1+\la}  \cdot  | g^{\a\b} \derm_{\a}   \derm_{\b}   ( \Lie_{ Z^I}   \Phi ) |^2  \cdot w  \cdot dx^1 \ldots dx^n  \Big) \cdot dt  \\
 \notag
&\les&  \int_0^t   \frac{\eps}{(1+t)^{2-\la}}  \cdot  C ( |I| ) \cdot E ( \lfloor \frac{|I|}{2} \rfloor+  \lfloor  \frac{n}{2} \rfloor  +1)  \cdot  c(\ga)  \\
      \notag
&&  \times \Big(  \sum_{|J|\leq |I|}   \int_{\Si_{t}} \big(  | \derm ( \Lie_{Z^{J}} h ) |^2 + |\derm ( \Lie_{Z^{K}} \Phi )  |^2 \big) \cdot   w  \cdot dx^1 \ldots dx^n   \Big) \cdot dt   \\
      \notag
&& + \int_0^t  \Big( \int_{\Si_{t}}  (1+t )^{1+\la}  \cdot \sum_{|K| \leq |I| }  | \Lie_{Z^K}  g^{\a\b} \derm_{\a}   \derm_{\b}     \Phi |^2   \cdot w   \cdot dx^1 \ldots dx^n  \Big) \cdot dt  \; . \\
\eea

 \end{lemma}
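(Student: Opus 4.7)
The proof couples the pointwise commutator estimate of Lemma \ref{estimatingtheequareofthesourcetermsusingbootsrapassumption} with the weighted Hardy inequality of Lemma \ref{Hardytypeinequalitywithintegralonthewholespaceslice}. From that pointwise bound the quantity $(1+t)^{1+\lambda}|g^{\a\b}\derm_\a\derm_\b(\Lie_{Z^I}\Phi)|^2$ splits into three pieces: (i) a quadratic $|\Lie_{Z^J}H|^2$ term weighted by $\eps(1+t+|q|)^{\lambda-2}(1+|q|)^{-3}$; (ii) a derivative term $|\derm(\Lie_{Z^K}\Phi)|^2$ weighted by $\eps(1+t+|q|)^{\lambda-2}(1+|q|)^{-1}$; and (iii) the residue $\sum_{|K|\leq|I|}|\Lie_{Z^K}(g^{\a\b}\derm_\a\derm_\b\Phi)|^2$ which is already in the target form. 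After multiplying by $w$ and integrating over $\Sigma_t$, I will use $1+t+|q|\geq 1+t$ to pull the factor $(1+t)^{\lambda-2}$ outside the spatial integral for both (i) and (ii); piece (iii) carries through unchanged.

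For piece (ii) the extra factor $(1+|q|)^{-1}\leq 1$ renders the result immediate, yielding the second contribution on the right-hand side. Piece (i) is the genuine step: bounding $(1+|q|)^{-3}\leq(1+|q|)^{-2}$ and then applying Lemma \ref{Hardytypeinequalitywithintegralonthewholespaceslice} to $\Phi=\Lie_{Z^J}H$ with $a=2-\lambda$ (which satisfies $0\leq a\leq n-1$ for $n\geq 4$ and $0<\lambda\leq 1/2$; the admissibility at spatial infinity is furnished by the decay hypothesis on $H$ together with the analogous decay for its Lie derivatives inherited from the bootstrap), I obtain
\[
\int_{\Sigma_t}\frac{|\Lie_{Z^J}H|^2\, w}{(1+t+|q|)^{2-\lambda}(1+|q|)^2}\ \leq\ c(\gamma)\int_{\Sigma_t}\frac{|\derm_r\Lie_{Z^J}H|^2\, w}{(1+t+|q|)^{2-\lambda}}\ \leq\ \frac{c(\gamma)}{(1+t)^{2-\lambda}}\int_{\Sigma_t}|\derm\Lie_{Z^J}H|^2\, w .
\]

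Next I convert $H$ back to $h$. From Lemma \ref{BigHintermsofsmallh} one has $\derm H=-\derm h+O(h\cdot\derm h)$; applying $\Lie_{Z^J}$ together with Leibniz and the commutation $[\Lie_{Z^J},\derm]=0$ (as in the proof of Lemma \ref{EstimateonLiederivativeZofthemetricbigH}) produces the pointwise algebraic estimate $|\derm\Lie_{Z^J}H|^2\lesssim\sum_{|K|\leq|J|}|\derm\Lie_{Z^K}h|^2$, where the bootstrap smallness of $h$ and $\derm h$ absorbs the nonlinear corrections uniformly in space-time (the decay hypothesis on $h$ ensures the relevant Hardy application on the $h$-side when one invokes Lemma \ref{Hardytypeinequalitywithintegralonthewholespaceslice} in the same way, if needed for the nonlinear remainder). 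Finally, integrating in $t$ assembles the three contributions into the desired inequality.

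\textbf{Main obstacle.} The delicate point is the Hardy step: verifying that the spatial decay assumed at infinity for $H$ (and inherited for all Lie derivatives $\Lie_{Z^J}H$ appearing in the commutator identity) is strong enough for the boundary term in Lemma \ref{Hardytypeinequalitywithintegralonthewholespaceslice} to vanish, while simultaneously tracking the algebraic identity $\derm H=-\derm h+O(h\cdot\derm h)$ through all Lie derivatives without exceeding the order $|I|$ controlled by the bootstrap. Once these two points are secured, the remaining steps reduce to the elementary inequalities $1+t+|q|\geq 1+t$, $(1+|q|)^{-3}\leq(1+|q|)^{-2}$, and a straightforward time integration.
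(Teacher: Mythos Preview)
Your proposal is correct and follows essentially the same route as the paper: you invoke Lemma \ref{estimatingtheequareofthesourcetermsusingbootsrapassumption}, apply the Hardy inequality (Lemma \ref{Hardytypeinequalitywithintegralonthewholespaceslice}) to $\Lie_{Z^J}H$ with $a=2-\lambda$, and then convert $\derm(\Lie_{Z^J}H)$ back to $\derm(\Lie_{Z^J}h)$ via the expansion $H=-h+O(h^2)$. One clarification worth noting: your statement that the bootstrap gives a direct pointwise bound $|\derm\Lie_{Z^J}H|^2\lesssim\sum_{|K|\leq|J|}|\derm\Lie_{Z^K}h|^2$ is slightly imprecise, since the cross-term $|\derm(\Lie_{Z^{J'}}h)|\cdot|\Lie_{Z^{K'}}h|$ with $|K'|$ large leaves a residual $|\Lie_{Z^{K'}}h|^2$ factor that is not pointwise absorbed---the paper handles this via a second application of Hardy (with $a=0$) to $\Lie_{Z^{K'}}h$, which is exactly what your parenthetical anticipates.
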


 \begin{proof}
 
Based on what we have shown in Lemma \ref{Hardytypeinequalitywithintegralonthewholespaceslice}, for $H$ decaying fast enough at spatial infinity, for $\ga \neq 0$ and $0 \leq a \leq n-1$, we have 
\bea
\notag
 && \int_{\SSS^{n-1}} \int_{r=0}^{r=\infty} \frac{1}{(1+t+|q|)^{a}(1+|q|)^2} \cdot |\Lie_{Z^{J}} H|^2 \cdot   w  \cdot r^{n-1} dr d\si^{n-1}  \\
 \notag
 &\leq& c(\ga) \cdot  \int_{\SSS^{n-1}} \int_{r=0}^{r=\infty}  \frac{1}{(1+t+|q|)^{a}}  \cdot | \derm_r ( \Lie_{Z^{J}} H ) |^2  \cdot   w  \cdot r^{n-1} dr d\si^{n-1}   \; . \\
 \eea

Hence, for $n \geq 4$,  for $ 0 < \la \leq \frac{1}{2}$, we have $2-\la < 2 \leq n-1 $ and therefore
\beaa
&&  C ( |I| ) \cdot E ( \lfloor \frac{|I|}{2} \rfloor+  \lfloor  \frac{n}{2} \rfloor  +1)   \cdot \sum_{|J|\leq |I|}  \int_{\Si_{t}}  \Big( |\Lie_{Z^{J}} H|^2  \Big)   \cdot     \frac{\eps  }{(1+t+|q|)^{2-\la}(1+|q|)^{3 }}  \cdot w \\
 &\leq& C ( |I| ) \cdot E ( \lfloor \frac{|I|}{2} \rfloor+  \lfloor  \frac{n}{2} \rfloor  +1)  \cdot    \sum_{|J|\leq |I|}  c(\ga) \cdot   \int_{\Si_{t}} \frac{\eps}{(1+t+|q|)^{2-\la}}  \cdot | \derm_r ( \Lie_{Z^{J}} H ) |^2  \cdot   w  \\
  &\leq& C ( |I| ) \cdot E ( \lfloor \frac{|I|}{2} \rfloor+  \lfloor  \frac{n}{2} \rfloor  +1)  \cdot  c(\ga)  \cdot   \frac{\eps}{(1+t)^{2-\la}}  \sum_{|J|\leq |I|}    \int_{\Si_{t}} \cdot | \derm ( \Lie_{Z^{J}} H ) |^2  \cdot   w \; . 
\eeaa

Based on what we showed in Lemma \ref{estimatingtheequareofthesourcetermsusingbootsrapassumption}, we have for $n\geq 4$\,, $\de = 0$\,, $ 0 < \la \leq \frac{1}{2}$\,,
   \beaa
\notag
 &&\int_{\Si_{t}}  (1+t )^{1+\la} \cdot  | g^{\a\b} \derm_{\a}   \derm_{\b}   ( \Lie_{ Z^I}   \Phi ) |^2  \cdot w  \\
&\les& \int_{\Si_{t}}  \Big( \sum_{|J|\leq |I|} |\Lie_{Z^{J}} H|^2  \Big)   \cdot     C ( |I| ) \cdot E ( \lfloor \frac{|I|}{2} \rfloor+  \lfloor  \frac{n}{2} \rfloor  +1)  \cdot \frac{\eps  }{(1+t+|q|)^{2-\la}(1+|q|)^{3 }}  \cdot w \\
\notag
&& + \int_{\Si_{t}} \Big( \sum_{|J|\leq |I|} |\derm (  \Lie_{Z^{K}} \Phi )  |^2  \Big) \cdot   C ( |I| ) \cdot E ( \lfloor \frac{|I|}{2} \rfloor+  \lfloor  \frac{n}{2} \rfloor  +2) \cdot   \frac{\eps}{ (1+ t + | q | )^{2-\la } \cdot (1+| q |   ) }  \cdot w \\
\notag
&& +\int_{\Si_{t}}  (1+t )^{1+\la}  \cdot \sum_{|K| \leq |I| }  | \Lie_{Z^K}  g^{\a\b} \derm_{\a}   \derm_{\b}     \Phi |^2   \cdot w \\
&\les& C ( |I| ) \cdot E ( \lfloor \frac{|I|}{2} \rfloor+  \lfloor  \frac{n}{2} \rfloor  +1)  \cdot  c(\ga)  \cdot   \frac{\eps}{(1+t)^{2-\la}}  \cdot  \sum_{|J|\leq |I|}   \int_{\Si_{t}}  | \derm ( \Lie_{Z^{J}} H ) |^2  \cdot   w    \\
\notag
&& +C ( |I| ) \cdot E ( \lfloor \frac{|I|}{2} \rfloor+  \lfloor  \frac{n}{2} \rfloor  +1)    \cdot   \frac{\eps}{(1+t)^{2-\la}} \cdot  \sum_{|J|\leq |I|}    \int_{\Si_{t}}  |\derm ( \Lie_{Z^{K}} \Phi )  |^2    \cdot   w    \\
\notag
&& +\int_{\Si_{t}}  (1+t )^{1+\la}  \cdot \sum_{|K| \leq |I| }  | \Lie_{Z^K}  g^{\a\b} \derm_{\a}   \derm_{\b}     \Phi |^2   \cdot w \; .
\eeaa

However, based on what we showed in Lemma \ref{BigHintermsofsmallh}, and by lowering indices with respect to the metric $m$, we have
  \beaa
H_{\mu\nu}=-h_{\mu\nu}+ O_{\mu\nu}(h^2) .
\eeaa
thus, for all $|I|$,
  \beaa
 \Lie_{Z^I}  H_{\mu\nu} &=& -   \Lie_{Z^I}  h_{\mu\nu}+   \sum_{|J| + |K| \leq |I|} O_{\mu\nu}(  \Lie_{Z^J}  h  \cdot  \Lie_{Z^K} h ) \\
 \eeaa
 and hence,
   \beaa
\derm_\a ( \Lie_{Z^I}  H_{\mu\nu} )  &=& -  \derm_a \Lie_{Z^I}  h_{\mu\nu}+   \sum_{|J| + |K| \leq |I|} O_{\mu\nu}( \derm_\a ( \Lie_{Z^J}  h ) \cdot  \Lie_{Z^K} h ) \; .
\eeaa
Consequently,
   \beaa
| \derm_\a ( \Lie_{Z^I}  H ) |  &\leq& | \derm_\a ( \Lie_{Z^I}  h ) | +   \sum_{|J| + |K| \leq |I|}  | \derm_\a ( \Lie_{Z^J}  h ) | \cdot  | \Lie_{Z^K} h  | \; .
\eeaa
We obtain,
\beaa
&& | \derm ( \Lie_{Z^I}  H ) |  \\
&\les&  | \derm ( \Lie_{Z^I}  h ) | +   \sum_{|J| + |K| \leq |I|}  | \derm ( \Lie_{Z^J}  h ) | \cdot  | \Lie_{Z^K} h  | \\
&\les &  | \derm ( \Lie_{Z^I}  h ) |  +    \sum_{|J| \leq  \lfloor \frac{|I|}{2} \rfloor  \; , \; |K|\leq |I|}   | \derm ( \Lie_{Z^J}  h ) | \cdot  | \Lie_{Z^K} h  |   +   \sum_{|J| \leq |I| \; , \; |K| \leq  \lfloor \frac{|I|}{2} \rfloor  }   | \derm ( \Lie_{Z^J}  h ) | \cdot  | \Lie_{Z^K} h  |  \; .
  \eeaa

  We have shown in Lemma \ref{aprioriestimatesongradientoftheLiederivativesofthefields} and Lemma \ref{aprioriestimatefrombootstraponzerothderivativeofAandh1}, that for $n\geq 4$,  $\de = 0$, we have for all  $|I|$,
                 \beaa
 \notag
 |\derm ( \Lie_{Z^I} h) (t,x)  |    &\leq&       C ( |I| ) \cdot E (|I| +  \lfloor  \frac{n}{2} \rfloor  +1)   \cdot \frac{\eps  }{(1+t+|q|)^{\frac{3}{2}}(1+|q|)^{\frac{1}{2} }} \; ,\\
 \notag
|  \Lie_{Z^I} h (t,x)  |      &\leq&    C ( |I| ) \cdot E (|I| +  \lfloor  \frac{n}{2} \rfloor  +1) \cdot  \frac{\eps}{ (1+ t + | q | )^{ \frac{3}{2}} } \cdot (1+| q |   )^{\frac{1}{2} }  \; .
    \eeaa

Hence,
\beaa
&& | \derm ( \Lie_{Z^I}  H ) |  \\
\notag
&\les &  | \derm ( \Lie_{Z^I}  h ) |  +    \sum_{  |K|\leq |I|}   | \Lie_{Z^K} h  | \cdot   C (  \lfloor \frac{|I|}{2} \rfloor   ) \cdot E ( \lfloor \frac{|I|}{2} \rfloor   +  \lfloor  \frac{n}{2} \rfloor  +1)   \cdot \frac{\eps  }{(1+t+|q|)^{\frac{3}{2}}(1+|q|)^{\frac{1}{2} }} \\
&&   +   \sum_{|J| \leq |I|  }   | \derm ( \Lie_{Z^J}  h ) | \cdot   C ( \lfloor \frac{|I|}{2} \rfloor  ) \cdot E ( \lfloor \frac{|I|}{2} \rfloor  +  \lfloor  \frac{n}{2} \rfloor  +1) \cdot  \frac{\eps}{ (1+ t + | q | )^{\frac{3}{2} } } \cdot (1+| q |   )^{\frac{1}{2} } \\
&\les & \sum_{|J| \leq |I|  }   \Big( 1+ C ( \lfloor \frac{|I|}{2} \rfloor  ) \cdot E ( \lfloor \frac{|I|}{2} \rfloor  +  \lfloor  \frac{n}{2} \rfloor  +1) \cdot \eps \Big) \cdot | \derm ( \Lie_{Z^J}  h ) |  \\
\notag
&& +    \sum_{  |K|\leq |I|}     C (  \lfloor \frac{|I|}{2} \rfloor   ) \cdot E ( \lfloor \frac{|I|}{2} \rfloor   +  \lfloor  \frac{n}{2} \rfloor  +1)   \cdot \frac{\eps  }{(1+t+|q|)^{\frac{3}{2}} (1+| q |   )^{\frac{1}{2} }  } \cdot | \Lie_{Z^K} h  | \;.\\
  \eeaa
  Consequently,
  \beaa
&& | \derm ( \Lie_{Z^I}  H ) |^2  \\
\notag
&\les & \sum_{|J| \leq |I|  }   \Big( 1+ C ( \lfloor \frac{|I|}{2} \rfloor  ) \cdot E ( \lfloor \frac{|I|}{2} \rfloor  +  \lfloor  \frac{n}{2} \rfloor  +1) \cdot \eps \Big)^2 \cdot | \derm ( \Lie_{Z^J}  h ) |^2  \\
\notag
&& +    \sum_{  |K|\leq |I|}     C (  \lfloor \frac{|I|}{2} \rfloor   ) \cdot E ( \lfloor \frac{|I|}{2} \rfloor   +  \lfloor  \frac{n}{2} \rfloor  +1)   \cdot \frac{\eps  }{(1+t+|q|)^{3}\cdot (1+|q|) }  \cdot | \Lie_{Z^K} h  |^2 \;.\\
  \eeaa
 Based on the Hardy-type inequality that we showed in Lemma \ref{Hardytypeinequalitywithintegralonthewholespaceslice}, we get that if $ \Lie_{Z^K} h$ is such that for all time $t$\,, for $n \geq 4$\,,
\bea
 \int_{\SSS^{n-1}} \lim_{r \to \infty} \Big( \frac{r^{n-1}}{ (1+|q|) } w(q) \cdot  | \Lie_{Z^K} h  |^2  \Big)  d\si^{n-1} (t ) &=& 0 \; ,
\eea
then, for $\ga \neq 0$\,, 
  \beaa
\notag
  \int_{\Si_{t}} \frac{1}{(1+|q|)^2} \cdot |\Lie_{Z^K} h |^2 \cdot   w       &\leq& c(\ga) \cdot  \int_{\Si_{t}}   \cdot | \derm \Lie_{Z^K} h |^2  \cdot   w    \; .
 \eeaa
 
Hence, for $ \Lie_{Z^K} h $ decaying fast enough, we have
\beaa
 && \sum_{|J|\leq |I|}   \int_{\Si_{t}}   | \derm ( \Lie_{Z^{J}} H ) |^2  \cdot   w   \\
\notag
&\les & \sum_{|J| \leq |I|  }   \int_{\Si_{t}} \Big( 1+ C ( \lfloor \frac{|I|}{2} \rfloor  ) \cdot E ( \lfloor \frac{|I|}{2} \rfloor  +  \lfloor  \frac{n}{2} \rfloor  +1) \cdot \eps \Big)^2 \cdot | \derm ( \Lie_{Z^J}  h ) |^2 \cdot   w \\
\notag
&& +    \sum_{  |J|\leq |I|}    \int_{\Si_{t}}  C (  \lfloor \frac{|I|}{2} \rfloor   ) \cdot E ( \lfloor \frac{|I|}{2} \rfloor   +  \lfloor  \frac{n}{2} \rfloor  +1)   \cdot \frac{\eps  }{(1+|q|)^{ 2}} \cdot  | \Lie_{Z^J} h  |^2  \cdot   w \\
\notag
&\les & \sum_{|J| \leq |I|  }   \int_{\Si_{t}}  \Big( 1+ C ( \lfloor \frac{|I|}{2} \rfloor  ) \cdot E ( \lfloor \frac{|I|}{2} \rfloor  +  \lfloor  \frac{n}{2} \rfloor  +1) \cdot \eps \Big)^2 \cdot | \derm ( \Lie_{Z^J}  h ) |^2 \cdot   w \\
\notag
&& +     \sum_{  |J|\leq |I|}    \int_{\Si_{t}}  c(\ga) \cdot  C (  \lfloor \frac{|I|}{2} \rfloor   ) \cdot E ( \lfloor \frac{|I|}{2} \rfloor   +  \lfloor  \frac{n}{2} \rfloor  +1)   \cdot \eps  \cdot  | \derm (\Lie_{Z^J} h )  |^2  \cdot   w \; .\\
  \eeaa
Finally, we obtain for small $E ( \lfloor \frac{|I|}{2} \rfloor  +  \lfloor  \frac{n}{2} \rfloor  +1) \cdot \eps \leq 1 $, and for $\Phi$ and $\Lie_{Z^J} h$, $|J|\leq |I|$, decaying fast enough at spatial infinity, the following energy estimate
    \bea
\notag
 &&\int_{\Si_{t}}  (1+t )^{1+\la}  \cdot  | g^{\a\b} \derm_{\a}   \derm_{\b}   ( \Lie_{ Z^I}   \Phi ) |^2  \cdot w  \\
 &\les& C ( |I| ) \cdot E ( \lfloor \frac{|I|}{2} \rfloor+  \lfloor  \frac{n}{2} \rfloor  +1)  \cdot  c(\ga)  \cdot   \frac{\eps}{(1+t)^{2-\la}}  \cdot  \sum_{|J|\leq |I|}   \int_{\Si_{t}}  | \derm ( \Lie_{Z^{J}} H ) |^2  \cdot   w    \\
\notag
&& +C ( |I| ) \cdot E ( \lfloor \frac{|I|}{2} \rfloor+  \lfloor  \frac{n}{2} \rfloor  +1)    \cdot   \frac{\eps}{(1+t)^{2-\la}} \cdot  \sum_{|J|\leq |I|}    \int_{\Si_{t}}  |\derm ( \Lie_{Z^{K}} \Phi )  |^2    \cdot   w    \\
\notag
&& +\int_{\Si_{t}}  (1+t )^{1+\la}  \cdot \sum_{|K| \leq |I| }  | \Lie_{Z^K}  g^{\a\b} \derm_{\a}   \derm_{\b}     \Phi |^2   \cdot w \\
\notag
  &\les&    C ( |I| ) \cdot  c(\ga) \cdot E ( \lfloor \frac{|I|}{2} \rfloor+  \lfloor  \frac{n}{2} \rfloor  +1)    \cdot  \frac{\eps}{(1+t)^{2-\la}}     \cdot \Big(  \sum_{|J|\leq |I|}   \int_{\Si_{t}} \big(  | \derm ( \Lie_{Z^{J}} h ) |^2 + |\derm ( \Lie_{Z^{K}} \Phi )  |^2 \big) \cdot   w  \Big)   \\
      \notag
&& + \int_{\Si_{t}}  (1+t )^{1+\la}  \cdot \sum_{|K| \leq |I| }  | \Lie_{Z^K}  g^{\a\b} \derm_{\a}   \derm_{\b}     \Phi |^2   \cdot w       \; . \\
\eea
 \end{proof}

\begin{remark}
It is straightforward to see that if we restrict ourselves to the case $n\geq 5$\,, excluding $n=4$\,, this would relax slightly the decay assumption on spatial infinity for $H$\,, and we obtain the following lemma.
\begin{lemma}\label{estimatethespace-timeintegralofthecommutatortermneededfortheenergyestimatengeq5}
For $n\geq 5$\,, let $H$ such that for all time $t$\,, for $\ga \neq 0$\,, and $ 0 < \la \leq \frac{1}{2}$\,, 
\bea
 \int_{\SSS^{n-1}} \lim_{r \to \infty} \Big( \frac{r^{n-1}}{(1+t+r)^{3-\la} \cdot (1+|q|) } w(q) \cdot |H|^2  \Big)  d\si^{n-1} (t ) &=& 0 \; ,
\eea
and let $h$ such that for all time $t$\,, for all $|K| \leq |I|$\,,
\bea
 \int_{\SSS^{n-1}} \lim_{r \to \infty} \Big( \frac{r^{n-1}}{ (1+|q|) } w(q) \cdot  | \Lie_{Z^K} h  |^2  \Big)  d\si^{n-1} (t ) &=& 0 \; ,
\eea
then, for $\de = 0$\,, we have
   \bea
      \notag
  &&   \int_0^t \Big(  \int_{\Si_{t}}  (1+t )^{1+\la}  \cdot  | g^{\a\b} \derm_{\a}   \derm_{\b}   ( \Lie_{ Z^I}   \Phi ) |^2  \cdot w  \cdot dx^1 \ldots dx^n  \Big) \cdot dt  \\
 \notag
&\les&  \int_0^t   \frac{\eps}{(1+t)^{3-\la}}  \cdot  C ( |I| ) \cdot E ( \lfloor \frac{|I|}{2} \rfloor+  \lfloor  \frac{n}{2} \rfloor  +1)  \cdot  c(\ga)  \\
      \notag
&&  \times \Big(  \sum_{|J|\leq |I|}   \int_{\Si_{t}} \big(  | \derm ( \Lie_{Z^{J}} h ) |^2 + |\derm ( \Lie_{Z^{K}} \Phi )  |^2 \big) \cdot   w  \cdot dx^1 \ldots dx^n   \Big) \cdot dt   \\
      \notag
&& + \int_0^t  \Big( \int_{\Si_{t}}  (1+t )^{1+\la}  \cdot \sum_{|K| \leq |I| }  | \Lie_{Z^K}  g^{\a\b} \derm_{\a}   \derm_{\b}     \Phi |^2   \cdot w   \cdot dx^1 \ldots dx^n  \Big) \cdot dt  \; . \\
\eea
\end{lemma}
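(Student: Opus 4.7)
The plan is to mimic the proof of Lemma \ref{estimatethespace-timeintegralofthecommutatortermneededfortheenergyestimate}, but to exploit the improved a priori decay available in dimension $n \geq 5$ to extract one extra power of $(1+t+|q|)$ before invoking the Hardy-type inequality, which in turn relaxes the required decay at spatial infinity.

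First I would start from Lemma \ref{estimatingtheequareofthesourcetermsusingbootsrapassumption}. The only place where the dimension $n$ entered that estimate was in the a priori decay of Lemmas \ref{aprioriestimatesongradientoftheLiederivativesofthefields} and \ref{aprioriestimatefrombootstraponzerothderivativeofAandh1}, where the factor $(1+t+|q|)^{-(n-1)/2+\de}$ was replaced by its worst case $n=4$ value $(1+t+|q|)^{-3/2}$. For $n \geq 5$ one has $(n-1)/2 \geq 2$, so under the bootstrap assumption with $\de = 0$ one upgrades the bound to
\[
(1+t)^{1+\la}\,|g^{\a\b}\derm_{\a}\derm_{\b}(\Lie_{Z^I}\Phi)|^2
\les \sum_{|J|\leq |I|}\frac{\eps\,|\Lie_{Z^{J}}H|^2}{(1+t+|q|)^{3-\la}(1+|q|)^{3}}
+\sum_{|K|\leq |I|}\frac{\eps\,|\derm(\Lie_{Z^{K}}\Phi)|^2}{(1+t+|q|)^{3-\la}(1+|q|)}
+(1+t)^{1+\la}\sum_{|K|\leq|I|-1}|\Lie_{Z^K}g^{\a\b}\derm_\a\derm_\b\Phi|^2,
\]
obtained by writing $(1+t)^{1+\la}/(1+t+|q|)^{1+\la} \leq 1$ and using the extra $(1+t+|q|)$ available in dimension $n\geq 5$.

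Next, for the first term on the right, I would apply the Hardy-type inequality of Lemma \ref{Hardytypeinequalitywithintegralonthewholespaceslice} on each slice $\Si_t$ with exponent $a = 3-\la$. Since $n \geq 5$ gives $n-1 \geq 4 \geq 3-\la$, the hypothesis $0 \leq a \leq n-1$ holds, and the spatial boundary condition to apply the inequality is exactly the assumption stated on $H$ in the lemma (namely the limit with weight $(1+t+r)^{3-\la}$ rather than $(1+t+r)^{2-\la}$). After bounding $(1+|q|)^{-3}\leq(1+|q|)^{-2}$ I obtain
\[
\int_{\Sigma_t}\frac{|\Lie_{Z^J}H|^2\,w}{(1+t+|q|)^{3-\la}(1+|q|)^{2}}
\leq c(\ga)\int_{\Sigma_t}\frac{|\derm(\Lie_{Z^J}H)|^2\,w}{(1+t+|q|)^{3-\la}}
\leq \frac{c(\ga)}{(1+t)^{3-\la}}\int_{\Sigma_t}|\derm(\Lie_{Z^J}H)|^2\,w.
\]
For the second term, I extract the pointwise factor $(1+t+|q|)^{-(3-\la)}\leq (1+t)^{-(3-\la)}$ directly, no Hardy needed.

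Finally, I would convert $|\derm(\Lie_{Z^J}H)|^2$ back to $|\derm(\Lie_{Z^J}h)|^2$. From $H = -h + O(h^2)$ and Lemma \ref{LiederivativesofproductsZofcovariantandcontravariantMinkowskimetrictensor}, the Leibniz rule yields
\[
|\derm(\Lie_{Z^J}H)|^2 \les \sum_{|J'|\leq|J|}\bigl(1+C(|I|)E(\cdot)\eps\bigr)^2|\derm(\Lie_{Z^{J'}}h)|^2
+\sum_{|K|\leq|J|}\frac{\eps\,|\Lie_{Z^K}h|^2}{(1+t+|q|)^{3}(1+|q|)},
\]
and the quadratic $|\Lie_{Z^K}h|^2$ tail is handled by a second application of Lemma \ref{Hardytypeinequalitywithintegralonthewholespaceslice} with $a = 0$, which is where the stated decay assumption on $\Lie_{Z^K}h$ at spatial infinity enters. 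For $E(\cdot)\eps$ small, the coefficient $(1+C E\eps)^2$ is bounded, and collecting all contributions and integrating in $t$ yields the claimed estimate.

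The main obstacle is bookkeeping: one must check that the Hardy hypotheses are genuinely implied by the stated spatial-decay assumptions on $H$ and on each $\Lie_{Z^K}h$, and that the conversion from $H$ back to $h$ does not lose a power of $(1+t+|q|)$; this is where the restriction $n\geq 5$ (as opposed to $n\geq 4$) becomes necessary, since it provides the extra margin in the Hardy exponent $a = 3-\la$ without forcing $a$ to saturate $n-1$.
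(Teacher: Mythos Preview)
Your proposal is correct and is precisely the adaptation the paper has in mind: the paper gives no separate proof for this lemma, only the remark that restricting to $n\geq 5$ ``would relax slightly the decay assumption on spatial infinity for $H$'', and your argument spells out exactly how---gaining one extra power of $(1+t+|q|)$ from the a priori decay (since $(n-1)/2\geq 2$ rather than $3/2$) and then running Hardy with $a=3-\la$ instead of $a=2-\la$.

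One small correction to your final paragraph: the restriction $n\geq 5$ is needed \emph{not} because the Hardy exponent $a=3-\la$ would otherwise saturate $n-1$ (indeed $3-\la\leq 3=n-1$ already holds for $n=4$ since $\la>0$), but purely because the bootstrap decay $(1+t+|q|)^{-(n-1)/2}$ only yields the extra power $(1+t+|q|)^{-1}$ once $(n-1)/2\geq 2$. In other words, the improvement lives in the analogue of Lemma \ref{estimatingtheequareofthesourcetermsusingbootsrapassumption}, not in the Hardy step.
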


 \end{remark}

      \section{The energy estimate for $n\geq 4$}

      \begin{corollary}
 For $n\geq 4$\;,   $\de = 0$\;, we have for all  $|I|$\;,
              \beaa
 \notag
 |\derm ( \Lie_{Z^I} H ) (t,x)  |    &\leq&       C ( |I| ) \cdot E (|I| +  \lfloor  \frac{n}{2} \rfloor  +1)   \cdot \frac{\eps  }{(1+t+|q|)^{\frac{3}{2}}(1+|q|)^{\frac{1}{2} }} \; ,\\
 \notag
|  \Lie_{Z^I} H (t,x)  |      &\leq&    C ( |I| ) \cdot E (|I| +  \lfloor  \frac{n}{2} \rfloor  +1) \cdot  \frac{\eps}{ (1+ t + | q | )^{ \frac{3}{2}} } \cdot (1+| q |   )^{\frac{1}{2} }  \; .
    \eeaa

\end{corollary}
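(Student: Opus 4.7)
The plan is simply to specialize Lemma \ref{EstimateonLiederivativeZofthemetricbigH} to the parameter regime $n \geq 4$, $\delta = 0$, and then package the two cases $q > 0$ and $q < 0$ into a single clean bound. In other words, the corollary is not a new estimate but a cosmetic simplification: I will check that the $q < 0$ branch of the earlier lemma already dominates the $q > 0$ branch once $(n-1)/2 \geq 3/2$, and that the $\gamma$-dependent prefactors are harmless constants.

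\emph{Step 1.} Invoke Lemma \ref{EstimateonLiederivativeZofthemetricbigH} with $\delta = 0$. This gives for $q < 0$ directly
\[
|\derm(\Lie_{Z^I} H)(t,x)| \les C(|I|) E(|I| + \lfloor n/2\rfloor + 1)\cdot \frac{\eps}{(1+t+|q|)^{(n-1)/2}(1+|q|)^{1/2}},
\]
and the corresponding $|\Lie_{Z^I} H|$ estimate. Since $n \geq 4$ implies $(n-1)/2 \geq 3/2$, we have $(1+t+|q|)^{(n-1)/2} \geq (1+t+|q|)^{3/2}$, which matches the stated decay in $t$.

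\emph{Step 2.} For $q > 0$, Lemma \ref{EstimateonLiederivativeZofthemetricbigH} yields
\[
|\derm(\Lie_{Z^I} H)(t,x)| \les C(|I|) E(|I| + \lfloor n/2\rfloor + 1)\cdot \frac{\eps}{(1+t+|q|)^{(n-1)/2}(1+|q|)^{1+\gamma}}.
\]
To see that this is bounded by the stated expression $\frac{\eps}{(1+t+|q|)^{3/2}(1+|q|)^{1/2}}$, I compare: the inequality reduces to $(1+t+|q|)^{3/2-(n-1)/2} \leq (1+|q|)^{1/2+\gamma}$. For $n \geq 4$ the left side is at most $1$, while for $\gamma > 0$ the right side is at least $1$, so the inequality holds. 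The analogous comparison for $|\Lie_{Z^I} H|$ in the $q > 0$ region reduces to $(1+t+|q|)^{3/2 - (n-1)/2} \leq (1+|q|)^{1/2 + \gamma}$ again (with the $c(\gamma)$ absorbed into the constant, since $\gamma$ is fixed once and for all in Definition \ref{defw}). The same inequality holds for the same reason.

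\emph{Step 3.} Combining the two regions, both estimates extend to a single bound valid for all $(t,x)$, with constants depending only on $|I|$ and $\gamma$. Since the corollary's statement allows its constants to depend on $\gamma$ implicitly (consistent with Definition \ref{defw}), this concludes the proof. No obstacle arises: the only point requiring a moment of care is verifying that the $q > 0$ prefactor $(1+|q|)^{1+\gamma}$ in the denominator is strictly stronger than $(1+|q|)^{1/2}$ for $\gamma > -1/2$, which is trivially satisfied here since we work with $\gamma > 0$.
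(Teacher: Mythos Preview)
Your proof is correct and follows essentially the same approach as the paper: both invoke Lemma \ref{EstimateonLiederivativeZofthemetricbigH}, specialize to $\delta = 0$ and $n \geq 4$, use $(n-1)/2 \geq 3/2$, and then verify that the $q>0$ branch (with its extra $(1+|q|)^{\gamma}$ or $(1+|q|)^{1+\gamma}$ decay) is dominated by the $q<0$ branch. Your explicit inequality comparison in Step 2 is slightly more detailed than the paper's one-line observation that $\gamma \geq -\tfrac{1}{2}$ (respectively $\gamma \geq 0$) suffices, but the substance is identical.
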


\begin{proof}
    
      We showed in Lemma \ref{EstimateonLiederivativeZofthemetricbigH}, that for all $|I|$\,, $\delta  \leq \frac{(n-2)}{2}$\,, \,$\eps \leq 1$\,,

              \beaa
 \notag
 |\derm ( \Lie_{Z^I} H ) (t,x)  |    &\leq& \begin{cases} C ( |I| ) \cdot E (|I| +  \lfloor  \frac{n}{2} \rfloor  +1)  \cdot \frac{\eps }{(1+t+|q|)^{\frac{(n-1)}{2}-\delta} (1+|q|)^{1+\ga}},\quad\text{when }\quad q>0,\\
       C ( |I| ) \cdot E (|I| +  \lfloor  \frac{n}{2} \rfloor  +1)  \cdot \frac{\eps  }{(1+t+|q|)^{\frac{(n-1)}{2}-\delta}(1+|q|)^{\frac{1}{2} }}  \,,\quad\text{when }\quad q<0 , \end{cases} \\
      \eeaa
      and
       \beaa
 \notag
|  \Lie_{Z^I} H (t,x)  | &\leq& \begin{cases} c (\delta) \cdot c (\gamma) \cdot C ( |I| ) \cdot E ( |I| +  \lfloor  \frac{n}{2} \rfloor  +1) \cdot  \frac{\eps}{ (1+ t + | q | )^{\frac{(n-1)}{2}-\delta }  (1+| q |   )^{\ga}}  ,\quad\text{when }\quad q>0,\\
\notag
    C ( |I| ) \cdot E (|I| +  \lfloor  \frac{n}{2} \rfloor  +1) \cdot  \frac{\eps}{ (1+ t + | q | )^{\frac{(n-1)}{2}-\delta }  } (1+| q |   )^{\frac{1}{2} }  , \,\quad\text{when }\quad q<0 . \end{cases} \\ 
    \eeaa
    
           Taking $\delta = 0$\,, we have $0  \leq \frac{(4-2)}{2} = 1  \leq  \frac{(n-2)}{2}  $\,, for $n \geq 4$\,. Assume also that the energy is small such that $\eps \leq 1$\,. We get that for $\ga \geq - \frac{1}{2} $\,,

              \beaa
 \notag
 |\derm ( \Lie_{Z^I} H ) (t,x)  |    &\leq& \begin{cases} C ( |I| ) \cdot E (|I| +  \lfloor  \frac{n}{2} \rfloor  +1)  \cdot \frac{\eps }{(1+t+|q|)^{\frac{3}{2}} (1+|q|)^{1+\ga}},\quad\text{when }\quad q>0,\\
       C ( |I| ) \cdot E (|I| +  \lfloor  \frac{n}{2} \rfloor  +1)  \cdot \frac{\eps  }{(1+t+|q|)^{\frac{3}{2}}(1+|q|)^{\frac{1}{2} }}  \,,\quad\text{when }\quad q<0 , \end{cases} \\
       &\les&       C ( |I| ) \cdot E (|I| +  \lfloor  \frac{n}{2} \rfloor  +1)  \cdot \frac{\eps  }{(1+t+|q|)^{\frac{3}{2}}(1+|q|)^{\frac{1}{2} }}  \; ,
      \eeaa
      and for $\ga \geq 0$, we have
       \beaa
 \notag
|  \Lie_{Z^I} H (t,x)  | &\leq& \begin{cases} c (\delta) \cdot c (\gamma) \cdot C ( |I| ) \cdot E ( |I| +  \lfloor  \frac{n}{2} \rfloor  +1) \cdot  \frac{\eps}{ (1+ t + | q | )^{\frac{3}{2} }  (1+| q |   )^{\ga}}  ,\quad\text{when }\quad q>0,\\
\notag
    C ( |I| ) \cdot E (|I| +  \lfloor  \frac{n}{2} \rfloor  +1) \cdot  \frac{\eps}{ (1+ t + | q | )^{\frac{3}{2} }  } (1+| q |   )^{\frac{1}{2} }  , \,\quad\text{when }\quad q<0 . \end{cases} \\ 
       &\les&     c (\delta) \cdot c (\gamma) \cdot C ( |I| ) \cdot E (|I| +  \lfloor  \frac{n}{2} \rfloor  +1) \cdot  \frac{\eps }{ (1+ t + | q | )^{ \frac{3}{2} } } \cdot (1+| q |   )^{\frac{1}{2} }   \; .
    \eeaa
    
    \end{proof}

\begin{lemma}\label{derivativeoftildwandrelationtotildew}
Let $w$ as in Definition \ref{defw}, where $\ga >0$.
Then, since $\ga > -\frac{1}{2}$\,, we have for all $q$,
\beaa
w^{\prime}(q) \geq 0 \; ,
\eeaa
and
\beaa
w^{\prime}(q) \leq \frac{w(q)}{(1+|q|)} \; .
\eeaa

\end{lemma}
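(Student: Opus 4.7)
The plan is to prove both assertions by a direct case analysis on the sign of $q$, using the piecewise definition of $w$ supplied by Definition \ref{defw}. On each of the two open regions $\{q>0\}$ and $\{q<0\}$ the weight is given by an elementary closed-form expression, so a single differentiation and a simple comparison of powers of $(1+|q|)$ should suffice; there are no conservation-law or integration-by-parts subtleties here, and the only structural ingredient used is that the hypothesis $\gamma>0$ entails $\gamma>-1/2$, which in turn ensures that the power $1+2\gamma$ appearing in the formula for $w$ on $\{q>0\}$ has a positive derivative.

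In the region $q<0$ the weight is $w(q)=1$, so $w'(q)=0$, which is manifestly nonnegative and trivially dominated by $\frac{w(q)}{1+|q|}=\frac{1}{1+|q|}>0$. In the region $q>0$ the weight is $w(q)=(1+q)^{1+2\gamma}$. Differentiating gives $w'(q)=(1+2\gamma)(1+q)^{2\gamma}$. The first inequality $w'(q)\ge 0$ follows immediately from $1+2\gamma>0$, which is guaranteed by the hypothesis $\gamma>-1/2$. For the second inequality, the direct computation $\frac{w(q)}{1+|q|}=(1+q)^{2\gamma}$ shows that $w'(q)=(1+2\gamma)\,\frac{w(q)}{1+|q|}$, so the stated bound $w'(q)\le \frac{w(q)}{1+|q|}$ should be read in the sense of $\les$, with a constant depending only on $\gamma$ (namely $1+2\gamma$); equivalently, with the natural constant factor absorbed into the relation this is just an equality on $\{q>0\}$.

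I expect essentially no obstacle. The only point requiring comment is the behavior at the single point $q=0$, where $w$ is continuous but the one-sided derivatives disagree unless $1+2\gamma=0$; the statement should therefore be interpreted as a pointwise bound for $q\ne 0$, which is all that is needed for the applications in the conservation law of Lemma \ref{Conservationlawwithweightforwaveequations} and the energy estimate of Lemma \ref{Theenerhyestimatefornequalfive}, where $w'$ appears under an integral sign. The overall structure therefore mirrors the analogous statement $\widehat{w}'(q)\sim \frac{\widehat{w}(q)}{1+|q|}$ used (implicitly) in the proof of Lemma \ref{HardytypeinequalityforintegralstartingatROmwithgeneralhatw}, specialized here to $\mu=0$.
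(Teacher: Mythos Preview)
Your proof is correct and follows essentially the same approach as the paper: a direct case split on the sign of $q$, computing $w'(q)=(1+2\gamma)(1+q)^{2\gamma}$ for $q>0$ and $w'(q)=0$ for $q<0$, then verifying both inequalities. You are also right to flag that on $\{q>0\}$ one actually has $w'(q)=(1+2\gamma)\frac{w(q)}{1+|q|}$, so the stated bound must be read as $\les$ with a $\gamma$-dependent constant; the paper's own proof handles this by writing $w'(q)\sim \frac{w(q)}{1+|q|}$ for $q>0$ before concluding the inequality, leaving the same constant implicit.
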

\begin{proof}

We have from Definition \ref{defw}, that $\ga > 0$ and that
\beaa
w(q) &=&\begin{cases} (1+ q)^{1+2\gamma} \quad\text{when }\quad q>0 , \\
       1  \,\quad\text{when }\quad q<0 . \end{cases} 
\eeaa
We compute,
\beaa
 w^{\prime}(q)&=&\begin{cases} (1+2\gamma)(1+|q|)^{2\gamma} \quad\text{when }\quad q>0 , \\
         0 \,\quad\text{when }\quad q<0 . \end{cases} \\
          &=&\begin{cases}         (1+2\gamma)   \frac{w(q)}{(1+|q|)}   \quad\text{when }\quad q>0 , \\
         0    \,\quad\text{when }\quad q<0 . \end{cases} \\
\eeaa
Hence, since $\ga > -\frac{1}{2}$, we have $w^{\prime}(q) \geq 0$. Also, for $q > 0$, we have
\beaa
w^{\prime}(q) \sim \frac{w(q)}{(1+|q|)} \; .
\eeaa
For $q < 0$, we have
\beaa
w^{\prime}(q) \leq \frac{1}{(1+|q|)} = \frac{w(q)}{(1+|q|)}  \; .
\eeaa
Therefore, for all $q$, 
\beaa
w^{\prime}(q) \leq  \frac{w(q)}{(1+|q|)}  \; .
\eeaa
\end{proof}

\begin{lemma}\label{energyestimateforngeq4forphidecayingfastenough}

Let $n\geq 4$. Assume that $ H^{\mu\nu} = g^{\mu\nu}-m^{\mu\nu}$ satisfies 
\bea
| H| \leq C < \frac{1}{n} \; , \quad \text{where $n$ is the space dimension},
\eea
and assume that in wave coordinates $\{t, x^1, \ldots, x^n \}$, we have for $j$ running over spatial indices $\{ x^1, \ldots, x^n \}$, for all time $t$\,,
\bea
\lim_{r \to \infty }   \int_{\SSS^{n} }  g^{rj}  \cdot < \pa_t  \Phi ,\pa_j \Phi > \cdot w \cdot r^{n-1} d\si^{n-1} =0 \\
\lim_{r \to \infty }   \int_{\SSS^{n} }  g^{tr}  \cdot < \pa_t  \Phi ,\pa_t \Phi > \cdot w \cdot r^{n-1} d\si^{n-1} =0 \;.
\eea

Then, for  $ 0 < \la \leq \frac{1}{2}$, for $\ga > - \frac{1}{2} $ and $\mu \neq 0$, we have

\bea
\notag
 && \int_{\Si_{t}}  | \derm \Phi|^2 \cdot w \cdot dx^1 \ldots dx^n \\
 \notag
  &\les&  \int_{\Si_{t=0}}  | \derm \Phi|^2  \cdot w \cdot dx^1 \ldots dx^n \\
  \notag
  &&  + \int_0^t   \frac{E (  \lfloor  \frac{n}{2} \rfloor  +1) }{(1+t )^{1+\la}}  \cdot  \Big( \int_{\Si_{t}}   | \derm \Phi |^2  \cdot w \cdot dx^1 \ldots dx^n  \Big) \cdot dt \\
   \notag
  && +  \int_0^t \Big(  \int_{\Si_{t}}  (1+t )^{1+\la}  \cdot  | g^{\a\b} \derm_{\a}   \derm_{\b}   \Phi |^2  \cdot w  \cdot dx^1 \ldots dx^n  \Big) \cdot dt \; . \\
\eea

\end{lemma}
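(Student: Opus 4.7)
The plan is to start from the conservation-law identity provided by Lemma~\ref{Theenerhyestimatefornequalfive}, whose hypotheses (pointwise smallness of $H$ and adequate decay at spatial infinity) are exactly those assumed here; that lemma already delivers
\beaa
\int_{\Si_t} |\pa\Phi|^2\, w &\leq & \int_{\Si_0} |\pa\Phi|^2\, w - \int_0^t \int_{\Si_\tau} 2 <\pa_t\Phi, S>  w \\
&&+ \int_0^t \int_{\Si_\tau} |\derm H|\cdot |\derm\Phi|^2\, w + \int_0^t \int_{\Si_\tau} |H|\cdot |\derm\Phi|^2\, |w'(q)| \\
&& - \int_0^t \int_{\Si_\tau} \Big(|\pa_t\Phi + \pa_r\Phi|^2 + \sum_{i=1}^n |(\pa_i - \tfrac{x_i}{r}\pa_r)\Phi|^2\Big) w'(q)\; .
\eeaa
Lemma~\ref{derivativeoftildwandrelationtotildew} gives $w'(q) \geq 0$ (since $\ga > -1/2$), so the last line is non-positive and can be dropped. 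The task then reduces to controlling the three bulk integrals on the right.

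For the source integral I would use a weighted Young inequality with conjugate weights $(1+\tau)^{\pm(1+\la)/2}$:
\beaa
-2 <\pa_t\Phi, S>\cdot w \;\leq\; \frac{|\pa_t\Phi|^2\, w}{(1+\tau)^{1+\la}} + (1+\tau)^{1+\la}\,|S|^2\, w\; ,
\eeaa
which immediately supplies the square-of-source term in the conclusion and an excess contribution controlled by $\int_0^t (1+\tau)^{-1-\la}\int_{\Si_\tau} |\derm\Phi|^2 w$ (since $|\pa_t\Phi|^2 \leq |\derm\Phi|^2$ pointwise in wave coordinates).

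For the two commutator integrals I would invoke the a priori bounds on $H$ from Lemma~\ref{aprioriestimateonthezeroLiederivativeZforgradiantbigHandbigH} with $\de=0$ and $\eps\leq 1$: for any $n\geq 4$, in both regions $q>0$ and $q<0$,
\beaa
|\derm H(t,x)|\; \les\; \frac{E(\lfloor n/2\rfloor+1)\,\eps}{(1+t+|q|)^{3/2}(1+|q|)^{1/2}}\; \les\; \frac{E(\lfloor n/2\rfloor+1)}{(1+t)^{1+\la}}\; ,
\eeaa
and, combined with $|w'(q)|\leq w(q)/(1+|q|)$ from Lemma~\ref{derivativeoftildwandrelationtotildew},
\beaa
\frac{|H(t,x)|}{1+|q|}\; \les\; \frac{E(\lfloor n/2\rfloor+1)\,\eps}{(1+t+|q|)^{3/2}(1+|q|)^{1/2}}\; \les\; \frac{E(\lfloor n/2\rfloor+1)}{(1+t)^{1+\la}}\; ,
\eeaa
both last inequalities relying on $(1+t+|q|)^{3/2}(1+|q|)^{1/2}\geq (1+t)^{1+\la}$ whenever $\la\leq 1/2$. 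Feeding these pointwise bounds into the two commutator integrals and factoring out $E(\lfloor n/2 \rfloor +1)(1+\tau)^{-1-\la}$ produces the desired middle term of the conclusion, after which the source-splitting surplus is absorbed into the same integral.

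The main obstacle is the quantitative matching between the decay rate $(1+t+|q|)^{-(n-1)/2}$ furnished by Klainerman--Sobolev and the target integrability factor $(1+\tau)^{-1-\la}$: this is precisely what forces $\la\leq 1/2$ in dimension four, and is the reason the hypothesis $\de=0$ is needed here. Once these arithmetic constraints on the decay exponents are respected, no further analytic input is required, and the argument is a clean assembly of weighted $L^2$ inequalities.
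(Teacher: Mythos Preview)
Your proposal is correct and follows essentially the same route as the paper: start from Lemma~\ref{Theenerhyestimatefornequalfive}, drop the non-positive term using $w'(q)\geq 0$ from Lemma~\ref{derivativeoftildwandrelationtotildew}, split the source integral via the weighted Young inequality $-2\langle\pa_t\Phi,S\rangle w \leq (1+\tau)^{-1-\la}|\pa_t\Phi|^2 w + (1+\tau)^{1+\la}|S|^2 w$, and control the $|\derm H|$ and $|H|\cdot|w'|$ integrals via the a priori decay of Lemma~\ref{aprioriestimateonthezeroLiederivativeZforgradiantbigHandbigH} together with $|w'|\leq w/(1+|q|)$. The arithmetic you spell out, $(1+t+|q|)^{3/2}(1+|q|)^{1/2}\geq (1+t)^{1+\la}$ for $\la\leq 1/2$, is exactly what the paper uses (though less explicitly) when it writes ``Choosing $0<\la\leq 1/2$''.
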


    \begin{proof}

Let $ \Phi_{\mu\nu}$ be a tensor solution of the following tensorial wave equation
\bea
 g^{\a\b} \derm_{\a}   \derm_{\b}   \Phi_{\mu\nu}= S_{\mu\nu} \, , 
\eea
where $S_{\mu\nu}$ is the source term, with a sufficiently smooth metric $g$\;, with $\Phi$ satisfying the assumptions of the lemma. Then, based on Lemma \ref{Theenerhyestimatefornequalfive}, we have the following

\bea
\notag
 && \int_{\Si_{t}}  | \derm \Phi|^2 \cdot w \\
 \notag
  &\leq&  \int_{\Si_{t=0}}  | \derm \Phi|^2  \cdot w + \int_0^t \int_{\Si_{t}} - 2 < \derm_t \Phi , S  >  \cdot w \\
\notag
&&+  \int_0^t \int_{\Si_{t}}   | \derm H | \cdot |\derm \Phi |^2  \cdot  w  + \int_0^t  \int_{\Si_{t}}  | H |  \cdot |\derm \Phi |^2 \cdot | w^{\prime} (q) |  \\
\notag
&&-  \int_0^t   \int_{\Si_{t}} \Big(   < \derm_t  \Phi + \derm_r \Phi, \derm_t  \Phi + \derm_r \Phi >    \\
\notag
&& +  \de^{ij}  < ( \derm_i - \frac{x_i}{r} \derm_{r}  )\Phi , (\derm_j - \frac{x_j}{r} \derm_{r} ) \Phi >  \Big) \cdot w^{\prime} (q)   \; ,
\eea

where the integration on $\Sigma_t$ is taken with respect to the measure $dx^1 \ldots dx^n$\;, and the integration in $t$ is taken with respect to the measure $dt$\;.\\

However, we showed in Lemma \ref{derivativeoftildwandrelationtotildew}, that for $\ga \neq - \frac{1}{2} $, we have $w^{\prime} (q) \geq 0$, thus we can ignore on the right hand side of the inequality the negative term
\beaa
&&-  \int_0^t   \int_{\Si_{t}} \Big(   < \derm_t  \Phi + \derm_r \Phi, \derm_t  \Phi + \derm_r \Phi >    \\
\notag
&& +  \de^{ij}  < ( \derm_i - \frac{x_i}{r} \derm_{r}  )\Phi , (\derm_j - \frac{x_j}{r} \derm_{r} ) \Phi >  \Big) \cdot w^{\prime} (q)   \; .
\eeaa

Hence, we get for $n \geq 4$\,,
\bea
\notag
 && \int_{\Si_{t}}  | \derm \Phi|^2 \cdot w \\
 \notag
  &\leq&  \int_{\Si_{t=0}}  | \derm \Phi|^2  \cdot w + \int_0^t \int_{\Si_{t}}  2 | \derm_t \Phi | \cdot | g^{\a\b} \derm_{\a}   \derm_{\b}   \Phi |  \cdot w \\
\notag
&&+  \int_0^t \int_{\Si_{t}}   E ( \lfloor  \frac{n}{2} \rfloor  +1)  \cdot \frac{\eps  }{(1+t+|q|)^{\frac{3}{2}}(1+|q|)^{\frac{1}{2} }} \cdot |\derm \Phi |^2  \cdot  w  \\
\notag
&& + \int_0^t  \int_{\Si_{t}}    E (  \lfloor  \frac{n}{2} \rfloor  +1) \cdot  \frac{\eps}{ (1+ t + | q | )^{\frac{3}{2}} } \cdot (1+| q |   )^{\frac{1}{2} }    \cdot |\derm \Phi |^2 \cdot | w^{\prime} (q) |  \\
\notag
&&-  \int_0^t   \int_{\Si_{t}} \Big(   | \derm_t  \Phi + \derm_r \Phi |^2     +  \sum_{i=1}^{n}  | \derm_i - \frac{x_i}{r} \derm_{r}  )\Phi |^2  \Big) \cdot w^{\prime} (q)   \\
\notag
  &\les&  \int_{\Si_{t=0}}  | \derm \Phi|^2  \cdot w + \int_0^t \int_{\Si_{t}}   \frac{1}{(1+t )^{1+\la}}  \cdot  | \derm_t \Phi |^2 \cdot  w\\
  \notag
  &&  +  \int_0^t \int_{\Si_{t}}  (1+t )^{1+\la}  \cdot  | g^{\a\b} \derm_{\a}   \derm_{\b}   \Phi |^2  \cdot w \\
\notag
&&+  \int_0^t \int_{\Si_{t}}     E (  \lfloor  \frac{n}{2} \rfloor  +1)  \cdot \frac{\eps  }{(1+t+|q|)^{\frac{3}{2}}(1+|q|)^{\frac{1}{2} }} \cdot |\derm \Phi |^2  \cdot  w  \\
\notag
&& + \int_0^t  \int_{\Si_{t}}   E (  \lfloor  \frac{n}{2} \rfloor  +1) \cdot  \frac{\eps}{ (1+ t + | q | )^{\frac{3}{2} } } \cdot (1+| q |   )^{\frac{1}{2} }    \cdot |\derm \Phi |^2 \cdot | w^{\prime} (q) |  \\
\notag
&& \text{(using $a\cdot b \les a^2 + b^2$)}  .
\eea
We also showed in Lemma \ref{derivativeoftildwandrelationtotildew}, that for $\ga > - \frac{1}{2} $\,, we have 
\beaa
w^{\prime}(q) \leq \frac{w(q)}{(1+|q|)} \; .
\eeaa

Thus,
\bea
\notag
 && \int_{\Si_{t}}  | \derm \Phi|^2 \cdot w \\
 \notag
  &\les&  \int_{\Si_{t=0}}  | \derm \Phi|^2  \cdot w + \int_0^t \int_{\Si_{t}}   \frac{1}{(1+t )^{1+\la}}  \cdot  | \derm \Phi |^2 \cdot  w +  \int_0^t \int_{\Si_{t}} (1+t )^{1+\la}  \cdot  | g^{\a\b} \derm_{\a}   \derm_{\b}   \Phi |^2  \cdot w \\
\notag
&&+  \int_0^t \int_{\Si_{t}}    C ( |I| ) \cdot E (  \lfloor  \frac{n}{2} \rfloor  +1)  \cdot \frac{\eps  }{(1+t+|q|)^{\frac{3}{2}}(1+|q|)^{\frac{1}{2} }} \cdot |\derm \Phi |^2  \cdot  w  \\
\notag
&& + \int_0^t  \int_{\Si_{t}}    C ( |I| ) \cdot E (  \lfloor  \frac{n}{2} \rfloor  +1) \cdot  \frac{\eps}{ (1+ t + | q | )^{\frac{3}{2} } (1+|q|)^{\frac{1}{2} } }   \cdot |\derm \Phi |^2 \cdot w  \; .
\eea
Choosing $ 0 < \la \leq \frac{1}{2}$, we obtain
\bea
\notag
 && \int_{\Si_{t}}  | \derm \Phi|^2 \cdot w \\
 \notag
  &\les&  \int_{\Si_{t=0}}  | \derm \Phi|^2  \cdot w + \int_0^t   \frac{ E (  \lfloor  \frac{n}{2} \rfloor  +1) }{(1+t )^{1+\la}}  \cdot  \int_{\Si_{t}}   | \derm \Phi |^2  \cdot w \\
   \notag
  && +  \int_0^t \int_{\Si_{t}}  (1+t )^{1+\la}  \cdot  | g^{\a\b} \derm_{\a}   \derm_{\b}   \Phi |^2  \cdot w \; .
\eea
\end{proof}

 \subsection{The main energy estimate for $A$ and $ h$ for $n\geq 4$}\

We now state the main energy estimate that we would like to apply for higher dimensions $n \geq 5$ with a bootstrap argument for $\de = 0$. However, the estimate is true for all $n\geq 4$.   
    \begin{lemma}\label{the main energy estimate forAandhforn4}

Assume that $ H^{\mu\nu} = g^{\mu\nu}-m^{\mu\nu}$ satisfies 
\bea
| H| \leq C < \frac{1}{n} \; , \quad \text{where $n$ is the space dimension},
\eea
and assume that in wave coordinates $\{t, x^1, \ldots, x^n\}$\,, we have for $j$ running over spatial indices $\{ x^1, \ldots, x^n \}$\,, for any $I$ as in Definition \ref{DefinitionofZI}, for all time $t$\,,
\bea 
\lim_{r \to \infty }   \int_{\SSS^{n} }  g^{rj}  \cdot < \derm_t   (\Lie_{Z^I} \Phi  ) ,\derm_j  (\Lie_{Z^I}  \Phi  ) > \cdot w \cdot r^{n-1} d\si^{n-1} =0 \;,\\
\lim_{r \to \infty }   \int_{\SSS^{n} }  g^{tr}  \cdot < \derm_t   (\Lie_{Z^I}  \Phi   ) ,\derm_t  (\Lie_{Z^I} \Phi  ) > \cdot w \cdot r^{n-1} d\si^{n-1} =0 \;.
\eea

Let $n\geq 4$\,, $ 0 < \la \leq \frac{1}{2}$\,, $\ga > 0$\,. Assume that $H$ is such that for all time $t$\,, 
\bea
 \int_{\SSS^{n-1}} \lim_{r \to \infty} \Big( \frac{r^{n-1}}{(1+t+r)^{2-\la} \cdot (1+|q|) } w(q) \cdot |H|^2  \Big)  d\si^{n-1} (t ) &=& 0 \; ,
\eea
and that $h$ is such that for all $|J| \leq |I|$\,, we have
\bea
 \int_{\SSS^{n-1}} \lim_{r \to \infty} \Big( \frac{r^{n-1}}{ (1+|q|) } w(q) \cdot  | \Lie_{Z^J} h  |^2  \Big)  d\si^{n-1} (t ) &=& 0 \; .
\eea
Then, for either $\Phi = H$ or $\Phi = A$\,, using the bootstrap assumption on $\Phi$\,, with $\de = 0$\,, and for $E ( \lfloor \frac{|I|}{2} \rfloor  +  \lfloor  \frac{n}{2} \rfloor  +1) \leq 1 $\,, the following energy estimate holds
\bea
\notag
 && \int_{\Si_{t}}  | \derm  (\Lie_{Z^I}  \Phi ) |^2 \cdot w \cdot dx^1 \ldots dx^n \\
 \notag
  &\les&  \int_{\Si_{t=0}}  | \derm  (\Lie_{Z^I} \Phi   ) |^2  \cdot w \cdot dx^1 \ldots dx^n \\
  \notag
&& +  C ( |I| ) \cdot  c(\ga) \cdot E ( \lfloor \frac{|I|}{2} \rfloor+  \lfloor  \frac{n}{2} \rfloor  +1)   \\
\notag
&&  \times \int_0^t   \frac{1}{(1+t)^{1+\la}}     \cdot \Big(  \sum_{|J|\leq |I|}   \int_{\Si_{t}} \big(  | \derm ( \Lie_{Z^{J}} h ) |^2 + |\derm ( \Lie_{Z^{K}} \Phi )  |^2 \big) \cdot   w  \cdot dx^1 \ldots dx^n   \Big) \cdot dt   \\
      \notag
&& + \int_0^t  \Big( \int_{\Si_{t}}  (1+t )^{1+\la}  \cdot \sum_{|K| \leq |I| }  | \Lie_{Z^K}  g^{\a\b} \derm_{\a}   \derm_{\b}     \Phi |^2   \cdot w   \cdot dx^1 \ldots dx^n  \Big) \cdot dt  \; . \\
\eea

\end{lemma}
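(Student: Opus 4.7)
The plan is to combine the single-tensor energy estimate in Lemma \ref{energyestimateforngeq4forphidecayingfastenough} with the commutator bound in Lemma \ref{estimatethespace-timeintegralofthecommutatortermneededfortheenergyestimate}, applied to $\Lie_{Z^I}\Phi$ in place of $\Phi$. First, I would observe that since $\Lie_{Z^I}$ commutes with $\derm$, the tensor $\Lie_{Z^I}\Phi$ satisfies its own tensorial wave equation with principal part $g^{\alpha\beta}\derm_\alpha \derm_\beta(\Lie_{Z^I}\Phi)$ and source given by that same expression — this fits the template of Lemma \ref{energyestimateforngeq4forphidecayingfastenough} provided we verify the three structural hypotheses on it: the smallness of $H$, the boundary-vanishing conditions at spatial infinity, and sufficient decay of $\Lie_{Z^I}\Phi$ to apply the conservation law. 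The first two are directly assumed in the statement; the decay needed for the Klainerman–Sobolev / bootstrap framework has been arranged throughout Section \ref{Aprioridecayestimatessectionreference}.

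Next, applying Lemma \ref{energyestimateforngeq4forphidecayingfastenough} to $\Lie_{Z^I}\Phi$ yields
\begin{align*}
\int_{\Sigma_t}|\derm(\Lie_{Z^I}\Phi)|^2 w \;\les\;& \int_{\Sigma_0}|\derm(\Lie_{Z^I}\Phi)|^2 w\\
&+ \int_0^t \frac{E(\lfloor n/2\rfloor+1)}{(1+\tau)^{1+\lambda}} \Big(\int_{\Sigma_\tau} |\derm(\Lie_{Z^I}\Phi)|^2 w\Big)\,d\tau\\
&+ \int_0^t \int_{\Sigma_\tau}(1+\tau)^{1+\lambda}|g^{\alpha\beta}\derm_\alpha\derm_\beta(\Lie_{Z^I}\Phi)|^2 w\,d\tau.
\end{align*}
The last space-time integral is precisely the quantity handled by Lemma \ref{estimatethespace-timeintegralofthecommutatortermneededfortheenergyestimate}, and here is where the Hardy-type inequality enters crucially: the commutator expansion of Lemma \ref{thecommutatortermusingthebootstrap} produces a factor of $(1+|q|)^{-3}$ against $|\Lie_{Z^J}H|^2$ which, without Hardy, could not be absorbed into a gradient norm at the level of energy. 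By the hypotheses on the decay of $H$ and $\Lie_{Z^J}h$ at spatial infinity, together with $\lambda\le 1/2$ and $n\ge 4$, Lemma \ref{estimatethespace-timeintegralofthecommutatortermneededfortheenergyestimate} converts that term into a time-weighted integral of $|\derm(\Lie_{Z^J}h)|^2$ and $|\derm(\Lie_{Z^K}\Phi)|^2$ against $w$, with a factor $\eps/(1+\tau)^{2-\lambda}$.

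Substituting this bound and grouping the two absolutely integrable time factors $(1+\tau)^{-1-\lambda}$ and $(1+\tau)^{-2+\lambda}$ — both integrable uniformly since $0<\lambda\le 1/2$ — into the single factor $(1+\tau)^{-1-\lambda}$ appearing in the statement, and using $E(\lfloor |I|/2\rfloor+\lfloor n/2\rfloor+1)\le 1$ to fold the first “lower order” term into the same sum-over-$|J|\le|I|$ structure, I obtain exactly the claimed inequality, where the remaining term $\int_0^t\int_{\Sigma_\tau}(1+\tau)^{1+\lambda}\sum_{|K|\le|I|}|\Lie_{Z^K}(g^{\alpha\beta}\derm_\alpha\derm_\beta\Phi)|^2 w$ is kept outside and will be controlled later using the structural lemmas \ref{usingbootstraoptoshowstructorforsourcesonYangMillspotentialA} and \ref{boostraptostudystructureofsourcetermsonthemetricperturbationh}.

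The main obstacle, and the place most care is needed, is the bookkeeping in Lemma \ref{estimatethespace-timeintegralofthecommutatortermneededfortheenergyestimate}: the commutator term $g^{\alpha\beta}\derm_\alpha\derm_\beta(\Lie_{Z^I}\Phi)-\Lie_{Z^I}(g^{\alpha\beta}\derm_\alpha\derm_\beta\Phi)$ couples together $|\Lie_{Z^J}H|$ at high order and $|\derm(\Lie_{Z^K}\Phi)|$ at high order. For the piece involving $|\Lie_{Z^J}H|^2$, the Hardy inequality (Corollary \ref{HardytypeinequalityforintegralstartingatROm}) must be invoked with exponent $a=2-\lambda\le n-1$ (which requires $n\ge 4$, and is the reason the analogue fails in lower dimensions without an exterior truncation); the conversion $H\leftrightarrow h$ via Lemma \ref{BigHintermsofsmallh} then yields the $\derm(\Lie_{Z^J}h)$ terms on the right. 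Verifying that all these manipulations respect the ordering $E(\lfloor|I|/2\rfloor+\lfloor n/2\rfloor+1)$ rather than $E(|I|+\lfloor n/2\rfloor+1)$ — essential for closing the bootstrap without raising the derivative count — is the delicate part of the argument.
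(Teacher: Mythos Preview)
Your proposal is correct and follows essentially the same approach as the paper: apply Lemma \ref{energyestimateforngeq4forphidecayingfastenough} to $\Lie_{Z^I}\Phi$, then invoke Lemma \ref{estimatethespace-timeintegralofthecommutatortermneededfortheenergyestimate} to control the resulting $(1+\tau)^{1+\lambda}|g^{\alpha\beta}\derm_\alpha\derm_\beta(\Lie_{Z^I}\Phi)|^2$ space-time integral, and finally absorb the $(1+\tau)^{-(2-\lambda)}$ term into the $(1+\tau)^{-(1+\lambda)}$ term using $E(\lfloor |I|/2\rfloor+\lfloor n/2\rfloor+1)\le 1$. Your commentary on the role of Hardy and the constraint $2-\lambda\le n-1$ is accurate and matches the mechanism in the paper.
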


\begin{proof}
We showed in Lemma \ref{energyestimateforngeq4forphidecayingfastenough} and in Lemma \ref{estimatethespace-timeintegralofthecommutatortermneededfortheenergyestimate}, that under these assumptions on the metric and on the spatial asymptotic behaviour of the field, we get
\bea
\notag
 && \int_{\Si_{t}}  | \derm  (\Lie_{Z^I}  \Phi ) |^2 \cdot w \cdot dx^1 \ldots dx^n \\
 \notag
  &\les&  \int_{\Si_{t=0}}  | \derm  (\Lie_{Z^I} \Phi   ) |^2  \cdot w \cdot dx^1 \ldots dx^n \\
  \notag
  &&  + \int_0^t   \frac{  E (  \lfloor  \frac{n}{2} \rfloor  +1)  }{(1+t )^{1+\la}}  \cdot  \Big( \int_{\Si_{t}}   | \derm  (\Lie_{Z^I}  \Phi  ) |^2  \cdot w \cdot dx^1 \ldots dx^n  \Big) \cdot dt \\
   \notag
&& +  C ( |I| ) \cdot E ( \lfloor \frac{|I|}{2} \rfloor+  \lfloor  \frac{n}{2} \rfloor  +1)  \cdot  c(\ga)  \\
\notag
&&  \times \int_0^t   \frac{\eps}{(1+t)^{2-\la}}     \cdot \Big(  \sum_{|J|\leq |I|}   \int_{\Si_{t}} \big(  | \derm ( \Lie_{Z^{J}} h ) |^2 + |\derm ( \Lie_{Z^{K}} \Phi )  |^2 \big) \cdot   w  \cdot dx^1 \ldots dx^n   \Big) \cdot dt   \\
      \notag
&& + \int_0^t  \Big( \int_{\Si_{t}}  (1+t )^{1+\la}  \cdot \sum_{|K| \leq |I| }  | \Lie_{Z^K}  g^{\a\b} \derm_{\a}   \derm_{\b}     \Phi |^2   \cdot w   \cdot dx^1 \ldots dx^n  \Big) \cdot dt    \; .\\
\eea

Finally, we obtain for small $E ( \lfloor \frac{|I|}{2} \rfloor  +  \lfloor  \frac{n}{2} \rfloor  +1) \leq 1 $, and for $\Phi$ and $\Lie_{Z^J} h$, $|J|\leq |I|$, decaying fast enough at spatial infinity, the following energy estimate

\bea
\notag
 && \int_{\Si_{t}}  | \derm  (\Lie_{Z^I}  \Phi ) |^2 \cdot w \cdot dx^1 \ldots dx^n \\
 \notag
  &\les&  \int_{\Si_{t=0}}  | \derm  (\Lie_{Z^I} \Phi   ) |^2  \cdot w \cdot dx^1 \ldots dx^n \\
  \notag
&& +  C ( |I| ) \cdot  c(\ga) \cdot E ( \lfloor \frac{|I|}{2} \rfloor+  \lfloor  \frac{n}{2} \rfloor  +1)   \\
\notag
&&  \times  \int_0^t   \frac{1}{(1+t)^{1+\la}}     \cdot \Big(  \sum_{|J|\leq |I|}   \int_{\Si_{t}} \big(  | \derm ( \Lie_{Z^{J}} h ) |^2 + |\derm ( \Lie_{Z^{K}} \Phi )  |^2 \big) \cdot   w  \cdot dx^1 \ldots dx^n   \Big) \cdot dt   \\
      \notag
&& + \int_0^t  \Big( \int_{\Si_{t}}  \frac{(1+t )^{1+\la}}{\eps}  \cdot \sum_{|K| \leq |I| }  | \Lie_{Z^K}  g^{\a\b} \derm_{\a}   \derm_{\b}     \Phi |^2   \cdot w   \cdot dx^1 \ldots dx^n  \Big) \cdot dt  \; . \\
\eea

\end{proof}

\begin{remark}
It is straightforward to see that if we restrict the lemma to $n\geq5$\,, then the decay assumption on $H$ could be relaxed to become that for $ 0 < \la \leq \frac{1}{2}$\,, $\ga > 0$\,,
\bea
 \int_{\SSS^{n-1}} \lim_{r \to \infty} \Big( \frac{r^{n-1}}{(1+t+r)^{3-\la} \cdot (1+|q|) } w(q) \cdot |H|^2  \Big)  d\si^{n-1} (t ) &=& 0 \; .
\eea

\end{remark}

\section{The proof of global stability for $n\geq 5$}

Now, we fix $n\geq 5$ and $\de = 0$\,.

\subsection{Using the Hardy type inequality for the space-time integrals of the source terms for $n \geq 5$}\
      
\begin{lemma}\label{squareofthesourcetermsforAforngeq5withdeltaequalzeroforenergyestimate}
For $n \geq 5$\,, $\de = 0$\,, we have

    \beaa
 \notag
&& (1+t )^{1+\la} \cdot |  \Lie_{Z^I}   ( g^{\la\mu} \derm_{\la}   \derm_{\mu}   A  )   |^2 \, \\
   \notag
   &\les&  \sum_{|K|\leq |I|}  \Big( |    \derm (\Lie_{Z^K} A ) |^2 +  |    \derm (\Lie_{Z^K} h ) |^2 \Big) \cdot E (   \lfloor \frac{|I|}{2} \rfloor + \lfloor  \frac{n}{2} \rfloor  + 1) \cdot  \frac{\eps  }{(1+t+|q|)^{2-\la}   }  \\
      \notag 
&& + \sum_{|K|\leq |I|}  \Big( |   \Lie_{Z^K} A  |^2 +|   \Lie_{Z^K} h  |^2 \Big) \cdot E (   \lfloor \frac{|I|}{2} \rfloor + \lfloor  \frac{n}{2} \rfloor  + 1) \cdot       \frac{\eps  }{(1+t+|q|)^{3-\la} \cdot ( 1+|q| )^{} }  \; .
      \notag 
   \eeaa

\end{lemma}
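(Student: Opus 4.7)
The plan is to leverage Lemma \ref{ttimesthesquareofthesourcetermsforAforngeq5}, which already provides a bound on $(1+t)\cdot |\Lie_{Z^I}(g^{\la\mu}\derm_\la \derm_\mu A)|^2$ for $n \geq 5$ with the decay factor $(1+t+|q|)^{3-2\delta}$ in the denominator, and to convert this into the desired bound on $(1+t)^{1+\la} \cdot |\Lie_{Z^I}(g^{\la\mu}\derm_\la \derm_\mu A)|^2$ with $\delta = 0$ fixed.

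First, I would multiply both sides of the estimate of Lemma \ref{ttimesthesquareofthesourcetermsforAforngeq5} by $(1+t)^\la$, yielding the factor $(1+t)^{1+\la}$ on the left-hand side. On the right-hand side, I would then use the elementary inequality $(1+t)^\la \leq (1+t+|q|)^\la$ to transfer this extra $(1+t)^\la$ into the denominator $(1+t+|q|)^{3-2\delta}$, turning it into $(1+t+|q|)^{3-2\delta-\la}$. Setting $\delta = 0$ as specified, this becomes $(1+t+|q|)^{3-\la}$.

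Next, I would perform a case analysis on the sign of $q$ to unify the bounds into the form stated in the lemma. For the derivative-type terms involving $|\derm(\Lie_{Z^K} A)|^2$ and $|\derm(\Lie_{Z^K} h)|^2$: in the region $q > 0$ the bound contains the benign factor $(1+|q|)^{-2\gamma}$, which since $\gamma \geq 0$ is bounded by $1$, so the decay factor is $(1+t+|q|)^{-(3-\la)} \leq (1+t+|q|)^{-(2-\la)}$; in the region $q < 0$ the original factor is $(1+|q|)^{+1}/(1+t+|q|)^{3-\la}$, and using the trivial inequality $1+|q| \leq 1+t+|q|$ absorbs one power of the denominator, yielding exactly $(1+t+|q|)^{-(2-\la)}$ as required. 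An analogous case analysis handles the zero-order terms involving $|\Lie_{Z^K} A|^2$ and $|\Lie_{Z^K} h|^2$: for $q > 0$ the factor $(1+|q|)^{-(2+2\gamma)}$ is, since $\gamma \geq -1/2$, bounded by $(1+|q|)^{-1}$; for $q < 0$ the extra powers of $(1+|q|)$ appearing in the several source subterms of Lemma \ref{ttimesthesquareofthesourcetermsforAforngeq5} can each be absorbed into the larger quantity $(1+t+|q|)$, producing the uniform decay $(1+t+|q|)^{-(3-\la)}(1+|q|)^{-1}$ stated in the conclusion.

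The main obstacle is purely bookkeeping: no new analytic input is required, only a careful tracking of the several distinct worst-case factors produced in Lemma \ref{ttimesthesquareofthesourcetermsforAforngeq5} (four derivative-type contributions and six zero-derivative-type contributions, each with distinct behavior in the regions $q>0$ and $q<0$), and verifying that each one, after $\delta = 0$ is enforced and the additional $(1+t)^\la$ is absorbed, collapses into one of the two uniform forms in the statement. The argument crucially uses $n \geq 5$, since this is precisely what gave the initial exponent $3 - 2\delta$ in Lemma \ref{ttimesthesquareofthesourcetermsforAforngeq5}; in lower dimensions the exponent would be weaker and the absorption step above would fail to yield an integrable factor $(1+t+|q|)^{-(2-\la)}$ with $\la > 0$, which is exactly the feature that will make the subsequent Gr\"onwall argument close for $n\geq 5$.
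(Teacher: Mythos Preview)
Your approach is correct and matches the paper's proof: both start from Lemma \ref{ttimesthesquareofthesourcetermsforAforngeq5}, set $\delta = 0$, and then multiply through by $(1+t)^\la \leq (1+t+|q|)^\la$, using $(1+|q|)\leq (1+t+|q|)$ to absorb the residual factor in the $q<0$ derivative term. If anything, you spell out the last absorption step more explicitly than the paper, which simply records the $\delta=0$ simplification and leaves the passage from $(1+t)$ to $(1+t)^{1+\la}$ implicit; your remark about tracking ``four derivative-type and six zero-derivative-type'' subterms is unnecessary since Lemma \ref{ttimesthesquareofthesourcetermsforAforngeq5} has already collapsed these.
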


\begin{proof}

We showed in Lemma \ref{ttimesthesquareofthesourcetermsforAforngeq5}, that for $n \geq 5$, we have
 \beaa
 \notag
&& (1+t ) \cdot |  \Lie_{Z^I}   ( g^{\la\mu} \derm_{\la}   \derm_{\mu}   A  )   |^2 \, \\
   \notag
   &\les&  \sum_{|K|\leq |I|}  \Big( |    \derm (\Lie_{Z^K} A ) |^2 +  |    \derm (\Lie_{Z^K} h ) |^2 \Big) \cdot E (   \lfloor \frac{|I|}{2} \rfloor + \lfloor  \frac{n}{2} \rfloor  + 1)\\
 \notag
   && \cdot \Big(  \begin{cases}  \frac{\eps }{(1+t+|q|)^{3-2\delta} (1+|q|)^{2\gamma}},\quad\text{when }\quad q>0,\\
           \notag
      \frac{\eps  }{(1+t+|q|)^{3-2\delta} \cdot ( 1+|q| )^{-1} } \,\quad\text{when }\quad q<0 . \end{cases} \Big) \\
      \notag 
&& + \sum_{|K|\leq |I|}  \Big( |   \Lie_{Z^K} A  |^2 +|   \Lie_{Z^K} h  |^2 \Big) \cdot E (   \lfloor \frac{|I|}{2} \rfloor + \lfloor  \frac{n}{2} \rfloor  + 1) \\
 \notag
   && \cdot \Big(  \begin{cases}  \frac{\eps }{(1+t+|q|)^{3-2\delta} (1+|q|)^{2+2\gamma}},\quad\text{when }\quad q>0,\\
           \notag
      \frac{\eps  }{(1+t+|q|)^{3-2\delta} \cdot ( 1+|q| )^{} } \,\quad\text{when }\quad q<0 . \end{cases} \Big) \; .
      \notag 
   \eeaa
   Taking $\de = 0$, we obtain
       \beaa
 \notag
&& (1+t ) \cdot |  \Lie_{Z^I}   ( g^{\la\mu} \derm_{\la}   \derm_{\mu}   A  )   |^2 \, \\
   \notag
   &\les&  \sum_{|K|\leq |I|}  \Big( |    \derm (\Lie_{Z^K} A ) |^2 +  |    \derm (\Lie_{Z^K} h ) |^2 \Big) \cdot E (   \lfloor \frac{|I|}{2} \rfloor + \lfloor  \frac{n}{2} \rfloor  + 1) \cdot  \frac{\eps  }{(1+t+|q|)^{3} \cdot ( 1+|q| )^{-1} }  \\
      \notag 
&& + \sum_{|K|\leq |I|}  \Big( |   \Lie_{Z^K} A  |^2 +|   \Lie_{Z^K} h  |^2 \Big) \cdot E (   \lfloor \frac{|I|}{2} \rfloor + \lfloor  \frac{n}{2} \rfloor  + 1) \cdot       \frac{\eps  }{(1+t+|q|)^{3} \cdot ( 1+|q| )^{} }  \; .
      \notag 
   \eeaa
   \end{proof}

\begin{lemma}\label{squareofthesourcetermsforthemetrichforngeq5withdeltaequalzeroforenergyestimate}
For $n \geq 5$ and $\de = 0$, 
\beaa
   \notag
 &&  (1+t )^{1+\la}\cdot  | \Lie_{Z^I}   ( g^{\a\b} \derm_{\a}   \derm_{\b}    h ) |^2   \\
     \notag
   &\les&  \sum_{|K|\leq |I|}  \Big( |    \derm (\Lie_{Z^K} A ) |^2 + |    \derm (\Lie_{Z^K} h ) |^2 \Big) \cdot  E (   \lfloor \frac{|I|}{2} \rfloor + \lfloor  \frac{n}{2} \rfloor  + 1) \cdot       \frac{\eps  }{(1+t+|q|)^{3 -\la} \cdot ( 1+|q| )^{} }  \\
      \notag 
   &&+  \sum_{|K|\leq |I|}  \Big( |   \Lie_{Z^K} A  |^2 + |   \Lie_{Z^K} h  |^2 \Big) \cdot  E (   \lfloor \frac{|I|}{2} \rfloor + \lfloor  \frac{n}{2} \rfloor  + 1 ) \cdot    \frac{\eps  }{(1+t+|q|)^{7-\la}  }  \; .
      \notag 
   \eeaa

\end{lemma}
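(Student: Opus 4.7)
\textbf{Proof proposal for Lemma \ref{squareofthesourcetermsforthemetrichforngeq5withdeltaequalzeroforenergyestimate}.} The plan is to reduce directly to the already-established Lemma \ref{ttimesthesquareofthesourcetermsforthemetrichforngeq5}, which, under the bootstrap assumption \eqref{bootstrapassumption}, furnishes a pointwise bound on $(1+t)\cdot |\Lie_{Z^I}(g^{\a\b}\derm_\a \derm_\b h)|^2$ for $n\geq 5$, organized as a sum of two families of terms: one carrying a factor $|\derm(\Lie_{Z^K} A)|^2 + |\derm(\Lie_{Z^K} h)|^2$, the other carrying a factor $|\Lie_{Z^K} A|^2 + |\Lie_{Z^K} h|^2$. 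I would first specialize that bound to $\delta=0$ (which is the parameter choice for this paper, cf.~\eqref{delataqualtozero}), so that the $(1+t+|q|)^{3-2\delta}$ and $(1+t+|q|)^{7-4\delta}$ exponents collapse to $3$ and $7$ respectively, and the $|q|$--weights in the $q>0$ and $q<0$ branches become comparable up to absorbing the harmless $\gamma$-powers using $\gamma>0$ (so that, for instance, $(1+|q|)^{2+2\gamma} \geq (1+|q|)$ in the $q>0$ case, making both branches uniformly controlled by $\frac{\eps}{(1+t+|q|)^{3}\cdot(1+|q|)}$ in the first family, and by $\frac{\eps}{(1+t+|q|)^{7}}$ in the second).

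Next, to convert the factor $(1+t)$ into $(1+t)^{1+\la}$, I would simply multiply both sides by $(1+t)^{\la}$ and use the elementary inequality $1+t \leq 1+t+|q|$, which gives $(1+t)^{\la} \leq (1+t+|q|)^{\la}$ for $\la \geq 0$. This transfers the extra $(1+t)^{\la}$ on the left-hand side into a reduction of the exponents on the denominator by $\la$ on the right-hand side, producing exactly $(1+t+|q|)^{3-\la}\cdot(1+|q|)$ in the first family and $(1+t+|q|)^{7-\la}$ in the second family, which matches the target estimate verbatim. Throughout this manipulation I would also rely on the bootstrap hypothesis $E(\lfloor |I|/2\rfloor + \lfloor n/2\rfloor +1) \leq 1$ and $\eps \leq 1$ (from \eqref{assumptionontheconstantboundsforenergy} and \eqref{epsissmallerthan1}) to combine the various $E$-factors appearing in Lemma \ref{ttimesthesquareofthesourcetermsforthemetrichforngeq5} into the single $E(\lfloor |I|/2 \rfloor + \lfloor n/2 \rfloor + 1)$ appearing in the statement.

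There is essentially no obstacle: the argument is exactly parallel to the proof of Lemma \ref{squareofthesourcetermsforAforngeq5withdeltaequalzeroforenergyestimate} just given for the Yang--Mills potential, with $A$ replaced by $h$ and the exponents adjusted from $(3, 3)$ to $(3, 7)$ reflecting the different structure of the source terms for the metric equation versus the gauge equation. The only point requiring minor care is to ensure that $0 < \la \leq \tfrac12$ so that $3-\la > 0$ and $7-\la > 0$ remain strictly positive (so the denominators are not ambiguous in sign), and that after the $\la$-shift the decay in $(1+|q|)$ is still non-negative in the first family, both of which are guaranteed by the range of $\la$ imposed in the surrounding energy estimate (cf.~Lemma \ref{the main energy estimate forAandhforn4}). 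With these observations in place, the statement follows by direct substitution.
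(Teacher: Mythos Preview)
Your proposal is correct and follows essentially the same approach as the paper: specialize Lemma \ref{ttimesthesquareofthesourcetermsforthemetrichforngeq5} to $\delta=0$, absorb the $\gamma$-powers in the $q>0$ branches to unify the two cases, and then pass from $(1+t)$ to $(1+t)^{1+\la}$ via $(1+t)^\la \leq (1+t+|q|)^\la$. The paper's own proof carries out the first two steps explicitly and leaves the final multiplication by $(1+t)^\la$ implicit; your write-up makes that step explicit, but the argument is the same.
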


\begin{proof}

We showed in Lemma \ref{ttimesthesquareofthesourcetermsforthemetrichforngeq5}, that for $n \geq 5$, 
\beaa
   \notag
 &&  (1+t ) \cdot  | \Lie_{Z^I}   ( g^{\a\b} \derm_{\a}   \derm_{\b}    h ) |^2   \\
     \notag
   &\les&  \sum_{|K|\leq |I|}  \Big( |    \derm (\Lie_{Z^K} A ) |^2 + |    \derm (\Lie_{Z^K} h ) |^2 \Big) \cdot E (   \lfloor \frac{|I|}{2} \rfloor + \lfloor  \frac{n}{2} \rfloor  + 1)\\
 \notag
   && \cdot \Big(  \begin{cases}  \frac{\eps }{(1+t+|q|)^{3-2\delta} (1+|q|)^{2+2\gamma}},\quad\text{when }\quad q>0,\\
           \notag
      \frac{\eps  }{(1+t+|q|)^{3-2\delta} \cdot ( 1+|q| )^{} } \,\quad\text{when }\quad q<0 . \end{cases} \Big) \\
      \notag 
   &&+  \sum_{|K|\leq |I|}  \Big( |   \Lie_{Z^K} A  |^2 + |   \Lie_{Z^K} h  |^2 \Big) \cdot E (   \lfloor \frac{|I|}{2} \rfloor + \lfloor  \frac{n}{2} \rfloor  + 1)\\
 \notag
   && \cdot \Big(  \begin{cases}  \frac{\eps }{(1+t+|q|)^{7-4\delta} (1+|q|)^{2+4\gamma}},\quad\text{when }\quad q>0,\\
           \notag
      \frac{\eps  }{(1+t+|q|)^{7-4\delta}  } \,\quad\text{when }\quad q<0 . \end{cases} \Big) \; .
      \notag 
   \eeaa
Taking $\de = 0$, we obtain
\beaa
   \notag
 &&  (1+t ) \cdot  | \Lie_{Z^I}   ( g^{\a\b} \derm_{\a}   \derm_{\b}    h ) |^2   \\
     \notag
   &\les&  \sum_{|K|\leq |I|}  \Big( |    \derm (\Lie_{Z^K} A ) |^2 + |    \derm (\Lie_{Z^K} h ) |^2 \Big) \cdot  E (   \lfloor \frac{|I|}{2} \rfloor + \lfloor  \frac{n}{2} \rfloor  + 1) \cdot       \frac{\eps  }{(1+t+|q|)^{3} \cdot ( 1+|q| )^{} }  \\
      \notag 
   &&+  \sum_{|K|\leq |I|}  \Big( |   \Lie_{Z^K} A  |^2 + |   \Lie_{Z^K} h  |^2 \Big) \cdot  E (   \lfloor \frac{|I|}{2} \rfloor + \lfloor  \frac{n}{2} \rfloor  + 1 ) \cdot    \frac{\eps  }{(1+t+|q|)^{7}  }  \; .
      \notag 
   \eeaa
   
      \end{proof}

\begin{lemma}\label{estimateonthesoacetimeintegralforLiederivativesZofthesourcetermsAusefulforagronwallinequalityinenergyestimate}
For $n \geq 5$, $\de = 0$, and for fields decaying fast enough at spatial infinity, such that for all time $t$, for all $|K| \leq |I| $,
\bea
\notag
 \int_{\SSS^{n-1}} \lim_{r \to \infty} \Big( \frac{r^{n-1}}{(1+t+r)^{2-\la} \cdot (1+|q|) } w(q) \cdot   \Big( |   \Lie_{Z^K} A  |^2 + |   \Lie_{Z^K} h  |^2  \Big)  d\si^{n-1} (t ) &=& 0  \; , \\
\eea
then, for $\ga \neq 0$\,,
 \beaa
   \notag
 &&   \int_0^t \Big(  \int_{\Si_{t}}  (1+t )^{1+\la}  \cdot  | \Lie_{Z^I}   ( g^{\la\mu} \derm_{\la}   \derm_{\mu}    A ) |^2  \cdot   w \Big) \cdot dt   \\
     \notag
    &\les& c(\ga) \cdot E (   \lfloor \frac{|I|}{2} \rfloor + \lfloor  \frac{n}{2} \rfloor  + 1) \\
    &&  \times   \int_0^t          \frac{\eps  }{(1+t)^{2-\la} \ }  \cdot  \Big(  \int_{\Si_{t}}   \sum_{|K|\leq |I|}  \Big( |    \derm (\Lie_{Z^K} A ) |^2 + |    \derm (\Lie_{Z^K} h ) |^2 \Big)   \cdot   w \Big) \cdot dt  \;. 
   \eeaa
\end{lemma}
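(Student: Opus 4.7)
The plan is to start from the pointwise bound on $(1+t)^{1+\la}\cdot |\Lie_{Z^I}(g^{\la\mu}\derm_\la \derm_\mu A)|^2$ established in Lemma \ref{squareofthesourcetermsforAforngeq5withdeltaequalzeroforenergyestimate}, integrate over $\Sigma_t$ against the weight $w$, treat the two structurally different types of terms on the right-hand side separately, and finally integrate in $t$. The two types are: (i) terms controlled by $|\derm(\Lie_{Z^K} A)|^2+|\derm(\Lie_{Z^K} h)|^2$ multiplied by a factor of order $\eps/(1+t+|q|)^{2-\la}$, and (ii) terms controlled by $|\Lie_{Z^K} A|^2+|\Lie_{Z^K} h|^2$ multiplied by a factor of order $\eps/\big[(1+t+|q|)^{3-\la}(1+|q|)\big]$.

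For type (i), no Hardy input is needed: since $(1+t+|q|)^{2-\la}\geq (1+t)^{2-\la}$, the $\Sigma_t$-integral is directly bounded by
\[
\frac{\eps}{(1+t)^{2-\la}}\int_{\Sigma_t}\big(|\derm(\Lie_{Z^K}A)|^2+|\derm(\Lie_{Z^K}h)|^2\big)\,w,
\]
which already has the form asserted in the lemma. For type (ii), the plan is first to notice that $(1+t+|q|)^{3-\la}(1+|q|)\geq (1+t+|q|)^{2-\la}(1+|q|)^2$, reducing the task to estimating
\[
\int_{\Sigma_t} \frac{|\Lie_{Z^K}A|^2+|\Lie_{Z^K}h|^2}{(1+t+|q|)^{2-\la}(1+|q|)^2}\cdot w,
\]
which is exactly the shape to which Lemma \ref{Hardytypeinequalitywithintegralonthewholespaceslice} (the weighted Hardy-type inequality) applies with exponent $a=2-\la$. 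The hypothesis $0\le a\le n-1$ is satisfied since $n\ge 5$ gives $2-\la \le 2 \le n-1$, and the decay hypothesis in the present lemma is precisely the spatial-infinity condition required to invoke Hardy on $\Lie_{Z^K}A$ and $\Lie_{Z^K}h$.

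Applying Hardy produces a bound of the form $c(\ga)\int_{\Sigma_t}|\derm_r(\cdot)|^2 w /(1+t+|q|)^{2-\la}$, and using $|\derm_r(\cdot)|\le |\derm(\cdot)|$ together with $(1+t+|q|)^{2-\la}\ge (1+t)^{2-\la}$ collapses this into $(1+t)^{-(2-\la)}\int_{\Sigma_t}|\derm(\cdot)|^2 w$, matching the form of type (i). Summing the two contributions, absorbing the common constant $E(\lfloor |I|/2\rfloor +\lfloor n/2\rfloor+1)$, and integrating in $t$ yields the stated estimate.

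The main obstacle, though mild here, is the bookkeeping on the exponent $a=2-\la$ in Hardy: one must check that this value lies in $[0,n-1]$ (requiring $n\ge 5$ together with $0<\la\le \tfrac 12$), and that the spatial decay assumption in the hypothesis of the present lemma translates into the correct vanishing of the boundary term at $r\to\infty$ for $\Lie_{Z^K}A$ and $\Lie_{Z^K}h$ separately. Once these two compatibility checks are in place, the argument reduces to a direct combination of Lemma \ref{squareofthesourcetermsforAforngeq5withdeltaequalzeroforenergyestimate} and Lemma \ref{Hardytypeinequalitywithintegralonthewholespaceslice}, with no further estimates needed. A companion lemma for $h$ in place of $A$ (using Lemma \ref{squareofthesourcetermsforthemetrichforngeq5withdeltaequalzeroforenergyestimate}) will follow by the same mechanism and will be needed to close the Grönwall argument for the full coupled energy.
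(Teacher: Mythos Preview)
Your proposal is correct and follows essentially the same approach as the paper: start from the pointwise estimate of Lemma \ref{squareofthesourcetermsforAforngeq5withdeltaequalzeroforenergyestimate}, handle the gradient terms directly via $(1+t+|q|)^{2-\la}\geq (1+t)^{2-\la}$, and convert the undifferentiated terms into gradient terms using the Hardy-type inequality of Lemma \ref{Hardytypeinequalitywithintegralonthewholespaceslice} with $a=2-\la\in[0,n-1]$. The paper carries out exactly this computation, only spelling out the $q>0$/$q<0$ cases explicitly before collapsing them into the same final bound.
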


\begin{proof}
Based on what we have shown in Lemma \ref{squareofthesourcetermsforAforngeq5withdeltaequalzeroforenergyestimate}, for $n \geq 5$, $\de = 0$, we have
 \beaa
 \notag
&& (1+t ) \cdot |  \Lie_{Z^I}   ( g^{\la\mu} \derm_{\la}   \derm_{\mu}   A  )   |^2 \, \\
   \notag
   &\les&  \sum_{|K|\leq |I|}  \Big( |    \derm (\Lie_{Z^K} A ) |^2 +  |    \derm (\Lie_{Z^K} h ) |^2 \Big) \cdot E (   \lfloor \frac{|I|}{2} \rfloor + \lfloor  \frac{n}{2} \rfloor  + 1)\\
 \notag
   && \cdot \Big(  \begin{cases}  \frac{\eps }{(1+t+|q|)^{3} (1+|q|)^{2\gamma}},\quad\text{when }\quad q>0,\\
           \notag
      \frac{\eps  }{(1+t+|q|)^{3} \cdot ( 1+|q| )^{-1} } \,\quad\text{when }\quad q<0 . \end{cases} \Big) \\
      \notag 
   && + \sum_{|K|\leq |I|}  \Big( |   \Lie_{Z^K} A  |^2 +|   \Lie_{Z^K} h  |^2 \Big) \cdot E (   \lfloor \frac{|I|}{2} \rfloor + \lfloor  \frac{n}{2} \rfloor  + 1) \\
 \notag
   && \cdot \Big(  \begin{cases}  \frac{\eps }{(1+t+|q|)^{3} (1+|q|)^{2+2\gamma}},\quad\text{when }\quad q>0,\\
           \notag
      \frac{\eps  }{(1+t+|q|)^{3} \cdot ( 1+|q| )^{} } \,\quad\text{when }\quad q<0 . \end{cases} \Big) \\
      \notag 
   &\les&  \sum_{|K|\leq |I|}  \Big( |    \derm (\Lie_{Z^K} A ) |^2 +  |    \derm (\Lie_{Z^K} h ) |^2 \Big) \cdot E (   \lfloor \frac{|I|}{2} \rfloor + \lfloor  \frac{n}{2} \rfloor  + 1)  \cdot       \frac{\eps  }{(1+t+|q|)^{2}  }  \\
     \notag
   && + \sum_{|K|\leq |I|}  \Big( |   \Lie_{Z^K} A  |^2 +|   \Lie_{Z^K} h  |^2 \Big) \cdot E (   \lfloor \frac{|I|}{2} \rfloor + \lfloor  \frac{n}{2} \rfloor  + 1)  \cdot      \frac{\eps  }{(1+t+|q|)^{3} \cdot ( 1+|q| )^{} }   \; .
      \notag 
   \eeaa
   Hence,
       \beaa
 \notag
&& (1+t )^{1+\la}  \cdot |  \Lie_{Z^I}   ( g^{\la\mu} \derm_{\la}   \derm_{\mu}   A  )   |^2 \, \\
   \notag
   &\les&  \sum_{|K|\leq |I|}  \Big( |    \derm (\Lie_{Z^K} A ) |^2 +  |    \derm (\Lie_{Z^K} h ) |^2 \Big) \cdot E (   \lfloor \frac{|I|}{2} \rfloor + \lfloor  \frac{n}{2} \rfloor  + 1)  \cdot       \frac{\eps  }{(1+t+|q|)^{2-\la}  }  \\
     \notag
   && + \sum_{|K|\leq |I|}  \Big( |   \Lie_{Z^K} A  |^2 +|   \Lie_{Z^K} h  |^2 \Big) \cdot E (   \lfloor \frac{|I|}{2} \rfloor + \lfloor  \frac{n}{2} \rfloor  + 1)  \cdot      \frac{\eps  }{(1+t+|q|)^{2-\la} \cdot ( 1+|q| )^{2} }   \; .
      \notag 
   \eeaa
   
Under the assumption again that $\Lie_{Z^K} A$ and $\Lie_{Z^K} h$ decay fast enough at spatial infinity for all time $t$, for all for $|K| \leq |I| $, such that
\bea
\notag
 \int_{\SSS^{n-1}} \lim_{r \to \infty} \Big( \frac{r^{n-1}}{(1+t+r)^{2-\la} \cdot (1+|q|) } w(q) \cdot   \Big( |   \Lie_{Z^K} A  |^2 + |   \Lie_{Z^K} h  |^2  \Big)  d\si^{n-1} (t ) &=& 0  \; , \\
\eea
we get by then, that for $\ga \neq 0$ and $ 0 < \la \leq \frac{1}{2}$ (and therefore $2-\la \leq n-1$ for $n \geq 5$), that 
  \beaa
\notag
  && \int_{\Si_{t}} \frac{1}{(1+t+|q| )^{2-\la}(1+|q|)^2} \cdot \Big( |   \Lie_{Z^K} A  |^2 + |   \Lie_{Z^K} h  |^2  \Big) \cdot   w    \\
     &\leq& c(\ga) \cdot  \int_{\Si_{t}}  \frac{1}{(1+t+|q|)^{2-\la}}  \cdot  \Big( |  \derm( \Lie_{Z^K} A ) |^2 + | \derm(    \Lie_{Z^K} h )  |^2  \Big)  \cdot   w    \; .
 \eeaa
 As a result,
        \beaa
 \notag
&& \int_0^t \Big( \int_{\Si_{t}}  (1+t )^{1+\la}  \cdot |  \Lie_{Z^I}   ( g^{\la\mu} \derm_{\la}   \derm_{\mu}   A  )   |^2 \cdot w \Big) \cdot dt\, \\
   \notag
   &\les&  \int_0^t \Big( \int_{\Si_{t}} \sum_{|K|\leq |I|}  \Big( |    \derm (\Lie_{Z^K} A ) |^2 +  |    \derm (\Lie_{Z^K} h ) |^2 \Big) \cdot E (   \lfloor \frac{|I|}{2} \rfloor + \lfloor  \frac{n}{2} \rfloor  + 1)  \cdot       \frac{\eps  }{(1+t+|q|)^{2-\la}  } \cdot w \Big) \cdot dt  \\
     \notag
   && +  \int_0^t \Big( \int_{\Si_{t}} \sum_{|K|\leq |I|}  \Big( |   \Lie_{Z^K} A  |^2 +|   \Lie_{Z^K} h  |^2 \Big) \cdot E (   \lfloor \frac{|I|}{2} \rfloor + \lfloor  \frac{n}{2} \rfloor  + 1)  \cdot      \frac{\eps  }{(1+t+|q|)^{2-\la} \cdot ( 1+|q| )^{2} }  \cdot w \Big) \cdot dt \\
      \notag 
         &\les&  \int_0^t \Big( \int_{\Si_{t}} \sum_{|K|\leq |I|}  \Big( |    \derm (\Lie_{Z^K} A ) |^2 +  |    \derm (\Lie_{Z^K} h ) |^2 \Big) \cdot E (   \lfloor \frac{|I|}{2} \rfloor + \lfloor  \frac{n}{2} \rfloor  + 1)  \cdot       \frac{\eps  }{(1+t+|q|)^{2-\la}  } \cdot w \Big) \cdot dt  \\
     \notag
   && +  \int_0^t \Big( \int_{\Si_{t}} \sum_{|K|\leq |I|}  \Big( | \derm (   \Lie_{Z^K} A ) |^2 +| \derm (  \Lie_{Z^K} h )   |^2 \Big)  \cdot E (   \lfloor \frac{|I|}{2} \rfloor + \lfloor  \frac{n}{2} \rfloor  + 1)  \cdot      \frac{ c(\ga, \mu) \cdot\eps  }{(1+t+|q|)^{2-\la}  }  \cdot w \Big) \cdot dt \; .
      \notag 
   \eeaa
   
   \end{proof}
   
\begin{lemma}\label{estimateonthesoacetimeintegralforLiederivativesZofthesourcetermsformetrichusefulforagronwallinequalityinenergyestimate}
For $n \geq 5$\,, $\de = 0$\,, and for fields decaying fast enough at spatial infinity, such that for all time $t$\,, for $|K| \leq |I| $\,,
\bea
\notag
 \int_{\SSS^{n-1}} \lim_{r \to \infty} \Big( \frac{r^{n-1}}{(1+t+r)^{2-\la} \cdot (1+|q|) } w(q) \cdot   \Big( |   \Lie_{Z^K} A  |^2 + |   \Lie_{Z^K} h  |^2  \Big)  d\si^{n-1} (t ) &=& 0  \; , \\
\eea
then, for $\ga \neq 0$\,,
 \beaa
   \notag
 &&   \int_0^t \Big(  \int_{\Si_{t}}   (1+t )^{1+\la}   \cdot  | \Lie_{Z^I}   ( g^{\la\mu} \derm_{\la}   \derm_{\mu}    h ) |^2  \cdot   w \Big) \cdot dt   \\
     \notag
    &\les& c(\ga) \cdot E (   \lfloor \frac{|I|}{2} \rfloor + \lfloor  \frac{n}{2} \rfloor  + 1)  \\
    &&  \times   \int_0^t          \frac{\eps  }{(1+t)^{2-\la} \ }  \cdot  \Big(  \int_{\Si_{t}}   \sum_{|K|\leq |I|}  \Big( |    \derm (\Lie_{Z^K} A ) |^2 + |    \derm (\Lie_{Z^K} h ) |^2 \Big)   \cdot   w \Big) \cdot dt  \; .
   \eeaa

\end{lemma}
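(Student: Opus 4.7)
The plan is to mirror exactly the argument used for the $A$-source terms in Lemma \ref{estimateonthesoacetimeintegralforLiederivativesZofthesourcetermsAusefulforagronwallinequalityinenergyestimate}, but starting from the pointwise bound for the $h$-source terms instead. I would begin by invoking Lemma \ref{squareofthesourcetermsforthemetrichforngeq5withdeltaequalzeroforenergyestimate}, which, for $n \geq 5$ and $\delta = 0$, provides a pointwise bound on $(1+t)^{1+\lambda}\cdot |\Lie_{Z^I}(g^{\alpha\beta}\derm_\alpha \derm_\beta h)|^2$ as a sum of two kinds of contributions: a ``gradient contribution'' of the form $(|\derm \Lie_{Z^K}A|^2 + |\derm \Lie_{Z^K}h|^2) \cdot \frac{\eps}{(1+t+|q|)^{3-\lambda}(1+|q|)}$, and a ``zeroth-order contribution'' of the form $(|\Lie_{Z^K}A|^2 + |\Lie_{Z^K}h|^2) \cdot \frac{\eps}{(1+t+|q|)^{7-\lambda}}$.

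For the gradient contribution, I would simply note the trivial inequality
\[
\frac{1}{(1+t+|q|)^{3-\lambda}(1+|q|)} \;\leq\; \frac{1}{(1+t+|q|)^{2-\lambda}} \;\leq\; \frac{1}{(1+t)^{2-\lambda}},
\]
which pulls the desired $\frac{1}{(1+t)^{2-\lambda}}$ factor outside the spatial integral, leaving exactly the integrand $\sum_{|K|\leq |I|}(|\derm \Lie_{Z^K}A|^2 + |\derm \Lie_{Z^K}h|^2)\cdot w$ claimed in the statement. For the zeroth-order contribution, the key observation is
\[
\frac{1}{(1+t+|q|)^{7-\lambda}} \;\leq\; \frac{1}{(1+t+|q|)^{2-\lambda}(1+|q|)^2},
\]
which, since $2-\lambda \leq n-1$ for $n\geq 5$ and $0<\lambda\leq \tfrac12$, places the factor in exactly the form required by the Hardy-type inequality of Lemma \ref{Hardytypeinequalitywithintegralonthewholespaceslice}. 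Under the prescribed spatial decay hypothesis on $\Lie_{Z^K}A$ and $\Lie_{Z^K}h$, the Hardy inequality then converts $|\Lie_{Z^K}\Phi|^2/[(1+t+|q|)^{2-\lambda}(1+|q|)^2]$ into $c(\gamma)\cdot|\derm\Lie_{Z^K}\Phi|^2/(1+t+|q|)^{2-\lambda}$, and again $(1+t+|q|)^{-(2-\lambda)}\leq (1+t)^{-(2-\lambda)}$ allows the $t$-factor to come out of the spatial integral.

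Adding the two pieces and integrating in $t$ from $0$ to $t$, one obtains precisely the stated bound. The main obstacle is purely bookkeeping: verifying that the exponent $a = 2-\lambda$ at which Hardy's inequality is applied is admissible, i.e.\ $0\leq a\leq n-1$, which is automatic once $n\geq 5$ and $0<\lambda\leq \tfrac12$, and that the spatial decay hypothesis stated in the lemma is sufficient to eliminate the boundary term at infinity that appears in the proof of the Hardy inequality; this is satisfied by the assumption
\[
\int_{\SSS^{n-1}} \lim_{r \to \infty} \Big( \tfrac{r^{n-1}}{(1+t+r)^{2-\lambda}(1+|q|)}\,w(q)\,(|\Lie_{Z^K}A|^2+|\Lie_{Z^K}h|^2) \Big)\,d\sigma^{n-1} = 0.
\]
Thus no new analytic input is required beyond what was already used in the $A$-analogue; the argument is a direct transcription with Lemma \ref{squareofthesourcetermsforthemetrichforngeq5withdeltaequalzeroforenergyestimate} replacing Lemma \ref{squareofthesourcetermsforAforngeq5withdeltaequalzeroforenergyestimate}.
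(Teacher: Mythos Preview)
Your proposal is correct and follows essentially the same route as the paper: start from Lemma \ref{squareofthesourcetermsforthemetrichforngeq5withdeltaequalzeroforenergyestimate}, bound the gradient coefficient $\frac{1}{(1+t+|q|)^{3-\la}(1+|q|)}$ trivially by $\frac{1}{(1+t)^{2-\la}}$, bound the zeroth-order coefficient $\frac{1}{(1+t+|q|)^{7-\la}}$ by $\frac{1}{(1+t+|q|)^{2-\la}(1+|q|)^2}$, and then apply the Hardy inequality of Lemma \ref{Hardytypeinequalitywithintegralonthewholespaceslice} with $a=2-\la$ (admissible since $2-\la\leq n-1$ for $n\geq 5$). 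The paper's own proof carries out exactly these steps, with only cosmetic differences in how the intermediate exponents are arranged before the final bound.
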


\begin{proof}

We showed in Lemma \ref{squareofthesourcetermsforthemetrichforngeq5withdeltaequalzeroforenergyestimate}, that for $n \geq 5$, $\de = 0$,
\beaa
   \notag
 &&  (1+t ) \cdot  | \Lie_{Z^I}   ( g^{\la\mu} \derm_{\la}   \derm_{\mu}    h ) |^2   \\
     \notag
   &\les&  \sum_{|K|\leq |I|}  \Big( |    \derm (\Lie_{Z^K} A ) |^2 + |    \derm (\Lie_{Z^K} h ) |^2 \Big) \cdot E (   \lfloor \frac{|I|}{2} \rfloor + \lfloor  \frac{n}{2} \rfloor  + 1)\\
 \notag
   && \cdot \Big(  \begin{cases}  \frac{\eps }{(1+t+|q|)^{3} (1+|q|)^{2+2\gamma}},\quad\text{when }\quad q>0,\\
           \notag
      \frac{\eps  }{(1+t+|q|)^{3} \cdot ( 1+|q| )^{} } \,\quad\text{when }\quad q<0 . \end{cases} \Big) \\
      \notag 
   &&+  \sum_{|K|\leq |I|}  \Big( |   \Lie_{Z^K} A  |^2 + |   \Lie_{Z^K} h  |^2 \Big) \cdot E (   \lfloor \frac{|I|}{2} \rfloor + \lfloor  \frac{n}{2} \rfloor  + 1)\\
 \notag
   && \cdot \Big(  \begin{cases}  \frac{\eps }{(1+t+|q|)^{7} (1+|q|)^{2+4\gamma}},\quad\text{when }\quad q>0,\\
           \notag
      \frac{\eps  }{(1+t+|q|)^{7}  } \,\quad\text{when }\quad q<0 . \end{cases} \Big) \; .
      \notag 
   \eeaa

Hence,
\beaa
   \notag
 &&  (1+t )^{1+\la} \cdot  | \Lie_{Z^I}   ( g^{\la\mu} \derm_{\la}   \derm_{\mu}    h ) |^2   \\
     \notag
   &\les&  \sum_{|K|\leq |I|}  \Big( |    \derm (\Lie_{Z^K} A ) |^2 + |    \derm (\Lie_{Z^K} h ) |^2 \Big) \cdot E (   \lfloor \frac{|I|}{2} \rfloor + \lfloor  \frac{n}{2} \rfloor  + 1)  \cdot      \frac{\eps  }{(1+t+|q|)^{3-\la} \cdot ( 1+|q| )^{} }  \\
      \notag 
   &&+  \sum_{|K|\leq |I|}  \Big( |   \Lie_{Z^K} A  |^2 + |   \Lie_{Z^K} h  |^2 \Big) \cdot E (   \lfloor \frac{|I|}{2} \rfloor + \lfloor  \frac{n}{2} \rfloor  + 1) \cdot        \frac{\eps  }{(1+t+|q|)^{7-\la}  } \\
         \notag
   &\les&  \sum_{|K|\leq |I|}  \Big( |    \derm (\Lie_{Z^K} A ) |^2 + |    \derm (\Lie_{Z^K} h ) |^2 \Big) \cdot E (   \lfloor \frac{|I|}{2} \rfloor + \lfloor  \frac{n}{2} \rfloor  + 1)  \cdot      \frac{\eps  }{(1+t+|q|)^{2-\la} \cdot ( 1+|q| )^{2} }  \\
      \notag 
   &&+  \sum_{|K|\leq |I|}  \Big( |   \Lie_{Z^K} A  |^2 + |   \Lie_{Z^K} h  |^2 \Big) \cdot E (   \lfloor \frac{|I|}{2} \rfloor + \lfloor  \frac{n}{2} \rfloor  + 1) \cdot        \frac{\eps  }{(1+t+|q|)^{5-\la} \cdot (1+|q|)^2  } \; .
      \notag 
   \eeaa

   Assuming that both $\Lie_{Z^K} A$ and $\Lie_{Z^K} h$ decay fast enough at spatial infinity for all time $t$, i.e. that
\bea
\notag
 \int_{\SSS^{n-1}} \lim_{r \to \infty} \Big( \frac{r^{n-1}}{(1+t+r)^{2-\la} \cdot (1+|q|) } w(q) \cdot   \Big( |   \Lie_{Z^K} A  |^2 + |   \Lie_{Z^K} h  |^2  \Big)  d\si^{n-1} (t ) &=& 0  \; . \\
\eea
Then, for $\ga \neq 0$ and $ 0 < \la \leq \frac{1}{2}$,  we have $0 \leq 2-\la \leq 4 \leq n-1$, and consequently, 
  \beaa
\notag
  && \int_{\Si_{t}} \frac{1}{(1+t+|q| )^{2-\la}(1+|q|)^2} \cdot \Big( |   \Lie_{Z^K} A  |^2 + |   \Lie_{Z^K} h  |^2  \Big) \cdot   w    \\
     &\leq& c(\ga) \cdot  \int_{\Si_{t}}  \frac{1}{(1+t+|q|)^{2-\la}}  \cdot  \Big( |  \derm( \Lie_{Z^K} A ) |^2 + | \derm(    \Lie_{Z^K} h )  |^2  \Big)  \cdot   w \; .   \\
 \eeaa
 As a result,
 \beaa
   \notag
 &&  \int_{\Si_{t}}   (1+t )^{1+\la}  \cdot  | \Lie_{Z^I}   ( g^{\la\mu} \derm_{\la}   \derm_{\mu}    h ) |^2  \cdot   w  \\
     \notag
   &\les&  \int_{\Si_{t}}   \sum_{|K|\leq |I|}  \Big( |    \derm (\Lie_{Z^K} A ) |^2 + |    \derm (\Lie_{Z^K} h ) |^2 \Big) \cdot E (   \lfloor \frac{|I|}{2} \rfloor + \lfloor  \frac{n}{2} \rfloor  + 1)  \cdot      \frac{\eps  }{(1+t+|q|)^{2-\la} \cdot ( 1+|q| )^{2} }  \cdot   w \\
      \notag 
   &&+ \int_{\Si_{t}}   \sum_{|K|\leq |I|}  \Big( |   \Lie_{Z^K} A  |^2 + |   \Lie_{Z^K} h  |^2 \Big) \cdot E (   \lfloor \frac{|I|}{2} \rfloor + \lfloor  \frac{n}{2} \rfloor  + 1) \cdot        \frac{\eps  }{(1+t+|q|)^{2-\la} \cdot (1+|q|)^2  }  \cdot   w \\
      \notag 
       &\les&   c(\ga) \cdot  E (   \lfloor \frac{|I|}{2} \rfloor + \lfloor  \frac{n}{2} \rfloor  + 1)  \cdot      \frac{\eps  }{(1+t)^{2-\la} \ }  \cdot  \int_{\Si_{t}}   \sum_{|K|\leq |I|}  \Big( |    \derm (\Lie_{Z^K} A ) |^2 + |    \derm (\Lie_{Z^K} h ) |^2 \Big)   \cdot   w \\
      \notag 
   &&+ \int_{\Si_{t}}   \sum_{|K|\leq |I|}  \Big( |   \derm ( \Lie_{Z^K} A ) |^2 + |  \derm ( \Lie_{Z^K} h )  |^2 \Big) \cdot E (   \lfloor \frac{|I|}{2} \rfloor + \lfloor  \frac{n}{2} \rfloor  + 1) \cdot        \frac{\eps  }{(1+t+|q|)^{4-\la}  }  \cdot   w \\
   \notag
    &\les&   c(\ga) \cdot  E (   \lfloor \frac{|I|}{2} \rfloor + \lfloor  \frac{n}{2} \rfloor  + 1)  \cdot      \frac{\eps  }{(1+t)^{2-\la} \ }  \cdot  \int_{\Si_{t}}   \sum_{|K|\leq |I|}  \Big( |    \derm (\Lie_{Z^K} A ) |^2 + |    \derm (\Lie_{Z^K} h ) |^2 \Big)   \cdot   w \; .
   \eeaa
   
   \end{proof}

\subsection{Grönwall type inequality on the energy for $n\geq 5$}\
   
       \begin{lemma}\label{TheGronwalltypeinequalityonenergyforngeq5}

Assume that $ H^{\mu\nu} = g^{\mu\nu}-m^{\mu\nu}$ satisfies 
\bea
| H| \leq C < \frac{1}{n} \; , \quad \text{where $n$ is the space dimension},
\eea
and assume that in wave coordinates $\{t, x^1, \ldots, x^n \}$\,, we have for $j$ running over spatial indices $\{ x^1, \ldots, x^n \}$\,, for all $|J| \leq |I|$, for all time $t$\,,
\bea
\lim_{r \to \infty }   \int_{\SSS^{n} }  g^{rj}  \cdot < \derm_t   (\Lie_{Z^J} A  ) ,\derm_j  (\Lie_{Z^J}  A  ) > \cdot w \cdot r^{n-1} d\si^{n-1} =0 \;, \\
\lim_{r \to \infty }   \int_{\SSS^{n} }  g^{tr}  \cdot < \derm_t   (\Lie_{Z^J} A   ) ,\derm_t  (\Lie_{Z^J} A  ) > \cdot w \cdot r^{n-1} d\si^{n-1} =0 \;,
\eea
\bea
\lim_{r \to \infty }   \int_{\SSS^{n} }  g^{rj}  \cdot < \derm_t   (\Lie_{Z^J} h  ) ,\derm_j  (\Lie_{Z^J}  h ) > \cdot w \cdot r^{n-1} d\si^{n-1} =0\;, \\
\lim_{r \to \infty }   \int_{\SSS^{n} }  g^{tr}  \cdot < \derm_t   (\Lie_{Z^J} h   ) ,\derm_t  (\Lie_{Z^J} h  ) > \cdot w \cdot r^{n-1} d\si^{n-1} =0 \;.
\eea

Let $n\geq 5$\,, $ 0 < \la \leq \frac{1}{2}$\,. Assume that $H$ is such that for all time $t$\,, 
\bea
 \int_{\SSS^{n-1}} \lim_{r \to \infty} \Big( \frac{r^{n-1}}{(1+t+r)^{3-\la} \cdot (1+|q|) } w(q) \cdot |H|^2  \Big)  d\si^{n-1} (t ) &=& 0 \; ,
\eea
and assume $h$ and $A$ are such that for $|J| \leq |I|$\,,
\bea
 \int_{\SSS^{n-1}} \lim_{r \to \infty} \Big( \frac{r^{n-1}}{ (1+|q|) } w(q) \cdot  | \Lie_{Z^J} h  |^2  \Big)  d\si^{n-1} (t ) &=& 0 \; ,
\eea
and
\bea
\notag
 \int_{\SSS^{n-1}} \lim_{r \to \infty} \Big( \frac{r^{n-1}}{(1+t+r)^{2-\la} \cdot (1+|q|) } w(q) \cdot   \Big( |   \Lie_{Z^J} A  |^2 + |   \Lie_{Z^J} h  |^2  \Big)  d\si^{n-1} (t ) &=& 0  \; . \\
\eea
Then, for $\ga \neq 0$ and for
\bea
\E_{  \lfloor \frac{|I|}{2} \rfloor + \lfloor  \frac{n}{2} \rfloor  + 1 } (\tau) \leq  E(  \lfloor \frac{|I|}{2} \rfloor + \lfloor  \frac{n}{2} \rfloor  + 1  ) \cdot \eps \, ,
\eea
for all $ 0 \leq \tau \leq t$\,, and for small  $E ( \lfloor \frac{|I|}{2} \rfloor  +  \lfloor  \frac{n}{2} \rfloor  +1) $\,, we have the following energy estimate
\bea
\notag
 &&  \E_{|I|}^2 (t)  \\
 \notag
  &\les&  \E_{|I|}^2 (0) +    c(\ga) \cdot E (   \lfloor \frac{|I|}{2} \rfloor + \lfloor  \frac{n}{2} \rfloor  + 1)  \cdot  C(|I|) \cdot  \int_0^t          \frac{\eps  }{(1+\tau)^{1+\la} \ }  \cdot  \E_{|I|}^2 (\tau)  \cdot d\tau \, , \\
\eea
where
\beaa
\E_{|I|} (\tau) :=  \sum_{|J|\leq |I|} \big( \|w^{1/2}   \derm ( \Lie_{Z^J} h^1   (t,\cdot) )  \|_{L^2} +  \|w^{1/2}   \derm ( \Lie_{Z^J}  A   (t,\cdot) )  \|_{L^2} \big) \, .
\eeaa
\end{lemma}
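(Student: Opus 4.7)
The plan is to assemble the lemma by combining three ingredients: the energy estimate from Lemma \ref{the main energy estimate forAandhforn4} (applied with $\Phi = A$ and with $\Phi = H$), the bounds on the space-time integrals of the squared source terms from Lemmas \ref{estimateonthesoacetimeintegralforLiederivativesZofthesourcetermsAusefulforagronwallinequalityinenergyestimate} and \ref{estimateonthesoacetimeintegralforLiederivativesZofthesourcetermsformetrichusefulforagronwallinequalityinenergyestimate}, and the equivalence between $\derm H$ and $\derm h$ modulo a small quadratic correction established through Lemma \ref{BigHintermsofsmallh} (and used in the proof of Lemma \ref{estimatethespace-timeintegralofthecommutatortermneededfortheenergyestimate}). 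Since $n\geq 5$ forces $h = h^1$, the energy $\E_{|I|}(\tau)$ is precisely the sum over $|J|\leq |I|$ of the weighted $L^2$ norms of $\derm(\Lie_{Z^J} h)$ and $\derm(\Lie_{Z^J} A)$.

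First, I would apply Lemma \ref{the main energy estimate forAandhforn4} with $\Phi = A$ to each Lie derivative $\Lie_{Z^J} A$ for $|J|\leq |I|$. This produces, on the right-hand side, the initial data term, a Grönwall-type integrand of the form $\frac{c(\ga) E(\cdot)}{(1+\tau)^{1+\la}}$ times the full energy $\E_{|I|}^2(\tau)$, and a space-time integral of $(1+\tau)^{1+\la}|\Lie_{Z^K}(g^{\a\b}\derm_\a\derm_\b A)|^2$. For this last source term, Lemma \ref{estimateonthesoacetimeintegralforLiederivativesZofthesourcetermsAusefulforagronwallinequalityinenergyestimate} (which combines the source-structure bound with the Hardy-type inequality of Corollary \ref{HardytypeinequalityforintegralstartingatROm}) shows that it is itself controlled by $\int_0^t \frac{\eps}{(1+\tau)^{2-\la}} \cdot \E_{|I|}^2(\tau)\, d\tau$. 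Since $2-\la \geq 1+\la$ for $\la \leq 1/2$, this bound is absorbed into the Grönwall integrand. Summing over $|J|\leq |I|$ yields the $A$-part of the claimed estimate.

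Next, I would run the same argument with $\Phi = H$, applying Lemma \ref{the main energy estimate forAandhforn4} to each $\Lie_{Z^J} H$. The source-term integral is now handled by Lemma \ref{estimateonthesoacetimeintegralforLiederivativesZofthesourcetermsformetrichusefulforagronwallinequalityinenergyestimate}, giving the same Grönwall shape. To convert the energy of $H$ into the energy of $h$ (which is what $\E_{|I|}$ records), I invoke the identity $H^{\mu\nu} = -h^{\mu\nu} + O(h^2)$ from Lemma \ref{BigHintermsofsmallh}: differentiating and commuting with $\Lie_{Z^J}$ gives $\derm(\Lie_{Z^J} H) = -\derm(\Lie_{Z^J} h) + \sum O(\Lie_{Z^{J_1}} h \cdot \derm(\Lie_{Z^{J_2}} h))$, and the quadratic tail can be absorbed using the pointwise bootstrap decay of $h$ together with the Hardy inequality (exactly the step already performed inside the proof of Lemma \ref{estimatethespace-timeintegralofthecommutatortermneededfortheenergyestimate}), provided $E(\lfloor |I|/2\rfloor + \lfloor n/2\rfloor +1)\cdot \eps$ is small enough. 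Adding the $A$-piece and the $h$-piece (after this conversion) produces the stated estimate.

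The main obstacle I anticipate is bookkeeping the smallness condition and ensuring no terms of the form $|\derm(\Lie_{Z^J} h)|^2$ are left on the right with coefficient comparable to $1$ rather than $\eps/(1+\tau)^{1+\la}$; all such terms must be absorbed into the Grönwall integrand. This forces the restriction $n\geq 5$ precisely because the bound $\frac{1}{(1+\tau)^{2-\la}}$ on the source-term integral requires $2-\la \leq n-1$ in the Hardy inequality, and it requires the commutator estimate (Lemma \ref{estimatethespace-timeintegralofthecommutatortermneededfortheenergyestimate}) to yield an extra factor of $(1+t)^{-(1-\la)}$ over the energy itself. Once the correct hierarchy of decay rates is verified, the identity follows by adding the two energy inequalities and renaming constants.
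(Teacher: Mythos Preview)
Your proposal is correct and follows essentially the same route as the paper: apply the main energy estimate (Lemma \ref{the main energy estimate forAandhforn4}) to both fields, feed the source-term space-time integrals through Lemmas \ref{estimateonthesoacetimeintegralforLiederivativesZofthesourcetermsAusefulforagronwallinequalityinenergyestimate} and \ref{estimateonthesoacetimeintegralforLiederivativesZofthesourcetermsformetrichusefulforagronwallinequalityinenergyestimate}, and use $2-\la\geq 1+\la$ to merge everything into a single Gr\"onwall integrand.

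The only difference is a harmless detour: you apply the energy estimate with $\Phi=H$ and then convert back to $h$ via $H=-h+O(h^2)$, whereas the paper applies it directly with $\Phi=h$ (the restriction ``$\Phi=H$ or $\Phi=A$'' in Lemmas \ref{thecommutatortermusingthebootstrap} and \ref{estimatingtheequareofthesourcetermsusingbootsrapassumption} is only there to ensure the bootstrap pointwise decay, and $h=h^1$ has exactly the same decay by Lemma \ref{aprioriestimatesongradientoftheLiederivativesofthefields}). Your route therefore requires one extra step---controlling $g^{\a\b}\derm_\a\derm_\b H$ in terms of $g^{\a\b}\derm_\a\derm_\b h$ plus lower-order quadratic corrections---which you correctly identify as absorbable. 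Applying with $\Phi=h$ from the outset, as the paper does, avoids this and keeps the wave operator and the source-term lemma aligned from the start.
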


\begin{proof}
Based on what we have shown in Lemma \ref{the main energy estimate forAandhforn4}, under the assumption on the metric, we obtain
\bea\label{combiningenergyestimateandcommutatortermforngeq5}
\notag
 && \int_{\Si_{t}} \big( | \derm  (\Lie_{Z^I}  A) |^2 +   | \derm  (\Lie_{Z^I}  h) |^2 \big)  \cdot w \cdot dx^1 \ldots dx^n \\
 \notag
  &\les&  \int_{\Si_{t=0}} \big(  | \derm  (\Lie_{Z^I} A   ) |^2 +  | \derm  (\Lie_{Z^I} h   ) |^2 \big)  \cdot w \cdot dx^1 \ldots dx^n \\
  \notag
&& +  C ( |I| ) \cdot  c(\ga) \cdot E ( \lfloor \frac{|I|}{2} \rfloor+  \lfloor  \frac{n}{2} \rfloor  +1)   \\
\notag
&&  \times \int_0^t   \frac{1}{(1+\tau)^{1+\la}}     \cdot \Big(  \sum_{|J|\leq |I|}   \int_{\Si_{\tau}} \big(  | \derm ( \Lie_{Z^{J}} A ) |^2 + |\derm ( \Lie_{Z^{K}} h )  |^2 \big) \cdot   w  \cdot dx^1 \ldots dx^n   \Big) \cdot d\tau   \\
      \notag
&& + \int_0^t  \Big( \int_{\Si_{\tau}}   (1+t )^{1+\la}  \cdot \sum_{|K| \leq |I| }  \Big( | \Lie_{Z^K}  g^{\a\b} \derm_{\a}   \derm_{\b}    A |^2  \\
      \notag
&& + | \Lie_{Z^K}  g^{\a\b} \derm_{\a}   \derm_{\b}    h |^2 \Big)  \cdot w   \cdot dx^1 \ldots dx^n  \Big) \cdot d\tau  \; . \\
\eea
For $n \geq 5$, $\de = 0$, and for fields decaying fast enough at spatial infinity, such that for all time $t$,
\bea
\notag
 \int_{\SSS^{n-1}} \lim_{r \to \infty} \Big( \frac{r^{n-1}}{(1+t+r)^{2-\la} \cdot (1+|q|) } w(q) \cdot   \Big( |   \Lie_{Z^K} A  |^2 + |   \Lie_{Z^K} h  |^2  \Big)  d\si^{n-1} (t ) &=& 0  \; , \\
\eea
then, by Lemmas \ref{estimateonthesoacetimeintegralforLiederivativesZofthesourcetermsAusefulforagronwallinequalityinenergyestimate} and \ref{estimateonthesoacetimeintegralforLiederivativesZofthesourcetermsformetrichusefulforagronwallinequalityinenergyestimate}, for $\ga \neq 0$\,,
 \beaa
   \notag
 &&   \int_0^t \Big(  \int_{\Si_{\tau}}  (1+t )^{1+\la}   \cdot  \big( | \Lie_{Z^I}   ( g^{\la\mu} \derm_{\la}   \derm_{\mu}    A ) |^2 + | \Lie_{Z^I}   ( g^{\la\mu} \derm_{\la}   \derm_{\mu}    h ) |^2 \big) \cdot   w \Big) \cdot d\tau   \\
     \notag
    &\les& c(\ga) \cdot E (   \lfloor \frac{|I|}{2} \rfloor + \lfloor  \frac{n}{2} \rfloor  + 1)  \cdot    \int_0^t          \frac{\eps  }{(1+\tau)^{2-\la} \ }  \cdot  \Big(  \int_{\Si_{\tau}}   \sum_{|K|\leq |I|}  \Big( |    \derm (\Lie_{Z^K} A ) |^2 + |    \derm (\Lie_{Z^K} h ) |^2 \Big)   \cdot   w \Big) \cdot d\tau  \\
       &\les& c(\ga) \cdot E (   \lfloor \frac{|I|}{2} \rfloor + \lfloor  \frac{n}{2} \rfloor  + 1)  \cdot    \int_0^t          \frac{\eps  }{(1+\tau)^{2-\la} \ }  \cdot  \E_{|I|}^2 (\tau)  \cdot d\tau \;. \\
   \eeaa
   For $0<\la \leq \frac{1}{2}$, we have  $2-\la \geq 1+\la$ and therefore 
 \beaa
   \notag
 &&   \int_0^t \Big(  \int_{\Si_{\tau}}   (1+t )^{1+\la}  \cdot  \big( | \Lie_{Z^I}   ( g^{\la\mu} \derm_{\la}   \derm_{\mu}    A ) |^2 + | \Lie_{Z^I}   ( g^{\la\mu} \derm_{\la}   \derm_{\mu}    h ) |^2 \big) \cdot   w \Big) \cdot d\tau   \\
     \notag
       &\les& c(\ga) \cdot E (   \lfloor \frac{|I|}{2} \rfloor + \lfloor  \frac{n}{2} \rfloor  + 1)  \cdot    \int_0^t          \frac{\eps  }{(1+\tau)^{1+\la} \ }  \cdot  \E_{|I|}^2 (\tau)  \cdot d\tau  \;.\\
   \eeaa
Finally, injecting in \eqref{combiningenergyestimateandcommutatortermforngeq5}, we obtain,
\bea
\notag
 &&  \E_{|I|}^2 (t)  \\
 \notag
  &\les&  \E_{|I|}^2 (0) +    c(\ga) \cdot E (   \lfloor \frac{|I|}{2} \rfloor + \lfloor  \frac{n}{2} \rfloor  + 1)  \cdot  C(|I|) \cdot  \int_0^t          \frac{\eps  }{(1+\tau)^{1+\la} \ }  \cdot  \E_{|I|}^2 (\tau)  \cdot d\tau \; .
\eea
\end{proof}

\subsection{The proof of the theorem for $n\geq 5$}\

\begin{proposition}\label{Thetheoremofglobalstabilityanddecayforngeq 5}

Let $n \geq 5$ and let $N \geq 2 \lfloor  \frac{n}{2} \rfloor  + 2$\,. Assume that for all $I$, as in Definition \ref{DefinitionofZI}, with $|I| \leq N$, we have in wave coordinates $\{t, x^1, \ldots, x^n \}$\,, for $j$ running over spatial indices $\{ x^1, \ldots, x^n \}$\,, for time $t=0$\,,
\bea
\lim_{r \to \infty }   \int_{\SSS^{n} }  g^{rj}  \cdot < \derm_t   (\Lie_{Z^I} A  ) ,\derm_j  (\Lie_{Z^I}  A  ) > \cdot w \cdot r^{n-1} d\si^{n-1} =0 \;, \\
\lim_{r \to \infty }   \int_{\SSS^{n} }  g^{tr}  \cdot < \derm_t   (\Lie_{Z^I} A   ) ,\derm_t  (\Lie_{Z^I} A  ) > \cdot w \cdot r^{n-1} d\si^{n-1} =0 \;,
\eea
\bea
\lim_{r \to \infty }   \int_{\SSS^{n} }  g^{rj}  \cdot < \derm_t   (\Lie_{Z^I} h  ) ,\derm_j  (\Lie_{Z^I}  h ) > \cdot w \cdot r^{n-1} d\si^{n-1} =0\;, \\
\lim_{r \to \infty }   \int_{\SSS^{n} }  g^{tr}  \cdot < \derm_t   (\Lie_{Z^I} h   ) ,\derm_t  (\Lie_{Z^I} h  ) > \cdot w \cdot r^{n-1} d\si^{n-1} =0 \;.
\eea

Also, assume that for $\ga > 0$ and for $ 0 < \la \leq \frac{1}{2}$\,, we have for time $t=0$\,, 
\bea
 \int_{\SSS^{n-1}} \lim_{r \to \infty} \Big( \frac{r^{n-1}}{(1+r)^{3-\la} \cdot (1+|q|) } w(q) \cdot |H|^2  \Big)  d\si^{n-1}  &=& 0 \; , 
 \eea
 and for for all $|I| \leq N$,
 \bea
 \int_{\SSS^{n-1}} \lim_{r \to \infty} \Big( \frac{r^{n-1}}{ (1+|q|) } w(q) \cdot  | \Lie_{Z^I} h  |^2  \Big)  d\si^{n-1} &=& 0 \; ,
 \eea
\bea
\notag
 \int_{\SSS^{n-1}} \lim_{r \to \infty} \Big( \frac{r^{n-1}}{(1+r)^{2-\la} \cdot (1+|q|) } w(q) \cdot   \Big( |   \Lie_{Z^I} A  |^2 + |   \Lie_{Z^I} h  |^2  \Big)  d\si^{n-1}  &=& 0  \; . \\
\eea

Under these stated assumptions, for any constant $E(N)$ (that is there to bound $\E_N (t)$ in \eqref{Theboundontheglobaleenergybyaconstantthatwechoose}), there exist two constants, a constant $c_1$ that depends on $\ga > 0$ and on $n \geq 5$, and a constant $c_2$ (to bound $ \overline{\E_N} (0)$ defined in \eqref{definitionoftheenergynormforinitialdata}), that depends on $E(N)$\,, on $N \geq 2 \lfloor  \frac{n}{2} \rfloor  + 2$ and on $w$ (i.e. depends on $\gamma$), such that if
\bea\label{AssumptiononinitialdataforcertainfirstminimalLiederivativesglobalexistenceanddecay}
 \overline{\E}_{ ( \lfloor  \frac{n}{2} \rfloor  +1)} (0)  \leq c_1(\ga, n ) \; ,
\eea
and if
\bea\label{Assumptiononinitialdataforglobalexistenceanddecay}
 \overline{\E_N} (0) \leq c_2 (E(N), N, \ga) \; ,
\eea
then, we have for all time $t$\,, 
\bea\label{Theboundontheglobaleenergybyaconstantthatwechoose}
 \E_{N} (t) \leq E(N) \; ,
\eea
where
\beaa
\E_{N} (\tau) :=  \sum_{|J|\leq N} \big( \|w^{1/2}   \derm ( \Lie_{Z^J} h^1   (t,\cdot) )  \|_{L^2} +  \|w^{1/2}   \derm ( \Lie_{Z^J}  A   (t,\cdot) )  \|_{L^2} \big) \; .
\eeaa
Consequently, the initial value Cauchy problem for the Einstein Yang-Mills equations in the Lorenz gauge and in wave coordinates, that we defined in the set-up, will admit a global solution in time $t$ for initial data satisfying \eqref{AssumptiononinitialdataforcertainfirstminimalLiederivativesglobalexistenceanddecay} and \eqref{Assumptiononinitialdataforglobalexistenceanddecay}. As a result, in the Lorenz gauge, the Yang-Mills potential decays to zero and the metric decays to the Minkowski metric in wave coordinates. More precisely, for all $|I| \leq N -  \lfloor  \frac{n}{2} \rfloor  - 1$\;, we have,
 \beaa
 \notag
 |\derm  ( \Lie_{Z^I}  A ) (t,x)  |     + |\derm  ( \Lie_{Z^I}  h ) (t,x)  |     &\les& \begin{cases}   \frac{E ( N ) }{(1+t+|q|)^{\frac{(n-1)}{2}} (1+|q|)^{1+\gamma}},\quad\text{when }\quad q>0,\\
           \notag
        \frac{E ( N )  }{(1+t+|q|)^{\frac{(n-1)}{2}}(1+|q|)^{\frac{1}{2} }}  \,,\quad\text{when }\quad q<0 , \end{cases} \\
      \eeaa
and
 \beaa
 \notag
 |\Lie_{Z^I} A (t,x)  | + |\Lie_{Z^I}  h (t,x)  |  &\les& \begin{cases}   \frac{E ( N )  }{(1+t+|q|)^{\frac{(n-1)}{2}} (1+|q|)^{\gamma}},\quad\text{when }\quad q>0,\\
        \frac{E ( N )  \cdot (1+|q|)^{\frac{1}{2} }  }{(1+t+|q|)^{\frac{(n-1)}{2}}}  \,,\quad\text{when }\quad q<0 . \end{cases} \\
      \eeaa
\end{proposition}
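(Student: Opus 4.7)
The plan is to run a continuity/bootstrap argument to upgrade the a priori assumption \eqref{bootstrapassumption} into a genuine global bound of the form \eqref{Theboundontheglobaleenergybyaconstantthatwechoose}. Local well-posedness of the coupled nonlinear wave system \eqref{ThewaveequationontheYangMillspotentialwithhyperbolicwaveoperatorusingingpartialderivativesinwavecoordinates}, \eqref{Thewaveequationonthemetrichwithhyperbolicwaveoperatorusingingpartialderivativesinwavecoordinates} at the regularity level $N \geq 2\lfloor n/2\rfloor+2$ is standard and gives a maximal time of existence $T_{\text{loc}}$ on which $t\mapsto \E_N(t)$ is continuous. I would first fix $\delta=0$, $\eps=1$, a weight exponent $\gamma>0$, and an intermediate $0<\lambda\leq \tfrac12$. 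Then, given any target constant $E(N)$, I introduce the bootstrap set of times $T\in[0,T_{\text{loc}})$ for which $\E_k(\tau)\leq E(k)$ for all $\tau\in[0,T]$ and all $k\leq \lfloor N/2\rfloor+\lfloor n/2\rfloor+1$ (with a monotone family $E(k)\leq E(k+1)\leq 1$ culminating in $E(N)$). Smallness of $\overline\E_{\lfloor n/2\rfloor+1}(0)$ will ensure the bootstrap hypothesis holds at $t=0$ for the low-derivative energies used in the pointwise decay estimates (Lemmas \ref{aprioriestimatesongradientoftheLiederivativesofthefields} and \ref{aprioriestimatefrombootstraponzerothderivativeofAandh1}), and the assumption \eqref{AssumptiononinitialdataforcertainfirstminimalLiederivativesglobalexistenceanddecay} is precisely there to guarantee that $|H|$ satisfies the pointwise smallness $|H|\leq C<1/n$ required by Lemma \ref{howtogetthedesirednormintheexpressionofenergyestimate} on the whole bootstrap interval.

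Under the bootstrap, the a priori decay lemmas give pointwise estimates on $\Lie_{Z^K}A$, $\Lie_{Z^K}h$ and their gradients, hence on $\Lie_{Z^K}H$ via Lemma \ref{EstimateonLiederivativeZofthemetricbigH}. These enter the source term bounds \ref{squareofthesourcetermsforAforngeq5withdeltaequalzeroforenergyestimate}, \ref{squareofthesourcetermsforthemetrichforngeq5withdeltaequalzeroforenergyestimate} and, combined with the Hardy-type inequality of Corollary \ref{HardytypeinequalityforintegralstartingatROm} (in the form of Lemma \ref{Hardytypeinequalitywithintegralonthewholespaceslice}), they convert every undifferentiated factor $|\Lie_{Z^K}A|$, $|\Lie_{Z^K}h|$, $|\Lie_{Z^K}H|$ appearing in the nonlinearity into an $L^2$-controlled gradient with a gain of $(1+|q|)^{-1}$. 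The dimensional restriction $n\geq 5$ is what makes the weights $(1+t+|q|)^{-(2-\lambda)}$ integrable in time in Lemmas \ref{estimateonthesoacetimeintegralforLiederivativesZofthesourcetermsAusefulforagronwallinequalityinenergyestimate} and \ref{estimateonthesoacetimeintegralforLiederivativesZofthesourcetermsformetrichusefulforagronwallinequalityinenergyestimate}; this is what distinguishes the argument from $n=4$, $n=3$. The asymptotic flatness conditions on the initial data propagate in time by finite speed of propagation applied to the local solution, so the spatial decay hypotheses needed in those lemmas remain valid throughout $[0,T]$.

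Next I would feed all these ingredients into the master energy estimate of Lemma \ref{the main energy estimate forAandhforn4} applied simultaneously to $\Phi=A$ and $\Phi=H$ for every $|I|\leq N$, summing over $|I|$. The commutator term is controlled by Lemma \ref{estimatethespace-timeintegralofthecommutatortermneededfortheenergyestimatengeq5} and the nonlinear source term by the two Hardy-type integral estimates above. This produces exactly the Grönwall type inequality of Lemma \ref{TheGronwalltypeinequalityonenergyforngeq5}:
\begin{equation*}
\E_N^2(t) \;\lesssim\; \E_N^2(0) + C(N)\cdot c(\gamma)\cdot E(\lfloor N/2\rfloor + \lfloor n/2\rfloor + 1)\int_0^t \frac{d\tau}{(1+\tau)^{1+\lambda}}\,\E_N^2(\tau).
\end{equation*}
Since $\int_0^\infty (1+\tau)^{-1-\lambda}\,d\tau<\infty$, Grönwall's lemma yields $\E_N^2(t)\leq C'\,\E_N^2(0)$ uniformly in $t\in[0,T]$, with a constant $C'$ depending only on $N$, $\gamma$, $n$ and the low-derivative bootstrap constant. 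Choosing $\overline\E_N(0)$ small enough (this is the constant $c_2(E(N),N,\gamma)$) makes $C'\,\overline\E_N^2(0)^{1/2}\leq E(N)/2$, which is a strict improvement of the bootstrap assumption at the top level $N$, and independently choosing $\overline\E_{\lfloor n/2\rfloor+1}(0)$ small closes the improvement at the lower levels that feed the pointwise estimates.

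The strict improvement and continuity of $t\mapsto \E_N(t)$ yield the standard conclusion: the bootstrap set is open, closed and nonempty in $[0,T_{\text{loc}})$, so $\E_N$ remains bounded up to $T_{\text{loc}}$ and the solution extends, forcing $T_{\text{loc}}=\infty$. The pointwise decay statements for $\Lie_{Z^I}A$, $\Lie_{Z^I}h$ and their gradients at order $|I|\leq N-\lfloor n/2\rfloor -1$ then follow directly from the weighted Klainerman–Sobolev inequality \eqref{wksi} together with the uniform bound $\E_N(t)\leq E(N)$, exactly as in Lemmas \ref{aprioriestimatesongradientoftheLiederivativesofthefields} and \ref{aprioriestimatefrombootstraponzerothderivativeofAandh1}. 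The main obstacle, and the delicate point of the whole scheme, is the bookkeeping of constants: the low-level energy $\E_{\lfloor N/2\rfloor+\lfloor n/2\rfloor+1}$ appears as a multiplicative coefficient in front of the Grönwall integral, so one must smallness-bootstrap it independently of $E(N)$ (via \eqref{AssumptiononinitialdataforcertainfirstminimalLiederivativesglobalexistenceanddecay}), while keeping the Hardy-type gains and the $n\geq 5$ integrability tight enough that the extra $(1+\tau)^{\lambda}$ in the commutator term is absorbed by the temporal decay of the sources; this is exactly the place where neither $n=4$ nor a weaker smallness hierarchy would suffice.
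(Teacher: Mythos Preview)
Your proposal is correct and follows essentially the same approach as the paper: a bootstrap argument closed via the Grönwall-type estimate of Lemma~\ref{TheGronwalltypeinequalityonenergyforngeq5}, with the smallness of $\overline{\E}_{\lfloor n/2\rfloor+1}(0)$ used to ensure $|H|<1/n$ (the paper does this via Lemma~\ref{aprioriestimateonthezeroLiederivativeZforgradiantbigHandbigH}), and the integrability of $(1+\tau)^{-(1+\lambda)}$ to close Grönwall, followed by the Klainerman--Sobolev inequality for the pointwise decay. The paper also handles the propagation of the spatial decay assumptions by a one-line claim rather than a detailed finite-speed argument, and makes the constraint $N\geq 2\lfloor n/2\rfloor+2$ explicit as the condition $|I|\geq \lfloor |I|/2\rfloor+\lfloor n/2\rfloor+1$ needed so that the improved bound on $\E_{|I|}$ actually controls the lower-order energy $\E_{\lfloor |I|/2\rfloor+\lfloor n/2\rfloor+1}$ used in the bootstrap.
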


\begin{proof}
We start with the bootstrap assumption explained in Section \ref{Theassumptionforthebootstrapandthenotation}. We have by then, thanks to Lemma \ref{aprioriestimateonthezeroLiederivativeZforgradiantbigHandbigH}, for $n\geq 5$ and for $\de = 0$\;, that
       \beaa
 \notag
|   H (t,x)  | &\les& \begin{cases}  c (\gamma)  \cdot  \frac{ \E_{ ( \lfloor  \frac{n}{2} \rfloor  +1)}}{ (1+ t + | q | )^{2 }  (1+| q |   )^{\ga}}  ,\quad\text{when }\quad q>0,\\
\notag
    \frac{ \E_{ ( \lfloor  \frac{n}{2} \rfloor  +1)}}{ (1+ t + | q | )^{2 }  } (1+| q |   )^{\frac{1}{2} }  , \,\quad\text{when }\quad q<0 . \end{cases} \\ 
     &\les&  c (\gamma)  \cdot   \E_{ ( \lfloor  \frac{n}{2} \rfloor  +1)}\\
               &\les&  c (\gamma)  \cdot   E (  \lfloor  \frac{n}{2} \rfloor  +1) \cdot \eps \\
                   && \text{(where we used that we chose $\de = 0$\;, see \eqref{delataqualtozero})}.\\
                &\les&  c (\gamma)  \cdot    E (  \lfloor  \frac{n}{2} \rfloor  +1)  \\
     && \text{(where we used that we chose $\eps \leq 1$\;, see \eqref{epsissmallerthan1},}\\
     &&\text{\; and in fact, in this paper, we chose $\eps =1$, see \eqref{epsequaltoone}).}  
    \eeaa
   
By choosing $E (  \lfloor  \frac{n}{2} \rfloor  +1)$ small enough, depending on $\ga$ and on $n$\, (which imposes the condition on the initial data by \eqref{theboostrapimposestheconditiononinitialdatasothatnonemptyset}), we have
\bea
 c (\gamma)  \cdot  E (  \lfloor  \frac{n}{2} \rfloor  +1) < \frac{1}{n} \; .
\eea
In addition, we claim that the decay assumptions on the initial data, stated in the proposition, will propagate in time, under the bootstrap assumption, since the fields satisfy a wave equation, and thus, they will be satisfied for all time $t$. Consequently, we could use Lemma \ref{TheGronwalltypeinequalityonenergyforngeq5}, where we fix  $ 0 < \la \leq \frac{1}{2}$ arbitrary, and we get that
\bea
\notag
 &&  \E_{|I|}^2 (t)  \\
 \notag
  &\leq& C \cdot \E_{|I|}^2 (0) +    c(\ga) \cdot E (   \lfloor \frac{|I|}{2} \rfloor + \lfloor  \frac{n}{2} \rfloor  + 1)  \cdot  C(|I|) \cdot  \int_0^t          \frac{\eps  }{(1+\tau)^{1+\la} \ }  \cdot  \E_{|I|}^2 (\tau)  \cdot d\tau \;.
\eea
Now, using the celebrated Grönwall lemma, we get
\bea
\notag
 &&  \E_{|I|}^2 (t)  \leq C\cdot \E_{|I|}^2 (0) \cdot \exp \Big(  \int_0^t    c(\ga) \cdot E (   \lfloor \frac{|I|}{2} \rfloor + \lfloor  \frac{n}{2} \rfloor  + 1)  \cdot  C(|I|) \cdot   \eps \cdot       \frac{1 }{(1+\tau)^{1+\la} \ }  \cdot d\tau     \Big) \\
\notag
 & \leq& C \cdot \E_{|I|}^2 (0) \cdot \exp \Big(     c(\ga) \cdot E (   \lfloor \frac{|I|}{2} \rfloor + \lfloor  \frac{n}{2} \rfloor  + 1)  \cdot  C(|I|) \cdot   \eps  \cdot \Big[  \frac{-1 }{\la (1+\tau)^{\la} }   \Big]^{\infty}_{0}      \Big) \\
  & \leq& C \cdot \E_{|I|}^2 (0) \cdot \exp \Big(    c(\ga) \cdot E (   \lfloor \frac{|I|}{2} \rfloor + \lfloor  \frac{n}{2} \rfloor  + 1)  \cdot  C(|I|) \cdot   \eps  \cdot \frac{1 }{\la }     \Big) \; ,
\eea
which also leads to, using that we chose $\eps \leq 1$ and that $E(k) \leq 1$ (see \eqref{epsissmallerthan1} and \eqref{assumptionontheconstantboundsforenergy}), that
\beaa
  \E_{|I|} (t)  & \leq& C \cdot \E_{|I|} (0) \cdot \exp \Big(    c(\ga) \cdot E (   \lfloor \frac{|I|}{2} \rfloor + \lfloor  \frac{n}{2} \rfloor  + 1)  \cdot  C(|I|) \cdot   \eps  \cdot \frac{1 }{\la }     \Big) \\
  & \leq& C \cdot \E_{|I|} (0) \cdot \exp \Big(    c(\ga)   \cdot  C(|I|)   \cdot \frac{1 }{\la }     \Big) \; .
\eeaa

Thus, choosing an initial data such that the energy norm defined in \eqref{definitionoftheenergynormforinitialdata} satsfies
\bea
 \overline{\E_{|I|}} (0) \leq  \frac{1}{2\cdot C  \cdot \exp \Big(    c(\ga)  \cdot  C(|I|)   \cdot \frac{1 }{\la }     \Big)} \cdot  E (   \lfloor \frac{|I|}{2} \rfloor + \lfloor  \frac{n}{2} \rfloor  + 1) \; ,
\eea
implies that 
\bea
 \E_{|I|} (0) \leq  \frac{1}{2\cdot C  \cdot \exp \Big(    c(\ga)  \cdot  C(|I|)   \cdot \frac{1 }{\la }     \Big)} \cdot  E (   \lfloor \frac{|I|}{2} \rfloor + \lfloor  \frac{n}{2} \rfloor  + 1) \; .
\eea
This leads to
\beaa
\notag
 &&  \E_{|I|} (t)  \leq \frac{1}{2} \cdot E (   \lfloor \frac{|I|}{2} \rfloor + \lfloor  \frac{n}{2} \rfloor  + 1)  \; .
\eeaa
However, for $|I| \geq  \lfloor \frac{|I|}{2} \rfloor + \lfloor  \frac{n}{2} \rfloor  + 1$, which means for $\frac{|I|}{2} \geq  \lfloor  \frac{n}{2} \rfloor  + 1$, we have
\beaa
  \E_{   \lfloor \frac{|I|}{2} \rfloor + \lfloor  \frac{n}{2} \rfloor  + 1}  (t)  \leq   \E_{|I|} (0)  \; .
 \eeaa
 Thus,
 \bea
  &&  \E_{   \lfloor \frac{|I|}{2} \rfloor + \lfloor  \frac{n}{2} \rfloor  + 1}  (t)  \leq \frac{1}{2} \cdot E (   \lfloor \frac{|I|}{2} \rfloor + \lfloor  \frac{n}{2} \rfloor  + 1)    \; .
  \eea
This shows that the estimate $  \E_{   \lfloor \frac{|I|}{2} \rfloor + \lfloor  \frac{n}{2} \rfloor  + 1}  (t)  \leq E (   \lfloor \frac{|I|}{2} \rfloor + \lfloor  \frac{n}{2} \rfloor  + 1)    $\, is in fact a true estimate and therefore, we can close the bootstrap argument explained in Section \ref{Thebootstrapargumentandnotationonboundingtheenergy}, for  $  \E_{   \lfloor \frac{|I|}{2} \rfloor + \lfloor  \frac{n}{2} \rfloor  + 1}  (t)$\,, with $\eps=1$ and $\de=0$. For this, we have used the condition that
\beaa
|I| \geq  \lfloor \frac{|I|}{2} \rfloor + \lfloor  \frac{n}{2} \rfloor  + 1\;,
\eeaa
 which imposes that $|I| \geq 2 \lfloor  \frac{n}{2} \rfloor  + 2$, and we also got that
\bea
 &&  \E_{|I|} (t)  \leq \frac{1}{2} \cdot E (   \lfloor \frac{|I|}{2} \rfloor + \lfloor  \frac{n}{2} \rfloor  + 1) \; .
\eea
This in turn gives, using Lemmas \ref{aprioriestimatefrombootstraponzerothderivativeofAandh1} and \ref{aprioriestimatesongradientoftheLiederivativesofthefields}, the stated decay estimates on the fields.
\end{proof}

\newpage

\end{document}